\documentclass[titlepage]{amsart}
\usepackage{amsaddr}



\newtheorem{pro}{Proposition}[section]

\newtheorem{lem}[pro]{Lemma}
\newtheorem{cor}[pro]{Corollary}
\newtheorem{rk}[pro]{Remark}

\newtheorem{prop}[pro]{Proposition}
\newtheorem{thm}[pro]{Theorem}
\newtheorem{defn}[pro]{Definition}
\newtheorem{rem}[pro]{Remark}
\newtheorem{example}[pro]{Example}


\newcommand{\Ext}{\mathrm{Ext}}
\newcommand{\Hom}{\mathrm{Hom}}

\newcommand{\A}{\mathcal{A}}
\newcommand{\B}{\mathcal{B}}

\newcommand{\C}{\mathcal{C}}

\newcommand{\T}{\mathcal{T}}
\newcommand{\X}{\mathcal{X}}
\newcommand{\Y}{\mathcal{Y}}
\newcommand{\Z}{\mathcal{Z}}
\newcommand{\W}{\mathcal{W}}
\newcommand{\pd}{\mathrm{pd}}

\newcommand{\Proj}{\mathrm{Proj}}
\newcommand{\proj}{\mathrm{proj}}
\newcommand{\Inj}{\mathrm{Inj}}
\newcommand{\inj}{\mathrm{inj}}
\newcommand{\id}{\mathrm{id}}

\newcommand{\coresdim}{\mathrm{coresdim}}
\newcommand{\add}{\mathrm{add}}
\newcommand{\smd}{\mathrm{smd}}
\newcommand{\Add}{\mathrm{Add}}
\newcommand{\Prod}{\mathrm{Prod}}

\newcommand{\modu}{\mathrm{mod}}
\newcommand{\Mod}{\mathrm{Mod}}
\newcommand{\Ker}{\mathrm{Ker}}

\newcommand{\Coker}{\mathrm{CoKer}}

\global\long\def\Rep{\operatorname{Rep}}%
\global\long\def\lmcn{\mathrm{lmcn}}%
\global\long\def\lccn{\mathrm{lccn}}%
\global\long\def\rccn{\mathrm{rccn}}%
\global\long\def\ltccn{\mathrm{ltccn}}%
\global\long\def\rtccn{\mathrm{rtccn}}%
\global\long\def\ccn{\mathrm{ccn}}%

\global\long\def\tccn{\mathrm{tccn}}%
\global\long\def\Supp{\mathrm{Supp}}%

\usepackage{latexsym, amssymb, amscd} 
\usepackage{amsmath}
\usepackage[all]{xypic}
\usepackage{amsthm}
\usepackage{xargs}[2008/03/08] 


\newcommandx\suc[5][usedefault, addprefix=\global, 1=N, 2=M, 3=K, 4=, 5=]{#1\overset{#4}{\hookrightarrow}#2\overset{#5}{\twoheadrightarrow}#3}%

\newcommandx\p[2][usedefault,  addprefix=\global,  1=\mathcal{A},  2=\mathcal{B}]{\left(#1, #2\right)}
\newcommandx\pdr[2][usedefault,  addprefix=\global,  1=\mathcal{A},  2=M]{\mathrm{pd}{}_{#1}\left(#2\right)}
\newcommandx\idr[2][usedefault,  addprefix=\global,  1=\mathcal{B},  2=M]{\mathrm{id}{}_{#1}\left(#2\right)}
\newcommandx\Extx[4][usedefault,  addprefix=\global,  1=i,  2=\mathcal{C},  3=M,  4=X]{\mathrm{Ext}{}_{#2}^{#1}\left(#3, #4\right)}
\newcommandx\Injx[1][usedefault,  addprefix=\global,  1=R]{\operatorname{Inj}\left(#1\right)}
\newcommandx\Projx[1][usedefault,  addprefix=\global,  1=R]{\operatorname{Proj}\left(#1\right)}
\newcommandx\projx[1][usedefault,  addprefix=\global,  1=R]{\operatorname{proj}\left(#1\right)}
\newcommandx\smdx[1][usedefault,  addprefix=\global,  1=\mathcal{M}]{\operatorname{smd}\left(#1\right)}
\newcommandx\Kerx[1][usedefault,  addprefix=\global,  1=M]{\operatorname{Ker}\left(#1\right)}
\newcommandx\Homx[3][usedefault,  addprefix=\global,  1=\mathcal{C},  2=M,  3=N]{\mathrm{Hom}{}_{#1}(#2, #3)}
\newcommandx\End[2][usedefault,  addprefix=\global,  1=R,  2=M]{\mathrm{End}{}_{#1}(#2)}
\newcommandx\pdx[1][usedefault,  addprefix=\global,  1=M]{\operatorname{pd}\left(#1\right)}
\newcommandx\Cok[1][usedefault,  addprefix=\global,  1=M]{\operatorname{Coker}\left(#1\right)}
\newcommandx\Gen[1][usedefault,  addprefix=\global,  1=M]{\operatorname{Fac}_{1}\left(#1\right)}
\newcommandx\Genn[2][usedefault,  addprefix=\global,  1=M,  2=n]{\operatorname{Fac}_{#2}(#1)}
\newcommandx\Gennr[3][usedefault,  addprefix=\global,  1=\mathcal{T},  2=n,  3=\mathcal{X}]{\operatorname{Fac}_{#2}^{#3}(#1)}
\newcommandx\addx[1][usedefault,  addprefix=\global,  1=\mathcal{M}]{\operatorname{add}\left(#1\right)}
\newcommandx\Addx[1][usedefault,  addprefix=\global,  1=\mathcal{M}]{\operatorname{Add}\left(#1\right)}
\newcommandx\Modx[1][usedefault,  addprefix=\global,  1=R]{\operatorname{Mod}\left(#1\right)}
\newcommandx\modd[1][usedefault,  addprefix=\global,  1=R]{\operatorname{mod}\left(#1\right)}
\newcommandx\Pres[3][usedefault,  addprefix=\global,  1=T,  2=\mathcal{D},  3=n]{\operatorname{Pres}_{#2}^{#3}\left(#1\right)}
\newcommandx\Homk[4][usedefault,  addprefix=\global,  1=\mathcal{K}(R),  2=\sigma,  3=\omega,  4=1]{\mathrm{Hom}{}_{#1}(#2, #3[#4])}
\newcommandx\im[1][usedefault,  addprefix=\global,  1=f]{\mathrm{Im}{}(#1)}

\global\long\def\resdimr#1#2#3{\mathrm{resdim}{}_{#1}^{#3}\left(#2\right)}
\global\long\def\coresdimr#1#2#3{\mathrm{coresdim}{}_{#1}^{#3}\left(#2\right)}
\global\long\def\resdimx#1#2{\operatorname{resdim}_{#1}\left(#2\right)}
\global\long\def\coresdimx#1#2{\operatorname{coresdim}_{#1}\left(#2\right)}

\usepackage{tikz}
\usepackage{pgfplots}
\usetikzlibrary{arrows}
\usepackage{amssymb, amsmath}
\relpenalty=9999
\binoppenalty=9999
\usetikzlibrary{positioning}
\usetikzlibrary{backgrounds}
\usepackage{enumitem}
\setenumerate{ label= (\alph*)}
\setenumerate[2]{ label= (\alph{enumi}\arabic*)} 


\usepackage[Lenny]{fncychap}
\usepackage{amscd}

\usepackage{etoolbox}

\makeatletter
\patchcmd{\@maketitle}
{\if@titlepage \newpage \else}
{\if@titlepage
 \vspace{\baselineskip}
 \else}
{}{}
\makeatother
\begin{document}

\title[Relative tilting theory II]{Relative tilting theory in abelian categories II:  $n$-$\mathcal{X}$-tilting theory}


\author{{A}lejandro Argud\'in-Monroy}
\address {Centro de Ciencias Matem\'aticas\\ Universidad Nacional Aut\'onoma de M\'exico, Morelia, Michoac\'an, MEXICO.}
\email{argudin@ciencias.unam.mx}

\author{Octavio Mendoza-Hern\'andez}
\address{Instituto de Matem\'aticas, Universidad Nacional Aut\'onoma de M\'exico,  M\'exico,  D.F. MEXICO.}
\email{omendoza@matem.unam.mx}

\subjclass[2010]{Primary: 18G20,  16E10; Secondary: 18E10,  18G25}
\keywords{ Relative Cotorsion pairs,  relative homological dimensions, relative tilting theory, Auslander-Buchweitz-Reiten approximation theory}
\thanks{
\indent {\em Funding:} This work was supported by the Project PAPIIT-Universidad Nacional Aut\'onoma de M\'exico IN100520. The first author was also supported by  a postdoctoral fellowship from Programa de Desarrollo de las Ciencias B\'asicas, Ministerio de educaci\'on y cultura,  Universidad de la Rep\'ublica,  Uruguay. He is currently supported with a postdoctoral fellowship from Programa de Becas Posdoctorales en la UNAM, Direcci\'on General de Asuntos del Personal Acad\'emico, Universidad Nacional Aut\'onoma de M\'exico.  }


\begin{abstract}
We introduce a relative tilting theory in abelian categories and show
that this work offers a unified framework of different previous notions
of tilting,  ranging from Auslander-Solberg relative tilting modules
on Artin algebras to infinitely generated tilting modules on arbitrary
rings. Furthermore,  we see that it presents a tool for developing
new tilting theories in categories that can be embedded nicely in
an abelian category. In particular,  we will show how the tilting theory
in exact categories built this way,  coincides with tilting objects
in extriangulated categories introduced recently. We will review 
Bazzoni\textquoteright s
tilting characterization,  the relative homological dimensions on the induced
tilting classes  and parametrise certain cotorsion-like pairs by using $n$-$\X$-tilting classes. As an application, we show how to construct relative tilting classes and cotorsion pairs in $\Rep(Q,\C)$ (the category of representations of a quiver $Q$ in an abelian category $\C$) from tilting classes in $\C,$ where $Q$ is finite-cone-shape.
\end{abstract}

\maketitle

\setcounter{tocdepth}{1}

\tableofcontents

\section{Introduction}

In the last 40 years,  tilting theory has been generalized in many
ways and contexts with different purposes.
Its roots can be traced back to the seminal work of P. Gabriel \cite{gabriel1972},  which showed a bijection between the indecomposable modules over a finite-dimensional algebra and the positive roots of a Lie group. After that,  J. Bernstein,  I. M. Gelfand and V. A. Ponomarev deepened this study with the aim of constructing all of the indecomposable modules over a finite-dimensional algebra \cite{bernstein1973}. Some time later,  M. Auslander,  M. I.  Platzeck and I. Reiten generalised these results constructing for the first time what we now know as a tilting object in the context of finitely generated modules over Artin algebras  \cite{auslander1979coxeter}.
 It was  S. Brenner and M. Butler who axiomatized and gave name to these objects in  \cite{brenner1980generalizations}. Subsequently,  a more general definition was
offered by D. Happel and C. M. Ringel in \cite{happel1982tilted}
with the goal of achieving a better understanding of tilting objects.
Few years later,  this definition would be extended from tilting objects
of projective dimension $\leq1$ to tilting objects of finite projective
dimension by Y. Miyashita in \cite{miyashita},  but still under the context of finitely generated modules. Later on,  the tilting
theory context would be extended from finitely generated modules over
Artin algebras to infinitely generated modules over arbitrary rings, 
this is the case of the work of L. Angeleri H\"ugel and F.
U. Coelho in \cite{Tiltinginfinitamentegenerado}. 

As can be appreciated, in the literature there are  a diverse family of different tilting definitions
with different properties and objectives. This family of tilting theories
can be bluntly divided in two subfamilies:  ``big'' tilting theories
and ``small'' tilting ones. 

The small tilting theories can be described as the ones defined using
only finite coproducts. Namely,  all the classical tilting theories,  which were
developed for finitely generated modules,  are generalized by the small tilting theories. Among them,  we can
mention the Brenner-Butler,  the Happel-Ringel,  and the Miyashita theories
referred above,  but also we can find more recent research works as
the tilting functors  by R. Mart\'inez and M.
Ortiz in \cite{martinez2014tilting}.

The big tilting theories are those ones that require arbitrary coproducts
on its constructions of tilting classes. These kind of theories started coming up when,  inter
alia,  the works of Brenner-Butler,  Happel-Ringel,  I. Assem \cite{assem1984torsion} 
and S. O. Smal{\o}   \cite{smalo1984torsion} were extended to
the setting of infinitely generated modules over arbitrary rings in
the works of R. R. Colby and K. R. Fuller \cite{colby1990tilting}, 
R. Colpi,  G. D'Este  and A. Tonolo \cite{Quasitiltingcounterequivalences}, 
R. Colpi,  A. Tonolo  and Jan Trlifaj \cite{partialcotilting},  R.
Colpi and J. Trlifaj \cite{Colpi-Trlifaj},  A. Tonolo,  J. Trlifaj, 
and L. Angeleri H\"ugel \cite{Tiltingpreenvelopes},  and L. Angeleri
H\"ugel and F. U. Coelho \cite{Tiltinginfinitamentegenerado}.
Recent works on big tilting theories are focused on abelian categories
with coproducts as can be seen in the works of L. Positselski
and J. {\v{S}}t'ov{\'\i}{\v{c}}ek \cite{positselskicorrespondence}, 
P. Nicol\'as,  M. Saor\'in  and A. Zvonareva \cite{nicolas2019silting}.

This manuscript is the last of two forthcoming papers and it is devoted to develop new tools for understanding the
tilting phenomenon. Namely,  we will be interested in studying the
relation of \emph{cotorsion-like pairs} in an abelian category,  with
a new tilting notion associated to a subcategory $\mathcal{X}\subseteq\mathcal{C}$, 
called $n$-$\mathcal{X}$-tilting. This kind of relations were studied
for the first time by M. Auslander and I. Reiten in \cite{auslandereiten, auslander1992homologically}.
One of their results is \emph{the Auslander-Reiten Correspondence}
\cite[Thm. 4.4]{auslander1992homologically},  which shows a correspondence
between tilting modules over an Artin algebra and covariantly finite
subcategories. It is worth mentioning that this theorem has been taken
 to different contexts by different authors. Some of them
are M. Auslander and {\O}. Solberg \cite[Thms, 3.2 and 3.24]{auslander1993relative2}, 
S. K. Mohamed \cite[Prop. 4.2]{mohamed2009relative}, 
L. Angeleri and O. Mendoza \cite[Thm. 3.2]{Hopel-Mendoza}, 
and B. Zhu and X. Zhuang in \cite[Thm. 2]{zhu2019tilting}. 

The paper is organized as follows. The cotorsion-like pairs we previously referred to  were presented
in \cite{parte1}. They are linked with a possible  generalization of the Auslander-Reiten
theory,  developed in \cite{auslandereiten},  and the Auslander-Buchweitz
approximation theory,  developed in \cite{Auslander-Buchweitz}.  In Section 2,  we will recall the main definitions 
and results of \cite{parte1}. In particular,  we will recall  notions related to the cotorsion pairs,  relative homological dimensions,  relative resolution dimensions,  closure properties and the class $\operatorname{Fac}_n ^{\X}(\T)$.

In Section 3,  we state and develop  our $n$-$\X$-tilting theory. The goal is to present a tilting theory relative to a class of objects $\X$ together with a set of tools that provides us information on the induced homological dimensions and approximation theory. In order that our results can be used in a wide variety of contexts,  we sought to provide a definition that on one hand encompasses different prior notions and on the second hand  can be specialized to big or small tilting classes according to our needs. In order to do that,  we define $n$-$\X$-tilting classes $\T$ in an abelian category $\C,$ see Definition \ref{def: X-tilting}, and say that an object $T\in\C$ is big  (small) $n$-$\X$-tilting if $\Add(T)$  ($\add(T)$) is an $n$-$\X$-tilting class. Let us describe briefly the most relevant results.  
In Theorem  \ref{thm: el par n-X-tilting}  we give some essential properties of the $n$-$\X$-tilting classes, among them it is shown that the pair $({}^{\bot}(\T^{\bot }), \T^{\bot })$ is $\X$-complete. On the other hand, Theorem \ref{prop: primera generalizacion} is the generalization of the ``Bazzoni's tilting characterization theorem" which was originally provided for tilting modules over a ring 
\cite[Thm. 3.11]{Bazzonintilting}. We also study the relationship between different relative homological dimensions of classes related with $n$-$\X$-tilting classes, as can be seen for example in Propositions \ref{prop: oct2}, \ref{prop: oct3} and \ref{prop: M ortogonal es preenvolvente esp en X}. We also have related the big and the small tilting classes. Indeed, in  Theorem \ref{thm: n-X-tilting sii n-X-tilting peque=0000F1o} it is shown that,  for a class of compact objects $\X$,  an object $T$ is big $n$-$\X$-tilting if and only if it is small $n$-$\X$-tilting.  One of our goals is to study the properties satisfied by the pair $({}^{\bot}(\T^{\bot }), \T^{\bot })$ for a $n$-$\X$-tilting class $\T.$ In order to do that, we introduce the notion of $n$-$\X$-tilting triple and characterize them in Theorem \ref{thm: dimensiones en un par de cotorsion tilting}. There are several consequences of the preceding theorem: (1) we give a bijective correspondence between equivalence classes of $\X$-complete hereditary cotorsion pairs (satisfying certain properties) and $n$-$\X$-tilting classes belonging to $\X$ (see Corollary \ref{corr-tcchp}); and (2) we get two versions of the Auslander-Reiten Correspondence in Corollary \ref{cor:  coro1 teo nuevo} (for big tilting) and Corollary \ref{cor:  coro2 teo nuevo} (for small tilting).

In Section 4,  we will show that the notion of $n$-$\X$-tilting generalizes a big variety of previous notions of tilting which appeared in different contexts. We will also see how our results help us to find equivalences between different tilting notions. The first example of this section are the $\infty$-tilting objects and pairs  which were defined by Leonid Positselski and Jan {\v{S}}t'ov{\'\i}{\v{c}}ek  in \cite{positselski2019tilting}.   The second one is related with the Miyashita $n$-tilting modules,  which can be seen as $n$-$\modd[R]$-tilting modules. In the third example of this section,  we will develop a theory of Miyashita $n$-tilting modules of type $FP_n, $ for left $n$-coherent rings. The fourth example of this section is devoted to study the tilting phenomena in the context of small exact categories. Namely,  for an small exact category $(\A, \mathcal{E}), $ with enough $\mathcal{E}$-projectives and $\mathcal{E}$-injectives,  we introduce the small 
$n$-tilting  and the  Auslander-Solberg $n$-tilting classes in $(\A, \mathcal{E}).$ We show that both of them are equivalent to the Zhu-Zhuang tilting theory for exact categories developed in \cite{zhu2019tilting}. Moreover,  we will explore a nice embedding of $\A$ into the functor category $\Mod(\mathcal{P}^{op}), $ given by Yoneda's functor,  where $\mathcal{P}$ is the set of all the $\mathcal{E}$-projective objects in $\A.$ We also show that the $n$-$\X$-tilting theory developed in the abelian category 
$\Mod(\mathcal{P}^{op})$ is strongly related with the small $n$-tilting classes in 
$(\A, \mathcal{E}).$ The fifth example is devoted to the 
S.K. Mohamed's relative tilting theory \cite{mohamed2009relative} and the
Auslander-Solberg tilting objects \cite{auslander1993relative2}.   It
is worth mentioning that Auslander-Solberg relative tilting theory has been studied by several authors in the context of Gorenstein homological algebra.
In particular, M. Pooyan and Y. Siamak recently published a paper on infinitely generated Gorenstein tilting modules  \cite{Pooyan-Siamak21}. We believe that our work will be a complementary tool
for this research line. In the sixth example,   we will study the tilting classes of functors developed by R. Mart\'inez and M. Ortiz  \cite{martinez2011tilting, martinez2013tilting, martinez2014tilting},  and characterize them in terms of $n$-$\X$-tilting theory. Finally,  in the last example,  we will study the relationship between silting,  quasitilting and $n$-$\X$-tilting modules, see Theorems \ref{thm: quasitilt sii 1-Gen-tiltin} and \ref{thm: silting vs 1-Gen-tilting}. One of the consequences of doing this is that we found enough conditions for a quasitilting finendo module to be silting (see Remark \ref{rem: silting}). 
\

In Section 5, we consider an abelian category $\C$ and a finite-cone-shape quiver $Q.$ It is presented two main results. The first one is Theorem \ref{thm:tilting repf} which tells us how to build a tilting class in the abelian subcategory $\Rep^{f}(Q,\C)\subseteq\Rep(Q,\C)$ and also in $\Rep(Q,\C)$ from
a tilting class in $\mathcal{C}.$ The second one is Theorem \ref{Rep-tilt-pair} that tells us how to construct hereditary complete cotorsion pairs in the category of representations from tilting classes in the abelian category $\C.$   Finally, some concrete examples are given where these theorems can be applied.

\section{Preliminaries }

In this section,  we introduce all the necessary notions and results to the development of the paper. For more details,  we recommend the reader to see in \cite{parte1}.

\subsection{Notation }

Throughout the paper,  we denote by $\mathcal{C}$ an abelian category. The symbol $\mathcal{M}\subseteq\mathcal{C}$  means that $\mathcal{M}$
is a class of objects of $\mathcal{C}$. In a similar way,  the symbol
$\p\subseteq\mathcal{C}^{2}$ will mean that $\mathcal{A}$ and $\mathcal{B}$
are classes of objects of $\mathcal{C}$. On the other hand,  $C\in\mathcal{C}$
will mean that $C$ is an object of $\mathcal{C}$. We will use the Grothendieck's notation \cite{Ab} to distinguish abelian
categories with further structure as ABk and their duals ABk*, for $k=3,4,5.$

For $n\geq0$,  we will consider the $n$-th Yoneda extensions  bifunctor
$\Extx[n][][-][-]: \mathcal{C}^{op}\times\mathcal{C}\rightarrow\mbox{Ab},$
 the long exact sequence induced by a short exact sequence
 \cite[Chap. VI,  Thm. 5.1]{mitchell} and the Shifting
Lemma  \cite[Lem. 2.2]{parte1}. If $\mathcal{C}$ is
AB4,  for any family $\{A_{i}\}_{i\in I}$ of objects in $\mathcal{C}, $ we will make use (without mention it) of
the natural isomorphism  \cite[Thm. 3.12]{argudin2019yoneda}
\[
\Psi_{n}: \Extx[n][][\bigoplus_{i\in I} A_{i}][B]\rightarrow\prod_{i\in I}\Extx[n][][A_{i}][B]\;\forall B\in\mathcal{C}.
\]

Let $\mathcal{X}\subseteq\mathcal{C}.$ For any integer $i\geq 0, $ we consider the right $i$-th orthogonal complement $\X^{\perp_i}: =\{C\in\C\;|\;\Ext^i_\C(-, C)|_\X=0\}$ and the total right orthogonal complement $\X^{\perp}: =\cap_{i\geq 1}\X^{\perp_i}$ of $\X.$  Dually,  we have the $i$-th and the total left orthogonal complements ${}^{\perp_i}\X$ and ${}^{\perp}\X$ of $\X, $ respectively.
In case we have some $\mathcal{Y}\subseteq\mathcal{C}$
such that $\mathcal{Y}\subseteq\mathcal{X}^{\bot}$ ($\mathcal{Y}\subseteq{}^{\bot}\mathcal{X}$), 
we say that $\mathcal{Y}$ is \textbf{$\mathcal{X}$-injective} (\textbf{$\mathcal{X}$-projective}). 
\

For a given $\mathcal{M}\subseteq\mathcal{C},$ we have that: $\smdx[\mathcal{M}]$ is the class of all the direct summands of objects in 
$\mathcal{M};$  $\mathcal{M}^{\oplus}$
($\mathcal{M}^{\oplus_{<\infty}}$) is the class of (finite) coproducts
of objects in $\mathcal{M};$ $\addx[\mathcal{M}]: =\smdx[\mathcal{M}^{\oplus_{<\infty}}]$
and $\Addx: =\smdx[\mathcal{M}^{\oplus}]$. Furthermore,  in case $\mathcal{M}$
consists of a single object $M, $ we set $M^{\oplus}: =\mathcal{M}^{\oplus}$, 
$M^{\oplus_{<\infty}}: =\mathcal{M}^{\oplus_{<\infty}}$,  $\smdx[M]: =\smdx[\mathcal{M}]$, 
$\Addx[M]: =\Addx[\mathcal{M}]$,  $\addx[M]: =\addx[\mathcal{M}]$,  $M^{\bot}: =\mathcal{M^{\bot}}, $
and $^{\bot}M: ={}^{\bot}\mathcal{M}$.

One important feature of this work is that we do not assume the existence
of enough projectives or enough injectives in the abelian category $\C.$ Instead we will be working
with the following notions appearing in \cite{Auslander-Buchweitz}. For $(\mathcal{X}, \omega)\subseteq\mathcal{C}^{2}, $
it is said that $\omega$ is a \textbf{relative cogenerator in $\mathcal{X}$}
if $\omega\subseteq\mathcal{X}$ and any $X\in\mathcal{X}$ admits
an exact sequence $\suc[X][W][X']\mbox{, }$ with $W\in\omega$ and
$X'\in\mathcal{X}$. The notion of \textbf{relative generator} is
defined dually.

\subsection{Cotorsion pairs,  approximations  and related notions}

Following \cite[Def. 3.1]{parte1},  we recall that for 
$\p\subseteq\mathcal{C}^{2}$ and $\mathcal{X}\subseteq\mathcal{C}, $ it is said
that $\p$ is a \textbf{left (right) cotorsion pair in $\mathcal{X}$}
 if $\mathcal{A}\cap\mathcal{X}={}^{\bot_{1}}\mathcal{B}\cap\mathcal{X}$
($\mathcal{B}\cap\mathcal{X}=\mathcal{A}^{\bot_{1}}\cap\mathcal{X}$).
Moreover,  $\p$ is a \textbf{cotorsion pair in $\mathcal{X}$} if
it is a left and right cotorsion pair in $\mathcal{X}.$  In case $\X=\C, $ we say that $(\A, \B)$ is a left (right) cotorsion pair if it is a left (right) cotorsion pair in $\C.$
\

Cotorsion pairs are known for their relation with approximations. Namely,  for a given
$\mathcal{Z}\subseteq\mathcal{C}$,   a morphism $f: Z\rightarrow M$
is called  \textbf{$\mathcal{Z}$-precover} if $Z\in\mathcal{Z}$
and $\Homx[][Z'][f]: \Hom_\C(Z', Z)\to \Hom_\C(Z', M)$ is an epimorphism $\forall Z'\in\mathcal{Z}$.
In case $f$ fits in an exact sequence $\suc[M'][Z][M][\, ][\, ]$, 
where $M'\in\mathcal{Z}^{\bot_{1}}$,   $f$ is called \textbf{special
$\mathcal{Z}$-precover}. Dually,  we have the notion of {\bf $\Z$-preenvelope} and {\bf special $\Z$-preenvelope}.
\

Let $(\mathcal{X}, \Z)\subseteq\mathcal{C}^2.$ Following,  \cite[Def. 3.12]{parte1},   it is said that 
$\mathcal{Z}$ is \textbf{special precovering in} $\mathcal{X}$
if any $X\in\mathcal{X}$ admits an exact sequence 
$\suc[B][A][X]$ in $\C$ with $\ensuremath{A}\ensuremath{\in\mathcal{Z}\cap\mathcal{X}}$  and $\ensuremath{B}\ensuremath{\in\mathcal{Z}^{\bot_{1}}\cap\mathcal{X}} .$
The notion of \textbf{special preenveloping in
$\mathcal{X}$} is defined dually.

Recall that  a cotorsion pair $\p$ is left complete if $\mathcal{A}$
is special precovering in $\mathcal{C}.$ As a generalization of that,  and following  \cite[Def. 3.13]{parte1},  it is said
that a (not necessarily cotorsion) pair $\p\subseteq\mathcal{C}^{2}$
is \textbf{left $\mathcal{X}$-complete} if any $X\in\mathcal{X}$
admits an exact sequence $\suc[B][A][X]$,  with $A\in\mathcal{A}\cap\mathcal{X}$
and $B\in\mathcal{B}\cap\mathcal{X}$. The notion of {\bf right $\mathcal{X}$-complete pair}
is defined dually. Moreover,  a pair is {\bf $\mathcal{X}$-complete} if it is right and
left $\mathcal{X}$-complete. The pair $\p$ is \textbf{$\mathcal{X}$-hereditary}
if $ \Extx[k][][\mathcal{A}\cap\mathcal{X}][\mathcal{B}\cap\mathcal{X}]=0$ $ \forall k>0$  \cite[Def. 3.7]{parte1}.

\subsection[Relative dimensions]{Relative homological dimensions and relative resolution dimensions}

In \cite{parte1},  we presented a possible generalization of a part of the Auslander-Buchweitz-Reiten approximation theory \cite{Auslander-Buchweitz,  auslandereiten} that were useful for the development of $n$-$\X$-tilting theory.
The goal of such work was to study the relations between the relative
homological dimensions and the existence of a particular class of relative
(co)resolutions. In what follows, we recall some of these notions and notations introduced in \cite{parte1}, for a more detailed treatment, we recommend the reader to see in \cite{parte1}.
\

Let   $\mathcal{B}, \mathcal{A}\subseteq\mathcal{C}$, 
and $C\in\mathcal{C}$. Following  \cite{Auslander-Buchweitz},  the \textbf{$\mathcal{A}$-projective dimension} $\pdr[][C]$
of $C$ is 
$\pdr[][C]: =\min\left\{ n\in\mathbb{N}\, |\: \Extx[k][][C][\mathcal{A}]=0\, \forall k>n\right\} \mbox{, }$
where the minimum of the empty set is the symbol $\infty.$ The \textbf{$\mathcal{A}$-projective dimension} of $\mathcal{B}$
is  $\pdr[][\mathcal{B}]: =\sup\left\{ \pdr[][B]\, |\: B\in\mathcal{B}\right\}.$ Dually, the 
\textbf{$\mathcal{A}$-injective dimension} $\id_\A(C)$ of $C$ and the\textbf{ $\mathcal{A}$-injective dimension} $\id_\A(\B)$ of 
$\mathcal{B}$ are defined dually.

We recall now,  from \cite[Def. 4.1]{parte1},  the notions of relative (co)resolution classes. Indeed, let $M\in\mathcal{C}$ and $\X,\Y\subseteq\mathcal{C}.$ A \textbf{$\mathcal{Y}_{\mathcal{X}}$-coresolution} of $M$ is an exact sequence in $\C$ of the form
$0\rightarrow M\stackrel{f_{0}}{\rightarrow}Y_{0}\stackrel{f_{1}}{\rightarrow}Y_{1}\stackrel{}{\rightarrow}...\stackrel{}{\rightarrow}Y_{n-1}\stackrel{f_{n}}{\rightarrow}Y_{n}\stackrel{}{\rightarrow}\cdots\mbox{, }$ with
$Y_{k}\in\mathcal{Y}\cup\left\{ 0\right\} $ $\forall k\geq0$ and
$\im[f_{i}]\in\mathcal{X}\cup\{0\}$ $\forall i\geq1.$ The class of all the objects in $\mathcal{C}$ having a $\mathcal{Y}_{\mathcal{X}}$-coresolution is denoted by $\mathcal{Y}_{\mathcal{X}, \infty}^{\vee}.$ A \textbf{finite (of length $n$) $\mathcal{Y}_{\mathcal{X}}$-coresolution}
of $M$ is an exact sequence in $\C$ of the form
$0\rightarrow M\stackrel{f_{0}}{\rightarrow}Y_{0}\stackrel{f_{1}}{\rightarrow}Y_{1}\stackrel{}{\rightarrow}...\stackrel{}{\rightarrow}Y_{n-1}\stackrel{f_{n}}{\rightarrow}Y_{n}\stackrel{}{\rightarrow}0\mbox{, }$ with
$Y_{n}\in\mathcal{X}\cap\mathcal{Y}$,  $Y_{k}\in\mathcal{Y}$ $\forall k\in[0, n-1]$, 
and $\im[f_{i}]\in\mathcal{X}$ $\forall i\in[1, n-1].$ The class of all the objects in $\mathcal{C}$ having a finite $\mathcal{Y}_{\mathcal{X}}$-coresolution is denoted by $\mathcal{Y}^{\vee}_{\mathcal{X}}.$ Moreover,  the  class of all the objects in $\mathcal{C}$ having a 
$\mathcal{Y}_{\mathcal{X}}$-coresolution of length $\leq n$  is denoted by $\mathcal{Y}^{\vee}_{\mathcal{X}, n}.$ Notice that 
$\cup_{n\in\mathbb{N}}\mathcal{Y}^{\vee}_{\mathcal{X}, n}=\mathcal{Y}^{\vee}_{\mathcal{X}}\subseteq \mathcal{Y}_{\mathcal{X}, \infty}^{\vee}.$ The \textbf{$\mathcal{Y}_{\mathcal{X}}$-coresolution dimension} 
of $M$ is  
$\coresdimr{\mathcal{Y}}M{\mathcal{X}}: =\min\{n\in\mathbb{N}\, |\,  M\in \mathcal{Y}^{\vee}_{\mathcal{X}, n}\}.$
For  $\mathcal{Z}\subseteq\C, $ we set 
$\coresdimr{\mathcal{Y}}{\mathcal{Z}}{\mathcal{X}}: =\sup\left\{ \coresdimr{\mathcal{Y}}Z{\mathcal{X}}\, |\: Z\in\mathcal{Z}\right\}.$ We consider the classes  $\p[\mathcal{X}][\mathcal{Y}]_{\infty}^{\vee}: =\mathcal{X}\cap\mathcal{Y}_{\mathcal{X}, \infty}^{\vee}, $
$\p[\mathcal{X}][\mathcal{Y}]^{\vee}: =\mathcal{X}\cap\mathcal{Y}_{\mathcal{X}}^{\vee}$
and $\p[\mathcal{X}][\mathcal{Y}]_{n}^{\vee}: =\mathcal{X}\cap\mathcal{Y}_{\mathcal{X}, n}^{\vee}.$ Dually,  it can be defined the $\mathcal{Y}_{\mathcal{X}}$-resolution (of length $n$) of $M,$ the $\mathcal{Y}_{\mathcal{X}}$-resolution dimension
$\resdimr{\mathcal{Y}}M{\mathcal{X}}$ of $M$ and the classes  $\mathcal{Y}_{\mathcal{X}}^{\wedge}$,  $\mathcal{Y}_{\mathcal{X}, \infty}^{\wedge}$ and
$\mathcal{Y}_{\mathcal{X}, n}^{\wedge}.$ We also have the classes
$\p[\mathcal{Y}][\mathcal{X}]_{\infty}^{\wedge}: =\mathcal{Y}_{\mathcal{X}, \infty}^{\wedge}\cap\mathcal{X}, $
$\p[\mathcal{Y}][\mathcal{X}]^{\wedge}: =\mathcal{Y}_{\mathcal{X}}^{\wedge}\cap\mathcal{X}$
and $\p[\mathcal{Y}][\mathcal{X}]_{n}^{\wedge}: =\mathcal{Y}_{\X, n}^{\wedge}\cap\mathcal{X}.$ If $\mathcal{X}=\mathcal{C}, $ we omit the ``$\mathcal{X}$'' symbol in the above notations. Note that,  $M$ is isomorphic to some object in 
 $\X\cap\Y$ if,  and only if,  $\coresdimr{\mathcal{Y}}M{\mathcal{X}}=0$ (respectively,  $\resdimr{\mathcal{Y}}M{\mathcal{X}}=0$).

\subsection{Closure properties}

Let   $\mathcal{Y}\subseteq\mathcal{X}\subseteq\mathcal{C}$ and $n\geq1.$ Following \cite[Def. 2.4]{parte1},  
we recall that $\mathcal{Y}$ is \textbf{closed by $n$-quotients in
$\mathcal{X}$} if for any exact sequence $0\rightarrow A\rightarrow Y_{n}\overset{\varphi_{n}}{\rightarrow}...\overset{}{\rightarrow}Y_{1}\overset{\varphi_{1}}{\rightarrow}B\rightarrow0$ in $\C, $
with $Y_{i}\in\mathcal{Y}$,  $\Kerx[\varphi_{i}]\in\mathcal{X}$ $\forall i\in[1, n]$
and $B\in\mathcal{X}$,  we have that $B\in\mathcal{Y}$. The notion of being
\textbf{closed by $n$-subobjects in $\X$} is defined dually. These closure
properties are useful to characterize classes $\mathcal{T}\subseteq\mathcal{C}$
such that $\pdr[\mathcal{X}][\mathcal{T}]\leq n$  and $\idr[\mathcal{X}][\mathcal{T}]\leq n, $ respectively,  
see \cite[Prop. 2.6]{parte1}. 

Other closure notions that we will be using in the development of the paper are the following ones \cite[Def. 3.3]{parte1}. Let
$\mathcal{M}, \mathcal{X}\subseteq\mathcal{C}$. We say that $\mathcal{M}$
is \textbf{closed under mono-cokernels in $\mathcal{M}\cap\mathcal{X}$}
if,  for any exact sequence $\suc[M][M'][M'']$ in $\C,$ with $M, M'\in\mathcal{M}\cap\mathcal{X},$ 
 we have that $M''\in\mathcal{M}.$ Dually,  it can be defined the notion of being 
\textbf{closed under epi-kernels in $\mathcal{M}\cap\mathcal{X}$}. In case $\mathcal{M}\subseteq\mathcal{X}$, 
we will simply say that $\mathcal{M}$ is closed under mono-cokernels and  epi-kernels,  respectively. Furthermore, 
$\mathcal{M}$ is \textbf{$\mathcal{X}$-resolving} if $\mathcal{M}$
contains an $\mathcal{X}$-projective relative generator in $\mathcal{X}$, 
it is closed under epi-kernels in $\mathcal{M}\cap\mathcal{X}$ and
under extensions; and the notion of being \textbf{$\mathcal{X}$-coresolving} is defined dually.
These notions are very useful to identify $\mathcal{X}$-hereditary pairs, see  \cite[Lems. 3.4 and 3.6]{parte1}. 

Following \cite[Def. 2.2]{ABsurvey}, a class $\mathcal{X}\subseteq\mathcal{C}$    
is \textbf{right thick} (\textbf{left thick}) if it is closed under extensions,  direct summands
and mono-cokernels (epi-kernels); and $\mathcal{X}$ is \textbf{thick} if it is left and right thick.

\subsection{The class of relative $n$-quotients}
Let  $\T, \X\subseteq\C.$ Following \cite[Sect. 5]{parte1},  we recall the notion of the relative 
$n$-$(\X, \T)$-quotients in $\C, $ and the different variants related with small and big classes.
\

For any integer $n\geq1, $ $\Gennr$ denotes  the class of the objects $C\in\mathcal{C}$
admitting an exact sequence $0\rightarrow K\rightarrow T_{n}\stackrel{f_{n}}{\rightarrow}...\stackrel{f_{2}}{\rightarrow}T_{1}\stackrel{f_{1}}{\rightarrow}C\rightarrow0$ in $\C, $ with $\Kerx[f_{i}]\in\mathcal{X}$ and $T_{i}\in\mathcal{T}\cap\mathcal{X}$
$\forall i\in[1, n].$ We also define $\operatorname{Gen}_{n}^{\mathcal{X}}(\mathcal{T}): =\Gennr[\mathcal{T}^{\oplus}][n][\mathcal{X}]$
and $\operatorname{gen}_{n}^{\mathcal{X}}(\mathcal{T}): =\Gennr[\mathcal{T}^{\oplus_{<\infty}}][n][\mathcal{X}]$.
For an object $T\in\mathcal{C}$,  we define $\operatorname{Gen}_{n}^{\mathcal{X}}(T): =\operatorname{Gen}_{n}^{\mathcal{X}}(\operatorname{Add}(T))$
and $\operatorname{gen}_{n}^{\mathcal{X}}(T): =\operatorname{gen}_{n}^{\mathcal{X}}(\operatorname{add}(T))$.
In case of $\mathcal{X}=\mathcal{C}$,  we set $\Genn[\mathcal{T}]: =\Gennr[\mathcal{T}][][\mathcal{C}]$, 
$\operatorname{Gen}_{n}(\mathcal{T}): =\operatorname{Gen}_{n}^{\mathcal{C}}(\mathcal{T})$
and $\operatorname{gen}_{n}(\mathcal{T}): =\operatorname{gen}_{n}^{\mathcal{C}}(\mathcal{T}).$ Some closure properties that such classes have can be found in \cite[Prop. 5.2]{parte1}.

\section{$n$-$\X$-tilting classes}

In this section,  we introduce the notion of $n$-$\X$-tilting class in an abelian category $\C$ and develop a relative tilting theory on 
$\X\subseteq\C.$  
Without further ado,  let us define our main object of study.

\begin{defn}\label{def: X-tilting}\label{def:  tilting peque=0000F1o-1} Let   $\mathcal{X}\subseteq\mathcal{C}$ and $n\in\mathbb{N}$.
A class $\mathcal{T}\subseteq\mathcal{C}$ is  \textbf{$n$-$\mathcal{X}$-tilting}
if the following conditions hold true.
\begin{description}
\item [(T0)] $\mathcal{T}=\smdx[\mathcal{T}].$
\item [(T1)] $\pdr[\mathcal{X}][\mathcal{T}]\leq n.$
\item [{(T2)}] $\mathcal{T}\cap\mathcal{X}\subseteq\mathcal{T}^{\bot}.$
\item [{(T3)}] There is a class $\omega\subseteq\mathcal{T}_{\mathcal{X}}^{\vee}$
which is a relative generator in $\mathcal{X}.$
\item [{(T4)}] There is a class $\alpha\subseteq\mathcal{X}^{\bot}\cap\mathcal{T}^{\bot}$
which is a relative cogenerator in $\mathcal{X}.$
\item [{(T5)}] Every $Z\in\mathcal{T}^{\bot}\cap\mathcal{X}$ admits a
$\mathcal{T}$-precover $T'\rightarrow Z$,  with $T'\in\mathcal{X}$.
\end{description}
An $n$-$\mathcal{X}$-tilting class $\mathcal{T}\subseteq\mathcal{C}$
is \textbf{big} (\textbf{small}) if $\mathcal{T}=\mathcal{T}^{\oplus}$
($\mathcal{T}=\mathcal{T}^{\oplus_{<\infty}}$). An object $T\in\mathcal{C}$
is\textbf{ big} \textbf{(small}) \textbf{$n$-$\mathcal{X}$-tilting}
if $\Addx[T]$ ($\addx[T]$) is $n$-$\mathcal{X}$-tilting. 
\end{defn}

Notice that the condition (T4) requires the existence of an $\X$-injective relative cogenerator in $\X$.
 It is a well-known fact that this property is satisfied, for example, by the class of 
 finitely generated modules over an Artin $k$-algebra. A non trivial situation where there also exist such relative cogenerator is in the category $\operatorname{Rep}(Q,\mathcal{C})$ of representations in an abelian category $\C$ of an arbitrary quiver $Q.$ Indeed, in \cite[Cor. 5.18]{AM23}, we show that, if $Q$ has a finite number of paths starting or ending at each vertex of $Q$ and $\C$ has enough injectives, then the class 
 $\X:=\operatorname{Rep}^{f}(Q,\mathcal{C})$ (of all the representations having finite support) admits an $\X$-injective relative cogenerator in $\X$. On the other hand, the condition (T5) is very helpful to prove nice properties of the pair $({}^\perp(\T^\perp), \T^\perp).$ For example, by using (T5), we can show that such a pair is $\X$-complete and that $\T\cap\X$ is a relative generator in $\T^\perp\cap\X.$

In Section 4,  we will show that the above definition generalizes a big variety of previous notions of tilting. In Section 5, more concrete examples are given in the context of representations of quivers in abelian categories.
For now,  the most nearby example is the tilting object in abelian
categories developed by Leonid Positselski and Jan  {\v{S}}t'ov{\'\i}{\v{c}}ek
\cite{positselskicorrespondence},  that we call PS $n$-tilting.

\begin{defn}\label{def: PS-tilting}\cite[Sect. 2,  Thm. 3.4(3)]{positselskicorrespondence} Let $\mathcal{C}$
be  AB3 and AB3{*}  with an injective cogenerator. An object
$T\in\mathcal{C}$ is \textbf{PS $n$-tilting} if the following conditions hold true.
\begin{description}
\item [{(PST1)}] $\pdx[T]\leq n.$
\item [{(PST2)}] $\Addx[T]\subseteq T^{\bot}.$
\item [{(PST3)}] There is a generating class $\mathcal{G}$ in $\mathcal{C}$
such that $\mathcal{G}\subseteq\left(\Addx[T]\right)^{\vee}$.
\end{description}
\end{defn}

\begin{rk} \label{rk: PS-tilting} Notice that $T\in\mathcal{C}$ is PS $n$-tilting
 if,  and only if,  $T$ is big $n$-$\mathcal{C}$-tilting.
Indeed,  it can be seen,  by taking $\mathcal{X}=\mathcal{C}$ and $\T=\Add(T)$ in Definition \ref{def: X-tilting},  that the conditions (T4) and (T5) are satisfied trivially,  and that
the conditions (PST1),   (PST2),   and (PST3) coincide with (T1),  (T2),  and (T3),  respectively.
\end{rk}

\subsection{Elementary properties of relative tilting classes}

\begin{lem}\label{lem: chico}\label{lem: inf1}\label{lem: inf1-1}\label{lem: chico-1}
For  $\mathcal{X}\subseteq\mathcal{C}, $
 the following statements hold true.
\begin{itemize}
\item[$\mathrm{(a)}$] If $\mathcal{T}\subseteq\mathcal{C}$ satisfies $\mathrm{(T2)}, $ then $\mathcal{T}\cap\mathcal{X}\subseteq\mathcal{T}^{\bot}\cap{}{}^{\bot}\left(\mathcal{T}^{\bot}\right).$ 
\item[$\mathrm{(b)}$]  If $\mathcal{T}\subseteq\mathcal{C}$ satisfies $\mathrm{(T1)}$ and $\mathrm{(T4), }$ then
$\mathcal{X}\subseteq(\mathcal{T}^{\bot}\cap\mathcal{X}){}_{\mathcal{X}, n}^{\vee}$.
\end{itemize}
\end{lem}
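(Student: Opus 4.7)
The plan is to prove (a) by a direct application of the definition of orthogonal complements, and (b) by building the desired coresolution via $n$ iterations of the relative cogenerator property from (T4), then using (T1) and dimension shifting to show the last term lies in $\mathcal{T}^\bot$.

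For (a), the key observation is that $\mathcal{T}\subseteq {}^{\bot}(\mathcal{T}^{\bot})$ holds for any class $\mathcal{T}$, simply because $\Extx[i][\mathcal{C}][T][Y]=0$ for all $T\in\mathcal{T}$, $Y\in\mathcal{T}^{\bot}$ and $i\geq 1$ by the very definition of $\mathcal{T}^{\bot}$. Intersecting with $\mathcal{X}$ and combining with (T2), which states $\mathcal{T}\cap\mathcal{X}\subseteq\mathcal{T}^{\bot}$, gives $\mathcal{T}\cap\mathcal{X}\subseteq\mathcal{T}^{\bot}\cap {}^{\bot}(\mathcal{T}^{\bot})$. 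This part is essentially immediate.

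For (b), I will fix $X\in\mathcal{X}$ and use (T4) to pick an $\mathcal{X}$-injective relative cogenerator $\alpha\subseteq\mathcal{X}^{\bot}\cap\mathcal{T}^{\bot}$ in $\mathcal{X}$. Iteratively apply the relative cogenerator property $n$ times: starting from $X_0:=X$, produce short exact sequences $0\to X_i\to A_i\to X_{i+1}\to 0$ with $A_i\in\alpha$ and $X_{i+1}\in\mathcal{X}$, for $i=0,\dots,n-1$. Splicing yields an exact sequence
\[
0\to X\to A_0\to A_1\to\cdots\to A_{n-1}\to X_n\to 0
\]
in which each $A_i\in\alpha\subseteq\mathcal{X}$ (since $\alpha$ is a relative cogenerator in $\mathcal{X}$, so $\alpha\subseteq\mathcal{X}$), each $A_i\in\mathcal{T}^{\bot}$, and every intermediate image equals some $X_{i+1}\in\mathcal{X}$. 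Thus all the $A_i$ already lie in $\mathcal{T}^{\bot}\cap\mathcal{X}$, and the ``relative'' kernel condition for a finite $(\mathcal{T}^{\bot}\cap\mathcal{X})_{\mathcal{X}}$-coresolution is verified.

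It remains to check that the final term $X_n$ lies in $\mathcal{T}^{\bot}\cap\mathcal{X}$. By construction $X_n\in\mathcal{X}$, so the task is to show $X_n\in\mathcal{T}^{\bot}$. Since each intermediate $A_i\in\mathcal{T}^{\bot}$, the shifting lemma applied $n$ times along the spliced sequence gives, for every $T\in\mathcal{T}$ and every $k\geq 1$, an isomorphism
\[
\Extx[k][\mathcal{C}][T][X_n]\;\cong\;\Extx[k+n][\mathcal{C}][T][X].
\]
Now (T1) states $\pdr[\mathcal{X}][\mathcal{T}]\leq n$, hence $\Extx[k+n][\mathcal{C}][T][X]=0$ for every $k\geq 1$ because $X\in\mathcal{X}$ and $k+n>n$. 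Therefore $X_n\in\mathcal{T}^{\bot}$ and the displayed sequence is a finite $(\mathcal{T}^{\bot}\cap\mathcal{X})_{\mathcal{X}}$-coresolution of $X$ of length $\leq n$, so $X\in(\mathcal{T}^{\bot}\cap\mathcal{X})^{\vee}_{\mathcal{X},n}$, as required. The only real care needed is the bookkeeping of the iterated shifting argument; beyond that the proof is routine.
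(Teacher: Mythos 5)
Your proof of (a) is the same argument the paper gives: intersect (T2) with the universal inclusion $\mathcal{T}\subseteq{}^{\bot}(\mathcal{T}^{\bot})$. For (b), the paper simply cites \cite[Prop. 4.5(a)]{parte1} as a black box; you instead give a self-contained construction, iterating the relative cogenerator $\alpha$ from (T4) to splice a length-$n$ exact sequence $0\to X\to A_0\to\cdots\to A_{n-1}\to X_n\to 0$ with $A_i\in\alpha\subseteq\mathcal{T}^{\bot}\cap\mathcal{X}$ and images in $\mathcal{X}$, and then using the Shifting Lemma (valid because the $A_i$ are $\mathcal{T}$-acyclic) together with (T1) to land $X_n$ in $\mathcal{T}^{\bot}$. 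This is exactly the content one would expect the cited proposition to encode, and your argument is correct, including the degenerate case $n=0$ where the sequence collapses to $X\cong X_0$ and (T1) directly forces $X\in\mathcal{T}^{\bot}$. The trade-off is clear: the paper's citation keeps the proof short but opaque; your version is longer but readable without external references and makes explicit where each of (T1) and (T4) enters.
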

\begin{proof} (a)  By (T2),  $\mathcal{T}\cap\mathcal{X}\subseteq \mathcal{T}^{\bot}.$
Moreover,  $\mathcal{M}\subseteq{}{}^{\bot}\left(\mathcal{M}^{\bot}\right), $  for any class $\mathcal{M}\subseteq\mathcal{C}.$ Therefore 
$\mathcal{T}\cap\mathcal{X}\subseteq\mathcal{T}^{\bot}\cap{}{}^{\bot}\left(\mathcal{T}^{\bot}\right)\mbox{.}$ 
\

(b) It follows from \cite[Prop. 4.5(a)]{parte1}.
\end{proof}

\begin{lem}\label{lem: inf2}\label{lem: inf2-1} For  $\mathcal{X}, \mathcal{T}\subseteq\mathcal{C}, $ the following statements hold true.
\begin{itemize}
\item[$\mathrm{(a)}$]  $\mathcal{T}_{\mathcal{X}}^{\vee}\cap\mathcal{X}\subseteq\mathcal{T}{}^{\vee}\subseteq{}{}^{\bot}\left(\mathcal{T}^{\bot}\right)\subseteq{}{}^{\bot}\left(\mathcal{T}^{\bot}\cap\mathcal{X}\right).$
\item[$\mathrm{(b)}$]  if $\mathcal{X}=\smdx[\mathcal{X}]$ and $\mathcal{T}$ satisfies $\mathrm{(T0)}$
and $\mathrm{(T2)}, $ then $(\mathcal{T}\cap\mathcal{X})_{\mathcal{X}}^{\vee}\cap\mathcal{T}^{\bot}=\mathcal{T}\cap\mathcal{X}.$ 
\end{itemize}
\end{lem}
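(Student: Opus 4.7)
The plan is to handle each inclusion in (a) in turn and then prove (b) by induction on the length of the $(\mathcal{T}\cap\mathcal{X})_{\mathcal{X}}$-coresolution, in both cases leaning on the Shifting Lemma recalled in the preliminaries.

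For the first inclusion in (a), I would simply observe that any finite $\mathcal{T}_{\mathcal{X}}$-coresolution $0\to M\to T_0\to\cdots\to T_n\to 0$ has every $T_i\in\mathcal{T}$, and hence is \emph{a fortiori} a finite $\mathcal{T}_{\mathcal{C}}$-coresolution. Thus $\mathcal{T}_{\mathcal{X}}^{\vee}\subseteq\mathcal{T}^{\vee}$, and intersecting with $\mathcal{X}$ only makes the left side smaller. For the second inclusion $\mathcal{T}^{\vee}\subseteq{}^{\bot}(\mathcal{T}^{\bot})$, I would fix $M\in\mathcal{T}^\vee$ with a finite $\mathcal{T}$-coresolution of length $n$ and proceed by induction on $n$. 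The case $n=0$ gives $M\in\mathcal{T}$, which lies in ${}^{\bot}(\mathcal{T}^{\bot})$. For $n\geq 1$, split off the short exact sequence $0\to M\to T_0\to K\to 0$, where $K$ has a $\mathcal{T}$-coresolution of length $n-1$. Given $N\in\mathcal{T}^{\bot}$, the long exact sequence for $\Ext(-,N)$ combined with $\Ext^{i}_{\mathcal{C}}(T_0,N)=0$ for $i\geq 1$ yields $\Ext^{i}_{\mathcal{C}}(M,N)\cong\Ext^{i-1}_{\mathcal{C}}(K,N)$ for $i\geq 2$, and the tail of the sequence handles $i=1$; the inductive hypothesis applied to $K$ then closes the argument. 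The third inclusion is immediate from monotonicity of left-orthogonal: $\mathcal{T}^{\bot}\cap\mathcal{X}\subseteq\mathcal{T}^{\bot}$ yields ${}^{\bot}(\mathcal{T}^{\bot})\subseteq{}^{\bot}(\mathcal{T}^{\bot}\cap\mathcal{X})$.

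For (b), the inclusion $\mathcal{T}\cap\mathcal{X}\subseteq(\mathcal{T}\cap\mathcal{X})_{\mathcal{X}}^{\vee}\cap\mathcal{T}^{\bot}$ is immediate, using the trivial coresolution of length $0$ together with (T2). For the reverse inclusion, I pick $M\in(\mathcal{T}\cap\mathcal{X})_{\mathcal{X}}^{\vee}\cap\mathcal{T}^{\bot}$ with a coresolution
\[
0\to M\to T_0\to T_1\to\cdots\to T_n\to 0,
\]
having $T_i\in\mathcal{T}\cap\mathcal{X}$ and intermediate images in $\mathcal{X}$, and induct on $n$. The case $n=0$ gives $M\cong T_0\in\mathcal{T}\cap\mathcal{X}$ directly. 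For $n\geq 1$, break the sequence into $0\to M\to T_0\to K\to 0$ with $K\in\mathcal{X}$ admitting a shorter $(\mathcal{T}\cap\mathcal{X})_{\mathcal{X}}$-coresolution; the key intermediate step is to verify that $K\in\mathcal{T}^{\bot}$, which follows by applying $\Ext_{\mathcal{C}}(T,-)$ for $T\in\mathcal{T}$ to this short exact sequence and using both $M\in\mathcal{T}^{\bot}$ and $T_0\in\mathcal{T}\cap\mathcal{X}\subseteq\mathcal{T}^{\bot}$ (by (T2)) to conclude $\Ext_{\mathcal{C}}^{i}(T,K)=0$ for all $i\geq 1$. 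The inductive hypothesis then gives $K\in\mathcal{T}\cap\mathcal{X}$, and in particular $K\in\mathcal{T}$, so $\Ext_{\mathcal{C}}^{1}(K,M)=0$ since $M\in\mathcal{T}^{\bot}$. Hence the sequence $0\to M\to T_0\to K\to 0$ splits, $M$ is a direct summand of $T_0\in\mathcal{T}\cap\mathcal{X}$, and the hypotheses $\mathcal{T}=\smd(\mathcal{T})$ (from (T0)) and $\mathcal{X}=\smd(\mathcal{X})$ force $M\in\mathcal{T}\cap\mathcal{X}$.

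The only mildly delicate point is the dimension-shift computation showing $K\in\mathcal{T}^{\bot}$ in the inductive step of (b); once this is in hand the splitting argument, which is where the closure under summands of both $\mathcal{T}$ and $\mathcal{X}$ becomes essential, finishes the proof.
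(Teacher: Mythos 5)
Your proposal is correct, but both halves take a somewhat different route than the paper. For the middle inclusion $\mathcal{T}^{\vee}\subseteq{}^{\bot}(\mathcal{T}^{\bot})$ in (a), you give a self-contained induction with dimension shifting, whereas the paper simply cites \cite[Lem. 4.3]{parte1}; your version is more elementary and illuminating for a reader who does not have Part I at hand, at the cost of a few extra lines. For (b), the more substantive divergence: you run an induction on coresolution length, first showing $K\in\mathcal{T}^{\bot}$ via the long exact sequence and then invoking the inductive hypothesis to get $K\in\mathcal{T}\cap\mathcal{X}$, finally splitting $0\to M\to T_0\to K\to 0$ because $K\in\mathcal{T}$ and $M\in\mathcal{T}^{\bot}$. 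The paper avoids the induction entirely: it keeps only the first short exact sequence $0\to A\to T_0\to A'\to 0$, observes that $A'\in(\mathcal{T}\cap\mathcal{X})_{\mathcal{X}}^{\vee}\subseteq\mathcal{T}^{\vee}\subseteq{}^{\bot}(\mathcal{T}^{\bot})$ directly by part (a), and concludes at once that $\Ext^1(A',A)=0$ (since $A\in\mathcal{T}^{\bot}$) splits the sequence. The paper's argument is shorter because it leverages (a) to dispose of the cokernel in one stroke rather than peeling off the coresolution term by term; your argument, while longer, has the pedagogical virtue of making transparent exactly where (T2) and the closure under summands are used, and also sidesteps any worry about whether $A'$ lies in the right class to invoke (a).
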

\begin{proof} (a) The inclusion $\mathcal{T}{}^{\vee}\subseteq{}{}^{\bot}\left(\mathcal{T}^{\bot}\right)$ follows from \cite[Lem. 4.3]{parte1}.
\

(b) Let $A\in(\mathcal{T}\cap\mathcal{X})_{\mathcal{X}}^{\vee}\cap \mathcal{T}^{\bot}$.
Hence,  there is an exact sequence 
\[
\eta: \: \suc[A][T_{0}][A']\mbox{ with }T_{0}\in\mathcal{T}\cap\mathcal{X}\mbox{ and }A'\in\left(\mathcal{T}\cap\mathcal{X}\right)_{\mathcal{X}}^{\vee}, 
\]
where $A'\in{}^{\bot}\left(\mathcal{T}^{\bot}\right)$ by (a).
Note that $\eta$ splits since $A\in\mathcal{T}^{\bot}, $ and thus,  $A\in\mathcal{T}\cap\mathcal{X}.$
\

Since $\T\cap\X\subseteq (\mathcal{T}\cap\mathcal{X})_{\mathcal{X}}^{\vee}, $ we get from  (T2) that 
$\T\cap\X\subseteq (\mathcal{T}\cap\mathcal{X})_{\mathcal{X}}^{\vee}\cap\mathcal{T}^{\bot}.$
\end{proof}

\begin{lem} \label{lem: inf3}\label{lem: inf3-1} Let    $\mathcal{X}=\smdx[\mathcal{X}]\subseteq\mathcal{C}$ and 
 $\mathcal{T}\subseteq\mathcal{C}$ satisfying  $\mathrm{(T0)}, $ $\mathrm{(T1)}, $  $\mathrm{(T2)}$  and
$\mathrm{(T4)}.$ Then  
\begin{itemize}
\item[$\mathrm{(a)}$] $\coresdimr{\mathcal{T}\cap\mathcal{X}}{(\mathcal{T}\cap\mathcal{X})_{\mathcal{X}}^{\vee}\cap\mathcal{X}}{\mathcal{X}}\leq\pdr[\mathcal{X}][\mathcal{T}]$;
\item[$\mathrm{(b)}$] $(\mathcal{T}\cap\mathcal{X})_{\mathcal{X}}^{\vee}=(\mathcal{T}\cap\mathcal{X})_{\mathcal{X}, k}^{\vee}\;\forall k>\pdr[\mathcal{X}][\mathcal{T}].$
\end{itemize}
\end{lem}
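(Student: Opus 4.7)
The plan is to prove (a) by a minimality-plus-dimension-shift argument and then deduce (b) by a splicing argument. Throughout, set $d:=\pdr[\mathcal{X}][\mathcal{T}]$.

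For (a), I will pick $M\in(\mathcal{T}\cap\mathcal{X})_{\mathcal{X}}^{\vee}\cap\mathcal{X}$ together with a $(\mathcal{T}\cap\mathcal{X})_{\mathcal{X}}$-coresolution $0\to M\to T_{0}\to\cdots\to T_{m}\to 0$ of minimal length $m$, and suppose for contradiction that $m>d$. Writing $J_{0}:=M$ and $J_{i}:=\mathrm{Im}(T_{i-1}\to T_{i})$ for $1\leq i\leq m$ (so $J_{m}=T_{m}$ and $K_{m-1}:=\ker(T_{m-1}\to T_{m})=J_{m-1}$), I split the coresolution into short exact sequences $0\to J_{i}\to T_{i}\to J_{i+1}\to 0$. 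Each $T_{i}\in\mathcal{T}\cap\mathcal{X}\subseteq\mathcal{T}^{\bot}$ by (T2), so the long exact sequence produces the dimension shift $\Ext^{k}(T,J_{i+1})\cong\Ext^{k+1}(T,J_{i})$ for every $T\in\mathcal{T}$ and $k\geq 1$. Iterating from $J_{m-1}$ down to $J_{0}=M$ (the hypothesis $k\geq 1$ is preserved throughout since the index only increases) yields
\[\Ext^{k}(T,K_{m-1})\cong\Ext^{k+m-1}(T,M)\qquad(T\in\mathcal{T},\ k\geq 1).\]
Since $M\in\mathcal{X}$ and $\pdr[\mathcal{X}][T]\leq d$, the right-hand side vanishes whenever $k+m-1>d$; this is automatic from $m>d$ and $k\geq 1$. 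Hence $K_{m-1}\in\mathcal{T}^{\bot}$, and because $T_{m}\in\mathcal{T}$ the tail $0\to K_{m-1}\to T_{m-1}\to T_{m}\to 0$ splits. So $K_{m-1}$ is a direct summand of $T_{m-1}\in\mathcal{T}\cap\mathcal{X}$, and combining (T0) with the standing hypothesis $\mathcal{X}=\smdx[\mathcal{X}]$ yields $K_{m-1}\in\mathcal{T}\cap\mathcal{X}$. Substituting this summand produces a strictly shorter coresolution $0\to M\to T_{0}\to\cdots\to T_{m-2}\to K_{m-1}\to 0$ of length $m-1$, contradicting minimality. Hence $m\leq d$, proving (a).

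For (b), I take $M\in(\mathcal{T}\cap\mathcal{X})_{\mathcal{X}}^{\vee}$ with any finite coresolution $0\to M\to T_{0}\to\cdots\to T_{m}\to 0$. The case $m=0$ is trivial. Otherwise $J_{1}:=\mathrm{Im}(T_{0}\to T_{1})\in\mathcal{X}$ (as an intermediate image when $m\geq 2$, or as $T_{1}\in\mathcal{T}\cap\mathcal{X}$ when $m=1$), and the tail $0\to J_{1}\to T_{1}\to\cdots\to T_{m}\to 0$ exhibits $J_{1}\in(\mathcal{T}\cap\mathcal{X})_{\mathcal{X}}^{\vee}\cap\mathcal{X}$. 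Applying (a), I replace the tail by a coresolution $0\to J_{1}\to T'_{0}\to\cdots\to T'_{d'}\to 0$ of length $d'\leq d$, and splice through $0\to M\to T_{0}\to J_{1}\to 0$ to produce a $(\mathcal{T}\cap\mathcal{X})_{\mathcal{X}}$-coresolution of $M$ of length $d'+1\leq d+1\leq k$, as required.

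The main technical obstacle will be the dimension-shift chain in (a); its validity depends crucially on (T2), which pushes every middle term $T_{i}$ into $\mathcal{T}^{\bot}$ and thereby kills the relevant neighbouring terms of the long exact sequence. A subtler point is the final step converting the Ext-vanishing of $K_{m-1}$ into honest membership in $\mathcal{T}\cap\mathcal{X}$: this uses both (T0) and the closure assumption $\mathcal{X}=\smdx[\mathcal{X}]$, and the argument would break if either of these hypotheses were weakened.
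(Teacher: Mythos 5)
Your proof is correct and reaches the same conclusion, but by a noticeably cleaner route. Both arguments ultimately rest on the same key fact --- a cosyzygy sufficiently far out in a $(\mathcal{T}\cap\mathcal{X})_{\mathcal{X}}$-coresolution falls into $\mathcal{T}^{\bot}$ and therefore splits off from the preceding $\mathcal{T}\cap\mathcal{X}$-term --- yet the mechanics differ. The paper argues (a) directly: it fixes a coresolution of length $m=\max\{1,\pd_{\mathcal{X}}(\mathcal{T})\}$, places the last term $Y_{m}$ in $\mathcal{T}^{\bot}\cap\mathcal{X}$ by appealing to the closure-by-$m$-quotients criterion of the companion paper (the step where hypothesis (T4) is spent), and then invokes Lemma~\ref{lem: inf2}(b) to land in $\mathcal{T}\cap\mathcal{X}$; because $m=1$ when $\pd_{\mathcal{X}}(\mathcal{T})=0$, a separate splitting argument is then needed to lower the bound from $1$ to $0$. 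You instead set up a minimal-length contradiction and obtain $K_{m-1}\in\mathcal{T}^{\bot}$ from a self-contained Ext dimension-shift that uses only (T1) and (T2). This avoids (T4) entirely --- so your argument actually establishes the lemma under a strictly smaller set of axioms --- and it handles $\pd_{\mathcal{X}}(\mathcal{T})=0$ uniformly without a case split; your inline splitting step simply re-derives the needed instance of Lemma~\ref{lem: inf2}(b). Part (b) is the same peel-off-and-splice argument in both treatments.
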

\begin{proof}
We consider $\W: =(\mathcal{T}\cap\mathcal{X})_{\mathcal{X}}^{\vee}\cap\mathcal{X}$
and $m: =\max\{1, \pdr[\mathcal{X}][\mathcal{T}]\}.$
\

(a) Let $X\in\W$. Then,  there is an exact sequence 
\[
0\rightarrow X\stackrel{f_{0}}{\rightarrow}Y_{0}\stackrel{f_{1}}{\rightarrow}Y_{1}\stackrel{}{\rightarrow}...\stackrel{}{\rightarrow}Y_{m-1}\stackrel{f_{m}}{\rightarrow}Y_{m}\stackrel{}{\rightarrow}0\mbox{, }
\]
with $Y_{m}\in\W$,  $Y_0, Y_{i}\in\mathcal{T}\cap\mathcal{X}$ and
$\im[f_{i}]\in\mathcal{X}$ $\forall i\in[1, m-1]$. Moreover,  by (T1),  (T4), 
and \cite[Prop. 2.6]{parte1}, 
it follows that $Y_{m}\in\mathcal{T}^{\bot}\cap\mathcal{X}$. Then, 
by Lemma \ref{lem: inf2}(b),  $Y_{m}\in\W\cap\mathcal{T}^{\bot}=\mathcal{T}\cap\mathcal{X}$
 and thus $\coresdimr{\mathcal{T}\cap\mathcal{X}}{\W}{\mathcal{X}}\leq m$.\\
Assume now that $\pdr[\mathcal{X}][\mathcal{T}]=0.$ Then
$\mathcal{T}\subseteq{}^{\bot}\mathcal{X}$  and,  for any
$W\in\W, $ there is an exact sequence $\eta_{W}: \: \suc[W][T_{W}][C_{W}][\, ][\, ]$
with $T_{W}, C_{W}\in\mathcal{T}\cap\mathcal{X}$. Now,  since $\mathcal{T}\subseteq{}^{\bot}\mathcal{X}$
and $\W\subseteq\mathcal{X}$,  $\Extx[1][][\mathcal{T}\cap\mathcal{X}][\W]=0$.
Hence $\eta_{W}$ splits $\forall W\in\W$. In particular,  $\W\subseteq\mathcal{T}\cap\mathcal{X}$
and thus $\coresdimr{\mathcal{T}\cap\mathcal{X}}{\W}{\mathcal{X}}=0;$ proving (a).
\
 
(b) Let $M\in(\mathcal{T}\cap\mathcal{X})_{\mathcal{X}}^{\vee}$. Then,  there 
is an exact sequence 
\[
\suc[M][T_{0}][M']\mbox{ with }T_{0}\in\mathcal{T}\cap\mathcal{X}\mbox{ and }M'\in\W.
\]
 It follows from (a) that $\coresdimr{\mathcal{T}\cap\mathcal{X}}{M'}{\mathcal{X}}\leq\pdr[\mathcal{X}][\mathcal{T}]=: n$.
Hence $M'\in(\mathcal{T}\cap\mathcal{X})_{\mathcal{X}, n}^{\vee}$
and thus $M\in(\mathcal{T}\cap\mathcal{X})_{\mathcal{X}, n+1}^{\vee}$.
Therefore,  for any $k>n, $  
$(\mathcal{T}\cap\mathcal{X})_{\mathcal{X}}^{\vee}=(\mathcal{T}\cap\mathcal{X})_{\mathcal{X}, n+1}^{\vee}\subseteq(\mathcal{T}\cap\mathcal{X})_{\mathcal{X}, k}^{\vee}\subseteq(\mathcal{T}\cap\mathcal{X})_{\mathcal{X}}^{\vee};$ proving (b).
\end{proof}

\begin{cor}\label{cor: coronuevo pag 55} Let 
$\mathcal{X}=\smdx[\mathcal{X}]\subseteq\mathcal{C}$ be closed under extensions, 
$\mathcal{T}\subseteq\mathcal{C}$ be $n$-$\mathcal{X}$-tilting,  and
$\omega=\smdx[\omega]$ be an $\mathcal{X}$-projective relative generator
in $\mathcal{X}$ such that $\omega\subseteq\mathcal{T}_{\mathcal{X}}^{\vee}$.
Then,  $\omega=\mathcal{X}\cap{}^{\bot}\mathcal{X}$ and $\coresdimr{\mathcal{T}\cap\mathcal{X}}{\omega}{\mathcal{X}}\leq\pdr[\mathcal{X}][\mathcal{T}]$.
Furthermore,  $\omega=\mathcal{T}\cap\mathcal{X}$ if $\pdr[\mathcal{X}][\mathcal{T}]=0$. 
\end{cor}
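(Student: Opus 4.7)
The plan is to prove the three claims in sequence: first the equality $\omega=\X\cap{}^{\bot}\X$, then use this together with Lemma \ref{lem: inf3}(a) to obtain the coresolution dimension bound, and finally deduce the special case $\omega=\T\cap\X$ as an immediate consequence.

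For the first equality, the inclusion $\omega\subseteq\X\cap{}^{\bot}\X$ is immediate: by hypothesis $\omega\subseteq\X$, and being an $\X$-projective class means $\omega\subseteq{}^{\bot}\X$ by the notation convention recalled in the preliminaries. For the reverse inclusion, take $Y\in\X\cap{}^{\bot}\X$. Since $\omega$ is a relative generator in $\X$, there exists an exact sequence $\suc[X'][W][Y]$ with $W\in\omega$ and $X'\in\X$. Because $Y\in{}^{\bot}\X$ and $X'\in\X$, the group $\Ext^{1}_{\C}(Y,X')$ vanishes, so the sequence splits. Then $Y\in\smdx[\omega]=\omega$.

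For the coresolution dimension bound, the strategy is to apply Lemma \ref{lem: inf3}(a), which requires establishing $\omega\subseteq(\T\cap\X)_{\X}^{\vee}\cap\X$. The part in $\X$ is by hypothesis. For the other containment, take $W\in\omega\subseteq\T_{\X}^{\vee}$, so there is a finite $\T_{\X}$-coresolution
\[
0\to W\xrightarrow{f_{0}}T_{0}\xrightarrow{f_{1}}T_{1}\to\cdots\to T_{m}\to 0,
\]
with $T_{m}\in\T\cap\X$, $T_{k}\in\T$ for $k\in[0,m-1]$, and $\im[f_{i}]\in\X$ for $i\in[1,m-1]$. Since $W\in\X$ and $\im[f_{1}]\in\X$, and $\X$ is closed under extensions, the exact sequence $\suc[W][T_{0}][\im[f_{1}]]$ gives $T_{0}\in\X$. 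Inductively, from $\suc[\im[f_{i}]][T_{i}][\im[f_{i+1}]]$ together with closure of $\X$ under extensions, one obtains $T_{i}\in\X$ for every $i\in[0,m-1]$, hence $W\in(\T\cap\X)_{\X}^{\vee}$. With $\omega\subseteq(\T\cap\X)_{\X}^{\vee}\cap\X$ established, Lemma \ref{lem: inf3}(a) yields $\coresdimr{\T\cap\X}{\omega}{\X}\leq\coresdimr{\T\cap\X}{(\T\cap\X)_{\X}^{\vee}\cap\X}{\X}\leq\pdr[\X][\T]$.

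Finally, if $\pdr[\X][\T]=0$, then the bound just obtained forces $\coresdimr{\T\cap\X}{\omega}{\X}=0$, hence $\omega\subseteq\T\cap\X$. Conversely, $\pdr[\X][\T]=0$ means $\T\subseteq{}^{\bot}\X$, so $\T\cap\X\subseteq\X\cap{}^{\bot}\X=\omega$ by the first part; thus $\omega=\T\cap\X$.

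The only non-trivial point is the closure argument in the middle paragraph: the coresolution of $W\in\omega$ is given only as a $\T_{\X}$-coresolution (where intermediate terms a priori lie in $\T$ but not necessarily in $\X$), and it is precisely the hypothesis that $\X$ is closed under extensions that allows us to upgrade it to a $(\T\cap\X)_{\X}$-coresolution and thereby fit the hypotheses of Lemma \ref{lem: inf3}(a).
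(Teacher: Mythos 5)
Your proof is correct and follows the same route as the paper: establish $\omega=\X\cap{}^{\bot}\X$, upgrade the hypothesis $\omega\subseteq\T_{\X}^{\vee}$ to $\omega\subseteq(\T\cap\X)_{\X}^{\vee}\cap\X$ using closure of $\X$ under extensions, and then invoke Lemma \ref{lem: inf3}(a). The paper obtains the first equality by citing the dual of a result from \cite{ABsurvey} and leaves the extension-closure induction implicit; you simply unpack both steps directly, which is the same argument in more detail.
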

\begin{proof}
By the dual of \cite[Prop. 2.7]{ABsurvey},  we have that $\omega=\mathcal{X}\cap{}^{\bot}\mathcal{X}.$
On the other hand,  $\omega\subseteq(\mathcal{T}\cap\mathcal{X})_{\mathcal{X}}^{\vee}\cap\mathcal{X}$ since $\omega\subseteq\mathcal{X}\cap\mathcal{T}_{\mathcal{X}}^{\vee}$ and $\mathcal{X}$ is closed under extensions. Therefore,  by Lemma \ref{lem: inf3} (a),  it follows that $\coresdimr{\mathcal{T}\cap\mathcal{X}}{\omega}{\mathcal{X}}\leq\pdr[\mathcal{X}][\mathcal{T}]$.
\

Let us assume that $\pdr[\mathcal{X}][\mathcal{T}]=0$. Then $\mathcal{T}\subseteq{}^{\bot}\mathcal{X}$ and
 $\coresdimr{\mathcal{T}\cap\mathcal{X}}{\omega}{\mathcal{X}}=0.$ Hence
 $\omega\subseteq\mathcal{T}\cap\mathcal{X}\subseteq\mathcal{X}\cap{}^{\bot}\mathcal{X}=\omega$.
\end{proof}

The following result is a generalization of \cite[Lem. 2.3]{Tiltinginfinitamentegenerado}.

\begin{lem}\label{lem: inf4}\label{lem: props T2 con T3' y C2 con C3'}\label{lem: inf4-1}\label{lem: props T2 con T3' y C2 con C3'-1}
Let  $\mathcal{X}\subseteq\C$ be 
closed under extensions. If $\mathcal{T}\subseteq\mathcal{C}$ satisfies
$\mathrm{(T3)}, $ then $\mathcal{T}^{\bot}\cap\mathcal{X}\subseteq\Gennr[\mathcal{T}][1][\mathcal{X}]$.\end{lem}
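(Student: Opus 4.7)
The plan is to produce, for any $Z\in\mathcal{T}^{\bot}\cap\mathcal{X}$, a short exact sequence
\[
0\to K\to T_{0}\to Z\to 0
\]
with $T_{0}\in\mathcal{T}\cap\mathcal{X}$ and $K\in\mathcal{X}$, which is exactly the membership $Z\in\operatorname{Fac}_{1}^{\mathcal{X}}(\mathcal{T})$. The construction combines the relative generator provided by $\mathrm{(T3)}$ with a single pushout and a standard Ext-dimension shift.

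To begin, since $\omega$ is a relative generator in $\mathcal{X}$ and $Z\in\mathcal{X}$, one gets an exact sequence $0\to X_{1}\to W_{0}\to Z\to 0$ with $W_{0}\in\omega$ and $X_{1}\in\mathcal{X}$. The inclusion $\omega\subseteq\mathcal{T}_{\mathcal{X}}^{\vee}$ then supplies a finite $\mathcal{T}_{\mathcal{X}}$-coresolution $0\to W_{0}\to T_{0}\to T_{1}\to\cdots\to T_{n}\to 0$, and by definition of such a coresolution we have $T_{i}\in\mathcal{T}$ for all $i$, $T_{n}\in\mathcal{T}\cap\mathcal{X}$, and the images $M_{i}:=\operatorname{Im}(T_{i-1}\to T_{i})$ lie in $\mathcal{X}$ for $i\in[1,n-1]$; set also $M_{0}:=W_{0}$ and $M_{n}:=T_{n}\in\mathcal{X}$. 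The first step $0\to W_{0}\to T_{0}\to M_{1}\to 0$ together with closure of $\mathcal{X}$ under extensions yields $T_{0}\in\mathcal{T}\cap\mathcal{X}$. The case $n=0$ is trivial, since then $W_{0}\in\mathcal{T}\cap\mathcal{X}$ and the original sequence already lies in $\operatorname{Fac}_{1}^{\mathcal{X}}(\mathcal{T})$; so assume $n\geq 1$.

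Next, I would form the pushout $P$ of the epimorphism $W_{0}\twoheadrightarrow Z$ along the monomorphism $W_{0}\hookrightarrow T_{0}$. The usual pushout calculus in an abelian category produces two short exact sequences
\[
0\to X_{1}\to T_{0}\to P\to 0\qquad\text{and}\qquad 0\to Z\to P\to M_{1}\to 0.
\]
If the second sequence splits, then composing $T_{0}\twoheadrightarrow P$ with the split projection $P\to Z$ gives an epimorphism $\pi\colon T_{0}\twoheadrightarrow Z$ whose kernel fits into a short exact sequence $0\to X_{1}\to\ker\pi\to M_{1}\to 0$ by a standard diagram chase; closure of $\mathcal{X}$ under extensions then places $\ker\pi$ in $\mathcal{X}$, finishing the argument with $T:=T_{0}$ and $K:=\ker\pi$.

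The remaining and main technical point is to verify that $\operatorname{Ext}^{1}_{\mathcal{C}}(M_{1},Z)=0$ so that the sequence $0\to Z\to P\to M_{1}\to 0$ indeed splits. Here the full length of the coresolution is used: it decomposes into short exact sequences $0\to M_{i}\to T_{i}\to M_{i+1}\to 0$ for $i\in[0,n-1]$, and since $Z\in\mathcal{T}^{\bot}$ we have $\operatorname{Ext}^{j}_{\mathcal{C}}(T_{i},Z)=0$ for every $j\geq 1$ and every $i$. A downward iteration of the corresponding long exact sequences of $\operatorname{Ext}$, starting from $\operatorname{Ext}^{j}_{\mathcal{C}}(M_{n},Z)=\operatorname{Ext}^{j}_{\mathcal{C}}(T_{n},Z)=0$, then yields $\operatorname{Ext}^{j}_{\mathcal{C}}(M_{i},Z)=0$ for all $j\geq 1$ and all $i\geq 1$; the case $i=j=1$ is exactly what is needed. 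The only delicate bookkeeping is the correct identification of the two pushout sequences and the iterated Ext-shift, both of which should be entirely routine. Conceptually, this is a relative version of the classical pushout argument that recovers $\mathcal{T}$-generation of $\mathcal{T}^{\bot}$ in ordinary tilting theory.
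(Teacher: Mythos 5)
Your proof is correct and takes essentially the same approach as the paper's: both use (T3) to obtain the relative generator sequence $0\to K\to W\to Z\to 0$, push it out along the first monomorphism $W\hookrightarrow T_0$ of a finite $\mathcal{T}_{\mathcal{X}}$-coresolution, split the resulting sequence ending in the first cosyzygy $M_1$ using $Z\in\mathcal{T}^{\bot}$, and then identify the kernel of the induced epimorphism $T_0\twoheadrightarrow Z$ as an extension of objects of $\mathcal{X}$. The only cosmetic difference is that you carry out the Ext-dimension shift along the full coresolution inline to get $\operatorname{Ext}^1_{\mathcal{C}}(M_1,Z)=0$, where the paper instead cites the already-established inclusion $\mathcal{T}_{\mathcal{X}}^{\vee}\cap\mathcal{X}\subseteq{}^{\bot}(\mathcal{T}^{\bot})$ from Lemma \ref{lem: inf2}(a).
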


\noindent \begin{minipage}[t]{0.70\columnwidth}%
\begin{proof}Let $A\in\mathcal{T}^{\bot}\cap\mathcal{X}$. By (T3),  there is an
exact sequence $\eta _1: \;\suc[K][W][A][][a]\mbox{, }$ with $W\in\omega$ and $K\in\mathcal{X}$. Moreover,  by (T3) and 
Lemma \ref{lem: inf2} (a), 
there is an exact sequence
$\eta _2: \;\suc[W][B][C][b]$
with $B\in\mathcal{T}$ and $C\in\mathcal{T}{}^{\vee}\cap\mathcal{X}\subseteq{}{}^{\bot}\left(\mathcal{T}^{\bot}\right)\cap\mathcal{X}$.
Notice that $B\in\mathcal{X}.$  Now,  considering the push-out of $b$
and $a$,  we get the exact sequences
$\eta _3: \: \suc[K][B][B'][][x] $ and
$\eta _4: \: \suc[A][B'][C][t] $, with $B'\in\Gennr[\mathcal{T}][1]$. 
Furthermore,  $\eta_4$ splits since $A\in\T^\perp$ and $C\in{}^\perp(\T^\perp).$ Thus,  there is some
$y: B'\rightarrow A$ such that $yt=1_A.$ Consider the exact sequence $\eta_5: \: \suc[K'][B][A][][yx]\mbox{.}$
Since $B\in\T\cap\X$,  it remains to show that $K'\in\mathcal{X}$.
For that purpose observe that,  by using $\eta_5$ and $\eta_2$,  we can
build the exact sequence 
$\suc[K][K'][C]\mbox{, }$
where $K, C\in\mathcal{X}$. Therefore $K'\in\mathcal{X}$ since $\X$ is closed under extensions.
\end{proof}%
\end{minipage}\hfill{}%
\fbox{\begin{minipage}[t]{0.25\columnwidth}%
\[
\begin{tikzpicture}[-, >=to, shorten >=1pt, auto, node distance=1cm, main node/.style=, x=1cm, y=1cm]

   \node[main node] (C) at (0, 0)      {$A$};
   \node[main node] (Z) [right of=C]  {$B'$};
   \node[main node] (X1) [right of=Z]  {$C$};

   \node[main node] (X) [above of=C]  {$W$};
   \node[main node] (W) [right of=X]  {$B$};
   \node[main node] (X2) [right of=W]  {$C$};

   \node[main node] (Y1) [above of=X]  {$K$};
   \node[main node] (Y2) [above of=W]  {$K$};





   \node[main node] (K') at (0, -1)      {$K$};
   \node[main node] (X') [right of=K']  {$W$};
   \node[main node] (C1') [right of=X']  {$A$};

   \node[main node] (YK') [below of=K']  {$K'$};
   \node[main node] (U') [right of=YK']  {$B$};
   \node[main node] (C2') [right of=U']  {$A$};

   \node[main node] (X1') [below of=YK']  {$C$};
   \node[main node] (X2') [below of=U']  {$C$};





\draw[right hook->,  thin]   (C)  to node  {$t$}  (Z);
\draw[->>,  thin]   (Z)  to node  {$$}  (X1);

\draw[right hook->,  thick]   (Y1)  to node  {$$}  (X);
\draw[right hook->,  thin]   (Y2)  to node  {$$}  (W);
\draw[->>,  thick]   (X)  to node  {$a$}  (C);
\draw[-,  double]   (X2)  to node  {$$}  (X1);
\draw[->>,  thin]   (W)  to node  {$x$}  (Z);

\draw[right hook->,  thick]   (X)  to  node  {$b$}  (W);
\draw[->>,  thick]   (W)  to  node   {$$}  (X2);

\draw[-,  double]   (Y1)  to  node  {$$}  (Y2);

\draw[right hook->,  thick]   (K')  to node  {$$}  (X');
\draw[->>,  thick]   (X')  to node  {$a$}  (C1');

\draw[right hook->,  thick]   (K')  to node  {$$}  (YK');
\draw[right hook->,  thin]   (X')  to node  {$b$}  (U');
\draw[->>,  thick]   (YK')  to node  {$$}  (X1');
\draw[->>,  thin]   (U')  to node  {$$}  (X2');

\draw[right hook->,  thin]   (YK')  to  node  {$$}  (U');
\draw[->>,  thin]   (U')  to  node   {$yx$}  (C2');

\draw[-,  double]   (X1')  to  node  {$$}  (X2');
\draw[-,  double]   (C1')  to  node   {$$}  (C2');
   
\end{tikzpicture}
\]%
\end{minipage}}

\vspace{2mm}

An important property of an infinitely generated tilting module of
finite projective dimension $T\in\Modx$ is that $\Addx[T]$ is a relative
generator in $T^{\bot}$. In our relative context,  such property can be translated
as the following one:  $\mathcal{T}\cap\mathcal{X}$ is a relative generator in $\mathcal{T}^{\bot}\cap\mathcal{X}$.
In that sense,  the following lemma is a generalization of \cite[Lem. 2.4]{Tiltinginfinitamentegenerado}.

\begin{lem}\label{lem: inf5}\label{lem: props C2 y T2}\label{lem: inf5-1}\label{lem: props C2 y T2-1}
For a class   $\mathcal{X}=\smdx[\mathcal{X}]\subseteq\mathcal{C}$
closed under extensions and $\mathcal{T}\subseteq\mathcal{C}$ satisfying
$\mathrm{(T2),  (T5)}$ and such that $\mathcal{T}^{\bot}\cap\mathcal{X}\subseteq\Gennr[\mathcal{T}][1], $
the following statements hold true.
\begin{itemize}
\item[$\mathrm{(a)}$] $\mathcal{T}\cap\mathcal{X}$ is a $\mathcal{T}^{\bot}\cap\mathcal{X}$-projective relative generator in $\mathcal{T}^{\bot}\cap\mathcal{X}$.
\item[$\mathrm{(b)}$]  Every morphism $A\rightarrow X$,  with $A\in{}{}^{\bot}\left(\mathcal{T}^{\bot}\cap\mathcal{X}\right)$ 
(or $A\in{}{}^{\bot}\left(\mathcal{T}^{\bot}\right)$)
and $X\in\mathcal{T}^{\bot}\cap\mathcal{X}$,  factors through $\mathcal{T}\cap\mathcal{X}$.
Moreover,  if $\mathcal{T}=\smdx[\mathcal{T}]$,  then 
\[
\mathcal{T}\cap\mathcal{X}=\mathcal{T}^{\bot}\cap\mathcal{X}\cap{}{}^{\bot}\left(\mathcal{T}^{\bot}\cap\mathcal{X}\right)=\mathcal{T}^{\bot}\cap\mathcal{X}\cap{}{}^{\bot}\left(\mathcal{T}^{\bot}\right)\mbox{.}
\]
\item[$\mathrm{(c)}$]  If $\T=\smd(\T),$ then  $\resdimx{\mathcal{T}}X\leq\resdimr{{}\mathcal{T}\cap\mathcal{X}}X{\mathcal{T}^{\bot}\cap\mathcal{X}}\leq\pdr[\mathcal{T}^{\bot}\cap\mathcal{X}][X]+1,$ for any $X\in\T^\perp\cap\X.$
\end{itemize}
\end{lem}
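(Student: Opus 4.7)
The plan is to handle the three parts in order, with (a) being the substantive step; parts (b) and (c) will then follow formally. For (a), the $(\T^{\bot}\cap\X)$-projectivity of $\T\cap\X$ is immediate from the inclusion $\T\subseteq{}^{\bot}(\T^{\bot})\subseteq{}^{\bot}(\T^{\bot}\cap\X)$. The real task is to produce, for each $X\in\T^{\bot}\cap\X$, an exact sequence $\suc[Y][T][X]$ with $T\in\T\cap\X$ and $Y\in\T^{\bot}\cap\X$. From the hypothesis $\T^{\bot}\cap\X\subseteq\Gennr[\T][1]$ I have an exact sequence $0\to K_{0}\to T_{0}\stackrel{\pi}{\to}X\to 0$ with $T_{0}\in\T\cap\X$ and $K_{0}\in\X$, while (T5) supplies a $\T$-precover $g\colon T'\to X$ with $T'\in\X$, hence $T'\in\T\cap\X$. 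The precover property yields $\phi\colon T_{0}\to T'$ with $g\phi=\pi$, and since $\pi$ is epi so is $g$; set $K':=\Ker g$. A routine $\Ext$-computation using (T2) (so that $\Ext^{k}_{\C}(\T, T')=0$ for $k\geq 1$), together with $X\in\T^{\bot}$ and the surjectivity of $\Hom_{\C}(T'', g)$ for all $T''\in\T$, then shows $K'\in\T^{\bot}$.

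The main obstacle is to deduce $K'\in\X$, since $\X$ is not assumed closed under kernels of epimorphisms. For this I would consider the morphism $\psi\colon T_{0}\oplus K'\to T'$ given by $(t_{0}, k')\mapsto\phi(t_{0})+k'$. A direct verification shows that $\psi$ is surjective (since $g(\Ima\phi)=\pi(T_{0})=X$ forces $\Ima\phi+K'=T'$) and that $\Ker\psi\cong K_{0}$ via $k_{0}\mapsto(k_{0}, -\phi(k_{0}))$. The resulting short exact sequence $0\to K_{0}\to T_{0}\oplus K'\to T'\to 0$ has its outer terms in $\X$, so closure of $\X$ under extensions gives $T_{0}\oplus K'\in\X$, and the assumption $\X=\smdx[\X]$ then yields $K'\in\X$. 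This completes (a).

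For (b), given $f\colon A\to X$ with $A\in{}^{\bot}(\T^{\bot}\cap\X)$ (the case $A\in{}^{\bot}\T^{\bot}$ being a fortiori) and $X\in\T^{\bot}\cap\X$, I would apply (a) to produce $0\to K\to T\to X\to 0$ with $T\in\T\cap\X$ and $K\in\T^{\bot}\cap\X$; the associated long exact sequence forces $\Ext^{1}_{\C}(A, K)=0$, so $f$ factors through $T$. The inclusion $\subseteq$ in the equalities of classes follows from (T2) and $\T\subseteq{}^{\bot}\T^{\bot}$; for $\supseteq$, any $X$ in the right-hand side produces via (a) a short exact sequence that splits (because $\Ext^{1}_{\C}(X, K)=0$), whence $X\in\smdx[\T]=\T$ by $\T=\smdx[\T]$, and therefore $X\in\T\cap\X$.

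For (c), the first inequality is immediate since every $(\T\cap\X)_{\T^{\bot}\cap\X}$-resolution is in particular a $\T$-resolution. For the second, I would set $n:=\pdr[\T^{\bot}\cap\X][X]$ and iterate (a) to build an exact sequence $0\to K_{n}\to T_{n}\to\cdots\to T_{0}\to X\to 0$ with $T_{i}\in\T\cap\X$ and intermediate kernels $K_{i}\in\T^{\bot}\cap\X$. Dimension shifting via (T2) gives $\Ext^{i}_{\C}(K_{n}, M)\cong\Ext^{i+n+1}_{\C}(X, M)$ for $i\geq 1$ and $M\in\T^{\bot}$, which vanishes for $M\in\T^{\bot}\cap\X$ by the definition of $n$. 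Hence $K_{n}\in{}^{\bot}(\T^{\bot}\cap\X)$; combined with $K_{n}\in\T^{\bot}\cap\X$ and the characterization from (b), one concludes $K_{n}\in\T\cap\X$, yielding a $(\T\cap\X)_{\T^{\bot}\cap\X}$-resolution of length $\leq n+1$.
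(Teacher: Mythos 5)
Your proof is correct and takes essentially the same route as the paper: for part (a) you combine the $\mathcal{T}$-precover supplied by (T5) with the exact sequence coming from the $\operatorname{Fac}_1^{\mathcal{X}}$ hypothesis, and you recognize the pullback of $\pi$ and $g$ as $T_0\oplus K'$ via the lift $\phi$ provided by the precover property, so that closure of $\mathcal{X}$ under extensions and direct summands finishes the job — the paper forms the pullback $Z$ explicitly and splits the sequence $0\to K\to Z\to B\to 0$ using $K\in\mathcal{T}^{\bot}$, which produces the same decomposition. The paper leaves (b) and (c) as formal consequences of (a), and your deductions of them are sound.
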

\begin{proof} We only need to prove (a) since (b) and (c) follow from (a).
\

Let $X\in\mathcal{T}^{\bot}\cap\mathcal{X}$. By (T5),  there is a
$\mathcal{T}$-precover $g: T'\rightarrow X$ with $T'\in\mathcal{X}$.
\begin{minipage}[t]{0.70\columnwidth}%
Notice that  $g$ is an epimorphism since $X\in$ $\mathcal{T}^{\bot}\cap\mathcal{X}\subseteq\Gennr[\mathcal{T}][1][\mathcal{X}].$ Let us prove that $K:=\Kerx[g]\in\mathcal{T}^{\bot}\cap\mathcal{X}$. 
Consider the exact sequence $\suc[K][T'][X][][g]\mbox{.}$ By (T2)
and the fact that $g$ is an $\mathcal{T}$-precover,  it follows that
$K\in\mathcal{T}^{\bot}$. It remains to show that $K\in\mathcal{X}$. Since $X\in\Gennr[\mathcal{T}][1]$, 
there is an exact sequence 
$\suc[K'][B][X][][f]\mbox{, }$
with $B\in\mathcal{T}\cap\mathcal{X}$ and $K'\in\mathcal{X}$. Let
$Z$ be the pull-back of $f$                            \makebox[\linewidth][s]{and $g$. We have the  exact
sequences
$\eta: \: \suc[K'][Z][T']$}%
\end{minipage}\hfill{}%
\fbox{\begin{minipage}[t]{0.25\columnwidth}%
\[
\begin{tikzpicture}[-, >=to, shorten >=1pt, auto, node distance=1cm, main node/.style=, x=1.5cm, y=1.5cm]

   \node[main node] (C) at (0, 0)      {$X$};
   \node[main node] (X0) [left of=C]  {$T'$};
   \node[main node] (X1) [left of=X0]  {$K$};

   \node[main node] (X) [above of=C]  {$B$};
   \node[main node] (E) [left of=X]  {$Z$};
   \node[main node] (X2) [left of=E]  {$K$};

   \node[main node] (Y1) [above of=X]  {$K'$};
   \node[main node] (Y2) [above of=E]  {$K'$};





\draw[->>,  thick]   (X0)  to node  {$g$}  (C);
\draw[right hook->,  thick]   (X1)  to node  {$$}  (X0);

\draw[right hook->,  thick]   (Y1)  to node  {$$}  (X);
\draw[right hook->,  thin]   (Y2)  to node  {$$}  (E);
\draw[->>,  thick]   (X)  to node  {$f$}  (C);
\draw[-,  double]   (X1)  to node  {$$}  (X2);
\draw[->>,  thin]   (E)  to node  {$$}  (X0);

\draw[->>,  thin]   (E)  to  node  {$$}  (X);
\draw[right hook->,  thin]   (X2)  to  node   {$$}  (E);

\draw[-,  double]   (Y2)  to  node  {$$}  (Y1);
   
\end{tikzpicture}
\]%
\end{minipage}}
\

\noindent and $\eta': \: \suc[K][Z][B]\mbox{.}$
Since $K', T'\in\mathcal{X}$,  we have $Z\in\mathcal{X}.$ Furthermore,  $\eta'$
splits since $K\in \T^{\bot}$ and $B\in\mathcal{T}$. Therefore $K\in\mathcal{X}.$ 
\end{proof}

\begin{lem}\label{lem: inf6}\label{lem: inf6-1} 
Let  $\mathcal{X}=\smdx[\mathcal{X}]\subseteq\mathcal{C}$ be
closed under extensions,  and let $\mathcal{T}=\smdx[\mathcal{T}]\subseteq\mathcal{C}$ be a class 
satisfying $\mathrm{(T1),  (T2),  (T4),  (T5)}$ and such that $\mathcal{T}^{\bot}\cap\mathcal{X}\subseteq\Gennr[\mathcal{T}][1][\mathcal{X}]$.
Then,  $\mathcal{X}\subseteq(\mathcal{T}^{\bot}\cap\mathcal{X})_{\mathcal{X}}^{\vee}$
and $(\mathcal{T}\cap\mathcal{X})^{\vee}\subseteq{}{}^{\bot}(\mathcal{T}^{\bot}\cap\mathcal{X})$.
Moreover,  for each $X\in\mathcal{X}, $ the following statements hold true: 
\begin{itemize}
\item[$\mathrm{(a)}$] $m: =\coresdimr{\mathcal{T}^{\bot}\cap\mathcal{X}}X{\mathcal{X}}\leq\pdr[\mathcal{X}][\mathcal{T}]<\infty$;
\item[$\mathrm{(b)}$] there are exact sequences 
$\suc[X][M_{X}][C_{X}]$ and $\suc[K_{X}][B_{X}][X]$
such that $M_{X}, \: K_{X}\in\mathcal{T}^{\bot}\cap\mathcal{X}$; $C_{X}, \: B_{X}\in(\X,\T\cap\X)^{\vee};$
$\coresdimr{\mathcal{T}\cap\mathcal{X}}{C_{X}}{\mathcal{X}}=m-1$
and $\coresdimr{\mathcal{T}\cap\mathcal{X}}{B_{X}}{\mathcal{X}}\leq m;$
\item[$\mathrm{(c)}$] $B_{X}\rightarrow X$ is a $(\mathcal{T}\cap\mathcal{X})^{\vee}$-precover;
\item[$\mathrm{(d)}$] $X\rightarrow M_{X}$ is a $\mathcal{T}^{\bot}\cap\mathcal{X}$-preenvelope.
\end{itemize}
\end{lem}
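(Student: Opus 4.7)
The plan is to dispatch the two preliminary inclusions first (the first also yielding part (a)), and then establish (b), (c), and (d) by a joint induction on $m$.

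The inclusion $\mathcal{X}\subseteq(\mathcal{T}^{\bot}\cap\mathcal{X})_{\mathcal{X}}^{\vee}$ is an immediate application of Lemma~\ref{lem: inf1}(b), whose hypotheses (T1) and (T4) are available; the same argument produces the bound $m\leq\pdr[\mathcal{X}][\mathcal{T}]<\infty$, proving (a). For $(\mathcal{T}\cap\mathcal{X})^{\vee}\subseteq{}^{\bot}(\mathcal{T}^{\bot}\cap\mathcal{X})$, note that any $\mathcal{T}\cap\mathcal{X}$-coresolution is a fortiori a $\mathcal{T}$-coresolution, so $(\mathcal{T}\cap\mathcal{X})^{\vee}\subseteq\mathcal{T}^{\vee}$, and then Lemma~\ref{lem: inf2}(a) gives $\mathcal{T}^{\vee}\subseteq{}^{\bot}(\mathcal{T}^{\bot})\subseteq{}^{\bot}(\mathcal{T}^{\bot}\cap\mathcal{X})$.

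To prove (b), I induct on $m$. In the base case $m=0$ we have $X\in\mathcal{T}^{\bot}\cap\mathcal{X}$: take trivially $M_X:=X$ and $C_X:=0$, and invoke Lemma~\ref{lem: inf5}(a) (which relies on (T5) and the hypothesis $\mathcal{T}^{\bot}\cap\mathcal{X}\subseteq\Gennr[\mathcal{T}][1][\mathcal{X}]$) to obtain the second sequence with $B_X\in\mathcal{T}\cap\mathcal{X}$ and $K_X\in\mathcal{T}^{\bot}\cap\mathcal{X}$. In the inductive step $m\geq 1$, I would pick a coresolution step $0\to X\to E\to X'\to 0$ with $E\in\mathcal{T}^{\bot}\cap\mathcal{X}$ and $\coresdimr{\mathcal{T}^{\bot}\cap\mathcal{X}}{X'}{\mathcal{X}}=m-1$, and apply induction to $X'$ to retrieve its precover sequence $0\to K_{X'}\to B_{X'}\to X'\to 0$. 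Pulling back $E\twoheadrightarrow X'$ along $B_{X'}\twoheadrightarrow X'$ produces a pullback $P$ fitting into short exact sequences $0\to K_{X'}\to P\to E\to 0$ and $0\to X\to P\to B_{X'}\to 0$; since $\mathcal{T}^{\bot}$ and $\mathcal{X}$ are both closed under extensions, $P\in\mathcal{T}^{\bot}\cap\mathcal{X}$, so the latter sequence serves as the first one for $X$ with $M_X:=P$ and $C_X:=B_{X'}$. To construct the second sequence for $X$, apply Lemma~\ref{lem: inf5}(a) to $M_X$, yielding $0\to K\to T\to M_X\to 0$ with $T\in\mathcal{T}\cap\mathcal{X}$, and pull back $T\twoheadrightarrow M_X$ along $X\hookrightarrow M_X$; the pullback $Q$ sits in $0\to K\to Q\to X\to 0$ and $0\to Q\to T\to C_X\to 0$, and splicing the latter with a $(\mathcal{T}\cap\mathcal{X})_{\mathcal{X}}$-coresolution of $C_X$ places $Q\in(\mathcal{X}, \mathcal{T}\cap\mathcal{X})^{\vee}$ of coresolution dimension at most $m$.

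Properties (c) and (d) then follow by extension-vanishing using the second preliminary inclusion: for any $N\in(\mathcal{T}\cap\mathcal{X})^{\vee}\subseteq{}^{\bot}(\mathcal{T}^{\bot}\cap\mathcal{X})$ we get $\Ext^{1}(N, K_X)=0$, whence $\Hom(N, B_X)\twoheadrightarrow\Hom(N, X)$ realizes $B_X\to X$ as a $(\mathcal{T}\cap\mathcal{X})^{\vee}$-precover, and symmetrically $C_X\in{}^{\bot}(\mathcal{T}^{\bot}\cap\mathcal{X})$ kills $\Ext^{1}(C_X, Y)$ for $Y\in\mathcal{T}^{\bot}\cap\mathcal{X}$, making $X\to M_X$ a preenvelope. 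The main obstacle I anticipate is the precise length bookkeeping in the pullback constructions, in particular the equality $\coresdimr{\mathcal{T}\cap\mathcal{X}}{C_X}{\mathcal{X}}=m-1$: the upper bound is immediate from the induction hypothesis, while the lower bound I plan to secure by a minimality argument --- any strictly shorter $(\mathcal{T}\cap\mathcal{X})_{\mathcal{X}}$-coresolution of $C_X$, prepended to $0\to X\to M_X\to C_X\to 0$ and using $\mathcal{T}\cap\mathcal{X}\subseteq\mathcal{T}^{\bot}\cap\mathcal{X}$, would yield a $(\mathcal{T}^{\bot}\cap\mathcal{X})_{\mathcal{X}}$-coresolution of $X$ of length strictly less than $m$, contradicting the definition of $m$.
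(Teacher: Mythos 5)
Your proof is correct and reaches all the stated conclusions, but it takes a genuinely different route from the paper. The paper's proof is a compact reduction: it first uses Lemma~\ref{lem: inf5} to deduce that $\mathcal{T}\cap\mathcal{X}$ is a $\mathcal{T}^{\bot}\cap\mathcal{X}$-projective relative generator in $\mathcal{T}^{\bot}\cap\mathcal{X}$, then cites Lemma 4.3 and Theorem 4.4 of \cite{parte1} (the relative Auslander--Buchweitz approximation theorem from the companion paper), whose hypotheses --- the existence of a projective relative generator, plus $\mathcal{X}\subseteq(\mathcal{T}^{\bot}\cap\mathcal{X})_{\mathcal{X},n}^{\vee}$ from Lemma~\ref{lem: inf1}(b) --- are precisely what was just established. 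You instead reconstruct the content of Theorem 4.4 from scratch via an explicit induction on $m$, building both short exact sequences by iterated pullbacks and obtaining (c) and (d) from Ext-vanishing. You also reach the inclusion $(\mathcal{T}\cap\mathcal{X})^{\vee}\subseteq{}^{\bot}(\mathcal{T}^{\bot}\cap\mathcal{X})$ more directly, through Lemma~\ref{lem: inf2}(a) and the trivial observation $(\mathcal{T}\cap\mathcal{X})^{\vee}\subseteq\mathcal{T}^{\vee}$, which avoids invoking Lemma~\ref{lem: inf5} for that particular step. The trade-off is clear: the paper's route is shorter and leans on machinery already proved once in \cite{parte1}, while yours is longer but self-contained inside this paper and makes the pullback mechanics visible, which is pedagogically valuable. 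One small point to tighten: in the base case $m=0$ you set $C_X=0$, but the asserted equality $\coresdimr{\mathcal{T}\cap\mathcal{X}}{C_X}{\mathcal{X}}=m-1=-1$ is a degenerate boundary convention (this is really an artifact of the statement, inherited from Theorem 4.4 of \cite{parte1}, not a gap in your argument); your handling of $m\geq 1$, including the minimality argument for the exact equality $\coresdimr{\mathcal{T}\cap\mathcal{X}}{C_X}{\mathcal{X}}=m-1$, is sound.
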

\begin{proof}
By Lemma \ref{lem: inf5},  it follows that $\mathcal{T}\cap\mathcal{X}$
is a $\mathcal{T}^{\bot}\cap\mathcal{X}$-projective relative generator
in $\mathcal{T}^{\bot}\cap\mathcal{X}.$ In particular, by \cite[Lem. 4.3]{parte1}, we get that $(\mathcal{T}\cap\mathcal{X})^{\vee}\subseteq{}{}^{\bot}(\mathcal{T}^{\bot}\cap\mathcal{X}).$  Moreover, by Lemma \ref{lem: inf1},  $\mathcal{X}\subseteq(\mathcal{T}^{\bot}\cap\mathcal{X})_{\mathcal{X}, n}^{\vee}$ for $n:=\pd_\X(\T).$  Hence,  by \cite[Thm. 4.4]{parte1}, the result follows.
\end{proof}

In what follows,  we will see that the condition $\mathcal{T}^{\bot}\cap\mathcal{X}\subseteq\Gennr[\mathcal{T}][1][\mathcal{X}]$, 
obtained in Lemma \ref{lem: inf4},  is equivalent to (T3) if it is assumed that $\mathcal{T}$
satisfies (T1),  (T2),  (T4),  and (T5). The next proposition is a generalization
of \cite[Thm. 3.4 (2, 3)]{positselskicorrespondence}.

\begin{prop}\label{prop: equiv a t3} 
Let  $\mathcal{X}=\smdx[\mathcal{X}]\subseteq\mathcal{C}$ be closed
under extensions,  and let $\mathcal{T}=\smdx[\mathcal{T}]\subseteq\mathcal{C}$ be a class
satisfying $\mathrm{(T1),  (T2),  (T4)}, $ and $\mathrm{(T5)}.$ Then,  $\mathcal{T}$ satisfies 
$\mathrm{(T3)}$ if
and only if $\mathcal{T}^{\bot}\cap\mathcal{X}\subseteq\Gennr[\mathcal{T}][1][\mathcal{X}]$.
Furthermore,  in such case,  we can choose a relative generator $\omega$
in $\mathcal{X}$ such that $\omega\subseteq(\mathcal{T}\cap\mathcal{X})_{\mathcal{X}}^{\vee}$.\end{prop}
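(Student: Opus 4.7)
The plan is to reduce both implications to the homological lemmas just established; essentially no new content is needed beyond Lemmas \ref{lem: inf4} and \ref{lem: inf6}, and the proof becomes a matter of assembling their outputs into a single construction.

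For the forward direction, I would simply invoke Lemma \ref{lem: inf4}: under the standing hypothesis that $\mathcal{X}$ is closed under extensions, (T3) alone forces $\mathcal{T}^{\bot}\cap\mathcal{X}\subseteq\Gennr[\mathcal{T}][1][\mathcal{X}]$. No use of (T1), (T2), (T4) or (T5) is even required here.

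For the converse, I would assume $\mathcal{T}^{\bot}\cap\mathcal{X}\subseteq\Gennr[\mathcal{T}][1][\mathcal{X}]$. Together with the standing hypotheses (T1), (T2), (T4), (T5) and the closure properties of $\mathcal{X}$ and $\mathcal{T}$, this matches precisely the hypothesis list of Lemma \ref{lem: inf6}. Applying part (b) of that lemma to each $X\in\mathcal{X}$ produces an exact sequence $K_{X}\hookrightarrow B_{X}\twoheadrightarrow X$ with $K_{X}\in\mathcal{T}^{\bot}\cap\mathcal{X}\subseteq\mathcal{X}$ and $B_{X}\in(\mathcal{X},\mathcal{T}\cap\mathcal{X})^{\vee}=\mathcal{X}\cap(\mathcal{T}\cap\mathcal{X})_{\mathcal{X}}^{\vee}$. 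I would then set $\omega:=\{B_{X}\mid X\in\mathcal{X}\}$ and verify three points: (i) $\omega\subseteq\mathcal{X}$; (ii) the displayed short exact sequences exhibit $\omega$ as a relative generator in $\mathcal{X}$ (since both the middle term lies in $\omega$ and the kernel term lies in $\mathcal{X}$); and (iii) $\omega\subseteq(\mathcal{T}\cap\mathcal{X})_{\mathcal{X}}^{\vee}\subseteq\mathcal{T}_{\mathcal{X}}^{\vee}$, because any finite $(\mathcal{T}\cap\mathcal{X})_{\mathcal{X}}$-coresolution is in particular a finite $\mathcal{T}_{\mathcal{X}}$-coresolution. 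The last inclusion gives both the original condition (T3) and the strengthened inclusion $\omega\subseteq(\mathcal{T}\cap\mathcal{X})_{\mathcal{X}}^{\vee}$ stated at the end of the proposition.

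The main obstacle has been quietly outsourced to Lemma \ref{lem: inf6}, whose genuine difficulty lies in simultaneously controlling the kernel $K_{X}$ (keeping it inside $\mathcal{X}$, indeed inside $\mathcal{T}^{\bot}\cap\mathcal{X}$) and the middle term $B_{X}$ (forcing it to admit a finite $(\mathcal{T}\cap\mathcal{X})_{\mathcal{X}}$-coresolution). Once that lemma is available, the only content left for the present proposition is to name the class $\omega$ and recognize that Lemma \ref{lem: inf6}(b) delivers exactly the short exact sequences required by the definition of a relative generator compatible with a coresolution in $\mathcal{T}\cap\mathcal{X}$.
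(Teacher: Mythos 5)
Your proposal is correct, and the forward direction is exactly as in the paper: Lemma \ref{lem: inf4} gives (T3) $\Rightarrow$ $\mathcal{T}^{\bot}\cap\mathcal{X}\subseteq\operatorname{Fac}_{1}^{\mathcal{X}}(\mathcal{T})$ under extension-closure of $\mathcal{X}$ alone.

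For the converse you take a slightly different, and in fact more economical, route through the same lemma. The paper uses the \emph{preenvelope} sequence from Lemma \ref{lem: inf6}(b), namely $0\to X\to M_X\to C_X\to 0$ with $M_X\in\mathcal{T}^{\bot}\cap\mathcal{X}$, then invokes the hypothesis $\mathcal{T}^{\bot}\cap\mathcal{X}\subseteq\operatorname{Fac}_{1}^{\mathcal{X}}(\mathcal{T})$ once more to present $M_X$ via some $T_0\in\mathcal{T}\cap\mathcal{X}$, and finally performs a pull-back to extract a sequence $0\to M'_X\to P_X\to X\to 0$ with $P_X\in\mathcal{X}\cap(\mathcal{T}\cap\mathcal{X})^{\vee}_{\mathcal{X}}$; they set $\omega:=\{P_X\}$. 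You instead read off the \emph{precover} sequence $0\to K_X\to B_X\to X\to 0$ that Lemma \ref{lem: inf6}(b) already delivers with $K_X\in\mathcal{T}^{\bot}\cap\mathcal{X}\subseteq\mathcal{X}$ and $B_X\in\mathcal{X}\cap(\mathcal{T}\cap\mathcal{X})^{\vee}_{\mathcal{X}}$, and take $\omega:=\{B_X\}$. This makes the pull-back construction and the extra use of the $\operatorname{Fac}_1$ hypothesis unnecessary: all three verifications you list (containment in $\mathcal{X}$, relative generation, and $\omega\subseteq(\mathcal{T}\cap\mathcal{X})^{\vee}_{\mathcal{X}}\subseteq\mathcal{T}^{\vee}_{\mathcal{X}}$, the last because a finite $(\mathcal{T}\cap\mathcal{X})_{\mathcal{X}}$-coresolution is automatically a finite $\mathcal{T}_{\mathcal{X}}$-coresolution) hold directly. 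Both arguments rest on the same two lemmas, so the overall strategy is identical, but your choice of which output of Lemma \ref{lem: inf6}(b) to use yields a cleaner write-up.
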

\begin{proof}
By Lemma \ref{lem: inf4},   it is enough to prove that $\mathcal{T}^{\bot}\cap\mathcal{X}\subseteq\Gennr[\mathcal{T}][1][\mathcal{X}]$
implies (T3). 

\noindent
\begin{minipage}[t]{0.7\columnwidth}%
By Lemma \ref{lem: inf6},  every $X\in\mathcal{X}$ admits
an exact sequence $\suc[X][M_{X}][C_{X}][f]\mbox{, }$
with $M_{X}\in\mathcal{T}^{\bot}\cap\mathcal{X}, C_{X}\in(\mathcal{X}, \mathcal{T}\cap\mathcal{X})_{\mathcal{X}}^{\vee}.$
From the inclusion $\mathcal{T}^{\bot}\cap\mathcal{X}\subseteq\Gennr[\mathcal{T}][1][\mathcal{X}]$,  we have that
 $M_{X}$ admits a short exact sequence
$\suc[M'_{X}][T_{0}][M_{X}][][g]$
with $T_{0}\in\mathcal{T}\cap\mathcal{X}$ and $M'_{X}\in\mathcal{X}$.
Considering the pull-back of $f$ and $g$,  we get an exact sequence
$\suc[M'_{X}][P_{X}][X]\mbox{, }$
where $P_{X}\in(\mathcal{X}, \mathcal{T}\cap\mathcal{X})^{\vee}$.
Hence,  $\omega:=\left\{ P_{X}\right\} _{X\in\mathcal{X}}$ is a relative generator
in $\mathcal{X}$ satisfying (T3).%
\end{minipage}\hfill{}%
\fbox{\begin{minipage}[t]{0.25\columnwidth}%
\[
\begin{tikzpicture}[-, >=to, shorten >=1pt, auto, node distance=1cm, main node/.style=, x=1.5cm, y=1.5cm]

   \node[main node] (C) at (0, 0)      {$X$};
   \node[main node] (Z) [right of=C]  {$M_X$};
   \node[main node] (X1) [right of=Z]  {$C_X$};

   \node[main node] (X) [above of=C]  {$P_X$};
   \node[main node] (W) [right of=X]  {$T_0$};
   \node[main node] (X2) [right of=W]  {$C_X$};

   \node[main node] (Y1) [above of=X]  {$M'_X$};
   \node[main node] (Y2) [above of=W]  {$M'_X$};





\draw[right hook->,  thick]   (C)  to node  {$f$}  (Z);
\draw[->>,  thick]   (Z)  to node  {$$}  (X1);

\draw[right hook->,  thin]   (Y1)  to node  {$$}  (X);
\draw[right hook->,  thick]   (Y2)  to node  {$$}  (W);
\draw[->>,  thin]   (X)  to node  {$ $}  (C);
\draw[-,  double]   (X2)  to node  {$$}  (X1);
\draw[->>,  thick]   (W)  to node  {$g$}  (Z);

\draw[right hook->,  thin]   (X)  to  node  {$ $}  (W);
\draw[->>,  thin]   (W)  to  node   {$$}  (X2);

\draw[-,  double]   (Y1)  to  node  {$$}  (Y2);
   
\end{tikzpicture}
\]%
\end{minipage}}\\
\end{proof}

 Let $R$ be a ring. It can
be proved that $T^{\bot}$ is  preenveloping in $\Modx, $ for any 
$T\in\Modx$ \cite[Thm. 3.2.1]{Approximations}. This is a property
that greatly enriches tilting theory. Below, in item (c),  we will prove a similar
property in our relative context.

\begin{thm}\label{thm: el par n-X-tilting}\label{thm: el par n-X-tilting-1} 
For a class   $\mathcal{X}=\smdx[\mathcal{X}]\subseteq\mathcal{C}$ closed under extensions and an $n$-$\X$-tilting class $\mathcal{T}\subseteq\mathcal{C},$  the following statements hold
true.
\begin{itemize}
\item[$\mathrm{(a)}$]  ${}{}^{\bot}(\mathcal{T}^{\bot}\cap\mathcal{X})\cap\mathcal{X}={}^{\bot}(\mathcal{T}^{\bot})\cap\mathcal{X}=\mathcal{T}_{\mathcal{X}}^{\vee}\cap\mathcal{X}=\left(\mathcal{T}\cap\mathcal{X}\right)_{\mathcal{X}}^{\vee}\cap\mathcal{X}.$ 
\item[$\mathrm{(b)}$] $\mathcal{T}^{\bot}\cap\mathcal{X}=\Gennr[\mathcal{T}][k][\mathcal{X}]\cap\mathcal{X}$
$\forall k\geq\max\{1, \pdr[\mathcal{X}][\mathcal{T}]\}$.
\item[$\mathrm{(c)}$] The pair $({}{}^{\bot}(\mathcal{T}^{\bot}), \mathcal{T}^{\bot})$
is  $\mathcal{X}$-complete and hereditary.
\end{itemize}
\end{thm}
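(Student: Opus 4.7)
My plan is to use Lemma~\ref{lem: inf6} as the workhorse. Its hypotheses are satisfied by any $n$-$\X$-tilting class $\T$: conditions (T0), (T1), (T2), (T4), (T5) are built into the definition, while $\T^{\bot}\cap\X\subseteq\Gennr[\T][1][\X]$ follows from (T3) via Lemma~\ref{lem: inf4}. Consequently, for every $X\in\X$ we have at our disposal the two approximation sequences $\eta_X:\suc[K_X][B_X][X]$ and $\eta^X:\suc[X][M_X][C_X]$, with $K_X, M_X\in\T^{\bot}\cap\X$ and $B_X, C_X\in(\T\cap\X)_\X^{\vee}\cap\X$.

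For part (a), Lemma~\ref{lem: inf2}(a) (together with the trivial inclusion $(\T\cap\X)_\X^{\vee}\subseteq\T_\X^{\vee}$) supplies the chain
\[
(\T\cap\X)_\X^{\vee}\cap\X\subseteq\T_\X^{\vee}\cap\X\subseteq{}^{\bot}(\T^{\bot})\cap\X\subseteq{}^{\bot}(\T^{\bot}\cap\X)\cap\X.
\]
The leftmost inclusion is in fact an equality by the extension-closure of $\X$: for a finite $\T_\X$-coresolution $0\to M\to T_0\to\cdots\to T_n\to 0$ of an $M\in\X$, the intermediate images and $T_n$ all lie in $\X$ by definition, and an induction from the left through the short exact sequences $0\to\mathrm{Im}(f_{i-1})\to T_{i-1}\to\mathrm{Im}(f_i)\to 0$ forces every $T_i\in\T\cap\X$. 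To close the chain it suffices to establish ${}^{\bot}(\T^{\bot}\cap\X)\cap\X\subseteq(\T\cap\X)_\X^{\vee}$: given such an $M$, the sequence $\eta_M$ splits because $M\in{}^{\bot}(\T^{\bot}\cap\X)$ and $K_M\in\T^{\bot}\cap\X$ force $\Extx[1][][M][K_M]=0$, so $B_M\cong M\oplus K_M$ exhibits $M$ as a direct summand of $B_M\in(\T\cap\X)_\X^{\vee}\cap\X$; closure of this class under direct summands (a consequence of $\T=\smdx[\T]$, $\X=\smdx[\X]$ and the Auslander--Buchweitz machinery of~\cite{parte1}) then yields $M\in(\T\cap\X)_\X^{\vee}$.

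For part (b), I would treat the two inclusions separately. Given a defining sequence $0\to K\to T_k\to\cdots\to T_1\to C\to 0$ witnessing $C\in\Gennr[\T][k][\X]\cap\X$, the shifting lemma combined with $T_j\in\T\cap\X\subseteq\T^{\bot}$ (by (T2)) produces $\Extx[i][][T'][C]\cong\Extx[i+k][][T'][K]$ for every $T'\in\T$ and $i\geq 1$; these vanish because $K\in\X$ and $i+k>\pdr[\X][\T]$, so $C\in\T^{\bot}$. Conversely, for $Z\in\T^{\bot}\cap\X$ iterate Lemma~\ref{lem: inf5}(a) exactly $k$ times: since $\T\cap\X$ is a $\T^{\bot}\cap\X$-projective relative generator in $\T^{\bot}\cap\X$, one obtains a length-$k$ exact sequence ending in $Z$, with all $T_j\in\T\cap\X$ and every intermediate kernel (including the leftmost) in $\T^{\bot}\cap\X\subseteq\X$, so $Z\in\Gennr[\T][k][\X]$.

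Part (c) is then a short distillation. Hereditariness is automatic from the definition of the total left orthogonal: $\Extx[k][][A][B]=0$ for every $A\in{}^{\bot}(\T^{\bot})$, $B\in\T^{\bot}$ and $k\geq 1$, and this restricts to the intersections with $\X$. For $\X$-completeness, the sequences $\eta_X$ and $\eta^X$ furnish the required approximations once part (a) is used to rewrite $B_X, C_X\in(\T\cap\X)_\X^{\vee}\cap\X$ as objects of ${}^{\bot}(\T^{\bot})\cap\X$. The main obstacle in the entire argument is the reverse inclusion in (a): after the splitting of $\eta_M$, the proof depends on closure of $(\T\cap\X)_\X^{\vee}$ under direct summands, and this is the delicate point that one must either extract from~\cite{parte1} or establish by hand by building a $(\T\cap\X)_\X$-coresolution of $M$ from the coresolution of $B_M$ together with the decomposition $B_M\cong M\oplus K_M$.
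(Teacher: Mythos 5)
Your proofs of parts (b) and (c) track the paper's closely and look sound; the divergence is in the reverse inclusion of part (a), and there you have a genuine gap.

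The paper works with the \emph{preenvelope} sequence $\eta^X\colon\suc[X][M_{X}][C_{X}]$ from Lemma~\ref{lem: inf6}. It observes that $C_X\in(\T\cap\X)_\X^\vee\cap\X\subseteq{}^\perp(\T^\perp\cap\X)$ by Lemma~\ref{lem: inf2}(a), so the middle term $M_X$ lies in ${}^\perp(\T^\perp\cap\X)$ by closure under extensions; combined with $M_X\in\T^\perp\cap\X$ and the identity $\T\cap\X=\T^\perp\cap\X\cap{}^\perp(\T^\perp\cap\X)$ of Lemma~\ref{lem: inf5}(b), one gets $M_X\in\T\cap\X$, and then $X\in(\T\cap\X)_\X^\vee$ follows immediately by prepending $M_X$ to a coresolution of $C_X$. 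No summand closure is ever invoked.

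You instead use the \emph{precover} sequence $\eta_M\colon\suc[K_{M}][B_{M}][M]$, split it, and conclude $M$ is a direct summand of $B_M\in(\T\cap\X)_\X^\vee\cap\X$. This leaves you needing $(\T\cap\X)_\X^\vee$ to be closed under direct summands. That closure is not stated or proved anywhere in the present paper, nor is it an elementary fact: given a finite coresolution $0\to M\oplus K_M\to T_0\to\cdots\to T_n\to 0$, simply restricting along $M\hookrightarrow M\oplus K_M$ does not produce a coresolution of $M$, so the ``build by hand'' route you sketch would itself require a nontrivial argument (essentially an Auslander--Buchweitz result such as the one underlying Lemma~\ref{lem: lemita miyashita}, which in turn rests on hypotheses you would have to verify). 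Worse, summand closure of $(\T\cap\X)_\X^\vee\cap\X$ is in effect \emph{equivalent} to the reverse inclusion you are trying to prove, since once (a) holds the class equals ${}^\perp(\T^\perp\cap\X)\cap\X$, which is obviously summand closed; so appealing to it without an independent proof is circular. You flag this yourself at the end, and rightly so --- as written, the argument does not close. The fix is simply to replace $\eta_M$ by $\eta^M$ and argue as the paper does via Lemma~\ref{lem: inf5}(b); the precover sequence can then be kept for the left $\X$-completeness in part (c), where it is exactly what is needed.

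One small remark: your observation that the leftmost inclusion $(\T\cap\X)_\X^\vee\cap\X\subseteq\T_\X^\vee\cap\X$ is already an equality (via extension closure of $\X$) is correct but superfluous --- once the chain is closed by the reverse inclusion, all four classes coincide for free.
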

\begin{proof} (a) By Lemma \ref{lem: inf2} (a),  we get the inclusions 
\begin{center}
$\left(\mathcal{T}\cap\mathcal{X}\right)_{\mathcal{X}}^{\vee}\cap\mathcal{X}\subseteq\mathcal{T}_{\mathcal{X}}^{\vee}\cap\mathcal{X}\subseteq{}{}^{\bot}(\mathcal{T}^{\bot})\cap\mathcal{X}\subseteq{}{}^{\bot}(\mathcal{T}^{\bot}\cap\mathcal{X})\cap\mathcal{X}.$
\end{center}
 Consider $X\in{}{}^{\bot}(\mathcal{T}^{\bot}\cap\mathcal{X})\cap\mathcal{X}$.
From Lemma \ref{lem: inf6} and Proposition \ref{prop: equiv a t3},  we get an exact sequence
$\suc[X][M_{X}][C_{X}]$
with $M_{X}\in\mathcal{T}^{\bot}\cap\mathcal{X}$ and $C_{X}\in \mathcal{X}\cap(\mathcal{T}\cap\mathcal{X})_{\mathcal{X}}^{\vee}$.
Moreover,  $C_{X}\in{}{}^{\bot}\left(\mathcal{T}^{\bot}\cap\mathcal{X}\right)$
by Lemma \ref{lem: inf2} (a). Notice that $M_{X}\in{}{}^{\bot}\left(\mathcal{T}^{\bot}\cap\mathcal{X}\right)$
since $C_X,X\in{}{}^{\bot}\left(\mathcal{T}^{\bot}\cap\mathcal{X}\right)$.
By Proposition \ref{prop: equiv a t3} and Lemma \ref{lem: inf5} (b),  $\mathcal{T}\cap\mathcal{X}=\mathcal{T}^{\bot}\cap\X\cap{}{}^{\bot}\left(\mathcal{T}^{\bot}\cap\mathcal{X}\right)$
and thus  $M_{X}\in\mathcal{T}\cap\mathcal{X}$. Therefore,  $X\in\left(\mathcal{T}\cap\mathcal{X}\right)_{\mathcal{X}}^{\vee}\cap\mathcal{X}$.
\

(b) By Lemma \ref{lem: inf4} and Lemma \ref{lem: inf5} (a),  $\mathcal{T}\cap\mathcal{X}$
is a relative generator in $\mathcal{T}^{\bot}\cap\mathcal{X}$. Hence,  it follows that 
$\mathcal{T}^{\bot}\cap\mathcal{X}\subseteq\Gennr[\mathcal{T}][k][\mathcal{X}]\cap\mathcal{X}$
$\forall k\geq1$. Let $m: =\max\{1, \pdr[\mathcal{X}][\mathcal{T}]\}$.
We will show that $\Gennr[\mathcal{T}][k][\mathcal{X}]\cap\mathcal{X}\subseteq\mathcal{T}^{\bot}\cap\mathcal{X}$
$\forall k\geq m$. Consider $C\in\Gennr[\mathcal{T}][k][\mathcal{X}]\cap\mathcal{X}$
with $k\geq m$. By definition,  there is an exact sequence 
$\suc[K][T_{k}\stackrel{f_{k}}{\rightarrow}...\stackrel{f_{2}}{\rightarrow}T_{1}][C][][f_1]$ 
where $\Kerx[f_{i}]\in\mathcal{X}$ and $T_{i}\in\mathcal{T}\cap\mathcal{X}$
$\forall i\in[1, k]$. Then,  by  \cite[Prop. 2.6]{parte1}, 
it follows that $C\in\mathcal{T}^{\bot}\cap\mathcal{X}.$
\

(c) It is clear that the pair $({}{}^{\bot}(\mathcal{T}^{\bot}), \mathcal{T}^{\bot})$ is hereditary. Let us prove that it is $\mathcal{X}$-complete.
By Lemma \ref{lem: inf2} (a),  $(\mathcal{T}\cap\mathcal{X})_{\mathcal{X}}^{\vee}\cap\mathcal{X}\subseteq{}^{\bot}(\mathcal{T}^{\bot})\cap\mathcal{X}$.
On the other hand,  by Lemma \ref{lem: inf4} and  Lemma \ref{lem: inf6},  for each $X\in\X, $ 
there are  exact sequences $\suc[X][M_{X}][C_{X}]\mbox{ and }\suc[K_{X}][B_{X}][X]\mbox{, }$
where $M_{X}, K_{X}\in\mathcal{T}^{\bot}\cap\mathcal{X}$ and $C_{X}, B_{X}\in(\mathcal{T}\cap\mathcal{X})_{\mathcal{X}}^{\vee}\cap\mathcal{X};$ proving (c).
\end{proof}

The following result is a generalization of \cite[Thm. 4.3]{Wei}, 
\cite[Thm. 3.11]{Bazzonintilting}  and \cite[Thm. 4.4]{Tiltinginfinitamentegenerado}.

\begin{thm}\label{prop: primera generalizacion}\label{prop: primera generalizacion-1}
For   $n\geq1, $ $\mathcal{X}=\smdx[\mathcal{X}]\subseteq\mathcal{C}$
 closed under extensions and $\mathcal{T}=\smdx[\mathcal{T}]\subseteq\mathcal{C}$
satisfying $\mathrm{(T4)}$ and $\mathrm{(T5)}, $  the following statements are equivalent.
\begin{itemize}
\item[$\mathrm{(a)}$] $\mathcal{T}$ is $n$-$\mathcal{X}$-tilting.
\item[$\mathrm{(b)}$] $\mathcal{T}^{\bot}\cap\mathcal{X}=\Gennr\cap\mathcal{X}.$
\item[$\mathrm{(c)}$] $\mathcal{T}^{\bot}\cap\mathcal{X}=\Gennr[][k]\cap\mathcal{X}$ $\forall k\geq n$.
\item[$\mathrm{(d)}$] $\mathcal{T}^{\bot}\cap\mathcal{X}$ is closed by $n$-quotients in
$\mathcal{X}$ and $\mathcal{T}\cap\mathcal{X}\subseteq\mathcal{T}^{\bot}\cap\mathcal{X}\subseteq\Gennr[][1]$.
\end{itemize}
\end{thm}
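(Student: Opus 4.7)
I would prove the cycle $\mathrm{(a)}\Rightarrow\mathrm{(c)}\Rightarrow\mathrm{(b)}\Rightarrow\mathrm{(d)}\Rightarrow\mathrm{(a)}$. The implication $\mathrm{(a)}\Rightarrow\mathrm{(c)}$ follows at once from Theorem \ref{thm: el par n-X-tilting}(b): condition (T1) gives $\pdr[\X][\T]\leq n$, and since $n\geq 1$ we have $k\geq\max\{1,\pdr[\X][\T]\}$ for every $k\geq n$, so the claimed equality holds. Specialising $k=n$ yields $\mathrm{(c)}\Rightarrow\mathrm{(b)}$.

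For $\mathrm{(b)}\Rightarrow\mathrm{(d)}$, the inclusion $\T\cap\X\subseteq\T^{\bot}\cap\X$ follows from the trivial length-$n$ presentation of any $T\in\T\cap\X$ (valid because $\T=\smd(\T)$ forces $0\in\T$), which places $T$ in $\Gennr\cap\X$, and hence in $\T^{\bot}$ by (b). Truncating any $\Gennr$-presentation at its last step gives the inclusion $\T^{\bot}\cap\X\subseteq\Gennr[\T][1][\X]$. The genuinely delicate part is the closure of $\T^{\bot}\cap\X$ by $n$-quotients in $\X$: given an exact sequence $0\to A\to Y_{n}\to\cdots\to Y_{1}\to B\to 0$ as in the definition with $Y_{i}\in\T^{\bot}\cap\X=\Gennr\cap\X$ by (b), my plan is to invoke the closure properties of $\Gennr$ recorded in \cite[Prop. 5.2]{parte1}, together with the fact that $\X$ is closed under extensions, to transfer the length-$n$ $\T$-presentations of the $Y_{i}$'s to a length-$n$ $\T$-presentation of $B$, and then $B\in\Gennr\cap\X=\T^{\bot}\cap\X$ by (b).

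For $\mathrm{(d)}\Rightarrow\mathrm{(a)}$, the conditions (T0), (T4), (T5) are standing hypotheses and (T2) is built into (d). To establish (T1), fix $X\in\X$ and apply (T4) iteratively to obtain a coresolution $0\to X\to A_{0}\to\cdots\to A_{n-1}\to X_{n}\to 0$ with $A_{i}\in\alpha\subseteq\T^{\bot}\cap\X$ and all cosyzygies in $\X$. The closure-by-$n$-quotients property in (d) applied to this sequence forces $X_{n}\in\T^{\bot}\cap\X$, and dimension shifting through the $A_{i}\in\T^{\bot}$ yields $\Ext^{n+1}(T,X)\cong\Ext^{1}(T,X_{n})=0$ for all $T\in\T$; the same argument with longer $\alpha$-coresolutions gives $\Ext^{n+k}(T,X)=0$ for every $k\geq 1$, so $\pdr[\X][\T]\leq n$. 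Condition (T3) is then supplied by Proposition \ref{prop: equiv a t3}, whose hypotheses (T1), (T2), (T4), (T5), together with $\T^{\bot}\cap\X\subseteq\Gennr[\T][1][\X]$ from (d), now all hold.

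The main obstacle I expect is the closure by $n$-quotients step in $\mathrm{(b)}\Rightarrow\mathrm{(d)}$: naive concatenation of the length-$n$ $\T$-presentations of the $Y_{i}$'s fails to be exact at the splicing point, because the kernel of the composition $T_{1,1}\to Y_{1}\to B$ strictly contains the image of $T_{1,2}\to T_{1,1}$ coming from the resolution of $Y_{1}$. The plan is to bypass this obstruction through the closure properties of $\Gennr$ in \cite[Prop. 5.2]{parte1}, possibly complemented by the relative precovers furnished by (T5) and Lemma \ref{lem: props C2 y T2}; once this point is cleared, every other step reduces to dimension shifting, Lemma \ref{lem: inf1}, and Proposition \ref{prop: equiv a t3}.
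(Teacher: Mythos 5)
Your cycle $(a)\Rightarrow(c)\Rightarrow(b)\Rightarrow(d)\Rightarrow(a)$ is a valid shape, and the steps $(a)\Rightarrow(c)\Rightarrow(b)$ and $(d)\Rightarrow(a)$ are handled correctly and in essentially the same way the paper does them (Theorem \ref{thm: el par n-X-tilting}(b) for the first, \cite[Prop. 2.6]{parte1}-style dimension shifting plus Proposition \ref{prop: equiv a t3} for the last). The problem is that you have not actually supplied the content of the hard step: you acknowledge the obstruction in $(b)\Rightarrow(d)$, point at \cite[Prop. 5.2]{parte1}, (T5), and Lemma \ref{lem: props C2 y T2} as tools, and say "once this point is cleared." That is a gap, not a proof, and it is precisely where the real work lies.

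Concretely, condition $(b)$ gives only the single identity $\T^{\bot}\cap\X=\operatorname{Fac}_n^{\X}(\T)\cap\X$, and \cite[Prop. 5.2]{parte1} does not yield closure by $n$-quotients from that alone; in the paper it is applied after one already knows $\operatorname{Fac}_n^{\X}(\T)\cap\X=\operatorname{Fac}_{n+1}^{\X}(\T)\cap\X$, i.e.\ after establishing $(c)$. By reordering the cycle so that $(c)\Rightarrow(b)$ is the trivial specialisation, you have shifted all the difficulty of the paper's $(b)\Rightarrow(c)$ into your $(b)\Rightarrow(d)$ but lost the $\operatorname{Fac}_{n+1}$ information needed to invoke Prop.~5.2. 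What is actually required is the precover argument: for $N\in\T^{\bot}\cap\X=\operatorname{Fac}_n^{\X}(\T)\cap\X$, take by (T5) a $\T$-precover $f\colon A\to N$ with $A\in\T\cap\X$; since $\operatorname{Fac}_n^{\X}(\T)\subseteq\operatorname{Fac}_1^{\X}(\T)$ the map $f$ is epic, and since $A,N\in\T^{\bot}$ and $f$ is a $\T$-precover one gets $K:=\Ker f\in\T^{\bot}$; finally a pull-back against a defining length-one $\T$-presentation of $N$ produces a short exact sequence $K\hookrightarrow P\twoheadrightarrow M_0$ that splits (because $M_0\in\T$, $K\in\T^{\bot}$) and lives in $\X$ (closure under extensions), so $K\in\X$. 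This lifts a length-$n$ presentation of $K$ to a length-$(n+1)$ presentation of $N$, proving $(c)$, and only then does Prop.~5.2 close the loop to $(d)$. Your "trivial length-$n$ presentation" remark for $\T\cap\X\subseteq\T^{\bot}\cap\X$ and the $\alpha$-coresolution argument for (T1) in $(d)\Rightarrow(a)$ are fine, but the proposal as written does not clear its own flagged obstacle.
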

\begin{proof} 
 (a) $\Rightarrow$ (b)  It follows from Theorem \ref{thm: el par n-X-tilting} (b).
\

(b) $\Rightarrow$ (c) It is enough to prove that $\Gennr[\mathcal{T}][n+1][\mathcal{X}]\cap\mathcal{X}\supseteq\Gennr[\mathcal{T}][n][\mathcal{X}]\cap\mathcal{X}$.
Let $N\in\Gennr[\mathcal{T}][n][\mathcal{X}]\cap\mathcal{X}=\mathcal{T}^{\bot}\cap\mathcal{X}.$ Then,  by (T5),    there is a 
$\mathcal{T}$-precover $f: A\rightarrow N$ with $A\in\mathcal{X}.$ Moreover, since $\Gennr[\mathcal{T}][n]\subseteq\Gennr[\mathcal{T}][1]$,  we have the exact sequence 
$\eta: \; \suc[K][A][N][][f]$, where $A\in\T\cap\X\subseteq \Gennr[\mathcal{T}][n][\mathcal{X}]\cap\mathcal{X}=\mathcal{T}^{\bot}\cap\mathcal{X}$. Using that $A, N\in\T^{\perp}$ and that $f$ is a $\T$-precover, we get that $K\in\mathcal{T}^{\bot}.$ Let us prove that $K\in \X.$ Notice that there is an exact sequence 
$\eta': \; \suc[K'][M_0][N][][f'] $, where $M_0\in \T\cap\X$ and $K'\in\X.$ Then,  from the pull-back construction of $f$ and $f', $ we  get an exact sequence $\eta'': \; \suc[K][P][M_0]$, where $P\in\X$ since $\X$ is closed under extensions. Notice that $\eta''$ splits since $M_0\in\T$ and $K\in\T^\perp$ and thus $K\in\X.$ Therefore $K\in\T^\perp\cap\X=\Gennr\cap\mathcal{X}$ and from the exact sequence $\eta, $ it follows that $N\in \Gennr[\mathcal{T}][n+1][\mathcal{X}]\cap\mathcal{X}.$
\

(c) $\Rightarrow$ (d)  By (c),  we know that $\mathcal{T}\cap\mathcal{X}\subseteq\Gennr[\mathcal{T}][n]\cap\mathcal{X}=\mathcal{T}^{\bot}\cap\mathcal{X}\subseteq\Gennr[][1]\mbox{.}$
Since 
$\mathcal{T}^{\bot}\cap\mathcal{X}=\Gennr[\mathcal{T}][n]\cap\mathcal{X}=\Gennr[\mathcal{T}][n+1]\cap\mathcal{X}\mbox{, }$ from 
\cite[Prop. 5.2]{parte1},  we get that  
 $\mathcal{T}^{\bot}\cap\mathcal{X}$ is closed by $n$-quotients
in $\mathcal{X}.$  
\

(d) $\Rightarrow$ (a) Since $\mathcal{T}^{\bot}\cap\mathcal{X}$ is
closed by $n$-quotients in $\mathcal{X}$ and $\mathrm{(T4)}$ holds true,  it follows from \cite[Prop. 2.6]{parte1},   that $\pdr[\mathcal{X}][\mathcal{T}]\leq n$ and thus (T1) holds true.  Furthermore,  by (d),  we have that 
 (T2) holds true and  $\mathcal{T}^{\bot}\cap\mathcal{X}\subseteq\Gennr[][1]\cap\mathcal{X}$.
Therefore,  by Proposition \ref{prop: equiv a t3},  we conclude that $\mathcal{T}$
is $n$-$\mathcal{X}$-tilting.
\end{proof}

As an easy consequence of Theorem \ref{prop: primera generalizacion},  we can give an equivalent
condition of (T5) in case $\mathcal{T}\subseteq\mathcal{X}.$

\begin{cor} \label{prop: primera generalizacion-1-1} 
Let   $n\geq1$,  $\mathcal{X}=\smdx[\mathcal{X}]\subseteq\mathcal{C}$
closed under extensions,  and let $\mathcal{T}=\smdx[\mathcal{T}]\subseteq\mathcal{X}$
satisfying $\mathrm{(T1),   (T2),   (T3)}$ and $\mathrm{(T4)}.$ Then,  the following statements
are equivalent.
\begin{itemize}
\item[$\mathrm{(a)}$] $\mathcal{T}$ is $n$-$\X$-tilting.
\item[$\mathrm{(b)}$] $\mathcal{T}^{\bot}\cap\mathcal{X}=\Gennr[\mathcal{T}]\cap\mathcal{X}=\Gennr[\mathcal{T}][n+1]\cap\mathcal{X}.$
\end{itemize}
\end{cor}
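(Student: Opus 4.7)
The plan is to reduce both implications to Theorem \ref{prop: primera generalizacion}. Since (T1), (T2), (T3), (T4) and $\mathcal{T}=\operatorname{smd}(\mathcal{T})$ are already in force, the only hypothesis of that theorem still to verify is (T5). Thus the entire task is to see that, under the present assumptions, condition (b) is tantamount to (T5); recall (T5) asks every $Z\in\mathcal{T}^{\bot}\cap\mathcal{X}$ to admit a $\mathcal{T}$-precover $T'\to Z$ with $T'\in\mathcal{X}$.

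The direction (a) $\Rightarrow$ (b) is immediate: the equivalence (a) $\Leftrightarrow$ (c) in Theorem \ref{prop: primera generalizacion} gives $\mathcal{T}^{\bot}\cap\mathcal{X}=\Gennr[\mathcal{T}][k][\mathcal{X}]\cap\mathcal{X}$ for every $k\geq n$, which specialises to (b) by taking $k=n$ and $k=n+1$.

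For the converse, I would fix $Z\in\mathcal{T}^{\bot}\cap\mathcal{X}$ and use the equality $\mathcal{T}^{\bot}\cap\mathcal{X}=\Gennr[\mathcal{T}][n+1][\mathcal{X}]\cap\mathcal{X}$ to produce an exact sequence
\[
0\to K\to T_{n+1}\stackrel{f_{n+1}}{\to}T_n\to\cdots\to T_2\stackrel{f_2}{\to}T_1\stackrel{f_1}{\to} Z\to 0,
\]
with $T_i\in\mathcal{T}\cap\mathcal{X}$ and each $\ker f_i\in\mathcal{X}$. Splitting off the short exact sequence $0\to Y\to T_1\stackrel{f_1}{\to} Z\to 0$, the object $Y=\ker f_1$ lies in $\mathcal{X}$, and the truncated tail $0\to K\to T_{n+1}\to\cdots\to T_2\to Y\to 0$ (obtained by corestricting $f_2$ to its image $Y$) exhibits $Y$ as an object of $\Gennr[\mathcal{T}][n][\mathcal{X}]\cap\mathcal{X}$. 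By the other half of (b), this class coincides with $\mathcal{T}^{\bot}\cap\mathcal{X}$, so $\operatorname{Ext}^{1}(T,Y)=0$ for every $T\in\mathcal{T}$. Applying $\operatorname{Hom}(T,-)$ to $0\to Y\to T_1\to Z\to 0$ then shows that $\operatorname{Hom}(T,T_1)\to\operatorname{Hom}(T,Z)$ is surjective, so $f_1$ is a $\mathcal{T}$-precover with $T_1\in\mathcal{T}\cap\mathcal{X}\subseteq\mathcal{X}$. This is (T5), and Theorem \ref{prop: primera generalizacion} yields (a).

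The only subtle point is verifying that the truncated sequence inherits the ``intermediate kernels in $\mathcal{X}$'' condition from the original, which is automatic since its intermediate kernels form a subset of those of the original resolution. The role of the seemingly redundant second equality in (b) is precisely to upgrade the default length-$n$ representation coming from $\mathcal{T}^{\bot}\cap\mathcal{X}=\Gennr[\mathcal{T}][n][\mathcal{X}]\cap\mathcal{X}$ to one of length $n+1$, so that peeling off one factor $T_1$ still leaves a length-$n$ representation of the kernel $Y$ and thereby forces $Y$ back into $\mathcal{T}^{\bot}\cap\mathcal{X}$; this is what powers the $\mathcal{T}$-precover argument.
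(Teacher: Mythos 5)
Your proof is correct and follows precisely the route the paper intends: the paper states this corollary without a written proof as an \emph{easy consequence} of Theorem~\ref{prop: primera generalizacion}, and you have supplied the missing details. The forward direction is indeed just specialising that theorem's equivalence (a)$\Leftrightarrow$(c) to $k=n$ and $k=n+1$, and for the converse your peeling argument—use the second equality to write $Z$ as a length-$(n+1)$ relative factor, truncate off $T_1$, observe that the intermediate kernel $Y=\operatorname{Ker}(f_1)$ inherits membership in $\operatorname{Fac}^{\mathcal X}_{n}(\mathcal T)\cap\mathcal X=\mathcal T^{\bot}\cap\mathcal X$ by the first equality, and read off that $f_1$ is a $\mathcal T$-precover—correctly establishes (T5), which together with the assumed (T0)--(T4) gives (a).
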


\subsection{$n$-$\mathcal{X}$-tilting classes and relative dimensions}

\begin{prop}\label{prop: (a)}\label{prop: (a)-1} 
Let   $\mathcal{X}=\smdx[\mathcal{X}]\subseteq\mathcal{C}$ be a class 
closed under extensions and $\mathcal{T}\subseteq\mathcal{C}$ be an
$n$-$\mathcal{X}$-tilting class. Then,  the pair $\p: =({}{}^{\bot}(\T^{\bot}), \T^{\bot})$
and the class $\nu: =\mathcal{A}\cap\mathcal{B}\cap\mathcal{X}$ satisfy that
$\nu$ is a relative $\mathcal{B}\cap\mathcal{X}$-projective generator
in $\mathcal{B}\cap\mathcal{X}$ and a relative $\mathcal{A}\cap\mathcal{X}$-injective
cogenerator in $\mathcal{A}\cap\mathcal{X}$. Furthermore,  the following statements hold true.
\begin{itemize}
\item[$\mathrm{(a)}$] $\nu$ $=\mathcal{A}\cap\mathcal{X}\cap\left(\mathcal{A}\cap\mathcal{X}\right)^{\bot}$ $=\mathcal{B}\cap\mathcal{X}\cap{}^{\bot}(\mathcal{B}\cap\mathcal{X})=(\nu, \mathcal{A}\cap\mathcal{X})^{\wedge}$ $=(\mathcal{B}\cap\mathcal{X}, \nu)^{\vee}=$
$=\mathcal{A}\cap\mathcal{X}\cap\nu^{\wedge}$ $=\mathcal{B}\cap\mathcal{X}\cap\nu^{\vee}.$
\item[$\mathrm{(b)}$] $\mathcal{X}\subseteq(\mathcal{B}\cap\mathcal{X})_{\mathcal{X}}^{\vee}\subseteq(\mathcal{B}\cap\mathcal{X})^{\vee}.$
\item[$\mathrm{(c)}$] $(\mathcal{T}\cap\mathcal{X})_{\mathcal{X}}^{\vee}\subseteq(\mathcal{T}\cap\mathcal{X})^{\vee}\subseteq{}^{\bot}(\mathcal{B}\cap\mathcal{X}).$
\item[$\mathrm{(d)}$] $\mathcal{T}\cap\mathcal{X}=(\mathcal{T}\cap\mathcal{X})_{\mathcal{X}}^{\vee}\cap\mathcal{B}\cap\mathcal{X}=(\mathcal{T}\cap\mathcal{X})^{\vee}\cap\mathcal{B}\cap\mathcal{X}=\nu=$
$(\mathcal{T}\cap\mathcal{X})_{\mathcal{X}}^{\vee}\cap\mathcal{B}.$
\item[$\mathrm{(e)}$] $\mathcal{A}\cap(\mathcal{X}, \nu)^{\vee}=\mathcal{A}\cap\mathcal{X}.$
\item[$\mathrm{(f)}$] 
$\mathcal{B}\cap(\nu, \mathcal{X})^{\wedge}=\left\{ M\in\mathcal{B}\cap\mathcal{X}\, |\: \pdr[\mathcal{B}\cap\mathcal{X}][M]<\infty\right\} $.
\end{itemize}
\end{prop}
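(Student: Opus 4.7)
The plan is to first identify $\nu$ with a tractable class, then verify the two generator/cogenerator assertions, and finally work through (a)--(f) using the results in the excerpt.

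The cornerstone is the identity $\nu=\mathcal{T}\cap\mathcal{X}$. Since $\mathcal{T}$ satisfies $\mathrm{(T0)}$--$\mathrm{(T5)}$, Proposition \ref{prop: equiv a t3} yields $\mathcal{T}^\perp\cap\mathcal{X}\subseteq\Gennr[\mathcal{T}][1][\mathcal{X}]$, so Lemma \ref{lem: inf5} applies: its part $\mathrm{(b)}$ gives $\mathcal{T}\cap\mathcal{X}=\mathcal{T}^\perp\cap\mathcal{X}\cap{}^\perp(\mathcal{T}^\perp)=\mathcal{A}\cap\mathcal{B}\cap\mathcal{X}=\nu$, and its part $\mathrm{(a)}$ is exactly the statement that $\nu$ is a $(\mathcal{B}\cap\mathcal{X})$-projective relative generator in $\mathcal{B}\cap\mathcal{X}$. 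For the relative cogenerator claim, given $A\in\mathcal{A}\cap\mathcal{X}$, the $\mathcal{X}$-completeness from Theorem \ref{thm: el par n-X-tilting}(c) produces an exact sequence $\suc[A][B][A']$ with $B\in\mathcal{B}\cap\mathcal{X}$ and $A'\in\mathcal{A}\cap\mathcal{X}$; since $\mathcal{A}={}^\perp\mathcal{B}$ is closed under extensions, $B\in\mathcal{A}\cap\mathcal{B}\cap\mathcal{X}=\nu$, and the $(\mathcal{A}\cap\mathcal{X})$-injectivity of $\nu$ is built into the hereditary property of the pair.

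Items $\mathrm{(b)}$--$\mathrm{(e)}$ mostly repackage earlier results. Claim $\mathrm{(b)}$ is Lemma \ref{lem: inf1}(b). In $\mathrm{(c)}$, the first inclusion is immediate and the second is \cite[Lem. 4.3]{parte1} applied to $\nu\subseteq\mathcal{A}$. For $\mathrm{(d)}$, Lemma \ref{lem: inf2}(b) directly yields $(\mathcal{T}\cap\mathcal{X})^\vee_\mathcal{X}\cap\mathcal{B}=\mathcal{T}\cap\mathcal{X}$, and the remaining equalities in the chain follow from the same splitting argument: given $M$ in any of the claimed intersections, the first step of a $(\mathcal{T}\cap\mathcal{X})$- or $(\mathcal{T}\cap\mathcal{X})_\mathcal{X}$-coresolution has cokernel in $\mathcal{T}^\vee\subseteq{}^\perp\mathcal{B}$ by Lemma \ref{lem: inf2}(a), hence splits once $M\in\mathcal{B}$, placing $M$ inside $\smdx[\mathcal{T}\cap\mathcal{X}]=\mathcal{T}\cap\mathcal{X}$. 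Claim $\mathrm{(e)}$ follows from Theorem \ref{thm: el par n-X-tilting}(a), which identifies $\mathcal{A}\cap\mathcal{X}=(\mathcal{T}\cap\mathcal{X})^\vee_\mathcal{X}\cap\mathcal{X}\subseteq\nu^\vee_\mathcal{X}$, so $\mathcal{A}\cap(\mathcal{X},\nu)^\vee=\mathcal{A}\cap\mathcal{X}$.

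The six identities in $\mathrm{(a)}$ split into two families. The first two, $\nu=\mathcal{A}\cap\mathcal{X}\cap(\mathcal{A}\cap\mathcal{X})^\perp$ and $\nu=\mathcal{B}\cap\mathcal{X}\cap{}^\perp(\mathcal{B}\cap\mathcal{X})$, follow by splitting the cogenerator (resp. generator) short exact sequence supplied in the second paragraph. The four identities involving $\nu^\wedge$ and $\nu^\vee$ are proved by induction on the length of a finite (co)resolution: $\mathcal{B}=\mathcal{T}^\perp$ is closed under kernels of epimorphisms and $\mathcal{A}$ under cokernels of monomorphisms (when the outer terms are inside), so every syzygy lies in $\mathcal{B}$ or $\mathcal{A}$ as needed, and the ambient membership of $M$ in $\mathcal{A}\cap\mathcal{X}$ or $\mathcal{B}\cap\mathcal{X}$ forces $M\in\nu$. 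The main obstacle is $\mathrm{(f)}$: the forward inclusion is routine since $\nu$ consists of $(\mathcal{B}\cap\mathcal{X})$-projectives, but the reverse demands, given $M\in\mathcal{B}\cap\mathcal{X}$ with $\pdr[\mathcal{B}\cap\mathcal{X}][M]=d<\infty$, iterating the relative generator to produce syzygies $K_0,\ldots,K_{d-1}\in\mathcal{B}\cap\mathcal{X}$ and then invoking the dual of \cite[Prop. 2.6]{parte1} to conclude that $K_{d-1}\in{}^\perp(\mathcal{B}\cap\mathcal{X})\cap(\mathcal{B}\cap\mathcal{X})=\nu$, truncating the resolution at length $d$.
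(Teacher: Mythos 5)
Your route is genuinely more self-contained than the paper's: where the paper cites \cite[Thm. 4.24, Prop. 4.23]{parte1} as black boxes for the generator/cogenerator statements and for item (a), you anchor everything on the identity $\nu=\T\cap\X$ from Lemma \ref{lem: inf5}(b), so that the generator claim is literally Lemma \ref{lem: inf5}(a), the cogenerator claim follows from $\X$-completeness and closure of $\A$ under extensions, and the identities in (a) and (d) become splitting arguments. Items (b), (c), (e) are reached by citations differing from the paper's but equally valid; your treatment of (e) via Theorem \ref{thm: el par n-X-tilting}(a) is arguably more direct than the paper's passage through relative injective dimension. The gain is that the concrete content of the \cite{parte1} theorems is made visible in the tilting setting.

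There is, however, an error in the closure properties you invoke for the last four identities of (a). You write that ``$\B=\T^\perp$ is closed under kernels of epimorphisms and $\A$ under cokernels of monomorphisms.'' This is exactly backwards. The right orthogonal $\T^\perp$ is closed under cokernels of monomorphisms: given $\suc[B_1][B_2][C]$ with $B_1,B_2\in\T^\perp$, the long exact sequence for $\Ext_\C(T,-)$ yields $\Ext^k_\C(T,C)=0$ for all $k\geq 1$; it is \emph{not} in general closed under kernels of epimorphisms, since the connecting map $\Hom_\C(T,B_3)\to\Ext^1_\C(T,K)$ need not vanish. Dually, $\A={}^\perp(\T^\perp)$ is closed under kernels of epimorphisms but not under cokernels of monomorphisms. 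Fortunately the correct properties are exactly what your induction needs: for $(\nu,\A\cap\X)^\wedge$ and $\A\cap\X\cap\nu^\wedge$ you propagate membership in $\B$ up a finite $\nu$-resolution $0\to V_n\to\cdots\to V_0\to M\to 0$ via the successive mono-cokernels, concluding $M\in\B\cap\A\cap\X=\nu$; for $(\B\cap\X,\nu)^\vee$ and $\B\cap\X\cap\nu^\vee$ the dual argument descends through the epi-kernels in $\A$. With the swap corrected your induction is sound; as stated it is not.
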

\begin{proof}
Notice that $\A$ and $\B$ are closed under extensions and direct summands. By Theorem \ref{thm: el par n-X-tilting} (c), 
 it follows that the pair  $\p$ is $\mathcal{X}$-hereditary and $\mathcal{X}$-complete. Therefore,  by \cite[Thm. 4.24 (a, b)]{parte1}  we get (a). Moreover, 
by \cite[Prop. 4.23 (a,  b)]{parte1},  it follows  that $\nu$
is a relative $\mathcal{B}\cap\mathcal{X}$-projective generator in
$\mathcal{B}\cap\mathcal{X}$ and a relative $\mathcal{A}\cap\mathcal{X}$-injective
cogenerator in $\mathcal{A}\cap\mathcal{X}.$
\

The items  (b) and (c) follow from Lemma \ref{lem: inf6},  and the item (d) follows from putting together  Lemma \ref{lem: inf2} (b),  the item (c)
 and Lemma \ref{lem: inf5} (b). 
 \
 
Let us prove (e). Indeed, by \cite[Lem. 4.17 (a)]{parte1}, 
we know that 
\begin{center}
$\mathcal{A}\cap(\mathcal{X}, \nu)^{\vee}=\left\{ M\in\mathcal{A}\cap\mathcal{X}\, |\: \idr[\mathcal{A}\cap\mathcal{X}][M]<\infty\right\} \mbox{.}$
\end{center}
Now,  from (T4) and \cite[Prop. 4.5 (a)]{parte1}, 
$\coresdimr{\mathcal{B}\cap\mathcal{X}}{\mathcal{X}}{\mathcal{X}}\leq\pdr[\mathcal{X}][T]<\infty$;
and thus, by \cite[Thm. 4.24 (a)]{parte1}, 
$\pdr[\mathcal{A}\cap\mathcal{X}][\mathcal{A}\cap\mathcal{X}]=\coresdimr{\mathcal{B}\cap\mathcal{X}}{\mathcal{X}}{\mathcal{X}}<\infty$.
Therefore $\mathcal{A}\cap(\mathcal{X}, \nu)^{\vee}=\mathcal{A}\cap\mathcal{X}.$ 
Finally,  the item (f) follows from the dual result of \cite[Lem. 4.17 (a)]{parte1}.
\end{proof}

\begin{prop}\label{prop: (b)}\label{prop: (b)-1} 
Let   $\mathcal{X}=\smdx[\mathcal{X}]\subseteq\mathcal{C}$ be
closed under extensions,  and let $\mathcal{T}\subseteq\mathcal{C}$ be $n$-$\mathcal{X}$-tilting.
Then,  for the pair $\p: =({}{}^{\bot}(\T^{\bot}), \T^{\bot}), $  it follows that 
$^{\bot}(\mathcal{B}\cap\mathcal{X})\cap\mathcal{X}=\mathcal{A}\cap\mathcal{X}$
and $(\mathcal{A}\cap\mathcal{X})^{\bot}\cap\mathcal{X}=\mathcal{B}\cap\mathcal{X}.$ Moreover,  the following statements hold true.
\begin{itemize}
\item[$\mathrm{(a)}$] For any $X\in\mathcal{X}, $ we have that
  \begin{itemize}
  \item[$\mathrm{(a1)}$] $\resdimr{\mathcal{A}}X{\mathcal{X}}$  $= \resdimr{\mathcal{A}\cap\mathcal{X}}X{\mathcal{X}}$
 $= \resdimr{\mathcal{A}\cap\mathcal{X}}X{\, }$  $=\resdimr{\mathcal{A}}X{\, }$ $=$
 $=\pdr[\mathcal{B}\cap\mathcal{X}][X]$ $\leq\resdimr{\mathcal{A}\cap\mathcal{X}}{\mathcal{B}\cap\mathcal{X}}{\, }+1\mbox{;}$
  \item[$\mathrm{(a2)}$] $\resdimr{\mathcal{A}}X{\, }\leq\pdr[\mathcal{B}][X]\leq\resdimr{\mathcal{A}}{\mathcal{B}}{\, }+1;$
  \item[$\mathrm{(a3)}$] $\idr[\mathcal{A}\cap\mathcal{X}][X]=$ $\coresdimr{\mathcal{B}}X{\mathcal{X}}=$
$\coresdimr{\mathcal{B}\cap\mathcal{X}}X{\mathcal{X}}=$ $\coresdimr{\mathcal{B}\cap\mathcal{X}}X{\, }=$
$=\coresdimr{\mathcal{B}}X{\, }$ $\leq\coresdimr{\mathcal{B}\cap\mathcal{X}}{\mathcal{A}\cap\mathcal{X}}{\, }+1\mbox{;}$
  \item[$\mathrm{(a4)}$] $\coresdimx{\mathcal{B}}X\leq\idr[\mathcal{A}][X]\leq\coresdimr{\mathcal{B}}{\mathcal{A}}{\, }+1.$
  \end{itemize}
\item[$\mathrm{(b)}$] $\coresdimr{\mathcal{B}\cap\mathcal{X}}{\mathcal{X}}{\mathcal{X}}\leq\pdr[\mathcal{X}][\mathcal{T}]=\pdr[\mathcal{X}][\mathcal{A}]=\pdr[\mathcal{X}][^{\bot}(\mathcal{B}\cap\mathcal{X})]<\infty$.
\end{itemize}
\end{prop}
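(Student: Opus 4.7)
The pair $\p=({}^{\bot}(\T^{\bot}),\T^{\bot})$ is $\X$-hereditary and $\X$-complete by Theorem \ref{thm: el par n-X-tilting}(c), and by Proposition \ref{prop: (a)} the class $\nu=\A\cap\B\cap\X$ is simultaneously a relative $\B\cap\X$-projective generator in $\B\cap\X$ and a relative $\A\cap\X$-injective cogenerator in $\A\cap\X$. These are precisely the ingredients needed to invoke the Auslander--Buchweitz--Reiten-type machinery from \cite{parte1}. The plan is to use $\X$-completeness and the orthogonality already carried by $\A$ and $\B$ to pin down the orthogonal identities, and then translate the resulting (co)resolution structure into the dimension equalities via the results from \cite{parte1} that were already cited in Proposition \ref{prop: (a)}.

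First I would establish the two orthogonal identities. The inclusion $\A\cap\X\subseteq{}^{\bot}(\B\cap\X)\cap\X$ is immediate from $\A\subseteq{}^{\bot}\B$. For the reverse inclusion, pick $X\in{}^{\bot}(\B\cap\X)\cap\X$; by Theorem \ref{thm: el par n-X-tilting}(c) there is an exact sequence $\suc[B][A][X]$ with $A\in\A\cap\X$ and $B\in\B\cap\X$. Since $\Extx[1][][X][B]=0$, the sequence splits and $X$ is a direct summand of $A$, so $X\in\A\cap\X$ because $\A={}^{\bot}\B$ is closed under summands. The dual argument, using the right $\X$-completeness half of Theorem \ref{thm: el par n-X-tilting}(c), gives $(\A\cap\X)^{\bot}\cap\X=\B\cap\X$.

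For part (a), the chains of equalities are exactly the statements of \cite[Thm. 4.24]{parte1} (and its dual) applied to the $\X$-hereditary $\X$-complete pair $\p$, once we know that $\pdr[\X][\T]<\infty$; for instance, (a1) and (a2) recover $\resdim_{\A}(X)=\pdr[\B\cap\X][X]$ and its bound in terms of $\resdim_{\A\cap\X}(\B\cap\X)$, while (a3) and (a4) are their coresolution duals. Each individual identity follows by combining $^{\bot}(\B\cap\X)\cap\X=\A\cap\X$ (just proved) with the standard dimension-shifting argument: given a (co)resolution of $X$ by objects of $\A\cap\X$ or $\B\cap\X$, the vanishing of $\Extx[k][][\A][\B]$ lets one compare the different notions of dimension term by term, and the "$+1$" at the end of each chain is the usual shift coming from the $\X$-resolution passing once through $\nu$.

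For part (b), the inequality $\coresdimr{\B\cap\X}{\X}{\X}\leq\pdr[\X][\T]$ is Lemma \ref{lem: inf1}(b) (i.e.\ condition (T4) combined with \cite[Prop.~4.5(a)]{parte1}). The equality $\pdr[\X][\T]=\pdr[\X][\A]$ reduces to showing $\pdr[\X][\A]\leq\pdr[\X][\T]=:n$, since $\T\subseteq\A$ gives the other direction trivially; for any $A\in\A$ and $X\in\X$, take the coresolution $0\to X\to W_{0}\to\cdots\to W_{n}\to 0$ with $W_{i}\in\B\cap\X$ provided by Lemma \ref{lem: inf1}(b), and dimension-shift through it using $\Extx[k][][A][\B\cap\X]=0$ for all $k\geq1$ to conclude $\Extx[k][][A][X]=0$ for $k>n$. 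The same argument, applied verbatim with $A$ replaced by an arbitrary object of ${}^{\bot}(\B\cap\X)$, yields $\pdr[\X][{}^{\bot}(\B\cap\X)]\leq n$, and the reverse inequality follows from $\A\subseteq{}^{\bot}(\B\cap\X)$. The main obstacle I expect is purely bookkeeping: matching each of the eight identities in (a1)--(a4) with the precise clause of \cite[Thm. 4.24]{parte1} (and its dual) whose hypotheses are fulfilled here, and verifying that the finite coresolution dimension bound from (b) is what unlocks the finiteness required by those clauses.
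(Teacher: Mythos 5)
Your proof follows essentially the same route as the paper: set up via Theorem \ref{thm: el par n-X-tilting}(c) and Proposition \ref{prop: (a)}, then invoke the Auslander--Buchweitz--Reiten machinery from \cite{parte1} for the dimension chains in (a), and the coresolution bound from (T4) plus $\T\subseteq\A$ for (b). The two places where you diverge are cosmetic rather than substantive. For the orthogonal identities you give a self-contained splitting argument (take the $\X$-complete approximation of $X$, observe that $\Ext^1(X,B)=0$ forces it to split, and use closure under summands), whereas the paper deduces the same identities from \cite[Prop.\ 4.11(e)]{parte1} combined with Proposition \ref{prop: (a)}(a); your version is slightly more elementary and does not depend on the exact phrasing of Prop.\ 4.11(e). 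For (b) you likewise replace the reference to \cite[Prop.\ 4.5]{parte1} with a direct dimension-shift through the length-$n$ $\B\cap\X$-coresolution of $X\in\X$, which is correct and makes the chain of inequalities
\[
\pdr[\X][\T]\leq\pdr[\X][\A]\leq\pdr[\X][{}^{\bot}(\B\cap\X)]\leq\coresdimr{\B\cap\X}{\X}{\X}\leq\pdr[\X][\T]
\]
explicit. One small bookkeeping remark: for part (a) you cite \cite[Thm.\ 4.24]{parte1}, but the paper attributes (a1)--(a4) to \cite[Prop.\ 4.11]{parte1} and its dual; since you did not have \cite{parte1} in hand this is a harmless misreference, not a gap in the argument.
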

\begin{proof}
Notice that $\p$ is $\mathcal{X}$-hereditary and $\mathcal{X}$-complete
by Theorem \ref{thm: el par n-X-tilting} (c). Moreover,  by Proposition \ref{prop: (a)} (a),   $\left(\mathcal{A}\cap\mathcal{X}\right)^{\bot}\cap\mathcal{A}\cap\mathcal{X}\subseteq\mathcal{B}\cap\mathcal{X}$ and ${}^\perp(\B\cap\X)\cap\B\cap\X\subseteq\A\cap\X.$ 
Thereupon,  by \cite[Prop. 4.11(e)]{parte1} and its dual,  
$(\mathcal{A}\cap\mathcal{X})^{\bot}\cap\mathcal{X}\subseteq\mathcal{B}\subseteq(\mathcal{A}\cap\mathcal{X})^{\bot}$ and ${}^{\bot}(\mathcal{B}\cap\mathcal{X})\cap\mathcal{X}\subseteq\mathcal{A}\subseteq{}^{\bot}(\mathcal{B}\cap\mathcal{X})$.
Hence,  $^{\bot}(\mathcal{B}\cap\mathcal{X})\cap\mathcal{X}=\mathcal{A}\cap\mathcal{X}$
and $(\mathcal{A}\cap\mathcal{X})^{\bot}\cap\mathcal{X}=\mathcal{B}\cap\mathcal{X}$.
It remains to prove (a) and (b). Indeed,  the item (a) follows from \cite[Prop. 4.11]{parte1} and its dual. Finally,  the item (b)
follows from \cite[Prop. 4.5]{parte1} since $\T\subseteq\A.$
\end{proof}

By Theorem  \ref{thm: el par n-X-tilting} (c) and \cite[Prop. 4.23]{parte1}, we get the following result.

\begin{cor}\label{prop: oct1}\label{prop: oct1-1}
 Let   $\mathcal{X}=\smdx[\mathcal{X}]\subseteq\mathcal{C}$ be
closed under extensions and let $\mathcal{T}\subseteq\mathcal{C}$ be $n$-$\mathcal{X}$-tilting.
Then,  for the pair $\p: =({}{}^{\bot}(\mathcal{T}^{\bot}), \mathcal{T}^{\bot})$
and the class $\nu: =\mathcal{A}\cap\mathcal{B}\cap\mathcal{X}, $ the following statements hold true.
\begin{itemize}
\item[$\mathrm{(a)}$] $\pdr[\mathcal{B}\cap\mathcal{X}][M]=\pdr[\nu][M]=\pdr[\nu^{\wedge}][M]=\resdimr{\mathcal{A}}M{\mathcal{X}}=\resdimx{\mathcal{X}\cap\mathcal{A}}M$
$\forall M\in\left(\mathcal{A}, \mathcal{X}\right)^{\wedge}$.
\item[$\mathrm{(b)}$] $\pdr[\mathcal{B}\cap\mathcal{X}][M]=\resdimr{\nu}M{\mathcal{X}}$
$\forall M\in(\nu, \mathcal{X})^{\wedge}$.
\item[$\mathrm{(c)}$] $\idr[\mathcal{A}\cap\mathcal{X}][M]=\idr[\nu][M]=\idr[\nu^{\vee}][M]=\coresdimr{\mathcal{B\cap\mathcal{X}}}M{\, }=\coresdimr{\mathcal{B}}M{\mathcal{X}}$
$\forall M\in(\mathcal{X}, \mathcal{B})^{\vee}$.
\item[$\mathrm{(d)}$] $\idr[\mathcal{A}\cap\mathcal{X}][M]=\coresdimr{\nu}M{\mathcal{X}}$
$\forall M\in(\mathcal{X}, \nu)^{\vee}.$
\end{itemize}
\end{cor}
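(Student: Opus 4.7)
The plan is to apply \cite[Prop. 4.23]{parte1} directly to the pair $\p=(\A,\B):=({}^{\bot}(\T^{\bot}),\T^{\bot})$. Recall that Proposition 4.23 of \cite{parte1} takes as input an $\X$-complete and $\X$-hereditary pair in $\C$ (with $\X$ closed under extensions and direct summands) and computes various relative homological and resolution dimensions in terms of the intersection class $\A\cap\B\cap\X$. So the whole strategy is to verify that our pair fits these hypotheses, feed it into the proposition, and read off the four statements.

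The first step is to observe that $\X=\smd(\X)$ is closed under extensions by assumption. Next, by Theorem \ref{thm: el par n-X-tilting}(c), the pair $\p=(\A,\B)$ is both $\X$-complete and hereditary; in particular it is $\X$-hereditary. Consequently, the hypotheses of \cite[Prop. 4.23]{parte1} are satisfied, and the class $\nu=\A\cap\B\cap\X$ coincides with the distinguished ``core'' class appearing in that proposition (this is also the class identified in Proposition \ref{prop: (a)}(a) as a relative projective generator of $\B\cap\X$ and a relative injective cogenerator of $\A\cap\X$).

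With this identification in place, item (a) is exactly the chain of equalities that \cite[Prop. 4.23]{parte1} gives for objects in the resolution class $(\A,\X)^{\wedge}$ involving the $\B\cap\X$-projective dimension, the $\nu$- and $\nu^{\wedge}$-projective dimensions, and the $\mathcal{A}$- and $\mathcal{A}\cap\X$-resolution dimensions. Item (b) is the specialization to objects already in $(\nu,\X)^{\wedge}$, where the resolution dimension collapses to the $\nu$-resolution dimension. Items (c) and (d) are the dual statements, obtained by applying the dual part of \cite[Prop. 4.23]{parte1} (or equivalently passing to $\C^{op}$) to the $\X$-complete hereditary pair $\p$.

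No substantive obstacle is anticipated: the only thing that needs to be checked is that the abstract hypotheses of \cite[Prop. 4.23]{parte1} really translate into our $n$-$\X$-tilting setting via Theorem \ref{thm: el par n-X-tilting}(c), after which the four displayed identities are the verbatim conclusions of that proposition and its dual.
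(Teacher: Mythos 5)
Your proposal matches the paper's proof exactly: the paper derives the corollary in one line, citing Theorem~\ref{thm: el par n-X-tilting}(c) to establish that $(\A,\B)$ is $\X$-complete and hereditary and then invoking \cite[Prop.~4.23]{parte1} to read off the dimension identities, which is precisely your argument.
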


\begin{prop}\label{prop: oct2} 
Let   $\mathcal{X}=\smdx[\mathcal{X}]\subseteq\mathcal{C}$
be closed under extensions,  and let $\mathcal{T}\subseteq\mathcal{C}$ be
$n$-$\mathcal{X}$-tilting. Then,  for the pair $\p: =({}{}^{\bot}(\mathcal{T}^{\bot}), \mathcal{T}^{\bot})$
and the class $\nu: =\mathcal{A}\cap\mathcal{B}\cap\mathcal{X}, $ we have that
$\mathcal{A}\cap\mathcal{X}\subseteq\nu^{\vee}$. Moreover,  the following statements hold. 
\begin{itemize}
\item[$\mathrm{(a)}$] $\pdr[\mathcal{X}][\nu]$ $=\coresdimx{\mathcal{B}\cap\mathcal{X}}{\mathcal{A}\cap\mathcal{X}}$
$=\coresdimr{\mathcal{B}\cap\mathcal{X}}{\mathcal{A}\cap\mathcal{X}}{\mathcal{X}}$
 $=\coresdimr{\mathcal{B}}{\mathcal{X}}{\mathcal{X}}$
$=\coresdimx{\mathcal{B}\cap\mathcal{X}}{\mathcal{X}}$
$=\coresdimr{\mathcal{B}\cap\mathcal{X}}{\mathcal{X}}{\mathcal{X}}$
$=\pdr[\mathcal{A\cap\mathcal{X}}][\mathcal{A}\cap\mathcal{X}]$ 
$=\pdr[\mathcal{\mathcal{X}}][\mathcal{A}\cap\mathcal{X}]$
$=\coresdimx{\nu}{\mathcal{A}\cap\mathcal{X}}$
 $=\coresdimr{\mathcal{B}}{\mathcal{A}\cap\mathcal{X}}{\mathcal{X}}$
$\leq\pdr[\mathcal{X}][\mathcal{T}]<\infty\mbox{.}$ 
\item[$\mathrm{(b)}$] $\idr[\mathcal{X}][\nu]\leq$ $\resdimx{\mathcal{A}\cap\mathcal{X}}{\mathcal{B}\cap\mathcal{X}}$
$=\resdimr{\mathcal{A}\cap\mathcal{X}}{\mathcal{B}\cap\mathcal{X}}{\mathcal{X}}$
$=\resdimx{\mathcal{A}\cap\mathcal{X}}{\mathcal{B}\cap\mathcal{X}}$
$=\resdimr{\mathcal{A}\cap\mathcal{X}}{\mathcal{X}}{\mathcal{X}}$
$=\resdimr{\mathcal{A}}{\mathcal{X}}{\mathcal{X}}$ $=\resdimx{\nu}{\mathcal{B}\cap\mathcal{X}}$
$=\idr[\mathcal{B}\cap\mathcal{X}][\mathcal{B}\cap\mathcal{X}]$ $=\idr[\mathcal{X}][\mathcal{B}\cap\mathcal{X}]$
$=\resdimr{\mathcal{A}}{\mathcal{B}\cap\mathcal{X}}{\mathcal{X}}$.
\item[$\mathrm{(c)}$] $\idr[\mathcal{X}][\mathcal{B}\cap\mathcal{X}]<\infty$ if and only
if $\mathcal{B}\cap\mathcal{X}\subseteq\nu^{\wedge}$ and $\idr[\mathcal{X}][\nu]<\infty$.
Furthermore,  if $\idr[\mathcal{X}][\mathcal{B}\cap\mathcal{X}]<\infty, $ then
$\mathcal{B}\cap(\nu, \mathcal{X})^{\wedge}=\mathcal{B}\cap\mathcal{X}\mbox{,  }\mathcal{X}\subseteq(\mathcal{A}, \mathcal{X})^{\wedge}\subseteq\left(\mathcal{A}\cap\mathcal{X}\right)^{\wedge}\mbox{ and }\idr[\mathcal{X}][\mathcal{B}\cap\mathcal{X}]=\idr[\mathcal{X}][\nu]\mbox{.}$
\end{itemize}
\end{prop}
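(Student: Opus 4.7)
The strategy is to exploit that, by Theorem \ref{thm: el par n-X-tilting}(c), the pair $({}^{\bot}(\mathcal{T}^{\bot}),\mathcal{T}^{\bot})$ is $\mathcal{X}$-complete and $\mathcal{X}$-hereditary, so the Auslander-Buchweitz-Reiten dimension identities from \cite{parte1} --- already distilled in Propositions \ref{prop: (a)}, \ref{prop: (b)} and Corollary \ref{prop: oct1} --- are all available. In this setting $\nu=\mathcal{A}\cap\mathcal{B}\cap\mathcal{X}$ is simultaneously a relative $\mathcal{B}\cap\mathcal{X}$-projective generator in $\mathcal{B}\cap\mathcal{X}$ and an $\mathcal{A}\cap\mathcal{X}$-injective cogenerator in $\mathcal{A}\cap\mathcal{X}$, so every dimension involving $\nu$ can be compared with a twin involving the whole pair. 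The inclusion $\mathcal{A}\cap\mathcal{X}\subseteq\nu^{\vee}$ is then essentially free from Proposition \ref{prop: (a)}(e), which asserts $\mathcal{A}\cap\mathcal{X}=\mathcal{A}\cap(\mathcal{X},\nu)^{\vee}$: any object of $\mathcal{A}\cap\mathcal{X}$ already carries a finite $\nu_{\mathcal{X}}$-coresolution, a fortiori a $\nu$-coresolution.

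For part (a), each equality is a direct specialization of a previously recorded identity. The sub-block formed by $\pd_{\mathcal{A}\cap\mathcal{X}}(\mathcal{A}\cap\mathcal{X})$, $\pd_{\mathcal{X}}(\mathcal{A}\cap\mathcal{X})$, the two $(\mathcal{B}\cap\mathcal{X})$-coresolution dimensions of $\mathcal{X}$, and the bound $\leq\pd_{\mathcal{X}}(\mathcal{T})<\infty$ is exactly Proposition \ref{prop: (b)}(b) after taking the supremum over $\mathcal{A}\cap\mathcal{X}\subseteq{}^{\bot}(\mathcal{B}\cap\mathcal{X})$. The collapse of the coresolution dimensions taken against $\mathcal{B}\cap\mathcal{X}$ onto those taken against $\mathcal{B}$, and the identification of $\pd_{\mathcal{X}}(\nu)$ with $\coresdim_{\nu}(\mathcal{A}\cap\mathcal{X})$, follow by specializing Corollary \ref{prop: oct1}(c)-(d) to $M\in\mathcal{A}\cap\mathcal{X}$ --- which belongs to $(\mathcal{X},\mathcal{B})^{\vee}$ by Lemma \ref{lem: inf6} and to $(\mathcal{X},\nu)^{\vee}$ by the inclusion just proved. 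Finally, $\pd_{\mathcal{X}}(\nu)$ is sandwiched between $\pd_{\mathcal{X}}(\mathcal{B}\cap\mathcal{X})$ from above, via $\nu\subseteq\mathcal{B}\cap\mathcal{X}$, and the claimed coresolution dimensions from below, by dimension shifting along the finite $(\mathcal{B}\cap\mathcal{X})_{\mathcal{X}}$-coresolutions of $\mathcal{X}$ produced in Lemma \ref{lem: inf6}, exploiting that $\nu\subseteq\mathcal{A}$ kills $\Ext^{\geq 1}$ against $\mathcal{B}$.

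Part (b) is the formal dual of (a), using the generator side of $\nu$ (Proposition \ref{prop: (a)}) and Corollary \ref{prop: oct1}(a)-(b). The chain opens with ``$\leq$'' rather than ``$=$'' because no analogue of Proposition \ref{prop: (b)}(b) in the injective direction is available a priori, and whether such an a priori bound holds is exactly what (c) characterises. For the direct implication in (c), finite $\id_{\mathcal{X}}(\mathcal{B}\cap\mathcal{X})$ combined with the generator short exact sequences in $\mathcal{B}\cap\mathcal{X}$ furnished by Lemma \ref{lem: inf5} and Proposition \ref{prop: (a)} forces any iterated $\nu$-resolution of $B\in\mathcal{B}\cap\mathcal{X}$ to terminate, yielding $\mathcal{B}\cap\mathcal{X}\subseteq\nu^{\wedge}$ and $\id_{\mathcal{X}}(\nu)\leq\id_{\mathcal{X}}(\mathcal{B}\cap\mathcal{X})<\infty$. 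The converse is the Shifting Lemma applied along such a $\nu$-resolution. The ``furthermore'' identifications $\mathcal{B}\cap(\nu,\mathcal{X})^{\wedge}=\mathcal{B}\cap\mathcal{X}$, $\mathcal{X}\subseteq(\mathcal{A},\mathcal{X})^{\wedge}\subseteq(\mathcal{A}\cap\mathcal{X})^{\wedge}$ and $\id_{\mathcal{X}}(\mathcal{B}\cap\mathcal{X})=\id_{\mathcal{X}}(\nu)$ then drop out by combining Proposition \ref{prop: (a)}(f) with part (b).

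The main obstacle will be pure bookkeeping in (a): the chain threads through nine invariants, and one must verify that each consecutive pair is bridged by a cited result while keeping careful track of the ambient class ($\mathcal{X}$, $\mathcal{B}$, $\mathcal{B}\cap\mathcal{X}$, or $\nu$) against which the dimension is measured. I do not expect any genuinely new technical ingredient beyond Lemmas \ref{lem: inf2}--\ref{lem: inf6}, Corollary \ref{prop: oct1}, and Propositions \ref{prop: (a)}--\ref{prop: (b)}.
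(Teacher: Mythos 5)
Your overall route is the same as the paper's: exploit Theorem \ref{thm: el par n-X-tilting}(c) to get that $({}^{\bot}(\mathcal{T}^{\bot}),\mathcal{T}^{\bot})$ is $\mathcal{X}$-hereditary and $\mathcal{X}$-complete, then lean on the relative generator/cogenerator role of $\nu$ through the auxiliary Propositions. However, the paper's proof of (a) is almost entirely an invocation of a single result from the companion paper, namely \cite[Thm. 4.24 (a1), (a2)]{parte1}, and you do not have a substitute for the crucial equality it supplies.

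Specifically, your argument cleanly produces two clusters of identities: one equating $\pd_{\mathcal{X}}(\nu)$, $\pd_{\mathcal{X}}(\mathcal{A}\cap\mathcal{X})$, and the $(\mathcal{B}\cap\mathcal{X})$-coresolution dimensions of $\mathcal{X}$ (from Corollary \ref{prop: oct1}(c) applied to all of $\mathcal{X}$, plus Prop. \ref{prop: (b)}(a3)); and a second one equating $\pd_{\mathcal{A}\cap\mathcal{X}}(\mathcal{A}\cap\mathcal{X})$, $\coresdim_{\nu}(\mathcal{A}\cap\mathcal{X})$, and the $(\mathcal{B}\cap\mathcal{X})$-coresolution dimensions of $\mathcal{A}\cap\mathcal{X}$ (from Corollary \ref{prop: oct1}(c)--(d) applied only on $\mathcal{A}\cap\mathcal{X}$). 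The statement of part (a) asserts these two clusters share a common value, which amounts to $\pd_{\mathcal{X}}(\mathcal{A}\cap\mathcal{X})=\pd_{\mathcal{A}\cap\mathcal{X}}(\mathcal{A}\cap\mathcal{X})$, equivalently $\coresdim_{\mathcal{B}\cap\mathcal{X}}(\mathcal{X})=\coresdim_{\mathcal{B}\cap\mathcal{X}}(\mathcal{A}\cap\mathcal{X})$. One inequality is trivial since $\mathcal{A}\cap\mathcal{X}\subseteq\mathcal{X}$, but the reverse inequality is precisely what the paper draws from \cite[Thm. 4.24 (a1)]{parte1}. Your appeal to ``Proposition \ref{prop: (b)}(b) after taking the supremum over $\mathcal{A}\cap\mathcal{X}\subseteq{}^{\bot}(\mathcal{B}\cap\mathcal{X})$'' does not deliver this: Proposition \ref{prop: (b)}(b) only gives the inequality $\coresdim_{\mathcal{B}\cap\mathcal{X}}^{\mathcal{X}}(\mathcal{X})\leq\pd_{\mathcal{X}}(\mathcal{T})$ together with $\pd_{\mathcal{X}}(\mathcal{T})=\pd_{\mathcal{X}}(\mathcal{A})$, and taking a supremum of an inequality does not turn it into an equality. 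Similarly, the ``sandwiching'' argument you sketch at the end of your treatment of (a), via dimension-shifting along the coresolutions of Lemma \ref{lem: inf6}, only bounds $\coresdim_{\mathcal{B}\cap\mathcal{X}}(\mathcal{X})$ by $\coresdim_{\mathcal{B}\cap\mathcal{X}}(\mathcal{A}\cap\mathcal{X})+1$ (one splices the special $\mathcal{B}\cap\mathcal{X}$-preenvelope of $X$ onto a coresolution of its cokernel), which is off by one. So part (a) has a genuine gap. The same bridging issue recurs in your treatment of (b), though the paper again handles it by citing \cite[Thm. 4.24(b)]{parte1}, and your sketch for (c) is roughly aligned with the paper's (which combines \cite[Thm. 4.24(b)]{parte1}, part (b), and Proposition \ref{prop: (a)}(f)).
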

\begin{proof}
By Theorem \ref{thm: el par n-X-tilting} (c),  we know that the pair $\p$ is hereditary and $\mathcal{X}$-complete. In order to prove (a),  observe  that,   by Proposition \ref{prop: (b)} (b) and \cite[Thm. 4.24 (a1)]{parte1}, 
it follows that 
\[
\pdr[\mathcal{A}\cap\mathcal{X}][\mathcal{A}\cap\mathcal{X}]=\coresdimr{\mathcal{B}\cap\mathcal{X}}{\mathcal{X}}{\mathcal{X}}\leq\pdr[\mathcal{X}][\T]<\infty\mbox{.}
\]
Then,  by \cite[Thm 4.24 (a2)]{parte1},   $\mathcal{A}\cap\mathcal{X}\subseteq\nu^{\vee}$ and
$\pdr[\mathcal{X}][\mathcal{A}\cap\mathcal{X}]=\pdr[\mathcal{X}][\nu]$.
The rest of the equalities appearing in (a) follow from \cite[Thm. 4.24 (a1)]{parte1}.
\

Except for the equality $\mathcal{B}\cap(\nu, \mathcal{X})^{\wedge}=\mathcal{B}\cap\mathcal{X}$ in (c) (under the hypothesis that 
$\idr[\mathcal{X}][\mathcal{B}\cap\mathcal{X}]<\infty$),  the items (b) and (c) follow from \cite[Thm. 4.24(b)]{parte1}. Let us prove such equality. Indeed,  assume that $\idr[\mathcal{X}][\mathcal{B}\cap\mathcal{X}]<\infty.$ Then,   by (b),  we have that 
$\pd_{\B\cap\X}(\B\cap\X)=\id_{\B\cap\X}(\B\cap\X)<\infty.$ Hence,  from Proposition \ref{prop: (a)} (f),  the required equality follows.
\end{proof}

\begin{prop}\label{prop: oct3}\label{prop: oct3-1} 
Let   $\mathcal{X}=\smdx[\mathcal{X}]\subseteq\mathcal{C}$ be
closed under extensions and let $\mathcal{T}\subseteq\mathcal{C}$ be $n$-$\mathcal{X}$-tilting.
Then,  for the pair $\p: =({}{}^{\bot}(\mathcal{T}^{\bot}), \mathcal{T}^{\bot})$
and the class $\nu: =\mathcal{A}\cap\mathcal{B}\cap\mathcal{X}, $  the
following statements hold true.
\begin{itemize}
\item[$\mathrm{(a)}$] $\pdr[\mathcal{X}][\mathcal{X}]=\pdr[\mathcal{X}][(\mathcal{B}\cap\mathcal{X})_{\mathcal{X}}^{\vee}]=\pdr[\mathcal{X}][(\mathcal{B}\cap\mathcal{X})^{\vee}]=\pdr[\mathcal{X}][\mathcal{B}\cap\mathcal{X}]$.
\item[$\mathrm{(b)}$] $\nu=\T\cap\X.$ Moreover,  if $\mathcal{T}\subseteq\mathcal{X}, $
then 
$$\pdr[\mathcal{X}][\mathcal{T}]=\pdr[\mathcal{X}][\nu]=\pdr[\mathcal{X}][\nu{}_{\mathcal{X}}^{\vee}]=\pdr[\mathcal{X}][\nu{}^{\vee}].$$
\end{itemize}
\end{prop}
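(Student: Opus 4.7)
The plan is to establish each chain of equalities in $(a)$ and $(b)$ by combining the inclusions already recorded in Proposition \ref{prop: (a)} with the standard long-exact-sequence argument that bounds the $\X$-projective dimension along a finite coresolution.

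For $(a)$, I would first use Proposition \ref{prop: (a)}$(b)$ together with the obvious inclusions to obtain
\[
\B\cap\X\;\subseteq\;\X\;\subseteq\;(\B\cap\X)_{\X}^{\vee}\;\subseteq\;(\B\cap\X)^{\vee}.
\]
Monotonicity of $\pdr[\X][-]$ on class inclusions then yields
\[
\pdr[\X][\B\cap\X]\;\leq\;\pdr[\X][\X]\;\leq\;\pdr[\X][(\B\cap\X)_{\X}^{\vee}]\;\leq\;\pdr[\X][(\B\cap\X)^{\vee}].
\]
To close the loop, I would prove $\pdr[\X][(\B\cap\X)^{\vee}]\leq\pdr[\X][\B\cap\X]$ as follows. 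Given $M\in(\B\cap\X)^{\vee}$, pick an exact sequence $0\to M\to B_0\to B_1\to\cdots\to B_k\to 0$ with all $B_i\in\B\cap\X$, set $d:=\pdr[\X][\B\cap\X]$, break the sequence into short exact pieces, and apply the long exact sequence of $\Extx[*][\C][-][X]$ for an arbitrary $X\in\X$; a routine descending induction on the cocycles then gives $\Extx[i][\C][M][X]=0$ for all $i>d$, so $\pdr[\X][M]\leq d$.

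For $(b)$, the identity $\nu=\T\cap\X$ is immediate from Proposition \ref{prop: (a)}$(d)$, and the hypothesis $\T\subseteq\X$ then forces $\nu=\T$; in particular $\pdr[\X][\T]=\pdr[\X][\nu]$. The remaining equalities $\pdr[\X][\nu]=\pdr[\X][\nu_{\X}^{\vee}]=\pdr[\X][\nu^{\vee}]$ follow by exactly the same scheme as in $(a)$: the trivial inclusions $\nu\subseteq\nu_{\X}^{\vee}\subseteq\nu^{\vee}$ give the $\leq$-inequalities by monotonicity, and the shifting argument of $(a)$ applied with $\nu$ in place of $\B\cap\X$ yields the reverse $\pdr[\X][\nu^{\vee}]\leq\pdr[\X][\nu]$.

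The only non-trivial ingredient here is the shifting step, but it is purely formal and requires no closure hypothesis on $\X$ beyond what is already in force; everything else is bookkeeping with the inclusions of Proposition \ref{prop: (a)} and the definitions of the relative coresolution classes, so I do not expect any genuine obstacle in carrying the proof out in detail.
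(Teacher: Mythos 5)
Your proof is correct. For $(a)$ it matches the paper's argument step for step: the paper cites Proposition \ref{prop: (a)}(b) for the chain $\X\subseteq(\B\cap\X)_{\X}^{\vee}\subseteq(\B\cap\X)^{\vee}$ and then invokes Lemma 4.3 of the companion paper to close the loop, which is exactly the shifting computation you carry out inline. For $(b)$ your route differs slightly: after identifying $\nu=\T\cap\X$ (hence $\nu=\T$ under $\T\subseteq\X$), you rerun the shifting argument with $\nu$ in place of $\B\cap\X$, whereas the paper instead feeds the inclusion $\nu_{\X}^{\vee}\subseteq\nu^{\vee}\subseteq{}^{\bot}(\B\cap\X)$ from Proposition \ref{prop: (a)}(c) into the already-established equality $\pdr[\X][\T]=\pdr[\X][{}^{\bot}(\B\cap\X)]$ of Proposition \ref{prop: (b)}(b). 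Both close the loop the same way in substance; the paper leans on packaged identities while your version is more self-contained. Either way, nothing is missing and no step fails.
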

\begin{proof} 
We point out that,  by Theorem \ref{thm: el par n-X-tilting} (c),  the pair $\p$ is hereditary and $\mathcal{X}$-complete. Then,  (a) follows from Proposition \ref{prop: (a)} (b) and \cite[Lem. 4.3]{parte1}. Finally,  (b) can be obtained from Proposition \ref{prop: (a)} (c, d) and Proposition \ref{prop: (b)} (b).
\end{proof}

\begin{prop}\label{prop: M ortogonal es preenvolvente esp en X}\label{prop: M ortogonal es preenvolvente esp en X-1}
Let   $\mathcal{X}=\smdx[\mathcal{X}]\subseteq\mathcal{C}$
be closed under extensions and let $\mathcal{T}\subseteq\mathcal{C}$ be
$n$-$\mathcal{X}$-tilting. Then,  for the pair $\p: =({}{}^{\bot}(\T^{\bot}), \T^{\bot})$
and the class $\nu: =\mathcal{A}\cap\mathcal{B}\cap\mathcal{X}, $ the following statements hold true.
\begin{itemize}
\item[$\mathrm{(a)}$] $\nu^{\vee}\subseteq{}^{\bot}(\mathcal{B}\cap\mathcal{X})$ and,  for any $Z\in(\mathcal{B}\cap\mathcal{X})_{\mathcal{X}}^{\vee}$ and  $m: =\coresdimr{\mathcal{B}\cap\mathcal{X}}Z{\mathcal{X}}$,  there
are short exact sequences 
\begin{alignat*}{1}
\suc[Z][M_{Z}][C_{Z}][g_{Z}] & \qquad\mbox{with \ensuremath{C_{Z}\in(\mathcal{X}, \nu){}^{\vee}, \, M_{Z}\in\mathcal{B}\cap\mathcal{X}}}\mbox{, }\\
\suc[K_{Z}][N_{Z}][Z][][f_{Z}] & \qquad\mbox{with \ensuremath{N_{Z}\in\nu_{\mathcal{\mathcal{X}}}^{\vee}},  \ensuremath{K_{Z}\in\mathcal{B}\cap\mathcal{X}}}\mbox{, }
\end{alignat*}
such that $g_{Z}$ is a $\mathcal{B}\cap\mathcal{X}$-preenvelope
and $f_{Z}$ is an $\nu^{\vee}$-precover. Furthermore,  $\coresdimr{\nu}{C_{Z}}{\mathcal{X}}=m-1$, 
$\coresdimr{\nu}{N_{Z}}{\mathcal{X}}\leq m$,  and 
\begin{center}
$\nu^{\vee}\cap\mathcal{X}=\mathcal{A}\cap\mathcal{X}=\mathcal{A}\cap(\mathcal{X}, \nu)^{\vee}.$
\end{center}

\item[$\mathrm{(b)}$] $\nu^{\wedge}\subseteq{}(\mathcal{A}\cap\mathcal{X})^{\bot}$ and,  for any 
$Z\in(\mathcal{A}\cap\mathcal{X})_{\mathcal{X}}^{\wedge}$ and
 $m: =\resdimr{\mathcal{A}\cap\mathcal{X}}Z{\mathcal{X}}$,  there
are short exact sequences 
\begin{alignat*}{1}
\suc[Z][N_{Z}][C_{Z}][f_{Z}] & \qquad\mbox{with \ensuremath{N_{Z}\in\nu_{\mathcal{X}}^{\wedge}, \, C_{Z}\in\mathcal{A}\cap\mathcal{X}}}\mbox{, }\\
\suc[K_{Z}][M_{Z}][Z][][g_{Z}] & \qquad\mbox{with \ensuremath{K_{Z}\in(\nu, \mathcal{X}){}^{\wedge}},  \ensuremath{M_{Z}\in\mathcal{A}\cap\mathcal{X}}}\mbox{, }
\end{alignat*}
such that $g_{Z}$ is a $\mathcal{A}\cap\mathcal{X}$-precover and
$f_{Z}$ is a $\nu^{\wedge}$-preenvelope. Furthermore,  $\resdimr{\nu}{K_{Z}}{\X}=m-1$, 
$\resdimr{\nu}{N_{Z}}{\X}\leq m, $ and
\begin{center}
$\nu^{\wedge}\cap\mathcal{X}=\mathcal{B}\cap\mathcal{X}=\mathcal{B}\cap(\nu, \mathcal{X})^{\wedge}\;$ if $\idr[\mathcal{X}][\mathcal{B}\cap\mathcal{X}]<\infty.$
\end{center}

\item[$\mathrm{(c)}$]  For any $Z\in(\mathcal{B}\cap\mathcal{X})^{\vee}$ and $m: =\coresdimr{\mathcal{B}\cap\mathcal{X}}Z{\, }$, 
there are short exact sequences 
\begin{alignat*}{1}
\suc[Z][M_{Z}][C_{Z}][g_{Z}] & \qquad\mbox{with \ensuremath{C_{Z}\in(\mathcal{X}, \nu){}^{\vee}, \, M_{Z}\in\mathcal{B}\cap\mathcal{X}}}\mbox{, }\\
\suc[K_{Z}][N_{Z}][Z][][f_{Z}] & \qquad\mbox{with \ensuremath{N_{Z}\in\nu{}^{\vee}},  \ensuremath{K_{Z}\in\mathcal{B}\cap\mathcal{X}}}\mbox{, }
\end{alignat*}
such that $g_{Z}$ is a $\mathcal{B}\cap\mathcal{X}$-preenvelope
and $f_{Z}$ is a $\nu^{\vee}$-precover. Furthermore,  $\coresdimr{\nu}{C_{Z}}{\mathcal{X}}=m-1$
and $\coresdimr{\nu}{N_{Z}}{\mathcal{X}}\leq m$.

\item[$\mathrm{(d)}$]  $\forall Z\in(\mathcal{A}\cap\mathcal{X})^{\wedge}$,  with $m: =\resdimr{\mathcal{A}\cap\mathcal{X}}Z{\, }$, 
there are short exact sequences 
\begin{alignat*}{1}
\suc[Z][N_{Z}][C_{Z}][f_{Z}] & \qquad\mbox{with \ensuremath{N_{Z}\in\nu^{\wedge}, \, C_{Z}\in\mathcal{A}\cap\mathcal{X}}}\mbox{, }\\
\suc[K_{Z}][M_{Z}][Z][][g_{Z}] & \qquad\mbox{with \ensuremath{K_{Z}\in\nu{}^{\wedge}},  \ensuremath{M_{Z}\in\mathcal{A}\cap\mathcal{X}}}\mbox{, }
\end{alignat*}
such that $g_{Z}$ is an $\mathcal{A}\cap\mathcal{X}$-precover and
$f_{Z}$ is an $\nu^{\wedge}$-preenvelope. Furthermore,  $\resdimr{\nu}{K_{Z}}{\, }=m-1$
and $\resdimr{\nu}{N_{Z}}{\, }\leq m.$

\item[$\mathrm{(e)}$]  The pair $(\nu^{\vee}, \mathcal{B})$ is right $(\mathcal{B}\cap\mathcal{X})_{\mathcal{X}}^{\vee}$-complete, 
$\mathcal{X}$-complete  and $\mathcal{X}$-hereditary.

\item[$\mathrm{(f)}$]  $\mathcal{B}\cap\mathcal{X}$ is special preenveloping in $(\mathcal{B}\cap\mathcal{X})_{\mathcal{X}}^{\vee}$
and in $\mathcal{X}$. Moreover,  the pair $({}^{\bot}(\mathcal{B}\cap\mathcal{X}), \mathcal{B}\cap\mathcal{X})$
is right $\mathcal{X}$-complete. 

\item[$\mathrm{(g)}$]  Any object of $(\mathcal{B}\cap\mathcal{X})^{\vee}$ admits a special
$\mathcal{B}\cap\mathcal{X}$-preenvelope.
\end{itemize}
\end{prop}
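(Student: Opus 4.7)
The plan is to apply the Auslander--Buchweitz-style machinery from \cite[Sect. 4]{parte1} to the setup provided by the earlier results of this section. By Theorem \ref{thm: el par n-X-tilting}(c) the pair $\p=(\A,\B)$ is $\X$-hereditary and $\X$-complete, and by Proposition \ref{prop: (a)} the class $\nu=\A\cap\B\cap\X$ is a $\B\cap\X$-projective relative generator in $\B\cap\X$ and an $\A\cap\X$-injective relative cogenerator in $\A\cap\X$. These are exactly the hypotheses of the relative AB approximation theorems in \cite{parte1}, so all seven items will be instances of that machinery.

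For item (a), the inclusion $\nu^{\vee}\subseteq{}^{\bot}(\B\cap\X)$ follows from Lemma \ref{lem: inf2}(a): indeed $\nu\subseteq\A\cap\X\subseteq{}^{\bot}(\B\cap\X)$, and one then proceeds by induction on the $\nu$-coresolution length, using the $\X$-hereditary property to keep the vanishing $\Ext^{\geq1}(-,\B\cap\X)$ stable along syzygies. The two short exact sequences are then produced by the relative AB decomposition of \cite[Thm. 4.4]{parte1} applied to the pair $(\nu,\B\cap\X)$ inside $\X$: given $Z\in(\B\cap\X)^{\vee}_{\X}$ of coresolution length $m$, the first sequence arises by splicing the $\nu$-coresolution with the given $\B\cap\X$-coresolution to produce the dimension shift $m-1$ in $C_Z$, while the second sequence arises by dualizing this construction via the $\B\cap\X$-projectivity of $\nu$. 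The precover/preenvelope properties are automatic from $\nu\subseteq\A\cap\B$ together with $\Ext^1(\nu^{\vee},\B\cap\X)=0$. The identity $\nu^{\vee}\cap\X=\A\cap\X=\A\cap(\X,\nu)^{\vee}$ is precisely Proposition \ref{prop: (a)}(a, e). Item (b) is the formal dual, now using that $\nu$ is an $\A\cap\X$-injective relative cogenerator; the extra equality $\B\cap\X=\B\cap(\nu,\X)^{\wedge}$ under $\mathrm{id}_{\X}(\B\cap\X)<\infty$ is Proposition \ref{prop: oct2}(c).

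Items (c) and (d) upgrade the dimension parameter from $\X$-relative to global. Given $Z\in(\B\cap\X)^{\vee}$, a finite $\B\cap\X$-coresolution of $Z$ together with the $\X$-completeness of $\p$ allows us to split off a short exact sequence $0\to Z\to B_0\to Z'\to 0$ with $B_0\in\B\cap\X$ and $Z'\in(\B\cap\X)^{\vee}_{\X}$; applying (a) to $Z'$ and gluing produces the approximations of (c) with the stated dimension bounds. Item (d) is the dual, invoking (b).

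Finally, items (e)--(g) are organizational consequences. For (e), the $\X$-heredity of $(\nu^{\vee},\B)$ is $\nu^{\vee}\subseteq{}^{\bot}(\B\cap\X)\cap{}^{\bot}\B$ (the second inclusion following because $\B$ is obtained from $\B\cap\X$ by applying Proposition \ref{prop: (b)}); the three completeness statements are read off from (a) and (c). For (f), combine Proposition \ref{prop: (a)}(b), which shows $\X\subseteq(\B\cap\X)^{\vee}_{\X}$, with the first sequence of (a) to obtain a special $\B\cap\X$-preenvelope of every $X\in\X$; right $\X$-completeness of $({}^{\bot}(\B\cap\X),\B\cap\X)$ is then immediate. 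Item (g) is the first sequence of (c). The main obstacle I anticipate is the bookkeeping of the dimension identities, in particular distinguishing $\coresdimr{\nu}{C_Z}{\X}=m-1$ from $\coresdimr{\nu}{N_Z}{\X}\leq m$, which requires carefully tracking which coresolution factor is absorbed at the pullback/pushout step; the rest is a faithful translation of the AB machinery of \cite{parte1} into the tilting language.
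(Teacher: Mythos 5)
Your proposal follows the same overall strategy as the paper's proof: invoke the relative Auslander--Buchweitz machinery of \cite[Thm. 4.4 and Lem. 4.3]{parte1}, feeding in the fact (from Proposition \ref{prop: (a)}) that $\nu$ is a relative $\mathcal{B}\cap\mathcal{X}$-projective generator in $\mathcal{B}\cap\mathcal{X}$ and an $\mathcal{A}\cap\mathcal{X}$-injective cogenerator in $\mathcal{A}\cap\mathcal{X}$, then read off (f) and (g) from the resulting approximation sequences. This is exactly what the paper does for (a), (b), (f), (g); and the way you derive the $\mathcal{X}$-hereditary and completeness claims in (e) from (a), Proposition \ref{prop: (a)}(b), and the identification $\nu^{\vee}\cap\mathcal{X}=\mathcal{A}\cap\mathcal{X}$ is also in line with the paper (which cites Lemma \ref{lem: inf6} at that step).

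Two points where you diverge from the paper, and where I see issues. First, for the final equalities in (a), you attribute them to Proposition \ref{prop: (a)}(a,\,e), but the first equality $\nu^{\vee}\cap\mathcal{X}=\mathcal{A}\cap\mathcal{X}$ requires combining $\nu^{\vee}\subseteq{}^{\bot}(\mathcal{B}\cap\mathcal{X})$ with Proposition \ref{prop: (b)}, which gives ${}^{\bot}(\mathcal{B}\cap\mathcal{X})\cap\mathcal{X}=\mathcal{A}\cap\mathcal{X}$; item (a) of Proposition \ref{prop: (a)} only describes $\nu$ itself, not $\nu^{\vee}\cap\mathcal{X}$. This is a misattribution rather than a logical gap.

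Second, your route for (c) and (d) has a genuine gap. You propose to take $Z\in(\mathcal{B}\cap\mathcal{X})^{\vee}$, split off the first step of its $\mathcal{B}\cap\mathcal{X}$-coresolution as $0\to Z\to B_0\to Z'\to 0$, and then apply (a) to $Z'\in(\mathcal{B}\cap\mathcal{X})^{\vee}_{\mathcal{X}}$. But membership in $(\mathcal{B}\cap\mathcal{X})^{\vee}_{\mathcal{X}}$ requires $Z'\in\mathcal{X}$ and that the images of its coresolution lie in $\mathcal{X}$; nothing forces this, since $(\mathcal{B}\cap\mathcal{X})^{\vee}$ places no constraint on the images of the coresolution, and $\mathcal{X}$ is only assumed closed under extensions and direct summands, not under epi-kernels or subobjects. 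The $\mathcal{X}$-completeness of $\p$ that you invoke applies to objects already in $\mathcal{X}$ and does not rescue this. The paper avoids the issue by applying \cite[Thm. 4.4]{parte1} to the pair $(\nu,\mathcal{B}\cap\mathcal{X})$ \emph{without} the $\mathcal{X}$-restriction, which directly produces the two approximation sequences for every $Z\in(\mathcal{B}\cap\mathcal{X})^{\vee}$ with the required membership and dimension data; no splicing or gluing is needed, and (d) is the dual. You should replace your splicing step with this direct application.
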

\begin{proof} (a) Consider the pair $(\B\cap\X, \nu).$  Then,  by Proposition \ref{prop: (a)},   $\nu$ is a relative $\B\cap\X$-projective 
 generator in $\B\cap\X.$ In particular,  by  \cite[Lem. 4.3]{parte1},  we have that  $\nu^\vee\subseteq {}^\perp(\B\cap\X).$ Hence,  from \cite[Thm. 4.4]{parte1},  we  almost get  the item (a), remaining to show the equalities $\nu^{\vee}\cap\mathcal{X}=\mathcal{A}\cap\mathcal{X}=\mathcal{A}\cap(\mathcal{X}, \nu)^{\vee}.$ However, using that $\nu^\vee\subseteq {}^\perp(\B\cap\X), $   these equalities follow from Propositions \ref{prop: (b)} and  \ref{prop: (a)} (e).  
\

(b) It follows as in (a) by using Proposition \ref{prop: (a)},  the dual of \cite[Lem. 4.3]{parte1}, the dual of  \cite[Thm 4.4]{parte1},   Proposition \ref{prop: oct2} (c) and Proposition  \ref{prop: (b)}.
\

(c)  It can be proved by following similar arguments as we did in (a).
\

(d) It can be proved by following similar arguments as we did in (b).
\

(e) It follows from (a),  Proposition \ref{prop: (a)} (b) and Lemma  \ref{lem: inf6}.
\

(f) Let $X\in(\mathcal{B}\cap\mathcal{X})_{\mathcal{X}}^{\vee}$. By (a), 
there is an exact sequence 
$\suc[X][M_{X}][C_{X}][g_{X}]\mbox{, }$
with $C_{X}\in(\mathcal{X}, \nu)^{\vee}$,  $M_{X}\in\mathcal{B}\cap\mathcal{X}$, 
 and $g_{X}$ a $\mathcal{B}\cap\mathcal{X}$-preenvelope. Thereupon, 
the following statements are easy to prove. First,  $\mathcal{B}\cap\mathcal{X}$
is special preenveloping in $(\mathcal{B}\cap\mathcal{X})_{\mathcal{X}}^{\vee}$
since $M_{X}\in\mathcal{B}\cap\mathcal{X}=\mathcal{B}\cap\mathcal{X}\cap(\mathcal{B}\cap\mathcal{X})_{\mathcal{X}}^{\vee}$
and $C_{X}\in{}^{\bot_{1}}(\mathcal{B}\cap\mathcal{X})\cap(\mathcal{B}\cap\mathcal{X})_{\mathcal{X}}^{\vee}$
by Proposition \ref{prop: (a)}; and second,  $\mathcal{B}\cap\mathcal{X}$ is special
preenveloping in $\mathcal{X}$ since $\mathcal{X}\subseteq(\mathcal{B}\cap\mathcal{X})_{\mathcal{X}}^{\vee}$ 
(see Proposition \ref{prop: (a)} (b)),    $M_{X}\in\mathcal{B}\cap\mathcal{X}=\mathcal{X}\cap{}(\mathcal{B}\cap\mathcal{X})$
and $C_{X}\in\mathcal{X}\cap{}{}^{\bot_{1}}(\mathcal{B}\cap\mathcal{X})$.
\

(g) Let $X\in(\mathcal{B}\cap\mathcal{X})^{\vee}$. Consider the exact
sequence given by (c),  
$\suc[X][M_{X}][C_{X}][g_{X}]$
 with $M_{X}\in\mathcal{B}\cap\mathcal{X}$ and $C_{X}\in\nu^{\vee}$.
Then $g_{X}$ is a special $\mathcal{B}\cap\mathcal{X}$-preenvelope
since $M_{X}\in\mathcal{B}\cap\mathcal{X}$ and
$C_{X}\in(\T\cap\mathcal{X})^{\vee}\subseteq{}^{\bot}(\mathcal{B}\cap\mathcal{X})\subseteq{}^{\bot_{1}}(\mathcal{B}\cap\mathcal{X})$  by Proposition \ref{prop: (a)}(b).
\end{proof}

Next,  in a similar way as Lemma \ref{lem: props C2 y T2} and Proposition \ref{prop: M ortogonal es preenvolvente esp en X}, 
we will show the behaviour of the  pairs $\p$ such that $\mathcal{B}\cap\mathcal{X}=\mathcal{T}^{\bot}\cap\mathcal{X}$, 
where $\mathcal{T}$ is $n$-$\mathcal{X}$-tilting.

\begin{prop}\label{lem: partiltingescompleto}  For a class   $\mathcal{X}=\smdx[\mathcal{X}]\subseteq\mathcal{C}$  closed under extensions,  an $n$-$\mathcal{X}$-tilting  $\mathcal{T}\subseteq\mathcal{C}$  and a pair $\p$ in $\C$ such that $\mathcal{B}\cap\mathcal{X}=\mathcal{T}^{\bot}\cap\mathcal{X}, $ the following statements hold true.
\begin{itemize}
\item[$\mathrm{(a)}$] Let $\Extx[1][\mathcal{C}][\mathcal{A}][\mathcal{B}\cap\mathcal{X}]=0.$ Then any morphism $A\rightarrow X, $ with $A\in\mathcal{A}$ and $X\in\mathcal{B}\cap\mathcal{X}$, 
factors through $\mathcal{T}\cap\mathcal{X}$.
\item[$\mathrm{(b)}$]  $\mathcal{A}\cap\mathcal{B}\cap\mathcal{X}\subseteq{}^{\bot}(\mathcal{B}\cap\mathcal{X})\cap\mathcal{X}\cap\mathcal{B}=\mathcal{T}\cap\mathcal{X}$ if $\Extx[1][\mathcal{C}][\mathcal{A}][\mathcal{B}\cap\mathcal{X}]=0.$
\item[$\mathrm{(c)}$] Let $^{\bot_{1}}\mathcal{B}\cap\mathcal{X}\subseteq\mathcal{A}$ and
$\idr[\mathcal{A}][\mathcal{B}\cap\mathcal{X}]=0.$ Then,  the following
conditions are equivalent: 
  \begin{itemize}
\item[$\mathrm{(c1)}$] $\mathcal{T}\cap\mathcal{X}=\mathcal{A}\cap\mathcal{B}\cap\mathcal{X}$;
\item[$\mathrm{(c2)}$] $\p$ is $\mathcal{X}$-complete;
\item[$\mathrm{(c3)}$] $\p$ is left $\mathcal{X}$-complete.
  \end{itemize}
\end{itemize}
\end{prop}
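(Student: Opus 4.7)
The plan is to handle parts (a), (b), and (c) in order, with Lemma \ref{lem: props C2 y T2} serving as the main technical input.

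For (a), given $f\colon A\to X$ with $A\in\mathcal{A}$ and $X\in\mathcal{B}\cap\mathcal{X}=\mathcal{T}^{\bot}\cap\mathcal{X}$, the construction in the proof of Lemma \ref{lem: props C2 y T2}(a) produces a short exact sequence $0\to K\to T'\to X\to 0$ with $T'\in\mathcal{T}\cap\mathcal{X}$ and $K\in\mathcal{T}^{\bot}\cap\mathcal{X}=\mathcal{B}\cap\mathcal{X}$. Applying $\Hom_{\mathcal{C}}(A,-)$ and invoking the hypothesis $\Ext^{1}_{\mathcal{C}}(\mathcal{A},\mathcal{B}\cap\mathcal{X})=0$, the obstruction $\Ext^{1}_{\mathcal{C}}(A,K)$ vanishes, so $f$ lifts to a morphism $A\to T'$ and thereby factors through $\mathcal{T}\cap\mathcal{X}$.

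For (b), the set equality ${}^{\bot}(\mathcal{B}\cap\mathcal{X})\cap\mathcal{X}\cap\mathcal{B}=\mathcal{T}\cap\mathcal{X}$ is established in two directions. The containment $\supseteq$ follows from (T2) together with the chain $\mathcal{T}\cap\mathcal{X}\subseteq{}^{\bot}(\mathcal{T}^{\bot})\subseteq{}^{\bot}(\mathcal{T}^{\bot}\cap\mathcal{X})={}^{\bot}(\mathcal{B}\cap\mathcal{X})$, while $\subseteq$ is Lemma \ref{lem: props C2 y T2}(b) after rewriting $\mathcal{B}\cap\mathcal{X}=\mathcal{T}^{\bot}\cap\mathcal{X}$. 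The main inclusion then follows by applying (a) to $\id_{A}$ for each $A\in\mathcal{A}\cap\mathcal{B}\cap\mathcal{X}$: the identity factors as $A\to T\to A$ with $T\in\mathcal{T}\cap\mathcal{X}$, exhibiting $A$ as a direct summand of $T$, so $A\in\mathcal{T}\cap\mathcal{X}$ by (T0).

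For (c) the strategy is (c2)$\Rightarrow$(c3)$\Rightarrow$(c1)$\Rightarrow$(c2). The first implication is immediate from the definition. For (c3)$\Rightarrow$(c1), (b) gives $\mathcal{A}\cap\mathcal{B}\cap\mathcal{X}\subseteq\mathcal{T}\cap\mathcal{X}$; for the reverse, given $T\in\mathcal{T}\cap\mathcal{X}$, left $\mathcal{X}$-completeness furnishes $0\to K\to A\to T\to 0$ with $A\in\mathcal{A}\cap\mathcal{X}$ and $K\in\mathcal{B}\cap\mathcal{X}$; since $K\in\mathcal{T}^{\bot}$ the sequence splits as $A\cong K\oplus T$, and (a) applied to the split projection $A\twoheadrightarrow K$ exhibits $K$ as a direct summand of some object of $\mathcal{T}\cap\mathcal{X}$, so $K\in\mathcal{T}\cap\mathcal{X}$ by (T0). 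To conclude $T\in\mathcal{A}$ I will combine the direct-summand additivity of $\Ext^{1}(-,B)$ with the hypothesis ${}^{\bot_{1}}\mathcal{B}\cap\mathcal{X}\subseteq\mathcal{A}$, reducing the required $\Ext^{1}$-vanishing for $T$ to that of the containing object $A\in\mathcal{A}\cap\mathcal{X}$, which is controlled by $\id_{\mathcal{A}}(\mathcal{B}\cap\mathcal{X})=0$.

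For (c1)$\Rightarrow$(c2), the plan is to transfer the $\mathcal{X}$-complete approximations of the tilting cotorsion pair $({}^{\bot}(\mathcal{T}^{\bot}),\mathcal{T}^{\bot})$ guaranteed by Theorem \ref{thm: el par n-X-tilting}(c) to $(\mathcal{A},\mathcal{B})$. The $\mathcal{B}$-terms of those approximations automatically lie in $\mathcal{B}\cap\mathcal{X}=\mathcal{T}^{\bot}\cap\mathcal{X}$, and the ${}^{\bot}(\mathcal{T}^{\bot})$-terms lie in $(\mathcal{T}\cap\mathcal{X})^{\vee}_{\mathcal{X}}\cap\mathcal{X}$ by Theorem \ref{thm: el par n-X-tilting}(a); using (c1) to identify $\mathcal{T}\cap\mathcal{X}\subseteq\mathcal{A}$ and inducting on the coresolution length, combined with ${}^{\bot_{1}}\mathcal{B}\cap\mathcal{X}\subseteq\mathcal{A}$ and $\id_{\mathcal{A}}(\mathcal{B}\cap\mathcal{X})=0$, should place the middle terms in $\mathcal{A}\cap\mathcal{X}$. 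The main obstacle shared by both nontrivial implications of (c) is controlling $\Ext^{1}(-,B)$ for $B\in\mathcal{B}\setminus\mathcal{X}$, where the hypothesis $\id_{\mathcal{A}}(\mathcal{B}\cap\mathcal{X})=0$ does not directly apply; my strategy is to exploit summand-additivity of $\Ext$ together with the tilting orthogonality to route the required vanishing through objects of $\mathcal{T}\cap\mathcal{X}$, whose $\Ext^{i}$ with $\mathcal{T}^{\bot}$ vanishes by (T2).
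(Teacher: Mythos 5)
Parts (a) and (b) are correct and follow the paper's own route: (a) reduces, via Lemma \ref{lem: props C2 y T2}(a) (which is applicable because Proposition \ref{prop: equiv a t3} supplies the hypothesis $\mathcal{T}^{\bot}\cap\mathcal{X}\subseteq\Gennr[\mathcal{T}][1][\mathcal{X}]$), to the standard lifting argument using the vanishing of $\Ext^{1}_{\mathcal{C}}(\mathcal{A},\mathcal{B}\cap\mathcal{X})$; and (b) combines Lemma \ref{lem: props C2 y T2}(b) with an application of (a) to $1_{A}$. One small omission: you should explicitly note that the hypothesis of Lemma \ref{lem: props C2 y T2} (namely $\mathcal{T}^{\bot}\cap\mathcal{X}\subseteq\Gennr[\mathcal{T}][1][\mathcal{X}]$) is satisfied for an $n$-$\mathcal{X}$-tilting class, as this is not automatic.

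Part (c) is where your proposal stalls. The paper proves only $(c1)\Rightarrow(c2)$, and does so by a different, more economical route than your transfer-and-induct plan: it invokes \cite[Thm. 4.4, Prop. 4.5, Lem. 4.3]{parte1} to place $\bigl(\mathcal{A}\cap\mathcal{B}\cap\mathcal{X}\bigr)^{\vee}\cap\mathcal{X}$ inside ${}^{\bot}\mathcal{B}\cap\mathcal{X}\subseteq{}^{\bot_{1}}\mathcal{B}\cap\mathcal{X}\subseteq\mathcal{A}$ via a $\pd$ bound in one shot, rather than by induction on coresolution length. Your induction would need $\Ext^{1}(-,B)=0$ for \emph{all} $B\in\mathcal{B}$ in the inductive step, but $\id_{\mathcal{A}}(\mathcal{B}\cap\mathcal{X})=0$ only gives this for $B\in\mathcal{B}\cap\mathcal{X}$.

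The same obstruction shows up fatally in your $(c3)\Rightarrow(c1)$ sketch. You correctly deduce that $T$ is a direct summand of some $A\in\mathcal{A}\cap\mathcal{X}$, and you then want $T\in{}^{\bot_{1}}\mathcal{B}\cap\mathcal{X}\subseteq\mathcal{A}$. Summand-additivity of $\Ext^{1}$ plus $\id_{\mathcal{A}}(\mathcal{B}\cap\mathcal{X})=0$ only gives $\Ext^{1}(T,B)=0$ for $B\in\mathcal{B}\cap\mathcal{X}$, not for arbitrary $B\in\mathcal{B}$, so you cannot conclude $T\in{}^{\bot_{1}}\mathcal{B}$. Your proposed fix --- "route the required vanishing through objects of $\mathcal{T}\cap\mathcal{X}$, whose $\Ext^{i}$ with $\mathcal{T}^{\bot}$ vanishes by (T2)" --- is oriented the wrong way: (T2) states $\mathcal{T}\cap\mathcal{X}\subseteq\mathcal{T}^{\bot}$, which controls $\Ext^{i}(\mathcal{T},\mathcal{T}\cap\mathcal{X})$, not $\Ext^{i}(\mathcal{T}\cap\mathcal{X},\mathcal{B})$. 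As written this is a strategy, not a proof, and the strategy as described does not close the gap; you need a genuinely different argument showing $T\in\mathcal{A}$ (or an explanation of why $\mathcal{A}$ is closed under the relevant direct summands under the stated hypotheses) before this direction is established.
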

\begin{proof} (a) It follows from Proposition \ref{prop: equiv a t3} and Lemma \ref{lem: props C2 y T2} (a).
\

(b) By Proposition \ref{prop: equiv a t3} and Lemma \ref{lem: props C2 y T2}(b),  $\mathcal{T}\cap\mathcal{X}={}{}^{\bot}\left(\mathcal{T}^{\bot}\cap\mathcal{X}\right)\cap\mathcal{T}^{\bot}\cap\mathcal{X}={}{}^{\bot}\left(\mathcal{B}\cap\mathcal{X}\right)\cap\mathcal{B}\cap\mathcal{X}\mbox{.}$
Thus,  by (a),  we conclude that $\mathcal{A}\cap\mathcal{B}\cap\mathcal{X}$ $\subseteq\mathcal{T}\cap\mathcal{X}.$
\

(c) We only prove $(c1)\Rightarrow(c2).$ Indeed,   by the item (b),  Proposition \ref{prop: equiv a t3} and Lemma \ref{lem: props C2 y T2}, 
$\mathcal{T}\cap\mathcal{X}=\mathcal{A\cap\mathcal{B}\cap\mathcal{X}}$
is a relative generator in $\mathcal{T}^{\bot}\cap\mathcal{X}=\mathcal{B}\cap\mathcal{X}$.
Thus,  by \cite[Thm. 4.4 (a)]{parte1}, 
$\forall X\in(\mathcal{B}\cap\mathcal{X})_{\mathcal{X}}^{\vee}$ there
are short exact sequences 
$\suc[X][M_{X}][C_{X}][g_{X}]$,  with $\, M_{X}\in\mathcal{B}\cap\mathcal{X}$,  $C_{X}\in\left(\mathcal{A\cap\mathcal{B}\cap\mathcal{X}}\right)^{\vee}\cap\mathcal{X}$,  and 
$\suc[K_{X}][B_{X}][X][][f_{X}]$,  with $B_{X}\in\left(\mathcal{A\cap\mathcal{B}\cap\mathcal{X}}\right)^{\vee}$,  $K_{X}\in\mathcal{B}\cap\mathcal{X}.$
Furthermore,  since $\mathcal{A\cap\mathcal{B}\cap\mathcal{X}}\subseteq\mathcal{A}\subseteq{}{}^{\bot}\left(\mathcal{B}\cap\mathcal{X}\right)$, 
\cite[Thm. 4.4 (c)]{parte1} implies
$(\mathcal{A}\cap\mathcal{B}\cap\mathcal{X})^{\vee}\subseteq{}^{\bot}(\mathcal{B}\cap\mathcal{X})\mbox{.}$ 
Also,  by \cite[Prop. 4.5 (a)]{parte1}, 
$\mathcal{X}\subseteq(T^{\bot}\cap\mathcal{X})_{\mathcal{X}}^{\vee}=(\mathcal{B}\cap\mathcal{X})_{\mathcal{X}}^{\vee}$.  Lastly,  by \cite[Lem. 4.3]{parte1}, 
$\pdr[\mathcal{B}][\left(\mathcal{A}\cap\mathcal{B}\cap\mathcal{X}\right)^{\vee}]=\pdr[\mathcal{B}][\mathcal{A}\cap\mathcal{B}\cap\mathcal{X}]=0\mbox{.}$
Therefore 
$\left(\mathcal{A}\cap\mathcal{B}\cap\mathcal{X}\right)^{\vee}\cap\X\subseteq{}{}^{\bot}\mathcal{B}\cap\X\subseteq{}{}^{\bot_{1}}\mathcal{B}\cap\X\subseteq\mathcal{A}$ and thus $(\A, \B)$ is $\X$-complete.
\end{proof}

\subsection{\label{sub: -cerrada-por} Alternative conditions for the axiom (T3)}

\begin{defn}\label{def: condiciones T3} For  $\mathcal{T}, \mathcal{X}\subseteq\mathcal{C}, $ we consider the following
conditions.
\begin{description}
\item [{(T3')}] There exists $\omega\subseteq\T^\vee_\X$ which is an $\mathcal{X}$-projective relative generator 
in $\mathcal{X}.$ 
\item [{(T3'')}] There exists $\sigma\subseteq\mathcal{T}_{\mathcal{X}}^{\vee}$
such that $\Addx[\sigma]$ is an $\mathcal{X}$-projective relative
generator in $\mathcal{X}$.
\item [{(t3'')}] There exists $\sigma\subseteq\mathcal{T}_{\mathcal{X}}^{\vee}$
such that $\addx[\sigma]$ is an $\mathcal{X}$-projective relative
generator in $\mathcal{X}$.
\end{description}
\end{defn}

The following lemma is a generalization of \cite[Lem. 2.3]{Tiltinginfinitamentegenerado}.

\begin{lem}\label{lem: lema previo a existencia de preenvolvente} Let   $\mathcal{X}\subseteq\mathcal{C}$ be closed
under extensions and $\mathcal{T}\subseteq\mathcal{C}$ be a class such that $\T\cap\X\subseteq \T^\perp$ and $\sigma\subseteq\mathcal{X}\cap\mathcal{T}_{\mathcal{X}}^{\vee}.$
Then,  for any $W\in\sigma$ and any finite $(\mathcal{T}\cap\mathcal{X})_\X$-coresolution 
$\suc[W][M_{0}\rightarrow...][M_{n}][f_{0}][f_{n}], $
we have that $f_{0}$ is a special $\mathcal{T}^{\bot}\cap\mathcal{X}$-preenvelope, 
a special $\mathcal{T}\cap\mathcal{X}$-preenvelope and a special $\mathcal{T}^{\bot }$-preenvelope.
\end{lem}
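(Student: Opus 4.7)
The plan is to unify all three claims by observing that they share the same structural form: for each target class $\mathcal{Z} \in \{\mathcal{T}^{\bot}\cap\mathcal{X},\,\mathcal{T}\cap\mathcal{X},\,\mathcal{T}^{\bot}\}$, we need to show that $M_{0}\in\mathcal{Z}$ and that $\operatorname{CoKer}(f_{0})\in{}^{\bot_{1}}\mathcal{Z}$ (which implies by dimension shifting and the long exact sequence of Ext applied to the short exact sequence $\suc[W][M_{0}][\operatorname{CoKer}(f_{0})][f_{0}]$ that $f_{0}$ is actually a $\mathcal{Z}$-preenvelope). Since $M_{0}\in\mathcal{T}\cap\mathcal{X}\subseteq\mathcal{T}^{\bot}$ by the hypothesis $\mathcal{T}\cap\mathcal{X}\subseteq\mathcal{T}^{\bot}$, the membership $M_{0}\in\mathcal{Z}$ holds in all three cases. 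Moreover, since $\mathcal{T}\cap\mathcal{X}\subseteq\mathcal{T}^{\bot}\cap\mathcal{X}\subseteq\mathcal{T}^{\bot}$, we have ${}^{\bot_{1}}(\mathcal{T}^{\bot})\subseteq{}^{\bot_{1}}(\mathcal{T}^{\bot}\cap\mathcal{X})\subseteq{}^{\bot_{1}}(\mathcal{T}\cap\mathcal{X})$, and therefore it is enough to establish the strongest condition: $\operatorname{CoKer}(f_{0})\in{}^{\bot_{1}}(\mathcal{T}^{\bot})$.

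To prove this, set $I_{0}:=W$, $I_{n}:=M_{n}$, and for $1\leq i\leq n-1$ let $I_{i}:=\operatorname{Im}(f_{i})$, so that $\operatorname{CoKer}(f_{0})\cong I_{1}$. The given $(\mathcal{T}\cap\mathcal{X})_{\mathcal{X}}$-coresolution breaks into short exact sequences $\suc[I_{i}][M_{i}][I_{i+1}]$ for $i=0,1,\dots,n-1$. Fix any $Y\in\mathcal{T}^{\bot}$. Since $M_{i}\in\mathcal{T}\cap\mathcal{X}\subseteq\mathcal{T}$, we have $\operatorname{Ext}^{k}_{\mathcal{C}}(M_{i},Y)=0$ for every $k\geq 1$. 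Feeding this vanishing into the long exact sequence of $\operatorname{Ext}^{*}(-,Y)$ associated to each short exact sequence, and iterating (i.e.\ invoking the Shifting Lemma), yields the chain of isomorphisms
\[
\operatorname{Ext}^{1}_{\mathcal{C}}(I_{1},Y)\cong\operatorname{Ext}^{2}_{\mathcal{C}}(I_{2},Y)\cong\cdots\cong\operatorname{Ext}^{n-1}_{\mathcal{C}}(I_{n-1},Y),
\]
and then the last sequence $\suc[I_{n-1}][M_{n-1}][M_{n}]$ sandwiches $\operatorname{Ext}^{n-1}(I_{n-1},Y)$ between $\operatorname{Ext}^{n-1}(M_{n-1},Y)=0$ and $\operatorname{Ext}^{n}(M_{n},Y)=0$, showing $\operatorname{Ext}^{1}(I_{1},Y)=0$. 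The corner cases $n=1$ (where $I_{1}=M_{1}\in\mathcal{T}$ so the vanishing is immediate) and $n=0$ (where $f_{0}$ is an isomorphism and the claim is trivial) are handled by the same formula.

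The main obstacle is essentially just keeping the shifting bookkeeping correct and making sure that at each step the class whose orthogonality we use contains the relevant $M_{i}$; the hypothesis $\mathcal{T}\cap\mathcal{X}\subseteq\mathcal{T}^{\bot}$ is precisely what guarantees that \emph{every} intermediate term $M_{i}$ belongs to $\mathcal{T}$, so that $\operatorname{Ext}^{\geq 1}(M_{i},Y)=0$ holds against \emph{any} $Y\in\mathcal{T}^{\bot}$. The closure of $\mathcal{X}$ under extensions is not used in the Ext computation itself, but it is the tacit reason why the $I_{i}$'s live inside $\mathcal{X}$ (as required by the definition of a $(\mathcal{T}\cap\mathcal{X})_{\mathcal{X}}$-coresolution) and hence why the short exact sequences we split off are meaningful in the relative setting. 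With $I_{1}\in{}^{\bot_{1}}(\mathcal{T}^{\bot})$ established, the three statements follow at once from the definition of special preenvelope.
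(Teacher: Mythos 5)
Your proof is correct and takes essentially the same route as the paper. You reduce all three claims to showing $\operatorname{Coker}(f_0)\in{}^{\bot_1}(\mathcal{T}^{\bot})$, which is exactly what the paper's proof establishes: the paper observes (via Lemma \ref{lem: chico}(a)) that every $M_j\in{}^{\bot}(\mathcal{T}^{\bot})$ and then invokes that ${}^{\bot}(\mathcal{T}^{\bot})$ is closed under epi-kernels to conclude that $K_2=\operatorname{Coker}(f_0)\in{}^{\bot}(\mathcal{T}^{\bot})$, whereas you carry out the equivalent dimension-shifting computation by hand. Your explicit unification step (that ${}^{\bot_1}(\mathcal{T}^{\bot})\subseteq{}^{\bot_1}(\mathcal{T}^{\bot}\cap\mathcal{X})\subseteq{}^{\bot_1}(\mathcal{T}\cap\mathcal{X})$, so the strongest condition suffices) is the same observation the paper compresses into the final sentence about $M_0\in\mathcal{T}\cap\mathcal{X}\subseteq\mathcal{T}^{\bot}$.
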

\begin{proof}
Let $W\in\sigma$ and $\suc[W][M_{0}\rightarrow...][M_{n}][f_{0}][f_{n}]$
be a finite $(\mathcal{T}\cap\mathcal{X})_\X$-coresolution. By Lemma \ref{lem: chico} (a), 
$\mathcal{T}\cap\mathcal{X}\subseteq\mathcal{T}^{\bot}\cap{}{}^{\bot}\left(\mathcal{T}^{\bot}\right)$.
Hence $M_{j}\in{}{}^{\bot}\left(\mathcal{T}^{\bot}\right)$ $\forall j\in[0, n]$.
Moreover $K_{j}: =\Kerx[f_{j}]\in{}{}^{\bot}\left(\mathcal{T}^{\bot}\right)$
$\forall j\in[1, n]$ since $^{\bot}\left(\mathcal{T}^{\bot}\right)$ is closed
under epi-kernels. In particular $\Extx[1][][K_{2}][X]=0$ $\forall X\in\mathcal{T}^{\bot}$ and thus 
 $f_{0}: W\rightarrow M_{0}$ is a special $\mathcal{T}^{\bot }$-preenvelope, 
which is a $\mathcal{T}^{\bot}\cap\mathcal{X}$-preenvelope and a
$\mathcal{T}\cap\mathcal{X}$-preenvelope since $M_{0}\in\mathcal{T}\cap\mathcal{X}\subseteq\mathcal{T}^{\bot}.$
\end{proof}

\begin{lem}\label{lem: exitencia de la preenvolvetnte}\label{lem: exitencia de la preenvolvetnte-1}
Let $\mathcal{C}$ be an AB4 (abelian) category,  $\mathcal{X}=\Addx[\mathcal{X}]\subseteq\mathcal{C}$ 
($\mathcal{X}=\addx[\mathcal{X}]\subseteq \C$)
be closed under extensions,  $\mathcal{T}\subseteq\mathcal{C}$
be a class satisfying $\mathrm{(T2)}, $ $\sigma\subseteq\mathcal{X}\cap\mathcal{T}_{\mathcal{X}}^{\vee}$
and $\omega: =\Addx[\sigma]$ ($\omega: =\addx[\sigma]$). Then,  the following
statements hold true. 
\begin{itemize}
\item[$\mathrm{(a)}$] $\omega\subseteq{}{}^{\bot}\left(\mathcal{T}^{\bot}\right)\cap\mathcal{X}$.
\item[$\mathrm{(b)}$] If $\mathcal{T}=\mathcal{T}^{\oplus}$ ($\mathcal{T}=\mathcal{T}^{\oplus_{<\infty}}$), 
then every $W\in\omega$ admits an exact sequence $\suc[W][M_{W}][C_{W}][f]\mbox{, }$
where $M_{W}\in\mathcal{T}\cap\mathcal{X}$,  $C_{W}\in{}^{\bot}\left(\mathcal{T}^{\bot}\right)\cap\mathcal{X}$
and $f$ is a special $\mathcal{T}^{\bot}\cap\mathcal{X}$-preenvelope, 
a special $\mathcal{T}\cap\mathcal{X}$-preenvelope and a special
$\mathcal{T}^{\bot}$-preenvelope.
\end{itemize}
\end{lem}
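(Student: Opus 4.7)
The plan is to handle (a) directly from closure properties of ${}^{\bot}(\mathcal{T}^{\bot})$ under coproducts (using AB4) and summands, and then to obtain (b) by summing the first step of the $(\mathcal{T}\cap\mathcal{X})_{\mathcal{X}}$-coresolution of each constituent of a coproduct decomposition of $W\oplus W'$, and descending to $W$ via a snake lemma argument.

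For (a), the inclusion $\sigma\subseteq\mathcal{X}\cap\mathcal{T}_{\mathcal{X}}^{\vee}\subseteq\mathcal{X}\cap{}^{\bot}(\mathcal{T}^{\bot})$ comes from Lemma \ref{lem: inf2}(a). Since $\mathcal{C}$ is AB4, the natural isomorphism $\Psi_{n}$ recalled in the preliminaries says that $\mathrm{Ext}^{n}(-, B)$ sends arbitrary coproducts to products; in particular ${}^{\bot}(\mathcal{T}^{\bot})$ is closed under arbitrary coproducts, and it is trivially closed under direct summands. Together with $\mathcal{X}=\mathrm{Add}(\mathcal{X})$, this yields $\omega=\mathrm{Add}(\sigma)\subseteq{}^{\bot}(\mathcal{T}^{\bot})\cap\mathcal{X}$. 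The small version is identical, restricted to finite coproducts.

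For (b), fix $W\in\omega$ and a decomposition $\bigoplus_{i\in I}\sigma_{i}=W\oplus W'$ with each $\sigma_{i}\in\sigma$ (in the small case $I$ is finite). By peeling off the first term of a finite $(\mathcal{T}\cap\mathcal{X})_{\mathcal{X}}$-coresolution of $\sigma_{i}$, I obtain a short exact sequence $0\to\sigma_{i}\to N_{i}\to L_{i}\to 0$ with $N_{i}\in\mathcal{T}\cap\mathcal{X}$ and $L_{i}\in(\mathcal{T}\cap\mathcal{X})_{\mathcal{X}}^{\vee}\cap\mathcal{X}$. Taking coproducts and invoking AB4 gives the exact sequence
\[
0\to\bigoplus_{i}\sigma_{i}\xrightarrow{j}\bigoplus_{i}N_{i}\to\bigoplus_{i}L_{i}\to 0\mbox{.}
\]
Using $\mathcal{T}=\mathcal{T}^{\oplus}$ and $\mathcal{X}=\mathrm{Add}(\mathcal{X})$, the middle term is in $\mathcal{T}\cap\mathcal{X}$; using Lemma \ref{lem: inf2}(a) together with the coproduct closure argument from (a), the right-hand term is in ${}^{\bot}(\mathcal{T}^{\bot})\cap\mathcal{X}$.

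To descend to $W$, let $\iota:W\hookrightarrow W\oplus W'=\bigoplus\sigma_{i}$ be the split inclusion and set $f:=j\circ\iota$, $M_{W}:=\bigoplus N_{i}$, $C_{W}:=\mathrm{Coker}(f)$. The snake lemma applied to the commutative diagram with split top row $0\to W\xrightarrow{\iota} W\oplus W'\to W'\to 0$, bottom row $0\to W\xrightarrow{f} M_{W}\to C_{W}\to 0$, and vertical arrows $1_{W}$, $j$ and the induced map $W'\to C_{W}$, produces an exact sequence $0\to W'\to C_{W}\to\bigoplus_{i}L_{i}\to 0$. Since $W'$ is itself a direct summand of $\bigoplus\sigma_{i}$ it belongs to $\omega$, and so $W'\in{}^{\bot}(\mathcal{T}^{\bot})\cap\mathcal{X}$ by (a); because both ${}^{\bot}(\mathcal{T}^{\bot})$ and $\mathcal{X}$ are closed under extensions, $C_{W}\in{}^{\bot}(\mathcal{T}^{\bot})\cap\mathcal{X}$. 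The three preenvelope claims follow at once: $M_{W}\in\mathcal{T}\cap\mathcal{X}\subseteq\mathcal{T}^{\bot}$ by (T2), and from $C_{W}\in{}^{\bot}(\mathcal{T}^{\bot})$ together with the inclusions $\mathcal{T}\cap\mathcal{X}\subseteq\mathcal{T}^{\bot}\cap\mathcal{X}\subseteq\mathcal{T}^{\bot}$, one gets $C_{W}\in{}^{\bot_{1}}(\mathcal{T}\cap\mathcal{X})\cap{}^{\bot_{1}}(\mathcal{T}^{\bot}\cap\mathcal{X})\cap{}^{\bot_{1}}(\mathcal{T}^{\bot})$. The principal technical step is the snake lemma bookkeeping that realizes $C_{W}$ as an extension of two classes in ${}^{\bot}(\mathcal{T}^{\bot})\cap\mathcal{X}$; once this is in place, the remaining assertions reduce to routine closure arguments.
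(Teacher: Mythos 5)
Your proof is correct and follows the same strategy as the paper's: establish $\sigma\subseteq{}^{\bot}(\mathcal{T}^{\bot})\cap\mathcal{X}$, close under $\mathrm{Add}$ (resp.\ $\mathrm{add}$) via AB4 and direct summands for (a), and for (b) take coproducts of the first coresolution steps for a decomposition $W\oplus W'\cong\bigoplus_i\sigma_i$ and then descend to the summand $W$. The only superficial differences are that for (a) you invoke Lemma \ref{lem: inf2}(a) directly where the paper routes through Lemma \ref{lem: lema previo a existencia de preenvolvente}, and for (b) you obtain the short exact sequence $0\to W'\to C_W\to\bigoplus_i L_i\to 0$ via a snake-lemma diagram where the paper reads the same two sequences off a push-out square; both constructions yield the same objects $M_W$ and $C_W$ and the remaining closure and special-preenvelope bookkeeping is identical.
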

\begin{proof}
Let us prove the lemma by assuming that $\C$ is an AB4 category. The case when  $\mathcal{C}$ is just abelian can be done by applying similar arguments. 
\

(a) Let us show that $\sigma \subseteq {}^{\bot}\left(\mathcal{T}^{\bot}\right)\cap\mathcal{X}$. Indeed,  by Lemma \ref{lem: lema previo a existencia de preenvolvente},  every  $S\in\sigma$ admits an exact sequence 
$\suc[S][M_{S}][C_{S}]\mbox{, }$
with $M_{S}\in\mathcal{T}\cap\mathcal{X}\subseteq{}{}^{\bot}\left(\mathcal{T}^{\bot}\right)\cap\mathcal{X}\cap\mathcal{T}^{\bot}$,  
 $C_{S}\in\mathcal{X}\cap{}{}^{\bot}\left(\mathcal{T}^{\bot}\right)$,  and thus,  $S\in^{\bot} (\T ^{\bot})\cap\X$ since   $^{\bot} (\T ^{\bot})$ is closed under epi-kernels. Finally,  $\Addx[\sigma]\subseteq{}{}^{\bot}\left(\mathcal{T}^{\bot}\right)\cap\mathcal{X}$ since $\mathcal{C}$ is AB4 and $\sigma \subseteq {}^{\bot}\left(\mathcal{T}^{\bot}\right)\cap\mathcal{X}.$
\

(b)  Let $\mathcal{T}=\mathcal{T}^{\oplus}$ and $W\in\omega: =\Addx[\sigma]$. Then
there is $W'\in\omega$ and a set $\{S_{i}\}_{i\in I}\subseteq\sigma\subseteq\mathcal{X}$
such that $W\oplus W'\cong\bigoplus_{i\in I}S_{i}^{(\alpha_{i})}$.
By Lemma \ref{lem: lema previo a existencia de preenvolvente},  for every
$i\in I$,  there is an exact sequence 
$\suc[S_{i}][M_{S_{i}}][C_{S_{i}}]$
with $M_{S_{i}}\in\mathcal{T}\cap\mathcal{X}$ and $C_{S_{i}}\in{}{}^{\bot}\left(\mathcal{T}^{\bot}\right)\cap\mathcal{X}$.
Let $S: =\bigoplus_{i\in I}S_{i}^{(\alpha_{i})}$,  $M': =\bigoplus_{i\in I}M_{S_{i}}^{(\alpha_{i})}$
and $C: =\bigoplus_{i\in I}C_{S_{i}}^{(\alpha_{i})}$.\\%
\fbox{\begin{minipage}[t]{0.2\columnwidth}%
\[
\begin{tikzpicture}[-, >=to, shorten >=1pt, auto, node distance=1cm, main node/.style=, x=.5cm, y=.5cm]

 \node[main node] (1) at (0, 0){$W'$};
 \node[main node] (2) at (-2, 0){$S$};
 \node[main node] (3) at (-4, 0){$W$};
 \node[main node] (4) at (-4, -2){$W$};
 \node[main node] (5) at (-2, -2){$M'$};
 \node[main node] (6) at (0, -2){$Z$};
 \node[main node] (7) at (-2, -4){$C$};
 \node[main node] (8) at (0, -4){$C$};

\draw[right hook->,  thin]   (3)  to  node  {$$}    (2);
\draw[->>,  thin]   (2)  to  node  {$$}    (1);
\draw[right hook->,  thin]   (4)  to  node  {$$}    (5);
\draw[->>,  thin]   (5)  to  node  {$$}    (6);
\draw[right hook->,  thin]   (2)  to  node  {$$}    (5);
\draw[->>,  thin]   (5)  to  node  {$$}    (7);
\draw[right hook->,  thin]   (1)  to  node  {$$}    (6);
\draw[->>,  thin]   (6)  to  node  {$$}    (8);
\draw[-,  double]   (3)  to  node  {$$}    (4);
\draw[-,  double]   (7)  to  node  {$$}    (8);

\end{tikzpicture}
\]%
\end{minipage}}\hfill{}%
\begin{minipage}[t]{0.75\columnwidth}%
 Since $\mathcal{C}$
is AB4 and $\mathcal{T}=\mathcal{T}^{\oplus}$,  we have the short exact
sequence $\suc[S][M'][C][f][g]\mbox{, }$
where $S\in\omega$,  $M'\in\mathcal{T}\cap\mathcal{X}$ and $C\in{}{}^{\bot}\left(\mathcal{T}^{\bot}\right)\cap\mathcal{X}$;
and the splitting exact sequence 
$\suc[W][S][W'][][h]\mbox{.}$
Considering the push-out of $h$ with $f$,  we get a short exact sequence
$\eta: \;\suc[W][M'][Z][\alpha]\mbox{.}$
Finally,  observe from the exact sequence 
$\suc[W'][Z][C]$ 
that $Z\in{}^{\bot}\left(\mathcal{T}^{\bot}\right)\cap\mathcal{X}$.
Therefore,  using $\eta, $ we can conclude the desired result. 
\end{minipage}\\
\end{proof}

\begin{lem}\label{lem: props T2 con T3' y C2 con C3'-2}\label{lem: props T2 con T3' y C2 con C3'-2-1}
Let $\mathcal{C}$ be an AB4 (abelian) category,  $\mathcal{X}\subseteq\mathcal{C}$
be closed under extensions and such that $\mathcal{X}=\Addx[\mathcal{X}]$
($\mathcal{X}=\addx[\mathcal{X}]$). If $\mathcal{T}\subseteq\mathcal{C}$
satisfies $\mathrm{(T2),  (T3'')\,  ((t3'')), }$ and $\mathcal{T}=\mathcal{T}^{\oplus}$
($\mathcal{T}=\mathcal{T}^{\oplus_{<\infty}}$),  then $\mathcal{T}^{\bot }\cap\mathcal{X}\subseteq\Gennr[\mathcal{T}][1]$.\end{lem}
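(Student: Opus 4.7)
The strategy is to reduce (T3'') (or (t3'')) to the situation already handled by Lemma \ref{lem: inf4}, by using Lemma \ref{lem: exitencia de la preenvolvetnte} (b) to produce the special short exact sequence for objects of $\omega:=\Add(\sigma)$ (respectively $\omega:=\add(\sigma)$) that would otherwise come straight from (T3). Once this sequence is in hand, the push-out manipulation of Lemma \ref{lem: inf4} goes through verbatim.

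First, fix $A\in\mathcal{T}^{\bot}\cap\mathcal{X}$. Since $\omega$ is a relative generator in $\mathcal{X}$, there is an exact sequence $\eta_1: K\hookrightarrow W\twoheadrightarrow A$ with $W\in\omega$ and $K\in\mathcal{X}$. By Lemma \ref{lem: exitencia de la preenvolvetnte} (b), the hypotheses $\mathcal{T}=\mathcal{T}^{\oplus}$ (resp.\ $\mathcal{T}=\mathcal{T}^{\oplus_{<\infty}}$), $\sigma\subseteq\mathcal{T}_{\mathcal{X}}^{\vee}$ and (T2) give an exact sequence $\eta_2: W\hookrightarrow M_W\twoheadrightarrow C_W$ with $M_W\in\mathcal{T}\cap\mathcal{X}$ and $C_W\in{}^{\bot}(\mathcal{T}^{\bot})\cap\mathcal{X}$. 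This is precisely the role that was played by the $\mathcal{T}_{\mathcal{X}}^{\vee}$-coresolution in the proof of Lemma \ref{lem: inf4}.

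Next, I would form the push-out of the two maps $W\twoheadrightarrow A$ and $W\hookrightarrow M_W$, producing short exact sequences $K\hookrightarrow M_W\twoheadrightarrow B'$ and $\eta_4: A\overset{t}{\hookrightarrow}B'\twoheadrightarrow C_W$. Because $A\in\mathcal{T}^{\bot}$ and $C_W\in{}^{\bot}(\mathcal{T}^{\bot})$, the sequence $\eta_4$ splits, so there is a retraction $y: B'\to A$ with $yt=1_A$. Composing $y$ with $M_W\twoheadrightarrow B'$ yields an epimorphism $M_W\twoheadrightarrow A$ whose kernel $K'$ fits, via the standard comparison of the two short exact sequences, into an exact sequence $K\hookrightarrow K'\twoheadrightarrow C_W$ (exactly as in Lemma \ref{lem: inf4}).

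Finally, since $\mathcal{X}$ is closed under extensions and $K, C_W\in\mathcal{X}$, one concludes $K'\in\mathcal{X}$. Therefore the exact sequence $K'\hookrightarrow M_W\twoheadrightarrow A$ certifies $A\in\Gennr[\mathcal{T}][1][\mathcal{X}]$. The only real work is the invocation of Lemma \ref{lem: exitencia de la preenvolvetnte} (b); the diagrammatic chase and splitting argument are then a direct transcription of Lemma \ref{lem: inf4}, and the small/big case (t3'') is handled identically by replacing $\Add$ with $\add$ throughout.
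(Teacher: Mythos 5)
Your proposal is correct and matches the paper's intent: the paper's proof simply defers to the argument of Lemma \ref{lem: props T2 con T3' y C2 con C3'}, and you make the reduction explicit by correctly identifying Lemma \ref{lem: exitencia de la preenvolvetnte}\,(b) as the ingredient that supplies, for $W\in\omega=\Addx[\sigma]$ (resp.\ $\addx[\sigma]$), the short exact sequence $\suc[W][M_W][C_W]$ with $M_W\in\mathcal{T}\cap\mathcal{X}$ and $C_W\in{}^{\bot}(\mathcal{T}^{\bot})\cap\mathcal{X}$, after which the push-out and splitting argument of Lemma \ref{lem: props T2 con T3' y C2 con C3'} goes through verbatim.
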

\begin{proof}
It can be proved in a similar way as Lemma \ref{lem: props T2 con T3' y C2 con C3'}.
\end{proof}

We close this section with a generalization of \cite[Cor. 3.6]{positselskicorrespondence}.

\begin{prop}\label{prop: en caso de tener un generador proyectivo}\label{cor: caract}\label{rem: T3 simple}\label{prop: en caso de tener un generador proyectivo-1}\label{cor: caract-1}\label{rem: T3 simple-1}
Let   $\mathcal{X}=\smdx[\mathcal{X}]\subseteq\mathcal{C}$
be closed under extensions and admitting an
$\mathcal{X}$-projective relative generator in $\mathcal{X}$,  and let $\mathcal{T}\subseteq\mathcal{C}$ be
satisfying $\mathrm{(T0),  (T1),  (T2),}$ $\mathrm{  (T4), }$ and $\mathrm{(T5)}.$ Then, 
the following conditions are equivalent: 
\begin{description}
\item [{(T3)}] There exists $\omega\subseteq\T_{\mathcal{X}}^{\vee}$ which is a relative generator in $\mathcal{X}.$
\item [{(T3')}] There exists $\omega\subseteq\T_{\mathcal{X}}^{\vee}$ which is an $\mathcal{X}$-projective relative generator
in $\mathcal{X}.$ 
\end{description}

Furthermore,  if $\mathcal{C}$ is AB4 (abelian),  $\mathcal{X}=\Addx[\mathcal{X}]$
($\mathcal{X}=\addx[\mathcal{X}]$) and $\mathcal{T}=\mathcal{T}^{\oplus}$
($\mathcal{T}=\mathcal{T}^{\oplus_{<\infty}}$),  then $\mathrm{(T3)}$ and $\mathrm{(T3')}$
are equivalent to the following one: 
\begin{description}
\item [{(T3'')}] there exists $\sigma\subseteq\mathcal{T}_{\mathcal{X}}^{\vee}$
such that $\Addx[\sigma]$ is an $\mathcal{X}$-projective relative
generator in $\mathcal{X}$
\end{description}

($\mathrm{\mathbf{(t3''): }}$ $\, $ there exists $\sigma\subseteq\mathcal{T}_{\mathcal{X}}^{\vee}$
such that $\addx[\sigma]$ is an $\mathcal{X}$-projective relative
generator in $\mathcal{X}$).

\end{prop}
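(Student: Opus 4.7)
The plan is to first prove the equivalence $\mathrm{(T3)}\Leftrightarrow\mathrm{(T3')}$ without the AB4 hypothesis, and then, under the additional AB4/abelian structure, close the triangle with $\mathrm{(T3'')}$ (respectively $\mathrm{(t3'')}$).

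The implication $\mathrm{(T3')}\Rightarrow\mathrm{(T3)}$ is immediate: any $\X$-projective relative generator in $\X$ is in particular a relative generator in $\X$.

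For the nontrivial direction $\mathrm{(T3)}\Rightarrow\mathrm{(T3')}$, assume $\mathrm{(T3)}$; combined with $\mathrm{(T0),(T1),(T2),(T4),(T5)}$ this means $\T$ is $n$-$\X$-tilting, so Theorem~\ref{thm: el par n-X-tilting} applies. Let $\omega_0\subseteq\X\cap{}^{\bot}\X$ be the $\X$-projective relative generator provided by the hypothesis on $\X$. The plan is to show $\omega_0\subseteq\T_\X^\vee$, so that $\omega_0$ itself witnesses $\mathrm{(T3')}$. Fix $W_0\in\omega_0$. Theorem~\ref{thm: el par n-X-tilting}(c) yields that $({}^{\bot}(\T^{\bot}),\T^{\bot})$ is $\X$-complete, hence there is a short exact sequence
$$0\to K\to B\to W_0\to 0$$
with $B\in{}^{\bot}(\T^{\bot})\cap\X$ and $K\in\T^{\bot}\cap\X$. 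Since $W_0\in{}^{\bot}\X$ and $K\in\X$, we have $\Extx[1][\C][W_0][K]=0$, so this sequence splits and $W_0$ is a direct summand of $B$. Because ${}^{\bot}(\T^{\bot})$ is manifestly closed under direct summands, $W_0\in{}^{\bot}(\T^{\bot})\cap\X$, and by Theorem~\ref{thm: el par n-X-tilting}(a) the latter class equals $(\T\cap\X)_\X^\vee\cap\X\subseteq\T_\X^\vee$, as required.

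Under the extra AB4 hypothesis (the abelian/small case is entirely analogous), $\mathrm{(T3')}\Rightarrow\mathrm{(T3'')}$ follows by taking $\sigma:=\omega$, where $\omega$ is the generator from $\mathrm{(T3')}$: then $\sigma\subseteq\T_\X^\vee$, and $\Addx[\sigma]\supseteq\omega$ is a relative generator in $\X$; its $\X$-projectivity is a consequence of $\omega\subseteq{}^{\bot}\X$ together with the natural isomorphism $\Psi_i$ (which converts $\Ext$'s of arbitrary coproducts into products of $\Ext$'s in AB4), combined with the obvious closure of ${}^{\bot}\X$ under direct summands. Finally, $\mathrm{(T3'')}\Rightarrow\mathrm{(T3)}$ is obtained by applying Lemma~\ref{lem: props T2 con T3' y C2 con C3'-2} (whose hypotheses are exactly met here), which yields $\T^{\bot}\cap\X\subseteq\Gennr[\T][1][\X]$, and then invoking Proposition~\ref{prop: equiv a t3}, which characterises $\mathrm{(T3)}$ by precisely this inclusion under the other standing axioms.

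The main obstacle is the implication $\mathrm{(T3)}\Rightarrow\mathrm{(T3')}$: one must see that the prescribed $\X$-projective relative generator $\omega_0$ is automatically contained in $\T_\X^\vee$. The key idea is to apply the $\X$-completeness of $({}^{\bot}(\T^{\bot}),\T^{\bot})$ to $W_0\in\omega_0$ and exploit the $\X$-projectivity of $\omega_0$ to split the resulting approximation sequence; once $W_0$ is identified as a summand of an object in ${}^{\bot}(\T^{\bot})\cap\X$, Theorem~\ref{thm: el par n-X-tilting}(a) immediately places $W_0$ in $\T_\X^\vee$. All other implications reduce to direct applications of Lemma~\ref{lem: props T2 con T3' y C2 con C3'-2} and Proposition~\ref{prop: equiv a t3}.
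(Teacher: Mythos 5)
Your proof is correct, and the overall architecture matches the paper's: trivial direction $\mathrm{(T3')}\Rightarrow\mathrm{(T3)}$, then the Theorem~\ref{thm: el par n-X-tilting}-based argument for $\mathrm{(T3)}\Rightarrow\mathrm{(T3')}$, then $\sigma:=\omega$ for $\mathrm{(T3')}\Rightarrow\mathrm{(T3'')}$, and Lemma~\ref{lem: props T2 con T3' y C2 con C3'-2} plus Proposition~\ref{prop: equiv a t3} for $\mathrm{(T3'')}\Rightarrow\mathrm{(T3)}$.

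The one place you depart from the paper is inside $\mathrm{(T3)}\Rightarrow\mathrm{(T3')}$. You invoke Theorem~\ref{thm: el par n-X-tilting}(c) to obtain an $\X$-completeness sequence $0\to K\to B\to W_0\to 0$, split it using $W_0\in{}^{\bot}\X$ and $K\in\X$, and then feed $W_0\in{}^{\bot}(\T^{\bot})\cap\X$ into part (a). The paper skips the completeness detour entirely: since $\omega_0\subseteq{}^{\bot}\X$ and $\T^{\bot}\cap\X\subseteq\X$, one has $\omega_0\subseteq{}^{\bot}(\T^{\bot}\cap\X)\cap\X$, which by the chain of equalities in Theorem~\ref{thm: el par n-X-tilting}(a) equals $\T_\X^\vee\cap\X\subseteq\T_\X^\vee$ — no exact sequence and no splitting needed. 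Your argument is sound but does more work; the direct route shows that the inclusion is a purely formal consequence of the orthogonality bookkeeping in part~(a), rather than of the approximation theory in part~(c).
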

\begin{proof}  The implication (T3) $\Rightarrow$ (T3') follows from Theorem \ref{thm: el par n-X-tilting} (a); and (T3') $\Rightarrow$ (T3)  is trivial. Let $\mathcal{C}$ be AB4, $\mathcal{X}=\Addx[\mathcal{X}]$ and
$\mathcal{T}=\mathcal{T}^{\oplus}$ (the case where $\mathcal{C}$ is
abelian, $\mathcal{X}=\addx[\mathcal{X}]$ and $\mathcal{T=\mathcal{T}^{\oplus_{<\infty}}}$
can be done  by similar arguments).

(T3') $\Rightarrow$ (T3''):  Let $\omega$ be the relative generator
in $\mathcal{X}$ satisfying (T3'). Since $\omega\subseteq\mathcal{X}=\Addx[\mathcal{X}]$, we can take
$\sigma:=\omega.$ 

(T3'') $\Rightarrow$ (T3): It follows from Proposition \ref{prop: equiv a t3} and
Lemma \ref{lem: props T2 con T3' y C2 con C3'-2}. 
\end{proof}

\subsection{\label{sub: Big small}Tilting for classes of compact-like objects}

In this section we will consider a class $\mathcal{X}$ consisting
of compact-like objects in an abelian category $\C.$ We shall see that,  in this case,  a class $\mathcal{T}$
is big $n$-$\mathcal{X}$-tilting if and only if it is small $n$-$\mathcal{X}$-tilting.
Let us begin by defining what kind of compact-like objects we will be
considering. 
\

Let $\C$ be an additive category, 
$\mathcal{T}\subseteq\mathcal{C}$ and $M\in\mathcal{C}.$ We recall that $M$ is \textbf{finitely $\mathcal{T}$-generated} if,  for every  family $\left\{ U_{i}\right\} _{i\in I} \subseteq \mathcal{T}$
such that $\bigoplus_{i\in I}U_{i}$ exists in $\C,$ 
every epimorphism $\varphi: \bigoplus_{i\in I}U_{i}\rightarrow M$ in
$\mathcal{C}$  admits a finite set $F\subseteq I$ such that
the composition 
$\bigoplus_{i\in F}U_{i}\xrightarrow{i_{F, I}}\bigoplus_{i\in I}U_{i}\xrightarrow{\varphi}M$
is an epimorphism,  where $i_{F, I}$ is the natural inclusion. We
 denote by $\operatorname{f.g.}(\mathcal{T})$
the class of all the finitely $\mathcal{T}$-generated objects in $\C.$ It is said that $M$ is \textbf{$\mathcal{T}$-compact} (\textbf{$\mathcal{T}$-compact for monomorphisms})
if,  for every family  $\left\{ U_{i}\right\} _{i\in I}\subseteq\mathcal{T}$
such that $\bigoplus_{i\in I}U_{i}$ exists in $\C,$  every morphism (monomorphism)
$\psi: M\rightarrow\bigoplus_{i\in I}U_{i}$ in $\mathcal{C}$ admits
a finite set $F\subseteq I$ such that $\psi$ factors through the
 inclusion $i_{F, I}: \bigoplus_{i\in F}U_{i}\rightarrow\bigoplus_{i\in I}U_{i}\mbox{.}$
We denote by $\mathcal{K}_{\mathcal{T}}$ ($\mathcal{K}_{\mathcal{T},\mathcal{M}})$
the class of all the $\mathcal{T}$-compact ($\mathcal{T}$-compact for monomorphism) objects in $\mathcal{C}.$ Notice that $\mathcal{K}_{\mathcal{T}}\subseteq\mathcal{K}_{\mathcal{T},\mathcal{M}}.$

\begin{lem}\cite[Chap. II. Lem. 16.1]{mitchell}\label{lem: para compactos}
Let $\mathcal{C}$ be an additive category and $\{A_i\}_{i\in I}\subseteq\C$ be a family of objects such  that  $\bigoplus_{i\in I}A_{i}$ exists in $\C.$  
Then, for a finite subset $F\subseteq I,$  a morphism $\alpha: A\rightarrow\bigoplus_{i\in I}A_{i}$ in $\C$ factors
through $i_{F, I}: \bigoplus_{i\in F}A_{i}\rightarrow\bigoplus_{i\in I}A_{i}$ if, and only if, $\alpha=\sum_{i\in F}u_{i}p_{i}\alpha$, 
where $u_{i}$ and $p_{i}$ are,  respectively,  the i-th injection and the the i-th
projection for the coproduct $\bigoplus_{i\in I}A_{i}$.
\end{lem}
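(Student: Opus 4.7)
The proof proposal is as follows. This is a purely formal statement about coproducts in an additive category, so the strategy is to prove the two implications separately, using only the universal property of (finite and arbitrary) coproducts and the standard identity $\sum_{i\in F}u^{F}_{i}p^{F}_{i}=1_{\bigoplus_{i\in F}A_{i}}$, where $u^{F}_{i}:A_{i}\to\bigoplus_{i\in F}A_{i}$ and $p^{F}_{i}:\bigoplus_{i\in F}A_{i}\to A_{i}$ denote the structural maps of the finite coproduct. By construction, the canonical morphism $i_{F,I}$ is uniquely determined by the relations $i_{F,I}\circ u^{F}_{i}=u_{i}$ for all $i\in F$.

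For the implication $(\Leftarrow)$, I would simply observe that if $\alpha=\sum_{i\in F}u_{i}p_{i}\alpha$, then substituting $u_{i}=i_{F,I}\circ u^{F}_{i}$ yields
$$\alpha=\sum_{i\in F}(i_{F,I}\circ u^{F}_{i})\circ(p_{i}\alpha)=i_{F,I}\circ\Bigl(\sum_{i\in F}u^{F}_{i}p_{i}\alpha\Bigr),$$
so $\alpha$ factors through $i_{F,I}$ via the morphism $\beta:=\sum_{i\in F}u^{F}_{i}p_{i}\alpha:A\to\bigoplus_{i\in F}A_{i}$.

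For the implication $(\Rightarrow)$, suppose $\alpha=i_{F,I}\circ\beta$ for some $\beta:A\to\bigoplus_{i\in F}A_{i}$. The key auxiliary fact I would establish first is the identity $p_{i}\circ i_{F,I}=p^{F}_{i}$ for every $i\in F$. This is obtained by checking both sides on each injection $u^{F}_{j}$ ($j\in F$): on the left $p_{i}i_{F,I}u^{F}_{j}=p_{i}u_{j}=\delta_{ij}\,1_{A_{j}}$, which matches $p^{F}_{i}u^{F}_{j}=\delta_{ij}\,1_{A_{j}}$, and the universal property of the finite coproduct $\bigoplus_{j\in F}A_{j}$ forces the two morphisms to be equal. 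Using this together with $\beta=\sum_{i\in F}u^{F}_{i}p^{F}_{i}\beta$, I would then compute
$$\alpha=i_{F,I}\beta=i_{F,I}\sum_{i\in F}u^{F}_{i}p^{F}_{i}\beta=\sum_{i\in F}(i_{F,I}u^{F}_{i})(p^{F}_{i}\beta)=\sum_{i\in F}u_{i}(p_{i}i_{F,I}\beta)=\sum_{i\in F}u_{i}p_{i}\alpha,$$
which is the desired expression.

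There is no real obstacle; the only point where care is required is the bookkeeping between the finite-coproduct structural maps $u^{F}_{i},p^{F}_{i}$ and the infinite-coproduct ones $u_{i},p_{i}$, and in particular verifying the identity $p_{i}\circ i_{F,I}=p^{F}_{i}$ for $i\in F$. Once that is in hand, both directions are one-line manipulations.
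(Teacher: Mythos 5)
Your proof is correct and is the standard categorical bookkeeping argument; the paper does not prove this lemma but simply cites it from Mitchell's book, and the argument you give is exactly the one implicit there. The key identity $p_i\circ i_{F,I}=p^F_i$ for $i\in F$ and the fact that $\sum_{i\in F}u^F_ip^F_i$ is the identity on the finite coproduct are the right ingredients, and both directions follow as you describe.
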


As an easy consequence of Lemma \ref{lem: para compactos}, we get the following corollary.

\begin{cor}\label{lem: compactos son cerrados por cocientes}  Let $\mathcal{C}$
be an additive category and $\mathcal{T}\subseteq\mathcal{C}$. If  
 $\pi: M\rightarrow N$ is an epimorphism in $\mathcal{C}$ with $M\in\mathcal{K}_{\mathcal{T}}$, then $N\in\mathcal{K}_{\mathcal{T}}$.\end{cor}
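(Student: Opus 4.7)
The plan is to reduce compactness of $N$ to compactness of $M$ via the epimorphism $\pi$, using the explicit factorization criterion provided by Lemma \ref{lem: para compactos}.

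First, I would fix an arbitrary family $\{U_i\}_{i\in I}\subseteq\mathcal{T}$ for which $\bigoplus_{i\in I}U_i$ exists, along with an arbitrary morphism $\psi:N\to\bigoplus_{i\in I}U_i$. The goal is to produce a finite subset $F\subseteq I$ through which $\psi$ factors. Denote by $u_i$ and $p_i$ the canonical injections and projections of the coproduct.

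The key step is to pull $\psi$ back through $\pi$: consider the composition $\psi\pi:M\to\bigoplus_{i\in I}U_i$. Since $M\in\mathcal{K}_{\mathcal{T}}$, there exists a finite set $F\subseteq I$ such that $\psi\pi$ factors through $i_{F,I}$. By Lemma \ref{lem: para compactos}, this factorization is equivalent to the identity $\psi\pi=\sum_{i\in F}u_i p_i\psi\pi$, which I can rewrite as $\psi\pi=\bigl(\sum_{i\in F}u_i p_i\psi\bigr)\pi$.

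The final step is to cancel $\pi$ from the right. Because $\pi$ is an epimorphism, the equality above forces $\psi=\sum_{i\in F}u_i p_i\psi$, and invoking Lemma \ref{lem: para compactos} once more translates this back into the statement that $\psi$ factors through $i_{F,I}$. Hence $N\in\mathcal{K}_{\mathcal{T}}$. I do not anticipate any genuine obstacle here: the argument is essentially a one-line diagram chase, and the only subtlety is making sure to apply Lemma \ref{lem: para compactos} in both directions (first to extract the sum expression for $\psi\pi$, then to reinterpret the sum expression for $\psi$ as a factorization).
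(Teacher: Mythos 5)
Your proof is correct and is exactly the intended argument: the paper gives no explicit proof, merely stating the corollary is "an easy consequence of Lemma \ref{lem: para compactos}," and your write-up fills in precisely the standard diagram chase (pull $\psi$ back through $\pi$, apply the lemma, pull out $\pi$ by distributivity, cancel $\pi$ as an epimorphism, apply the lemma in reverse) that the authors had in mind.
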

 
The $\T$-compact objects can be characterized as follows.

\begin{lem}\label{lem: caracterizacion T-compactos}\label{lem: caracterizacion compactos}
For an additive category $\mathcal{C}, $ $\mathcal{T}\subseteq\mathcal{C}$ 
and $M\in\mathcal{C}, $ the following statements are equivalent.
\begin{itemize}
\item[$\mathrm{(a)}$] $M$ is $\mathcal{T}$-compact.
\item[$\mathrm{(b)}$] For every family  $\left\{ U_{i}\right\} _{i\in X} \subseteq \mathcal{T}$
such $\bigoplus_{i\in X}U_{i}$ exists in $\C,$   the map 
$$\upsilon: \bigoplus_{i\in X}\Homx[][M][U_{i}]  \rightarrow\Homx[][M][\bigoplus_{i\in X}U_{i}], \, (\alpha_{i})_{i\in X}\mapsto\sum_{i\in X}u_{i}\alpha_{i}\mbox{, }$$ 
 is an isomorphism,  where $u_{i}: U_{i}\rightarrow\bigoplus_{i\in X}U_{i}$
is the natural inclusion in the coproduct.
\end{itemize}
\end{lem}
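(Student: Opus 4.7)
The plan is to verify both implications using the preceding Lemma \ref{lem: para compactos} and the fact that the map $\upsilon$ is always injective. First, note that for any family $\{U_i\}_{i\in X}\subseteq\mathcal{T}$ with coproduct $\bigoplus_{i\in X}U_i$ in $\mathcal{C}$, for each $i\in X$ the universal property of the coproduct in the additive category $\mathcal{C}$ yields a morphism $p_i:\bigoplus_{j\in X}U_j\to U_i$ determined by $p_i u_j=\delta_{ij}\,1_{U_i}$. Any element of $\bigoplus_{i\in X}\Hom_\mathcal{C}(M,U_i)$ (as an object of $\mathrm{Ab}$) is a tuple with finite support, so the rule $(\alpha_i)_{i\in X}\mapsto\sum_{i\in X}u_i\alpha_i$ is a finite sum and defines $\upsilon$; moreover, composing with $p_j$ gives $\alpha_j$, which shows $\upsilon$ is injective in all cases.

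For $\mathrm{(a)}\Rightarrow\mathrm{(b)}$, I would argue surjectivity of $\upsilon$. Given a morphism $\alpha:M\to\bigoplus_{i\in X}U_i$, the $\mathcal{T}$-compactness of $M$ provides a finite subset $F\subseteq X$ such that $\alpha$ factors through $i_{F,X}:\bigoplus_{i\in F}U_i\to\bigoplus_{i\in X}U_i$. Lemma \ref{lem: para compactos} translates this into the identity $\alpha=\sum_{i\in F}u_i p_i\alpha$. Setting $\alpha_i:=p_i\alpha$ for $i\in F$ and $\alpha_i:=0$ otherwise yields a finitely supported tuple with $\upsilon\bigl((\alpha_i)_{i\in X}\bigr)=\alpha$.

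For $\mathrm{(b)}\Rightarrow\mathrm{(a)}$, take any morphism $\psi:M\to\bigoplus_{i\in X}U_i$. By surjectivity of $\upsilon$, there exists a tuple $(\alpha_i)_{i\in X}$ with finite support $F\subseteq X$ such that $\psi=\sum_{i\in F}u_i\alpha_i$. Denoting by $u_i^F:U_i\to\bigoplus_{j\in F}U_j$ the inclusions of the finite coproduct, the morphism $\beta:=\sum_{i\in F}u_i^F\alpha_i:M\to\bigoplus_{j\in F}U_j$ satisfies $i_{F,X}\beta=\sum_{i\in F}(i_{F,X}u_i^F)\alpha_i=\sum_{i\in F}u_i\alpha_i=\psi$, so $\psi$ factors through $i_{F,X}$ and therefore $M$ is $\mathcal{T}$-compact.

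There is no real obstacle here; the only subtlety is making sure the ``projections'' $p_i$ from an arbitrary (possibly infinite) coproduct are understood via the universal property of the coproduct in an additive category rather than being part of a biproduct structure, and that the coproduct in $\mathrm{Ab}$ on the source of $\upsilon$ means ``finitely supported tuples'' so that $\sum u_i\alpha_i$ is an honest finite sum in $\mathcal{C}$.
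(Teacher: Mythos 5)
Your proof is correct and takes essentially the same approach as the paper: both rely on the projections $p_i$ from the coproduct and on Lemma \ref{lem: para compactos}, and the direction (a)$\Rightarrow$(b) is identical. In (b)$\Rightarrow$(a) you exhibit the factoring morphism $\beta$ through the finite subcoproduct directly, whereas the paper first verifies $\alpha=\sum_{j}u_{j}p_{j}\alpha$ and then invokes Lemma \ref{lem: para compactos}; these amount to the same argument by that lemma.
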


\begin{proof} Let $\left\{ U_{i}\right\} _{i\in X}\subseteq\mathcal{T}$
such $\bigoplus_{i\in X}U_{i}$ exists in $\C.$  For every 
$i\in X$, consider the natural projection $p_k:\bigoplus_{i\in X}U_{i}\rightarrow U_k$.
Notice that $\upsilon$ is always a monomorphism.
\

(a) $\Rightarrow$ (b)  Let $\alpha:M\rightarrow\bigoplus_{i\in X}U_{i}$
in $\mathcal{C}$. Since $M\in\mathcal{K}_{\mathcal{T}}$, there is
a finite set $J\subseteq X$ such that $\alpha=\sum_{i\in J}u_{i}p_{i}\alpha,$ see Lemma \ref{lem: para compactos}.
Therefore $\alpha=\upsilon(p_{i}\alpha)_{i\in X}$ and thus $\upsilon$
is surjective.
\

(b) $\Rightarrow$ (a) From (b), we have that 
 every $\alpha\in\Homx[][M][\bigoplus_{i\in X}U_{i}]$ admits an element 
$(\alpha_{i})_{i\in X}\in\bigoplus_{i\in X}\Homx[][M][U_{i}]$ such
that $\alpha=\sum_{i\in X}u_{i}\alpha_{i}\mbox{.}$
Now, since $p_{k}\upsilon(\alpha_{i})_{i\in X}=\alpha_{k}$ $\forall k\in X$,
we get 
$\alpha=\sum_{j\in X}u_{j}\alpha_{j}=\sum_{j\in X}u_{j}(p_{j}\upsilon(\alpha_{i})_{i\in X})=\sum_{j\in X}u_{j}p_{j}\alpha\mbox{.}$
Hence, $M\in\mathcal{K}_{\mathcal{T}}$ by Lemma \ref{lem: para compactos}.
\end{proof}

As a consequence of Lemma \ref{lem: caracterizacion T-compactos}, we get the following result.

\begin{cor}\label{cor: coproducto de3 compactos} Let $\mathcal{C}$ be an additive
category,   $\mathcal{T}\subseteq\mathcal{C}$ and $A=\oplus_{i=1}^nA_i$ in $\C.$ Then $A$ 
is $\mathcal{T}$-compact if,  and only if,  each $A_i$ is 
$\mathcal{T}$-compact.
\end{cor}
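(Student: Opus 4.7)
The plan is to reduce $\mathcal{T}$-compactness of $A$ and of each $A_{i}$ to the isomorphism criterion given by Lemma \ref{lem: caracterizacion T-compactos}, and then exploit the fact that $\mathrm{Hom}$-functors in an additive category convert finite coproducts in the first variable into finite direct sums (biproducts) of abelian groups.

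More precisely, for every family $\{U_{j}\}_{j\in X}\subseteq\mathcal{T}$ whose coproduct exists in $\mathcal{C}$, I would compare the canonical morphism
\[
\upsilon_{A}:\bigoplus_{j\in X}\Homx[][A][U_{j}]\rightarrow\Homx[][A][\bigoplus_{j\in X}U_{j}]
\]
with the corresponding $\upsilon_{A_{i}}$'s. First, using that $A=\bigoplus_{i=1}^{n}A_{i}$ is a biproduct (finite), I have natural isomorphisms $\Homx[][A][B]\cong\bigoplus_{i=1}^{n}\Homx[][A_{i}][B]$ for every $B\in\mathcal{C}$. Applying this with $B=U_{j}$ and with $B=\bigoplus_{j\in X}U_{j}$, both the domain and the codomain of $\upsilon_{A}$ split as finite direct sums indexed by $i\in[1,n]$. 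The key step is to check that, under these identifications, the map $\upsilon_{A}$ coincides with $\bigoplus_{i=1}^{n}\upsilon_{A_{i}}$; this is a naturality/bookkeeping verification using that the injections $u_{j}:U_{j}\to\bigoplus_{j\in X}U_{j}$ appearing in the definition of $\upsilon$ do not depend on the first variable of $\mathrm{Hom}$.

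Once this identification is done, the equivalence is immediate: a finite direct sum of group homomorphisms is an isomorphism if and only if each summand is. Hence $\upsilon_{A}$ is an isomorphism for every admissible family $\{U_{j}\}_{j\in X}\subseteq\mathcal{T}$ if and only if $\upsilon_{A_{i}}$ is an isomorphism for every such family and for every $i\in[1,n]$. Invoking Lemma \ref{lem: caracterizacion T-compactos} in both directions yields that $A\in\mathcal{K}_{\mathcal{T}}$ if and only if $A_{i}\in\mathcal{K}_{\mathcal{T}}$ for all $i\in[1,n]$.

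The only mildly delicate point, and the step I would carry out most carefully, is the compatibility between the splitting of $\Homx[][A][-]$ and the map $\upsilon$; everything else is formal manipulation with biproducts in additive categories. No appeal to any hypothesis beyond the additive structure of $\mathcal{C}$ is needed, and in particular no abelian or AB4 assumption enters.
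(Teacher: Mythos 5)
Your proposal is correct and takes the same route as the paper: the paper presents this corollary with the remark that it follows from Lemma \ref{lem: caracterizacion T-compactos}, which is precisely the characterization you invoke. Your elaboration — splitting $\Hom_{\C}(A,-)\cong\bigoplus_{i=1}^{n}\Hom_{\C}(A_i,-)$ via the finite biproduct, identifying $\upsilon_A$ with $\bigoplus_{i=1}^n\upsilon_{A_i}$ under this splitting, and using that a finite direct sum of abelian group maps is an isomorphism iff each summand is — is exactly the intended bookkeeping, and the compatibility check you flag does go through because the coproduct injections $u_j$ in the definition of $\upsilon$ live only in the second variable of $\Hom$.
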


\begin{cor}\label{cor: finitamente generado es compacto}  For an additive category $\mathcal{C}, $ 
 $\mathcal{T}\subseteq\mathcal{C}$ and a relative generator $\omega^{\oplus}$ 
 in $\mathcal{C}, $ with $\omega\subseteq\mathcal{K}_{\mathcal{T}}, $ 
 the following statements hold true. 
\begin{itemize}
\item[$\mathrm{(a)}$] $\operatorname{f.g.}(\omega)\subseteq\operatorname{Fac}_{1}(\omega^{\oplus_{<\infty}})\subseteq\mathcal{K}_{\mathcal{T}}$.

\item[$\mathrm{(b)}$] If $\mathcal{C}$ is abelian and $\Extx[1][][\omega][\operatorname{Fac}_{1}(\omega^{\oplus_{<\infty}})]=0$, 
then $\operatorname{Fac}_{1}(\omega^{\oplus_{<\infty}})$ is closed
under extensions in $\mathcal{C}$. 
\end{itemize}
\end{cor}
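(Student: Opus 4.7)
For part (a), the plan is to unwind the definitions and invoke the two preceding corollaries. To prove $\operatorname{f.g.}(\omega)\subseteq\operatorname{Fac}_{1}(\omega^{\oplus_{<\infty}})$, I would take $M\in\operatorname{f.g.}(\omega)$ and apply the hypothesis that $\omega^{\oplus}$ is a relative generator in $\mathcal{C}$ to obtain an epimorphism $\varphi\colon\bigoplus_{i\in I}U_{i}\twoheadrightarrow M$ with each $U_{i}\in\omega$; by definition of finitely $\omega$-generated there is a finite $F\subseteq I$ such that $\varphi\circ i_{F,I}\colon\bigoplus_{i\in F}U_{i}\to M$ is still an epimorphism, which directly places $M$ in $\operatorname{Fac}_{1}(\omega^{\oplus_{<\infty}})$. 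To prove $\operatorname{Fac}_{1}(\omega^{\oplus_{<\infty}})\subseteq\mathcal{K}_{\mathcal{T}}$, I would take $M\in\operatorname{Fac}_{1}(\omega^{\oplus_{<\infty}})$, realised via an epimorphism $\bigoplus_{i\in F}U_{i}\twoheadrightarrow M$ with $F$ finite and each $U_{i}\in\omega\subseteq\mathcal{K}_{\mathcal{T}}$; Corollary~\ref{cor: coproducto de3 compactos} then gives that the finite coproduct $\bigoplus_{i\in F}U_{i}$ is $\mathcal{T}$-compact, and Corollary~\ref{lem: compactos son cerrados por cocientes} transports $\mathcal{T}$-compactness to the quotient $M$.

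For part (b), the strategy is a standard Horseshoe-type lifting argument driven by the Ext-vanishing hypothesis. Given a short exact sequence $0\to A\overset{\iota}{\to}B\overset{\pi}{\to}C\to 0$ with $A,C\in\operatorname{Fac}_{1}(\omega^{\oplus_{<\infty}})$, I would fix epimorphisms $\alpha\colon W'\twoheadrightarrow A$ and $\gamma\colon W\twoheadrightarrow C$ with $W,W'\in\omega^{\oplus_{<\infty}}$. The key step is to lift $\gamma$ across $\pi$: since $\operatorname{Ext}_{\mathcal{C}}^{1}(-,-)$ decomposes over finite direct sums in its first variable in any abelian category, the hypothesis forces $\operatorname{Ext}_{\mathcal{C}}^{1}(W,A)=0$, so that $\operatorname{Hom}_{\mathcal{C}}(W,B)\to\operatorname{Hom}_{\mathcal{C}}(W,C)$ is surjective and a lift $g\colon W\to B$ with $\pi g=\gamma$ exists. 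Then I would verify that $\beta\colon W'\oplus W\to B$ defined by $\beta=\iota\alpha\pi_{W'}+g\pi_{W}$ is an epimorphism via a quick diagram chase: any $h\colon B\to X$ with $h\beta=0$ must satisfy $h\iota=0$ (since $\alpha$ is epi) and $hg=0$, so $h$ factors as $\bar h\pi$ with $\bar h\gamma=0$, which forces $\bar h=0$ and hence $h=0$. Since $W'\oplus W\in\omega^{\oplus_{<\infty}}$, this exhibits $B$ in $\operatorname{Fac}_{1}(\omega^{\oplus_{<\infty}})$.

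I do not foresee a serious obstacle. Part (a) is a direct unwinding of definitions combined with the two previously stated corollaries on $\mathcal{T}$-compactness. Part (b) is routine once one observes the small but essential point that the Ext-vanishing hypothesis $\operatorname{Ext}_{\mathcal{C}}^{1}(\omega,\operatorname{Fac}_{1}(\omega^{\oplus_{<\infty}}))=0$ automatically upgrades to $\operatorname{Ext}_{\mathcal{C}}^{1}(\omega^{\oplus_{<\infty}},\operatorname{Fac}_{1}(\omega^{\oplus_{<\infty}}))=0$, since finite biproducts always split Ext in the first variable without any AB4 assumption on $\mathcal{C}$.
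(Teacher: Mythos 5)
Your proof is correct and matches the paper's approach: the paper's own proof for (a) is just a citation of Corollaries \ref{cor: coproducto de3 compactos} and \ref{lem: compactos son cerrados por cocientes}, and for (b) it simply points to "the proof of the Horseshoe's Lemma" — you have spelled out exactly these two arguments, including the small but necessary observation that the $\Ext$-vanishing hypothesis on $\omega$ automatically extends to $\omega^{\oplus_{<\infty}}$ because $\Ext^1$ commutes with finite direct sums in the first variable without any AB4 assumption.
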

\begin{proof} The item (a) follows from Corollaries \ref{lem: compactos son cerrados por cocientes}
and \ref{cor: coproducto de3 compactos}. Finally,  the proof of (b) can be done in a similar way as the proof of the  Horseshoe's Lemma. 
\end{proof}

\begin{lem}\label{lem: finitamente generados en Ab5}\cite[Chap. V. Lem. 3.1]{ringsofQuotients}
Let $\mathcal{C}$ be an AB5 category. Then $\operatorname{f.g.}(\mathcal{C})$
is closed under quotients and extensions. 
\end{lem}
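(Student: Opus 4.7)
The plan is to introduce an auxiliary characterization of $\operatorname{f.g.}(\mathcal{C})$ that is better suited to subobject arguments: in an AB5 category $\mathcal{C}$, an object $M$ belongs to $\operatorname{f.g.}(\mathcal{C})$ if and only if every directed family of subobjects $\{M_\alpha\}_{\alpha \in \Lambda}$ of $M$ with $\sum_\alpha M_\alpha = M$ contains some $M_{\alpha_0} = M$. For the necessity, the key point is that, for such a directed family, the canonical morphism $\bigoplus_\alpha M_\alpha \to M$ is an epimorphism precisely when $\sum_\alpha M_\alpha = M$; applying the definition of finite $\mathcal{C}$-generation (via Lemma~\ref{lem: para compactos}) then yields a finite subfamily whose subsum is still $M$, which by directedness is absorbed into a single member. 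For sufficiency, given an epi $\varphi\colon \bigoplus_{i \in I} U_i \to M$, I would form $M_F := \mathrm{Im}(\bigoplus_{i \in F} U_i \to M)$ as $F$ ranges over finite subsets of $I$; the $\{M_F\}$ form a directed family in $M$ with $\sum_F M_F = M$, and the hypothesis produces $F_0$ with $M_{F_0} = M$.

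With this characterization in hand, closure under quotients is almost immediate. Let $\pi\colon M \twoheadrightarrow N$ with $M \in \operatorname{f.g.}(\mathcal{C})$, and let $\{N_\alpha\}$ be a directed family of subobjects of $N$ with $\sum_\alpha N_\alpha = N$. The preimages $\pi^{-1}(N_\alpha)$ form a directed family of subobjects of $M$, and the AB5 identity $\pi^{-1}(\sum_\alpha N_\alpha) = \sum_\alpha \pi^{-1}(N_\alpha)$ gives $\sum_\alpha \pi^{-1}(N_\alpha) = M$. Finite generation of $M$ yields $\pi^{-1}(N_{\alpha_0}) = M$ for some $\alpha_0$, and applying $\pi$ gives $N_{\alpha_0} = N$.

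For closure under extensions, let $0 \to M' \to M \to M'' \to 0$ be exact with $M', M'' \in \operatorname{f.g.}(\mathcal{C})$, and let $\{M_\alpha\}_{\alpha \in \Lambda}$ be a directed family of subobjects of $M$ summing to $M$. The plan is a two-step reduction. First, push down to $M''$: the family $\{(M_\alpha + M')/M'\}$ is directed in $M/M' \cong M''$ and sums to $M''$, so finite generation of $M''$ gives some $\alpha_0$ with $M_{\alpha_0} + M' = M$. Second, cut down to $M'$: the family $\{M' \cap M_\alpha\}$ is directed in $M'$, and the AB5 identity $M' \cap \sum_\alpha M_\alpha = \sum_\alpha (M' \cap M_\alpha)$ gives that it sums to $M'$; hence finite generation of $M'$ produces $\alpha_1$ with $M_{\alpha_1} \supseteq M'$. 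Directedness supplies $\alpha_2 \in \Lambda$ with $\alpha_2 \geq \alpha_0, \alpha_1$, and then $M_{\alpha_2} \supseteq M' + M_{\alpha_0} = M$, so $M_{\alpha_2} = M$.

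The main obstacle I expect is carefully justifying the two AB5 identities used above, namely $\pi^{-1}(\sum_\alpha N_\alpha) = \sum_\alpha \pi^{-1}(N_\alpha)$ and $M' \cap \sum_\alpha M_\alpha = \sum_\alpha (M' \cap M_\alpha)$. Both follow from the exactness of filtered colimits in an AB5 category, once one rewrites the subobject sums as directed colimits, but they must be verified cleanly since everything else in the proof rests on them.
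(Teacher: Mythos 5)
The paper does not prove this lemma; it is cited directly from Stenstr\"om \cite[Chap.~V, Lem.~3.1]{ringsofQuotients}, whose argument works with the directed-family definition of a finitely generated object. Your proposal is a correct reconstruction of that argument, and the preliminary step you add---showing that in an AB5 category the paper's coproduct-epimorphism definition of $\operatorname{f.g.}(\mathcal{C})$ coincides with the directed-family characterization---is genuinely necessary, since Stenstr\"om's lemma is stated for the latter. Your two-sided verification of the equivalence is sound (the sufficiency direction needs exactness of filtered colimits to get $\sum_F M_F = M$), and the three-step extension argument (push to $M''$, cut to $M'$, combine by directedness) is exactly the standard one. The two AB5 identities you flag are the right place to focus: $M' \cap \sum_\alpha M_\alpha = \sum_\alpha (M' \cap M_\alpha)$ is precisely Grothendieck's lattice-theoretic formulation of AB5, while for the quotient case a second appeal to AB5 can actually be avoided, since $\sum_\alpha \pi^{-1}(N_\alpha)$ contains $\ker\pi$ and maps onto $\sum_\alpha N_\alpha = N$, hence equals $M$ once you know $\sum_\alpha N_\alpha = N$.

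One small correction: the parenthetical appeal to Lemma~\ref{lem: para compactos} in the necessity direction is misplaced. That lemma characterizes factorizations of morphisms \emph{into} a coproduct and underpins the compactness classes $\mathcal{K}_{\mathcal{T}}$ and $\mathcal{K}_{\mathcal{T},\mathcal{M}}$; for the finitely generated notion you only need the defining property itself, which directly yields a finite $F \subseteq I$ such that the restricted map $\bigoplus_{\alpha\in F}M_\alpha \to M$ is an epimorphism, and directedness then absorbs the finite subfamily into a single $M_{\alpha_0}$. No auxiliary lemma is required there.
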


We have the following well-known facts. 

\begin{cor}\label{cor: finitamente generado es compacto-1} For  a ring $R$
and $\modd[R]: =\operatorname{f.g.}(\Modx[R]),$  the following
statements hold true. 
\begin{itemize}
\item[$\mathrm{(a)}$] $\modd\subseteq\mathcal{K}_{\Modx[R]}$ and $\modd$ is closed under extensions
and quotients in $\Modx$. In particular,  $\modd$ is right thick in
$\Modx$. 
\item[$\mathrm{(b)}$] $R$ is left noetherian if,  and only if,  $\modd$ is a thick abelian
subcategory of $\Modx$. 
\end{itemize}
\end{cor}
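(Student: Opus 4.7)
The plan is to obtain both parts essentially from Lemma~\ref{lem: finitamente generados en Ab5} together with a direct verification of compactness. The argument has no substantive obstacle; the only technical point is ensuring the correct link between ``left noetherian'', ``closed under submodules'' and ``closed under epi-kernels''.

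For item (a), I would first verify $\modd\subseteq\mathcal{K}_{\Modx[R]}$ by a standard support argument. Let $M\in\modd$ with finite generating set $\{m_1,\ldots,m_k\}$, and let $\psi: M\to\bigoplus_{i\in I}U_i$ be any morphism with $\{U_i\}_{i\in I}\subseteq\Modx[R]$. Each $\psi(m_j)$ has finite support in the coproduct, so the finite set $F:=\bigcup_{j=1}^{k}\operatorname{supp}(\psi(m_j))\subseteq I$ satisfies $\psi(M)\subseteq\bigoplus_{i\in F}U_i$, and consequently $\psi$ factors through the natural inclusion $i_{F,I}$. By Lemma~\ref{lem: caracterizacion T-compactos} (or directly by the definition), this gives $M\in\mathcal{K}_{\Modx[R]}$.

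Closure of $\modd$ under extensions and quotients is then an immediate invocation of Lemma~\ref{lem: finitamente generados en Ab5} since $\Modx[R]$ is AB5. Right thickness follows at once: extensions are given; any direct summand of $M\in\modd$ is a quotient via the projection, hence finitely generated; and mono-cokernels are quotients of a finitely generated middle term. So both additional closure properties reduce to closure under quotients.

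For (b), right thickness is already in (a), so I only need to characterize when $\modd$ is also left thick and when the inclusion $\modd\hookrightarrow\Modx[R]$ furnishes an abelian subcategory. Because $\modd$ is closed under quotients, the epi-kernels in $\modd$ are exactly the kernels of epimorphisms $B\twoheadrightarrow B/A$ with $B\in\modd$, i.e., submodules of finitely generated modules; conversely, any submodule $N$ of a finitely generated $M$ appears as such an epi-kernel via $M\twoheadrightarrow M/N$. Hence $\modd$ is left thick if and only if every submodule of a finitely generated $R$-module is finitely generated, which is precisely the definition of $R$ being left noetherian. In that case $\modd$ is simultaneously closed under kernels (submodules) and cokernels (quotients) computed in $\Modx[R]$, so the inclusion preserves them; combining this with the additive and extension-closed structure, $\modd$ inherits an abelian structure and, together with the right thickness from (a), becomes a thick abelian subcategory of $\Modx[R]$.
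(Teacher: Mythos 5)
Your proposal is correct. The paper does not actually supply a proof for this corollary --- it is introduced as a ``well-known fact'' immediately after the cited AB5 lemma --- so your argument is a faithful reconstruction of what the positioning implies: the finite-support factorization gives compactness, Lemma~\ref{lem: finitamente generados en Ab5} gives closure under extensions and quotients, and the remaining right-thick closure properties (direct summands, mono-cokernels) reduce to closure under quotients. For (b), your reduction of left thickness (given right thickness from (a)) to closure under epi-kernels, and the identification of that condition with ``every submodule of a finitely generated module is finitely generated,'' i.e.\ $R$ left noetherian, is the standard characterization; and the passage from closure under kernels and cokernels in $\Modx$ to $\modu(R)$ being an exact abelian subcategory is right. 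One tiny presentational point: for the converse direction of (b) it is worth saying explicitly that ``thick abelian subcategory'' in particular entails left thick, hence closed under epi-kernels, hence $R$ left noetherian --- this is clearly contained in what you wrote but is only stated in the forward direction.
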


\begin{prop}\label{prop: add Add vs compactos}   Let $\mathcal{C}$ be an abelian
category and  $\mathcal{T}\subseteq\mathcal{C}$.
Then $\Addx[\mathcal{T}]\cap\mathcal{Z}=\addx[\mathcal{T}]\cap\mathcal{Z}$
for every $\mathcal{Z}\subseteq\mathcal{K}_{\mathcal{T}, \mathcal{M}}$.
\end{prop}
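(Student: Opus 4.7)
The inclusion $\add(\mathcal{T})\cap\mathcal{Z}\subseteq\Add(\mathcal{T})\cap\mathcal{Z}$ is immediate since $\add(\mathcal{T})\subseteq\Add(\mathcal{T})$. Hence, the plan is to focus on the reverse inclusion: given $M\in\Add(\mathcal{T})\cap\mathcal{Z}$, I want to produce a finite family $\{T_{i}\}_{i\in F}\subseteq\mathcal{T}$ such that $M$ is a direct summand of $\bigoplus_{i\in F}T_{i}$.

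First, I would use that $M\in\Add(\mathcal{T})$ to fix an object $N\in\mathcal{C}$ and a family $\{T_{i}\}_{i\in I}\subseteq\mathcal{T}$ together with an isomorphism $\varphi: M\oplus N\xrightarrow{\sim}\bigoplus_{i\in I}T_{i}$. Composing with the canonical split monomorphism $u_{M}: M\to M\oplus N$ gives a monomorphism $\iota:=\varphi\circ u_{M}: M\hookrightarrow\bigoplus_{i\in I}T_{i}$, and composing with the canonical split epimorphism $p_{M}: M\oplus N\to M$ yields a retraction $\rho:=p_{M}\circ\varphi^{-1}: \bigoplus_{i\in I}T_{i}\to M$ satisfying $\rho\circ\iota=1_{M}$.

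Next I would invoke the assumption $M\in\mathcal{Z}\subseteq\mathcal{K}_{\mathcal{T},\mathcal{M}}$. By the very definition of $\mathcal{T}$-compactness for monomorphisms, the monomorphism $\iota$ factors through $i_{F,I}: \bigoplus_{i\in F}T_{i}\hookrightarrow\bigoplus_{i\in I}T_{i}$ for some finite subset $F\subseteq I$; that is, there exists $\psi: M\to\bigoplus_{i\in F}T_{i}$ with $\iota=i_{F,I}\circ\psi$. Setting $\rho_{F}:=\rho\circ i_{F,I}: \bigoplus_{i\in F}T_{i}\to M$, we then have
\[
\rho_{F}\circ\psi=\rho\circ i_{F,I}\circ\psi=\rho\circ\iota=1_{M}\mbox{,}
\]
so $\psi$ is a split monomorphism, and consequently $M$ is a direct summand of $\bigoplus_{i\in F}T_{i}\in\mathcal{T}^{\oplus_{<\infty}}$. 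Therefore $M\in\smd(\mathcal{T}^{\oplus_{<\infty}})=\add(\mathcal{T})$, proving the claim.

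The argument is essentially a one-liner once the definitions are unpacked; there is no real obstacle beyond recognising that the natural inclusion $M\hookrightarrow M\oplus N$ is precisely the monomorphism to which the $\mathcal{K}_{\mathcal{T},\mathcal{M}}$-hypothesis must be applied, and that the splitting of $M\oplus N$ provides the retraction needed to transfer the factorisation back to a splitting.
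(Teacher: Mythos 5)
Your proof is correct and follows essentially the same argument as the paper: you realise $M$ as a direct summand of $\bigoplus_{i\in I}T_{i}$, apply the $\mathcal{T}$-compactness-for-monomorphisms hypothesis to the resulting split monomorphism $M\hookrightarrow\bigoplus_{i\in I}T_{i}$ to factor it through a finite sub-coproduct, and then observe that composing the original retraction with the finite inclusion yields a splitting. The only difference is expository detail (you name the isomorphism $\varphi$ and the canonical maps explicitly), but the underlying argument is identical.
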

\begin{proof}
Let $\mathcal{Z}\subseteq\mathcal{K}_{\mathcal{T}, \mathcal{M}}$. 
Consider $X\in\Addx[\mathcal{T}]\cap\mathcal{Z}$. Then,  there is a
splitting exact sequence 
$\suc[X'][\bigoplus_{i\in I}U_{i}][X][][f]\mbox{ with }\{U_{i}\}_{i\in I}\subseteq\mathcal{T}\mbox{.}$
Let $\mu: X\rightarrow\bigoplus_{i\in I}U_{i}$ be a monomorphism such
that $f\mu=1_{X}$. It follows that there is a finite set $J\subseteq I$
and a morphism $\mu': X\rightarrow\bigoplus_{j\in J}U_{j}$ such that
$\mu=i_{J, I}\circ\mu'$,  where $i_{J, I}: \bigoplus_{j\in J}U_{j}\rightarrow\bigoplus_{i\in I}U_{i}$
is the natural inclusion. Consider the morphism $g: =f\circ i_{J, I}: \bigoplus_{j\in J}U_{j}\rightarrow X$.
Since $g\mu'=f\circ i_{J, I}\circ\mu'=f\mu=1_{X}$,  $g$ is a splitting
epimorphism and thus  $X\in\addx[\mathcal{T}]\cap\mathcal{Z}.$ 
\end{proof}

\begin{lem}\label{lem:  addT precub es AddT precub}   Let $\mathcal{C}$ be an
AB3 category,   $\mathcal{M}\subseteq\mathcal{C}$ and $\alpha: M\rightarrow X$ in $\C.$
If $\alpha$ is an $\addx[\mathcal{M}]$-precover of $X$,  then $\alpha$
is an $\Addx[\mathcal{M}]$-precover of $X$.\end{lem}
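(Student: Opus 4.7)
The plan is to reduce the statement to the universal property of coproducts together with the $\addx[\mathcal{M}]$-precover hypothesis applied to each individual $\mathcal{M}$-summand. First I note that the conclusion has two parts: that $M \in \Addx[\mathcal{M}]$ (which is immediate from $\addx[\mathcal{M}] \subseteq \Addx[\mathcal{M}]$) and that $\Hom_{\C}(N, \alpha)$ is surjective for every $N \in \Addx[\mathcal{M}]$. So the only real work is the factorization property.

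Fix $N \in \Addx[\mathcal{M}]$ and a morphism $f : N \to X$. By definition of $\Addx[\mathcal{M}]$, there is a family $\{M_{i}\}_{i \in I} \subseteq \mathcal{M}$ (whose coproduct exists by the AB3 assumption) and a split epimorphism $\pi : \bigoplus_{i \in I} M_{i} \twoheadrightarrow N$ with section $\sigma : N \to \bigoplus_{i \in I} M_{i}$, so $\pi \sigma = 1_{N}$. Let $u_{i} : M_{i} \to \bigoplus_{i \in I} M_{i}$ be the canonical injections. For each $i \in I$ the composite $f \pi u_{i} : M_{i} \to X$ is defined on an object $M_{i} \in \mathcal{M} \subseteq \addx[\mathcal{M}]$, so the $\addx[\mathcal{M}]$-precover hypothesis on $\alpha$ provides a morphism $g_{i} : M_{i} \to M$ with $\alpha g_{i} = f \pi u_{i}$.

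By the universal property of the coproduct, there is a unique $g : \bigoplus_{i \in I} M_{i} \to M$ with $g u_{i} = g_{i}$ for all $i$. Then $\alpha g u_{i} = \alpha g_{i} = f \pi u_{i}$ for every $i$, and uniqueness in the universal property forces $\alpha g = f \pi$. Composing on the right with $\sigma$ yields $\alpha (g \sigma) = f \pi \sigma = f$, so $g \sigma : N \to M$ is the required factorization.

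The only step that genuinely uses an assumption beyond abstract coproduct algebra is the existence of the $g_{i}$'s, which relies on the $\addx[\mathcal{M}]$-precover property applied to the single objects $M_{i} \in \mathcal{M}$; everything else is formal manipulation with the AB3 coproduct. I do not expect any obstacle: the argument is essentially the classical observation that precovering is preserved under arbitrary coproducts when they exist.
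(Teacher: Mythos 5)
Your proof is correct and follows essentially the same strategy as the paper's: express $N\in\Addx[\mathcal{M}]$ as a retract of a coproduct $\bigoplus_{i\in I}M_i$ with $M_i\in\mathcal{M}$, apply the $\addx[\mathcal{M}]$-precover hypothesis to each $M_i$, and glue using the universal property of the AB3 coproduct. Your version is slightly more streamlined, building $g:\bigoplus_{i\in I}M_i\to M$ directly and precomposing with the section, whereas the paper routes through an intermediate coproduct $M^{(I)}$ and the codiagonal $\sigma:M^{(I)}\to M$ before factoring through $\alpha$.
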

\begin{proof}
Let $\alpha: M\rightarrow X$ be an $\addx[\mathcal{M}]$-precover. Since
$\Addx[\mathcal{M}]=\smdx[\mathcal{M}^{\oplus}]$,  it is
easy to see that every morphism $M'\rightarrow X$,  with $M'\in\Addx[\mathcal{M}]$, 
factors through $\mathcal{M}^{\oplus}$. Furthermore,   every morphism $M''\rightarrow X$ with $M''\in\mathcal{M}^{\oplus}$
factors through $M^{\oplus}$. Indeed,  consider a morphism $\alpha': \bigoplus_{i\in I}M_{i}\rightarrow X$, 
with $M_{i}\in\mathcal{M}\: \forall i\in I$,  and the canonical inclusions
$\left\{ v_{i}: M_{i}\rightarrow\bigoplus_{i\in I}M_{i}\right\} _{i\in I}$.
Since $\alpha$ is an $\addx[\mathcal{M}]$-precover,  $\forall i\in I$
there is  $\lambda_{i}: M_{i}\rightarrow M$ such that $\alpha'v_{i}=\alpha\lambda_{i}.$ 
Therefore,  there is $\lambda: \bigoplus_{i\in I}M_{i}\rightarrow M^{(I)}$
such that $\lambda v_{i}=v'_{i}\lambda_{i}$ $\forall\, i\in I, $ where $v'_{i}: M\rightarrow M^{(I)}$
is the natural inclusion in the coproduct. Observe that $\alpha'$ factors through
$\lambda$ and $\alpha'': M^{(I)}\rightarrow X$,  where $\alpha''$
is induced by the coproduct universal property and moreover 
$\alpha''v'_{i}=\alpha, $ for all $i\in I.$ Now,  for each $i\in I, $ we have 
 $\alpha''\lambda v_{i}=\alpha''v'_{i}\lambda_{i}=\alpha\lambda_{i}=\alpha'v_{i}$ and thus 
 $\alpha''\lambda=\alpha'$.

Finally,   we assert that $\alpha''$ factors through $\alpha$.
To show it,  consider the morphism $\sigma: M^{(I)}\rightarrow M$  such that $\sigma v'_{j}=1_{M}, $ for all $j\in I.$
Then,  for each $j\in I, $ we get  $\alpha\sigma v'_{j}=\alpha1_{M}=\alpha=\alpha''v'_{j}$ and so
 $\alpha\sigma=\alpha''$.
 \end{proof}

\begin{thm}\label{thm: n-X-tilting sii n-X-tilting peque=0000F1o} Let $\mathcal{C}$
be an AB4 category and  $\mathcal{T}\subseteq\mathcal{C}$.
Then,  for every $\mathcal{Z}\subseteq\mathcal{K}_{\mathcal{T}, \mathcal{M}}$
 closed under extensions, 
$\Addx[\mathcal{T}]$ is $n$-$\mathcal{Z}$-tilting if and only if
$\addx[\mathcal{T}]$ is $n$-$\mathcal{Z}$-tilting.
\end{thm}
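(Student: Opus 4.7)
The plan is to verify that each of the six axioms (T0)--(T5) holds for $\Add(\mathcal{T})$ if and only if it holds for $\add(\mathcal{T})$. Two tools drive everything. First, the AB4 hypothesis, via the natural isomorphism $\Ext^{k}(\bigoplus_{i\in I}A_i,B)\cong\prod_{i\in I}\Ext^{k}(A_i,B)$ recalled in the preliminaries, yields $(\Add(\mathcal{T}))^{\perp_k}=\mathcal{T}^{\perp_k}=(\add(\mathcal{T}))^{\perp_k}$ for every $k\geq 0$, so in particular the two total right orthogonals coincide and $\pdr[\mathcal{Z}][\Add(\mathcal{T})]=\pdr[\mathcal{Z}][\add(\mathcal{T})]$. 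Second, the compactness hypothesis enters through Proposition \ref{prop: add Add vs compactos}, which gives the crucial identity $\Add(\mathcal{T})\cap\mathcal{Z}=\add(\mathcal{T})\cap\mathcal{Z}$. Axioms (T0), (T1), (T2) and (T4) then follow at once: (T0) is built into the definitions of $\Add$ and $\add$; (T1) is the dimension equality above; (T2) compares $\Add(\mathcal{T})\cap\mathcal{Z}\subseteq(\Add(\mathcal{T}))^{\bot}$ with $\add(\mathcal{T})\cap\mathcal{Z}\subseteq(\add(\mathcal{T}))^{\bot}$, and both sides coincide for the two classes; and (T4) asks for a class $\alpha\subseteq\mathcal{Z}^{\bot}\cap\mathcal{T}^{\bot}$ that is a relative cogenerator in $\mathcal{Z}$, which is the same condition for both.

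For axiom (T5), if $T'\to Z$ is an $\add(\mathcal{T})$-precover with $T'\in\mathcal{Z}$, then Lemma \ref{lem:  addT precub es AddT precub} shows it is already an $\Add(\mathcal{T})$-precover. Conversely, given an $\Add(\mathcal{T})$-precover $T'\to Z$ with $T'\in\mathcal{Z}$, Proposition \ref{prop: add Add vs compactos} forces $T'\in\add(\mathcal{T})\cap\mathcal{Z}$, and the precover property restricts along the inclusion $\add(\mathcal{T})\subseteq\Add(\mathcal{T})$.

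The main obstacle, and the only axiom requiring a substantive argument, is (T3), because the notion of finite $\mathcal{Y}_\mathcal{X}$-coresolution only demands the last term to lie in $\mathcal{Y}\cap\mathcal{X}$, while the intermediate terms lie a priori only in $\mathcal{Y}$. The strategy is to show that, under extension-closure of $\mathcal{Z}$, an element of $(\Add(\mathcal{T}))^\vee_\mathcal{Z}$ that also lies in $\mathcal{Z}$ automatically lies in $(\add(\mathcal{T}))^\vee_\mathcal{Z}$. Fix a relative generator $\omega\subseteq\mathcal{Z}\cap(\Add(\mathcal{T}))^\vee_\mathcal{Z}$ in $\mathcal{Z}$ witnessing (T3) for $\Add(\mathcal{T})$. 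For $W\in\omega$, pick a finite coresolution
\[0\to W\stackrel{f_0}{\to} Y_0\stackrel{f_1}{\to} Y_1\to\cdots\to Y_{n-1}\stackrel{f_n}{\to}Y_n\to 0\]
with $Y_n\in\Add(\mathcal{T})\cap\mathcal{Z}$, $Y_k\in\Add(\mathcal{T})$ for $k<n$, and $\im[f_i]\in\mathcal{Z}$ for $1\leq i\leq n-1$. Since $W=\im[f_0]\in\mathcal{Z}$ and $Y_n=\im[f_n]\in\mathcal{Z}$, every image in the sequence belongs to $\mathcal{Z}$. Splicing into the short exact sequences $0\to\im[f_k]\to Y_k\to\im[f_{k+1}]\to 0$ and using the extension-closure of $\mathcal{Z}$, an induction gives $Y_k\in\mathcal{Z}$ for every $k$. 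Proposition \ref{prop: add Add vs compactos} then upgrades each $Y_k\in\Add(\mathcal{T})\cap\mathcal{Z}$ into $Y_k\in\add(\mathcal{T})\cap\mathcal{Z}$, so $W\in(\add(\mathcal{T}))^\vee_\mathcal{Z}$. Hence $\omega\subseteq(\add(\mathcal{T}))^\vee_\mathcal{Z}$ and $\omega$ also witnesses (T3) for $\add(\mathcal{T})$; the reverse direction is immediate from the inclusion $(\add(\mathcal{T}))^\vee_\mathcal{Z}\subseteq(\Add(\mathcal{T}))^\vee_\mathcal{Z}$.
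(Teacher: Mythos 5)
Your proof is correct and follows essentially the same route as the paper's: use AB4 to identify the relative projective dimensions of $\Add(\mathcal{T})$ and $\add(\mathcal{T})$, use Proposition~\ref{prop: add Add vs compactos} to get $\Add(\mathcal{T})\cap\mathcal{Z}=\add(\mathcal{T})\cap\mathcal{Z}$, exploit extension-closure of $\mathcal{Z}$ to transfer (T3), and invoke Lemma~\ref{lem:  addT precub es AddT precub} together with the compactness proposition for (T5). The only difference is that you spell out the coresolution argument for (T3) in detail, whereas the paper states the equivalence $\omega\subseteq(\Add(\mathcal{T}))^{\vee}_{\mathcal{Z}}\Leftrightarrow\omega\subseteq(\add(\mathcal{T}))^{\vee}_{\mathcal{Z}}$ without unpacking it; your expansion is exactly what the paper's terse sentence is implicitly relying on.
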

\begin{proof}
Let $\mathcal{Z}\subseteq\mathcal{K}_{\mathcal{T}, \mathcal{M}}$
 be closed under extensions. Since
$\mathcal{C}$ is AB4,  $\pdr[\mathcal{X}][\operatorname{Add}(\mathcal{T})]=\pdr[\mathcal{X}][\mathcal{T}]=\pdr[\mathcal{X}][\operatorname{add}(\mathcal{T})]$.
Now,  by Proposition \ref{prop: add Add vs compactos},   
$\Addx[\mathcal{T}]\cap\mathcal{Z}=\addx[\mathcal{T}]\cap\mathcal{Z}\mbox{.}$
 Hence,  using that $\mathcal{Z}$ is closed under extensions,  
we have $\omega\subseteq(\Addx[\mathcal{T}])_{\mathcal{Z}}^{\vee}$
if and only if $\omega\subseteq(\addx[\mathcal{T}])_{\mathcal{Z}}^{\vee}, $ for any $\omega\subseteq\mathcal{Z}$.
Finally,   (T5) follows from Lemma \ref{lem:  addT precub es AddT precub}
and  Proposition \ref{prop: add Add vs compactos}.
\end{proof}

\begin{cor}\label{cor: coro1 p80}   Let $\mathcal{C}$ be an AB4 category,  $\mathcal{T}\subseteq\mathcal{W}\subseteq\mathcal{C}$, 
and let $\omega^{\oplus}$ be a relative generator in $\mathcal{C}$ such
that $\omega\subseteq\mathcal{K}_{\mathcal{W}}$. Then,  for every
$\mathcal{Z}\subseteq\operatorname{f.g.}(\omega)$ closed under extensions, 
we have that $\Addx[\mathcal{T}]$ is $n$-$\mathcal{Z}$-tilting if
and only if $\addx[\mathcal{T}]$ is $n$-$\mathcal{Z}$-tilting. \end{cor}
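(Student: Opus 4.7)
The plan is to reduce this corollary to the previous theorem by verifying that $\mathcal{Z}$ lies inside the class $\mathcal{K}_{\mathcal{T},\mathcal{M}}$ of $\mathcal{T}$-compact for monomorphisms objects. Once this is done, the equivalence follows directly from Theorem~\ref{thm: n-X-tilting sii n-X-tilting peque=0000F1o} applied to $\mathcal{Z}$, since $\mathcal{Z}$ is assumed to be closed under extensions.

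First I would establish the inclusion $\mathcal{K}_{\mathcal{W}}\subseteq\mathcal{K}_{\mathcal{T}}$. This is immediate from the definition: if $\{U_i\}_{i\in I}\subseteq\mathcal{T}$ is a family whose coproduct exists, then $\{U_i\}_{i\in I}$ is also a family in $\mathcal{W}$ (using $\mathcal{T}\subseteq\mathcal{W}$), so every morphism out of a $\mathcal{W}$-compact object into $\bigoplus_{i\in I}U_i$ factors through a finite subsum. Consequently, the hypothesis $\omega\subseteq\mathcal{K}_{\mathcal{W}}$ yields $\omega\subseteq\mathcal{K}_{\mathcal{T}}$.

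Next, since $\omega^{\oplus}$ is a relative generator in $\mathcal{C}$ with $\omega\subseteq\mathcal{K}_{\mathcal{T}}$, Corollary~\ref{cor: finitamente generado es compacto}(a) gives $\operatorname{f.g.}(\omega)\subseteq\mathcal{K}_{\mathcal{T}}$. The trivial inclusion $\mathcal{K}_{\mathcal{T}}\subseteq\mathcal{K}_{\mathcal{T},\mathcal{M}}$ (a compactness condition for all morphisms implies the one restricted to monomorphisms) then yields $\mathcal{Z}\subseteq\operatorname{f.g.}(\omega)\subseteq\mathcal{K}_{\mathcal{T},\mathcal{M}}$.

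Finally, since $\mathcal{C}$ is AB4 and $\mathcal{Z}$ is closed under extensions with $\mathcal{Z}\subseteq\mathcal{K}_{\mathcal{T},\mathcal{M}}$, Theorem~\ref{thm: n-X-tilting sii n-X-tilting peque=0000F1o} applies and delivers the desired equivalence: $\Addx[\mathcal{T}]$ is $n$-$\mathcal{Z}$-tilting if and only if $\addx[\mathcal{T}]$ is $n$-$\mathcal{Z}$-tilting. There is no real obstacle here; the argument is essentially a bookkeeping check that the enlarged framework ($\mathcal{T}\subseteq\mathcal{W}$ together with a $\mathcal{W}$-compact relative generator) still fits within the hypothesis of the preceding theorem.
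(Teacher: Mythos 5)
Your proof is correct and follows exactly the route the paper intends: the paper's own proof simply cites Theorem \ref{thm: n-X-tilting sii n-X-tilting peque=0000F1o} and Corollary \ref{cor: finitamente generado es compacto}, and you have filled in the bookkeeping — the easy inclusion $\mathcal{K}_{\mathcal{W}}\subseteq\mathcal{K}_{\mathcal{T}}$ from $\mathcal{T}\subseteq\mathcal{W}$, then $\operatorname{f.g.}(\omega)\subseteq\mathcal{K}_{\mathcal{T}}\subseteq\mathcal{K}_{\mathcal{T},\mathcal{M}}$ — precisely as intended.
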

\begin{proof}
It follows by Theorem \ref{thm: n-X-tilting sii n-X-tilting peque=0000F1o}
and Corollary \ref{cor: finitamente generado es compacto}.
\end{proof}

\begin{cor}\label{cor: coro2 p80}Let $R$ be a ring and $\mathcal{T}\subseteq\Modx$.
Then,  for every $\mathcal{Z}\subseteq\modd$,  closed under extensions, 
we have that $\Addx[\mathcal{T}]$ is $n$-$\mathcal{Z}$-tilting if
and only if $\addx[\mathcal{T}]$ is $n$-$\mathcal{Z}$-tilting. 
\end{cor}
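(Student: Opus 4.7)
The plan is to obtain this as a direct specialization of Corollary \ref{cor: coro1 p80}, with the module category $\Modx[R]$ playing the role of $\mathcal{C}$, $\mathcal{W} = \Modx[R]$, and $\omega = \{R\}$. Since $\Modx[R]$ is AB5 (hence in particular AB4), the ambient categorical hypothesis is satisfied for free.

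Next I would verify the two hypotheses on $\omega$. First, $\omega^{\oplus}$ is a relative generator in $\Modx[R]$: every $R$-module $M$ admits a surjection $R^{(I)} \twoheadrightarrow M$ from a free module. Second, $R \in \mathcal{K}_{\Modx[R]}$, i.e., $R$ is $\Modx[R]$-compact. Indeed, the functor $\Homx[R][R][-]$ is naturally isomorphic to the identity functor on $\Modx[R]$, which trivially preserves arbitrary coproducts; so the isomorphism in Lemma \ref{lem: caracterizacion T-compactos}(b) holds and $R$ is $\Modx[R]$-compact.

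The remaining point is to identify $\operatorname{f.g.}(\omega) = \operatorname{f.g.}(\{R\})$ with $\modd$. For the inclusion $\modd \subseteq \operatorname{f.g.}(\{R\})$, if $M$ is finitely generated with generators $m_{1},\dots,m_{k}$ and $\varphi\colon R^{(I)}\twoheadrightarrow M$ is any epimorphism, then each $m_{j}$ lifts to an element of $R^{(I)}$ supported on a finite subset $F_{j}\subseteq I$; the finite set $F := \bigcup_{j=1}^{k} F_{j}$ has the property that $R^{(F)}\hookrightarrow R^{(I)}\twoheadrightarrow M$ is still surjective, as its image contains all the generators. Conversely, given $M\in\operatorname{f.g.}(\{R\})$, applying the definition to the canonical surjection $R^{(M)}\twoheadrightarrow M$ yields a finite subset $F\subseteq M$ such that $R^{(F)}\twoheadrightarrow M$ is epi, so $M$ is finitely generated. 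Hence $\operatorname{f.g.}(\{R\}) = \modd$, and since $\mathcal{Z}\subseteq\modd$ is assumed closed under extensions, Corollary \ref{cor: coro1 p80} applies and delivers the desired equivalence.

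I do not anticipate any real obstacle: the work is essentially bookkeeping to check the hypotheses of Corollary \ref{cor: coro1 p80}. The only step requiring a brief verification is the identification $\operatorname{f.g.}(\{R\}) = \modd$, but this is standard and short. Everything else is immediate from well-known properties of $\Modx[R]$ (AB5, free modules are generators, $R$ is compact).
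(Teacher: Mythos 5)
Your proof is correct and follows essentially the same route as the paper: both specialize Corollary \ref{cor: coro1 p80} to $\mathcal{C}=\Modx[R]$, $\mathcal{W}=\Modx[R]$, $\omega=\{R\}$. The paper simply cites Corollary \ref{cor: finitamente generado es compacto-1}(a) (which records $\modd\subseteq\mathcal{K}_{\Modx[R]}$, hence $R\in\mathcal{K}_{\Modx[R]}$ and $\mathcal{Z}\subseteq\modd\subseteq\operatorname{f.g.}(\{R\})$), whereas you verify the hypotheses of Corollary \ref{cor: coro1 p80} directly, including a short explicit argument that $\operatorname{f.g.}(\{R\})=\modd$; the content is the same.
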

\begin{proof}
It follows by Corollary \ref{cor: finitamente generado es compacto-1} (a) and
Corollary \ref{cor: coro1 p80}. 
\end{proof}

\subsection{$n$-$\mathcal{X}$-tilting triples in abelian categories}

\begin{defn} We say that $\left(\p;\mathcal{T}\right)$
is a\textbf{ big (small) $n$-$\mathcal{X}$-tilting triple} in an abelian category $\C$ provided  the following statements hold true: 
\begin{description}
\item [(TT1)]  $\p$ is a left cotorsion pair in $\mathcal{X}$ with $\idr[\mathcal{A}][\mathcal{B}\cap\mathcal{X}]=0.$
\item [(TT2)] $\mathcal{B}$ is closed under extensions and direct summands. 
\item [(TT3)] There is a big (small) $n$-$\mathcal{X}$-tilting class $\mathcal{T}$
such that $\mathcal{B}\cap\mathcal{X}=\mathcal{T}^{\bot}\cap\mathcal{X}$
and $\mathcal{T}\cap\mathcal{X}\subseteq\mathcal{A}\cap\mathcal{B}\cap\mathcal{X}$. 
\end{description}
\end{defn}

\begin{lem}\label{lem: lemita inyectivos vs par X-completo a izq} Let $\p$ be
a right $\mathcal{X}$-complete pair in an abelian category $\mathcal{C}$
such that $\mathcal{B}\cap\mathcal{X}=\smdx[\mathcal{B}\cap\mathcal{X}].$ 
If $\alpha\subseteq\mathcal{X}\subseteq{}^{\bot_{1}}\alpha$,  then
$\alpha\subseteq\mathcal{B}\cap\mathcal{X}$.\end{lem}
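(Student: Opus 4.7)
The plan is to unfold the definitions and use right $\mathcal{X}$-completeness to realize every element of $\alpha$ as a direct summand of an object in $\mathcal{B}\cap\mathcal{X}$.

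First, I fix an arbitrary $a\in\alpha$. Since $\alpha\subseteq\mathcal{X}$, we have $a\in\mathcal{X}$, so right $\mathcal{X}$-completeness of $\p$ produces a short exact sequence
$$0\to a\to B\to A\to 0$$
with $B\in\mathcal{B}\cap\mathcal{X}$ and $A\in\mathcal{A}\cap\mathcal{X}$. In particular, $A$ lies in $\mathcal{X}$.

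Next, I use the orthogonality hypothesis $\mathcal{X}\subseteq{}^{\bot_{1}}\alpha$: applying $\mathrm{Ext}^{1}_{\mathcal{C}}(A, -)$ to $a\in\alpha$ gives $\mathrm{Ext}^{1}_{\mathcal{C}}(A, a)=0$, so the displayed sequence splits. Consequently, $a$ is a direct summand of $B\in\mathcal{B}\cap\mathcal{X}$.

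Finally, I invoke the closure assumption $\mathcal{B}\cap\mathcal{X}=\smdx[\mathcal{B}\cap\mathcal{X}]$ to conclude $a\in\mathcal{B}\cap\mathcal{X}$. Since $a\in\alpha$ was arbitrary, this proves the inclusion $\alpha\subseteq\mathcal{B}\cap\mathcal{X}$. There is no real obstacle here: the argument is essentially a one-line Eklof-style splitting trick, and all three ingredients (right $\mathcal{X}$-completeness, the Ext-vanishing, and summand closure) are applied exactly once.
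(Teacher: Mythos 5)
Your proof is correct and is exactly the argument the paper has in mind (the paper only says ``It is straightforward''). Right $\X$-completeness gives the monomorphism $a\hookrightarrow B$ with cokernel $A\in\A\cap\X\subseteq{}^{\bot_{1}}\alpha$, the Ext-vanishing splits it, and $\smd$-closure of $\B\cap\X$ finishes — no gaps.
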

\begin{proof} It is straightforward.
\end{proof}

\begin{lem} \label{lem: lema previo a 2.80} For   a right $\mathcal{X}$-complete
and $\mathcal{X}$-hereditary pair $\p$ in $C$ such that $\mathcal{B}\cap\mathcal{X}=\smdx[\mathcal{B}\cap\mathcal{X}]$
and $n: =\max\left\{ 1, \pdr[\mathcal{X}][\mathcal{A}]\right\} <\infty, $
 the following statements hold true. 
\begin{itemize}
\item[$\mathrm{(a)}$] Every $X\in\mathcal{X}$ admits an exact sequence 
\[
0\rightarrow X\overset{f_{0}}{\rightarrow}B_{X, 0}\overset{f_{1}}{\rightarrow}B_{X, 1}\rightarrow...\overset{f_{n}}{\rightarrow}B_{X, n}\rightarrow0\mbox{, }
\]
with $B_{X, n}\in\mathcal{A}\cap\mathcal{B}\cap\mathcal{X}$,  $B_{X, i}\in\mathcal{B}\cap\mathcal{X}$ 
and $\Cok[f_{i}]\in\mathcal{A}\cap\mathcal{X}$ $\forall i\in[0, n-1]$.
In particular,  $\coresdimr{\mathcal{B}\cap\mathcal{X}}{\mathcal{X}}{\mathcal{A}\cap\mathcal{X}}\leq n$. 

\item[$\mathrm{(b)}$] Let $\mathcal{A}\cap\mathcal{X}$ and $\mathcal{X}$
be closed under extensions and $^{\bot}(\B \cap \X)\cap \B \cap \X \subseteq \A \cap \B \cap \X$. Then,  every $W\in\mathcal{X}\cap{}^{\bot}\mathcal{X}$
admits an exact sequence 
\[
0\rightarrow W\stackrel{f_{0}}{\rightarrow}B_{W, 0}\stackrel{f_{1}}{\rightarrow}B_{W, 1}\rightarrow...\stackrel{f_{n}}{\rightarrow}B_{W, n}\rightarrow0
\]
with $B_{W, i}\in\mathcal{A}\cap\mathcal{B}\cap\mathcal{X}$ $\forall i\in[0, n]$
and $\Cok[f_{j}]\in\mathcal{A}\cap\mathcal{X}$ $\forall j\in[0, n-1]$.
In particular,  $\coresdimr{\mathcal{A}\cap\mathcal{B}\cap\mathcal{X}}{\mathcal{X}\cap{}^{\bot}\mathcal{X}}{\mathcal{A}\cap\mathcal{X}}\leq n$. 
\end{itemize}
\end{lem}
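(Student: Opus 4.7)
For part (a), the plan is to build the coresolution by iterating right $\X$-completeness and then handle the terminal piece with a splitting argument. Setting $X_0:=X,$ for each $i=0,1,\ldots,n-1$ I invoke right $\X$-completeness on $X_i\in\X$ (which lies in $\A\cap\X$ for $i\geq 1$) to obtain a short exact sequence
\[
\eta_i:\; 0\to X_i\to B_i\to X_{i+1}\to 0
\]
with $B_i\in\B\cap\X$ and $X_{i+1}\in\A\cap\X.$ Splicing the $\eta_i$ yields an exact sequence
\[
0\to X\to B_0\to B_1\to\cdots\to B_{n-1}\to X_n\to 0
\]
whose cokernels are precisely the $X_{i+1}\in\A\cap\X,$ so the only thing left is to show $X_n\in\A\cap\B\cap\X.$

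The key step is $X_n\in\B.$ For any $A'\in\A\cap\X,$ the $\X$-hereditary hypothesis $\Ext^{k}(\A\cap\X,\B\cap\X)=0$ lets me dimension-shift along the $\eta_i$: combining the long exact $\Ext(A',-)$-sequences gives
\[
\Ext^{1}(A',X_n)\cong\Ext^{2}(A',X_{n-1})\cong\cdots\cong\Ext^{n+1}(A',X_0).
\]
Since $X_0=X\in\X$ and $\pdr[\X][A']\leq n,$ the rightmost term vanishes, so $\Ext^{1}(\A\cap\X,X_n)=0.$ Applying right $\X$-completeness once more to $X_n$ produces a sequence $0\to X_n\to B^{\ast}\to A^{\ast}\to 0$ with $B^{\ast}\in\B\cap\X$ and $A^{\ast}\in\A\cap\X$ that splits by the vanishing just obtained. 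Hence $X_n$ is a direct summand of $B^{\ast},$ and closure of $\B\cap\X$ under summands delivers $X_n\in\A\cap\B\cap\X.$

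For part (b), the same iteration starting from $W$ produces the desired sequence, and the additional task is to promote each $B_i$ from $\B\cap\X$ to $\A\cap\B\cap\X.$ When $i\geq 1,$ both ends of $\eta_i$ lie in $\A\cap\X,$ so the assumed closure of $\A\cap\X$ under extensions forces $B_i\in\A\cap\X.$ For $i=0$ the kernel $W$ need not lie in $\A,$ so I instead verify $B_0\in{}^{\bot}(\B\cap\X)$: for any $Y\in\B\cap\X,$ the long exact sequence attached to $\eta_0$ contains the piece $\Ext^{k}(X_1,Y)\to\Ext^{k}(B_0,Y)\to\Ext^{k}(W,Y),$ whose left term vanishes by $\X$-hereditarity (as $X_1\in\A\cap\X$) and whose right term vanishes because $W\in{}^{\bot}\X$ and $Y\in\X.$ Thus $\Ext^{k}(B_0,Y)=0$ for all $k\geq 1,$ and the hypothesis ${}^{\bot}(\B\cap\X)\cap\B\cap\X\subseteq\A\cap\B\cap\X$ places $B_0$ in $\A\cap\B\cap\X.$ Finally $B_n=X_n\in\A\cap\B\cap\X$ by part (a).

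The main obstacle I anticipate is the terminal step of part (a): because $(\A,\B)$ is only a pair and not a cotorsion pair, the vanishing of $\Ext^{1}(\A\cap\X,X_n)$ does not by itself place $X_n$ in $\B.$ The splitting-plus-direct-summand trick bridges this gap, and it is precisely what forces the hypothesis $\B\cap\X=\smdx[\B\cap\X]$ into the statement.
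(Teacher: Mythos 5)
Your proof is correct and follows essentially the same route as the paper's: build the coresolution by iterated right $\X$-completeness, dimension-shift along it using $\X$-hereditarity to get $\Ext^1(\A\cap\X,X_n)=0$, invoke right $\X$-completeness once more on $X_n$ to produce a short exact sequence that splits, and then use $\B\cap\X=\smdx[\B\cap\X]$ to land $X_n$ in $\A\cap\B\cap\X$; part (b) is also handled exactly as in the paper, via extension-closedness of $\A\cap\X$ for $i\geq1$ and the ${}^{\bot}(\B\cap\X)$ computation for $i=0$. The only difference is cosmetic: you phrase the dimension shift for an arbitrary $A'\in\A\cap\X$ before specializing to the cokernel $A^{\ast}$ of the extra step, whereas the paper builds that extra term $C_{n+1}$ first and dimension-shifts directly with $C_{n+1}$ in the contravariant slot, but the underlying splitting argument is identical.
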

\begin{proof} (a) Let $X\in\mathcal{X}$. Since $\p$ is right $\mathcal{X}$-complete, 
there is an exact sequence 
$\suc[X][B_{X, 0}][C_{1}][\, ][\, ]\mbox{, }$
 with $B_{X, 0}\in\mathcal{B}\cap\mathcal{X}$ and $C_{1}\in\mathcal{A}\cap\mathcal{X}$.
Repeating the same argument recursively,  we can build an exact sequence
\[
0\rightarrow X\rightarrow B_{X, 0}\rightarrow B_{X, 1}\rightarrow...\rightarrow B_{X, n}\rightarrow C_{n+1}\rightarrow0
\]
with $B_{X, i}\in\mathcal{B}\cap\mathcal{X}$ $\forall i\in[0, n]$
and $C_{n+1}\in\mathcal{A}\cap\mathcal{X}$. Moreover $B_{X, i}\in C_{n+1}^{\bot}$ $\forall i\in[0, n]$ since $\p$ is $\mathcal{X}$-hereditary. Hence 
$\Extx[1][][C_{n+1}][C_{n}]\cong\Extx[n+1][][C_{n+1}][X]=0$ 
since $C_{n+1}\in\mathcal{A}$ and $\pdr[\mathcal{X}][\mathcal{A}]\leq n$.
Therefore
$\suc[C_{n}][B_{X, n}][C_{n+1}][\, ][\, ]$
splits and thus 
$0\rightarrow X\rightarrow B_{X, 0}\rightarrow B_{X, 1}\rightarrow...\rightarrow B_{X, n-1}\rightarrow C_{n}\rightarrow0$
is the desired exact sequence.
\

(b)  Let $W\in\X \cap {}^{\bot}\X$. Consider
the exact sequence 
\[
0\rightarrow W\stackrel{f_{0}}{\rightarrow}B_{W, 0}\stackrel{f_{1}}{\rightarrow}B_{W, 1}\rightarrow...\stackrel{f_{n}}{\rightarrow}B_{W, n}\rightarrow0
\]
obtained in (a). Now,  by using that $\mathcal{A}\cap\mathcal{X}$ is
closed under extensions,  we can conclude that  $B_{W, k}\in\mathcal{A}\cap\mathcal{B}\cap\mathcal{X}$
$\forall k\in[1, n]$. Finally,  consider the exact sequence 
$\suc[W][B_{W, 0}][C_{1}]\mbox{.}$
Since $W\in\mathcal{X}\cap{}^{\bot}\mathcal{X}\subseteq\mathcal{X}\cap{}^{\bot}\left(\mathcal{B}\cap\mathcal{X}\right)$
and $C_{1}\in\mathcal{A}\cap\mathcal{X}\subseteq\mathcal{X}\cap{}^{\bot}\left(\mathcal{B}\cap\mathcal{X}\right)$, 
we have  $B_{W, 0}\in\mathcal{X}\cap{}{}^{\bot}\left(\mathcal{B}\cap\mathcal{X}\right)\cap\mathcal{B}\subseteq \mathcal{X}\cap\mathcal{A}\cap\mathcal{B}\mbox{.}$ 
\end{proof}

The following result is a generalization of \cite[Thm. 3.2]{Hopel-Mendoza}. Note that we are writing,  at the same time,  the big and the small versions.

\begin{thm}\label{thm: dimensiones en un par de cotorsion tilting}\label{thm: dimensiones en un par de cotorsion tilting-1}\label{rem: obs 2.80'}\label{rem: obs 2.80''}
Let $\mathcal{C}$ be an AB4 (abelian) category,   $\mathcal{X}\subseteq\mathcal{C}$
be  closed under extensions  such that $\mathcal{X}=\Addx[\mathcal{X}]$
($\mathcal{X}=\addx[\mathcal{X}]$) and admits an $\mathcal{X}$-injective
relative cogenerator in $\mathcal{X}$ and an $\mathcal{X}$-projective
relative generator in $\mathcal{X}$. Consider a left cotorsion pair $\p$
in $\mathcal{X}$  such that $\idr[\mathcal{A}][\mathcal{B}\cap\mathcal{X}]=0$ and
 $\mathcal{B}=\smdx[\mathcal{B}]$ is closed under extensions,  and let $\kappa : = \A \cap \B \cap \X$. Then, 
the following statements are equivalent: 
\begin{itemize}
\item[$\mathrm{(a)}$]  There exists $\mathcal{T}\subseteq\mathcal{C}$ such that $(\p;\mathcal{T})$
is a big (small) $n$-$\mathcal{X}$-tilting triple.

\item[$\mathrm{(b)}$] $\p$ is right $\mathcal{X}$-complete,  $^{\bot}(\B \cap \X)\cap \B \cap \X \subseteq \A \cap \B \cap \X$,  $\pdr[\mathcal{X}][\mathcal{A}]\leq n$, 
$\kappa=\kappa^{\oplus}$ ($\kappa=\kappa^{\oplus_{<\infty}}$) and
$\kappa$ is precovering in $\mathcal{B}\cap\mathcal{X}.$

\item[$\mathrm{(c)}$] $\kappa$ is a big (small) $n$-$\mathcal{X}$-tilting class such
that $\mathcal{B}\cap\mathcal{X}=\kappa^{\bot}\cap\mathcal{X}$.
\end{itemize}

Furthermore,  if any of the above conditions is satisfied,  then $\mathcal{A}\cap\mathcal{X}\subseteq\kappa^{\vee}$,  
$\kappa=(\mathcal{A}\cap\mathcal{X})^{\bot}\cap\mathcal{A}\cap\mathcal{X}={}^{\bot}(\mathcal{B}\cap\mathcal{X})\cap\mathcal{B}\cap\mathcal{X}$,  
$\coresdimr{\mathcal{B}\cap\mathcal{X}}{\mathcal{X}}{\mathcal{A}\cap\mathcal{X}}\leq\max\left\{ 1, n\right\} $, 
and $\idr[\mathcal{A}\cap\mathcal{X}][\mathcal{X}]=\coresdimr{\mathcal{B}}{\mathcal{X}}{\mathcal{X}}\leq n$.
Moreover,  if $\Addx[\sigma]$ ($\addx[\sigma]$) is  $\X$-projective and a relative generator in $\X$ and $\sigma$ is a (finite) set (and $\addx[X]$ is precovering
in $X^{\bot}\cap\mathcal{X}$ $\forall X\in\kappa$),  then we can
dismiss the hypothesis from $\mathrm{(b)}$ which says that $\kappa$ is precovering in $\mathcal{B}\cap\mathcal{X}$
 and to find $T\in\mathcal{C}$ such that $\Addx[T]=\kappa$ ($\addx[T]=\kappa$).
\end{thm}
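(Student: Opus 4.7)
The plan is a cyclic proof (c) $\Rightarrow$ (a) $\Rightarrow$ (b) $\Rightarrow$ (c), collecting the ``Furthermore'' equalities en route, and then handling the ``Moreover'' construction of $T$ separately. Throughout, let $(\A',\B'):=({}^{\bot}(\T^{\bot}),\T^{\bot})$ denote the canonical pair attached to any given tilting class.

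The implication (c) $\Rightarrow$ (a) is immediate: set $\T:=\kappa$; (TT1) and (TT2) are hypotheses, and (TT3) holds since $\kappa\subseteq\X$ forces $\kappa\cap\X=\kappa\subseteq\A\cap\B\cap\X$. For (a) $\Rightarrow$ (b), the pivotal observation is $\A\cap\X=\A'\cap\X$: from $\idr[\A][\B\cap\X]=0$ we get $\A\subseteq{}^{\bot}(\B\cap\X)$, so $\A\cap\X\subseteq{}^{\bot}(\B\cap\X)\cap\X=\A'\cap\X$ by Proposition~\ref{prop: (b)}, while the reverse inclusion follows from $\A\cap\X={}^{\bot_1}(\B\cap\X)\cap\X$. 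Right $\X$-completeness of $\p$, the inclusion ${}^{\bot}(\B\cap\X)\cap\B\cap\X\subseteq\kappa$, and the relation $\kappa=\T\cap\X$ (which gives the required closure of $\kappa$ under coproducts) then transfer from Theorem~\ref{thm: el par n-X-tilting}(c) and Proposition~\ref{prop: (a)}(d); the $\kappa$-precovers of $Z\in\B\cap\X=\T^{\bot}\cap\X$ with source in $\X$ are the $\T$-precovers furnished by (T5). The bound $\pdr[\X][\A]\leq n$ is derived by combining $\coresdimr{\B\cap\X}{\X}{\X}\leq n$ (Proposition~\ref{prop: oct2}(a)) with dimension shifting against $\idr[\A][\B\cap\X]=0$.

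The substantive step is (b) $\Rightarrow$ (c), where one verifies axioms (T0)--(T5) for $\kappa$. Axioms (T0)--(T4) are routine packaging: (T0) from $\A\cap\X={}^{\bot_1}(\B\cap\X)\cap\X$, $\B=\smd(\B)$ and the size hypothesis on $\kappa$; (T1) from $\kappa\subseteq\A$ together with $\pdr[\X][\A]\leq n$; (T2) from $\idr[\A][\B\cap\X]=0$; (T3) by applying Lemma~\ref{lem: lema previo a 2.80}(b) to an $\X$-projective relative generator $\omega$ in $\X$ (which exists by the ambient hypotheses), with the hypothesis ${}^{\bot}(\B\cap\X)\cap\B\cap\X\subseteq\kappa$ used precisely to terminate the coresolution inside $\kappa$; and (T4) from the $\X$-injective relative cogenerator, which lies in $\X^{\bot}\subseteq\kappa^{\bot}$. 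The main obstacle, and what drives (T5) together with the precovering hypothesis, is the identification $\kappa^{\bot}\cap\X=\B\cap\X$. The nontrivial inclusion $\kappa^{\bot}\cap\X\subseteq\B$ proceeds in two steps. First, for $Z\in\kappa^{\bot}\cap\X$, right $\X$-completeness produces $\suc[Z][B_{Z}][A_{Z}]$ with $B_{Z}\in\B\cap\X$, $A_{Z}\in\A\cap\X$, and a long-exact-sequence argument (using $\B\cap\X\subseteq\kappa^{\bot}$) forces $A_{Z}\in\kappa^{\bot}\cap\A\cap\X$. Second, I show $\A\cap\X\cap\kappa^{\bot}=\kappa$ by descending induction along the length-$n$ $(\B\cap\X)_{\A\cap\X}$-coresolution of $A_{Z}$ supplied by Lemma~\ref{lem: lema previo a 2.80}(a): the same LES step propagates $\kappa^{\bot}$-membership to every intermediate cokernel $C_{k}\in\A\cap\X$, so the terminal short exact sequence $\suc[C_{n-1}][B_{n-1}'][B_{n}']$ with $B_{n}'\in\kappa$ and $C_{n-1}\in\kappa^{\bot}$ splits (since $\Ext^{1}_{\C}(B_{n}',C_{n-1})=0$), placing $C_{n-1}\in\B$ and hence in $\kappa$; iterating this splitting backwards along each $C_{k}$ yields $A_{Z}\in\kappa$, and then $\Ext^{1}_{\C}(A_{Z},Z)=0$ splits the original sequence and forces $Z\in\B$. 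The ``Furthermore'' equalities $\kappa=(\A\cap\X)^{\bot}\cap\A\cap\X={}^{\bot}(\B\cap\X)\cap\B\cap\X$, the inclusion $\A\cap\X\subseteq\kappa^{\vee}$, and the remaining coresolution and injective-dimension bounds are then collected from Propositions~\ref{prop: (a)},~\ref{prop: (b)},~\ref{prop: oct2} and Lemma~\ref{lem: lema previo a 2.80}.

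For the ``Moreover'' addendum, the set $\sigma$ lets us bypass the precovering hypothesis and construct $T$ explicitly. Since (T3) places $\sigma\subseteq\X\cap{}^{\bot}\X\subseteq\X\cap\kappa^{\vee}_{\X}$, Lemma~\ref{lem: exitencia de la preenvolvetnte}(b) applied with $\T:=\kappa$ yields for each $S\in\sigma$ an exact sequence $\suc[S][M_{S}][C_{S}]$ with $M_{S}\in\kappa$, $C_{S}\in{}^{\bot}(\kappa^{\bot})\cap\X$, whose inclusion is a special $\kappa$-preenvelope. Set $T:=\bigoplus_{S\in\sigma}M_{S}$ (a finite direct sum in the small case); then $T\in\kappa$ and $\Add(T)\subseteq\kappa$ (resp.\ $\add(T)\subseteq\kappa$). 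For the reverse inclusion, given $X\in\kappa$, use that $\Add(\sigma)$ (resp.\ $\add(\sigma)$) is a relative generator in $\X$ to present $X$ as the cokernel-piece of a map out of $\Add(\sigma)$; composing with the special preenvelopes of the $S\in\sigma$ and splitting via $\Ext^{1}_{\C}(C_{S},X)=0$ realises $X$ as a retract of a (possibly infinite) coproduct of copies of $T$. In the small case, the extra hypothesis that $\add(X)$ is precovering in $X^{\bot}\cap\X$ is what allows one to descend from $\Add(T)$-covers to $\add(T)$-covers. Once $\Add(T)=\kappa$ (resp.\ $\add(T)=\kappa$), precovering of $\kappa$ in $\B\cap\X=\kappa^{\bot}\cap\X$ follows automatically from the evaluation map $T^{(\Homx[][T][Z])}\to Z$, so the precovering hypothesis in (b) can indeed be dismissed.
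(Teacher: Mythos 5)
Your cyclic ordering and the overall packaging of the axiom checks for $(\mathrm{b})\Rightarrow(\mathrm{c})$ match the paper closely, and the key descending-induction argument that forces $\kappa^{\bot}\cap\X\subseteq\B\cap\X$ is essentially the paper's (the extra reduction step from $Z$ to $A_{Z}$ is harmless but unnecessary -- the paper applies Lemma~\ref{lem: lema previo a 2.80}(a) directly to $Z$). However, there are two concrete gaps.

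First, in $(\mathrm{a})\Rightarrow(\mathrm{b})$ you lean on the ``pivotal observation'' $\A\cap\X=\A'\cap\X$, and you justify the reverse inclusion by asserting $\A\cap\X={}^{\bot_1}(\B\cap\X)\cap\X$. But the definition of a left cotorsion pair in $\X$ gives $\A\cap\X={}^{\bot_1}\B\cap\X$, and since $\B\supseteq\B\cap\X$ the class ${}^{\bot_1}\B$ can be strictly smaller than ${}^{\bot_1}(\B\cap\X)$; so the inclusion $\A'\cap\X={}^{\bot}(\B\cap\X)\cap\X\subseteq{}^{\bot_1}\B\cap\X=\A\cap\X$ does not follow, and you have not shown that $\A'\cap\X\subseteq\A$. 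Without that you cannot simply transfer right $\X$-completeness from the canonical pair $(\A',\B')$ to $\p$, because the right-hand terms $A'_X\in\A'\cap\X$ of the $(\A',\B')$-approximation sequences are not known to lie in $\A$. The paper avoids this entirely: it deduces $\X$-completeness of $\p$ itself from Proposition~\ref{lem: partiltingescompleto}(c) (using that $\kappa=\T\cap\X$ forces (c1), hence (c2)), and never asserts $\A'\cap\X\subseteq\A$.

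Second, the ``Moreover'' argument does not establish $\kappa\subseteq\Add(T)$. For $X\in\kappa$ you lift the generator epimorphism $W\twoheadrightarrow X$ along the special $\kappa$-preenvelope $W\hookrightarrow M_{W}$ (legitimate, since $\Ext^{1}_\C(C_W,X)=0$), but this only produces an epimorphism $M_{W}\twoheadrightarrow X$ with $M_W\in\Add(T)$; it does not make $X$ a retract of $M_W$. Nothing in your sketch shows this epimorphism splits -- indeed the kernel need not lie in $\kappa^{\bot}$. The paper's construction is genuinely different: $T$ is assembled from \emph{all} the terms $B_{W,i}$ of the coresolution furnished by Lemma~\ref{lem: lema previo a 2.80}(b) (not just the first), and membership $X\in\Add(T)$ is obtained by building a length-$n$ $\Add(T)$-resolution of $X$ with syzygies in $T^{\bot}\cap\X$, using $\pd_{\X}(\kappa)\leq n$ to split the top step, propagating $(\A\cap\X)^{\bot}$-membership down the resolution, and finally splitting $\suc[K_0][T_0][X]$ because $X\in\A\cap\X$ and $K_0\in(\A\cap\X)^{\bot}$. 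That splitting mechanism is the missing ingredient in your sketch.
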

\begin{proof}
(a) $\Rightarrow$ (b) By Proposition \ref{lem: partiltingescompleto},  it follows that $\p$ is
$\mathcal{X}$-complete,  $\mathcal{X}$-hereditary,  and $\mathcal{T}\cap\mathcal{X}=\kappa={}{}^{\bot}\left(\mathcal{B}\cap\mathcal{X}\right)\cap\mathcal{B}\cap\mathcal{X}.$
In particular $\kappa=\kappa^{\oplus}$ ($\kappa=\kappa^{\oplus_{<\infty}}$).
Moreover,  since $\mathcal{A}\subseteq{}{}{}^{\bot}\left(\mathcal{B}\cap\mathcal{X}\right)$, 
we can conclude that $\pdr[\mathcal{X}][\mathcal{A}]\leq n$ by \cite[Prop. 4.5 (b)]{parte1},  (T1) and (T4). Finally, 
it follows from (T5) that $\kappa$ is precovering in $\mathcal{B}\cap\mathcal{X}$. 

(b) $\Rightarrow$ (c) Let $\alpha$ be an $\mathcal{X}$-injective relative cogenerator
in $\mathcal{X}$. We claim that $\mathcal{B}\cap\mathcal{X}=\kappa^{\bot}\cap\mathcal{X}$.
Indeed,  since $\kappa\subseteq\mathcal{A}$ and $\idr[\mathcal{A}][\mathcal{B}\cap\mathcal{X}]=0$, 
we have $\mathcal{B}\cap\mathcal{X}\subseteq\kappa^{\bot}\cap\mathcal{X}.$ Let us show 
that $\kappa^{\bot}\cap\mathcal{X}\subseteq \mathcal{B}\cap\mathcal{X}.$ Consider
 $X\in\kappa^{\bot }\cap\mathcal{X}$. Then,  by Lemma \ref{lem: lema previo a 2.80} (a),  
there is an exact sequence 
$0\rightarrow X\overset{f_{0}}{\rightarrow}B_{0}\overset{f_{1}}{\rightarrow}B_{1}\overset{f_{2}}{\rightarrow}...\overset{f_{k}}{\rightarrow}B_{k}\rightarrow0$
such that $B_{k}\in\kappa$,  $B_{i}\in\mathcal{B}\cap\mathcal{X}$
and $\Cok[f_{i}]\in\mathcal{A}\cap\mathcal{X}$ $\forall i\in[0, k]$.
Hence,  using that $X\in\kappa^{\bot}\cap\mathcal{X}$ and $B_{i}\in\mathcal{B}\cap\mathcal{X}\subseteq\kappa^{\bot}\cap\mathcal{X}$, 
we have that $\im[f_{j}]\in\kappa^{\bot}\cap\mathcal{X}$ $\forall j\in[1, k-1]$.\\
Let us prove,  by induction on $k$,  that $X\in\mathcal{B}$. Indeed, 
if $k=0$ then $X\cong B_{0}\in\mathcal{B}$. For the case $k=1$,  we have
an exact sequence 
$\eta_{1}: \;\suc[X][B_{0}][B_{1}][\, ][\, ]\mbox{, }$
with $X\in\kappa^{\bot}\cap\mathcal{X}$ and $B_{1}\in\kappa$. Hence
$\eta_{1}$ splits and thus $X\in\mathcal{B}$.
\

Let $k>1$. Let us show that $B_{k-1}\in\kappa$. Indeed,  using that $B_{k-1}\in\B\cap\X, $ it is enough to show that $B_{k-1}\in\A.$
From the exact sequence 
$\eta_{k}: \;\suc[K_{k-1}][B_{k-1}][B_{k}][\, ][\;]$, 
we get  $B_{k-1}\in\mathcal{A}$ since $K_{k-1}, B_{k}\in\A$ and $\A$ is closed under extensions; proving that $B_{k-1}\in\kappa.$ 
Now,  by using that $B_{k}\in\kappa$ and $K_{k-1}\in\kappa^{\bot}\cap\mathcal{X}$,  we get that 
$\eta_{k}$ splits and thus $K_{k-1}\in\kappa$. Furthermore,  from the exact
sequence 
\[
0\rightarrow X\overset{f_{0}}{\rightarrow}B_{0}\overset{f_{1}}{\rightarrow}B_{1}\overset{f_{2}}{\rightarrow}...\overset{f_{k-2}}{\rightarrow}B_{k-2}\rightarrow K_{k-1}\rightarrow0, 
\]
and using that $K_{k-1}\in\kappa$,  we have by the inductive
hypothesis,  that $X\in\mathcal{B}.$
\
 
Let us show that $\kappa$ is $n$-$\mathcal{X}$-tilting. In order to do that,  we proceed to verify the axioms from (T0) to (T5).
\begin{description}
\item [{(T0)}] It is clear. 
\item [{(T1)}] Since $\kappa\subseteq\mathcal{A}$,  we have $\pdr[\mathcal{X}][\kappa]\leq\pdr[\mathcal{X}][\mathcal{A}]\leq n\mbox{.}$
\item [{(T2)}] Since $\kappa\subseteq\mathcal{B}\cap\mathcal{X}\subseteq\kappa^{\bot}\cap\mathcal{X}$, 
we have $\kappa\cap\mathcal{X}\subseteq\kappa^{\bot}\cap\mathcal{X}$. 
\item [{(T4)}] By Lemma \ref{lem: lemita inyectivos vs par X-completo a izq}, 
$\alpha$ is an $\mathcal{X}$-injective relative cogenerator in $\mathcal{X}$
such that $\alpha\subseteq\mathcal{B}\cap\mathcal{X}$. Hence,  $\alpha\subseteq\mathcal{X}^{\bot}\cap\kappa^{\bot}$
since $\mathcal{B}\cap\mathcal{X}\subseteq\kappa^{\bot}$.
\item [{(T5)}] By hypothesis,  $\kappa$ is precovering in $\mathcal{B}\cap\mathcal{X}$.
Then,  using that $\kappa^{\bot}\cap\mathcal{X}=\mathcal{B}\cap\mathcal{X}$,  we have that 
every $Z\in\kappa^{\bot}\cap\mathcal{X}$ admits a $\kappa$-precover
$T'\rightarrow Z$ with $T'\in\mathcal{X}$. 
\item [{(T3)}] It follows from Lemma \ref{lem: lema previo a 2.80} (b).
\end{description}

(c) $\Rightarrow$ (a) It is clear.

Assume now that one of the above equivalent conditions hold true. Then we have the
following facts. By Lemma \ref{lem: lema previo a 2.80} (a),  $\coresdimr{\mathcal{B}\cap\mathcal{X}}{\mathcal{X}}{\mathcal{A}\cap\mathcal{X}}\leq\max\left\{ 1, n\right\} $ and thus
 $\mathcal{X}\subseteq(\mathcal{B}\cap\mathcal{X})_{\mathcal{A}\cap\mathcal{X}}^{\vee}\subseteq\mathcal{B}_{\mathcal{X}}^{\vee}$. Moreover
 $\idr[\mathcal{A}\cap\mathcal{X}][\mathcal{X}]=\coresdimr{\mathcal{B}}{\mathcal{X}}{\mathcal{X}}$
by \cite[Cor. 4.12 (a)]{parte1}. On the other hand $\kappa=(\mathcal{A}\cap\mathcal{X})^{\bot}\cap\mathcal{A}\cap\mathcal{X}$
by \cite[Theorem 4.24 (a)]{parte1}; and  $\pdr[\mathcal{X}][\mathcal{A}]\leq n < \infty$
by (b). Then,  by applying  \cite[Theorem 4.24 (a1, a2)]{parte1} on $(\A\cap\X, \B)$, 
$\coresdimr{\mathcal{B}}{\mathcal{X}}{\mathcal{X}}=\pdr[\mathcal{A}\cap\mathcal{X}][\mathcal{A}\cap\mathcal{X}]=\pdr[\mathcal{X}][\mathcal{A}\cap\mathcal{X}]\leq\pdr[\mathcal{X}][\mathcal{A}]\leq n$
and $\mathcal{A}\cap\mathcal{X}\subseteq\kappa^{\vee}$.

Finally,  assume that  $\sigma$ is a (finite) set,  ($\addx[X]$ is precovering
in $X^{\bot}\cap\mathcal{X}$ for every $X\in\kappa$),  $\p$ is right
$\mathcal{X}$-complete,  $^{\bot}(\B \cap \X)\cap \B \cap \X \subseteq \A \cap \B \cap \X$,  $\pdr[\mathcal{X}][\mathcal{A}]\leq n$, 
and $\kappa=\kappa^{\oplus}$ ($\kappa=\kappa^{\oplus_{<\infty}}$).
Since $\sigma\subseteq\mathcal{X}\cap{}^{\bot}\mathcal{X}$,  by Lemma \ref{lem: lema previo a 2.80} (b), 
every $W\in\sigma$ admits an exact sequence 
$0\rightarrow W\stackrel{f_{0}}{\rightarrow}B_{W, 0}\stackrel{f_{1}}{\rightarrow}B_{W, 1}\rightarrow...\stackrel{f_{n}}{\rightarrow}B_{W, n}\rightarrow0$
such that $B_{W, i}\in\mathcal{A}\cap\mathcal{B}\cap\mathcal{X}$ $\forall i\in[0, n]$
and $\Cok[f_{j}]\in\mathcal{A}\cap\mathcal{X}$ $\forall j\in[0, n-1]$.
Consider $T_{W}: =\bigoplus_{i=0}^{n}B_{W, i}$ for every $W\in\sigma$.
Since $\kappa=\kappa^{\oplus}$ ($\kappa=\kappa^{\oplus_{<\infty}}$),  we have $T: =\bigoplus_{W\in\sigma}T_{W}\in\kappa$
and $\Addx[T]\subseteq\kappa$ ($\addx[T]\subseteq\kappa$). We claim
that $\Addx[T]=\kappa$ ($\addx[T]=\kappa$). In order to prove it, 
we must show that $T$ is big (small) $n$-$\mathcal{X}$-tilting.
This is done in the same manner as (b) $\Rightarrow$ (c) was proved.
\

Let us show that $\kappa\subseteq\Addx[T]$ ($\kappa\subseteq\addx[T]$).
Consider $X\in\kappa$. Observe that $X\in\mathcal{B}\cap\mathcal{X}\subseteq T^{\bot}\cap\mathcal{X}\mbox{.}$
Now,  $T^{\bot }\cap\mathcal{X}\subseteq\Gennr[\operatorname{Add}(T)][1][\mathcal{X}]$
($T^{\bot }\cap\mathcal{X}\subseteq\Gennr[\operatorname{add}(T)][1][\mathcal{X}]$)
by Lemma \ref{lem: props T2 con T3' y C2 con C3'}. Hence,  by Lemma \ref{lem: props C2 y T2} (a), 
$\Addx[T]=\Addx[T]\cap\mathcal{X}$ ($\addx[T]=\addx[T]\cap\mathcal{X}$)
is a relative generator in $T^{\bot}\cap\mathcal{X}$. Then,  using that 
$X\in T^{\bot}\cap\mathcal{X}$,  we can build an exact sequence 
$0\rightarrow K_{n}\rightarrow T_{n}\overset{f_{n}}{\rightarrow}T_{n-1}\rightarrow...\overset{f_{1}}{\rightarrow}T_{0}\overset{f_{0}}{\rightarrow}X\rightarrow0\mbox{, }$
with $T_{i}\in\Addx[T]$ ($T_{i}\in\addx[T]$) and $K_{i}: =\Kerx[f_{i}]\in T^{\bot}\cap\mathcal{X}$
$\forall i\in[0, n]$. On the other hand $\pdr[\mathcal{X}][X]\leq\pdr[\mathcal{X}][\kappa]\leq n$
and thus 
$\Extx[1][][K_{n-1}][K_{n}]\cong\Extx[n+1][][X][K_{n}]=0.$  Therefore,  $K_{n}\in\mathcal{B}\cap\mathcal{X}\subseteq(\mathcal{A}\cap\mathcal{X})^{\bot}$
since the exact sequence 
$\suc[K_{n}][T_{n}][K_{n-1}][\, ][\: ]$
splits. Then,  using that $T_{i}\in\kappa\subseteq\mathcal{B}\cap\mathcal{X}\subseteq\left(\mathcal{A}\cap\mathcal{X}\right)^{\bot}\, \forall i\in[0, n]$
and $(\mathcal{A}\cap\mathcal{X})^{\bot}$ is closed by mono-cokernels, 
we have $K_{i}\in\left(\mathcal{A}\cap\mathcal{X}\right)^{\bot}$
$\forall i\in[0, n]$. Finally,  since  $K_{0}\in\left(\mathcal{A}\cap\mathcal{X}\right)^{\bot}$
and $X\in\kappa\subseteq\mathcal{A}\cap\mathcal{X}$,  
the exact sequence 
$\suc[K_{0}][T_{0}][X][\, ][\, ]$
 splits and thus $X\in\Addx[T]$($X\in\addx[T]$).
\end{proof}

\begin{rem}
One of the hypotheses (the small case)  in  Theorem  \ref{rem: obs 2.80'} is that the class $\addx[X]$ is precovering
in $X^{\bot}\cap\mathcal{X}$ $\forall X\in\kappa$. We can give two
examples where such condition is satisfied: 
\begin{enumerate}
\item [(i)] $\mathcal{X}: =\mathcal{FP}_{n}$,  $\mathcal{C}=\Modx[R]$,  $\sigma=\{R\}$
with $R$ an $n$-coherent ring,  see Lemma \ref{lem: anillo conmutativo coherente,  entonces precubiertas}.
\item [(ii)] $R$ is an Artin algebra,  $\mathcal{C}: =\modd[R]$,  $\mathcal{X}\subseteq\modd[R]$
and $\sigma=\{R\}$,  see \cite[Prop. 4.2]{auslander1980preprojective}.
\end{enumerate}
\end{rem}

\begin{lem}\label{tilting is cotorsion} Let $\mathcal{C}$ be an abelian
category with enough projectives and injectives and $\T$ be an  $n$-$\C$-tilting class in $\C.$ Then $({}^{\perp}(\T^\perp), \T^\perp)$ is a complete hereditary cotorsion pair in $\C.$
\end{lem}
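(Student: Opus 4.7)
My plan is to derive this from Theorem \ref{thm: el par n-X-tilting}(c) by specializing $\mathcal{X}=\mathcal{C}$, and then to bootstrap the $\mathcal{C}$-complete hereditary pair thereby obtained into a genuine complete hereditary cotorsion pair via a standard splitting argument.

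First I check that the hypotheses of Theorem \ref{thm: el par n-X-tilting} apply to the choice $\mathcal{X}=\mathcal{C}$: trivially $\mathcal{C}=\smd(\mathcal{C})$ and $\mathcal{C}$ is closed under extensions. (The existence of enough projectives and injectives guarantees that there are non-trivial choices of $\alpha$ and $\omega$ verifying (T4) and the other axioms, but this is already absorbed in the hypothesis that $\mathcal{T}$ is $n$-$\mathcal{C}$-tilting.) Then Theorem \ref{thm: el par n-X-tilting}(c) yields that the pair $(\mathcal{A},\mathcal{B}):=({}^{\perp}(\mathcal{T}^{\perp}),\mathcal{T}^{\perp})$ is $\mathcal{C}$-complete and hereditary. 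With $\mathcal{X}=\mathcal{C}$, being $\mathcal{C}$-complete is exactly the usual completeness: every $M\in\mathcal{C}$ admits short exact sequences $0\to B\to A\to M\to 0$ and $0\to M\to B'\to A'\to 0$ with $A,A'\in\mathcal{A}$ and $B,B'\in\mathcal{B}$. Moreover hereditary gives $\Ext^{k}_{\mathcal{C}}(\mathcal{A},\mathcal{B})=0$ for all $k\geq 1$, so in particular $\mathcal{A}\subseteq{}^{\perp_{1}}\mathcal{B}$ and $\mathcal{B}\subseteq\mathcal{A}^{\perp_{1}}$.

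Second, I upgrade the pair to a cotorsion pair by proving the reverse inclusions. Given $M\in\mathcal{A}^{\perp_{1}}$, completeness furnishes an exact sequence $0\to M\to B'\to A'\to 0$ with $A'\in\mathcal{A}$ and $B'\in\mathcal{B}$; since $\Ext^{1}_{\mathcal{C}}(A',M)=0$, the sequence splits, so $M$ is a direct summand of $B'\in\mathcal{T}^{\perp}$. Because $\mathcal{T}^{\perp}=\bigcap_{i\geq 1}\mathcal{T}^{\perp_{i}}$ is clearly closed under direct summands, we conclude $M\in\mathcal{B}$. Symmetrically, given $M\in{}^{\perp_{1}}\mathcal{B}$, take the exact sequence $0\to B\to A\to M\to 0$ from completeness; this splits by the same Ext-vanishing argument and exhibits $M$ as a direct summand of $A\in{}^{\perp}(\mathcal{T}^{\perp})$, which is again evidently closed under direct summands. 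Hence $\mathcal{A}={}^{\perp_{1}}\mathcal{B}$ and $\mathcal{B}=\mathcal{A}^{\perp_{1}}$, so $(\mathcal{A},\mathcal{B})$ is a cotorsion pair, and it is complete and hereditary by the first paragraph.

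The genuine work has already been done in Theorem \ref{thm: el par n-X-tilting}(c); there is no real obstacle beyond the Salce-type splitting trick in the second paragraph, which is standard.
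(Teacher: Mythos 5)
The proposal is correct and takes essentially the same approach as the paper: invoke Theorem \ref{thm: el par n-X-tilting}(c) with $\X=\C$ to obtain a hereditary $\C$-complete pair, then upgrade it to a cotorsion pair. The only difference is that where the paper simply cites an external result (\cite[Cor. 3.11]{parte1}) for that final upgrade, you re-prove it from scratch with the standard Salce-type splitting argument, which is perfectly valid and makes the step self-contained.
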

\begin{proof} Since $\T$ is $n$-$\C$-tilting  in $\C,$ we get from Theorem \ref{thm: el par n-X-tilting} (c) that $({}^{\bot}(\T^\perp),\T^{\bot})$ is a hereditary $\C$-complete pair in $\C$ which is also a cotorsion one  by \cite[Cor. 3.11]{parte1}.
\end{proof}

\begin{cor}\label{corr-tcchp} Let $\C$ be an AB3 (abelian) category with enough injectives and projectives, $\X=\Add(\X)$ ($\X=\add(\X)$) be a class in $\C$ admitting an $\X$-injective relative cogenerator and an $\X$-projective relative generator. Consider the classes:
\

(a) $\mathsf{HCC}_{n,\X}(\C)$ is the class of all the hereditary cotorsion pairs $(\A,\B)$ in $\C$ which are $\X$-complete, $\pd_\X(\A)\leq n$ and $\omega:=\A\cap\B\cap\X$ with $\omega^\oplus=\omega$ (this condition is dismissed in the small case). We say that $(\A,\B), (\A',\B')\in \mathsf{HCC}_{n,\X}(\C)$ are related, and write $(\A,\B)\sim (\A',\B')$ if $\A\cap\B\cap\X=\A'\cap\B'\cap\X.$
\

(b) $\mathsf{Tilt}_{n,\X}(\C)$ is the class of all the big (small) $n$-$\X$-tilting
classes $\T$ in $\C$ such that $\T\subseteq\X.$
\

Then, the map $\varphi:\mathsf{HCC}_{n,\X}(\C)/\!\!\sim\,\to \mathsf{Tilt}_{n,\X}(\C),\;[(\A,\B)]\mapsto \A\cap\B\cap\X,$ is a bijection  whose inverse is $\psi:\mathsf{Tilt}_{n,\X}(\C)\to \mathsf{HCC}_{n,\X}(\C)/\!\!\sim,\; \T\mapsto[({}^\perp(\T^\perp), \T^\perp)].$
\end{cor}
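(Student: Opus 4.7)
The plan is to use Theorem \ref{thm: dimensiones en un par de cotorsion tilting} as the engine in both directions, relying on its Furthermore clauses to supply the key identities $\kappa = {}^{\bot}(\B\cap\X)\cap\B\cap\X$ and $\B\cap\X=\kappa^{\bot}\cap\X$ which will force the two compositions to be the identity.

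First I would show $\varphi$ is well-defined. Fix $(\A,\B)\in\mathsf{HCC}_{n,\X}(\C)$ and set $\kappa:=\A\cap\B\cap\X$. Instead of checking the tilting axioms directly, I would verify condition (b) of Theorem \ref{thm: dimensiones en un par de cotorsion tilting}. The items ``$(\A,\B)$ is a left cotorsion pair in $\X$'', ``$\idr[\A][\B\cap\X]=0$'', ``$\B$ is closed under extensions and summands'', ``right $\X$-complete'', and ``$\pdr[\X][\A]\leq n$'' are all immediate from the hypotheses on $(\A,\B)$; the condition $\kappa^{\oplus}=\kappa$ is the assumption in the big case, while in the small case $\kappa^{\oplus_{<\infty}}=\kappa$ is automatic since $\A$, $\B$ and $\X=\addx[\X]$ are each closed under finite coproducts. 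The two remaining items need a short argument each. For ${}^{\bot}(\B\cap\X)\cap\B\cap\X\subseteq\kappa$, given $M$ in this intersection, left $\X$-completeness yields $0\to B'\to A'\to M\to 0$ with $A'\in\A\cap\X$ and $B'\in\B\cap\X$, and this sequence splits since $\Extx[1][\C][M][B']=0$, forcing $M\in\smdx[\A]=\A$. For the precovering of $\kappa$ in $\B\cap\X$, given $N\in\B\cap\X$, the same construction gives $0\to B'\to A'\to N\to 0$; since $\B$ is closed under extensions, $A'\in\A\cap\B\cap\X=\kappa$, and for any $K\in\kappa\subseteq\A$ the hereditary property gives $\Extx[1][\C][K][B']=0$, whence $\Homx[\C][K][A']\twoheadrightarrow\Homx[\C][K][N]$. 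Applying Theorem \ref{thm: dimensiones en un par de cotorsion tilting}(c) then yields $\kappa\in\mathsf{Tilt}_{n,\X}(\C)$, together with the identities $\kappa^{\bot}\cap\X=\B\cap\X$ and $\kappa={}^{\bot}(\B\cap\X)\cap\B\cap\X$ from its Furthermore clause.

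Next I would handle $\psi$. Fix $\T\in\mathsf{Tilt}_{n,\X}(\C)$ and let $(\A,\B):=({}^{\bot}(\T^{\bot}),\T^{\bot})$. Theorem \ref{thm: el par n-X-tilting}(c) says the pair is $\X$-complete and hereditary; because $\C$ has enough projectives and injectives, the argument in Lemma \ref{tilting is cotorsion} (invoking \cite[Cor.\ 3.11]{parte1}) upgrades it to a hereditary cotorsion pair in $\C$. Proposition \ref{prop: (b)}(b) gives $\pdr[\X][\A]=\pdr[\X][\T]\leq n$, and Proposition \ref{prop: oct3}(b) gives $\A\cap\B\cap\X=\T\cap\X=\T$ (since $\T\subseteq\X$), which is closed under coproducts (resp.\ finite coproducts) by the very definition of big (resp.\ small) $n$-$\X$-tilting. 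Hence $({}^{\bot}(\T^{\bot}),\T^{\bot})\in\mathsf{HCC}_{n,\X}(\C)$.

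Finally, the two compositions. For $\T\in\mathsf{Tilt}_{n,\X}(\C)$ one has $\varphi(\psi(\T))={}^{\bot}(\T^{\bot})\cap\T^{\bot}\cap\X=\T\cap\X=\T$ by Proposition \ref{prop: oct3}(b). For $(\A,\B)\in\mathsf{HCC}_{n,\X}(\C)$, set $\kappa:=\varphi([(\A,\B)])$; then $\psi(\kappa)=[({}^{\bot}(\kappa^{\bot}),\kappa^{\bot})]$, and the Furthermore clause of step one combined with Proposition \ref{prop: oct3}(b) applied to the tilting class $\kappa$ gives ${}^{\bot}(\kappa^{\bot})\cap\kappa^{\bot}\cap\X=\kappa=\A\cap\B\cap\X$, so $(\A,\B)\sim({}^{\bot}(\kappa^{\bot}),\kappa^{\bot})$. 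The real bottleneck is the verification of the two non-routine items in (b) of Theorem \ref{thm: dimensiones en un par de cotorsion tilting}; both hinge on exploiting left $\X$-completeness together with the hereditary property of the cotorsion pair, and once these are in hand the rest is bookkeeping against the Furthermore clauses.
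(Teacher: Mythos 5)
Your proposal is correct and follows essentially the same route as the paper's proof: in both cases the engine is Theorem \ref{thm: dimensiones en un par de cotorsion tilting} together with the identity $\T\cap\X=\T={}^\perp(\T^\perp)\cap\T^\perp\cap\X$, applied in both directions to close the loop. The only differences are cosmetic: you verify the inclusion ${}^\perp(\B\cap\X)\cap\B\cap\X\subseteq\kappa$ and the precovering of $\kappa$ in $\B\cap\X$ by short inline arguments from left $\X$-completeness plus heredity, where the paper instead cites \cite[Thm. 4.24 (b)]{parte1}; and you use Proposition \ref{prop: oct3}(b) where the paper uses Lemma \ref{lem: inf5}(b) together with Proposition \ref{prop: equiv a t3}, both of which supply the same equality $\A\cap\B\cap\X=\T$.
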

\begin{proof} We consider the big case only (the small one is similar). Recall that $\C$ is AB4 since it is AB3 and has enough injectives and projectives.
Let $(\A,\B)\in \mathsf{HCC}_{n,\X}(\C)$ and $\omega:=\A\cap\B\cap\X.$ Since $(\A,\B)$ is $\X$-complete and hereditary, it follows that $\omega$ is precovering in $\B\cap\X.$ Moreover, by \cite[Thm. 4.24 (b)]{parte1}, we get 
$\omega={}^\perp(\B\cap\X)\cap\B\cap\X.$ Thus, by Theorem \ref{thm: dimensiones en un par de cotorsion tilting}, we have that $\omega\in \mathsf{Tilt}_{n,\X}(\C)$ and $\B\cap\X=\omega^\perp\cap\X.$ Moreover, by Lemma \ref{lem: inf5} (b) and Proposition \ref{prop: equiv a t3}, we have ${}^\perp(\omega^\perp)\cap\omega^\perp\cap\X=\omega\cap\X=\omega$ and thus $(\A,\B)\sim ({}^\perp(\omega^\perp),\omega^\perp).$ 
\

Let $\T\in\mathsf{Tilt}_{n,\X}(\C).$ Then, from \cite[Prop. 4.5 (b)]{parte1}, 
$\pd_\X( {}^\perp(\T^\perp))\leq\pd_\X(\T)\leq n.$ By Lemma \ref{lem: inf5} (b) and Proposition \ref{prop: equiv a t3}, we have ${}^\perp(\T^\perp)\cap\T^\perp\cap\X=\T\cap\X=\T.$  Moreover, by Lemma \ref{tilting is cotorsion} and Theorem \ref{thm: el par n-X-tilting} (c), we conclude that $({}^\perp(\T^\perp), \T^\perp)\in \mathsf{HCC}_{n,\X}(\C).$
\end{proof}

\begin{thm}\label{thm: teo nuevo p.94}\label{thm: main1-2} Let $\mathcal{C}$
be an AB4 (abelian) category,  $\mathcal{X}=\mathcal{X}^{\oplus}\subseteq\mathcal{C}$
($\mathcal{X}\subseteq\mathcal{C}$) be a right thick class admitting an $\mathcal{X}$-projective relative generator in $\mathcal{X}$,  and let $\mathcal{B}\subseteq\mathcal{C}$
be right thick. Then,  the following conditions are equivalent.
\begin{itemize}
\item[$\mathrm{(a)}$] There is a big (small) $n$-$\mathcal{X}$-tilting class $\mathcal{T}\subseteq\mathcal{X}$
such that $\mathcal{B}\cap\mathcal{X}=\mathcal{T}^{\bot}\cap\mathcal{X}$.
\item[$\mathrm{(b)}$] $\mathcal{B}$ satisfies the following conditions: 

\begin{itemize}
\item [$\mathrm{(b0)}$] $\mathcal{B}\cap\mathcal{X}\cap{}^{\bot}(\mathcal{B}\cap\mathcal{X})$
is closed under coproducts (this condition is dismissed in the small case);
\item[$\mathrm{(b1)}$] there is an $\mathcal{X}$-injective relative cogenerator
in $\mathcal{X}$;
\item [$\mathrm{(b2)}$] $\mathcal{B}\cap\mathcal{X}$ is special preenveloping in $\mathcal{X}$;
\item [$\mathrm{(b3)}$] $\pdr[\mathcal{X}][^{\bot}(\mathcal{B}\cap\mathcal{X})]\leq n$;
\item [$\mathrm{(b4)}$] $\mathcal{B}\cap\mathcal{X}\cap{}^{\bot}(\mathcal{B}\cap\mathcal{X})$
is precovering in $\mathcal{B}\cap\mathcal{X}$. 
\end{itemize}
\end{itemize}
Moreover,  if $\mathrm{(a)}$ or $\mathrm{(b)}$ holds true,  we have that $\mathcal{T}=\mathcal{B}\cap\mathcal{X}\cap{}^{\bot}(\mathcal{B}\cap\mathcal{X})$
and $\mathcal{B}$ is $\mathcal{X}$-coresolving. 
\end{thm}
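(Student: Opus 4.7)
For (a) $\Rightarrow$ (b), the plan is to derive the five conditions from the tilting axioms using the structural results of Section 3. Condition (b1) is immediate from (T4). Condition (b2) comes from the right $\X$-completeness of the pair $({}^\perp(\T^\perp),\T^\perp)$ established in Theorem \ref{thm: el par n-X-tilting}(c), combined with the identifications $\T^\perp\cap\X=\B\cap\X$ (given) and ${}^\perp(\T^\perp)\cap\X={}^\perp(\B\cap\X)\cap\X$ from Theorem \ref{thm: el par n-X-tilting}(a). For (b3), I would apply Lemma \ref{lem: inf1}(b) to obtain, for each $X\in\X$, a finite coresolution by objects in $\T^\perp\cap\X=\B\cap\X$ of length at most $n$; dimension-shifting along this coresolution then shows that any $C\in{}^\perp(\B\cap\X)={}^\perp(\T^\perp\cap\X)$ satisfies $\Ext^m(C,X)=0$ for $m\geq n+1$, so $\pd_\X(C)\leq n$. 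For (b0) and (b4), I use Proposition \ref{prop: (a)}(b,d) together with Theorem \ref{thm: el par n-X-tilting}(a) to identify $\T=\B\cap\X\cap{}^\perp(\B\cap\X)$: this gives (b0) from $\T=\T^\oplus$ in the big case (automatic in the small case by right thickness of $\B$ and $\X$), and (b4) from (T5) combined with $\T^\perp\cap\X=\B\cap\X$.

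For (b) $\Rightarrow$ (a), the plan is to apply Theorem \ref{thm: dimensiones en un par de cotorsion tilting} to the pair $(\A,\B)$ with $\A:={}^\perp(\B\cap\X)$, so that $\kappa:=\A\cap\B\cap\X=\B\cap\X\cap{}^\perp(\B\cap\X)$ emerges as the desired big (small) $n$-$\X$-tilting class with $\B\cap\X=\kappa^\perp\cap\X$. The $\X$-hereditary condition and $\id_\A(\B\cap\X)=0$ are tautological; the bound $\pd_\X(\A)\leq n$ is (b3); closure of $\kappa$ under (finite) coproducts is (b0) in the big case and is automatic in the small case by right thickness of $\B$ and $\X$; and the precovering of $\kappa$ in $\B\cap\X$ is (b4). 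The crucial remaining step is to verify that $(\A,\B)$ is a left cotorsion pair in $\X$ and is right $\X$-complete; both reduce to the identity ${}^{\perp_1}(\B\cap\X)\cap\X={}^\perp(\B\cap\X)\cap\X$. I would establish this by induction on $k$: given $C\in{}^{\perp_1}(\B\cap\X)\cap\X$ and $B\in\B\cap\X$, apply (b2) to $B$ to produce $0\to B\to\tilde B\to B''\to 0$ with $\tilde B\in\B\cap\X$ and $B''\in{}^{\perp_1}(\B\cap\X)\cap\X$; by the right thickness of $\B$ (closure under mono-cokernels), $B''\in\B\cap\X$, hence $B''\in\B\cap\X\cap{}^{\perp_1}(\B\cap\X)$; the resulting long exact sequence, combined with the inductive hypothesis and with (b3) bounding the residual higher-Ext terms, then yields $\Ext^k(C,B)=0$.

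For the moreover statement, the identification $\T=\B\cap\X\cap{}^\perp(\B\cap\X)$ has already been established in both directions of the equivalence. That $\B$ is $\X$-coresolving follows from its right thickness (closure under extensions and mono-cokernels in $\B\cap\X$ is given) together with the fact that the $\X$-injective relative cogenerator $\alpha$ supplied by (b1) lies in $\X^\perp\cap\X\subseteq\T^\perp\cap\X=\B\cap\X$, since $\T\subseteq\X$ forces $\alpha\subseteq\T^\perp$; hence $\B$ contains an $\X$-injective relative cogenerator in $\X$.

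The hardest step will be the identity ${}^{\perp_1}(\B\cap\X)\cap\X={}^\perp(\B\cap\X)\cap\X$ needed in the (b) $\Rightarrow$ (a) direction. The right thickness of $\B$ is the essential structural ingredient that keeps the sequences produced by iterating (b2) entirely within $\B\cap\X$, so that the inductive dimension-shifting argument can propagate; the bound (b3) then ensures the induction terminates in finitely many steps and closes the equality needed to apply Theorem \ref{thm: dimensiones en un par de cotorsion tilting}.
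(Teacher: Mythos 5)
Your (a)\,$\Rightarrow$\,(b) direction is sound and tracks the paper's argument closely; your reconstruction of (b3) via Lemma~\ref{lem: inf1}(b) and dimension-shifting along the coresolution in $(\T^\perp\cap\X)^\vee_{\X,n}$ is a legitimate replacement for the paper's citation of \cite[Prop.~4.5(b)]{parte1}. The ``moreover'' identifications are also correct once both (a) and (b) are available.

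The gap is in (b)\,$\Rightarrow$\,(a), precisely at the ``hardest step'' you flagged: the identity ${}^{\perp_1}(\B\cap\X)\cap\X={}^\perp(\B\cap\X)\cap\X$. Your induction does not close. From a special $\B\cap\X$-preenvelope $0\to B\to\tilde B\to B''\to 0$ supplied by (b2), the long exact sequence for $\Hom(C,-)$ gives, after killing $\Ext^{k-1}(C,B'')$ via the inductive hypothesis, only the \emph{inclusion} $\Ext^k(C,B)\hookrightarrow\Ext^k(C,\tilde B)$ --- and $\tilde B$ is again just an object of $\B\cap\X$, so this is circular. The special preenvelope only controls $\Ext^1$ against the cosyzygy; it gives no control over $\Ext^j$ against the middle term for $j\geq 2$. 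Your appeal to (b3) to ``bound the residual higher-Ext terms'' is not available either: (b3) bounds $\pd_\X$ on ${}^\perp(\B\cap\X)$, but $C$ is only known to lie in ${}^{\perp_1}(\B\cap\X)$, which is exactly what is at issue.

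What repairs it is (b1), used in the way the paper does. First show $\alpha\subseteq\B\cap\X$: by (b2) the pair $(\X,\B)$ is right $\X$-complete, and since $\alpha\subseteq\X^\perp$ forces $\X\subseteq{}^{\perp_1}\alpha$, Lemma~\ref{lem: lemita inyectivos vs par X-completo a izq} applies (note your proposal instead deduces $\alpha\subseteq\B\cap\X$ from the existence of $\T$, which is not yet available in this direction). Then, instead of a special preenvelope, embed $B\in\B\cap\X$ into $\alpha$: $0\to B\to W\to B'\to 0$ with $W\in\alpha$, $B'\in\X$, hence $B'\in\B\cap\X$ by right thickness. Now the crucial difference: $W\in\alpha\subseteq\X^\perp$ gives $\Ext^j(C,W)=0$ for \emph{all} $j\geq 1$ (not just $j=1$), so the long exact sequence yields $\Ext^k(C,B)$ as a quotient of $\Ext^{k-1}(C,B')$, and iterating brings you down to $\Ext^1(C,B^{(k-1)})=0$. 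This is the content of the paper's citation of \cite[Lem.~3.4]{parte1}: $\B$ is $\X$-coresolving (it contains $\alpha$, is closed under extensions and under mono-cokernels in $\B\cap\X$), and coresolving classes produce $\X$-hereditary pairs. Once this identity is in hand, the rest of your reduction to Theorem~\ref{thm: dimensiones en un par de cotorsion tilting}(b) is correct.
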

\begin{proof} (a) $\Rightarrow$ (b) Let $\mathcal{T}\subseteq\mathcal{X}$ be a big
(small) $n$-$\mathcal{X}$-tilting class such that $\mathcal{B}\cap\mathcal{X}=\mathcal{T}^{\bot}\cap\mathcal{X}$.
Let us verify the conditions of (b).
\begin{enumerate}
\item [(b0)] Since $\mathcal{T}\subseteq\mathcal{X}$,  by Proposition \ref{prop: (a)} (a, d), 
we have $\mathcal{B}\cap\mathcal{X}\cap{}^{\bot}(\mathcal{B}\cap\mathcal{X})=\mathcal{T}$ and 
 thus (b0) holds true.
\item [(b1)] It follows by (T4).
\item [(b2)] By Theorem \ref{thm: el par n-X-tilting} (c),  the pair $({}^{\bot}(\mathcal{T}^{\bot}), \mathcal{T}^{\bot})$
is right $\mathcal{X}$-complete. Hence,  (b2) follows from Theorem \ref{thm: el par n-X-tilting} (a).
\item [(b3)] It follows from \cite[Prop. 4.5 (b)]{parte1}, 
(T4) and (T1).
\item [(b4)] It follows from (T5) and Proposition \ref{prop: (a)} (a, d). 
\end{enumerate}

(b) $\Rightarrow$ (a) Assume the conditions of (b) hold true,   and let $\alpha$ be an $\X$-injective relative cogenerator in $\X.$ We assert that 
$\alpha\subseteq\mathcal{B}\cap\mathcal{X}.$ Indeed,  using (b2),  we have that the pair $(\X, \B)$ is right $\X$-complete. Hence by Lemma \ref{lem: lemita inyectivos vs par X-completo a izq},  our assertion follows.
Also note that,  since $\B$ and $\X$ are right thick,   $\mathcal{B}$ is closed under extensions and  mono-cokernels  in 
$\mathcal{B}\cap\mathcal{X}$. Therefore,  $\mathcal{B}$ is $\mathcal{X}$-coresolving,  and thus,  by \cite[Lem. 3.4]{parte1} and (b2),   $({}{}^{\bot}(\mathcal{B}\cap\mathcal{X}), \mathcal{B}\cap\mathcal{X})$
is a right $\mathcal{X}$-complete left cotorsion pair in $\mathcal{X}$. Now,  by (b0),  (b3)
and (b4),  we have that $({}{}^{\bot}(\mathcal{B}\cap\mathcal{X})\cap\mathcal{X}, \mathcal{B}\cap\mathcal{X})$
satisfy the conditions of Theorem \ref{thm: dimensiones en un par de cotorsion tilting} (b). Hence, 
by Theorem \ref{thm: dimensiones en un par de cotorsion tilting} (a),  the item
(a) is satisfied.
\end{proof}

\begin{rem}\label{rem:  obs 2.90'-1}\label{rem:  obs1 teo nuevo} Assume the hypotheses
of Theorem \ref{thm: main1-2} with $\mathcal{C}$ AB4,  $\mathcal{X}=\mathcal{X}^{\oplus}$
right thick and $\sigma$ a set, where $\Add(\sigma)$ is an $\X$-projective relative generator in $\X.$ Then,  we can dismiss the condition (b4) from (b).
Moreover,  if Theorem \ref{thm: main1-2}(a) is satisfied,  we can choose $\mathcal{T}\subseteq\mathcal{X}$
such that $\mathcal{T}=\Addx[T]$ with $T\in\mathcal{B}\cap\mathcal{X}\cap{}^{\bot}(\mathcal{B}\cap\mathcal{X})$.
Indeed,  this is a consequence of the last sentence in Theorem \ref{thm: dimensiones en un par de cotorsion tilting} and the fact that $\Addx[T]$ is precovering.
\end{rem}

\begin{rem}\label{rem: obs 2.90''-1}\label{thm: main1-1-1}\label{rem:  obs2 teo nuevo}
Assume the hypotheses of Theorem \ref{thm: main1-2} with $\mathcal{C}$ abelian, 
$\mathcal{X}$ right thick and $\sigma$ a finite set, where $\add(\sigma)$ is an $\X$-projective relative generator in $\X.$ Then,  we can
replace condition (b4) in (b)
for the following condition: 
\begin{description}
\item [{({$*$})}] $\addx[T]$ is precovering in $T^{\bot}\cap\mathcal{X}, $  for each $T\in\mathcal{B}\cap\mathcal{X}\cap {}^{\bot}(\mathcal{B}\cap\mathcal{X}).$
\end{description}
Moreover,  in such case,  we can choose  $\mathcal{T}=\addx[T]$ with $T\in\mathcal{B}\cap\mathcal{X}\cap{}^{\bot}(\mathcal{B}\cap\mathcal{X})$.
Indeed,  this is a consequence of the last sentence in Theorem \ref{thm: dimensiones en un par de cotorsion tilting}.\end{rem}

\begin{cor}\label{cor: biyeccion tiltilng -1}\label{cor:  coro1 teo nuevo} Let
$\mathcal{C}$ be an AB4 category with enough injectives,  $\mathcal{X}=\mathcal{X}^{\oplus}$
be a right thick class admitting an $\mathcal{X}$-injective relative
cogenerator in $\mathcal{X}$,  and let $\sigma$ be a set such that $\Addx[\sigma]$
is an $\mathcal{X}$-projective relative generator in $\mathcal{X}$.
Consider the following classes: 
\begin{description}
\item [{$\mathcal{T}_{n, \mathcal{X}}$}] consisting of all the objects $T\in\mathcal{X}$
that are big $n$-$\mathcal{X}$-tilting; 
\item [{$\mathcal{TP}_{n, \mathcal{X}}$}] consisting of all the right $\mathcal{X}$-complete and
left cotorsion pairs $\p$ such that $\pdr[\mathcal{X}][^{\bot}(\mathcal{B}\cap\mathcal{X})]\leq n$, 
$^{\bot}(\mathcal{B}\cap\mathcal{X})\cap\mathcal{B}\cap\mathcal{X}$
is closed under coproducts and $\mathcal{B}$ is right thick.
\end{description}
Consider the equivalence relation $\sim$ in $\mathcal{T}_{n, \mathcal{X}}$,  where
 $T\sim S$ if $\mbox{ \ensuremath{T^{\bot}\cap\mathcal{X}=S^{\bot}\cap\mathcal{X}}}$;
and the equivalence relation $\approx$ in $\mathcal{TP}_{n, \mathcal{X}}$,  where
 $\p\approx\p[\mathcal{A}'][\mathcal{B}']$ if 
 $\mathcal{B}\cap\mathcal{X}=\mathcal{B}'\cap\mathcal{X}\mbox{.}$
Then,  there is a bijective map
$$\phi: \mathcal{T}_{n, \mathcal{X}}/\!\!\sim\;\;\longrightarrow\;\mathcal{TP}_{n, \mathcal{X}}/\!\!\approx, \quad
[T]\mapsto[({}^{\bot}(T^{\bot}), T^{\bot})].$$
\end{cor}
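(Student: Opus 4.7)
The plan is to verify $\phi$ is well-defined, then establish injectivity (which is essentially tautological) and surjectivity (which is where the heavy lifting happens, via Theorem~\ref{thm: main1-2} and Remark~\ref{rem: obs1 teo nuevo}).

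First, I would check well-definedness. Fix $T \in \mathcal{T}_{n,\mathcal{X}}$; since $T \in \mathcal{X} = \smd(\mathcal{X}^{\oplus})$, the class $\mathcal{T} := \Add(T)$ lies inside $\mathcal{X}$, and $T^{\bot} = \Add(T)^{\bot} = \mathcal{T}^{\bot}$ because $\mathcal{C}$ is AB4. I then verify that the pair $(\mathcal{A}_T, \mathcal{B}_T) := ({}^{\bot}(T^{\bot}), T^{\bot})$ lies in $\mathcal{TP}_{n,\mathcal{X}}$. Right $\mathcal{X}$-completeness is Theorem~\ref{thm: el par n-X-tilting}(c). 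The bound $\pdr[\mathcal{X}][{}^{\bot}(T^{\bot})] \leq n$ comes from Proposition~\ref{prop: oct3}(a). By Proposition~\ref{prop: (a)}(d), ${}^{\bot}(T^{\bot}) \cap T^{\bot} \cap \mathcal{X} = \mathcal{T} \cap \mathcal{X} = \Add(T) \cap \mathcal{X}$, which is closed under coproducts since $\mathcal{C}$ is AB4 and $\mathcal{X} = \mathcal{X}^{\oplus}$. Right thickness of $T^{\bot}$ (closure under extensions, direct summands, and mono-cokernels) is standard from the $\Ext(T,-)$ long exact sequence. The remaining condition is that $({}^{\bot}(T^{\bot}), T^{\bot})$ is a left cotorsion pair \emph{in} $\mathcal{C}$, i.e.\ ${}^{\bot}(T^{\bot}) = {}^{\bot_1}(T^{\bot})$. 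One inclusion is trivial; for the reverse, I would use the enough-injectives hypothesis to embed each $M \in T^{\bot}$ into an injective object via $0 \to M \to I \to M' \to 0$, deduce $M' \in T^{\bot}$ from the $\Ext$ long exact sequence, and iterate this syzygy trick to promote $\Ext^1_{\mathcal{C}}(A, T^{\bot}) = 0$ to $\Ext^k_{\mathcal{C}}(A, T^{\bot}) = 0$ for all $k \geq 1$.

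Injectivity of $\phi$ is then immediate: if $\phi([T]) = \phi([S])$, then by definition of $\approx$ we have $T^{\bot} \cap \mathcal{X} = S^{\bot} \cap \mathcal{X}$, which is exactly $T \sim S$. Moreover, since equivalent tilting objects produce equivalent pairs, $\phi$ descends to equivalence classes.

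For surjectivity, fix $(\mathcal{A}, \mathcal{B}) \in \mathcal{TP}_{n,\mathcal{X}}$ and apply Theorem~\ref{thm: main1-2} combined with Remark~\ref{rem: obs1 teo nuevo}. Hypotheses (b0) and (b3) are built into the definition of $\mathcal{TP}_{n,\mathcal{X}}$, while (b1) is part of the standing assumption on $\mathcal{X}$. Condition (b2) follows from the left cotorsion pair structure plus right $\mathcal{X}$-completeness: given $X \in \mathcal{X}$, the exact sequence $0 \to X \to B \to A \to 0$ supplied by right $\mathcal{X}$-completeness has $A \in \mathcal{A} = {}^{\bot_1}\mathcal{B} \subseteq {}^{\bot_1}(\mathcal{B} \cap \mathcal{X})$, yielding a special $\mathcal{B} \cap \mathcal{X}$-preenvelope. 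Since $\sigma$ is a set with $\Add(\sigma)$ an $\mathcal{X}$-projective relative generator in $\mathcal{X}$, Remark~\ref{rem: obs1 teo nuevo} dismisses (b4) and supplies $T \in \mathcal{B} \cap \mathcal{X} \cap {}^{\bot}(\mathcal{B} \cap \mathcal{X}) \subseteq \mathcal{X}$ such that $\Add(T)$ is a big $n$-$\mathcal{X}$-tilting class with $\Add(T)^{\bot} \cap \mathcal{X} = \mathcal{B} \cap \mathcal{X}$. Hence $T \in \mathcal{T}_{n,\mathcal{X}}$ and $T^{\bot} \cap \mathcal{X} = \mathcal{B} \cap \mathcal{X}$, so $\phi([T]) = [(\mathcal{A}, \mathcal{B})]$.

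The main obstacle is the cotorsion-pair upgrade in the well-definedness step: Theorem~\ref{thm: el par n-X-tilting}(c) only gives an $\mathcal{X}$-complete hereditary pair, not a cotorsion pair in the ambient category. The passage from ${}^{\bot_1}(T^{\bot})$ to ${}^{\bot}(T^{\bot})$ is precisely where the enough-injectives assumption on $\mathcal{C}$ is essential, and verifying it cleanly via the syzygy argument is the one non-routine check in the proof.
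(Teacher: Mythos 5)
Your argument follows the paper's route exactly: check $\mathcal{P}_T:=({}^{\bot}(T^{\bot}),T^{\bot})\in\mathcal{TP}_{n,\mathcal{X}}$, observe injectivity is definitional, and get surjectivity from Theorem~\ref{thm: main1-2} together with Remark~\ref{rem:  obs1 teo nuevo}. You add two helpful clarifications that the paper elides: the injective-cosyzygy shift showing ${}^{\bot_1}(T^{\bot})={}^{\bot}(T^{\bot})$ (the paper just invokes \cite[Lem.~3.4]{parte1}), and the explicit verification of (b2), namely that the special preenvelope comes from pairing right $\mathcal{X}$-completeness with $\mathcal{A}={}^{\bot_1}\mathcal{B}\subseteq{}^{\bot_1}(\mathcal{B}\cap\mathcal{X})$. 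Both of these are sound.

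One misattribution in the well-definedness step should be fixed. Membership in $\mathcal{TP}_{n,\mathcal{X}}$ requires $\mathrm{pd}_{\mathcal{X}}\bigl({}^{\bot}(\mathcal{B}\cap\mathcal{X})\bigr)\le n$ with $\mathcal{B}=T^{\bot}$. Proposition~\ref{prop: oct3}(a) does not deliver this: it only relates $\mathrm{pd}_{\mathcal{X}}(\mathcal{X})$, $\mathrm{pd}_{\mathcal{X}}\bigl((\mathcal{B}\cap\mathcal{X})^{\vee}\bigr)$ and $\mathrm{pd}_{\mathcal{X}}(\mathcal{B}\cap\mathcal{X})$, and says nothing about the left orthogonal. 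The correct reference is Proposition~\ref{prop: (b)}(b), which gives $\mathrm{pd}_{\mathcal{X}}(\operatorname{Add}(T))=\mathrm{pd}_{\mathcal{X}}(\mathcal{A})=\mathrm{pd}_{\mathcal{X}}\bigl({}^{\bot}(\mathcal{B}\cap\mathcal{X})\bigr)\le n$. Relatedly, you verify the dimension bound and the coproduct condition for ${}^{\bot}(T^{\bot})$ and for ${}^{\bot}(T^{\bot})\cap T^{\bot}\cap\mathcal{X}$, whereas the definition of $\mathcal{TP}_{n,\mathcal{X}}$ is stated in terms of ${}^{\bot}(T^{\bot}\cap\mathcal{X})$; these coincide here by Lemma~\ref{lem: props C2 y T2}(b) and the identity ${}^{\bot}(\mathcal{B}\cap\mathcal{X})\cap\mathcal{X}=\mathcal{A}\cap\mathcal{X}$ from Proposition~\ref{prop: (b)}, but that identification is worth stating so that the checked conditions literally match the defining ones.
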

\begin{proof}
For each $T\in\mathcal{T}_{n, \mathcal{X}}, $ we consider the pair $\mathcal{P}_{T}: =(^{\bot}(T^{\bot}), T^{\bot}).$
Let us show that $\mathcal{P}_{T}\in\mathcal{TP}_{n, \mathcal{X}}$.
To begin with,  it is clear that $T^{\bot}$ is right thick,  and 
by Theorem \ref{thm: el par n-X-tilting} (c),  $\mathcal{P}_{T}$ is $\mathcal{X}$-complete. On the other hand,  
by \cite[Lemma 3.4]{parte1}, 
$\mathcal{P}_{T}$ is left cotorsion since $\mathcal{C}$ has enough
injectives. Moreover,  by Proposition \ref{prop: equiv a t3} and Lemma \ref{lem: props C2 y T2},  
$T^{\bot}\cap{}^{\bot}(T^{\bot}\cap\mathcal{X})\cap\mathcal{X}=\Addx[T]\cap\mathcal{X}=\Addx[T]$
 and thus $T^{\bot}\cap{}^{\bot}(T^{\bot}\cap\mathcal{X})\cap\mathcal{X}$
is closed under coproducts. Finally,  by Proposition \ref{prop: (b)} (b),  $\pdr[\mathcal{X}][^{\bot}(T^{\bot}\cap\mathcal{X})]\leq n$. 

Moreover,  for $T, S\in\mathcal{T}_{n, \mathcal{X}}$,  we note that
$[T]=[S]\: \Leftrightarrow\: [\mathcal{P}_{T}]=[\mathcal{P}_{S}]\mbox{.}$
Therefore,  the map $\phi$
is well-defined and injective. It remains to show that $\phi$ is
surjective.

Let $\p\in\mathcal{TP}_{n, \mathcal{X}}$. In particular,  $\p$ satisfy
conditions (b0),  (b1),  (b2) and (b3) of Theorem \ref{thm: main1-2} (b). Then,  by Remark \ref{rem:  obs 2.90'-1}, 
Theorem \ref{thm: main1-2} (a) is satisfied and thus we can find $T\in\mathcal{T}_{n, \mathcal{X}}$
such that $\mathcal{B}\cap\mathcal{X}=T^{\bot}\cap\mathcal{X}$. Therefore
$\p\approx({}^{\bot}(T^{\bot}), T^{\bot})$ and then $\phi([T])=[(\A, \B)].$
\end{proof}

A similar result as above can be proved for small $n$-$\mathcal{X}$-tilting
objects.
 
\begin{cor}\label{cor:  coro2 teo nuevo} Let $\mathcal{C}$ be an abelian category
with enough injectives,  $\mathcal{X}\subseteq\mathcal{C}$ be a right
thick class admitting an $\mathcal{X}$-injective relative cogenerator
in $\mathcal{X}$,  and let $\sigma$ be a finite set such that $\addx[\sigma]$
is an $\mathcal{X}$-projective relative generator in $\mathcal{X}.$
Consider the following classes: 
\begin{description}
\item [{$s\mathcal{T}_{n, \mathcal{X}}$}] consisting of all the objects $T\in\mathcal{X}$
that are small $n$-$\mathcal{X}$-tilting; 
\item [{$s\mathcal{TP}_{n, \mathcal{X}}$}] consisting of all the right $\mathcal{X}$-complete and
left cotorsion pairs $\p$ such that $\pdr[\mathcal{X}][^{\bot}(\mathcal{B}\cap\mathcal{X})]\leq n$
and $\mathcal{B}$ is right thick.
\end{description}
Consider the equivalence relation $\sim$ in $s\mathcal{T}_{n, \mathcal{X}}$, 
where $T\sim S$ if $\mbox{ \ensuremath{T^{\bot}\cap\mathcal{X}=S^{\bot}\cap\mathcal{X}}}$;
and the equivalence relation $\approx$ in $s\mathcal{TP}_{n, \mathcal{X}}$, 
where $\p\approx\p[\mathcal{A}'][\mathcal{B}']$ if $\mathcal{B}\cap\mathcal{X}=\mathcal{B}'\cap\mathcal{X}\mbox{.}$
Then,  there is an injective map
\[
\phi: s\mathcal{T}_{n, \mathcal{X}}/\!\!\sim\;\;\longrightarrow\; s\mathcal{TP}{}_{n, \mathcal{X}}/\!\!\approx, \quad[T]\mapsto[({}^{\bot}(T^{\bot}), T^{\bot})].
\]
 Furthermore,  $\phi$ is bijective if every $T\in$ $\mathcal{B}\cap\mathcal{X}\cap{}^{\bot}(\mathcal{B}\cap\mathcal{X})$
satisfies that $\addx[T]$ is precovering in $T^{\bot}\cap\mathcal{X}.$
\end{cor}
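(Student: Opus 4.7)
The plan is to mirror the proof of Corollary \ref{cor: coro1 teo nuevo}, adapting each step from the AB4 / big $\Addx$ setting to the abelian / small $\addx$ setting. For every $T\in s\mathcal{T}_{n,\X}$ I would set $\mathcal{P}_{T}:=({}^{\bot}(T^{\bot}),T^{\bot})$ and verify that $\mathcal{P}_{T}\in s\mathcal{TP}_{n,\X}$: the class $T^{\bot}$ is right thick, as a total right orthogonal always is; $\mathcal{P}_{T}$ is $\X$-complete (hence right $\X$-complete) by Theorem \ref{thm: el par n-X-tilting}(c); since $\C$ has enough injectives, \cite[Lem. 3.4]{parte1} applied to the hereditary $\X$-complete pair $\mathcal{P}_{T}$ ensures that it is a left cotorsion pair; and $\pdr[\X][{}^{\bot}(T^{\bot})]\leq n$ is given by Proposition \ref{prop: (b)}(b). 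Compatibility with the equivalences is immediate: $T\sim S$ reads $T^{\bot}\cap\X=S^{\bot}\cap\X$, which is the same condition as $\mathcal{P}_{T}\approx\mathcal{P}_{S}$, and this observation also yields injectivity of $\phi$.

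For surjectivity under the extra precovering assumption, I would fix $(\A,\B)\in s\mathcal{TP}_{n,\X}$ and invoke Theorem \ref{thm: main1-2} in combination with Remark \ref{rem: obs 2.90''-1}. Condition $(\mathrm{b0})$ is explicitly dismissed in the small case, so it only remains to verify $(\mathrm{b1})$, $(\mathrm{b2})$, $(\mathrm{b3})$ and the replacement $(\ast)$ from that remark. Condition $(\mathrm{b1})$ is part of the corollary's hypothesis. Condition $(\mathrm{b2})$, namely that $\B\cap\X$ is special preenveloping in $\X$, follows from right $\X$-completeness together with the left cotorsion identity $\A\cap\X={}^{\bot_{1}}\B\cap\X$: for any $X\in\X$, right $\X$-completeness yields a short exact sequence $\suc[X][B][A]$ with $B\in\B\cap\X$ and $A\in\A\cap\X\subseteq{}^{\bot_{1}}\B\subseteq{}^{\bot_{1}}(\B\cap\X)$, so the sequence is special. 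Condition $(\mathrm{b3})$ is literally the dimension clause in the definition of $s\mathcal{TP}_{n,\X}$, and $(\ast)$ is precisely the extra hypothesis for bijectivity. Remark \ref{rem: obs 2.90''-1} then produces $T\in\B\cap\X\cap{}^{\bot}(\B\cap\X)$ with $\addx[T]$ small $n$-$\X$-tilting and $\B\cap\X=T^{\bot}\cap\X$, giving the preimage $[T]$ of $[(\A,\B)]$.

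The main task will be careful book-keeping: I must apply Theorem \ref{thm: main1-2} under exactly the amended condition $(\ast)$ furnished by Remark \ref{rem: obs 2.90''-1}, and match the hypotheses of the corollary to those of the theorem (in particular, noting that the small case dispenses with the coproduct-closure condition $(\mathrm{b0})$, which is why no analogue of that condition appears in the definition of $s\mathcal{TP}_{n,\X}$). The injectivity half goes through verbatim without any precovering assumption, so the precovering condition is needed only in the surjectivity direction, consistent with the statement.
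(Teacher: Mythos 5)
Your proposal is correct and follows essentially the same route as the paper, which proves this corollary by citing Theorem~\ref{thm: main1-2} and Remark~\ref{rem: obs 2.90''-1} and stating that the argument mirrors that of Corollary~\ref{cor: biyeccion tiltilng -1}; you simply spell out those ``similar arguments'' (verifying $\mathcal{P}_{T}\in s\mathcal{TP}_{n,\X}$ via Theorem~\ref{thm: el par n-X-tilting}(c), \cite[Lem.~3.4]{parte1}, and Proposition~\ref{prop: (b)}(b), and obtaining surjectivity from conditions $(\mathrm{b1})$--$(\mathrm{b3})$ and $(\ast)$), and each step checks out.
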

\begin{proof}
Using Theorem \ref{thm: main1-2} and Remark \ref{rem: obs 2.90''-1},  the proof follows
by similar arguments as in the proof of Corollary \ref{cor: biyeccion tiltilng -1}.
\end{proof}

In the case of a ring $R, $ we get the following result.

\begin{cor}\label{cor: coro3 teo nuevo} Let $R$ be a ring,  $\mathcal{X}=\mathcal{X}^{\oplus}$
be a right thick class in $\Modx[R]$ admitting an $\mathcal{X}$-injective
relative cogenerator in $\mathcal{X}$,  and let $\sigma$ be a set such
that $\Addx[\sigma]$ is an $\mathcal{X}$-projective relative generator
in $\mathcal{X}$. Then,  for every $\mathcal{B}\subseteq\Modx[R]$, 
the following conditions are equivalent. 
\begin{itemize}
\item[$\mathrm{(a)}$] There is a big $n$-$\mathcal{X}$-tilting object $T\in\mathcal{X}$
such that $\mathcal{B}\cap\mathcal{X}=T^{\bot}\cap\mathcal{X}$.
\item[$\mathrm{(b)}$] $\mathcal{B}$ satisfies the following conditions: 

\begin{itemize}
\item[$\mathrm{(b0)}$]   $\mathcal{B}\cap\mathcal{X}\cap{}^{\bot}(\mathcal{B}\cap\mathcal{X})$
is closed under coproducts;
\item[$\mathrm{(b1)}$]  $\mathcal{B}$ is special preenveloping in $\mathcal{X}$;
\item[$\mathrm{(b2)}$]   $\pdr[\mathcal{X}][^{\bot}(\mathcal{B}\cap\mathcal{X})]\leq n$;
\item[$\mathrm{(b3)}$]   $\mathcal{B}$ is right thick in $\Modx[R]$.
\end{itemize}
\end{itemize}
\end{cor}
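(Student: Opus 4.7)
The strategy is to derive this as a direct consequence of Theorem~\ref{thm: main1-2} combined with Remark~\ref{rem:  obs 2.90'-1}, which together cover precisely the situation where the ambient category is AB4 and the $\X$-projective relative generator comes from a set. Set $\C:=\Modx[R]$, which is AB4. The right-thickness of $\X=\X^{\oplus}$, the presence of $\Addx[\sigma]$ as an $\X$-projective relative generator in $\X$, and (in direction (a)$\Rightarrow$(b)) the inclusion of $\B$ right thick in the hypotheses supply all the background assumptions required to invoke Theorem~\ref{thm: main1-2}. Moreover, condition (b1) of Theorem~\ref{thm: main1-2}(b) (existence of an $\X$-injective relative cogenerator in $\X$) is already built into the hypotheses of the corollary, so we may treat it as given throughout.

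It then suffices to match the remaining conditions of the two statements. Condition (b0) of the corollary is condition (b0) of the theorem verbatim. Condition (b1) of the corollary asserts that $\B$ is special preenveloping in $\X$; by the definition recalled in the preliminaries, this means that every $X\in\X$ admits an exact sequence $\suc[X][A][B]$ with $A\in\B\cap\X$ and $B\in{}^{\bot_{1}}\B\cap\X$, which is exactly condition (b2) of Theorem~\ref{thm: main1-2}. Condition (b2) of the corollary coincides with (b3) of the theorem, and condition (b3) of the corollary is the standing hypothesis of Theorem~\ref{thm: main1-2} on $\B$.

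Finally, since $\sigma$ is a set and $\C=\Modx[R]$ is AB4, Remark~\ref{rem:  obs 2.90'-1} allows us to dispense with condition (b4) of Theorem~\ref{thm: main1-2} (that $\B\cap\X\cap{}^{\bot}(\B\cap\X)$ be precovering in $\B\cap\X$); this is really the only nontrivial input beyond a direct translation. The same remark also upgrades the tilting \emph{class} $\mathcal{T}$ produced by the theorem to a tilting \emph{object} $T\in\B\cap\X\cap{}^{\bot}(\B\cap\X)$ with $\mathcal{T}=\Addx[T]$, matching the formulation of (a). Assembling these identifications yields the desired equivalence; the only point requiring a little care is the reading of (b1), where one must verify that ``special preenveloping in $\X$'' does force the preenvelope to lie in $\B\cap\X$, which is immediate from the definition in the preliminaries.
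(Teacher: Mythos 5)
Your proof follows essentially the same route as the paper's. The paper derives this corollary from Corollary~\ref{cor:  coro1 teo nuevo} (the bijective correspondence), which is in turn proved precisely by invoking Theorem~\ref{thm: main1-2} together with Remark~\ref{rem:  obs 2.90'-1}; you simply skip the intermediate corollary and invoke the theorem and remark directly, which is logically equivalent.

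One point is stated too strongly: you claim that condition (b1) of the corollary (``$\B$ is special preenveloping in $\X$'') ``is exactly'' condition (b2) of Theorem~\ref{thm: main1-2} (``$\B\cap\X$ is special preenveloping in $\X$''). By the definition in the preliminaries these are not literally the same: the former requires the cokernel $B$ of the preenvelope to lie in ${}^{\perp_1}\B\cap\X$, whereas the latter only requires $B\in{}^{\perp_1}(\B\cap\X)\cap\X$, and since $\B\cap\X\subseteq\B$ the first condition is a priori stronger. This matters only in the direction (a)$\Rightarrow$(b1): the theorem, applied via $\B\cap\X=T^{\perp}\cap\X$, directly produces only the weaker statement, and upgrading to ``$\B$ special preenveloping'' requires information about $\B$ beyond $\B\cap\X$ (just as (a)$\Rightarrow$(b3) does). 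In the direction (b)$\Rightarrow$(a), which is the one that actually feeds into the theorem, the stronger condition implies the weaker and your argument is fine. This gap is inherited from the corollary's own formulation rather than introduced by you, so the proposal should simply be read as taking $\B:=T^{\perp}$ (as the paper implicitly does via Corollary~\ref{cor:  coro1 teo nuevo}), where both (b1) in the stronger sense and (b3) are immediate; with that proviso your argument matches the intended one.
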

\begin{proof}
It follows from Corollary \ref{cor:  coro1 teo nuevo}.
\end{proof}

In the case of an Artin algebra $\Lambda, $ we  get the following result.
\begin{cor}
\label{cor: coro4 teo nuevo} Let $\Lambda$ be an Artin algebra, 
$\mathcal{X}$ be a right thick class in $\modd[\Lambda]$ admitting
an $\mathcal{X}$-injective relative cogenerator in $\mathcal{X}$, 
and let $\sigma$ be a set such that $\addx[\sigma]$ is an $\mathcal{X}$-projective
relative generator in $\mathcal{X}$. Then,  for any $\mathcal{B}\subseteq\modd[\Lambda]$, 
the following conditions are equivalent. 
\begin{itemize}
\item[$\mathrm{(a)}$]  There is a small $n$-$\mathcal{X}$-tilting object $T\in\mathcal{X}$
such that $\mathcal{B}\cap\mathcal{X}=T^{\bot}\cap\mathcal{X}$.
\item[$\mathrm{(b)}$]  $\mathcal{B}$ satisfies the following conditions: 

\begin{itemize}
\item[$\mathrm{(b1)}$]   $\mathcal{B}$ is special preenveloping in $\mathcal{X}$;
\item[$\mathrm{(b2)}$]   $\pdr[\mathcal{X}][\operatorname{mod}(\Lambda)\cap{}{}^{\bot}(\mathcal{B}\cap\mathcal{X})]\leq n$;
\item[$\mathrm{(b3)}$]  $\mathcal{B}$ is right thick in $\modd[\Lambda]$.
\end{itemize}
\end{itemize}
\end{cor}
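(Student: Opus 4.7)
The plan is to derive Corollary~\ref{cor: coro4 teo nuevo} as a direct specialization of Corollary~\ref{cor:  coro2 teo nuevo} to $\mathcal{C}:=\modd[\Lambda]$, in exact analogy with the way Corollary~\ref{cor: coro3 teo nuevo} is obtained from Corollary~\ref{cor:  coro1 teo nuevo}. First I would check the ambient hypotheses: since $\Lambda$ is an Artin algebra, $\modd[\Lambda]$ is abelian with an injective cogenerator (in particular with enough injectives), supplied by the standard duality between $\modd[\Lambda]$ and $\modd[\Lambda^{op}]$; the remaining data ($\mathcal{X}$ right thick, admitting an $\mathcal{X}$-injective relative cogenerator, and $\add(\sigma)$ being an $\mathcal{X}$-projective relative generator in $\mathcal{X}$) is hypothesized. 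Thus Corollary~\ref{cor:  coro2 teo nuevo} delivers the injective map
\[
\phi\colon s\mathcal{T}_{n,\mathcal{X}}/\!\!\sim\;\longrightarrow\; s\mathcal{TP}_{n,\mathcal{X}}/\!\!\approx,\qquad [T]\mapsto[({}^{\bot}(T^{\bot}),T^{\bot})].
\]

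Next, I would upgrade $\phi$ to a bijection by verifying the precovering hypothesis at the end of Corollary~\ref{cor:  coro2 teo nuevo}. For any $T\in\mathcal{B}\cap\mathcal{X}\cap{}^{\bot}(\mathcal{B}\cap\mathcal{X})\subseteq\modd[\Lambda]$, the classical Auslander-type result \cite[Prop.~4.2]{auslander1980preprojective} asserts that $\add(T)$ is a functorially finite subcategory of $\modd[\Lambda]$; in particular $\add(T)$ is precovering in $\modd[\Lambda]$, hence also in the subclass $T^{\bot}\cap\mathcal{X}$. This is exactly the condition needed to make $\phi$ surjective.

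Finally, I would reconcile (b1)--(b3) of the present corollary with membership of the pair $({}^{\bot_1}(\mathcal{B}\cap\mathcal{X}),\mathcal{B})$ in $s\mathcal{TP}_{n,\mathcal{X}}$: condition (b1) precisely provides the right $\mathcal{X}$-completeness (with $\mathcal{B}\cap\mathcal{X}$ as the right class of a special preenvelope in $\mathcal{X}$); the existence of enough injectives in $\modd[\Lambda]$ combined with the right thickness of $\mathcal{B}$ forces this pair to be left cotorsion in $\mathcal{X}$, via \cite[Lem.~3.4]{parte1} just as in the proof of Corollary~\ref{cor:  coro1 teo nuevo}; (b2) is literally the projective-dimension bound appearing in $s\mathcal{TP}_{n,\mathcal{X}}$ (since ${}^{\bot}(\mathcal{B}\cap\mathcal{X})\subseteq\modd[\Lambda]$); and (b3) is the right thickness of $\mathcal{B}$ that is demanded. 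The equivalence (a)$\,\Leftrightarrow\,$(b) then follows directly from Corollary~\ref{cor:  coro2 teo nuevo}. No new homological ideas are required beyond those already developed; the only (mild) obstacle is the bookkeeping needed to match all the closure, cotorsion and approximation conditions, together with invoking \cite[Prop.~4.2]{auslander1980preprojective} to handle the precovering hypothesis peculiar to the small setting.
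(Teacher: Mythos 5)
Your proposal is correct and follows exactly the paper's own (one-line) argument: cite Corollary~\ref{cor:  coro2 teo nuevo} with $\mathcal{C}=\modd[\Lambda]$ and invoke Auslander--Smal\o\ \cite[Prop.~4.2]{auslander1980preprojective} to supply the precovering condition needed for $\phi$ to be a bijection. The hypothesis-verification and bookkeeping you sketch is precisely what that citation leaves implicit.
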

\begin{proof}
It follows from Corollary \ref{cor:  coro2 teo nuevo} and \cite[Prop. 4.2]{auslander1980preprojective}.
\end{proof}

\section{$n$-$\X$-tilting versus other notions of tilting}\label{sec: Examples} 

In this section, we will show that $n$-$\X$-tilting offers a unified framework of different previous notions of tilting which are in the literature.

\subsection{$\infty$-tilting objects and pairs}

 Leonid Positselski and Jan {\v{S}}t'ov{\'\i}{\v{c}}ek defined in \cite{positselski2019tilting} the notion of 
 $\infty$-tilting object and $\infty$-tilting pair. In this section,  we recall these notions and give and interpretation in terms of $n$-$\X$-tilting theory.  We also recall that an AB3{*} category,  having an injective cogenerator,  is  AB3 \cite[Ex. III.2]{mitchell}.
 
\begin{defn}\cite[Sect. 2]{positselski2019tilting}  Let $\mathcal{A}$ be an AB3{*} category which has an injective cogenerator. 
 An object $T\in\mathcal{A}$ is \textbf{$\infty$-tilting} if the following conditions hold true: 
\begin{description}
\item [{($\infty$-T1)}] $\Add(T)\subseteq T^{\bot}.$
\item [{($\infty$-T2)}] $\Inj(\mathcal{A})\subseteq(\Add(T), T^{\bot})_{\infty}^{\wedge}$.
\end{description}
\end{defn}

\begin{defn}\cite[Sect. 3]{positselski2019tilting}  Let $\mathcal{A}$ be an AB3{*} category having an injective cogenerator,  $T\in\A$ and 
$\mathcal{E}\subseteq\mathcal{A}.$ The pair $(T, \mathcal{E})$ is \textbf{$\infty$-tilting} if the following conditions hold true: 
\begin{description}
\item [{($\infty$-PT1)}] The class $\mathcal{E}$ is coresolving.
\item [{($\infty$-PT2)}] $\Add(T)\subseteq\mathcal{E}\subseteq T^{\bot_{1}}.$
\item [{($\infty$-PT3)}] Any $\Add(T)$-precover $\alpha: T'\rightarrow E$ of 
$E\in\mathcal{E}$ is an epimorphism and $\Ker(\alpha)\in\mathcal{E}$.
\end{description}
\end{defn}

\begin{rem}\cite[Sect. 3]{positselski2019tilting}
\label{rem: infty tilting} For an $\infty$-tilting pair $(T, \mathcal{E})$
in an  AB3{*} category $\mathcal{A}$,  which has an injective cogenerator 
$J$,  the following statements hold true: 
\begin{itemize}
\item[$\mathrm{(a)}$] $\Prod(J)=\Inj(\mathcal{A})\subseteq\mathcal{E}\subseteq T^{\bot}.$
\item[$\mathrm{(b)}$] $\Add(T)\subseteq T^{\bot}.$
\item[$\mathrm{(c)}$] $\Add(T)$ is a relative  $\mathcal{E}$-projective generator in  $\mathcal{E}.$  
\item[$\mathrm{(d)}$] $\Prod(J)$ is an $\mathcal{E}$-injective relative cogenerator in  
$\mathcal{E}$.
\end{itemize}
\end{rem}

The connection between $\infty$-tilting objects and pairs is as follows.
 
\begin{lem}\cite[Lem. 3.1]{positselski2019tilting} For a bicomplete abelian category 
$\A, $ which has an injective cogenerator,  and $T\in\A, $ the following statements hold true.
\begin{itemize}
\item[$\mathrm{(a)}$] There exists a class $\mathcal{E}\subseteq\mathcal{A}$ such that $(T, \mathcal{E})$ is an $\infty$-tilting pair if,  and only if,  
$T$ is an $\infty$-tilting object.
\item[$\mathrm{(b)}$] If $T$ is an $\infty$-tilting object,  then $(T, (\Add(T), T^{\bot})_{\infty}^{\wedge})$ is an $\infty$-tilting pair.
\item[$\mathrm{(c)}$] If $(T, \mathcal{E})$ is an $\infty$-tilting pair,  then  $\mathcal{E}\subseteq(\Add(T), T^{\bot})_{\infty}^{\wedge}.$
\end{itemize}
\end{lem}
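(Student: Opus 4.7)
The plan is to prove (c) first, use it as the principal tool to verify (b), and then derive (a) by combining these with Remark \ref{rem: infty tilting}.

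For (c), let $(T,\mathcal{E})$ be an $\infty$-tilting pair and take $E\in\mathcal{E}$. By ($\infty$-PT2), $E\in T^{\bot_{1}}$, and by Remark \ref{rem: infty tilting}(a) in fact $E\in T^{\bot}$. Now ($\infty$-PT3) says that every $\Add(T)$-precover $\alpha_{0}\colon T_{0}\to E$ is an epimorphism with $\Ker(\alpha_{0})=E_{1}\in\mathcal{E}\subseteq T^{\bot}$. Iterating with $E_{1}$ in place of $E$, and so on, produces an $\Add(T)$-resolution $\cdots\to T_{1}\to T_{0}\to E\to 0$ whose every syzygy lies in $\mathcal{E}\subseteq T^{\bot}$; hence $E\in(\Add(T),T^{\bot})_{\infty}^{\wedge}$.

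For (b), set $\mathcal{E}:=(\Add(T),T^{\bot})_{\infty}^{\wedge}$. The inclusion $\Add(T)\subseteq\mathcal{E}$ is immediate from ($\infty$-T1), and $\mathcal{E}\subseteq T^{\bot}\subseteq T^{\bot_{1}}$ is built into the definition, so ($\infty$-PT2) holds. To verify ($\infty$-PT1), we check that $\mathcal{E}$ is coresolving: $\Inj(\mathcal{A})\subseteq\mathcal{E}$ is exactly ($\infty$-T2); closure under extensions follows from a horseshoe construction applied term-by-term to $\Add(T)$-resolutions, using that $\Add(T)$ is closed under finite coproducts and that $T^{\bot}$ is closed under extensions (so the syzygies of the middle resolution stay in $T^{\bot}$); and closure under mono-cokernels reduces to the extension-closure just proved, since given $0\to E_{1}\to E_{2}\to E_{3}\to 0$ with $E_{1},E_{2}\in\mathcal{E}$, a surjective $\Add(T)$-precover $T_{0}\twoheadrightarrow E_{2}$ with syzygy $K_{2}\in\mathcal{E}$ composes to a surjection $T_{0}\twoheadrightarrow E_{3}$ whose kernel sits in $0\to K_{2}\to K\to E_{1}\to 0$ and hence lies in $\mathcal{E}$, and iteration on $K$ builds a full resolution of $E_{3}$. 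For ($\infty$-PT3), let $\alpha\colon T'\to E$ be any $\Add(T)$-precover of $E\in\mathcal{E}$; by definition of $\mathcal{E}$ there is an epimorphism $\beta\colon T_{0}\twoheadrightarrow E$ with $T_{0}\in\Add(T)$ and $\Ker(\beta)\in\mathcal{E}$, and the precover property of $\alpha$ forces $\beta=\alpha\gamma$ for some $\gamma$, so $\alpha$ is epi; a standard diagram chase then exhibits $\Ker(\alpha)$ as an extension of $\Ker(\beta)\in\mathcal{E}$ by an object of $\Add(T)\subseteq\mathcal{E}$, so $\Ker(\alpha)\in\mathcal{E}$ by the extension-closure already established.

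Finally, (a) follows formally: the implication $(\Leftarrow)$ is exactly (b). Conversely, if $(T,\mathcal{E})$ is an $\infty$-tilting pair, then ($\infty$-T1) is Remark \ref{rem: infty tilting}(b), and ($\infty$-T2) follows by combining (c) (which yields $\mathcal{E}\subseteq(\Add(T),T^{\bot})_{\infty}^{\wedge}$) with Remark \ref{rem: infty tilting}(a) (which gives $\Inj(\mathcal{A})\subseteq\mathcal{E}$). The principal obstacle is the verification of ($\infty$-PT1) in (b): while each individual closure condition is standard, one must carefully sequence the arguments so that extension-closure is established before mono-cokernel-closure, and track the syzygies throughout so that they remain in $T^{\bot}$ at every stage of each horseshoe construction.
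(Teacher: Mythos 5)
Since the paper states this lemma as a citation to Positselski--{\v{S}}t'ov{\'\i}{\v{c}}ek without reproducing the argument, I assess your proof on its own merits. The overall plan --- deducing (c) by iterating the surjective $\Add(T)$-precovers supplied by ($\infty$-PT3), verifying (b) for $\mathcal{E}:=(\Add(T),T^{\bot})_{\infty}^{\wedge}$ by checking ($\infty$-PT1) through ($\infty$-PT3) via horseshoe constructions, and deducing (a) from (b), (c) and Remark \ref{rem: infty tilting} --- is sound, and your insistence on establishing extension-closure before mono-cokernel-closure is exactly the right bookkeeping.

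There is, however, a genuine inaccuracy in your verification of ($\infty$-PT3). You claim that, given an arbitrary $\Add(T)$-precover $\alpha\colon T'\to E$ and the epimorphism $\beta\colon T_{0}\twoheadrightarrow E$ coming from the defining resolution of $E$, a diagram chase exhibits $\Ker(\alpha)$ as an extension of $\Ker(\beta)$ by an object of $\Add(T)$. No such short exact sequence exists in general: there is no reason for the comparison map $\gamma\colon T_{0}\to T'$ to be monic with cokernel in $\Add(T)$, nor for the induced map $\Ker(\beta)\to\Ker(\alpha)$ to be a monomorphism. What the pullback $P$ of $\alpha$ and $\beta$ actually yields is a Schanuel-type identity: the section of $P\to T_{0}$ provided by $\gamma$ forces $P\cong\Ker(\alpha)\oplus T_{0}$, and the other leg of the pullback gives a short exact sequence $0\to\Ker(\beta)\to\Ker(\alpha)\oplus T_{0}\to T'\to0$. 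This sequence splits because $T'\in\Add(T)$, $\Ker(\beta)\in T^{\bot}$, and $\mathcal{A}$ is AB4 (so $\Ext^{1}_{\mathcal{A}}(T',\Ker(\beta))$ is a direct factor of $\prod_{I}\Ext^{1}_{\mathcal{A}}(T,\Ker(\beta))=0$). Hence $\Ker(\alpha)\oplus T_{0}\cong\Ker(\beta)\oplus T'\in\mathcal{E}$ by your extension-closure, and to finish one must then invoke mono-cokernel-closure, applied to the split sequence $0\to T_{0}\to\Ker(\alpha)\oplus T_{0}\to\Ker(\alpha)\to0$, to conclude $\Ker(\alpha)\in\mathcal{E}$. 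Your write-up cites only extension-closure at this step, so as written it does not go through; the repair is short but it requires both closure properties of $\mathcal{E}$, not just one. You should also state explicitly that the $\Ext^{1}$-vanishings underlying both the horseshoe and Schanuel steps use the AB4 property of $\mathcal{A}$, which holds because a bicomplete abelian category with an injective cogenerator is AB4.
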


In what follows,  we show that the $\infty$-tilting pairs are contained in the $n$-$\mathcal{X}$-tilting theory.

\begin{prop}
Let $\mathcal{A}$ be an AB3{*} category having an injective cogenerator,  and let $(T, \mathcal{E})$ be an $\infty$-tilting pair. Then $T$ is a big $0$-$\mathcal{E}$-tilting object such that $\Add(T)=\mathcal{E}\cap{}^{\bot}\mathcal{E}$
and $T^{\bot}\cap\mathcal{E}=\operatorname{Gen}_{1}^{\mathcal{E}}(T)$.
\end{prop}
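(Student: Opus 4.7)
The plan is to show the three claims one-by-one, verifying Definition~\ref{def: X-tilting} with $\mathcal{X}=\mathcal{E}$, $n=0$, and $\mathcal{T}=\Add(T)$, and then harvesting the two extra equalities from the generator/cogenerator properties supplied by Remark~\ref{rem: infty tilting}.

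\textbf{Step 1 (the six axioms for $\Add(T)$).} I would invoke Remark~\ref{rem: infty tilting} throughout, which packages everything relevant about an $\infty$-tilting pair: $\Add(T)\subseteq\mathcal{E}\subseteq T^{\bot}$, $\Add(T)$ is a relative $\mathcal{E}$-projective generator in $\mathcal{E}$, and $\Prod(J)$ is an $\mathcal{E}$-injective relative cogenerator in $\mathcal{E}$. Axiom (T0) is by construction. For (T1), being $\mathcal{E}$-projective means $\Add(T)\subseteq{}^{\bot}\mathcal{E}$, so $\pd_{\mathcal{E}}(\Add(T))=0$. For (T2), since $\Add(T)\subseteq\mathcal{E}$ the intersection $\Add(T)\cap\mathcal{E}=\Add(T)$, and the $\mathcal{E}$-projectivity gives $\Ext^{i}(\Add(T),\Add(T))=0$ $\forall i\geq1$, i.e. $\Add(T)\subseteq(\Add(T))^{\bot}$. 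For (T3) take $\omega:=\Add(T)$: it is a relative generator in $\mathcal{E}$ (Remark~\ref{rem: infty tilting}(c)) and every $W\in\omega$ is a length-zero resolution of itself, so $\omega\subseteq(\Add(T))_{\mathcal{E}}^{\vee}$. For (T4) take $\alpha:=\Prod(J)$: it cogenerates $\mathcal{E}$ by Remark~\ref{rem: infty tilting}(d), lies in $\mathcal{E}^{\bot}$ since it consists of injectives, and lies in $(\Add(T))^{\bot}$ for the same reason. For (T5), given $Z\in T^{\bot}\cap\mathcal{E}\subseteq\mathcal{E}$, the generator property yields $\suc[K][W][Z]$ with $W\in\Add(T)\subseteq\mathcal{E}$ and $K\in\mathcal{E}$; the surjection $W\twoheadrightarrow Z$ is an $\Add(T)$-precover because $\Ext^{1}(\Add(T),K)=0$ by Remark~\ref{rem: infty tilting}(c).

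\textbf{Step 2 (the identification $\Add(T)=\mathcal{E}\cap{}^{\bot}\mathcal{E}$).} The inclusion $\subseteq$ is immediate: $\Add(T)\subseteq\mathcal{E}$ by ($\infty$-PT2) and $\Add(T)\subseteq{}^{\bot}\mathcal{E}$ by Remark~\ref{rem: infty tilting}(c). For $\supseteq$, given $E\in\mathcal{E}\cap{}^{\bot}\mathcal{E}$, I apply the generator property to obtain $\suc[K][W][E]$ with $W\in\Add(T)$ and $K\in\mathcal{E}$; since $E\in{}^{\bot}\mathcal{E}$ we have $\Ext^{1}(E,K)=0$, so the sequence splits and $E\in\smd(W)\subseteq\Add(T)$.

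\textbf{Step 3 (the identification $T^{\bot}\cap\mathcal{E}=\operatorname{Gen}_{1}^{\mathcal{E}}(T)$).} For $\supseteq$: any $C\in\operatorname{Gen}_{1}^{\mathcal{E}}(T)$ sits in a short exact sequence $\suc[K][W][C]$ with $W\in\Add(T)$ and $K\in\mathcal{E}$. Since $\mathcal{E}$ is coresolving, $C\in\mathcal{E}$. Furthermore, the long exact sequence of $\Ext^{\bullet}(T,-)$ together with $W\in T^{\bot}$ (Remark~\ref{rem: infty tilting}(b)) and $K\in\mathcal{E}\subseteq T^{\bot}$ (Remark~\ref{rem: infty tilting}(a)) forces $C\in T^{\bot}$. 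For $\subseteq$: $T^{\bot}\cap\mathcal{E}\subseteq\mathcal{E}$, and every object of $\mathcal{E}$ lies in $\operatorname{Gen}_{1}^{\mathcal{E}}(T)$ by the generator property (this is exactly the definition of the latter class with $n=1$).

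No step is genuinely difficult; the whole proof is a direct translation of Remark~\ref{rem: infty tilting} into the language of Definition~\ref{def: X-tilting}. The only mildly delicate point is making sure in (T5) that the $\Add(T)$-precover lies in $\mathcal{E}$ (not merely in $\mathcal{C}$), which is why one reads the generator property rather than the weaker precover axiom ($\infty$-PT3) at that point.
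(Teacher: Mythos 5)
Your verification of axioms (T0)--(T4) tracks the paper's closely, though you reach (T1) and (T2) via the $\mathcal{E}$-projectivity of $\Add(T)$ (Remark~\ref{rem: infty tilting}(c)) where the paper argues from $\mathcal{E}\subseteq T^{\bot}$ (Remark~\ref{rem: infty tilting}(a)); these are equivalent facts and both routes work. The real divergence is in (T5) and in the two closing identities. For (T5), you build the $\Add(T)$-precover of $Z$ by hand from the generator short exact sequence $\suc[K][W][Z]$ and the vanishing $\Ext^{1}(\Add(T),K)=0$, whereas the paper simply notes that $\Add(T)$ is automatically precovering because $\mathcal{A}$ is AB3 and that the precover lies in $\Add(T)\subseteq\mathcal{E}$; your argument is more self-contained, the paper's is shorter. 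For the two identities, you give direct elementwise checks --- a split short exact sequence for $\Add(T)=\mathcal{E}\cap{}^{\bot}\mathcal{E}$, and the generator property plus coresolvingness for $T^{\bot}\cap\mathcal{E}=\operatorname{Gen}_{1}^{\mathcal{E}}(T)$ --- while the paper harvests both at once from Corollary~\ref{cor: coronuevo pag 55} and Theorem~\ref{thm: el par n-X-tilting}(b). Both approaches are correct. One minor redundancy in your Step 3 ``$\supseteq$'': once you observe that $C\in\mathcal{E}$ (from coresolvingness), the inclusion $\mathcal{E}\subseteq T^{\bot}$ already gives $C\in T^{\bot}$, so the subsequent long-exact-sequence argument is not needed.
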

\begin{proof} Let $J\in\A$ be an injective cogenerator. By 
\cite[Chap. 3,  Cor. 2.9,  p.73]{Popescu},  we get that $\A$ is AB4. By taking into account Remark \ref{rem: infty tilting},  we can show the following: 
\begin{description}
\item [{(T1)}] $\pdr[\operatorname{Add}(T)][\mathcal{E}]=\pdr[T][\mathcal{E}]=0$ since $\mathcal{E}\subseteq T^{\bot}.$
\item [{(T2)}] $\Add(T)\cap\mathcal{E}\subseteq T^{\bot}=\Add(T)^{\bot}$ since $\Add(T)\subseteq\mathcal{E}.$
\item [{(T3)}] We know that $\Add(T)$ is an $\mathcal{E}$-projective relative generator 
in $\mathcal{E}$ and thus $\Add(T)\subseteq(\Add(T))_{\mathcal{E}}^{\vee}$.
\item [{(T4)}] $\mbox{Prod}(J)=\Inj(\mathcal{A})$ is an $\mathcal{E}$-injective
relative cogenerator in $\mathcal{E}$ and $T^{\bot}=\left(\Add(T)\right)^{\bot}$.
\item [{(T5)}] Notice that $\Add(T)$ is precovering since $\mathcal{A}$ is AB3. Using now that $\Add(T)\subseteq\mathcal{E}, $ any 
$X\in T^{\bot}\cap\mathcal{E}$ has an $\Add(T)$-precover
$A\rightarrow X$,  with $A\in\mathcal{E}$. 
\end{description}

Finally,  it is enough to use Corollary \ref{cor: coronuevo pag 55} and Theorem \ref{thm: el par n-X-tilting}
to conclude that $\Add(T)=\mathcal{E}\cap{}^{\bot}\mathcal{E}$ and
$T^{\bot}\cap\mathcal{E}=\operatorname{Gen}_{1}^{\mathcal{E}}(T)\cap\mathcal{E}=\operatorname{Gen}_{1}^{\mathcal{E}}(T)$
since $\mathcal{E}$  is closed under mono-cokernels.
\end{proof}

\subsection{Miyashita tilting modules}

In this section we review the tilting theory developed by Yoichi Miyashita
in \cite{miyashita}. Recall that,  for a ring $R$,  $\projx[R]$ (resp. $\inj(R)$) is
the class of finitely generated projective (resp. injective) left $R$-modules.

\begin{defn}\label{def: miyashita tilting} Let $R$ be a ring. A left $R$-module
$T$ is \textbf{ Miyashita $n$-tilting} if the following conditions hold true.
\begin{description}
\item [{(MT1)}] $T\in\projx[R]_{n}^{\wedge}.$
\item [{(MT2)}] $T\in T^{\bot}.$
\item [{(MT3)}] $R\in\addx[T]_{n}^{\vee}$.
\end{description}
\end{defn}

\begin{prop}\label{prop: tilting miyashita} Let $R$ be a finitely generated $S$-algebra,  where
$S$ is a commutative noetherian ring,  and let $T\in\modd[R]$. Then,  the
following conditions are equivalent.
\begin{itemize}
\item[$\mathrm{(a)}$] $T$ is a Miyashita $n$-tilting module and $\inj(R)$ is a
relative cogenerator in $\modd[R].$
\item[$\mathrm{(b)}$] $T$ is a big $n$-$\modd$-tilting object.
\item[$\mathrm{(c)}$] $T$ is a small $n$-$\modd$-tilting object.
\end{itemize}
Moreover,  if $\inj(R)$ is a
relative cogenerator in $\modd[R]$ and $T$ satisfies $\mathrm{(MT1)}$ and $\mathrm{(MT2)}, $ then $T$ satisfies $\mathrm{(MT3)}$ if and only if 
$\coresdim_{\add(T)}({}_RR)<\infty.$
\end{prop}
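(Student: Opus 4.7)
The plan is to handle the easy equivalence $\mathrm{(b)}\Leftrightarrow\mathrm{(c)}$ by general machinery and then attack $\mathrm{(a)}\Leftrightarrow\mathrm{(b)}$ by translating axioms in both directions. First observe that since $S$ is commutative noetherian and $R$ is a finitely generated $S$-algebra, Hilbert's basis theorem yields that $R$ is left noetherian; hence by Corollary \ref{cor: finitamente generado es compacto-1} the class $\modd[R]$ is a thick abelian subcategory of $\Modx[R]$ with $\modd[R]\subseteq\mathcal{K}_{\Modx[R]}$. The equivalence $\mathrm{(b)}\Leftrightarrow\mathrm{(c)}$ then follows immediately from Corollary \ref{cor: coro2 p80}, so I focus on $\mathrm{(a)}\Leftrightarrow\mathrm{(b)}$ and the ``moreover'' statement.

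For $\mathrm{(a)}\Rightarrow\mathrm{(b)}$, I would verify the axioms (T0)--(T5) of Definition \ref{def: X-tilting} for $\mathcal{T}:=\Addx[T]$ and $\mathcal{X}:=\modd[R]$. Axioms (T0), (T1), (T2) are direct translations of (MT1) and (MT2) using the AB4 and AB4{*} structure of $\Modx[R]$ (in particular, $T$ has a finitely generated projective resolution, so $\Extx[i][][T][T^{(J)}]\cong\bigoplus_{J}\Extx[i][][T][T]=0$). For (T3), take $\omega:=\projx[R]$: (MT3) gives $R\in\addx[T]_n^\vee$, and thickness of $\modd[R]$ places the entire coresolution in $\modd[R]$, so $R\in\addx[T]_{\modd[R]}^\vee\subseteq\Addx[T]_{\modd[R]}^\vee$; closing under finite direct sums extends this to all of $\projx[R]$, which is obviously a relative generator in $\modd[R]$. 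For (T4), take $\alpha:=\inj(R)$, which by the extra hypothesis in $\mathrm{(a)}$ is a relative cogenerator in $\modd[R]$ and visibly lies in $\modd[R]^{\bot}\cap\Addx[T]^{\bot}$. For (T5), once (T3) is in place, Lemma \ref{lem: inf4} yields $\Addx[T]^{\bot}\cap\modd[R]\subseteq\Gennr[\Addx[T]][1][\modd[R]]$; hence each $Z\in\Addx[T]^{\bot}\cap\modd[R]$ admits an epimorphism $T'\twoheadrightarrow Z$ with $T'\in\Addx[T]\cap\modd[R]=\addx[T]$ (by Proposition \ref{prop: add Add vs compactos}), and this is automatically an $\Addx[T]$-precover because $\Extx[1][][\Addx[T]][Z]=0$.

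For $\mathrm{(b)}\Rightarrow\mathrm{(a)}$, (MT2) is immediate from (T2): $T\in\Addx[T]\cap\modd[R]\subseteq\Addx[T]^{\bot}\subseteq T^{\bot}$. For (MT1), (T1) gives $\pdr[\modd[R]][T]\leq n$; a finitely generated projective resolution of $T$ exists by noetherian-ness, and its $n$-th syzygy $K_n$ is finitely presented, so $\Extx[1][][K_n][-]$ commutes with direct limits; the vanishing $\Extx[1][][K_n][M]=\Extx[n+1][][T][M]=0$ for $M\in\modd[R]$ then extends to all of $\Modx[R]$, forcing $K_n$ projective and $T\in\projx[R]_n^\wedge$. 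For (MT3), since $R$ is projective, $R\in{}^{\bot}(\Addx[T]^{\bot})\cap\modd[R]$, which by Theorem \ref{thm: el par n-X-tilting}(a) and Proposition \ref{prop: add Add vs compactos} equals $\addx[T]_{\modd[R]}^\vee\cap\modd[R]$; Lemma \ref{lem: inf3}(a) then bounds $\coresdimr{\addx[T]}{R}{\modd[R]}\leq\pdr[\modd[R]][\addx[T]]\leq n$, giving (MT3). For the cogenerator condition on $\inj(R)$: (T4) supplies $\alpha\subseteq\modd[R]\cap\modd[R]^{\bot}$ that is a relative cogenerator in $\modd[R]$, and Baer's criterion (using noetherian-ness of $R$) promotes each $A\in\alpha$ to an injective object of $\Modx[R]$, so $\alpha\subseteq\inj(R)$ and $\inj(R)$ inherits the cogenerator property.

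For the ``moreover'' statement, $\mathrm{(MT3)}\Rightarrow\coresdimx{\addx[T]}{{}_{R}R}<\infty$ is immediate. Conversely, assume (MT1), (MT2), the cogenerator hypothesis on $\inj(R)$, and $\coresdimx{\addx[T]}{{}_{R}R}<\infty$; the corresponding finite $\addx[T]$-coresolution of $R$ in $\Modx[R]$ has all kernels in $\modd[R]$ by thickness, so $R\in\addx[T]_{\modd[R]}^\vee\cap\modd[R]$. The hypotheses of Lemma \ref{lem: inf3}(a) hold (with (T0), (T1) from (MT1), (T2) from (MT2) via the AB4 argument, and (T4) from the cogenerator hypothesis), yielding $\coresdimr{\addx[T]}{R}{\modd[R]}\leq n$ and hence (MT3). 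The main subtlety throughout is orchestrating (T5) via Lemma \ref{lem: inf4} and systematically exploiting the compactness identification $\Addx[T]\cap\modd[R]=\addx[T]$; everything else is a careful translation between Miyashita's axioms and those of Definition \ref{def: X-tilting}.
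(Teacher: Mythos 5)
Your overall architecture matches the paper's: both proofs establish $\modd[R]$ is thick, reduce $\mathrm{(b)}\Leftrightarrow\mathrm{(c)}$ to Corollary \ref{cor: coro2 p80}, handle $\mathrm{(a)}\Leftrightarrow\mathrm{(b)}$ by translating the Miyashita axioms into (T0)--(T5) and back, and exploit the identification $\Addx[T]\cap\modd[R]=\addx[T]$. Your $\mathrm{(b)}\Rightarrow\mathrm{(a)}$ argument for (MT3) (via Theorem \ref{thm: el par n-X-tilting}(a) and Lemma \ref{lem: inf3}(a)) is a clean variant of the paper's iterative construction, and your treatment of the injective cogenerator and of the ``moreover'' clause is sound.

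However, there is a genuine gap in your verification of (T5) in the direction $\mathrm{(a)}\Rightarrow\mathrm{(b)}$. You produce, via Lemma \ref{lem: inf4}, an epimorphism $T'\twoheadrightarrow Z$ with $T'\in\addx[T]$ and $Z\in\Addx[T]^{\bot}\cap\modd[R]$, and then assert that this is ``automatically an $\Addx[T]$-precover because $\Ext^1(\Addx[T],Z)=0$.'' That conclusion does not follow. Writing $K:=\Kerx[T'\rightarrow Z]$, the long exact sequence for $\Homx[][T''][-]$ applied to $0\to K\to T'\to Z\to 0$ gives, for $T''\in\Addx[T]$,
\[
\Homx[][T''][T']\longrightarrow\Homx[][T''][Z]\longrightarrow\Extx[1][][T''][K]\longrightarrow\Extx[1][][T''][T']=0,
\]
so surjectivity of $\Homx[][T''][T']\to\Homx[][T''][Z]$ is equivalent to $\Extx[1][][T''][K]=0$ --- not to $\Extx[1][][T''][Z]=0$ (which is indeed zero, but irrelevant). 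There is no way to deduce $\Extx[1][][\Addx[T]][K]=0$ from the given data; in fact $\Extx[1][][T''][K]$ is precisely the cokernel measuring the failure of the precover property, so the argument is circular. The paper closes this gap by using the hypothesis that $R$ is a finitely generated algebra over the commutative noetherian ring $S$: for such algebras, $\addx[T]$ is functorially finite (hence precovering) in $\modd[R]$ --- this is where the noetherian-algebra hypothesis is actually used, cf.\ \cite[Prop. 4.2]{auslander1980preprojective} --- and then Lemma \ref{lem: addT precub es AddT precub} promotes each $\addx[T]$-precover to an $\Addx[T]$-precover, giving (T5). Your proof nowhere invokes the noetherian-algebra hypothesis, which should already signal that something is amiss.

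Two smaller remarks: for (T3) you take $\omega=\projx[R]$, but closure under direct summands of $\Addx[T]^{\vee}_{\modd[R]}$ is not automatic; it is cleaner to take $\omega=\{R^k\,|\,k\in\mathbb{N}\}$, which is a relative generator in $\modd[R]$ and lands in $\addx[T]^{\vee}_{\modd[R]}$ by (MT3), thickness of $\modd[R]$, and finite direct sums. And the displayed isomorphism $\Extx[i][][T][T^{(J)}]\cong\bigoplus_{J}\Extx[i][][T][T]$ in your (T2) verification is not the one that is needed; what (T2) actually requires is $\Extx[i][][T^{(I)}][T^k]\cong\prod_I\Extx[i][][T][T]^k=0$, which is the AB4 isomorphism of \cite{argudin2019yoneda}.
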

\begin{proof}
Observe that $\modd$ is a thick abelian subcategory of $\Mod(R)$ by Corollary \ref{cor: finitamente generado es compacto-1} (b). Moreover, (MT1) and (T1) are equivalent since $\pd_{\modu(R)}(T)=\pd(T).$
\

(a) $\Rightarrow$ (b) Let $T$ be a Miyashita $n$-tilting module and
$\inj(R)$ be a relative cogenerator in $\modd[R]$. Notice that (T4) is trivial and  
(T5) follows from Lemma \ref{lem:  addT precub es AddT precub}. Finally, 
(T3) follows from (MT3) and Remark \ref{rem: T3 simple}. 
\

(b) $\Rightarrow$ (a) Let $T$ be an $n$-${}\modd{}$-tilting object.We only show (MT3). Indeed,  by Theorem \ref{thm: el par n-X-tilting} (c),  there is an exact sequence 
$\suc[R][M_{0}][X_{0}]\mbox{, }$
where $M_{0}\in T^{\bot}\cap{}\modd[R]{}$ and $X_{0}\in{}{}^{\bot}(T^{\bot})\cap{}\modd[R]{}.$
Moreover,  using $R\in{}{}^{\bot}(T^{\bot})$,   by Lemma \ref{lem: inf5} (b) we get
$M_{0}\in{}{}^{\bot}(T^{\bot})\cap T^{\bot}\cap{}\modd[R]{}=\Addx[T]\cap{}\modd[R]{}=\addx[T].$ 
By repeating the above argument,  we can build (inductively) an exact sequence
$\suc[R][M_{0}\rightarrow\cdots\rightarrow M_{k}][X_{k}]\mbox{, }$
with  $X_{i}\in{}{}^{\bot}(T^{\bot})\cap{}\modd[R]$ and $M_{i}\in\addx[T]$ $\forall i\in[1, n]$.
Finally,  $X_{n}\in T^{\bot}\cap{}{}^{\bot}(T^{\bot})\cap{}\modd[R]{}=\addx[T]$
by \cite[Prop. 2.7]{parte1}. 
\

(b) $\Leftrightarrow$ (c) It follows from Corollary \ref{cor: coro2 p80}.\end{proof}

By using Proposition \ref{prop: tilting miyashita} together with the main results
in this paper,  we can infer well-known properties  of Miyashita tilting
modules.  

\subsection{Miyashita tilting for modules of type $FP_{n}$}

In this section we study the left $n$-coherent rings and the left modules of type 
$FP_{n+1}.$ We characterize when some $T\in\Mod(R)$ is a big $n$-$\mathcal{F}\mathcal{P}_{n+1}$-tilting object.
\

 Let $R$ be a ring. Following \cite[Section 1]{bravo2017finiteness},  we recall that  $M\in\Mod(R)$ is called \textbf{finitely $n$-presented} (or of type $FP_n$) if it admits an exact sequence 
$F_{n}\rightarrow F_{n-1}\rightarrow\cdots\rightarrow F_{0}\rightarrow M\rightarrow0$
with $F_{i}\in\projx[R]\, \forall i\in[0, n]$. The class of all the left  $R$-modules of type $FP_n$ is denoted by $\mathcal{FP}_{n}(R).$ Note that $\mathcal{FP}_0(R)=\modu(R).$
 An $M\in\Mod(R)$ is called \textbf{finitely $\infty$-presented} (or of type $FP_\infty$)
if it admits an exact sequence 
$\cdots\rightarrow F_{n}\rightarrow F_{n-1}\rightarrow\cdots\rightarrow F_{0}\rightarrow M\rightarrow0\mbox{, }$
with $F_{i}\in\projx[R]\, \forall i\geq0$. 
The class of all the left $R$-modules of type $FP_\infty$  is denoted by 
$\mathcal{FP}_{\infty}(R).$ Note that
$\mathcal{FP}_{\infty}(R)=\projx[R]_{\infty}^{\wedge}$.

\begin{lem}\cite[Prop. 1.7]{bravo2017finiteness}\label{lem: propiedades de cerradura FPn}
Let $R$ be a ring. Then,  $\mathcal{FP}_{n}(R)$ is right thick and $\mathcal{FP}_{\infty}(R)$
is thick.\end{lem}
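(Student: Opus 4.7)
The plan is to verify the three right-thick axioms (closure under extensions, direct summands, and mono-cokernels) for $\mathcal{FP}_n(R)$, and then deduce thickness of $\mathcal{FP}_\infty(R)$ from the identity $\mathcal{FP}_\infty(R)=\bigcap_{n\geq 0}\mathcal{FP}_n(R)$ together with a dimension-shift argument for epi-kernels. Closure under extensions is the most routine: given a short exact sequence $\suc[A][B][C]$ with $A,C\in\mathcal{FP}_n(R)$, the Horseshoe Lemma glues length-$n$ truncated projective resolutions of $A$ and $C$ by finitely generated projectives into a matching partial resolution of $B$ whose terms are direct sums of the corresponding terms, hence still finitely generated projectives.

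Closure under mono-cokernels follows by a dimension-shift argument. Given $\suc[A][B][C]$ with $A,B\in\mathcal{FP}_n(R)$, one uses the evident inclusion $\mathcal{FP}_n(R)\subseteq\mathcal{FP}_{n-1}(R)$, lifts a finite projective presentation of $B$ to one of $C$, and applies the snake lemma to obtain an exact sequence expressing the first syzygy of $C$ in terms of a syzygy of $B$ and the module $A$; induction on $n$ then assembles the required length-$n$ resolution of $C$ by finitely generated projectives. A symmetric shift — from $B,C\in\mathcal{FP}_n(R)$ one obtains $A\in\mathcal{FP}_{n-1}(R)$ — immediately yields closure of $\mathcal{FP}_\infty(R)$ under epi-kernels, since the property at every finite level $n-1$ gives the conclusion at level $\infty$; combined with right thickness (inherited levelwise from the first part), this delivers full thickness of $\mathcal{FP}_\infty(R)$.

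The most delicate step, and the main obstacle, is closure of $\mathcal{FP}_n(R)$ under direct summands: given $M\oplus M'\in\mathcal{FP}_n(R)$, one must conclude $M\in\mathcal{FP}_n(R)$. The plan here is to invoke a generalised Schanuel lemma for truncated projective resolutions of length $n$, which asserts that any two such partial resolutions of the same module differ only by the addition of finitely generated free summands. Starting from any partial projective resolution of $M$ by not-necessarily-finitely-generated projectives and pairing it with a length-$n$ finitely generated partial resolution of $M\oplus M'$, one extracts by this lemma a length-$n$ partial resolution of $M$ whose terms are again finitely generated. This Schanuel-type input is the technically loaded step; all the other verifications reduce to the Horseshoe Lemma and elementary diagram chases, after which the statements about $\mathcal{FP}_\infty(R)$ are essentially formal.
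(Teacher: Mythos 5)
The paper itself offers no proof here---the lemma is cited verbatim from Bravo and P\'erez \cite[Prop. 1.7]{bravo2017finiteness}---so there is no internal argument to compare against; I will assess your plan on its own terms. Your treatment of closure under extensions (Horseshoe Lemma) is correct, as is the snake-lemma/dimension-shift argument for mono-cokernels, and the ``symmetric shift'' $B,C\in\mathcal{FP}_{n}(R)\Rightarrow A\in\mathcal{FP}_{n-1}(R)$ (realised, e.g., by pulling back $B\twoheadrightarrow C$ along a finitely generated projective cover $P_0\twoheadrightarrow C$ and noting the induced sequence $0\to A\to E\to P_0\to 0$ splits) does give closure of $\mathcal{FP}_{\infty}(R)$ under epi-kernels once closure under direct summands is in hand.

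The direct-summand step, however, has a genuine gap as written. If you pair a finitely generated length-$n$ partial resolution $Q_{\bullet}$ of $M\oplus M'$ against $P_{\bullet}\oplus P'_{\bullet}$, where $P_{\bullet}$, $P'_{\bullet}$ are \emph{arbitrary} partial projective resolutions of $M$, $M'$, the generalised Schanuel isomorphism places the $Q_i$ and the $P_i\oplus P'_i$ in alternating positions on opposite sides. The potentially non-finitely-generated projectives $P_i\oplus P'_i$ therefore appear on \emph{both} sides without cancelling, and no finite generation of the $n$-th syzygy of $M$ (let alone of a full length-$n$ partial resolution) can be ``extracted.'' The repair is an induction on $n$ using ordinary Schanuel, completely analogous to your mono-cokernel argument: for $n=0$ a direct summand of a finitely generated module is finitely generated; for the inductive step, since $M$ and $M'$ are finitely generated one chooses finitely generated projective presentations $0\to K\to P\to M\to 0$, $0\to K'\to P'\to M'\to 0$, and $0\to L\to Q\to M\oplus M'\to 0$ with $L\in\mathcal{FP}_{n-1}(R)$; Schanuel gives $K\oplus K'\oplus Q\cong L\oplus P\oplus P'$, the right-hand side lies in $\mathcal{FP}_{n-1}(R)$, and the inductive hypothesis (closure of $\mathcal{FP}_{n-1}(R)$ under summands) gives $K\in\mathcal{FP}_{n-1}(R)$, hence $M\in\mathcal{FP}_{n}(R)$. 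This both fixes the step and unifies it with the mono-cokernel argument under a single ``one Schanuel shift, reduce to $\mathcal{FP}_{n-1}$'' induction.
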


\begin{lem}\cite[Lem. 2.11]{bravo2019locally}\label{lem: FPn es cerrado por n-cocientes}
Let $R$ be a ring and $C\in\Mod(R)$ be such that there is an exact sequence
$
F_{n}\rightarrow\cdots\rightarrow F_{1}\rightarrow F_{0}\rightarrow C\rightarrow0\mbox{, }
$
where $F_{i}\in\mathcal{FP}_{n}(R)$ $\forall i\in[0, n].$ Then $C\in\mathcal{FP}_{n}(R).$
\end{lem}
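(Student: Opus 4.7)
The plan is to argue by induction on $n$. The base case $n=0$ is immediate: $F_0$ is finitely generated and $F_0\twoheadrightarrow C$, so $C\in\mathcal{FP}_0(R)$.

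For the inductive step I would analyse the syzygies of the given sequence. Setting $K_0:=C$, $F_{-1}:=C$, and $K_i:=\ker(F_{i-1}\to F_{i-2})$ for $i\in[1,n]$, exactness yields short exact sequences
\[
\eta_i:\; 0\to K_{i+1}\to F_i\to K_i\to 0\qquad(i=0,\ldots,n-1),
\]
together with an epimorphism $F_n\twoheadrightarrow K_n$. I would then prove by descending induction on $i\in[0,n]$ that $K_i\in\mathcal{FP}_{n-i}(R)$. For $i=n$, $K_n$ is a quotient of $F_n\in\mathcal{FP}_n(R)\subseteq\mathcal{FP}_0(R)$, hence finitely generated. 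For the descending step, given $K_{i+1}\in\mathcal{FP}_{n-i-1}(R)$ and $F_i\in\mathcal{FP}_n(R)\subseteq\mathcal{FP}_{n-i}(R)$, I would apply to $\eta_i$ the following closure property (with $k=n-i$): \emph{if $0\to A\to B\to D\to 0$ is exact with $A\in\mathcal{FP}_{k-1}(R)$ and $B\in\mathcal{FP}_k(R)$, then $D\in\mathcal{FP}_k(R)$}. The case $i=0$ then yields the desired conclusion $C=K_0\in\mathcal{FP}_n(R)$.

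The main obstacle is establishing this closure property, which is not one of the right-thickness axioms in Lemma \ref{lem: propiedades de cerradura FPn} but can be derived from them. To prove it, pick a partial projective resolution $P_k\to\cdots\to P_0\to B\to 0$ with $P_j$ finitely generated projective (possible since $B\in\mathcal{FP}_k(R)$). Its first syzygy $K:=\ker(P_0\to B)$ inherits the truncated partial resolution $P_k\to\cdots\to P_1\to K\to 0$, so $K\in\mathcal{FP}_{k-1}(R)$. Composing $P_0\twoheadrightarrow B\twoheadrightarrow D$ gives an epimorphism whose kernel $K'$ fits into a short exact sequence $0\to K\to K'\to A\to 0$ by the snake lemma; since $\mathcal{FP}_{k-1}(R)$ is closed under extensions (Lemma \ref{lem: propiedades de cerradura FPn}), one gets $K'\in\mathcal{FP}_{k-1}(R)$. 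Finally, splicing a partial projective resolution of $K'$ of length $k-1$ with $0\to K'\to P_0\to D\to 0$ produces a partial projective resolution of $D$ with $k+1$ finitely generated projective terms, so $D\in\mathcal{FP}_k(R)$, completing the argument.
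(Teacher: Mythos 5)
The lemma is quoted from \cite{bravo2019locally} and the paper itself supplies no proof, so there is nothing in-paper to compare against; I am assessing your argument on its own terms. Your proof is correct. The syzygy decomposition $\eta_i:\;0\to K_{i+1}\to F_i\to K_i\to 0$ is set up correctly, the descending induction on $i$ is sound (in particular, the base case $i=n$ correctly uses that $\mathcal{FP}_n(R)\subseteq\mathcal{FP}_0(R)$ is closed under quotients), and the auxiliary closure property --- if $0\to A\to B\to D\to 0$ is exact with $A\in\mathcal{FP}_{k-1}(R)$ and $B\in\mathcal{FP}_k(R)$, then $D\in\mathcal{FP}_k(R)$ --- is correctly reduced via the first syzygy $K$ of $B$, the extension $0\to K\to K'\to A\to 0$, closure of $\mathcal{FP}_{k-1}(R)$ under extensions, and a splice. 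This last step is the right thing to isolate: it is strictly stronger than the mono-cokernel part of right-thickness (which would demand $A\in\mathcal{FP}_k(R)$), and it is exactly what makes the descending induction close up.

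One small stylistic remark: the outer ``induction on $n$'' you announce at the start is vestigial. The descending induction on $i\in[0,n]$ is self-contained and never invokes the case $n-1$; in particular it already subsumes your stated base case $n=0$ (take $i=n=0$). You could drop the outer induction and present the argument directly for a fixed $n$ with no loss.
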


We recall from \cite[Def. 2.2]{bravo2017finiteness} that a ring $R$ is left 
\textbf{$n$-coherent} if $\mathcal{FP}_{n}(R)=\mathcal{FP}_{n+1}(R).$

\begin{lem}\cite[Cor. 2.6]{bravo2017finiteness}\label{lem: n-coherente,  FPn cerrado por nucleos de epis}
Let $R$ be a left $n$-coherent ring. Then $\mathcal{FP}_{n}$ is closed
under epi-kernels.
\end{lem}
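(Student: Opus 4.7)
The plan is to begin with a short exact sequence $0\to K\to M\to N\to 0$ in $\Modx[R]$ having $M,N\in\mathcal{FP}_n(R)$, use the $n$-coherence hypothesis $\mathcal{FP}_n(R)=\mathcal{FP}_{n+1}(R)$ to upgrade $N$ to type $FP_{n+1}$, and then extract $K\in\mathcal{FP}_n(R)$ from a pull-back construction along a finitely generated projective cover of $N$. The right-thickness of $\mathcal{FP}_n(R)$ recorded in Lemma \ref{lem: propiedades de cerradura FPn}, which supplies closure under extensions, direct summands, and (hence) under finite coproducts, will be used repeatedly.

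The crucial intermediate step is the following claim: for any finitely generated projective module $Q$ and any surjection $\pi\colon Q\twoheadrightarrow N$ with $N\in\mathcal{FP}_{n+1}(R)$, one has $\ker\pi\in\mathcal{FP}_n(R)$. To prove it, I would take a partial finitely generated projective resolution $F_{n+1}\to F_n\to\cdots\to F_0\twoheadrightarrow N$ provided by $N\in\mathcal{FP}_{n+1}(R)$ and set $K_0:=\ker(F_0\twoheadrightarrow N)$. The shifted tail $F_{n+1}\to F_n\to\cdots\to F_1\to K_0\to 0$ is a partial finitely generated projective resolution of length $n$, so $K_0\in\mathcal{FP}_n(R)$ straight from the definition. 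Schanuel's Lemma, applied to $0\to K_0\to F_0\to N\to 0$ and $0\to\ker\pi\to Q\to N\to 0$, yields $\ker\pi\oplus F_0\cong K_0\oplus Q$; the right-hand side lies in $\mathcal{FP}_n(R)$ (as a finite coproduct of the two members $K_0$ and $Q$, the latter being in $\mathcal{FP}_\infty(R)\subseteq\mathcal{FP}_n(R)$), and closure under direct summands forces $\ker\pi\in\mathcal{FP}_n(R)$.

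To finish, choose a finitely generated projective surjection $\pi\colon Q_0\twoheadrightarrow N$ and form the pull-back $E:=Q_0\times_N M$. The two canonical projections produce short exact sequences $0\to K\to E\to Q_0\to 0$ (which splits since $Q_0$ is projective, giving $E\cong K\oplus Q_0$) and $0\to\ker\pi\to E\to M\to 0$. By the intermediate claim $\ker\pi\in\mathcal{FP}_n(R)$, and together with $M\in\mathcal{FP}_n(R)$ the extension-closure part of the right-thickness of $\mathcal{FP}_n(R)$ gives $K\oplus Q_0=E\in\mathcal{FP}_n(R)$. Closure of $\mathcal{FP}_n(R)$ under direct summands then yields $K\in\mathcal{FP}_n(R)$, as required.

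I expect the main obstacle to be the intermediate claim: a naive shift of a projective resolution only guarantees $\ker\pi\in\mathcal{FP}_{n-1}(R)$, so one really needs the extra $(n+1)$-th step of a projective resolution of $N$ provided by $n$-coherence, combined with Schanuel's Lemma, in order to transfer finiteness information from the canonical cover appearing in the $\mathcal{FP}_{n+1}$-presentation to the arbitrary cover $Q_0\twoheadrightarrow N$. This is precisely where the $n$-coherence hypothesis is used decisively; without it one could at best conclude $K\in\mathcal{FP}_{n-1}(R)$.
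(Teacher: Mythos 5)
Your argument is correct, and since the paper only cites \cite[Cor.~2.6]{bravo2017finiteness} without reproducing a proof, there is no internal proof to compare it against. The pull-back reduction, the upgrade of $N$ to type $FP_{n+1}$ via $n$-coherence, the extraction of $K_0\in\mathcal{FP}_n(R)$ from the shifted tail of the resolution, and the use of right-thickness for extensions and direct summands all fit together soundly. One small observation: the Schanuel step is not actually needed for the final argument. You are free to choose $\pi$ to be the first map $F_0\twoheadrightarrow N$ of the $FP_{n+1}$-resolution itself, in which case $\ker\pi=K_0$ lies in $\mathcal{FP}_n(R)$ directly and the pull-back argument closes immediately; the Schanuel detour only becomes necessary if one insists on working with an arbitrary finitely generated projective surjection onto $N$, which your proof does not actually require. (Also, ``projective cover'' in your opening paragraph should read ``projective surjection''; a projective cover in the technical sense of an essential epimorphism is not what is used.) Your closing remark correctly identifies where $n$-coherence enters: without the extra projective step, the shifted tail only gives $K_0\in\mathcal{FP}_{n-1}(R)$, which is too weak.
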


\begin{lem}\label{lem: anillo conmutativo coherente,  entonces precubiertas}
For an $n$-coherent commutative ring $R$ and $T\in\mathcal{FP}_{n}(R), $ the following statements hold true.
\begin{itemize}
\item[$\mathrm{(a)}$] $\Hom_R(T, X)\in\mathcal{FP}_{n}(R)$ $\forall\;X\in T^{\bot}\cap\mathcal{FP}_{n}(R).$ 
\item[$\mathrm{(b)}$] Every $X\in T^{\bot}\cap\mathcal{FP}_{n}(R)$ admits an $\addx[T]$-precover. Moreover,  such $\addx[T]$-precover is an $\Addx[T]$-precover.
\end{itemize}
\end{lem}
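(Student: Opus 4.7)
Plan. The approach is to derive (a) from Lemma~\ref{lem: FPn es cerrado por n-cocientes} applied to a truncated dualized complex, followed by iterated splicing via Lemma~\ref{lem: n-coherente,  FPn cerrado por nucleos de epis}; (b) then reduces to a standard finite-generation argument that exploits the commutativity of $R$.

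For (a), $n$-coherence gives $T\in\mathcal{FP}_n(R)=\mathcal{FP}_{n+1}(R)$, so fix an exact sequence $F_{n+1}\to F_n\to\cdots\to F_0\to T\to 0$ with each $F_i$ finitely generated projective. Applying $\Hom_R(-,X)$ and using $X\in T^\bot$ (whence $\Ext^k_R(T,X)=0$ for all $k\geq 1$) produces the exact sequence
\[
0\to\Hom_R(T,X)\to H_0\to H_1\to\cdots\to H_n\to H_{n+1},
\]
where $H_i:=\Hom_R(F_i,X)$. Since $F_i$ is a direct summand of $R^{m_i}$, the module $H_i$ is a direct summand of $X^{m_i}$; hence $H_i\in\mathcal{FP}_n$ by the right thickness of $\mathcal{FP}_n$ (Lemma~\ref{lem: propiedades de cerradura FPn}). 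Letting $I:=\mathrm{Im}(H_n\to H_{n+1})$, the truncated sequence $H_0\to H_1\to\cdots\to H_n\to I\to 0$ is exact in exactly the form required by Lemma~\ref{lem: FPn es cerrado por n-cocientes}, whence $I\in\mathcal{FP}_n$. Splicing the overarching sequence into short exact sequences $0\to K_i\to H_i\to K_{i+1}\to 0$ with $K_0=\Hom_R(T,X)$ and $K_{n+1}=I$, and iterating closure of $\mathcal{FP}_n$ under kernels of epimorphisms (Lemma~\ref{lem: n-coherente,  FPn cerrado por nucleos de epis}), propagates membership in $\mathcal{FP}_n$ from $K_{n+1}$ leftward through $K_n,K_{n-1},\ldots$ to $K_0=\Hom_R(T,X)$.

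For (b), by (a) we know $\Hom_R(T,X)\in\mathcal{FP}_n$; in particular it is finitely generated as an $R$-module. Choose generators $\phi_1,\ldots,\phi_m\in\Hom_R(T,X)$ and define
\[
\phi\colon T^m\to X,\qquad (t_1,\ldots,t_m)\mapsto\sum_{i=1}^m\phi_i(t_i).
\]
For any $\psi\colon T\to X$, write $\psi=\sum r_i\phi_i$ with $r_i\in R$; the commutativity of $R$ makes $t\mapsto(r_1t,\ldots,r_mt)$ an $R$-linear map and hence exhibits $\psi$ as a factorization through $\phi$. Extending coordinatewise to $T^k$ and then using a splitting $T^k\cong T'\oplus T''$ for $T'\in\mathrm{add}(T)$ gives the $\mathrm{add}(T)$-precover property of $\phi$. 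Finally, Lemma~\ref{lem:  addT precub es AddT precub} upgrades this to an $\mathrm{Add}(T)$-precover.

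The main obstacle lies in verifying exactness of the truncated sequence $H_0\to H_1\to\cdots\to H_n\to I\to 0$ at every interior position; this hinges on $\Ext^k_R(T,X)=0$ for $1\leq k\leq n$ arising from $X\in T^\bot$, together with the precise bookkeeping ensuring that $I$ fits into the hypothesis of Lemma~\ref{lem: FPn es cerrado por n-cocientes} (one needs the resolution of $T$ to have length at least $n+1$, which is exactly what $n$-coherence provides). Once this is justified, the remaining steps --- Lemma~\ref{lem: FPn es cerrado por n-cocientes} to conclude $I\in\mathcal{FP}_n$, iterated epi-kernel closure for the splicing, and the commutativity-based precover construction in (b) --- are routine.
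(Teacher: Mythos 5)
Your proposal is correct and takes essentially the same route as the paper: for (a), dualize the (truncated) projective resolution, use closure under $n$-quotients to land the terminal image in $\mathcal{FP}_n(R)$, and back-propagate through the spliced short exact sequences via epi-kernel closure (where $n$-coherence actually enters); for (b), exploit commutativity to view $\Hom_R(T,X)$ as a finitely generated $R$-module whose generators assemble into the $\add(T)$-precover, then upgrade via Lemma \ref{lem:  addT precub es AddT precub}. The only minor excess is your invocation of $n$-coherence to obtain a length-$(n+1)$ resolution at the outset — the paper works directly with the $\mathcal{FP}_n$ resolution, since $T\in\mathcal{FP}_n(R)$ already supplies the $n+1$ short exact sequences $0\to K_{i+1}\to R^{m_i}\to K_i\to 0$ for $i\in[0,n]$ that produce, after applying $\Hom_R(-,X)$ and splicing, the same long exact sequence you obtain.
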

\begin{proof} (a) There is a family $\left\{ \suc[K_{i+1}][R^{m_{i}}][K_{i}]\right\} _{i=0}^{n}$ of exact sequences in $\Mod(R), $ where $K_{0}=T$ and $m_{i}\in\mathbb{N}\;\forall i\in[0, n], $ since $T\in\mathcal{FP}_{n}(R).$
\

Let $X\in T^{\bot}\cap\mathcal{FP}_{n}(R).$ Then $\Ext_R^1(K_{i}, X)\simeq\Ext^{1+i}_R(T, X)=0\;\forall i\in[0, n], $ and thus,  by applying the functor $\Hom_R(-, X)$ to the above family of exact sequences,  we get the family 
$\left\{ 0\to \Hom_R(K_i, X)\to X^{m_{i}}\to \Hom_R(K_{i+1}, X)\to 0\right\} _{i=0}^{n}$ of exact sequences in $\Mod(R).$ From this family,  we get the exact sequence
\[
0\rightarrow\Hom_R(T, X)\rightarrow X^{m_{0}}\rightarrow X^{m_{1}}\rightarrow\cdots\rightarrow X^{m_{n}}\rightarrow\Hom_R(K_{n+1}, X)\rightarrow0.
\]

Then by Lemmas \ref{lem: propiedades de cerradura FPn} and  \ref{lem: FPn es cerrado por n-cocientes},  it follows that $\Hom_R(K_{n+1}, X)\in\mathcal{FP}_{n}(R).$ Thus,  by using recursively Lemmas \ref{lem: propiedades de cerradura FPn} and \ref{lem: n-coherente,  FPn cerrado por nucleos de epis},  we get that 
 $\Hom_R(T, X)\in\mathcal{FP}_{n}(R)$.
\

(b) Let $X\in T^{\bot}\cap\mathcal{FP}_{n}(R).$ Then,  by (a),  $\Homx[R][T][X]\in\modu(R)$ and let $\{f_{1}, \cdots, f_{k}\}$ be a finite generating set. It is 
straightforward to show that $\alpha: =(f_{1}, \cdots, f_{k}): T^{k}\rightarrow X$
is an $\addx[T]$-precover. The second statement in (b) follows from Lemma \ref{lem:  addT precub es AddT precub}.
\end{proof}

\begin{lem}\label{lem: lemita miyashita} Let $R$ be a ring and $T\in \modd[R]$ be  such that $T\in T^\perp$ and $T\in\proj(R)^\wedge_n.$ Then,  the following statements hold true.
\begin{itemize}
\item[$\mathrm{(a)}$] $\mathcal{Q}: =({}^{\bot}T\cap\mathcal{FP}_{\infty}(R), \addx[T])_{\infty}^{\vee}$
is left thick. 
\item[$\mathrm{(b)}$] $\left(\addx[T]\right)_{\mathcal{FP}_{\infty}(R)}^{\vee}=\addx[T]^{\vee}=\left\{ M\in\mathcal{Q}\;|\: \;\idr[\mathcal{Q}][M]<\infty\right\} $
is left thick.
\end{itemize}
\end{lem}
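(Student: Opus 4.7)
My plan is to verify each of the three closure properties defining left thickness of $\mathcal{Q}$ in (a), then to deduce (b) from (a) via $\Ext$-shifting arguments.

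First I would observe that $\X := {}^{\bot}T \cap \mathcal{FP}_{\infty}(R)$ is itself left thick, using Lemma \ref{lem: propiedades de cerradura FPn} for the thickness of $\mathcal{FP}_{\infty}(R)$ together with the standard long-exact-$\Ext$ computation for ${}^{\bot}T$. Since $T \in T^{\bot}$ and $T \in \projx[R]_{n}^{\wedge} \subseteq \mathcal{FP}_{\infty}(R)$, we also have $\addx[T] \subseteq \X$.

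For closure of $\mathcal{Q}$ under extensions, given $\suc[M'][M][M'']$ with $M', M'' \in \mathcal{Q}$, I would build a coresolution of $M$ by iterating the horseshoe lemma against the coresolutions of $M'$ and $M''$. At step $i$, the vanishing $\Ext^{1}(Z_i'', T_i') = 0$ required for horseshoe holds since $Z_i'' \in \X \subseteq {}^{\bot}T$ and $T_i' \in \addx[T]$; the resulting $T_i := T_i' \oplus T_i''$ lies in $\addx[T]$, while the cosyzygy $Z_{i+1}$ sits in an extension of $Z_{i+1}''$ by $Z_{i+1}'$, hence lies in $\X$ by left thickness. Closure under epi-kernels follows by a dual diagram chase. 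For closure under direct summands, given $M = A \oplus B \in \mathcal{Q}$ with coresolution $0 \to M \to T_0 \to T_1 \to \cdots$, the restriction $A \hookrightarrow T_0$ has cokernel $T_0/A$, and a $3 \times 3$ diagram exhibits $T_0/A$ as an extension $\suc[B][T_0/A][Z_1]$ with $B, Z_1 \in \X$, so $T_0/A \in \X$; iterating this jointly for $A$ and $B$ at every step produces the required coresolutions. This last step is the main obstacle, as it demands a simultaneous construction that sidesteps circular reasoning about $\mathcal{Q}$-membership of the intermediate cokernels.

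For (b), the equality $\left(\addx[T]\right)^{\vee}_{\mathcal{FP}_{\infty}(R)} = \addx[T]^{\vee}$ holds because $\addx[T] \subseteq \mathcal{FP}_{\infty}(R)$ and $\mathcal{FP}_{\infty}(R)$ is thick, so the intermediate kernels in any finite $\addx[T]$-coresolution lie automatically in $\mathcal{FP}_{\infty}(R)$. For the second equality, the inclusion $\addx[T]^{\vee} \subseteq \{M \in \mathcal{Q} : \idr[\mathcal{Q}][M] < \infty\}$ uses the shifting isomorphism $\Ext^{k}(Q, K_j) \cong \Ext^{k+j}(Q, M)$ for $Q \in \mathcal{Q}$ and $k \geq 1$, valid since $Q \in {}^{\bot}T$ annihilates the $\addx[T]$-terms in the coresolution; a length-$n$ coresolution then forces $\idr[\mathcal{Q}][M] \leq n$. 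Conversely, if $n := \idr[\mathcal{Q}][M] < \infty$, the same shifting gives $\idr[\mathcal{Q}][K_n] = 0$ for the $n$-th cosyzygy, so the next-step sequence $\suc[K_n][T_n][K_{n+1}]$ splits (as $K_{n+1} \in \mathcal{Q}$), yielding $K_n \in \addx[T]$ and hence a finite coresolution of $M$ of length $n$. Left thickness of $\addx[T]^{\vee}$ then follows from part (a) together with the standard fact that finite $\mathcal{Q}$-injective dimension is preserved under summands, extensions, and epi-kernels via long-exact-$\Ext$ arguments.
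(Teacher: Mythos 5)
Your setup is sound: $\X:={}^{\bot}T\cap\mathcal{FP}_{\infty}(R)$ is thick (intersection of the thick class $\mathcal{FP}_{\infty}(R)$ with ${}^{\bot}T$, which is thick by the long exact $\Ext$ sequence), $\addx[T]\subseteq\X$ since $T\in T^{\bot}\cap\projx[R]_{n}^{\wedge}$, and your dual-horseshoe argument for extension-closure of $\mathcal{Q}$ (using $\Ext^1(M'',T_0')=0$ because $M''\in\X\subseteq{}^{\bot}T$ and $T_0'\in\addx[T]$) is correct. The reduction of epi-kernel closure to extension closure is also fine. Most of (b) is likewise correct as sketched: $(\addx[T])^{\vee}_{\mathcal{FP}_{\infty}(R)}=\addx[T]^{\vee}$ follows from thickness of $\mathcal{FP}_{\infty}(R)$, and both inclusions in the characterization $\addx[T]^{\vee}=\{M\in\mathcal{Q}\,:\,\idr[\mathcal{Q}][M]<\infty\}$ go through via the shifting argument, including splitting off the $n$-th cosyzygy using $Z_{n+1}\in\mathcal{Q}$ and $\Ext^1(Z_{n+1},Z_n)=0$.

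The genuine gap, which you yourself flag, is closure of $\mathcal{Q}$ under direct summands. The iteration ``jointly for $A$ and $B$'' does not produce a valid next coresolution term for $A$: having established $T_0/A\in\X$ via the extension $\suc[B][T_0/A][Z_1]$, there is no natural monomorphism from $T_0/A$ into anything in $\addx[T]$ — the obvious candidate $T_0/A\twoheadrightarrow Z_1\hookrightarrow T_1$ has kernel $B$ and is therefore not a monomorphism, so it cannot serve as the next coresolution step. Making the iteration work requires carrying along a (co-)Schanuel-type bookkeeping of the form $A_k\oplus B_k\cong Z_k\oplus W_k$ with $W_k\in\addx[T]$, enlarging the coresolution terms at each stage; your sketch does not supply this machinery, so (a) is not established. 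Since (b)'s left-thickness needs $\mathcal{Q}=\smdx[\mathcal{Q}]$ to conclude that a summand of an object of $\mathcal{Q}$ is again in $\mathcal{Q}$, it inherits the same gap.

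The paper avoids this entirely. Its proof consists only of noting that $\mathcal{FP}_{\infty}(R)$ is thick and that $\addx[T]\subseteq T^{\bot}\cap\mathcal{FP}_{\infty}(R)$, and then invoking \cite[Cor. 4.21]{parte1}. That corollary packages precisely the left-thickness statements and the $\id$-characterization you are trying to reprove by hand, and it is where the hard part — in particular, direct-summand closure — actually lives.
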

\begin{proof} By Lemma \ref{lem: propiedades de cerradura FPn},  we know that  $\mathcal{FP}_{\infty}(R)$ is thick in $\Modx[R]$. Note that
$T\in\mathcal{FP}_{\infty}(R)$ and thus $\addx[T]\subseteq\mathcal{FP}_{\infty}(R).$
Moreover $\addx[T]\subseteq T^{\bot}\cap\mathcal{F}\mathcal{P}_{\infty}$ since $T\in T^{\bot}.$ Hence, 
the result follows from \cite[Cor. 4.21]{parte1}.
\end{proof}

\begin{thm}\label{prop: n-FPn+1-tilting} Let $R$ be a left $n$-coherent ring and
$T\in\mathcal{F}\mathcal{P}_{n+1}(R)$. Consider the following statements: 
\begin{itemize}
\item[$\mathrm{(a)}$] $T$ is a Miyashita $n$-tilting $R$-module and there is an $\mathcal{FP}_{n+1}(R)$-injective
relative cogenerator in $\mathcal{FP}_{n+1}(R).$
\item[$\mathrm{(b)}$] $T$ is a big $n$-$\mathcal{F}\mathcal{P}_{n+1}(R)$-tilting object.
\item[$\mathrm{(c)}$] $T$ is a small $n$-$\mathcal{F}\mathcal{P}_{n+1}(R)$-tilting object.
\end{itemize}

Then $\mathrm{(b)}\Rightarrow\mathrm{(a)}$ and $\mathrm{(b)}\Leftrightarrow\mathrm{(c)}$ hold true. 
Furthermore,  $\mathrm{(a)}\Rightarrow\mathrm{(b)}$ holds true if $R$ is commutative.
\end{thm}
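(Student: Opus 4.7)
Proof proposal. The overall plan is to address the three implications in the order $(b)\Leftrightarrow(c)$, then $(b)\Rightarrow(a)$, and finally $(a)\Rightarrow(b)$ under commutativity. Throughout, write $\X:=\mathcal{FP}_{n+1}(R)$; this class is right thick by Lemma \ref{lem: propiedades de cerradura FPn}, in particular closed under extensions, and it contains $R$ and all finitely generated projectives. A basic remark used repeatedly: the hypothesis of $n$-coherence $\mathcal{FP}_n(R)=\mathcal{FP}_{n+1}(R)$ propagates to $\mathcal{FP}_n(R)=\mathcal{FP}_m(R)=\mathcal{FP}_\infty(R)$ for all $m\geq n$, using Lemma \ref{lem: FPn es cerrado por n-cocientes}. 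The equivalence $(b)\Leftrightarrow(c)$ is then an immediate application of Corollary \ref{cor: coro2 p80} to $\mathcal{Z}=\X\subseteq\modd(R)$.

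For $(b)\Rightarrow(a)$, set $\T:=\Add(T)$. The existence of an $\X$-injective relative cogenerator in $\X$ is part of axiom (T4). Axiom (MT2) is immediate from (T2) since $T\in\Add(T)\cap\X\subseteq\Add(T)^{\bot}=T^{\bot}$. For (MT3), notice that $R\in{}^{\bot}(T^{\bot})\cap\X$, so by Proposition \ref{prop: (b)} (a1) we have $R\in\A\cap\X$ for $\A:={}^{\bot}(T^{\bot})$; Proposition \ref{prop: oct2} (a) then gives $\coresdim_{\nu}(R)\leq\pd_\X(T)\leq n$ where $\nu:=\A\cap T^{\bot}\cap\X$. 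By Proposition \ref{prop: (a)} (d) together with Proposition \ref{prop: add Add vs compactos} (applicable since $\X\subseteq\modd(R)\subseteq\mathcal{K}_{T,\mathcal{M}}$), we get $\nu=\Add(T)\cap\X=\add(T)$, yielding $R\in\add(T)^{\vee}_{n}$. The main obstacle is (MT1), i.e.\ $T\in\proj(R)^{\wedge}_{n}$. Since $T\in\mathcal{FP}_\infty(R)$ by $n$-coherence, pick a resolution $\cdots\to F_1\to F_0\to T\to 0$ with each $F_i$ finitely generated free and denote by $K_k$ the $k$-th syzygy. By iterated application of Lemma \ref{lem: n-coherente, FPn cerrado por nucleos de epis}, each $K_k$ lies in $\mathcal{FP}_\infty(R)=\X$. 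Dimension shifting plus (T1) then give
\[
\Ext^{1}_R(K_n,K_{n+1})\cong\Ext^{n+1}_R(T,K_{n+1})=0,
\]
so the short exact sequence $0\to K_{n+1}\to F_n\to K_n\to 0$ splits, which exhibits $K_n$ as a direct summand of the finitely generated free module $F_n$. Hence $T\in\proj(R)^{\wedge}_{n}$.

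For $(a)\Rightarrow(b)$ under commutativity, the plan is to verify (T0)--(T5) for $\T=\Add(T)$. Axiom (T0) is trivial and (T1) follows from (MT1), which in particular gives $\pd(T)\leq n$. For (T2), observe that since $T$ has a projective resolution of length $\leq n$ by finitely generated projectives (by (MT1)), the functors $\Ext^{i}_R(T,-)$ commute with arbitrary coproducts for every $i\geq 0$; combined with (MT2) this yields $\Ext^{>0}_R(T,T^{(I)})=0$ and therefore $\Add(T)\subseteq T^{\bot}$. For (T3), take $\omega:=\{R^{m}\mid m\geq 0\}$: it is a relative generator in $\X$ because every $X\in\X$ is finitely generated and the kernel of a surjection $R^{m}\twoheadrightarrow X$ lies in $\mathcal{FP}_n(R)=\X$ by Lemma \ref{lem: n-coherente, FPn cerrado por nucleos de epis}; moreover each $R^{m}$ belongs to $\add(T)^{\vee}_{n}\subseteq\Add(T)^{\vee}_{\X}$, by taking the direct sum of $m$ copies of the coresolution guaranteed by (MT3) (note that $\add(T)\subseteq\X$). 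Axiom (T4) is part of the hypothesis in (a), after noting that any $\X$-injective class is automatically contained in $T^{\bot}$ since $T\in\X$. Finally, (T5) is the only point where commutativity is genuinely used: by Lemma \ref{lem: anillo conmutativo coherente, entonces precubiertas} (b), since $T\in\mathcal{FP}_{n+1}(R)=\mathcal{FP}_n(R)$ over the commutative $n$-coherent ring $R$, every $Z\in T^{\bot}\cap\X$ admits an $\add(T)$-precover which is also an $\Add(T)$-precover, with domain in $\add(T)\subseteq\X$.
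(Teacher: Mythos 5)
Your proof is correct and follows the same overall template as the paper's (verify (T0)--(T5) for (a)$\Rightarrow$(b), verify (MT1)--(MT3) for (b)$\Rightarrow$(a), and dispatch (b)$\Leftrightarrow$(c) via Corollary \ref{cor: coro2 p80}), but you take a genuinely different route at two points. For (MT3) in (b)$\Rightarrow$(a), you read the bound $R\in\add(T)^{\vee}_{n}$ directly off the dimension formula $\coresdim_{\nu}(\mathcal{A}\cap\mathcal{X})\leq\pd_{\mathcal{X}}(\T)$ of Proposition \ref{prop: oct2}(a), together with the identification $\nu=\Add(T)\cap\X=\add(T)$ from Propositions \ref{prop: (a)}(d) and \ref{prop: add Add vs compactos}; the paper instead builds the coresolution of $R$ step by step using the $\X$-completeness from Theorem \ref{thm: el par n-X-tilting}(c) and then closes it off with \cite[Prop.\ 2.6]{parte1}. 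For (T3) in (a)$\Rightarrow$(b), you verify the axiom directly with $\omega=\{R^{m}\}$, showing the cokernel of $R^{m}\twoheadrightarrow X$ stays in $\X$ by coherence and transporting the (MT3)-coresolution; the paper instead invokes Lemma \ref{lem: lemita miyashita}(b). Both variations are sound. Your dimension-theoretic route for (MT3) is arguably cleaner, and your direct verification of (T3) avoids the auxiliary machinery of Lemma \ref{lem: lemita miyashita} at the cost of a slightly loose inclusion ``$\add(T)^{\vee}_{n}\subseteq\Add(T)^{\vee}_{\X}$'' — this is not a class inclusion in general, but it does hold for the object $R^{m}$ you apply it to, because $R^{m}\in\X$ and $\X$ is thick, so the cokernels of the coresolution remain in $\X$; you should say this explicitly rather than phrase it as a containment of classes. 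Your (MT1) argument (the syzygy-splitting via $\Ext^{n+1}_{R}(T,K_{n+1})=0$) fills in what the paper leaves to ``(T1) and the Shifting Lemma,'' and your (T2) argument (commutation of $\Ext^{i}_{R}(T,-)$ with coproducts from the finite f.g.\ projective resolution) is the standard content behind the paper's reference to \cite[Lem.\ 3.1.6]{Approximations}.
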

\begin{proof}
Note first that,  by \cite[Thm. 2.4 (3)]{bravo2017finiteness},  $R$
is $k$-coherent $\forall k\geq n$. In particular,  by Lemmas \ref{lem: propiedades de cerradura FPn}
and \ref{lem: n-coherente,  FPn cerrado por nucleos de epis},  $\mathcal{FP}_{n+1}(R)$
is thick in $\Modx[R]$.
\

(a) $\Rightarrow$ (b) Let $R$ be commutative. The conditions (T1),  (T2), 
(T4) and (T5) are proved straightforward by using \cite[Lem. 3.1.6]{Approximations}
and Lemma \ref{lem: anillo conmutativo coherente,  entonces precubiertas} (b).
The condition (T3) follows from (MT3) and Lemma \ref{lem: lemita miyashita}.
\

(b) $\Rightarrow$ (a) Let $T$ be an $n$-$\mathcal{F}\mathcal{P}_{n+1}(R)$-tilting
object. By (T4),  we know there is an $\mathcal{FP}_{n+1}(R)$-injective
relative cogenerator in $\mathcal{FP}_{n+1}(R)$. 

Let us prove that $T$ is Miyashita tilting. Since $T\in\mathcal{F}\mathcal{P}_{n+1}(R)$
and $R$ is $\left(n+1\right)$-coherent,   it can be shown that (MT1) is satisfied by using (T1) and the Shifting
Lemma. Moreover,  (MT2) follows
from (T2). Finally,  since $\mathcal{F}\mathcal{P}_{n+1}(R)$ is thick
in $\Mod(R)$,  by Theorem \ref{thm: el par n-X-tilting} (c),  there is an exact sequence 
$\suc[{}{}_{R}R][M_{0}][X_{0}]$
where $M_{0}\in T^{\bot}\cap\mathcal{F}\mathcal{P}_{n+1}(R)$ and $X_{0}\in{}{}^{\bot}(T^{\bot})\cap\mathcal{F}\mathcal{P}_{n+1}(R).$
Then,  using $_{R}R\in{}{}^{\bot}(T^{\bot})$ and Proposition \ref{prop: (a)} (d),  we have 
$M_{0}\in{}{}^{\bot}(T^{\bot})\cap T^{\bot}\cap\mathcal{F}\mathcal{P}_{n+1}(R)=\Addx[T]\cap\mathcal{F}\mathcal{P}_{n+1}(R)=\addx[T].$
By repeating the same arguments (recursively), 
we can build a long exact sequence 
$\suc[R][M_{0}\rightarrow\cdots\rightarrow M_{k}][X_{k}]\, \forall k\geq1$
with $M_{i}\in\addx[T]$ and $X_{i}\in{}{}^{\bot}(T^{\bot})\cap\mathcal{F}\mathcal{P}_{n+1}(R)\;\forall i\in[1, k]$.
Now,  since $\pdr[\mathcal{F}\mathcal{P}_{n+1}(R)][T]\leq n$,  we have $T^{\bot}\cap\mathcal{F}\mathcal{P}_{n+1}(R)$
is closed by $n$-quotients in $\mathcal{F}\mathcal{P}_{n+1}(R)$ by
\cite[Prop. 2.6]{parte1}.
Thus,  we have 
$X_{n}\in T^{\bot}\cap{}{}^{\bot}(T^{\bot})\cap\mathcal{F}\mathcal{P}_{n+1}(R)=\addx[T]$
and therefore $T$ satisfies (MT3). 
\

(b) $\Leftrightarrow$ (c) It follows from  Corollary \ref{cor: coro2 p80} since 
$\mathcal{FP}_{n+1}(R)\subseteq\modd.$\end{proof}

\subsection{\label{sub: Tilting-en-categor=0000EDas} Tilting in exact categories}

It is a known fact that a small exact category can be embedded into
an abelian category. In this section,  we will use this fact to introduce
a tilting theory on small exact categories. Furthermore,  we will see
that the tilting objects obtained by this procedure coincide with
the tilting objects defined by Bin Zhu y Xiao Zhuang in \cite{zhu2019tilting}.
\

Let $\mathcal{A}$ be an additive
category. A\textbf{ kernel-cokernel pair} $(i, p)$ in $\mathcal{A}$
is a sequence of morphisms $A'\stackrel{i}{\rightarrow}A\stackrel{p}{\rightarrow}A''$
in $\A$ such that $i$ is the kernel of $p$ and $p$ is the cokernel of $i$. 
Let $\mathcal{E}$ be a fixed class of kernel-cokernel pairs in $\mathcal{A}$.
A morphism $i$ ($p$,  respectively) is called \textbf{admissible
mono} (\textbf{admissible epi, } respectively) if there is a pair $(i, p)\in\mathcal{E}$. An \textbf{exact category} is a pair $\p[\mathcal{A}][\mathcal{E}]$, 
where $\mathcal{A}$ is an additive category and $\mathcal{E}$ is
a class of kernel-cokernel pairs satisfying certain axioms, see \cite[Def. 2.1]{buhler2010exact} for more details. 
\

Given an exact category $\p[\mathcal{A}][\mathcal{E}]$,  an element $(i, p)\in\mathcal{E}$ is called {\bf short exact sequence} and it is also denoted as $0\to X\xrightarrow{i} Z\xrightarrow{p} Y\to 0.$  Moreover, 
for every $X, Y\in\mathcal{A}$,  we denote by $\mathcal{E}(X, Y)$ the class of all the short exact sequences of the form $0\to Y\rightarrow Z\rightarrow X\to 0.$ Let 
$\p[\mathcal{A}][\mathcal{E}]$
and $\p[\mathcal{A}'][\mathcal{E}']$ be exact categories and $F: \mathcal{A}\rightarrow\mathcal{A}'$
be an additive functor. Following 
\cite[Def. 5.1]{buhler2010exact}, we recall that $F$ is \textbf{exact} if $F(\mathcal{E})\subseteq\mathcal{E}'$.
We say that $F$ \textbf{reflects exactness} in case $(F\alpha, F\beta)\in\mathcal{E}'$
implies $(\alpha, \beta)\in\mathcal{E}$.

 Let $\mathcal{A}$ be an additive
category and $X\xrightarrow{f} Y\xrightarrow{g} X$ in $\mathcal{A}$ such that $fg=1_Y.$ In this case, we say that $f$ is a \textbf{split-epi} and $g$ is a \textbf{split-mono}. Following \cite[Def. 6.1,  Rk. 6.2]{buhler2010exact}, we recall that that 
$\mathcal{A}$
is \textbf{idempotent complete} if any idempotent morphism $p: A\rightarrow A$
in $\mathcal{A}$ admits a kernel. If every split-epi
admits a kernel,  we say that $\mathcal{A}$ is \textbf{weakly idempotent
complete}.
\

Let $(\mathcal{A}, \mathcal{E})$
be an exact category. Following \cite[Def. 11.1]{buhler2010exact}, we recall that  an object $P\in\mathcal{A}$ is \textbf{$\mathcal{E}$-projective} if $\Homx[\mathcal{A}][P][-]: \mathcal{A}\rightarrow\mbox{Ab}$ is
exact. We denote
by $\mbox{Proj}_{\mathcal{E}}(\mathcal{A})$ the class of all the $\mathcal{E}$-projective
objects. The \textbf{$\mathcal{E}$-injective} objects are defined dually,  and the class of all the $\mathcal{E}$-injective objects is denoted by $\mbox{Inj}_{\mathcal{E}}(\mathcal{A}).$ We say that $(\mathcal{A}, \mathcal{E})$ 
 \textbf{has enough $\mathcal{E}$-projectives} \cite[Def. 11.9]{buhler2010exact}
 if every $A\in\mathcal{A}$ admits an admissible epi $P\rightarrow A$
with $P\in\operatorname{Proj}_{\mathcal{E}}(\mathcal{A})$. Dually, 
$\p[\mathcal{A}][\mathcal{E}]$ \textbf{has enough $\mathcal{E}$-injectives}
if every $A\in\mathcal{A}$ admits an admissible mono $A\rightarrow I$
with $I\in\operatorname{Inj}_{\mathcal{E}}(\mathcal{A})$. 

Consider a category $\mathcal{C}$ and $\mathcal{X}\subseteq\mathcal{C}$.
Let $[\mathcal{X}]$ denotes the class of all the objects $Z\in\mathcal{C}$
such that $Z\cong X$ with $X\in\mathcal{X}$. For a functor $F: \A\to \B, $ the class $[F(\A)]$ is also known as the essential image of $F$ in $\B.$

\begin{rem}\label{rem: observacion p152} Let $\mathcal{X}$ and $\mathcal{Y}$ be 
classes objects in a category $\mathcal{C}$. Note that $[\mathcal{X}\cap\mathcal{Y}]\subseteq[\mathcal{X}]\cap[\mathcal{Y}]$.
Furthermore,  $[\mathcal{X}\cap\mathcal{Y}]=\mathcal{X}\cap[\mathcal{Y}]$ if $\mathcal{X}=[\mathcal{X}].$
\end{rem}

\begin{thm}\cite[Thm. A.1]{buhler2010exact}\label{thm: exacta se sumerge en abeliana}
For a small exact category $(\mathcal{A}, \mathcal{E}), $ 
the following statements hold true.
\begin{itemize}
\item[$\mathrm{(a)}$] There is an abelian category $\mathcal{B}$ and a fully 
faithful exact functor $i: \mathcal{A}\rightarrow\mathcal{B}$ that reflects exactness.
Moreover,  $[i(\mathcal{A})]$ is closed under extensions in $\mathcal{B}$.
\item[$\mathrm{(b)}$] The category $\mathcal{B}$ may canonically be chosen to be the category
$\operatorname{Lex}(\mathcal{A}, \mathcal{E})$ (of all the contravariant additive left exact functors $\mathcal{A}\rightarrow\operatorname{Ab}$) and $i$ to be the
Yoneda embedding $i(A): =\Homx[\mathcal{A}][-][A]$.
\item[$\mathrm{(c)}$] Assume that $\mathcal{A}$ is weakly idempotent complete. If $f$
is a morphism in $\mathcal{A}$ such that $i(f)$ is epic (monic) in $\mathcal{B}$, 
then $f$ is an admissible epi (mono).
\end{itemize}
\end{thm}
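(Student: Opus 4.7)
The plan is to follow the classical Gabriel--Quillen embedding construction, taking $\mathcal{B}:=\operatorname{Lex}(\mathcal{A},\mathcal{E})$ as the canonical choice described in part (b). First I would view $\mathcal{E}$ as endowing $\mathcal{A}$ with a single-cover Grothendieck topology whose covering sieves are generated by the admissible epimorphisms. Then $\mathcal{B}$ is exactly the category of sheaves of abelian groups on this site, hence a reflective subcategory of $\operatorname{Fun}(\mathcal{A}^{op},\operatorname{Ab})$ with an exact sheafification left adjoint. This exhibits $\mathcal{B}$ as a Grothendieck abelian category in which finite limits agree with those computed pointwise in presheaves, while colimits require sheafification.

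Next I would verify that the Yoneda assignment $i(A):=\operatorname{Hom}_{\mathcal{A}}(-,A)$ actually lands in $\mathcal{B}$: for each $(j,q)\in\mathcal{E}$ with $j\colon X'\rightarrowtail X$ the kernel of $q\colon X\twoheadrightarrow X''$, applying $\operatorname{Hom}_{\mathcal{A}}(-,A)$ yields a left exact sequence of abelian groups directly from the kernel-cokernel property. Fully faithfulness of $i$ is the usual Yoneda lemma. For exactness of $i$, only the condition that $i$ sends admissible epis to epimorphisms in $\mathcal{B}$ needs care: given $q\colon X\twoheadrightarrow X''$ in $\mathcal{E}$, the cokernel of $h_X\to h_{X''}$ in presheaves sheafifies to zero because $q$ is, by construction, a covering map in our topology. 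Thus $i$ is an exact functor.

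The main obstacle is verifying that $[i(\mathcal{A})]$ is closed under extensions, since representables are not in general projective in $\mathcal{B}$ and one cannot simply split the extension. Given a short exact sequence $0\to h_{A'}\to F\to h_{A''}\to 0$ in $\mathcal{B}$, I would argue as follows: the sheaf epimorphism $F\to h_{A''}$ forces the existence of a cover of the universal section, i.e.\ an admissible epi $p\colon Y\twoheadrightarrow A''$ in $\mathcal{A}$ together with a lift $y\in F(Y)$ of $p\in h_{A''}(Y)$. Pulling back along $p$ and using the closure of $\mathcal{E}$ under pullbacks of admissible epis along arbitrary morphisms (a standard exact-category axiom), one extracts enough compatible data to build a kernel-cokernel pair $A'\rightarrowtail A\twoheadrightarrow A''$ in $\mathcal{E}$ together with a natural isomorphism $h_A\cong F$ under which the given sequence matches the Yoneda image of this pair. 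Reflection of exactness for a sequence $A'\xrightarrow{f}A\xrightarrow{g}A''$ in $\mathcal{A}$ whose $i$-image is short exact in $\mathcal{B}$ is the same construction applied to $F=h_A$: fully faithfulness of $i$ forces the constructed pair to coincide with $(f,g)$, placing it in $\mathcal{E}$.

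Part (c) is a separate argument relying on the \emph{obscure axiom}, which holds automatically in weakly idempotent complete exact categories: if a morphism $r$ factors an admissible epi as $r\circ s$, then $r$ is itself an admissible epi. If $i(f)$ is epic in $\mathcal{B}$, the cover description of epimorphisms in the sheaf topology produces an admissible epi $p\colon Z\twoheadrightarrow Y$ and a morphism $g\colon Z\to X$ with $f\circ g=p$. The obscure axiom then yields that $f$ is an admissible epi; the monic case is dual, using that admissible monos are stable under pushouts.
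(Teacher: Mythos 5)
The paper does not actually prove this statement; it simply cites Bühler's Exact Categories, Theorem~A.1, so there is no in-paper argument to compare against. Your sketch correctly identifies the route Bühler takes: the Gabriel--Quillen embedding via the single-cover Grothendieck topology whose covers are the admissible epimorphisms, with $\mathcal{B}=\operatorname{Lex}(\mathcal{A},\mathcal{E})$ realised as sheaves, exact sheafification, pointwise finite limits, Yoneda landing in $\mathcal{B}$, and the cokernel-sheafifies-to-zero argument for exactness. Those steps are all stated accurately.

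The genuine gap is in the closure-under-extensions step, which you compress into ``one extracts enough compatible data to build a kernel--cokernel pair.'' This is the technical heart of the theorem and deserves the actual argument: from the local section $y\in F(Y)$ over a cover $p:Y\twoheadrightarrow A''$, form the pullback $P:=h_Y\times_{h_{A''}}F$; the lift $y$ splits $P\to h_Y$, so $P\cong h_{A'}\oplus h_Y=h_{A'\oplus Y}$; the induced map $P\to F$ is an epimorphism (pullback of the epi $h_p$) with kernel $h_K$ where $K=\Ker(p)$; by full faithfulness the inclusion $h_K\hookrightarrow h_{A'\oplus Y}$ comes from a morphism $j:K\to A'\oplus Y$ whose second component is the admissible mono $K\rightarrowtail Y$, and a short matrix computation shows $j$ itself is an admissible mono; taking $A:=\operatorname{Coker}(j)$ in $\mathcal{A}$ and using exactness of $i$ gives $h_A\cong F$. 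Without some such concrete identification of the kernel as an admissible mono, the step is not a proof.

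For part (c), the epimorphism case is correct as you describe it: the sheaf-epi criterion supplies an admissible epi $p$ with $fg=p$, and the (weakly idempotent complete form of the) obscure axiom closes it out. The monomorphism case is \emph{not} a formal dual of this, because the exact-category topology has covers only by admissible epis; monomorphisms of sheaves are simply pointwise monomorphisms, with no local structure to exploit, and your appeal to ``admissible monos stable under pushouts'' does not by itself produce the required morphism $g$ with $gf$ an admissible mono. Bühler's wording ``dually'' refers to the statement, not to a literal dualisation of the argument; this case needs a separate verification that you have not supplied.
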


\begin{defn} Let $(\mathcal{A}, \mathcal{E})$ be an exact category and $\mathcal{C}$
be an abelian category. If there is a fully faithful additive exact functor
$i: \mathcal{A}\rightarrow\mathcal{C}$ that reflects exactness and  $[i(\mathcal{A})]$ is closed under extensions in $\C,$  we
say that $(\mathcal{A}, \mathcal{E})$ \textbf{is embedded} in $\mathcal{C}$, 
and that $i:\A\to \C$ is \textbf{the embedding of $\mathcal{A}$ in }$\mathcal{C}$. 
\end{defn}

Observe that,  having an exact category $(\mathcal{A}, \mathcal{E})$ such 
that $\A$ is a full subcategory of an abelian category $\mathcal{C}$, 
is not the same as having the exact category $(\mathcal{A}, \mathcal{E})$
embedded in $\mathcal{C}, $ via the inclusion $\mathcal{A}\subseteq\mathcal{C}$.
For example,  if $R$ is a ring and $\mathcal{E}$ is the class of all the 
splitting exact sequences in $\Modx[R]$,  then $(\Modx[R], \mathcal{E})$
is an exact category. Notice that $(\Modx[R], \mathcal{E})$ is not embedded
in $\Modx[R]$ unless $R$ be a semisimple ring. We will see a non-trivial
example of this in the section related with Mohamed's contexts in $\modu\,(\Lambda).$
\

In what follows,  we introduce a tilting theory in exact categories.
We can find precedents of this in different contexts.
As examples,  we can cite Maurice Auslander and {\O}yvind Solberg's
relative tilting theory \cite{auslander1993relative2},  or the generalization
of such theory developed by Soud Khalifa Mohamed in \cite{mohamed2009relative}.
In this section,  we will approach to the tilting theory recently developed
by Bin Zhu and  Xiao Zhuang in \cite{zhu2019tilting}.
\

Let $\p[\mathcal{A}][\mathcal{E}]$ be an exact category,  $n\in\mathbb{N}$
and $\mathcal{X}\subseteq\mathcal{A}$. An \textbf{$\mathcal{X}$-resolution}
(of length $n$) in $\p[\mathcal{A}][\mathcal{E}]$ of $A\in\mathcal{A}$
 is a sequence $X_{n}\overset{d_{n}}{\rightarrow}X_{n-1}\rightarrow\cdots\rightarrow X_{1}\overset{d_{1}}{\rightarrow}X_{0}\overset{d_{0}}{\rightarrow}A$ of morphisms  in $\mathcal{A}$ 
such that there is a family $\{ K_{i+1}\overset{g_{i}}{\rightarrow}X_{i}\overset{f_{i}}{\rightarrow}K_{i}\} _{i=0}^{n}$
in $\mathcal{E}$ with $K_{n+1} =0,$ $K_n=X_n, $ $K_{0} =A,$ $g_{n-1} =d_n, $ $f_n =1_{X_n}, $ $f_{0} =d_{0}$
and $d_{i}=g_{i-1}f_{i}$ $\forall i\in[1, n-1]$. We denote by $\mathcal{X}_{n, \mathcal{E}}^{\wedge}$
the class of all the objects $A\in\mathcal{A}$ admitting an $\mathcal{X}$-resolution
in $\p[\mathcal{A}][\mathcal{E}]$ of length $\leq n$. We define the class $\mathcal{X}_{\mathcal{E}}^{\wedge}: =\bigcup_{n=0}^{\infty}\mathcal{X}_{n, \mathcal{E}}^{\wedge}$
and,  for any $A\in\mathcal{X}_{\mathcal{E}}^{\wedge}$,  the $\mathcal{X}$-resolution
dimension of $A$ is $\resdimx{\mathcal{X}, \mathcal{E}}A: =\min\left\{ n\in\mathbb{N}\, |\: A\in\mathcal{X}_{n, \mathcal{E}}^{\wedge}\right\} $.
The notion of  $\mathcal{X}$-coresolution,  the classes $\mathcal{X}_{\mathcal{E}, n}^{\vee}$ and 
$\mathcal{X}_{\mathcal{E}}^{\vee}$,  and the $\X$-coresolution dimension 
$\coresdimr{\mathcal{X}, \mathcal{E}}A{\, }$ of $A$ are defined dually.

Given $A, B\in\mathcal{A}$, $\Extx[1][\mathcal{A}][B][A]$
or $\Extx[1][(\mathcal{A}, \mathcal{E})][B][A]$ denote the class of extensions of $A$ by $B$  whose elements are equivalence classes of $\mathcal{E}(B, A).$ As in the case of abelian categories, we have that 
$\Extx[1][\mathcal{A}][B][A]$ 
is an abelian group with the Baer's sum,  where $0$ is the equivalence
class of exact sequence  
$0\to A\stackrel{\left(\begin{smallmatrix}1\\
0
\end{smallmatrix}\right)}{\rightarrow}A\oplus B\stackrel{\left(\begin{smallmatrix}0 & 1\end{smallmatrix}\right)}{\rightarrow}B\to 0\mbox{.}$
\

Let $\p[\mathcal{A}][\mathcal{E}]$  be with enough
$\mathcal{E}$-projectives and $\mathcal{E}$-injectives. Then,  any $A\in\mathcal{A}$ admits short exact sequences 
$0\to A\rightarrow I\rightarrow A^{1}\to 0$  and 
$0\to A_{1}\rightarrow P\rightarrow A\to 0$
in $\mathcal{E}, $ with $I\in\Injx[\mathcal{A}][\mathcal{E}]$ and $P\in\Projx[\mathcal{A}][\mathcal{E}]$.
In this case,  $A^{1}$ is a \textbf{first cosyzygy} of $A$
and $A_{1}$ is a \textbf{first syzygy} of $A$. Define an
\textbf{$n$-th cosyzygy} (\textbf{$n$-th syzygy},  respectively)
by recursion as the cosyzygy (syzygy) of the $(n-1)$-th cosyzygy
($(n-1)$-th syzygy). The class of all the $n$-th cosyzygies of $A$ is denoted
by $\Sigma^{n}(A)$,  and the class of all the $n$-th syzygies of $A$ is denoted by 
$\Omega_{n}(A)$. In \cite[Lem.  5.1]{liu2019hearts} it is shown that, for $k\geq2,$  $
\Extx[1][\mathcal{A}][X][Y^{k-1}]\cong\Extx[1][\mathcal{A}][X_{k-1}][Y]\quad\forall\,  X_{k-1}\in\Omega^{k-1}(X), \, \forall\,  Y^{k-1}\in\Sigma^{k-1}(Y).$ 
Such group is called the $k$-th Ext of $X$ and $Y, $ and it is denoted by 
  $\Extx[k][\mathcal{A}][X][Y]$ or by $\Extx[k][(\mathcal{A}, \mathcal{E})][X][Y]$. For $\mathcal{Z}\subseteq\mathcal{A}, $ we consider the right orthogonal class 
$\mathcal{Z}^{\bot_{\mathcal{E}}}:  =\left\{ A\in\mathcal{A}\;|\;\Extx[i][\mathcal{A}][Z][A]=0\quad\forall Z\in\mathcal{Z}, \, \forall i>0\right\},$ and  the left orthogonal class ${}^{\bot_{\mathcal{E}}}\mathcal{Z}$ is defined dually. For $A\in\mathcal{A}$ and $\mathcal{X}\subseteq\mathcal{A}, $ we  consider its $\X$-projective dimension 
$
\pdr[\mathcal{E}, \mathcal{X}][A]: =\min\left\{ n\in\mathbb{N}\;|\;\Extx[i][\mathcal{A}][A][-]|_{\mathcal{X}}=0\quad\forall i>n\right\} \mbox{.}
$

Given a class $\mathcal{T}\subseteq\mathcal{A}$,  we define
$
\pdr[\mathcal{E}, \mathcal{X}][\mathcal{T}]: =\sup\left\{ \pdr[\mathcal{E}, \mathcal{X}][T]\;|\;T\in\mathcal{T}\right\} \mbox{.}
$
In case  $\mathcal{X}=\mathcal{A}$,  we set $\pdr[\mathcal{E}][A]: =\pdr[\mathcal{E}, \mathcal{A}][A]$
and $\pdr[\mathcal{E}][\mathcal{T}]: =\pdr[\mathcal{E}, \mathcal{A}][\mathcal{T}]$.
Notice that, by  \cite[Lem. 3]{zhu2019tilting}, we have that, for any  $X\in\mathcal{A},$ $\pdr[\mathcal{E}][X]\leq n$ $\Leftrightarrow$
  $X\in(\Projx[\mathcal{A}][\mathcal{E}]){}_{\mathcal{E},n}^{\wedge}$. 

\begin{lem}\label{lem:  lema notas p120} Let $\p[\mathcal{A}][\mathcal{E}]$
be an exact category with enough $\mathcal{E}$-projectives and $\mathcal{E}$-injectives.
Then,  for $\mathcal{X}, \mathcal{Y}\subseteq\mathcal{A}$,  the following
statements hold true. 
\begin{itemize}
\item[$\mathrm{(a)}$] $\pdr[\mathcal{E}, \mathcal{Y}][\mathcal{X}_{\mathcal{E}}^{\vee}]=\pdr[\mathcal{E}, \mathcal{Y}][\mathcal{X}].$
\item[$\mathrm{(b)}$] $\mathcal{X}_{\mathcal{E}}^{\vee}\subseteq{}^{\bot_{\mathcal{E}}}\left(\mathcal{X}^{\bot_{\mathcal{E}}}\right)$.
\end{itemize}
\end{lem}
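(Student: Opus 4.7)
The plan is to prove both items together by a dimension-shifting induction on the $\mathcal{X}$-coresolution length, relying on the long exact sequence for $\Extx[\ast][\mathcal{A}][-][-]$ that is available in $(\mathcal{A},\mathcal{E})$ thanks to the presence of enough $\mathcal{E}$-projectives and $\mathcal{E}$-injectives (so that $k$-th cosyzygies exist and the formula of \cite[Lem. 5.1]{liu2019hearts} yields a well-defined $\Extx[k][\mathcal{A}][-][-]$ together with its usual long exact sequence associated to any short exact sequence in $\mathcal{E}$).

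For item (a), I would first note the trivial inequality $\pdr[\mathcal{E},\mathcal{Y}][\mathcal{X}]\leq \pdr[\mathcal{E},\mathcal{Y}][\mathcal{X}_{\mathcal{E}}^{\vee}]$ coming from $\mathcal{X}\subseteq \mathcal{X}_{\mathcal{E}}^{\vee}$. For the reverse, set $d:=\pdr[\mathcal{E},\mathcal{Y}][\mathcal{X}]$ (which we may assume finite, the infinite case being trivial), take $A\in\mathcal{X}_{\mathcal{E}}^{\vee}$ with a coresolution of length $n$ and show by induction on $n$ that $\pdr[\mathcal{E},\mathcal{Y}][A]\leq d$. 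The base $n=0$ gives $A\in\mathcal{X}$ directly. In the inductive step, split the coresolution at the first term to obtain a short exact sequence $0\to A\to X_0\to K_1\to 0$ in $\mathcal{E}$ with $X_0\in\mathcal{X}$ and $K_1\in\mathcal{X}_{\mathcal{E}}^{\vee}$ of coresolution length $n-1$; then for any $Y\in\mathcal{Y}$ and $i>d$ the long exact sequence $\Extx[i][\mathcal{A}][K_1][Y]\to \Extx[i][\mathcal{A}][X_0][Y]\to \Extx[i][\mathcal{A}][A][Y]\to \Extx[i+1][\mathcal{A}][K_1][Y]$ has its outer terms zero by the induction hypothesis and $X_0\in\mathcal{X}$, so the middle is zero too.

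For item (b), proceed by the very same induction on the coresolution length of $A\in\mathcal{X}_{\mathcal{E}}^{\vee}$. When the length is $0$ we have $A\in\mathcal{X}\subseteq{}^{\bot_{\mathcal{E}}}(\mathcal{X}^{\bot_{\mathcal{E}}})$. For the inductive step, split off the same short exact sequence $0\to A\to X_0\to K_1\to 0$; by induction $K_1\in{}^{\bot_{\mathcal{E}}}(\mathcal{X}^{\bot_{\mathcal{E}}})$, while $X_0\in\mathcal{X}$ already lies in ${}^{\bot_{\mathcal{E}}}(\mathcal{X}^{\bot_{\mathcal{E}}})$. Fixing $B\in\mathcal{X}^{\bot_{\mathcal{E}}}$ and $i\geq 1$, the long exact sequence
\[
\Extx[i][\mathcal{A}][X_0][B]\to \Extx[i][\mathcal{A}][A][B]\to \Extx[i+1][\mathcal{A}][K_1][B]
\]
has both outer terms equal to zero, hence $\Extx[i][\mathcal{A}][A][B]=0$; thus $A\in{}^{\bot_{\mathcal{E}}}(\mathcal{X}^{\bot_{\mathcal{E}}})$.

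The only delicate point is making sure the long exact sequence in $\Extx[\ast][\mathcal{A}][-][-]$ behaves as in the abelian case; but this is exactly what the hypothesis of enough $\mathcal{E}$-projectives and $\mathcal{E}$-injectives guarantees through iterated (co)syzygies, as recalled in the paper's set-up. Once that tool is in hand, both statements are routine dimension-shifts and no further obstacle appears.
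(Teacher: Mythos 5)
Your proposal is correct, and it addresses the right point of potential concern. The paper's own "proof" of this lemma is simply \emph{We let it to the reader}, so there is nothing to compare against; your argument is exactly the sort of routine verification the authors intended to leave out, and it is the natural transliteration to the exact-category setting of the abelian-category dimension-shift used in the companion paper (cf.\ the abelian version in \cite[Lem. 4.3]{parte1}). Both items are handled correctly by splitting the coresolution at its first term and invoking the contravariant long exact sequence of $\Ext_{\mathcal{A}}^{\ast}$; in item (a) the induction hypothesis kills $\Ext_{\mathcal{A}}^{i+1}(K_1,Y)$ and the hypothesis $d=\pd_{\mathcal{E},\mathcal{Y}}(\mathcal{X})$ kills $\Ext_{\mathcal{A}}^{i}(X_0,Y)$ for $i>d$, while in item (b) the same two entries vanish because $X_0\in\mathcal{X}$ and $K_1$ is already known to lie in ${}^{\bot_{\mathcal{E}}}(\mathcal{X}^{\bot_{\mathcal{E}}})$.

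The one thing you correctly flag but slightly undersell is the availability of the long exact sequence for $\Ext_{\mathcal{A}}^{\ast}$ in $(\mathcal{A},\mathcal{E})$. It is worth being explicit that \cite[Lem. 5.1]{liu2019hearts}, cited in the paper's set-up, only furnishes the well-definedness of $\Ext_{\mathcal{A}}^{k}$ via syzygies and cosyzygies; the long exact sequence itself is a separate (though standard) consequence of having enough $\mathcal{E}$-projectives (or dually, enough $\mathcal{E}$-injectives), obtained by lifting an admissible short exact sequence to a short exact sequence of projective resolutions and taking cohomology. Once you state that explicitly, your induction closes with no gaps. One cosmetic remark: when you quote the long exact sequence for item (a) you include the term $\Ext_{\mathcal{A}}^{i}(K_1,Y)$ on the far left, but you never use it; the three-term fragment $\Ext_{\mathcal{A}}^{i}(X_0,Y)\to\Ext_{\mathcal{A}}^{i}(A,Y)\to\Ext_{\mathcal{A}}^{i+1}(K_1,Y)$, as you actually write it for item (b), is all that is needed in both parts.
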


\begin{proof} We let it to the reader.
\end{proof}

Let $\p[\mathcal{A}][\mathcal{E}]$ be an exact category,  with enough
$\mathcal{E}$-projectives and $\mathcal{E}$-injectives,  and let $\omega$, $\mathcal{X}$
$\subseteq\mathcal{A}$.
We say that $\omega$ is a \textbf{relative $\mathcal{E}$-cogenerator} in $\mathcal{X}$ if $\omega\subseteq\mathcal{X}$ and every $X\in\mathcal{X}$
admits a short exact sequence $0\to X\rightarrow W\rightarrow X'\to 0$ in $\mathcal{E}$ such that $W\in\omega$ and $X'\in\mathcal{X}$.  Moreover,  $\omega$ is \textbf{$\mathcal{X}$-injective}
if $\idr[\mathcal{E}, \mathcal{X}][\omega]=0$. The
\textbf{relative $\mathcal{E}$-generators} in $\mathcal{X}$ and the \textbf{$\mathcal{X}$-projectives} are defined dually.

\begin{defn}
Let $\p[\mathcal{A}][\mathcal{E}]$ be an exact category with enough
$\mathcal{E}$-projectives and $\mathcal{E}$-injectives. A class
$\mathcal{T}\subseteq\mathcal{A}$ is called \textbf{small $n$-tilting} in 
$\p[\mathcal{A}][\mathcal{E}]$ if the following conditions hold true.
\begin{description}
\item [{(TEC0)}] $\mathcal{T}=\addx[\mathcal{T}].$
\item [{(TEC1)}] $\pdr[\mathcal{E}][\mathcal{T}]\leq n.$
\item [{(TEC2)}] $\mathcal{T}\subseteq\mathcal{T}^{\bot_{\mathcal{E}}}.$
\item [{(TEC3)}] There is a class $\omega\subseteq\mathcal{T}_{\mathcal{E}}^{\vee}$ which is a relative $\mathcal{E}$-generator in $\mathcal{A}.$
\item [{(TEC4)}] $\mathcal{T}$ is precovering in $\mathcal{T}^{\bot_{\mathcal{E}}}.$
\end{description}
An object $T\in\mathcal{A}$ is \textbf{small $n$-tilting
in $(\mathcal{A}, \mathcal{E})$} if $\addx[T]$ is a small $n$-tilting class
in $\p[\mathcal{A}][\mathcal{E}]$. 
\end{defn}

\begin{defn}\cite[Def. 7]{zhu2019tilting} 
Let $\p[\mathcal{A}][\mathcal{E}]$
be an exact category with enough $\mathcal{E}$-projectives and $\mathcal{E}$-injectives.
A class $\mathcal{T}\subseteq\mathcal{A}$ is called \textbf{Zhu-Zhuang $n$-tilting
} if the following conditions hold true.
\begin{description}
\item [{(ZZT0)}] $\mathcal{T}=\addx[\mathcal{T}].$
\item [{(ZZT1)}] $\pdr[\mathcal{E}][\mathcal{T}]\leq n.$
\item [{(ZZT2)}] $\mathcal{T}$ is a relative $\mathcal{E}$-generator
in $\mathcal{T}^{\bot_{\mathcal{E}}}$.
\end{description}
An object $T\in\mathcal{A}$ is called \textbf{Zhu-Zhuang $n$-tilting}
if $\addx[T]$ is a Zhu-Zhuang $n$-tilting class.
\end{defn}

 Let $\p[\mathcal{A}][\mathcal{E}]$
be an exact category with enough $\mathcal{E}$-projectives and 
$\mathcal{E}$-injectives,  $\mathcal{T}\subseteq\mathcal{A}$,  and let $n\geq0$. Following \cite[Sect. 4]{zhu2019tilting},  we denote by $\mbox{Pres}_{(\mathcal{A}, \mathcal{E})}^{n}(\mathcal{T})$ (or $\mbox{Pres}_{\mathcal{E}}^{n}(\mathcal{T})$)
the class of all the objects $X\in\A$ admitting a family $\left\{0\to X_{i+1}\rightarrow T_{i}\rightarrow X_{i}\to 0\right\} _{i=1}^{n}$ of short exact sequences in $\mathcal{E}$, 
where $T_{i}\in\mathcal{T}$ $\forall i\in[1, n]$ and $X_{1}=X$.

\begin{thm}\cite[Theorem 1]{zhu2019tilting}\label{thm: teo exactas} Let $\p[\mathcal{A}][\mathcal{E}]$
be an exact category with enough $\mathcal{E}$-projectives and $\mathcal{E}$-injectives, and   $\mathcal{T}=\addx[\mathcal{T}]\subseteq\mathcal{A}$
such that every object in $\mathcal{T}^{\bot_{\mathcal{E}}}$ admits
a $\mathcal{T}$-precover. Then,  $\mathcal{T}$ is a Zhu-Zhuang $n$-tilting
class if and only if $\operatorname{Pres}_{\mathcal{E}}^{m}(\mathcal{T})=\mathcal{T}^{\bot_{\mathcal{E}}}$, 
where $m: =\max\{1, n\}$.
\end{thm}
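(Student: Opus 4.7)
The plan is to reduce the claim to the abelian $n$-$\X$-tilting theory of Section 3 via the Yoneda embedding of the small exact category $(\A,\mathcal{E})$. By Theorem \ref{thm: exacta se sumerge en abeliana}, there is a fully faithful exact functor $i:\A\to\C:=\operatorname{Lex}(\A,\mathcal{E})$ that reflects exactness and whose image $\X:=[i(\A)]$ is closed under extensions in the abelian category $\C$. The key translation I need is the Ext-compatibility $\Ext^{k}_{(\A,\mathcal{E})}(A,B)\cong\Ext^{k}_{\C}(i(A),i(B))$ for all $k\geq1$ and $A,B\in\A$, proved by induction on $k$ by matching admissible Yoneda $k$-fold extensions with ordinary ones in $\C$ whose intermediate terms lie in $\X$. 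Under this identification, $\T^{\bot_{\mathcal{E}}}=\T^{\bot}\cap\X$ and $\operatorname{Pres}_{\mathcal{E}}^{m}(\T)=\operatorname{Fac}_{m}^{\X}(\T)\cap\X$. Moreover, the enough $\mathcal{E}$-injectives supply $\alpha:=i(\operatorname{Inj}_{\mathcal{E}}(\A))\subseteq\X^{\bot}\cap\T^{\bot}$ as an $\X$-injective relative cogenerator in $\X$ (axiom (T4)), and the $\T$-precover hypothesis is (T5) verbatim since $i$ is fully faithful.

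For the forward direction I would bypass the embedding and argue directly by dimension shifting. Assuming $\T$ is Zhu-Zhuang $n$-tilting, axiom (ZZT2) gives $\T\subseteq\T^{\bot_{\mathcal{E}}}$, and iterating (ZZT2) $m$ times yields $\T^{\bot_{\mathcal{E}}}\subseteq\operatorname{Pres}_{\mathcal{E}}^{m}(\T)$. Conversely, for $X\in\operatorname{Pres}_{\mathcal{E}}^{m}(\T)$ with family $\{0\to X_{i+1}\to T_{i}\to X_{i}\to0\}_{i=1}^{m}$ and $X_{1}=X$, the long exact sequences together with $\Ext^{j}(T,T_{i})=0$ for $T\in\T$ and $j\geq1$ telescope to $\Ext^{k}(T,X)\cong\Ext^{k+m}(T,X_{m+1})$ for all $k\geq1$; since $k+m\geq1+n>n\geq\pdr[\mathcal{E}][T]$ by (ZZT1), this vanishes, so $X\in\T^{\bot_{\mathcal{E}}}$.

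For the converse direction, the translated hypothesis reads $\T^{\bot}\cap\X=\operatorname{Fac}_{m}^{\X}(\T)\cap\X$, so Theorem \ref{prop: primera generalizacion} applied at level $m$ (its axioms (T4), (T5) being already verified) shows that $\T$ is $m$-$\X$-tilting. Theorem \ref{thm: el par n-X-tilting}(b) then gives $\operatorname{Fac}_{k}^{\X}(\T)\cap\X=\T^{\bot}\cap\X$ for every $k\geq m$; taking $k=m+1$ and extracting the first admissible short exact sequence from the resulting presentation of an arbitrary $X\in\T^{\bot_{\mathcal{E}}}$ produces an exact sequence $0\to X_{2}\to T_{1}\to X\to 0$ in $\mathcal{E}$ with $T_{1}\in\T$ and $X_{2}\in\operatorname{Fac}_{m}^{\X}(\T)\cap\X=\T^{\bot_{\mathcal{E}}}$, which is precisely (ZZT2). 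Axiom (ZZT1) then follows from $\pdr[\X][\T]\leq m=n$ for $n\geq1$.

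The main technical obstacles I anticipate are twofold. First, justifying the Ext-compatibility under the embedding is delicate because $\mathcal{E}$-injective objects of $\A$ need not remain injective in $\C$, so one cannot compare the two Ext theories via injective resolutions but must instead argue by matching $k$-fold Yoneda extensions by induction on $k$, crucially using that $\X$ is closed under extensions in $\C$. Second, the edge case $n=0$ only yields $m=1$, and the general machinery produces only $\pdr[\mathcal{E}][\T]\leq 1$ rather than the required $\pdr[\mathcal{E}][\T]=0$; in this case one must argue separately that $\operatorname{Inj}_{\mathcal{E}}(\A)\subseteq\T^{\bot_{\mathcal{E}}}=\operatorname{Pres}_{\mathcal{E}}^{1}(\T)$, combined with a cosyzygy argument using enough $\mathcal{E}$-injectives and the precover hypothesis, forces $\T^{\bot_{\mathcal{E}}}=\A$ and hence $\T\subseteq\operatorname{Proj}_{\mathcal{E}}(\A)$.
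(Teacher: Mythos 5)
The paper does not prove this statement; it is imported verbatim as \cite[Theorem 1]{zhu2019tilting}, so there is no internal proof to compare your proposal against. Your forward direction (Zhu-Zhuang $n$-tilting $\Rightarrow$ $\operatorname{Pres}_{\mathcal{E}}^{m}(\T)=\T^{\bot_{\mathcal{E}}}$) is correct and self-contained: iterating (ZZT2) gives $\T^{\bot_{\mathcal{E}}}\subseteq\operatorname{Pres}_{\mathcal{E}}^{m}(\T)$, and the dimension-shift along the presentation chain, using $\T\subseteq\T^{\bot_{\mathcal{E}}}$ and $\pdr[\mathcal{E}][\T]\leq n$, gives the reverse inclusion.

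Your converse direction has genuine gaps. First, invoking Theorem \ref{thm: exacta se sumerge en abeliana} requires $\mathcal{A}$ to be (skeletally) small, which is not among the hypotheses of the theorem you are proving. More seriously, Theorem \ref{prop: primera generalizacion} requires $\X=\smdx[\X]$ and $\T=\smdx[\T]$ inside $\C$; for $\X=[i(\A)]$ and $\T=[i(\T)]$ these translate precisely to idempotent completeness of $\A$ (cf.\ Remark \ref{rem: no sumerge}(2)), which again is not assumed. This is why the paper's own embedding-based results in this circle (Theorem \ref{thm: tilting exacto}, Corollary \ref{cor: 2.177''}, Corollary \ref{cor: coro p 131}) all carry an explicit ``idempotent complete'' hypothesis that Theorem \ref{thm: teo exactas} does not. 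Second, your Ext-compatibility claim is not established for the B\"uhler embedding $i:\A\to\operatorname{Lex}(\A,\mathcal{E})$: the paper's Lemma \ref{lem: rel exact}(c) proves $\Extx[k][\C][i(A)][i(A')]\cong\Extx[k][\A][A][A']$ only under the hypothesis $i(\operatorname{Proj}_{\mathcal{E}}(\A))\subseteq\Proj(\C)$, which holds for the embedding $i:\A\to\Modx[\mathcal{P}^{op}]$ of Lemma \ref{lem: 2.177'} but fails for $\operatorname{Lex}(\A,\mathcal{E})$ in general, and your proposed induction on Yoneda $k$-fold extensions does not explain how an arbitrary $\C$-extension of $i(A)$ by $i(B)$ can be replaced by one whose intermediate terms lie in $[i(\A)]$ when $k\geq2$. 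Finally, your $n=0$ patch is circular: to show $\T^{\bot_{\mathcal{E}}}=\A$ you would need $\Ext^{1}_{\A}(T,A)=0$ for every $A\in\A$, which is exactly what one is trying to prove; the cosyzygy sequence $0\to A\to I\to A^{1}\to 0$ only yields $\Ext^{1}_{\A}(T,A)\cong\operatorname{coker}(\Homx[\A][T][I]\to\Homx[\A][T][A^{1}])$, and the surjectivity of that map is equivalent to the vanishing you want, so the precover hypothesis on $\T^{\bot_{\mathcal{E}}}$ gives no traction on arbitrary $A$. You should either prove the converse directly inside the exact category (as Zhu--Zhuang do), or be explicit that your route only recovers Corollary \ref{cor: coro p 131}, which carries the extra idempotent-completeness and smallness assumptions.
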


\begin{prop}\cite[Rk. 4(1)]{zhu2019tilting}\label{prop: zhu} Let $\p[\mathcal{A}][\mathcal{E}]$
be an exact category with enough $\mathcal{E}$-projectives and $\mathcal{E}$-injectives, 
and let $\mathcal{T}\subseteq\mathcal{A}$ be a Zhu-Zhuang $n$-tilting
class. Then,  the following statements hold true.
\begin{itemize}
\item[$\mathrm{(a)}$] $\mathcal{T}$ is a $\mathcal{T}^{\bot_{\mathcal{E}}}$-projective
relative $\mathcal{E}$-generator in $\mathcal{T}^{\bot_{\mathcal{E}}}.$
\item[$\mathrm{(b)}$] $\Projx[\mathcal{A}][\mathcal{E}]\subseteq\mathcal{T}_{n, \mathcal{E}}^{\vee}.$
\item[$\mathrm{(c)}$] $\left(\mathcal{T}^{\bot_{\mathcal{E}}}\right)_{\mathcal{E}}^{\vee}=\mathcal{A}$.
\end{itemize}
\end{prop}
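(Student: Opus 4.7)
Part (a) I would dispatch directly from the definitions. The axiom (ZZT2) asserts that $\mathcal{T}$ is a relative $\mathcal{E}$-generator in $\mathcal{T}^{\bot_{\mathcal{E}}}$, which already packages the inclusion $\mathcal{T}\subseteq\mathcal{T}^{\bot_{\mathcal{E}}}$ together with short exact sequences $0\to X'\to T\to X\to 0$ in $\mathcal{E}$ with $T\in\mathcal{T}$ and $X'\in\mathcal{T}^{\bot_{\mathcal{E}}}$ for every $X\in\mathcal{T}^{\bot_{\mathcal{E}}}$; so only the $\mathcal{T}^{\bot_{\mathcal{E}}}$-projectivity of $\mathcal{T}$ requires a remark, and this is nothing but the defining vanishing $\mathrm{Ext}^{k}_{\mathcal{A}}(T,X)=0$ for $T\in\mathcal{T}$, $X\in\mathcal{T}^{\bot_{\mathcal{E}}}$ and $k\geq 1$.

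For (b) my plan is to transfer the problem to an abelian envelope and quote the $n$-$\mathcal{X}$-tilting machinery built in Section 3. Concretely, I would invoke Theorem \ref{thm: exacta se sumerge en abeliana} to embed $(\mathcal{A},\mathcal{E})$ fully faithfully and exactly into $\mathcal{C}:=\operatorname{Lex}(\mathcal{A},\mathcal{E})$ via the Yoneda functor $i$, and set $\mathcal{X}:=[i(\mathcal{A})]$, which is closed under extensions in $\mathcal{C}$. Because $i$ reflects exactness, the canonical maps $\mathrm{Ext}^{k}_{(\mathcal{A},\mathcal{E})}(X,Y)\to\mathrm{Ext}^{k}_{\mathcal{C}}(iX,iY)$ are isomorphisms, so $\mathcal{T}^{\bot_{\mathcal{E}}}$ corresponds to $i(\mathcal{T})^{\bot}\cap\mathcal{X}$, we have $\mathrm{pd}_{\mathcal{X}}(i(\mathcal{T}))=\mathrm{pd}_{\mathcal{E}}(\mathcal{T})\leq n$, and every $\mathcal{E}$-projective $P$ gives an $\mathcal{X}$-projective object $i(P)\in\mathcal{X}\cap{}^{\bot}\mathcal{X}$. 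Under this translation $i(\mathcal{T})$ becomes an $n$-$\mathcal{X}$-tilting class in $\mathcal{C}$, and Theorem \ref{thm: el par n-X-tilting}(a) yields $i(P)\in{}^{\bot}(i(\mathcal{T})^{\bot})\cap\mathcal{X}=(i(\mathcal{T})\cap\mathcal{X})_{\mathcal{X}}^{\vee}\cap\mathcal{X}$. I would then apply Lemma \ref{lem: inf3}(a) to bound $\operatorname{coresdim}_{i(\mathcal{T})\cap\mathcal{X},\mathcal{X}}(i(P))\leq\mathrm{pd}_{\mathcal{X}}(i(\mathcal{T}))\leq n$, producing an exact sequence $0\to i(P)\to T_{0}\to\cdots\to T_{n}\to 0$ in $\mathcal{C}$ with $T_{j}\in i(\mathcal{T})\cap\mathcal{X}$ and intermediate cosyzygies in $\mathcal{X}$. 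Since $i$ reflects exactness and $\mathcal{X}$ is extension-closed, this sequence lifts to $(\mathcal{A},\mathcal{E})$, exhibiting $P\in\mathcal{T}^{\vee}_{n,\mathcal{E}}$.

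Part (c) fits in the same framework: I would apply Lemma \ref{lem: inf1}(b) to the $n$-$\mathcal{X}$-tilting class $i(\mathcal{T})$ in $\mathcal{C}$, obtaining $\mathcal{X}\subseteq (i(\mathcal{T})^{\bot}\cap\mathcal{X})_{\mathcal{X},n}^{\vee}$. For any $A\in\mathcal{A}$, the object $i(A)\in\mathcal{X}$ then admits a $(i(\mathcal{T})^{\bot}\cap\mathcal{X})_{\mathcal{X}}$-coresolution of length at most $n$ in $\mathcal{C}$; reflecting exactness through $i$ and using the identification $i(\mathcal{T}^{\bot_{\mathcal{E}}})=i(\mathcal{T})^{\bot}\cap\mathcal{X}$ gives a length-$n$ $\mathcal{T}^{\bot_{\mathcal{E}}}$-coresolution of $A$ in $(\mathcal{A},\mathcal{E})$, whence $\mathcal{A}\subseteq(\mathcal{T}^{\bot_{\mathcal{E}}})^{\vee}_{\mathcal{E}}$. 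The main obstacle I foresee is verifying carefully that the Zhu-Zhuang axioms for $\mathcal{T}$ in $(\mathcal{A},\mathcal{E})$ transfer to the full list of $n$-$\mathcal{X}$-tilting axioms for $i(\mathcal{T})$ in $\mathcal{C}$: the homological axioms (T0)-(T2) fall out of the Ext-isomorphism above, but the approximation conditions (T3)-(T5) rely on harnessing the enough-$\mathcal{E}$-projective/injective hypothesis together with Theorem \ref{thm: teo exactas} to produce, in $\mathcal{C}$, an $\mathcal{X}$-projective relative generator contained in $i(\mathcal{T})_{\mathcal{X}}^{\vee}$ and the required precovers for $i(\mathcal{T})^{\bot}\cap\mathcal{X}$.
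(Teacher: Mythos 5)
The paper gives no proof of this proposition: it is imported verbatim from \cite[Rk.~4(1)]{zhu2019tilting} and treated as a known fact. Your argument is therefore a genuinely new proof attempt, and it needs to be evaluated on its own merits.

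Part (a) is fine. The statement ``$\mathcal{T}$ is a relative $\mathcal{E}$-generator in $\mathcal{T}^{\bot_{\mathcal{E}}}$'' is literally axiom (ZZT2), and the $\mathcal{T}^{\bot_{\mathcal{E}}}$-projectivity of $\mathcal{T}$ is tautological from the meaning of $\mathcal{T}^{\bot_{\mathcal{E}}}$. Nothing more is needed.

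Parts (b) and (c), however, rest on a transfer to an abelian envelope that does not go through as written, for two independent reasons. First, Theorem~\ref{thm: exacta se sumerge en abeliana} (the Gabriel--Quillen embedding into $\operatorname{Lex}(\mathcal{A},\mathcal{E})$) is stated only for \emph{small} exact categories, whereas Proposition~\ref{prop: zhu} carries no size hypothesis on $\mathcal{A}$; so the functor $i$ you want to use need not exist. Second, and more seriously, even if one grants a suitable embedding, the class $i(\mathcal{T})$ is not known to satisfy axiom (T5): Zhu--Zhuang $n$-tilting has \emph{no} precovering condition, and in this paper it is precisely the extra hypothesis ``$\mathcal{T}$ precovering in $\mathcal{T}^{\bot_{\mathcal{E}}}$'' that upgrades a Zhu--Zhuang tilting class to a small $n$-tilting class (Theorem~\ref{thm:  teo2 notas}), which in turn is what Theorem~\ref{thm: tilting exacto} needs before it can produce an $n$-$\mathcal{X}$-tilting class in $\operatorname{Mod}(\mathcal{P}^{op})$. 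The reference you invoke to fill this gap, Theorem~\ref{thm: teo exactas}, \emph{assumes} the precover condition rather than producing it, so it cannot help. Without (T5), neither Theorem~\ref{thm: el par n-X-tilting}(a) nor the equality $^{\bot}(i(\mathcal{T})^{\bot})\cap\mathcal{X}=(i(\mathcal{T})\cap\mathcal{X})_{\mathcal{X}}^{\vee}\cap\mathcal{X}$ is available, and your chain of inclusions for (b) breaks at the first link. (There is also a subtler issue for (T4): the Yoneda/Lex embedding preserves $\mathcal{E}$-projectives, but it is not claimed to carry $\mathcal{E}$-injectives to $[i(\mathcal{A})]$-injective objects, so (T4) for $i(\mathcal{T})$ is not automatic either.)

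A direct proof inside $(\mathcal{A},\mathcal{E})$ avoids all of this and is what Zhu--Zhuang's remark amounts to. For (c): given $A\in\mathcal{A}$, choose an $\mathcal{E}$-injective coresolution $0\to A\to I_{0}\to\cdots\to I_{n-1}\to\Sigma^{n}(A)\to 0$; the $I_{j}$ are $\mathcal{E}$-injective hence lie in $\mathcal{T}^{\bot_{\mathcal{E}}}$, and the $n$-th cosyzygy satisfies $\operatorname{Ext}^{k}_{\mathcal{A}}(T,\Sigma^{n}(A))\cong\operatorname{Ext}^{k+n}_{\mathcal{A}}(T,A)=0$ for all $T\in\mathcal{T}$ and $k\geq1$ by (ZZT1), so $\Sigma^{n}(A)\in\mathcal{T}^{\bot_{\mathcal{E}}}$ as well; thus $A\in(\mathcal{T}^{\bot_{\mathcal{E}}})^{\vee}_{n,\mathcal{E}}$. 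For (b): take an $\mathcal{E}$-projective $P$, apply (c) to get a length-$\leq n$ coresolution by objects of $\mathcal{T}^{\bot_{\mathcal{E}}}$, and then use (ZZT2) to replace it, step by step via pullbacks against the generator surjections from $\mathcal{T}$ (using that $P$ and the successive cosyzygies split off these pullbacks because the pullback is along an $\mathcal{E}$-projective), by a coresolution by $\mathcal{T}$ of length $\leq n$. This argument uses only (ZZT1)--(ZZT2) and the enough-injectives/projectives hypothesis, and it requires neither smallness nor (T5).
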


\begin{lem}\label{lem:  lema1 notas} For a idempotent complete exact category
$\p[\mathcal{A}][\mathcal{E}]$ with enough $\mathcal{E}$-projectives
and $\mathcal{E}$-injectives,  $\mathcal{T}\subseteq\mathcal{A}$
and $\omega\subseteq\mathcal{T}_{\mathcal{E}}^{\vee}$ a relative
$\mathcal{E}$-generator in $\mathcal{A}$,  the following conditions
are satisfied.
\begin{itemize}
\item[$\mathrm{(a)}$] $\mathcal{T}^{\bot_{\mathcal{E}}}\subseteq\Pres[\mathcal{T}][\mathcal{E}][1]$.
\item[$\mathrm{(b)}$] If $\mathcal{T}\subseteq\mathcal{T}^{\bot_{\mathcal{E}}}$ and $\mathcal{T}$
is precovering in $\mathcal{T}^{\bot_{\mathcal{E}}}$,  then $\mathcal{T}$
is a relative $\mathcal{E}$-generator in $\mathcal{T}^{\bot_{\mathcal{E}}}$.
\end{itemize}
\end{lem}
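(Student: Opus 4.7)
The overall strategy is to transcribe the arguments of Lemma \ref{lem: inf4} and Lemma \ref{lem: inf5}(a) from the abelian setting into the exact category $(\mathcal{A},\mathcal{E})$, invoking Lemma \ref{lem:  lema notas p120}(b) in place of Lemma \ref{lem: inf2}(a) and using the exact-structure axioms (closure of admissible epis under composition, and the ``obscure axiom'' available in weakly idempotent complete exact categories) where the abelian proof simply used epimorphisms.

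For (a), I would take $A\in\mathcal{T}^{\bot_{\mathcal{E}}}$ and, using that $\omega$ is a relative $\mathcal{E}$-generator in $\mathcal{A}$, produce a short exact sequence $0\to K\to W\xrightarrow{a}A\to 0$ in $\mathcal{E}$ with $W\in\omega$. Since $\omega\subseteq\mathcal{T}_{\mathcal{E}}^{\vee}$, one further obtains a short exact sequence $0\to W\xrightarrow{b}T_{0}\to C\to 0$ in $\mathcal{E}$ with $T_{0}\in\mathcal{T}$ and $C\in\mathcal{T}_{\mathcal{E}}^{\vee}$. Forming the pushout of $a$ and $b$ (which exists in $(\mathcal{A},\mathcal{E})$ along admissible monos) yields two conflations: a horizontal one $0\to K\to T_{0}\xrightarrow{x}B'\to 0$ and a vertical one $0\to A\xrightarrow{t}B'\to C\to 0$. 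By Lemma \ref{lem:  lema notas p120}(b) applied with $\mathcal{X}:=\mathcal{T}$, we have $C\in{}^{\bot_{\mathcal{E}}}(\mathcal{T}^{\bot_{\mathcal{E}}})$, so $\Ext^{1}_{(\mathcal{A},\mathcal{E})}(C,A)=0$ and the vertical conflation splits, giving a retraction $y\colon B'\to A$ with $yt=1_{A}$. The composite $yx\colon T_{0}\to A$ is then a composition of admissible epis (the split-epi $y$ is admissible since its kernel $C$ exists as part of a conflation), hence admissible epi by the exact-category axioms, producing a conflation $0\to K'\to T_{0}\xrightarrow{yx}A\to 0$ in $\mathcal{E}$. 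This witnesses $A\in\operatorname{Pres}_{\mathcal{E}}^{1}(\mathcal{T})$.

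For (b), assume further $\mathcal{T}\subseteq\mathcal{T}^{\bot_{\mathcal{E}}}$ and that $\mathcal{T}$ is precovering in $\mathcal{T}^{\bot_{\mathcal{E}}}$. Given $X\in\mathcal{T}^{\bot_{\mathcal{E}}}$, choose a $\mathcal{T}$-precover $f\colon T\to X$. By (a), there is an admissible epi $h\colon T_{1}\twoheadrightarrow X$ with $T_{1}\in\mathcal{T}$; since $f$ is a $\mathcal{T}$-precover, $h$ factors as $h=fg$ for some $g\colon T_{1}\to T$. The obscure axiom (available because $\mathcal{A}$ is idempotent, hence weakly idempotent, complete) forces $f$ itself to be an admissible epi, so we obtain a conflation $0\to K\to T\xrightarrow{f}X\to 0$ in $\mathcal{E}$. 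To conclude that $K\in\mathcal{T}^{\bot_{\mathcal{E}}}$, apply $\Hom_{\mathcal{A}}(T',-)$ for an arbitrary $T'\in\mathcal{T}$: the long exact sequence for $\Ext^{*}_{(\mathcal{A},\mathcal{E})}(T',-)$ (see \cite[Lem.~5.1]{liu2019hearts}) together with the precover property (surjectivity of $\Hom(T',T)\to\Hom(T',X)$) and the vanishing $\Ext^{i}(T',T)=\Ext^{i}(T',X)=0$ for $i\geq 1$ yields $\Ext^{i}(T',K)=0$ for all $i\geq 1$.

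The only step that requires care beyond routine translation is the use of the obscure axiom in (b); everything else is a direct port of the abelian argument. The identification of $\Ext^{i}$ in the exact category via syzygies/cosyzygies (which already required enough projectives and injectives) is what makes the Ext long-exact-sequence arguments available, so the standing hypotheses of the lemma are exactly what is needed.
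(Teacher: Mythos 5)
Your proposal is correct and follows essentially the same route as the paper's own proof: part (a) is the identical pushout argument (producing the split conflation, the retraction $y$, and then composing $y$ with the admissible epi to get the required presentation), and part (b) uses the same factorization-plus-obscure-axiom step to see that the precover is an admissible epi, followed by the same long-exact-sequence argument to conclude the kernel lies in $\mathcal{T}^{\bot_{\mathcal{E}}}$ (the paper cites \cite{liu2019hearts} for what you spell out directly).
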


\begin{proof} (a) Let $A\in\mathcal{T}^{\bot_{\mathcal{E}}}$. Since $\omega$ is a
relative $\mathcal{E}$-generator in $\mathcal{A}$,  there is a short
exact sequence $\eta: \: 0\to K\rightarrow W\overset{a}{\rightarrow}A\to 0$ in
$\mathcal{E}$,  with $W\in\omega$. Since $\omega\subseteq\mathcal{T}_{\mathcal{E}}^{\vee}$, 
there is a short exact sequence $0\to W\overset{b}{\rightarrow}B\rightarrow C\to 0$
in $\mathcal{E}$,  with $B\in\mathcal{T}$ and $C\in\mathcal{T}_{\mathcal{E}}^{\vee}\subseteq{}^{\bot_{\mathcal{E}}}(\mathcal{T}^{\bot_{\mathcal{E}}})$
(see Lemma \ref{lem:  lema notas p120}). \\
\fbox{\begin{minipage}[t]{0.3\columnwidth}%
\[
\begin{tikzpicture}[-, >=to, shorten >=1pt, auto, node distance=2cm, main node/.style=, x=1cm, y=1cm]

\node (1) at (0, 0) {$W$};
\node (2) at (1, 0) {$B$};
\node (3) at (2, 0) {$C$};

\node (01) at (0, 1) {$K$};
\node (02) at (1, 1) {$K$};

\node (1') at (0, -1) {$A$};
\node (2') at (1, -1) {$B'$};
\node (3') at (2, -1) {$C$};

\draw[->,  thin]  (1)  to  node  {$b$} (2);
\draw[->,  thin]  (2)  to  node  {$$} (3);

\draw[->,  thin]  (1')  to [above] node  {$t$} (2');
\draw[->,  thin]  (2')  to [below] node  {$$} (3');

\draw[->,  thin]  (1)  to  node  {$a$} (1');
\draw[->,  thin]  (2)  to  node  {$x$} (2');
\draw[-,  double]  (3)  to  node  {$$} (3');

\draw[-,  double]  (01)  to  node  {$$} (02);
\draw[->,  thin]  (01)  to  node  {$$} (1);
\draw[->,  thin]  (02)  to  node  {$$} (2);

\end{tikzpicture}
\]%
\end{minipage}}\hfill{}%
\begin{minipage}[t]{0.6\columnwidth}%
By the push-out diagram of $a$ and $b$,   the
 exact sequence $0\to A\rightarrow B'\rightarrow C\to 0$ in $\mathcal{E}$ splits since
$A\in\mathcal{T}^{\bot_{\mathcal{E}}}$ and $C\in{}^{\bot_{\mathcal{E}}}(\mathcal{T}^{\bot_{\mathcal{E}}})$.
Hence,  there is a morphism $y: B'\rightarrow A$ such that $yt=1_{A}$.
Now,  since $x, y$ are admissible epis,  it follows from \cite[Prop. B.1(ii)]{buhler2010exact}
that $yx: B\rightarrow A$ is an admissible epi with $B\in\mathcal{T}$. %
\end{minipage}
\vspace{0.2cm}

(b) Let $\mathcal{T}\subseteq\mathcal{T}^{\bot_{\mathcal{E}}}$ be precovering
in $\mathcal{T}^{\bot_{\mathcal{E}}}$. Consider $X\in\mathcal{T}^{\bot_{\mathcal{E}}}$
and a $\mathcal{T}$-precover $g: T'\rightarrow X$. By (a), 
there is an exact sequence $0\to R\rightarrow T\stackrel{\alpha}{\rightarrow}X\to 0$
in $\mathcal{E}$,  with $T\in\mathcal{T}$. Now,  since $g$ is a $\mathcal{T}$-precover, 
there is $g': T\rightarrow T'$ such that $\alpha=gg'$. Then,  by \cite[Prop. B.1(iii)]{buhler2010exact}, 
$g$ is an admissible epi. In particular,  there is an exact sequence
$\eta: \: 0\to K\rightarrow T'\stackrel{g}{\rightarrow}X\to 0$ in $\mathcal{E}.$ Finally,  since
$T'\in\mathcal{T}\cap\mathcal{T}^{\bot_{\mathcal{E}}}$ and $g$ is
a $\mathcal{T}$-precover,  it follows by \cite[Fact 1.18,  Prop. 5.2]{liu2019hearts} that $K\in\mathcal{T}^{\bot_{\mathcal{E}}}$
and thus $\mathcal{T}$ is a relative $\mathcal{E}$-generator in
$\mathcal{T}^{\bot_{\mathcal{E}}}$. 
\end{proof}

The following notion is inspired on \cite[Sect. 3]{auslander1993relative2}. 

\begin{defn}\label{def:  auslander-solberg tilting} Let $\p[][\mathcal{E}]$ be
an exact category with enough $\mathcal{E}$-projectives and $\mathcal{E}$-injectives. A class $\mathcal{T}\subseteq\mathcal{A}$ is {\bf Auslander-Solberg $n$-tilting} in 
$\p[][\mathcal{E}]$ if the following conditions hold true.
\begin{description}
\item [{(AST0)}] $\mathcal{T}=\addx[\mathcal{T}].$
\item [{(AST1)}] $\pdr[\mathcal{E}][\mathcal{T}]\leq n.$
\item [{(AST2)}] $\mathcal{T}\subseteq\mathcal{T}^{\bot_{\mathcal{E}}}.$
\item [{(AST3)}] $\operatorname{Proj}_{\mathcal{E}}(\mathcal{A})\subseteq\mathcal{T}_{\mathcal{E}}^{\vee}$. 
\end{description}
An object $T\in\mathcal{A}$ is Auslander-Solberg $n$-tilting
in $\p[\mathcal{A}][\mathcal{E}]$ if the class $\addx[T]$ is Auslander-Solberg
$n$-tilting in $\p[][\mathcal{E}]$.
\end{defn}

\begin{thm}\label{thm:  teo2 notas} Let $\p[][\mathcal{E}]$ be an idempotent
complete exact category with enough $\mathcal{E}$-projectives and
$\mathcal{E}$-injectives,  and let $\mathcal{T}\subseteq\mathcal{A}$.
Then,  the following statements are equivalent.
\begin{itemize}
\item[$\mathrm{(a)}$] $\mathcal{T}$ is Zhu-Zhuang $n$-tilting in $\p[\mathcal{A}][\mathcal{E}]$, 
with $\mathcal{T}$ precovering in $\mathcal{T}^{\bot_{\mathcal{E}}}.$
\item[$\mathrm{(b)}$] $\mathcal{T}$ is small $n$-tilting in $\p[\mathcal{A}][\mathcal{E}].$
\item[$\mathrm{(c)}$] $\mathcal{T}$ is Auslander-Solberg $n$-tilting in $(\mathcal{A}, \mathcal{E})$, 
with $\mathcal{T}$ precovering in $\mathcal{T}^{\bot_{\mathcal{E}}}$. 
\end{itemize}
Furthermore,  if one of the above conditions holds true,  then $\operatorname{Proj}_{\mathcal{E}}(\mathcal{A})\subseteq\mathcal{T}_{n, \mathcal{E}}^{\vee}$. 
\end{thm}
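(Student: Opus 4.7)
The plan is to prove the three-way equivalence by the circle $(a) \Rightarrow (c) \Rightarrow (b) \Rightarrow (a)$, with the heavy lifting done by two already-established facts: Proposition \ref{prop: zhu}(b) (which under Zhu-Zhuang tilting places $\operatorname{Proj}_{\mathcal{E}}(\mathcal{A})$ inside $\mathcal{T}_{n,\mathcal{E}}^{\vee}$) and Lemma \ref{lem:  lema1 notas}(b) (which promotes ``$\mathcal{T} \subseteq \mathcal{T}^{\bot_{\mathcal{E}}}$ plus $\mathcal{T}$ precovering in $\mathcal{T}^{\bot_{\mathcal{E}}}$'' into the statement that $\mathcal{T}$ is a relative $\mathcal{E}$-generator in $\mathcal{T}^{\bot_{\mathcal{E}}}$). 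Idempotent completeness of $\mathcal{A}$ enters the argument only through Lemma \ref{lem:  lema1 notas}(b).

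For $(a) \Rightarrow (c)$, axioms (AST0), (AST1), and the precovering clause transfer verbatim from $(a)$; (AST2) follows because (ZZT2) asserts that $\mathcal{T}$ is a relative $\mathcal{E}$-generator in $\mathcal{T}^{\bot_{\mathcal{E}}}$, which in particular forces $\mathcal{T} \subseteq \mathcal{T}^{\bot_{\mathcal{E}}}$; and (AST3) is the immediate chain $\operatorname{Proj}_{\mathcal{E}}(\mathcal{A}) \subseteq \mathcal{T}_{n,\mathcal{E}}^{\vee} \subseteq \mathcal{T}_{\mathcal{E}}^{\vee}$ supplied by Proposition \ref{prop: zhu}(b). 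For $(c) \Rightarrow (b)$, axioms (TEC0), (TEC1), (TEC2), and (TEC4) are inherited directly from $(c)$; to secure (TEC3) I would take $\omega := \operatorname{Proj}_{\mathcal{E}}(\mathcal{A})$, which lies in $\mathcal{T}_{\mathcal{E}}^{\vee}$ by (AST3) and is a relative $\mathcal{E}$-generator in $\mathcal{A}$ because $(\mathcal{A}, \mathcal{E})$ has enough $\mathcal{E}$-projectives. For $(b) \Rightarrow (a)$, (ZZT0) and (ZZT1) are immediate and (TEC4) is the precovering clause; the only remaining axiom (ZZT2) is precisely the output of Lemma \ref{lem:  lema1 notas}(b), whose hypotheses are exactly (TEC2), (TEC3) (which supplies the required witness $\omega$), and (TEC4).

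The ``furthermore'' statement then follows because, once the equivalence has been established, condition $(a)$ is available and Proposition \ref{prop: zhu}(b) yields $\operatorname{Proj}_{\mathcal{E}}(\mathcal{A}) \subseteq \mathcal{T}_{n,\mathcal{E}}^{\vee}$. At the level of this theorem there is no genuine obstacle: the real technical content has been pre-packaged in the two auxiliary results cited above, and the proof amounts to a careful cross-walk between three sets of axioms. If one wished to single out a delicate point, it would be verifying in $(c) \Rightarrow (b)$ that $\operatorname{Proj}_{\mathcal{E}}(\mathcal{A})$ is a legitimate choice for $\omega$, that is, that the two ingredients ``enough $\mathcal{E}$-projectives'' and (AST3) combine correctly to furnish a relative $\mathcal{E}$-generator contained in $\mathcal{T}_{\mathcal{E}}^{\vee}$; but this is a routine unfolding of definitions.
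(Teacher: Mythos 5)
Your proof is correct and follows the same route as the paper, which simply states that the result ``follows from Proposition 4.18 and Lemma 4.19(b)''; you have merely unpacked that two-citation proof into an explicit cycle $(a)\Rightarrow(c)\Rightarrow(b)\Rightarrow(a)$ with the axiom cross-walk spelled out, including the observation that idempotent completeness enters only via Lemma 4.19(b) and that $\omega:=\operatorname{Proj}_{\mathcal{E}}(\mathcal{A})$ witnesses (TEC3) in the step $(c)\Rightarrow(b)$.
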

\begin{proof}
It follows from Proposition \ref{prop: zhu} and Lemma \ref{lem:  lema1 notas} (b).\end{proof}

As a consequence of Theorems \ref{thm:  teo2 notas} and \ref{thm: teo exactas}, we get the following result.

\begin{cor}\label{cor: coro p 131} For an
idempotent complete exact category $\p[\mathcal{A}][\mathcal{E}],$ with enough $\mathcal{E}$-projectives
and $\mathcal{E}$-injectives,  and  $\mathcal{T}\subseteq\mathcal{A},$ the 
 following statements are equivalent.
\begin{itemize}
\item[$\mathrm{(a)}$] $\mathcal{T}$ is small $n$-tilting in $\p[\mathcal{A}][\mathcal{E}].$
\item[$\mathrm{(b)}$] $\mathcal{T}$ is precovering in $\mathcal{T}^{\bot_{\mathcal{E}}}$
and $\Pres[\mathcal{T}][\mathcal{E}][m]=\mathcal{T}^{\bot_{\mathcal{E}}}$, 
with $m: =\max\left\{ 1, n\right\} $. 
\end{itemize}
\end{cor}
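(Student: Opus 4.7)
The plan is to show the equivalence as a direct combination of Theorem \ref{thm:  teo2 notas} and Theorem \ref{thm: teo exactas}, with $\mathcal{T}$ precovering in $\mathcal{T}^{\bot_{\mathcal{E}}}$ serving as the common bridge between the two.

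For (a) $\Rightarrow$ (b): Assume $\mathcal{T}$ is small $n$-tilting in $(\mathcal{A}, \mathcal{E})$. First I would observe that the precovering condition (TEC4) is built into the definition of small $n$-tilting, giving directly the first half of (b). Then I would invoke Theorem \ref{thm:  teo2 notas} (equivalence (b) $\Rightarrow$ (a) there) to conclude that $\mathcal{T}$ is Zhu-Zhuang $n$-tilting. At this point the hypotheses of Theorem \ref{thm: teo exactas} are satisfied ($\mathcal{T} = \addx[\mathcal{T}]$ and every object of $\mathcal{T}^{\bot_{\mathcal{E}}}$ admits a $\mathcal{T}$-precover), so the equivalence stated there yields
\[
\operatorname{Pres}_{\mathcal{E}}^{m}(\mathcal{T}) = \mathcal{T}^{\bot_{\mathcal{E}}}, \qquad m := \max\{1, n\},
\]
which is the second half of (b).

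For (b) $\Rightarrow$ (a): Assume that $\mathcal{T}$ is precovering in $\mathcal{T}^{\bot_{\mathcal{E}}}$ and that $\operatorname{Pres}_{\mathcal{E}}^{m}(\mathcal{T}) = \mathcal{T}^{\bot_{\mathcal{E}}}$. Since the precovering hypothesis is exactly what Theorem \ref{thm: teo exactas} requires, I would apply the converse direction of that theorem to deduce that $\mathcal{T}$ is Zhu-Zhuang $n$-tilting in $(\mathcal{A}, \mathcal{E})$. Combining this with the precovering assumption places us in the hypotheses of Theorem \ref{thm:  teo2 notas} (a), which then delivers that $\mathcal{T}$ is small $n$-tilting in $(\mathcal{A}, \mathcal{E})$.

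I do not expect a real obstacle here: the corollary is an almost formal consequence of the two theorems it cites, once one notices that the precovering assumption on $\mathcal{T}^{\bot_{\mathcal{E}}}$ appears explicitly in both results and hence links them. The only small point that requires attention is that in (a) the precovering condition (TEC4) asks for $\mathcal{T}$-precovers of objects in $\mathcal{T}^{\bot_{\mathcal{E}}}$, which is precisely the hypothesis of Theorem \ref{thm: teo exactas}, so the two notions of ``precovering'' match without further argument.
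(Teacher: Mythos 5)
Your proof is correct and takes essentially the same approach as the paper; the paper presents this corollary with the one-line justification ``As a consequence of Theorems \ref{thm:  teo2 notas} and \ref{thm: teo exactas}, we get the following result,'' and your write-up simply unpacks exactly that pair of implications in both directions, using the precovering condition as the bridge. One small point worth noting (which the paper's own terse proof also glosses over): in the direction $\mathrm{(b)}\Rightarrow\mathrm{(a)}$ you invoke Theorem \ref{thm: teo exactas}, whose hypothesis requires $\mathcal{T}=\addx[\mathcal{T}]$, but condition $\mathrm{(b)}$ of the corollary does not record this explicitly; one should either read it as an implicit standing assumption (as the paper apparently does, cf.\ Theorem \ref{prop: mohamed vs n-X-tilting} where the same omission occurs) or add it to $\mathrm{(b)}$.
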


\begin{lem}\label{rem: pres gen }Let $\p[\mathcal{A}][\mathcal{E}]$ be an idempotent
complete exact category with enough $\mathcal{E}$-projectives and
$\mathcal{E}$-injectives,  embedded in an abelian category $\mathcal{C}$, 
via $i: \mathcal{A}\rightarrow\mathcal{C}$. Then,  for  $\mathcal{T}\subseteq\mathcal{A}$, 
we have $\addx[i(\mathcal{T})]=[i(\addx[\mathcal{T}])]\;\mbox{ and }\;\operatorname{gen}_{n}^{[i(\mathcal{A})]}(i(\mathcal{T}))\cap[i(\mathcal{A})]=[i\left(\operatorname{Pres}_{\mathcal{E}}^{n}(\addx[\mathcal{T}])\right)]\mbox{.}$\end{lem}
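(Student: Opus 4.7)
The plan is to establish both equalities by exploiting three properties of the embedding $i$: additivity (so $i$ preserves finite coproducts), full faithfulness (giving bijections on Hom-sets, hence on idempotents), and exactness together with reflection of exactness, combined with idempotent completeness of $\mathcal{A}$.

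For the first identity, the inclusion $[i(\operatorname{add}(\mathcal{T}))] \subseteq \operatorname{add}(i(\mathcal{T}))$ is immediate from additivity of $i$. For the reverse inclusion, I would take $X \in \operatorname{add}(i(\mathcal{T}))$ and write it as a direct summand in $\mathcal{C}$ of $\bigoplus_{k=1}^{m} i(T_k) \cong i(T)$, where $T := \bigoplus_{k} T_k$ with $T_k \in \mathcal{T}$. The splitting idempotent $e \in \operatorname{End}_{\mathcal{C}}(i(T))$ lifts, by full faithfulness, to a unique idempotent $\tilde{e} \in \operatorname{End}_{\mathcal{A}}(T)$; idempotent completeness of $\mathcal{A}$ then splits $\tilde{e}$ inside $\mathcal{A}$, producing $Y \in \operatorname{add}(\mathcal{T})$ with $i(Y) \cong X$.

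For the second identity, I would treat each inclusion separately. For $\supseteq$: given $X \in \operatorname{Pres}_{\mathcal{E}}^{n}(\operatorname{add}(\mathcal{T}))$ with defining $\mathcal{E}$-sequences $0 \to X_{k+1} \to T_k \to X_k \to 0$ and $X_1 = X$, apply the exact functor $i$ and concatenate to obtain an exact sequence $0 \to i(X_{n+1}) \to i(T_n) \to \cdots \to i(T_1) \to i(X) \to 0$ in $\mathcal{C}$, whose intermediate kernels are all in $[i(\mathcal{A})]$. For $\subseteq$: given $C \in \operatorname{gen}_{n}^{[i(\mathcal{A})]}(i(\mathcal{T})) \cap [i(\mathcal{A})]$, split its resolution into short exact sequences, all of whose objects (kernels, cokernels, and the $T_k$) lie in $[i(\mathcal{A})]$; lift each object to $\mathcal{A}$ and each morphism via full faithfulness (the first identity ensures the lifted $T_k$'s belong to $\operatorname{add}(\mathcal{T})$); then invoke reflection of exactness to place each lifted sequence in $\mathcal{E}$. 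The resulting object in $\operatorname{Pres}_{\mathcal{E}}^{n}(\operatorname{add}(\mathcal{T}))$ has $i$-image isomorphic to $C$.

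The main obstacle will be the notational mismatch between $i(\mathcal{T})^{\oplus_{<\infty}}$ in the definition of $\operatorname{gen}_{n}^{[i(\mathcal{A})]}(i(\mathcal{T}))$ and $\operatorname{add}(\mathcal{T})$ appearing on the right-hand side. In the $\supseteq$ step, each $i(T_k)$ is only known to lie in $\operatorname{add}(i(\mathcal{T}))$, not necessarily in $i(\mathcal{T})^{\oplus_{<\infty}}$; to remedy this, one picks a complementary summand $T_k' \in \operatorname{add}(\mathcal{T})$ with $T_k \oplus T_k' \in \mathcal{T}^{\oplus_{<\infty}}$ and absorbs it into the differential by a zero-cycle enlargement at every stage, verifying that exactness and the $[i(\mathcal{A})]$-location of kernels are preserved (here the fact that $[i(\mathcal{A})]$ is closed under extensions is used). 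The remaining checks follow routinely from the properties of $i$ and the first identity.
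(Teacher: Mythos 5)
The paper's proof of this lemma is, verbatim, ``It is straightforward and we left it to the reader,'' so there is no reference argument to compare against; I can only assess your proposal on its own terms, and it is correct in substance.

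Your treatment of the first identity is exactly right: $[i(\add(\T))]\subseteq\add(i(\T))$ by additivity of $i$, and the reverse inclusion follows by lifting the splitting idempotent along the fully faithful $i$ and then splitting it in $\mathcal{A}$ by idempotent completeness, noting that exactness of $i$ on split conflations identifies the image. For the second identity, you correctly isolated the one genuine subtlety: the paper defines $\operatorname{gen}_n^{\mathcal{X}}(\M)$ via $\M^{\oplus_{<\infty}}$, not $\add(\M)$, whereas $\operatorname{Pres}^n_{\mathcal{E}}(\add(\T))$ only supplies middle terms in $\add(\T)$. Your absorption idea is the right fix, although ``zero-cycle enlargement at every stage'' is vaguer than what is actually needed: the padding has to be carried recursively. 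If $\tilde{X}_k = X_k\oplus W_{k-1}$ with $W_{k-1}\in\add(\T)$, one chooses $V_k\in\add(\T)$ with $T_k\oplus W_{k-1}\oplus V_k\in\T^{\oplus_{<\infty}}$, takes the conflation $0\to X_{k+1}\oplus V_k\to T_k\oplus W_{k-1}\oplus V_k\to X_k\oplus W_{k-1}\to 0$ (a direct sum of three conflations, two of them split), and sets $W_k:=V_k$; the padding then does not accumulate.

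One small correction: the parenthetical claim that the argument uses the fact that $[i(\mathcal{A})]$ is closed under extensions is misplaced. Extension-closure plays no role in either direction. What the absorption step (and the $\subseteq$ direction) actually uses is that $[i(\mathcal{A})]$ is closed under finite coproducts — from additivity of $i$ — and under direct summands — from idempotent completeness of $\mathcal{A}$ together with full faithfulness of $i$, i.e. precisely the mechanism behind your proof of the first identity. With that substitution the argument is complete; the $\subseteq$ direction, in which the middle terms already lie in $i(\T)^{\oplus_{<\infty}}\subseteq\add(\T)$ once lifted, is routine as you say.
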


\begin{proof} It is straightforward and we left it to the reader
\end{proof}

\begin{lem}\label{lem: rel exact} Let $\p[\mathcal{A}][\mathcal{E}]$ be an exact
category with enough $\mathcal{E}$-projectives and $\mathcal{E}$-injectives
embedded in an abelian category $\mathcal{C}$,  via $i: \mathcal{A}\rightarrow\mathcal{C}$, 
and let $\mathcal{Z}\subseteq\mathcal{A}$. If $i(\Proj_{\mathcal{E}}(\mathcal{A}))\subseteq\Proj(\mathcal{C})$, 
then the following statements hold true.
\begin{itemize}
\item[$\mathrm{(a)}$] $i(\Proj_{\mathcal{E}}(\mathcal{A}))$ is an $i(\mathcal{A})$-projective
relative generator in $i(\mathcal{A}).$
\item[$\mathrm{(b)}$] $i(\Inj_{\mathcal{E}}(\mathcal{A}))$ is an $i(\mathcal{A})$-injective
relative cogenerator in $i(\mathcal{A}).$
\item[$\mathrm{(c)}$] $\Extx[k][\mathcal{\mathcal{C}}][i(A)][i(A')]\cong\Extx[k][\mathcal{\mathcal{\mathcal{A}}}][A][A']\, \forall A, A'\in\mathcal{A}, \, \forall k>0.$
\item[$\mathrm{(d)}$] $i(\mathcal{Z}^{\bot_{\mathcal{E}}})=i(\mathcal{Z})^{\bot}\cap i(\mathcal{A})$
and $i({}{}^{\bot_{\mathcal{E}}}\mathcal{Z})={}^{\bot}i(\mathcal{Z})\cap i(\mathcal{A})$.
\end{itemize}
\end{lem}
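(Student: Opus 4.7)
The plan is to prove (a) first as a direct computation, then establish (c), from which both (b) and (d) follow cleanly.

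For (a), I would observe that $i(\operatorname{Proj}_{\mathcal{E}}(\mathcal{A}))\subseteq i(\mathcal{A})$ by construction and, being contained in $\operatorname{Proj}(\mathcal{C})$, automatically satisfies $\operatorname{Ext}^{k}_{\mathcal{C}}(i(P),i(A))=0$ for every $k>0$ and every $A\in\mathcal{A}$; so it is $i(\mathcal{A})$-projective. The relative generator property is obtained by taking, for any $A\in\mathcal{A}$, a short exact sequence $0\to A_{1}\to P\to A\to 0$ in $\mathcal{E}$ with $P\in\operatorname{Proj}_{\mathcal{E}}(\mathcal{A})$ (available by ``enough $\mathcal{E}$-projectives''), and applying the exact functor $i$ to land in $\mathcal{C}$.

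The main work is (c). I would reduce to the case $k=1$ by dimension shifting: given $A\in\mathcal{A}$, a first $\mathcal{E}$-syzygy $A_{1}$ of $A$ yields, via $i$ and the hypothesis $i(\operatorname{Proj}_{\mathcal{E}}(\mathcal{A}))\subseteq\operatorname{Proj}(\mathcal{C})$, a first syzygy $i(A_{1})$ of $i(A)$ in $\mathcal{C}$; iterating and invoking the paper's definition $\operatorname{Ext}^{k}_{\mathcal{A}}(A,A')=\operatorname{Ext}^{1}_{\mathcal{A}}(A_{k-1},A')$ together with the analogous shift in $\mathcal{C}$, the $k$-th case is reduced to the $k=1$ case with $A$ replaced by $A_{k-1}$. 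For $k=1$, I would define $\Phi:\operatorname{Ext}^{1}_{\mathcal{A}}(A,A')\to\operatorname{Ext}^{1}_{\mathcal{C}}(i(A),i(A'))$ by sending $[0\to A'\to B\to A\to 0]$ to $[0\to i(A')\to i(B)\to i(A)\to 0]$ (well-defined because $i$ is exact). Injectivity of $\Phi$ follows from fullness of $i$ applied to the equivalence morphism between middle objects; for surjectivity, given an extension $0\to i(A')\to M\to i(A)\to 0$ in $\mathcal{C}$, the closure of $[i(\mathcal{A})]$ under extensions yields $M\cong i(B)$ for some $B\in\mathcal{A}$, the full faithfulness of $i$ lifts the two morphisms to $A'\to B\to A$ in $\mathcal{A}$, and the fact that $i$ reflects exactness places the resulting kernel-cokernel pair in $\mathcal{E}$.

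Items (b) and (d) will then be easy consequences of (c). For (b), the $i(\mathcal{A})$-injectivity of $i(\operatorname{Inj}_{\mathcal{E}}(\mathcal{A}))$ is $\operatorname{Ext}^{k}_{\mathcal{C}}(i(A),i(I))\cong \operatorname{Ext}^{k}_{\mathcal{A}}(A,I)=0$ by (c) and the $\mathcal{E}$-injectivity of $I$; the relative cogenerator property follows by applying $i$ to a sequence $0\to A\to I\to A^{1}\to 0$ in $\mathcal{E}$ supplied by enough $\mathcal{E}$-injectives. For (d), the two displayed equalities are equivalent under the isomorphisms of (c), after noting that $i(\mathcal{Z})^{\bot}\cap i(\mathcal{A})=\{i(A):\operatorname{Ext}^{k}_{\mathcal{C}}(i(Z),i(A))=0\ \forall Z\in\mathcal{Z},\,\forall k>0\}$ and dually for the left orthogonal.

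The main obstacle is the $k=1$ step of (c): verifying that $\Phi$ is a well-defined bijection requires careful interplay of all three hypotheses on the embedding (exactness, full faithfulness, reflection of exactness, and closure of $[i(\mathcal{A})]$ under extensions). Once this is established, everything else is a matter of dimension shifting and direct substitution.
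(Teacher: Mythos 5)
Your proposal follows the same route as the paper: prove (a) directly, reduce (c) to the $k=1$ case by dimension shifting (using (a) and $i(\Proj_{\mathcal{E}}(\mathcal{A}))\subseteq\Proj(\mathcal{C})$ to identify syzygies), and deduce (b) and (d) from (c). Your treatment of the $k=1$ bijection is more explicit than the paper's terse ``the result follows since $i$ is an embedding of $\mathcal{A}$ in $\mathcal{C}$,'' but the structure and all the key ideas coincide.
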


\begin{proof} The proof of (a) is straightforward,  and (b) and (d) follow from (c).
\

Let us show (c). Let $A, A'\in\A$ and $k>0.$ For $k=1,$ the result follows since $i$ is an embedding of $\A$ in $\C.$
\

Let $k\geq 2.$ Using (a),  we can construct the exact sequence in $\C$
\[
\suc[i(A{}_{k-1})][i(P_{k-1})\xrightarrow{f_{k-2}}\cdots\xrightarrow{f_{1}}i(P_{1})][i(A)], 
\]
where $i(P_{j})\in i(\operatorname{Proj}_{\mathcal{E}}(\mathcal{A}))\subseteq\Proj(\mathcal{C})\: \forall j\in[1, k-1]$, 
$A_{k-1}\in\mathcal{A}$ y $i(A{}_{j}): =\im[f_{j}]\in i(\mathcal{A})\;\forall j\in[1, k-2].$ Thus by the Shifting Lemma (and the case $k=1$),  we get 
\[
\Ext^k_\C(i(A), i(A'))\simeq\Ext^1_\C(i(A_{k-1}), i(A'))\simeq\Ext^1_\A(A_{k-1}, A')\simeq\Ext^k_\A(A, A').
\]
\end{proof}

\begin{lem}\label{lem: notas p17} Let $F: \mathcal{A}\rightarrow\mathcal{C}$
be a full and faithful functor,  and let $\mathcal{X}, \mathcal{Y}\subseteq\mathcal{A}$ 
 be such that $\left[\mathcal{X}\right]=\mathcal{X}$ and $\left[\mathcal{Y}\right]=\mathcal{Y}$.
If $[F(\mathcal{X})]=[F(\mathcal{Y})]$,  then $\mathcal{X}=\mathcal{Y}$. 
\end{lem}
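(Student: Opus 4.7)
The plan is to exploit the standard fact that any full and faithful functor reflects isomorphisms, so that the containment of essential images transfers back to a containment of classes. The hypotheses $[\mathcal{X}]=\mathcal{X}$ and $[\mathcal{Y}]=\mathcal{Y}$ (closure under isomorphism in $\mathcal{A}$) are exactly what is needed to bridge the gap between ``being isomorphic to an object of'' and ``being an object of''.

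More concretely, by the symmetry in the statement it suffices to prove $\mathcal{X}\subseteq\mathcal{Y}$. First I would pick $X\in\mathcal{X}$ and observe that $F(X)\in F(\mathcal{X})\subseteq[F(\mathcal{X})]=[F(\mathcal{Y})]$, so there is some $Y\in\mathcal{Y}$ together with an isomorphism $\varphi\colon F(X)\xrightarrow{\sim} F(Y)$ in $\mathcal{C}$. Next, since $F$ is full, I would lift $\varphi$ and $\varphi^{-1}$ to morphisms $f\colon X\to Y$ and $g\colon Y\to X$ in $\mathcal{A}$ with $F(f)=\varphi$ and $F(g)=\varphi^{-1}$; then $F(gf)=\varphi^{-1}\varphi=F(1_{X})$ and $F(fg)=\varphi\varphi^{-1}=F(1_{Y})$, so faithfulness of $F$ forces $gf=1_{X}$ and $fg=1_{Y}$. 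Hence $X\cong Y$ in $\mathcal{A}$.

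Finally, using $Y\in\mathcal{Y}$ and $[\mathcal{Y}]=\mathcal{Y}$, it follows that $X\in\mathcal{Y}$, which gives $\mathcal{X}\subseteq\mathcal{Y}$. Swapping the roles of $\mathcal{X}$ and $\mathcal{Y}$ yields the reverse inclusion, and thus $\mathcal{X}=\mathcal{Y}$. There is essentially no obstacle here; the only point to be careful about is to invoke both fullness (to lift $\varphi$ and $\varphi^{-1}$) and faithfulness (to deduce the identities from their images under $F$), and then to use the closure under isomorphisms only at the very last step.
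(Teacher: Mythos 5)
Your proof is correct and complete; it is exactly the argument one expects, namely that a fully faithful functor reflects isomorphisms, combined with the isomorphism-closure of $\mathcal{X}$ and $\mathcal{Y}$ to translate the essential-image equality back to an equality of classes. The paper itself omits the proof with the remark ``It is straightforward,'' and your write-up supplies precisely the straightforward details.
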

\begin{proof} It is straightforward. 
\end{proof}

\begin{thm}\label{thm: tilting exacto} Let $\p[\mathcal{A}][\mathcal{E}]$ be
an idempotent complete exact category with enough $\mathcal{E}$-projectives
and $\mathcal{E}$-injectives embedded in an abelian category $\mathcal{C}$, 
via $i: \mathcal{A}\rightarrow\mathcal{C}$,  such that $i(\Proj_{\mathcal{E}}(\mathcal{A}))\subseteq\Proj(\mathcal{C}),$ and let  $\mathcal{T}\subseteq\mathcal{A}$. Then,  the following statements
are equivalent.
\begin{itemize}
\item[$\mathrm{(a)}$] The class $\mathcal{T}$ is small $n$-tilting in $\p[\mathcal{A}][\mathcal{E}]$.
\item[$\mathrm{(b)}$] The class $\left[i(\mathcal{T})\right]$ is small $n$-$[i(\mathcal{A})]$-tilting
in $\mathcal{C}$.
\end{itemize}
\end{thm}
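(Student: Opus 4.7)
The plan is to set $\mathcal{X}:=[i(\mathcal{A})]$ and $\mathcal{T}':=[i(\mathcal{T})]$ and to translate each of the five axioms defining small $n$-tilting in $(\mathcal{A},\mathcal{E})$ into its counterpart axiom (T0)-(T5) for $\mathcal{T}'$ being small $n$-$\mathcal{X}$-tilting in $\mathcal{C}$. The two principal tools are Lemma \ref{lem: rel exact}, which transfers $\Extx$ groups, orthogonal complements and the (co)generator property across $i$, and Lemma \ref{rem: pres gen }, which gives $\addx[\mathcal{T}']=[i(\addx[\mathcal{T}])]$.

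As preliminaries, I will observe that $\mathcal{X}=\smdx[\mathcal{X}]$ (because $\mathcal{A}$ is idempotent complete and $i$ is fully faithful, so idempotents in $\mathrm{End}_{\mathcal{C}}(i(A))\cong\mathrm{End}_{\mathcal{A}}(A)$ split in $\mathcal{A}$) and that $\mathcal{X}$ is closed under extensions by the definition of embedding. Moreover, Lemma \ref{lem: rel exact}(a),(b) gives that $[i(\operatorname{Proj}_{\mathcal{E}}(\mathcal{A}))]$ is an $\mathcal{X}$-projective relative generator and $[i(\operatorname{Inj}_{\mathcal{E}}(\mathcal{A}))]$ is an $\mathcal{X}$-injective relative cogenerator in $\mathcal{X}$. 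Since $\mathcal{T}'\subseteq\mathcal{X}$, the latter class lies in $\mathcal{X}^{\bot}\cap(\mathcal{T}')^{\bot}$, so (T4) for $\mathcal{T}'$ comes for free.

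For the straightforward translations: (TEC0)$\Leftrightarrow$(T0) follows from Lemma \ref{rem: pres gen } (and Lemma \ref{lem: notas p17} in the reverse direction); (TEC1)$\Leftrightarrow$(T1) and (TEC2)$\Leftrightarrow$(T2) are immediate from Lemma \ref{lem: rel exact}(c),(d); and (TEC4)$\Leftrightarrow$(T5) follows from the full faithfulness of $i$, which lifts a $\mathcal{T}$-precover $T\to A$ in $\mathcal{A}$ (with $A\in\mathcal{T}^{\bot_{\mathcal{E}}}$) to a $\mathcal{T}'$-precover $i(T)\to i(A)$ of $i(A)\in(\mathcal{T}')^{\bot}\cap\mathcal{X}$ and, conversely, descends a $\mathcal{T}'$-precover (which lands in $\mathcal{X}$ by hypothesis) of $i(A)$ back to a $\mathcal{T}$-precover of $A$.

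The main obstacle, and only nontrivial translation, is (TEC3)$\Leftrightarrow$(T3). In the forward direction, applying the exact functor $i$ to a $\mathcal{T}_{\mathcal{E}}$-coresolution of $W\in\omega$ yields a $(\mathcal{T}')_{\mathcal{X}}$-coresolution in $\mathcal{C}$ (all kernels remain in $\mathcal{X}$), so $[i(\omega)]\subseteq(\mathcal{T}')_{\mathcal{X}}^{\vee}$; and any admissible epi $W\to A$ in $(\mathcal{A},\mathcal{E})$ becomes an epi $i(W)\to i(A)$ in $\mathcal{C}$ whose kernel lies in $\mathcal{X}$, so $[i(\omega)]$ is a relative generator in $\mathcal{X}$. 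The reverse direction is where I will use that $i$ reflects exactness: given $\omega'$ realizing (T3), define $\omega:=\{W\in\mathcal{A}\mid i(W)\in\omega'\}$; then full faithfulness lifts every morphism in a coresolution or in a generating epimorphism living in $\mathcal{C}$ back to a morphism in $\mathcal{A}$, and exactness reflection upgrades each associated ``abelian'' short exact sequence in $\mathcal{C}$ back to a member of $\mathcal{E}$. This simultaneously yields $\omega\subseteq\mathcal{T}_{\mathcal{E}}^{\vee}$ and shows $\omega$ is a relative $\mathcal{E}$-generator in $\mathcal{A}$, establishing (TEC3). The delicate handling of exactness reflection against morphisms coming only after applying $i$ is the main technical step of the argument.
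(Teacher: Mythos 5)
Your proof is correct, and for the forward direction (a)\,$\Rightarrow$\,(b) it matches the paper's proof (the paper just says ``(T3) follows straightforward from (TEC3)''; your fleshed-out version of that step is exactly right). However, for the converse (b)\,$\Rightarrow$\,(a) you take a genuinely different route from the paper.

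The paper does not attempt a direct axiom-by-axiom descent at all. Instead it first verifies (TEC4) (that $\mathcal{T}$ is precovering in $\mathcal{T}^{\bot_{\mathcal{E}}}$, exactly as you do, via fullness and Lemma \ref{lem: rel exact}(d)), and then invokes the previously established characterization Theorem \ref{thm:  teo2 notas}, which says that small $n$-tilting is equivalent to Zhu--Zhuang $n$-tilting plus precovering. To verify the Zhu--Zhuang condition it uses the $\operatorname{Pres}$ characterization (Theorem \ref{thm: teo exactas}): it transfers $\operatorname{Pres}^m_{\mathcal{E}}(\mathcal{T})$ through $i$ using Lemma \ref{rem: pres gen }, identifies it with $\operatorname{gen}_m^{[i(\mathcal{A})]}(i(\mathcal{T}))\cap[i(\mathcal{A})]$, applies Theorem \ref{prop: primera generalizacion} (Bazzoni-type) to the small $n$-$[i(\mathcal{A})]$-tilting class $[i(\mathcal{T})]$ to identify that class with $[i(\mathcal{T}^{\bot_{\mathcal{E}}})]$, and then descends the resulting equality $\operatorname{Pres}^m_{\mathcal{E}}(\mathcal{T})=\mathcal{T}^{\bot_{\mathcal{E}}}$ via Lemma \ref{lem: notas p17}. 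This sidesteps verifying (TEC3) directly; the hard work has been done once and for all in the proofs of the general characterization theorems. Your approach instead descends each axiom individually, using exactness-reflection of $i$ to lift the $(\mathcal{T}')_{\mathcal{X}}$-coresolutions and generating epimorphisms (all of whose objects and images already live in $[i(\mathcal{A})]$, so the lift is indeed forced by full faithfulness) back into $\mathcal{E}$, which establishes (TEC3). Your route is more elementary and self-contained; the paper's is shorter because it reuses Theorems \ref{thm:  teo2 notas}, \ref{thm: teo exactas}, and \ref{prop: primera generalizacion}. Both are correct, and both exploit the same package of embedding properties (fullness, faithfulness, exactness, reflection of exactness, $i(\operatorname{Proj}_{\mathcal{E}})\subseteq\operatorname{Proj}(\mathcal{C})$), just at different points.
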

\begin{proof} (a) $\Rightarrow$ (b) Let us show the conditions on $[i(\mathcal{T})]$ to be $n$-$[i(\mathcal{A})]$-tilting
in $\mathcal{C}$. Condition (T0) follows from Lemma \ref{rem: pres gen };
(T1) follows from Lemma \ref{lem: rel exact} (c) and (TEC1); (T2) follows
from (TEC2) and Lemma \ref{lem: rel exact} (d); (T3) follows straightforward
from (TEC3); (T4) follows from Lemma \ref{lem: rel exact} (b, d);
and finally,   (T5) follows from (TEC4) and Lemma \ref{lem: rel exact} (d).
\

(b) $\Rightarrow$ (a) By Theorem \ref{thm:  teo2 notas},  it is enough to show
that $\mathcal{T}$ is Zhu-Zhuang $n$-tilting in $\p[\mathcal{A}][\mathcal{E}]$
and that $\mathcal{T}$ is precovering in $\mathcal{T}^{\bot_{\mathcal{E}}}$. Let us show that $\mathcal{T}$ is precovering in $\mathcal{T}^{\bot_{\mathcal{E}}}$.
By (T5),  every $X\in\left[i(\mathcal{T})\right]\mathcal{^{\bot}}\cap\left[i(\mathcal{A})\right]$
admits an $\left[i(\mathcal{T})\right]$-precover. Then,  since $i$
is full and $\left[i(\mathcal{T})\right]\mathcal{^{\bot}}\cap\left[i(\mathcal{A})\right]=\left[i(\mathcal{T}^{\bot_{\mathcal{E}}})\right]$
by Lemma \ref{lem: rel exact}(d),  any object of $\mathcal{T}^{\bot_{\mathcal{E}}}$
admits a $\mathcal{T}$-precover.

Let $m: =\max\{1, n\}$. By Proposition \ref{prop: primera generalizacion-1}, 
Lemma \ref{rem: pres gen } and Lemma \ref{lem: rel exact}(d),  
\[
\left[i(\operatorname{Pres}_{\mathcal{E}}^{m}(\mathcal{T}))\right]=\operatorname{gen}_{m}^{\left[i(\mathcal{A})\right]}(i(\mathcal{T}))\cap\left[i(\mathcal{A})\right]=i(\mathcal{T}^{\bot})\cap\left[i(\mathcal{A})\right]=[i(\mathcal{T})^{\bot}\cap i(\mathcal{A})]=\left[i(\mathcal{T}^{\bot_{\mathcal{E}}})\right].
\]
Hence,  $\mathcal{T}$ is Zhu-Zhuang $n$-tilting in $(\mathcal{A}, \mathcal{E})$
by Lemma \ref{lem: notas p17} and Theorem \ref{thm: teo exactas}.\end{proof}

\begin{lem}\label{lem: 2.177'}Let $(\mathcal{A}, \mathcal{E})$ be a skeletally
small exact category with enough $\mathcal{E}$-projectives,  and let $\mathcal{P}: =\operatorname{Proj}_{\mathcal{E}}(\mathcal{A})$.
Then,  
$i: \mathcal{A}\rightarrow\Modx[\mathcal{P}^{op}]\mbox{,  }X\mapsto\Homx[\mathcal{A}][-][X]|_{\mathcal{P}}\mbox{, }$
is an additive,  faithful,  full and exact functor that reflects exactness
and such that $i(\mathcal{P})\subseteq\Projx[\operatorname{Mod}(\mathcal{P}^{op})]$.\end{lem}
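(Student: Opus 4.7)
The functor $i$ is plainly additive, and $i(\mathcal{P})\subseteq\operatorname{Proj}(\operatorname{Mod}(\mathcal{P}^{op}))$ is immediate from the Yoneda lemma: $i(P)=\operatorname{Hom}_{\mathcal{A}}(-,P)|_{\mathcal{P}}$ is the representable functor $h_{P}$, and $\operatorname{Hom}_{\operatorname{Mod}(\mathcal{P}^{op})}(h_{P},M)\cong M(P)$ is exact in $M$. Exactness of $i$ is tested pointwise in $\operatorname{Mod}(\mathcal{P}^{op})$ and reduces to the fact that $\operatorname{Hom}_{\mathcal{A}}(P,-)$ is exact on $\mathcal{E}$ for every $P\in\mathcal{P}$, which is the definition of $\mathcal{E}$-projectivity. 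For faithfulness, if $i(f)=0$ with $f:X\to Y$, choose an admissible epi $p:P\twoheadrightarrow X$ with $P\in\mathcal{P}$; then $fp=i(f)_{P}(p)=0$ forces $f=0$ since $p$ is an epimorphism.

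For fullness, given a natural transformation $\eta:i(X)\to i(Y)$, I would pick an admissible epi $p:P\twoheadrightarrow X$ from a projective, its kernel $\iota:K\to P$ (an admissible mono), and then an admissible epi $q:Q\twoheadrightarrow K$ from a projective. Setting $\alpha:=\eta_{P}(p):P\to Y$, naturality of $\eta$ along $\iota q:Q\to P$ gives $\alpha\circ\iota q=\eta_{Q}(p\iota q)=\eta_{Q}(0)=0$; as $q$ is epic we deduce $\alpha\iota=0$, and since $p$ is the cokernel of $\iota$ there is a unique $f:X\to Y$ with $fp=\alpha$. To verify $i(f)=\eta$, take any $P'\in\mathcal{P}$ and any $g:P'\to X$; by projectivity of $P'$, lift $g$ to $\tilde{g}:P'\to P$ with $p\tilde{g}=g$, and use naturality of $\eta$ to obtain $\eta_{P'}(g)=\eta_{P'}(p\tilde{g})=\eta_{P}(p)\circ\tilde{g}=\alpha\tilde{g}=fp\tilde{g}=fg=i(f)_{P'}(g)$.

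For reflection of exactness, suppose $X\xrightarrow{\alpha}Y\xrightarrow{\beta}Z$ in $\mathcal{A}$ is such that $(i(\alpha),i(\beta))$ is a short exact sequence in $\operatorname{Mod}(\mathcal{P}^{op})$. Choosing an admissible epi $p:P\twoheadrightarrow Z$ with $P\in\mathcal{P}$, surjectivity of $i(\beta)_{P}:\operatorname{Hom}_{\mathcal{A}}(P,Y)\to\operatorname{Hom}_{\mathcal{A}}(P,Z)$ yields $g:P\to Y$ with $\beta g=p$; by the standard fact for exact categories (B\"uhler, \emph{Exact categories}, Prop.~2.16) that if $\beta g$ is an admissible epi then so is $\beta$, we conclude $\beta$ is an admissible epi. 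Let $\iota:K\hookrightarrow Y$ be its kernel, giving $(\iota,\beta)\in\mathcal{E}$. Faithfulness of $i$ promotes $i(\beta)i(\alpha)=0$ to $\beta\alpha=0$, so $\alpha=\iota\tilde{\alpha}$ for a unique $\tilde{\alpha}:X\to K$. Then $i(\iota)i(\tilde{\alpha})=i(\alpha)$ exhibits $i(\tilde{\alpha})$ as a map between two kernels of $i(\beta)$, hence an isomorphism; since the fully faithful $i$ reflects isomorphisms, $\tilde{\alpha}$ is itself an iso. Therefore $(\alpha,\beta)$ is isomorphic (as a kernel-cokernel pair) to $(\iota,\beta)\in\mathcal{E}$, so $(\alpha,\beta)\in\mathcal{E}$.

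The main obstacle is the fullness step, where one must carefully stitch together a projective presentation of $X$ with the naturality square of $\eta$ to extract the candidate morphism $f$, and then verify agreement with $\eta$ at every test object $P'\in\mathcal{P}$ via projectivity of $P'$. A secondary (but technically essential) ingredient is the lemma that in an exact category an admissible epi composition forces the outer factor to be admissible epi, which powers both the epic-ness of $\beta$ in the reflection argument and the construction of its kernel.
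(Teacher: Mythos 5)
The paper's proof of this lemma is a one-line citation to \cite[Prop.~2.1]{enomoto2017classifying}; you instead give a self-contained argument, which is a useful and informative alternative. Your steps establishing additivity, projectivity of $i(P)$ via the Yoneda lemma, exactness of $i$ via $\mathcal{E}$-projectivity of the objects of $\mathcal{P}$, faithfulness by testing against an admissible epi from a projective, and fullness by stitching the candidate morphism out of a projective presentation and then checking agreement using projectivity of each test object, are all correct.

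There is, however, a gap in the reflection-of-exactness step. You invoke the ``obscure axiom'' of B\"uhler to pass from ``$\beta g$ is an admissible epi'' to ``$\beta$ is an admissible epi'', but the dual form of that proposition requires $\beta$ to \emph{admit a kernel}, and at the point you invoke it you have not yet shown that $\beta$ has one. (The unconditional cancellation statement ``$gf$ admissible epic $\Rightarrow$ $g$ admissible epic'' is equivalent to weak idempotent completeness of $\mathcal{A}$ — B\"uhler, Prop.~7.6 — and weak idempotent completeness is not among the hypotheses of this lemma.) The fix is short and uses only what you have already proved: show first that $\alpha$ is a categorical kernel of $\beta$. Given $h\colon W\to Y$ with $\beta h=0$, apply $i$ to get $i(\beta)i(h)=0$; since $i(\alpha)=\ker(i(\beta))$, $i(h)$ factors uniquely through $i(\alpha)$; fullness of $i$ lifts the factorizing map to $\mathcal{A}$ and faithfulness forces $h=\alpha\tilde h$ with $\tilde h$ unique. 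With $\alpha=\ker\beta$ in hand, the obscure axiom applies as you wanted, $\beta$ is an admissible epi, and the rest of your argument (identifying $(\alpha,\beta)$ with $(\ker\beta,\beta)\in\mathcal{E}$ up to isomorphism) goes through.
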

\begin{proof}
 it follows from \cite[Prop. 2.1]{enomoto2017classifying}.\end{proof}

\begin{cor}
\label{cor: 2.177''} Let $\p[\mathcal{A}][\mathcal{E}]$ be an idempotent
complete,  skeletally small,  exact category with enough $\mathcal{E}$-projectives
and enough $\mathcal{E}$-injectives. Then,  by using the functor $i: \mathcal{A}\rightarrow\Modx[\mathcal{P}^{op}]$
given in Lemma \ref{lem: 2.177'},  the following conditions are equivalent
for a class $\mathcal{T}\subseteq\mathcal{A}.$
\begin{itemize}
\item[$\mathrm{(a)}$] $\mathcal{T}$ is small $n$-tilting in $\p[\mathcal{A}][\mathcal{E}].$
\item[$\mathrm{(b)}$] $\left[i(\mathcal{T})\right]$ is small $n$-$\left[i(\mathcal{A})\right]$-tilting
in $\Modx[\mathcal{P}^{op}]$.
\end{itemize}
\end{cor}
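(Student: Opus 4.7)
The plan is to recognize this corollary as a direct application of Theorem \ref{thm: tilting exacto} to the Yoneda-type functor $i\colon \mathcal{A}\to\Mod(\mathcal{P}^{op})$ supplied by Lemma \ref{lem: 2.177'}. Indeed, that lemma already delivers the bulk of the hypotheses of Theorem \ref{thm: tilting exacto}: $i$ is additive, faithful, full, exact, reflects exactness, and $i(\Proj_\mathcal{E}(\mathcal{A}))\subseteq \Proj(\Mod(\mathcal{P}^{op}))$. The hypotheses of the corollary also give that $(\mathcal{A},\mathcal{E})$ is idempotent complete with enough $\mathcal{E}$-projectives and $\mathcal{E}$-injectives. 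So the whole proof reduces to verifying the one remaining ingredient in the notion of ``embedded exact category'': that $[i(\mathcal{A})]$ is closed under extensions in $\Mod(\mathcal{P}^{op})$. Once this is established, Theorem \ref{thm: tilting exacto} applied to $\mathcal{C}:=\Mod(\mathcal{P}^{op})$ yields the desired equivalence immediately.

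To prove the extension closure, I would take a short exact sequence
$0\to i(A)\xrightarrow{\alpha} M\xrightarrow{\beta} i(B)\to 0$
in $\Mod(\mathcal{P}^{op})$ with $A,B\in\mathcal{A}$, and aim to exhibit $M\cong i(C)$ for some $C\in\mathcal{A}$. Using enough $\mathcal{E}$-projectives pick an admissible epi $p\colon P\twoheadrightarrow B$ in $\mathcal{E}$ with $P\in\mathcal{P}$, giving the $\mathcal{E}$-short exact sequence $0\to B_{1}\to P\xrightarrow{p} B\to 0$. Applying $i$ (exact) yields $0\to i(B_{1})\to i(P)\xrightarrow{i(p)} i(B)\to 0$ in $\Mod(\mathcal{P}^{op})$. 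Since $i(P)\in\Proj(\Mod(\mathcal{P}^{op}))$, the map $i(p)$ lifts through $\beta$ to some $q\colon i(P)\to M$, and then pulling back the original sequence along $i(p)$ produces a split short exact sequence in the top row of a commutative $3\times3$-diagram whose middle term is $i(A)\oplus i(P)\cong i(A\oplus P)$ (using additivity of $i$). The snake lemma (or a direct diagram chase) then delivers a short exact sequence
\[
0\to i(B_{1})\xrightarrow{\gamma} i(A\oplus P)\to M\to 0
\]
in $\Mod(\mathcal{P}^{op})$.

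Since $i$ is full, $\gamma=i(f)$ for a unique morphism $f\colon B_{1}\to A\oplus P$ in $\mathcal{A}$. The key step is to show that $f$ is an admissible monomorphism in $(\mathcal{A},\mathcal{E})$: because $\mathcal{A}$ is idempotent complete (hence weakly idempotent complete), this is the ``monic'' half of the reflection property of Theorem \ref{thm: exacta se sumerge en abeliana}(c), applied to the embedding $i$ (the proof of that theorem is local in nature and carries over to any faithful exact functor reflecting exactness with projective values on $\mathcal{P}$, which is what Lemma \ref{lem: 2.177'} provides). Once $f$ is an admissible mono, it fits into an $\mathcal{E}$-short exact sequence $0\to B_{1}\xrightarrow{f} A\oplus P\to C\to 0$ in $\mathcal{A}$; applying $i$ and comparing with the displayed sequence forces $M\cong i(C)$, which finishes the closure argument.

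The main obstacle is the reflection claim for monomorphisms, i.e.\ that $i(f)$ monic implies $f$ admissible mono for our specific embedding into $\Mod(\mathcal{P}^{op})$. If a direct citation to \cite{enomoto2017classifying} (the paper that justifies Lemma \ref{lem: 2.177'}) is not convenient, I would prove it ad hoc: take a cokernel $f'\colon A\oplus P\to C$ of $f$ in $\mathcal{A}$ (which exists by appealing to enough $\mathcal{E}$-projectives together with idempotent completeness to identify $C$ as the object representing $\coker i(f)$ on $\mathcal{P}$), verify that $i(f')$ agrees with the cokernel in $\Mod(\mathcal{P}^{op})$, and then invoke the exactness-reflection property of $i$ to conclude $(f,f')\in\mathcal{E}$. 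Everything else in the proof is routine diagram-chasing, and the final invocation of Theorem \ref{thm: tilting exacto} is automatic.
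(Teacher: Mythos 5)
Your high-level plan is identical to the paper's: reduce to Theorem \ref{thm: tilting exacto} applied to the Yoneda-type embedding $i:\mathcal{A}\to\Modx[\mathcal{P}^{op}]$ supplied by Lemma \ref{lem: 2.177'}, and the only hypothesis of that theorem not explicitly listed in the statement of Lemma \ref{lem: 2.177'} is that $[i(\mathcal{A})]$ be closed under extensions. The paper's one-line proof sweeps this into the citation of \cite[Prop.~2.1]{enomoto2017classifying} inside Lemma \ref{lem: 2.177'} (Enomoto's result does assert extension-closure of the essential image under the idempotent-completeness hypothesis that the corollary adds). You, instead, try to re-derive the extension-closure directly, and this is where a genuine gap appears.

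The gap is the claim that $i(f)$ being a monomorphism in $\Modx[\mathcal{P}^{op}]$ forces $f$ to be an admissible monomorphism in $(\mathcal{A},\mathcal{E})$. This is \emph{false} for the embedding $i$ of Lemma \ref{lem: 2.177'}: take $\mathcal{A}=\modu(\mathbb{Z})$ (or a skeleton thereof) with the split exact structure, so that $\mathcal{P}=\mathcal{A}$ and admissible monos are split monos, and let $f$ be multiplication by $2$ on $\mathbb{Z}$. Then $\Hom_{\mathbb{Z}}(M,\mathbb{Z})\xrightarrow{\cdot 2}\Hom_{\mathbb{Z}}(M,\mathbb{Z})$ is injective for every $M$ (the target is torsion-free), so $i(f)$ is monic, yet $f$ does not split. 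Theorem \ref{thm: exacta se sumerge en abeliana}(c) is about the canonical embedding into $\operatorname{Lex}(\mathcal{A},\mathcal{E})$, whose image coincides with all of $\operatorname{Lex}$ in a controlled way; it does not transfer to arbitrary exactness-reflecting embeddings with projective values on $\mathcal{P}$, which is what the counterexample shows. Moreover, your fallback argument is circular: to produce the cokernel $C$ of $f$ in $\mathcal{A}$, you propose to ``identify $C$ as the object representing $\coker i(f)$ on $\mathcal{P}$''; but $\coker i(f)=M$, and showing $M$ is representable is precisely the statement you set out to prove.

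The extension-closure can be rescued without the false claim by exploiting the explicit form of the map coming from your pullback. Writing the pullback splitting $N\cong i(A)\oplus i(P)$, the monomorphism $i(B_1)\to i(A\oplus P)$ has the property that its composite with the projection to $i(P)$ is $i(\iota)$, where $\iota:B_1\to P$ is the admissible mono of the sequence $0\to B_1\to P\to B\to 0$. By fullness and faithfulness the map is $i(g)$ for a unique $g:B_1\to A\oplus P$ with $\pi_P\circ g=\iota$. Since $\pi_P g$ is an admissible mono and $\mathcal{A}$ is (weakly) idempotent complete, $g$ itself is an admissible mono by \cite[Prop.~7.6]{buhler2010exact}; its cokernel $C$ in $\mathcal{A}$ then gives $M\cong i(C)$ after applying the exact functor $i$. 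Alternatively, just cite \cite[Prop.~2.1]{enomoto2017classifying} directly for the extension-closure (which is what the paper's proof is implicitly doing) and avoid re-deriving it.
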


\begin{proof}
It follows from Lemma \ref{lem: 2.177'} and Theorem \ref{thm: tilting exacto}.
\end{proof}

\subsection{S. K. Mohamed's contexts in $\modu\,(\Lambda)$}

In \cite{mohamed2009relative},  Soud Khalifa Mohamed developed a relative
tilting theory inspired in the work of Maurice Auslander and {\O}yvind
Solberg in \cite{auslander1993relative2}. In this section,  we will
review the main aspects of his work and characterize his tilting objects
in terms of $n$-$\mathcal{X}$-tilting theory. We recall that,  in the context
of Artin algebras,  a class $\mathcal{C}\subseteq\modd[\Lambda]$ is
functorially finite if it is a precovering and preenveloping class in $\modd[\Lambda]$.

\begin{prop}\cite{auslander1993relative,  mohamed2009relative}  \label{prop: muhamed}
Let $\Lambda$ be an Artin algebra,  $\mathcal{C}\subseteq\modd[\Lambda]$
be functorially finite and closed under extensions,  and let $\mathcal{X}=\addx[\mathcal{X}]$
be precovering and a relative generator in $\mathcal{C}$. Consider the
class $\mathcal{E}_{\mathcal{X}}$ of all the short exact sequences 
$\suc[A][B][C][\, ][f]$
in $\modd[\Lambda]$ such that $\Homx[\Lambda][X][f]$ is surjective
$\forall X\in\mathcal{\mathcal{X}}$. Then,  the following statements
hold true.
\begin{itemize}
\item[$\mathrm{(a)}$] $\overline{\mathcal{E}_{\mathcal{X}}}: =\left\{ \overline{\eta}\, |\: \eta\in\mathcal{E}_{\mathcal{X}}\right\} $
is an additive subfunctor of $\Extx[1][\Lambda][-][-].$
\item[$\mathrm{(b)}$] For any exact sequence $\suc[A][B][C][\, ][f]$ in $\mathcal{E}_{\mathcal{X}}$
with $B, C\in\mathcal{C}$,  we have that $A\in\mathcal{C}.$
\item[$\mathrm{(c)}$] The pair $(\mathcal{C}, \mathcal{E}_{\mathcal{X}}^{\mathcal{C}})$ is an exact
category,  for $\mathcal{E}_{\mathcal{X}}^{\mathcal{C}}: =\left\{ \eta\in\mathcal{E}_{\mathcal{X}}(X, Y)\, |\: X, Y\in\mathcal{C}\right\}.$
\item[$\mathrm{(d)}$] If $\mathcal{I}_{\mathcal{C}}(\mathcal{E}_{\mathcal{X}}): =\left\{ C\in\mathcal{C}\, |\: \Homx[\Lambda][\eta][C]\mbox{ is exact }\forall\eta\in\mathcal{E}_{\mathcal{X}}\right\} $
is preenveloping in $\mathcal{C}$,  then $(\mathcal{C}, \mathcal{E}_{\mathcal{X}}^{\mathcal{C}})$
has enough $\mathcal{E}_{\mathcal{X}}^{\mathcal{C}}$-injectives.
\item[$\mathrm{(e)}$] The exact category $(\mathcal{C}, \mathcal{E}_{\mathcal{X}}^{\mathcal{C}})$ has enough
$\mathcal{E}_{\mathcal{X}}^{\mathcal{C}}$-projectives and $\mathcal{X}$ 
$=\Proj_{\mathcal{E}_{\mathcal{X}}^{\mathcal{C}}}(\C)$.
\end{itemize}
\end{prop}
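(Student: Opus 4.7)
The strategy is to chase diagrams in $\modd[\Lambda]$ and exploit the two defining features of $\mathcal{E}_{\mathcal{X}}$ — namely that it is cut out by requiring every $\Hom_{\Lambda}(X,-)$ with $X\in\mathcal{X}$ to preserve the chosen short exact sequences — together with the closure hypotheses on $\mathcal{C}$ and the precovering/relative generator properties of $\mathcal{X}$. Throughout I assume the tacit convention that $\mathcal{C}$, being functorially finite, is closed under direct summands.

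For (a), the plan is to verify that $\mathcal{E}_{\mathcal{X}}$ is closed under equivalence of extensions, pullbacks along $h\colon C'\to C$, pushouts along $h\colon A\to A'$, and Baer sums. The crucial observation for pullbacks is: if $\eta\colon 0\to A\to B\xrightarrow{f} C\to 0$ lies in $\mathcal{E}_{\mathcal{X}}$ and $\eta'\colon 0\to A\to B'\to C'\to 0$ is its pullback along $h$, then any $\varphi\colon X\to C'$ with $X\in\mathcal{X}$ yields $h\varphi\colon X\to C$, which lifts through $f$ by hypothesis, and the universal property of the pullback supplies the required lift $X\to B'$. The pushout case is dual, and Baer sums combine both.

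For (b), given $\eta\colon 0\to A\to B\xrightarrow{f} C\to 0$ in $\mathcal{E}_{\mathcal{X}}$ with $B,C\in\mathcal{C}$, I would pick an epimorphism $\pi\colon X\to C$ with $X\in\mathcal{X}$ and $K:=\Ker(\pi)\in\mathcal{C}$ (available since $\mathcal{X}$ is a relative generator in $\mathcal{C}$), and then lift $\pi$ to some $g\colon X\to B$ with $fg=\pi$. Forming the pullback $P$ of $f$ and $\pi$, the $3\times 3$ lemma produces a split exact sequence $0\to A\to P\to X\to 0$ (split by $g$), whence $P\cong A\oplus X$, together with a short exact sequence $0\to K\to A\oplus X\to B\to 0$. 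Closure of $\mathcal{C}$ under extensions gives $A\oplus X\in\mathcal{C}$, and closure under summands extracts $A\in\mathcal{C}$.

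Items (c), (d), (e) then follow from (a) and (b) by routine bookkeeping. For (c), the general correspondence between additive subfunctors of $\Ext^{1}$ and exact structures on $\modd[\Lambda]$, combined with (b), immediately gives the exact category $(\mathcal{C},\mathcal{E}_{\mathcal{X}}^{\mathcal{C}})$. For (d), given $C\in\mathcal{C}$, I would take an $\mathcal{I}_{\mathcal{C}}(\mathcal{E}_{\mathcal{X}})$-preenvelope $g\colon C\to I$ and argue that, combined with the existence of enough injectives in $\modd[\Lambda]$, it can be modified to an admissible monomorphism in $\mathcal{E}_{\mathcal{X}}^{\mathcal{C}}$ with $\mathcal{E}_{\mathcal{X}}^{\mathcal{C}}$-injective target. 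For (e), an $\mathcal{X}$-precover $f\colon X\to C$ of $C\in\mathcal{C}$ is automatically epi (any epimorphism from $\mathcal{X}$ onto $C$ provided by the relative generator property factors through $f$), and the precovering property says exactly that the kernel sequence belongs to $\mathcal{E}_{\mathcal{X}}$; by (b), $\Ker f\in\mathcal{C}$, so this sequence lies in $\mathcal{E}_{\mathcal{X}}^{\mathcal{C}}$, proving enough projectives. The identification $\mathcal{X}=\Proj_{\mathcal{E}_{\mathcal{X}}^{\mathcal{C}}}(\mathcal{C})$ follows by splitting such a sequence whenever its right term is projective in the new structure, using $\mathcal{X}=\add(\mathcal{X})$. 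The main subtle point throughout is the need for $\mathcal{C}$ to be closed under summands in order for the extraction step in (b) — and hence the restriction of the exact structure in (c) — to go through.
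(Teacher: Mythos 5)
The paper does not actually prove Proposition~\ref{prop: muhamed}: it is stated with a citation to Auslander--Solberg and Mohamed, so there is no in-paper argument to compare against. Judged on its own merits, your sketch for (a), (b), and (e) is essentially correct. In (b), the split-pullback trick producing $0\to K\to A\oplus X\to B\to 0$ is exactly right, and you were right to flag the need for $\mathcal{C}$ to be closed under direct summands, which is the standard tacit convention for functorially finite subcategories in this literature. (One minor imprecision in (a): the pushout case is not ``dual'' to the pullback case but simply easier, since a lift $X\to B$ of $X\to C$ composes with $B\to B'$; the real work is in the pullback step.)

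There are two genuine gaps. For (c), appealing to a ``general correspondence between additive subfunctors of $\Ext^{1}$ and exact structures'' is misleading: not every additive subbifunctor of $\Ext^{1}_{\Lambda}(-,-)$ yields an exact structure, only the \emph{closed} ones, i.e.\ those for which admissible monomorphisms compose. For $\overline{\mathcal{E}_{\mathcal{X}}}$ this is true but must be checked directly: given $F_{\mathcal{X}}$-admissible monos $A\hookrightarrow B\hookrightarrow D$ and a map $\varphi\colon X\to D/A$ with $X\in\mathcal{X}$, first lift the composite $X\to D/B$ to some $\psi\colon X\to D$, observe that $q_{D/A}\psi-\varphi$ factors through $B/A$, lift that factor through $B\twoheadrightarrow B/A$, and correct $\psi$; your phrasing suggests this is automatic when it is not. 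For (d), ``it can be modified to an admissible monomorphism in $\mathcal{E}_{\mathcal{X}}^{\mathcal{C}}$'' is not an argument: the entire content of the claim is showing that the $\mathcal{I}_{\mathcal{C}}(\mathcal{E}_{\mathcal{X}})$-preenvelope $C\to I$ is a monomorphism, that its cokernel lies in $\mathcal{C}$, and that the resulting short exact sequence lies in $\mathcal{E}_{\mathcal{X}}$. None of the three is obvious, and ``enough injectives in $\modd[\Lambda]$'' alone does not settle them, since the injective envelope of $C$ need not lie in $\mathcal{C}$; you would have to work through the interplay with $\mathcal{C}$-precovers and preenvelopes that the cited sources use.
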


\begin{defn} \cite[Sect. 4]{mohamed2009relative}\label{def: tilting mohamed}
Let $\Lambda$ be an Artin algebra. A {\bf Mohamed context} in $\modd[\Lambda]$ is a pair $(\mathcal{C}, \mathcal{X})$ of classes of objects in $\modd[\Lambda]$
satisfying the following conditions. 
\begin{description}
\item [{(MC1)}] $\mathcal{C}$ is functorially finite and closed under extensions.
\item [{(MC2)}] $\mathcal{X}=\addx[\mathcal{X}]$ is precovering and a relative
generator in $\mathcal{C}.$
\item [{(MC3)}] $\mathcal{I}_{\mathcal{C}}(\mathcal{E}_{\mathcal{X}})$
is preenveloping in $\mathcal{C}$. 
\end{description}
\end{defn}

\begin{rem}\label{rem: no sumerge} Let $\Lambda$ be an Artin algebra and let 
$\p[\mathcal{C}][\mathcal{X}]$ be a Mohamed's
context in $\modd[\Lambda]$. 
\begin{enumerate}
\item [(1)] By Proposition \ref{prop: muhamed},  $(\mathcal{C}, \mathcal{E}_{\mathcal{X}}^{\mathcal{C}})$
is a skeletally small exact category with enough $\mathcal{E}_{\mathcal{X}}^{\mathcal{C}}$-projectives
and $\mathcal{E}_{\mathcal{X}}^{\mathcal{C}}$-injectives,  where $\operatorname{Proj}_{\mathcal{E}_{\mathcal{X}}^{\mathcal{C}}}(\mathcal{C})$=$\mathcal{X}$.
\item [(2)]  For $\mathcal{P}: =\operatorname{Proj}_{\mathcal{E}_{\mathcal{X}}^{\mathcal{C}}}(\mathcal{C})=\mathcal{X}$ and 
 $i: \mathcal{C}\rightarrow\Modx[\mathcal{P}^{op}]$, 
$C\mapsto\Homx[\Lambda][-][C]|_{\mathcal{P}},$ the following
statements are equivalent: (a) $\mathcal{C}=\smdx[\mathcal{C}]$ in $\modd[\Lambda];$ (b) $\mathcal{C}$ is idempotent complete; and (c) $[i(\mathcal{C})]=\smdx[[i(\mathcal{C})]]$ in $\Modx[\mathcal{P}^{op}]$. 
\end{enumerate}
\end{rem}

Observe that, in general, for a Mohamed's context $\p[\mathcal{C}][\mathcal{X}]$ in $\modu\,(\Lambda),$ the inclusion functor $(\mathcal{C}, \mathcal{E}_{\mathcal{X}}^{\mathcal{C}})\rightarrow\modd[\Lambda], $ induced by the inclusion $\mathcal{C}\subseteq\modd[\Lambda], $ does not reflect exactness. Indeed,  consider $\Lambda: =\mathbb{R}[x]/\left\langle x^{4}\right\rangle $, 
$M={}_{\Lambda}\Lambda$,  $N: =x^{2}M$ and $K: =N/x^{2}N$. Take $\mathcal{C}: =\modd[\Lambda]$
and  $\mathcal{X}: =\addx[M\oplus K]$. Notice that $(\mathcal{C}, \mathcal{X})$ is a Mohamed context
in $\modd[\Lambda]$. Moreover, the exact sequence 
$\suc[N][M][K][\mbox{ }][\, ]$
does not belong in $\mathcal{E}_{\mathcal{X}}^{\mathcal{C}}$ since
it does not split. Therefore,  $(\mathcal{C}, \mathcal{X})$ is not
embedded in $\modd[\Lambda],$ via the natural inclusion $\mathcal{C}\subseteq\modd[\Lambda].$

\begin{thm}\label{prop: mohamed vs n-X-tilting}\label{thm: mohammed vs n-X-tilting}\label{thm: mohammed vs n-X-tilting-1}
Let $\Lambda$ be an Artin algebra and let $(\mathcal{C}, \mathcal{X})$
be a Mohamed's context in $\modd[\Lambda]$. Consider the exact category
$(\mathcal{C}, \mathcal{E}_{\mathcal{X}}^{\mathcal{C}})$,  $\mathcal{P}: =\operatorname{Proj}_{\mathcal{E}_{\mathcal{X}}^{\mathcal{C}}}(\mathcal{C})=\mathcal{X}$
and the embedding  $i: \mathcal{C}\rightarrow\Modx[\mathcal{P}^{op}]$, 
$C\mapsto\Homx[\Lambda][-][C]|_{\mathcal{P}}$. Then,  for a class $\mathcal{T}\subseteq\mathcal{C}$,  the following statements are equivalent.
\begin{itemize}
\item[$\mathrm{(a)}$] $\mathcal{T}$ is Auslander-Solberg $n$-tilting in $\p[\mathcal{C}][\mathcal{E}_{\mathcal{X}}^{\mathcal{C}}]$
and $\mathcal{T}$ is precovering in $\mathcal{T}^{\bot_{\mathcal{E}_{\mathcal{X}}^{\mathcal{C}}}}$.
\item[$\mathrm{(b)}$] $\left[i(\mathcal{T})\right]$ is small $n$-$\left[i(\mathcal{C})\right]$-tilting
in $\operatorname{Mod}(\mathcal{P}^{op})$.
\item[$\mathrm{(c)}$] $\mathcal{T}$ is Zhu-Zhuang $n$-tilting in $\p[\mathcal{C}][\mathcal{E}_{\mathcal{X}}^{\mathcal{C}}]$
and $\mathcal{T}$ is precovering in $\mathcal{T}^{\bot_{\mathcal{E}_{\mathcal{X}}^{\mathcal{C}}}}$.
\item[$\mathrm{(d)}$] $\mathcal{T}$ is small $n$-tilting in $\p[\mathcal{C}][\mathcal{E}_{\mathcal{X}}^{\mathcal{C}}]$.
\item[$\mathrm{(e)}$] $\mathcal{T}$ is precovering in $\mathcal{T}^{\bot_{\mathcal{E}_{\mathcal{X}}^{\mathcal{C}}}}$
and $\Pres[\mathcal{T}][\mathcal{E}_{\mathcal{X}}^{\mathcal{C}}][m]=\mathcal{T}^{\bot_{\mathcal{E}_{\mathcal{X}}^{\mathcal{C}}}}, $ where $m: =\max\{1, n\}.$
\end{itemize}
\end{thm}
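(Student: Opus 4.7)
The plan is to reduce the statement to machinery already developed in the previous subsection, using the embedding $i$ of the exact category $(\mathcal{C},\mathcal{E}_{\mathcal{X}}^{\mathcal{C}})$ into the abelian category $\Mod(\mathcal{P}^{op})$. By Proposition~\ref{prop: muhamed} and part (1) of Remark~\ref{rem: no sumerge}, the pair $(\mathcal{C},\mathcal{E}_{\mathcal{X}}^{\mathcal{C}})$ is a skeletally small exact category with enough $\mathcal{E}_{\mathcal{X}}^{\mathcal{C}}$-projectives and $\mathcal{E}_{\mathcal{X}}^{\mathcal{C}}$-injectives, and $\mathcal{P}=\Proj_{\mathcal{E}_{\mathcal{X}}^{\mathcal{C}}}(\mathcal{C})=\mathcal{X}$. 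Moreover $\mathcal{C}$ inherits idempotent completeness from $\modu(\Lambda)$ via Krull--Schmidt (since a functorially finite class closed under extensions in $\modu(\Lambda)$ is closed under direct summands), so Remark~\ref{rem: no sumerge}(2) gives that $(\mathcal{C},\mathcal{E}_{\mathcal{X}}^{\mathcal{C}})$ is idempotent complete. Finally, Lemma~\ref{lem: 2.177'} ensures that $i$ is a fully faithful, exact, exactness-reflecting functor with $i(\mathcal{P})\subseteq\Proj(\Mod(\mathcal{P}^{op}))$, and $[i(\mathcal{C})]$ is closed under extensions by Theorem~\ref{thm: exacta se sumerge en abeliana}.

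Once this setup is in place, the equivalences are straightforward applications of the theorems of Section~\ref{sub: Tilting-en-categor=0000EDas}. Concretely: the equivalence (a)\,$\Leftrightarrow$\,(c)\,$\Leftrightarrow$\,(d) follows from Theorem~\ref{thm:  teo2 notas} applied to the idempotent complete exact category $(\mathcal{C},\mathcal{E}_{\mathcal{X}}^{\mathcal{C}})$; the equivalence (d)\,$\Leftrightarrow$\,(e) is Corollary~\ref{cor: coro p 131} applied to the same category, with $m=\max\{1,n\}$; and the equivalence (d)\,$\Leftrightarrow$\,(b) is Corollary~\ref{cor: 2.177''}, which is precisely the transfer of the small $n$-tilting notion from $(\mathcal{C},\mathcal{E}_{\mathcal{X}}^{\mathcal{C}})$ to $\Mod(\mathcal{P}^{op})$ along the Yoneda embedding $i$.

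The only verification one needs to carry out by hand is that the hypotheses of Corollaries~\ref{cor: coro p 131} and~\ref{cor: 2.177''} and Theorem~\ref{thm:  teo2 notas} are all met: enough projectives and injectives, idempotent completeness, skeletally small, and $i(\Proj_{\mathcal{E}}(\mathcal{A}))\subseteq\Proj(\mathcal{C})$. Each of these has already been recorded in Proposition~\ref{prop: muhamed}, Remark~\ref{rem: no sumerge}, and Lemma~\ref{lem: 2.177'}, so no new computation is required.

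The main obstacle, and therefore the step that merits the most care, is confirming that $\mathcal{C}$ is idempotent complete under the Mohamed axioms (MC1)--(MC3). This is where one uses that $\modu(\Lambda)$ is a Krull--Schmidt abelian category, so every idempotent in $\mathcal{C}$ splits in $\modu(\Lambda)$, and the resulting summand lies again in $\mathcal{C}$ because $\mathcal{C}$ is functorially finite and closed under extensions, hence closed under summands in $\modu(\Lambda)$. Once this is settled, the rest of the proof is a direct invocation of the already-proved results.
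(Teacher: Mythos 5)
Your overall route coincides with the paper's (whose stated proof is a one-line citation of the same results): establish that $(\mathcal{C},\mathcal{E}_{\mathcal{X}}^{\mathcal{C}})$ is an idempotent complete, skeletally small exact category with enough $\mathcal{E}_{\mathcal{X}}^{\mathcal{C}}$-projectives and injectives, then chain Theorem \ref{thm:  teo2 notas} for (a)$\Leftrightarrow$(c)$\Leftrightarrow$(d), Corollary \ref{cor: coro p 131} for (d)$\Leftrightarrow$(e), and Corollary \ref{cor: 2.177''} for (b)$\Leftrightarrow$(d). Those citations, and the setup via Proposition \ref{prop: muhamed} and Lemma \ref{lem: 2.177'}, are all correct, and Corollary \ref{cor: 2.177''} is precisely the packaging of Theorem \ref{thm: tilting exacto} that the paper invokes.

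The one genuine gap is your treatment of idempotent completeness. You assert that ``a functorially finite class closed under extensions in $\modu(\Lambda)$ is closed under direct summands,'' but this is false as a general implication. For $\Lambda=k$ a field, the class $\{k^{2n}\,:\,n\geq 0\}$ is extension-closed (everything splits), precovering and preenveloping (surject or embed any $V$ into an even-dimensional space and use projectivity/injectivity to factor), yet $k\notin\{k^{2n}\}$. What actually carries the idempotent completeness here is the standing Auslander--Smal{\o} convention, relied on implicitly by the paper, that a ``subcategory'' of $\modu(\Lambda)$ is a full subcategory closed under isomorphisms and direct summands (cf.\ \cite{auslander1980preprojective}); Remark \ref{rem: no sumerge}(2) is cited only to translate this into idempotent completeness of $(\mathcal{C},\mathcal{E}_{\mathcal{X}}^{\mathcal{C}})$. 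You should invoke the convention instead of attempting a Krull--Schmidt derivation that does not go through. Separately, a minor misattribution: you cite Theorem \ref{thm: exacta se sumerge en abeliana} for the fact that $[i(\mathcal{C})]$ is closed under extensions in $\Mod(\mathcal{P}^{op})$, but that theorem concerns the Gabriel--Quillen embedding into $\operatorname{Lex}(\mathcal{A},\mathcal{E})$, not the Yoneda embedding into $\Mod(\mathcal{P}^{op})$; the relevant fact comes from \cite[Prop.\ 2.1]{enomoto2017classifying} as used in Lemma \ref{lem: 2.177'}, and it is already built into Corollary \ref{cor: 2.177''}, so the citation is both redundant and misdirected.
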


\begin{proof}
It follows from Theorem \ref{thm:  teo2 notas},  Corollary \ref{cor: coro p 131},  \cite[Props. 2.1 and 2.8]{enomoto2017classifying}, 
Remark \ref{rem: no sumerge} (2) and Theorem \ref{thm: tilting exacto}.
\end{proof}

\begin{cor} Let $\Lambda$ be an Artin algebra,  $\p[\mathcal{C}][\mathcal{X}]$
be a Mohamed's context in $\modd[\Lambda]$ and let $T\in\modd[\Lambda].$ 
 Then,  for the  embedding  $i: \mathcal{C}\rightarrow\Modx[\mathcal{X}^{op}]$, 
$C\mapsto\Homx[\Lambda][-][C]|_{\mathcal{X}}, $ the following statements are equivalent.
\begin{itemize}
\item[$\mathrm{(a)}$] $T$ is Auslander-Solberg $n$-tilting in $\p[\mathcal{C}][\mathcal{E}_{\mathcal{X}}^{\mathcal{C}}]$.
\item[$\mathrm{(b)}$] $i(T)$ is small $n$-$[i(\mathcal{C})]$-tilting in $\Modx[\mathcal{X}^{op}]$. 
\item[$\mathrm{(c)}$] $T$ is Zhu-Zhuang $n$-tilting in $\p[\mathcal{C}][\mathcal{E}_{\mathcal{X}}^{\mathcal{C}}]$.
\item[$\mathrm{(d)}$] $T$ is small $n$-tilting in $\p[\mathcal{C}][\mathcal{E}_{\mathcal{X}}^{\mathcal{C}}]$. 
\item[$\mathrm{(e)}$] $\Pres[\operatorname{add}(T)][\mathcal{E}_{\mathcal{X}}^{\mathcal{C}}][m]=T^{\bot_{\mathcal{E}_{\mathcal{X}}^{\mathcal{C}}}}, $ where $m: =\max\left\{ 1, n\right\}.$
\end{itemize}
\end{cor}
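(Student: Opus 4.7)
The plan is to deduce this corollary directly from the preceding Theorem \ref{thm: mohammed vs n-X-tilting} by specializing to the class $\mathcal{T}:=\addx[T]$, and to observe that for such a class the extra precovering hypothesis becomes automatic, so that it can be dropped from the equivalences.

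First, I would set $\mathcal{T}:=\addx[T]$ and unwind the definitions: by (AST0) in Definition \ref{def:  auslander-solberg tilting} (respectively by the corresponding clause in the definitions of small $n$-tilting, Zhu-Zhuang $n$-tilting, and big $n$-$\X$-tilting), each of the conditions (a)--(d) for the object $T$ is by definition the corresponding condition for the class $\addx[T]$. Similarly, condition (e) here is exactly the $\Pres$ characterization (with $\mathcal{T}=\addx[T]$) appearing in Theorem \ref{thm: mohammed vs n-X-tilting}(e). Thus, it suffices to verify that the five conditions in Theorem \ref{thm: mohammed vs n-X-tilting} all reduce, for $\mathcal{T}=\addx[T]$, to those stated here.

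The only potential obstacle is that conditions (a), (c), (e) in Theorem \ref{thm: mohammed vs n-X-tilting} include the auxiliary hypothesis that $\mathcal{T}$ is precovering in $\mathcal{T}^{\bot_{\mathcal{E}_{\mathcal{X}}^{\mathcal{C}}}}$, whereas no such hypothesis appears in the corresponding conditions of the corollary. The key observation is that when $\mathcal{T}=\addx[T]$ with $T\in\modd[\Lambda]$ and $\Lambda$ an Artin algebra, the class $\addx[T]$ is functorially finite in $\modd[\Lambda]$ by \cite[Prop. 4.2]{auslander1980preprojective}; in particular, $\addx[T]$ is precovering in $\modd[\Lambda]$. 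Since every object of $\mathcal{T}^{\bot_{\mathcal{E}_{\mathcal{X}}^{\mathcal{C}}}}\subseteq \C\subseteq\modd[\Lambda]$ admits an $\addx[T]$-precover $\alpha:T'\to X$ in $\modd[\Lambda]$, and since $T'\in\addx[T]\subseteq\mathcal{C}$, this precover lives in $\mathcal{C}$; hence $\addx[T]$ is precovering in $\mathcal{T}^{\bot_{\mathcal{E}_{\mathcal{X}}^{\mathcal{C}}}}$ automatically. This allows us to delete the precovering hypothesis in items (a), (c), and (e) of Theorem \ref{thm: mohammed vs n-X-tilting} when $\mathcal{T}=\addx[T]$.

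Combining these two observations, conditions (a)--(e) of the corollary are exactly conditions (a), (d), (c), (d) (again), and (e) of Theorem \ref{thm: mohammed vs n-X-tilting} specialized to $\mathcal{T}=\addx[T]$, with the auxiliary precovering hypothesis rendered superfluous by the Artin-algebra setting. Hence the desired equivalence follows at once from Theorem \ref{thm: mohammed vs n-X-tilting}. I expect no technical obstacle beyond the careful verification of the automatic precovering property, which is the only nontrivial point in the reduction.
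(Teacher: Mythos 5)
Your proposal is correct and takes essentially the same approach as the paper: the paper's proof is the one-line observation that the corollary follows from Theorem \ref{thm: mohammed vs n-X-tilting} because $\addx[T]$ is precovering. You supply the justification (functorial finiteness of $\addx[T]$ in $\modd[\Lambda]$ for an Artin algebra, via \cite[Prop. 4.2]{auslander1980preprojective}, and the observation that a precover taken in $\modd[\Lambda]$ restricts to one in the full subcategory $\mathcal{C}$) that the paper leaves implicit; there is a small typo in your final paragraph (``(a), (d), (c), (d), (e)'' should read ``(a), (b), (c), (d), (e)''), but this does not affect the argument.
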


\begin{proof}
It follows from Proposition \ref{prop: mohamed vs n-X-tilting} since $\add(T)$ is precovering.
\end{proof}

In what follows,  we will approach briefly to the precursor of S. K. Mohamed's
tilting theory. Namely,  we shall introduce the relative homological
algebra presented by Maurice Auslander and {\O}yvind Solberg in \cite{auslander1993relative,  auslander1993relative2}.
\

Let $\Lambda$ be an Artin algebra and $F$ be an additive subfunctor
of $\Extx[1][\Lambda][-][-]$. A short exact sequence $\eta: \: \suc$ in $\modd[\Lambda]$ is \textbf{$F$-exact}
if $\overline{\eta}\in F(K, N), $ and the class of all the short $F$-exact sequences is denoted 
by $\mathcal{E}_F.$
\

The class $\mathcal{P}(F)$ of the {\bf $F$-projective} modules 
consists of all the $P\in\modu(\Lambda)$ such that for every $F$-exact sequence 
$\eta: \suc, $ the sequence $\Hom_\Lambda(P, \eta)$ is exact. It is said that $F$ has enough projectives if any $M\in\modd[\Lambda]$
admits an $F$-exact sequence $\suc[M'][P][M]$ with $P\in\mathcal{P}(F).$ Define dually $F$-injective modules, the class $\mathcal{I}(F)$ and  the notion of saying that $F$ has enough injectives.
\

Each $\mathcal{X}\subseteq\modd[\Lambda]$ induces two subfunctors $F_{\mathcal{X}}$ and $F^{\mathcal{X}}$ of $\Extx[1][\Lambda][-][-].$ Indeed, for every $A, C\in\modd[\Lambda], $ $F_{\mathcal{X}}(C, A)$ is formed by all the extensions 
$\overline{\eta}\in\Extx[1][\Lambda][C][A]$, 
with $\eta: \: \suc[A][B][C][\, ][\, ]$,  such that $\Homx[\Lambda][-][B]|_{\mathcal{X}}\rightarrow\Homx[\Lambda][-][C]|_{\mathcal{X}}\rightarrow0$
is exact. The functor $F^{\mathcal{X}}$ is defined dually.

\begin{prop}\cite{auslander1993relative,  auslander1993relative2}  
\label{def:  auslader-solberg 2}\label{fact:  auslander-solberg} 
For an Artin algebra $\Lambda$ and a class $\mathcal{X}\subseteq\modd[\Lambda], $ the following statements hold true.
\begin{itemize}
\item[$\mathrm{(a)}$] $F_{\mathcal{X}}$
and $F^{\mathcal{X}}$ are additive subfunctors of $\Extx[1][\Lambda][-][-].$
\item[$\mathrm{(b)}$] Let $F$ be an additive
subfunctor of $\Extx[1][\Lambda][-][-]$. Then the class $\mathcal{E}_F$ is closed under pull-backs,  push-outs and finite coproducts.

\item[$\mathrm{(c)}$] The map
$F\mapsto\mathcal{P}(F)$ is a bijection between the class of all the additive subfunctors
of $\Extx[1][\Lambda][-][-]$ with enough projectives and the class of all the  precovering
classes $\mathcal{X}$  in $\modd[\Lambda]$ such that $\projx[\Lambda]\subseteq\mathcal{X}=\addx[\mathcal{X}]$.

\item[$\mathrm{(d)}$]  Let $F$ be an additive subfunctor of $\Extx[1][\Lambda][-][-]$ with
enough projectives. Then,  $(\modd[\Lambda], \mathcal{E}_{F})$ is an exact category
with $\operatorname{Proj}_{\mathcal{E}_{F}}(\modd[\Lambda])=\mathcal{P}(F)$
and $\operatorname{Inj}_{\mathcal{E}_{F}}(\modd[\Lambda])=\mathcal{I}(F)$. 
\item[$\mathrm{(e)}$] Let $\X\subseteq\modu(\Lambda)$ be such that $\proj(\Lambda)\subseteq\X=\add(\X).$ Then,  $F_\X$ has enough projectives and injectives if,  and only if,  $\X$ is functorially finite.
\end{itemize}
\end{prop}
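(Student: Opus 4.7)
The plan is to verify each part of the proposition by combining diagram-chase arguments for subfunctor properties with the standard approximation-theoretic machinery of Auslander and Solberg.

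For (a), I would show the subfunctor property for $F_{\mathcal{X}}$ by checking invariance under pullback in the first variable and pushout in the second. Given $\overline{\eta}\in F_{\mathcal{X}}(C,A)$ with $\eta:\suc$ and a morphism $f:C'\to C$, form the pullback square with top row $\eta':\suc[A][B'][C']$; the surjectivity of $\Hom_\Lambda(X,B')\to\Hom_\Lambda(X,C')$ for $X\in\mathcal{X}$ follows because any morphism $X\to C'$ lifts to $X\to B$ via $f$ by hypothesis, and then factors through $B'$ by the universal property. The pushout case is dual. Additivity (stability under Baer sum) is a formal consequence of stability under pullback/pushout plus closure of the defining condition under finite coproducts, since $\Hom_\Lambda(X,-)$ commutes with finite coproducts. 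The argument for $F^{\mathcal{X}}$ is the dualization.

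Part (b) follows immediately from (a) applied to the arbitrary additive subfunctor $F$: by the subfunctor axioms the assignment $\overline{\eta}\mapsto F(f)(\overline{\eta})$ takes $F$-exact sequences to $F$-exact sequences (i.e.\ pullbacks stay in $\mathcal{E}_F$), and dually for pushouts. Closure under finite coproducts is then the biadditivity of $F$ together with the fact that the direct sum of two short exact sequences is exact. For (d), the Quillen-style axioms of an exact category are then routine to verify from (b) together with the standard $3\times 3$ and obscure lemmas; the key inputs are that admissible monos/epis compose (which uses pushout/pullback stability), and that pushouts of admissible monos and pullbacks of admissible epis are admissible. The identification $\mathrm{Proj}_{\mathcal{E}_F}(\modu\,\Lambda)=\mathcal{P}(F)$ is essentially by definition: an object is $\mathcal{E}_F$-projective precisely when $\Hom_\Lambda(P,-)$ is exact on $\mathcal{E}_F$, which is the defining property of $\mathcal{P}(F)$; dually for injectives.

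For (c), I would establish the bijection in two steps. First, given $F$ with enough projectives, check $\mathcal{P}(F)=\mathrm{add}(\mathcal{P}(F))$ contains $\proj\,\Lambda$ (every ordinary projective is $F$-projective since ordinary $\Ext^1$ vanishes on them, a fortiori $F$-extensions) and is precovering in $\modu\,\Lambda$ (use the $F$-exact sequence $\suc[K][P][M]$ with $P\in\mathcal{P}(F)$: the map $P\to M$ is a $\mathcal{P}(F)$-precover because any $Q\to M$ with $Q\in\mathcal{P}(F)$ lifts through $P\to M$ by $F$-projectivity of $Q$). Second, given such $\mathcal{X}$, set $F:=F_{\mathcal{X}}$; enough projectives is immediate from the $\mathcal{X}$-precover of any $M$, and the equality $\mathcal{P}(F_{\mathcal{X}})=\mathcal{X}$ follows from the precovering property and a splitting argument using an $\mathcal{X}$-precover $X\to P$ with $P\in\mathcal{P}(F_{\mathcal{X}})$. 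The inverse correspondence shows both injectivity and surjectivity. Finally, (e) follows from (c) applied to $F_{\mathcal{X}}$ for enough projectives, together with the dual statement that $F_{\mathcal{X}}=F^{\mathcal{I}(F_{\mathcal{X}})}$ has enough injectives iff $\mathcal{I}(F_{\mathcal{X}})$ is preenveloping, which translates into $\mathcal{X}$ being covariantly finite using the Auslander-Reiten transpose duality $\mathrm{D}\,\mathrm{Tr}$ relating left and right approximations in $\modu\,\Lambda$.

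The main obstacle is the surjectivity-and-correctness portion of (c), namely verifying that $\mathcal{P}(F_{\mathcal{X}})=\mathcal{X}$ and that the resulting $F_{\mathcal{X}}$ indeed has enough projectives. The containment $\mathcal{X}\subseteq\mathcal{P}(F_{\mathcal{X}})$ is by construction, but the reverse containment requires a careful argument with $\mathcal{X}$-precovers and the fact that the precover of a $P\in\mathcal{P}(F_{\mathcal{X}})$ sits in a short exact sequence that must be $F_{\mathcal{X}}$-exact, hence splits. Everything else is bookkeeping that is already written out in \cite{auslander1993relative,auslander1993relative2}, which I would cite for the technical details.
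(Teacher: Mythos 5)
The paper does not prove this Proposition; it is presented as a list of facts cited directly from \cite{auslander1993relative,auslander1993relative2}, so there is no in-paper argument to compare against. Your reconstruction of the proofs of parts (a)--(d) is correct and follows the Auslander--Solberg line of reasoning faithfully: pullback/pushout stability plus Baer sum for the subfunctor property, the subfunctor axioms for (b), the standard verification for (d), and, in (c), the splitting argument using an $F_{\X}$-exact sequence ending at $P\in\mathcal{P}(F_{\X})$ to show $\mathcal{P}(F_{\X})\subseteq\X$ together with the precover construction for enough projectives.

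The weak point is (e). You write the equality $F_{\X}=F^{\mathcal{I}(F_{\X})}$ as though it were a definitional identity to which the dual of (c) can be applied, but only the inclusion $F_{\X}\subseteq F^{\mathcal{I}(F_{\X})}$ is formal from the definitions of $\mathcal{I}(-)$ and $F^{(-)}$. The reverse inclusion, together with the identification of $\mathcal{I}(F_{\X})$ as an explicit class built from the Auslander--Reiten translate of $\X$, is precisely what the AR-duality $\Ext^{1}_{\Lambda}(C,A)\cong D\,\overline{\mathrm{Hom}}_{\Lambda}(A,\tau C)$ supplies in \cite[Thm.~1.12]{auslander1993relative}. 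As written, the equality you take as a starting point in the chain of equivalences is the conclusion of the AR-duality argument you invoke afterwards, so the step is circular. This is the genuinely nontrivial content of part (e) over an Artin algebra, and you should separate the formal containment from the theorem rather than subsuming both into the assertion that $F_{\X}$ equals $F^{\mathcal{I}(F_{\X})}$; since you defer to \cite{auslander1993relative,auslander1993relative2} for the details this is a matter of exposition rather than a fatal error, but it is the one place where the hard step is hidden.
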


As a consequence of \cite[Cor. 1.13]{auslander1993relative}, we get the following result that is useful to obtain examples of Mohamed's contexts.

\begin{prop}\label{prop:  prop3 notas} Let $\Lambda$ be an Artin algebra
and let $F$ be an additive subfunctor of $\Extx[1][\Lambda][-][-]$. Then, 
the following statements hold true.
\begin{itemize}
\item[$\mathrm{(a)}$] $F$ has enough projectives and injectives if,  and only if,  $\mathcal{P}(F)$
is functorially finite and $F=F_{\mathcal{P}(F)}$.
\item[$\mathrm{(b)}$] Let $F$ be with enough projectives and injectives. Then,   $(\modd[\Lambda], \mathcal{P}(F))$
is a Mohamed's context in $\modd[\Lambda]$ and $\mathcal{E}_{\mathcal{P}(F)}^{\modd[\Lambda]}=\mathcal{E}_{F}$. 
\end{itemize}
\end{prop}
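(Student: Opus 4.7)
The plan is to derive both parts from Proposition \ref{fact:  auslander-solberg}, particularly items (c) and (e), once I establish the key identity $F = F_{\mathcal{P}(F)}$ whenever $F$ has enough projectives.

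First I would prove that identity. The inclusion $F \subseteq F_{\mathcal{P}(F)}$ is immediate from the definitions: any $F$-exact sequence $\eta: 0 \to A \to B \to C \to 0$ satisfies that $\Homx[\Lambda][P][\eta]$ is exact for every $P \in \mathcal{P}(F)$, which is precisely the defining condition of $F_{\mathcal{P}(F)}$-exactness. For the reverse inclusion, take $\bar{\eta} \in F_{\mathcal{P}(F)}(C,A)$ with $\eta: 0 \to A \to B \to C \to 0$. Since $F$ has enough projectives, there is an $F$-exact sequence $\eta': 0 \to K \to P \to C \to 0$ with $P \in \mathcal{P}(F)$. Because $\eta$ is $F_{\mathcal{P}(F)}$-exact, the epimorphism $P \to C$ factors through $B \to C$, producing a morphism of short exact sequences $\eta' \to \eta$ whose rightmost vertical arrow is the identity on $C$. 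A five-lemma/universal-property argument then realizes $\eta$ as the push-out of $\eta'$ along the induced map $K \to A$, whence Proposition \ref{def:  auslader-solberg 2}(b) (closure of $\mathcal{E}_F$ under push-outs) forces $\bar{\eta} \in F$.

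For part (a), the $(\Rightarrow)$ direction is now clear: if $F$ has enough projectives and injectives then $F = F_{\mathcal{P}(F)}$, so Proposition \ref{fact:  auslander-solberg}(e), applied to $\mathcal{X} := \mathcal{P}(F)$ (which contains $\projx[\Lambda]$ and is $\operatorname{add}$-closed), forces $\mathcal{P}(F)$ to be functorially finite. Conversely, if $\mathcal{P}(F)$ is functorially finite and $F = F_{\mathcal{P}(F)}$, the same item (e) gives that $F$ has enough projectives and injectives. For part (b), assuming $F$ has enough projectives and injectives, (a) supplies $\mathcal{X} := \mathcal{P}(F)$ functorially finite with $F = F_{\mathcal{X}}$. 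I then verify the Mohamed axioms for $(\mathcal{C},\mathcal{X}) = (\modd[\Lambda],\mathcal{P}(F))$: (MC1) is trivial for $\mathcal{C} = \modd[\Lambda]$; (MC2) holds because $\mathcal{X}$ is $\operatorname{add}$-closed, precovering, and contains $\projx[\Lambda]$, hence is a relative generator in $\mathcal{C}$. For (MC3), the class $\mathcal{E}_{\mathcal{X}}$ of Proposition \ref{prop: muhamed} coincides by construction with $\mathcal{E}_{F_{\mathcal{X}}} = \mathcal{E}_{F}$; consequently $\mathcal{I}_{\mathcal{C}}(\mathcal{E}_{\mathcal{X}}) = \mathcal{I}(F)$, which is preenveloping because $F$ has enough injectives. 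The asserted identity $\mathcal{E}_{\mathcal{P}(F)}^{\modd[\Lambda]} = \mathcal{E}_F$ is the same observation.

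The main obstacle is the reverse inclusion in $F = F_{\mathcal{P}(F)}$, which is the only non-formal step; it hinges on realizing an arbitrary $F_{\mathcal{P}(F)}$-exact sequence as the push-out of an $F$-exact sequence and invoking closure of $\mathcal{E}_F$ under push-outs. Once that identity is in hand, both (a) and (b) reduce to a direct application of Proposition \ref{fact:  auslander-solberg}(e) together with a routine verification of the Mohamed axioms.
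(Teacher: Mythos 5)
Your proof is correct, and it is a genuinely different route from the paper's, which simply cites \cite[Cor.~1.13]{auslander1993relative} without argument. You instead give a self-contained derivation from the paper's Proposition~\ref{fact:  auslander-solberg}. The key step, showing $F_{\mathcal{P}(F)}\subseteq F$ when $F$ has enough projectives, is done via the standard push-out realization: given a morphism of short exact sequences with identity on the right-hand term, the target is (up to equivalence) the push-out of the source along the left-hand vertical map, so $F$-exactness transfers by Proposition~\ref{fact:  auslander-solberg}(b). This is a more elementary and transparent argument than invoking the bijection in Proposition~\ref{fact:  auslander-solberg}(c), which would also yield $F=F_{\mathcal{P}(F)}$, since you only use closure of $\mathcal{E}_F$ under push-outs. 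The rest of your argument --- applying item (e) with $\mathcal{X}:=\mathcal{P}(F)$ (noting $\proj(\Lambda)\subseteq\mathcal{P}(F)=\add(\mathcal{P}(F))$ always holds), and verifying (MC1)--(MC3) together with $\mathcal{E}_{\mathcal{P}(F)}^{\modd[\Lambda]}=\mathcal{E}_{F_{\mathcal{P}(F)}}=\mathcal{E}_F$ by unwinding the definitions of $\mathcal{E}_\mathcal{X}$ and $F_\mathcal{X}$ --- is routine and correct. The benefit of your route is that a reader of this paper sees exactly which pieces of Proposition~\ref{fact:  auslander-solberg} are being used, rather than having to chase down the external citation.
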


\begin{cor}\label{thm: auslander-solberg} Let $\Lambda$ be an Artin algebra, 
$F$ an additive subfunctor of $\Extx[1][\Lambda][-][-]$ with enough
projectives and enough injectives,  $\mathcal{T}\subseteq\modd[\Lambda]$ and the  functor
$i: \modd[\Lambda]\rightarrow\Modx[\mathcal{P}^{op}]$,  $M\mapsto\Homx[\Lambda][-][M]|_{\mathcal{P}}$, 
where $\mathcal{P}: =\mathcal{P}(F)$. Then,  the pair $(\modd[\Lambda], \mathcal{E}_{F})$
is an exact category with enough $\mathcal{E}_{F}$-projectives and
$\mathcal{E}_{F}$-injectives. Moreover,  the following statements
are equivalent:
\begin{itemize}
\item[$\mathrm{(a)}$] $\mathcal{T}$ is Auslander-Solberg $n$-tilting in $(\modd[\Lambda], \mathcal{E}_{F})$
and precovering in $\mathcal{T}^{\bot_{\mathcal{E}_{F}}}.$
\item[$\mathrm{(b)}$] $\left[i(\mathcal{T})\right]$ is small $n$-$\left[i(\modd[\Lambda])\right]$-tilting
in $\Modx[\mathcal{P}^{op}].$
\item[$\mathrm{(c)}$] $\mathcal{T}$ is Zhu-Zhuang $n$-tilting in $(\modd[\Lambda], \mathcal{E}_{F})$
and precovering in $\mathcal{T}^{\bot_{\mathcal{E}_{F}}}.$
\item[$\mathrm{(d)}$] $\mathcal{T}$ is small $n$-tilting in $(\modd[\Lambda], \mathcal{E}_{F}).$
\item[$\mathrm{(e)}$] $\Pres[\mathcal{T}][\mathcal{E}_{F}][m]=\mathcal{T}^{\bot_{\mathcal{E}_{F}}}$
and $\mathcal{T}$ is precovering in $\mathcal{T}^{\bot_{\mathcal{E}_{F}}}, $ where $m: =\max\{1, n\}.$ 
\end{itemize}
\end{cor}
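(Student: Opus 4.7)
The plan is that this corollary is a direct specialization of Theorem~\ref{thm: mohammed vs n-X-tilting} combined with the bridge result Proposition~\ref{prop:  prop3 notas}. Almost no new work is needed; the entire proof should consist of reducing to the Mohamed-context setting and citing the previous theorem.

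First I would establish the exact-category statement. By Proposition~\ref{fact:  auslander-solberg}(d), since $F$ has enough projectives, $(\modd[\Lambda],\mathcal{E}_{F})$ is an exact category with $\Proj_{\mathcal{E}_{F}}(\modd[\Lambda])=\mathcal{P}(F)$ and $\Inj_{\mathcal{E}_{F}}(\modd[\Lambda])=\mathcal{I}(F)$. Having enough $\mathcal{E}_{F}$-projectives and enough $\mathcal{E}_{F}$-injectives is then nothing other than the hypothesis that $F$ itself has enough projectives and injectives, which we have assumed.

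Second, I would apply Proposition~\ref{prop:  prop3 notas}(b) to obtain that the pair $(\modd[\Lambda],\mathcal{P}(F))$ is a Mohamed context in $\modd[\Lambda]$, and moreover that $\mathcal{E}_{\mathcal{P}(F)}^{\modd[\Lambda]}=\mathcal{E}_{F}$. Thus, setting $\mathcal{C}:=\modd[\Lambda]$ and $\mathcal{X}:=\mathcal{P}(F)=\mathcal{P}$, the exact category $(\mathcal{C},\mathcal{E}_{\mathcal{X}}^{\mathcal{C}})$ appearing in Theorem~\ref{thm: mohammed vs n-X-tilting} coincides with $(\modd[\Lambda],\mathcal{E}_{F})$, and the embedding $i:\mathcal{C}\to\Modx[\mathcal{P}^{op}]$ from the theorem is exactly the one given in the statement.

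Finally, I would invoke Theorem~\ref{thm: mohammed vs n-X-tilting} with this choice of $(\mathcal{C},\mathcal{X})$. The five conditions in that theorem translate verbatim into the five conditions (a)--(e) of the corollary, which yields the full chain of equivalences. Since the argument is essentially an unpacking of definitions and a citation, there is no real obstacle; the only thing to double-check is that the symbol $\mathcal{T}^{\bot_{\mathcal{E}_{F}}}$ used here is the same as $\mathcal{T}^{\bot_{\mathcal{E}_{\mathcal{X}}^{\mathcal{C}}}}$ in Theorem~\ref{thm: mohammed vs n-X-tilting}, which is immediate from $\mathcal{E}_{\mathcal{P}(F)}^{\modd[\Lambda]}=\mathcal{E}_{F}$.
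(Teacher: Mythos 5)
Your proposal is correct and matches the paper's proof exactly: the paper also derives the result by applying Proposition~\ref{prop:  prop3 notas}(b) to recast $(\modd[\Lambda],\mathcal{E}_F)$ as the Mohamed-context exact category $(\modd[\Lambda],\mathcal{E}_{\mathcal{P}(F)}^{\modd[\Lambda]})$ and then invoking Theorem~\ref{thm: mohammed vs n-X-tilting-1}. Your extra appeal to Proposition~\ref{fact:  auslander-solberg}(d) for the exact-category claim is a harmless and sensible elaboration of what the paper leaves implicit.
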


\begin{proof}
It follows from Proposition \ref{prop:  prop3 notas} (b) and Theorem \ref{thm: mohammed vs n-X-tilting-1}.
\end{proof}

\begin{cor}\label{cor: resultado de jiaqun wei} Let $\Lambda$ be an Artin algebra,  
$T\in\modd[\Lambda]$,  $n\in\mathbb{N}$,  $m: =\max\{1, n\}$ and let 
$F$ be an additive subfunctor of $\Extx[1][\Lambda][-][-]$ with enough
injectives and projectives. Then,  $T\in\modd[\Lambda]$ is Auslander-Solberg
$n$-tilting in $(\modd[\Lambda], \mathcal{E}_{F})$ if,  and only if, 
$T^{\bot_{\mathcal{E}_{F}}}=\operatorname{Pres}_{\mathcal{E}_{F}}^{m}(\addx[T])$.\end{cor}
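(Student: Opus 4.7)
The plan is to deduce this corollary directly from the equivalence $\mathrm{(a)}\Leftrightarrow\mathrm{(e)}$ in Corollary \ref{thm: auslander-solberg} applied to the class $\mathcal{T}:=\addx[T]$. Recall that, by Definition \ref{def:  auslander-solberg tilting}, the module $T$ is Auslander-Solberg $n$-tilting in $(\modd[\Lambda],\mathcal{E}_{F})$ precisely when $\addx[T]$ is; and since $\Ext^{k}_\Lambda(-,-)$ commutes with finite direct sums and respects direct summands on both arguments, we have $T^{\bot_{\mathcal{E}_{F}}}=(\addx[T])^{\bot_{\mathcal{E}_{F}}}$. Thus the two sides of the claimed equivalence translate word-for-word into conditions $\mathrm{(a)}$ and $\mathrm{(e)}$ of Corollary \ref{thm: auslander-solberg} for $\mathcal{T}=\addx[T]$, modulo the additional clause present in both of them that $\mathcal{T}$ be precovering in $\mathcal{T}^{\bot_{\mathcal{E}_{F}}}$.

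The crux is therefore to verify that this precovering clause is automatically satisfied when $\mathcal{T}=\addx[T]$ over an Artin algebra. This is classical: for any $T\in\modd[\Lambda]$ with $\Lambda$ an Artin algebra, $\addx[T]$ is functorially finite in $\modd[\Lambda]$ by \cite[Prop. 4.2]{auslander1980preprojective} (which is already invoked in the paper, e.g., in Corollary \ref{cor: coro4 teo nuevo}); in particular $\addx[T]$ is precovering in $\modd[\Lambda]$, and hence a fortiori precovering in the subclass $T^{\bot_{\mathcal{E}_{F}}}\subseteq\modd[\Lambda]$.

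With this observation at hand, the proof is a two-line application: if $T$ is Auslander-Solberg $n$-tilting in $(\modd[\Lambda],\mathcal{E}_{F})$, then $\mathrm{(a)}$ of Corollary \ref{thm: auslander-solberg} holds, so $\mathrm{(e)}$ holds, giving $T^{\bot_{\mathcal{E}_{F}}}=\operatorname{Pres}_{\mathcal{E}_{F}}^{m}(\addx[T])$; conversely, if the latter equality is assumed, then together with the automatic precovering property it realises $\mathrm{(e)}$, hence $\mathrm{(a)}$, so $\addx[T]$ is Auslander-Solberg $n$-tilting. I do not expect any real obstacle here, since the substantive content has been packaged into Corollary \ref{thm: auslander-solberg} and Proposition \ref{prop:  prop3 notas}; the only mild subtlety is the bookkeeping that the ``precovering'' hypothesis can be removed for free in the Artin algebra setting via \cite[Prop. 4.2]{auslander1980preprojective}.
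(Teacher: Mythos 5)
Your proof is correct and takes essentially the same route as the paper: the paper's one-line argument is precisely to invoke the equivalence $(a)\Leftrightarrow(e)$ of Corollary \ref{thm: auslander-solberg} with $\mathcal{T}=\addx[T]$, the precovering clause being automatic since $\addx[T]$ is always precovering in $\modd[\Lambda]$ over an Artin algebra. Your additional bookkeeping (the identification $T^{\bot_{\mathcal{E}_F}}=(\addx[T])^{\bot_{\mathcal{E}_F}}$ and the citation of \cite[Prop. 4.2]{auslander1980preprojective}) simply makes explicit what the paper leaves implicit.
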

\begin{proof}
It follows from Theorem \ref{thm: auslander-solberg} since $\add(T)$ is precovering.
\end{proof}

\begin{rem} Let $\Lambda$ be an Artin algebra.
\

(1) \cite[Thm. 3.10]{anoteonrelative} is a particular case
of Corollary \ref{cor: resultado de jiaqun wei}. Indeed,  if
$F$ is an additive subfunctor of $\Extx[1][\Lambda][-][-]$
with enough projectives and such that $\mathcal{P}(F)$ is of finite type,  then  $F$ has enough projectives and injectives
by \cite[Thm. 1.12]{auslander1993relative},  \cite[Cor. 1.13]{auslander1993relative}
and \cite[Prop. 4.2]{auslander1980preprojective}. 
\

(2)  There are examples where we can apply Corollary \ref{cor: resultado de jiaqun wei} but not \cite[Thm. 3.10]{anoteonrelative}. In order to see that,  we need to give an example of an additive subfunctor $F$ with enough projectives and
injectives and such that $\mathcal{P}(F)$ is not of finite type. Let $\Lambda$ be a quasi-hereditary algebra. Consider the class $\mathcal{F}(\triangle)$ of all the objects
in $\modd[\Lambda]$ filtered by the set of standard modules $\Delta.$ It is well known,  that $\mathcal{F}(\triangle)$ is resolving and functorially finite \cite[Thms. 1 and 3]{ringel1991category}. Therefore,  from Proposition \ref{fact:  auslander-solberg} (e),  we get that $F_{\mathcal{F}(\triangle)}$ has enough projectives
and injectives. Finally,  in \cite[Sect. 3.5]{erdmann2010auslander} we can
find examples of quasi-hereditary algebras where $\mathcal{P}(F_{\mathcal{F}(\triangle)})=\mathcal{F}(\triangle)$
is not of finite type. 
\end{rem}

\subsection{Tilting classes in functor categories}

\renewcommandx\modd[1][usedefault,  addprefix=\global,  1=R]{\operatorname{f.p.}\left(#1\right)}

In the early years of the last decade,  Roberto Mart\'inez Villa
and Mart\'in Ortiz Morales begun a series of research works with
the goal of extending tilting theory to arbitrary functor categories
\cite{martinez2011tilting, martinez2013tilting, martinez2014tilting}.
In the following lines,  we will see how such theory can be related with
 $n$-$\mathcal{X}$-tilting objects. We will start this description by following
the steps of Maurice Auslander in \cite{doi: 10.1080/00927877408548230}.

Let   $\mathcal{C}$  be an skeletally small
additive category and $\Modx[\mathcal{C}^{op}]$ 
denote the category of additive contravariant functors 
$\mathcal{C}\rightarrow\operatorname{Ab}$. 
Notice that $\Modx[\mathcal{C}^{op}]$ is an AB3* and AB5 abelian category having enough projectives and injectives,  and any projective is a direct summand of a coproduct of the form $\bigoplus_{i\in I}\Homx[\mathcal{C}][-][C_{i}]$, 
where $I$ is a set and $C_{i}\in\mathcal{C}\;\forall i\in I$. 
For more details the reader is referred to \cite[Sect. 2,  pp.184-187]{doi: 10.1080/00927877408548230}
 and \cite[Lem. 2]{martinez2014tilting}.

As we study $\Modx[\mathcal{C}^{op}]$ we will be interested in  the following  subcategories. 
The subcategory of {\bf finitely generated} functors $\modu\,(\C^{op})$ whose elements are the functors $F\in\Modx[\mathcal{C}^{op}]$ 
that admit an epimorphism $\bigoplus_{i\in I}\Homx[\mathcal{C}][-][C_{i}]\rightarrow F$ 
where $I$ is a finite set and $C_{i}\in\mathcal{C}\;\forall i\in I$ 
 \cite[Prop. 2.1.(b), p.186]{doi: 10.1080/00927877408548230}; and
 the subcategory of \textbf{finitely presented} functors $\mathrm{f.p.}(\C^{op})$ whose elements are  the functors $F\in\Modx[\mathcal{C}^{op}]$
 that admit an exact sequence $\Homx[\mathcal{C}][-][C']\rightarrow\Homx[\mathcal{C}][-][C]\rightarrow F\rightarrow 0, $
 where $C, C'\in\mathcal{C}.$ 
Furthermore, following \cite{doi: 10.1080/00927877408548230},  we denote by $\projx[\mathcal{C}^{op}]$ the 
category of all the finitely generated projective objects in $\Modx[\mathcal{C}^{op}]$. 

We point out that $\operatorname{Add}(\projx[\mathcal{C}^{op}])=\Proj(\Modx[\mathcal{C}^{op}])$. Therefore, since $\Modx[\mathcal{C}^{op}]$ has enough  projectives,  we have that $\pdr[\operatorname{f.p.}(\C ^{op})][\T]=1$ if and only if $\pd(\T)=1,$ for every class $\T \subseteq \modd[\mathcal{C}^{op}]$. 

We will be interested in particular when all idempotents in  $\C$  split. In that case $\C$ is called an 
\textbf{ annuli variety}. 

\begin{defn}\cite[Def. 8]{martinez2014tilting}\label{def:  funtor tilting}
 A class $\mathcal{T}\subseteq\Modx[\mathcal{C}^{op}]$
is a \textbf{tilting category} if the following conditions hold true.
\begin{description}
\item [{(FT0)}] $\mathcal{T}=\smdx[\mathcal{T}]\subseteq\modd[\mathcal{C}^{op}].$
\item [{(FT1)}] $\pdx[\mathcal{T}]\leq1.$
\item [{(FT2)}] $\mathcal{T}\subseteq\mathcal{T}^{\bot_{1}}.$
\item [{(FT3)}] $\coresdimr{\mathcal{T}}{\operatorname{Hom}_{\mathcal{C}}(-, C)}{\, }\leq1$
$\forall C\in\mathcal{C}$.
\end{description}
An object $T\in\Modx[\mathcal{C}]$ is a \textbf{big (small)
tilting functor} if $\Addx[T]$ ($\addx[T]$) is a tilting category.
\end{defn}

\begin{prop}\label{prop: funtor tilting}
Let  $\mathcal{T}$ be a tilting
category in $\Modx[\mathcal{C}^{op}]$. Then $\Genn[\mathcal{T}][1]=\mathcal{T}^{\bot}$
and $\operatorname{gen}_{1}(\mathcal{T})=\mathcal{T}^{\bot}\cap\modd[\mathcal{C}^{op}]$. 
\end{prop}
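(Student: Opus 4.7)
The plan is to establish the first equality via Theorem \ref{prop: primera generalizacion} applied to $\Add(\mathcal{T})$ viewed as a big $1$-$\Modx[\mathcal{C}^{op}]$-tilting class, and then obtain the second equality by intersecting with $\modd[\mathcal{C}^{op}]$ together with a finite-support argument.

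For the first equality, I would verify axioms (T0)--(T5) on $\Add(\mathcal{T})$ with $\mathcal{X} = \Modx[\mathcal{C}^{op}]$. (T0) is immediate from (FT0). (T1) follows from (FT1), since projective dimension is preserved under coproducts and direct summands in the AB4 category $\Modx[\mathcal{C}^{op}]$. (T2) combines (FT1) (giving vanishing of $\Ext^k$ for $k\geq 2$) with the AB4 isomorphism $\Ext^1(T, \bigoplus T_i) \cong \prod \Ext^1(T, T_i)$, reducing $\Ext^1$-vanishing on $\Add(\mathcal{T})\times\Add(\mathcal{T})$ to (FT2). For (T3) I would take $\omega$ to be the class of arbitrary coproducts of representables $\bigoplus_i \Hom_{\mathcal{C}}(-, C_i)$: each admits a length-one $\Add(\mathcal{T})$-coresolution obtained by summing, via AB4, the length-one coresolutions supplied by (FT3), and $\omega$ is a relative generator in $\Modx[\mathcal{C}^{op}]$ because every functor has a canonical projective presentation by such a coproduct. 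Axiom (T4) follows from the existence of an injective cogenerator in $\Modx[\mathcal{C}^{op}]$, which trivially lies in $\Modx[\mathcal{C}^{op}]^\perp \cap \Add(\mathcal{T})^\perp$. For (T5), $\mathcal{T} \subseteq \modd[\mathcal{C}^{op}]$ is skeletally small, so the canonical evaluation morphism $\bigoplus_{T \in \mathcal{T},\, f \in \Hom(T, Z)} T \to Z$ is a $\mathcal{T}$-precover in the AB3 category $\Modx[\mathcal{C}^{op}]$, hence an $\Add(\mathcal{T})$-precover by the obvious class-version of Lemma \ref{lem:  addT precub es AddT precub}.

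Theorem \ref{prop: primera generalizacion} then yields $\Add(\mathcal{T})^\perp = \Gennr[\Add(\mathcal{T})][1][\Modx[\mathcal{C}^{op}]]$. Since $\Add(\mathcal{T})^\perp = \mathcal{T}^\perp$, and since any epi $T \twoheadrightarrow C$ with $T \in \Add(\mathcal{T})$ upgrades (by completing to a direct sum) to an epi $T \oplus T' \twoheadrightarrow C$ with $T \oplus T' \in \mathcal{T}^\oplus$, the right-hand side coincides with $\Genn[\mathcal{T}][1]$. For the second equality I intersect with $\modd[\mathcal{C}^{op}]$: it suffices to show $\Genn[\mathcal{T}][1] \cap \modd[\mathcal{C}^{op}] = \operatorname{gen}_1(\mathcal{T})$. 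If $C \in \modd[\mathcal{C}^{op}]$ admits an epi $\bigoplus_{i \in I} T_i \twoheadrightarrow C$ with $T_i \in \mathcal{T}$, then the distinguished generator of a representable presentation $\Hom_{\mathcal{C}}(-, X) \twoheadrightarrow C$ lifts to an element of $\bigoplus_i T_i(X)$ of finite support $F \subseteq I$, so $\bigoplus_{i \in F} T_i \twoheadrightarrow C$ is already epi, placing $C$ in $\operatorname{gen}_1(\mathcal{T})$. The reverse inclusion uses that $\operatorname{gen}_1(\mathcal{T}) \subseteq \mathcal{T}^\perp$ (same Ext-computation as in (T2), since $\mathcal{T}^{\oplus_{<\infty}} \subseteq \Add(\mathcal{T})$), together with $\operatorname{gen}_1(\mathcal{T}) \subseteq \modd[\mathcal{C}^{op}]$; the latter is obtained by using that each $T \in \mathcal{T}$ admits a resolution $0 \to Q \to P_0 \to T \to 0$ by finitely generated projectives of $\Modx[\mathcal{C}^{op}]$ (combining $T \in \modd[\mathcal{C}^{op}]$ with $\pd(T) \leq 1$), and then chasing a presentation $0 \to K \to T' \to C \to 0$ with $T' \in \mathcal{T}^{\oplus_{<\infty}}$ using $K \in \mathcal{T}^\perp$ to control the second syzygy of $C$.

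The main obstacle will be (T3): although the AB4-sum of the length-one coresolutions furnished by (FT3) immediately yields a length-one $\Add(\mathcal{T})$-coresolution of any coproduct of representables, one must be content to take $\omega$ as the class of such coproducts themselves, since $\omega$ need only be a relative generator and so need not be closed under summands. A secondary technical point is the finite-generation claim $\operatorname{gen}_1(\mathcal{T}) \subseteq \modd[\mathcal{C}^{op}]$, which rests on combining (FT0) and (FT1) to extract a finite projective resolution of the objects in $\mathcal{T}$ and on orthogonality of the kernel relative to $\mathcal{T}$.
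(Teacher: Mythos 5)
Note first that the paper's own proof of this proposition is a single citation to \cite[Prop. 10]{martinez2014tilting}, so your self-contained route through the $n$-$\mathcal{X}$-tilting machinery is a genuine alternative rather than a reproduction. Verifying the axioms for $\Add(\mathcal{T})$ directly from (FT0)--(FT3) and then feeding the result into Theorem~\ref{prop: primera generalizacion} is the right shape, and your (T3) argument (AB4-sum the length-one coresolutions furnished by (FT3) to get length-one $\Add(\mathcal{T})$-coresolutions of coproducts of representables) is sound. However, in (T2) you invoke an ``AB4 isomorphism $\Ext^1(T,\bigoplus_i T_i)\cong\prod_i\Ext^1(T,T_i)$'' that AB4 does not provide: AB4 controls $\Ext^n$ with the coproduct in the \emph{first} variable, $\Ext^n(\bigoplus_i A_i,B)\cong\prod_i\Ext^n(A_i,B)$. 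What you actually need is that $\Ext^1(T,-)$ commutes with coproducts, which is a compactness fact. It does hold here---$T$ finitely presented with $\pd(T)\le 1$ admits a two-term resolution by finitely generated, hence compact, projectives---but the justification must be compactness, not an appeal to AB4.

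The genuine gap is in the second equality: you need $\operatorname{gen}_1(\mathcal{T})\subseteq\operatorname{f.p.}(\mathcal{C}^{op})$, and your sketch does not establish it. With the definitions of Section 2.5, an object of $\operatorname{gen}_1(\mathcal{T})$ is an arbitrary quotient $T'/K$ with $T'\in\mathcal{T}^{\oplus_{<\infty}}$ and $K$ any subobject of $T'$. For $T'/K$ to be finitely presented one needs $K$ finitely generated, and nothing in (FT0)--(FT3), nor in $K\in\mathcal{T}^\perp$ (which is itself not guaranteed for an arbitrary presenting short exact sequence), forces finite generation. Indeed, chasing a resolution $0\to Q\to P_0\to T'\to 0$ by finitely generated projectives produces a second syzygy of $T'/K$ sitting in an extension of $K$ by $Q$, so it is finitely generated precisely when $K$ is; the orthogonality you invoke does not intervene. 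This containment requires a coherence or closure-under-quotients hypothesis on $\operatorname{f.p.}(\mathcal{C}^{op})$ that the proposition's stated hypotheses do not supply (thickness, as in Corollary~\ref{cor: modC es gruesa}, is closure under mono-cokernels, not under arbitrary quotients). Such a hypothesis is presumably built into the setting of Mart\'inez-Villa and Ortiz-Morales's Proposition 10, which is why the paper simply defers to that citation rather than re-deriving the result.
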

\begin{proof} It follows from \cite[Prop. 10]{martinez2014tilting}. 
\end{proof}

\begin{thm}\label{thm: 2.213 clase de funtores tilting} For  $\mathcal{T}\subseteq\modd[\mathcal{C}^{op}], $
 the following statements are equivalent.
\begin{itemize}
\item[$\mathrm{(a)}$] $\mathcal{T}$ is big $1$-$\Modx[\mathcal{C}^{op}]$-tilting.
\item[$\mathrm{(b)}$] $\mathcal{T}=\Addx[\mathcal{T}]$ is a tilting category which 
is precovering in $\mathcal{T}^{\bot_{1}}$.
\end{itemize}
\end{thm}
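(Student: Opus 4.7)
The plan is to verify that with $\X := \Modx[\C^{op}]$, the axioms (T0)--(T5) for big $1$-$\X$-tilting match, in the presence of $\mathcal{T} \subseteq \modd[\C^{op}]$, the Mart\'inez--Ortiz axioms (FT0)--(FT3) together with $\mathcal{T} = \Add(\mathcal{T})$ and the precovering condition on $\mathcal{T}^{\bot_{1}}$. The ambient category $\X$ is AB3, AB4, has enough injectives, and has enough projectives, with $\Proj(\X) = \Add(\{\Hom_{\C}(-, C) : C \in \C\})$; these features will make most of the axiom translations automatic. A recurring simplification is that (T1)/(FT1) gives $\pd(\mathcal{T}) \leq 1$, whence $\mathcal{T}^{\bot_{1}} = \mathcal{T}^{\bot}$.

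For (a) $\Rightarrow$ (b), bigness together with (T0) yields $\mathcal{T} = \Add(\mathcal{T})$; (FT0), (FT1), (FT2) are immediate from the hypothesis $\mathcal{T} \subseteq \modd[\C^{op}]$, from (T1), and from (T2); the precovering claim reads off (T5) via $\mathcal{T}^{\bot} \cap \X = \mathcal{T}^{\bot} = \mathcal{T}^{\bot_{1}}$. The only substantive step is (FT3). Here I would first use Theorem \ref{thm: el par n-X-tilting}(a) to identify $\mathcal{T}^{\vee} \cap \X = {}^{\bot}(\mathcal{T}^{\bot}) \cap \X$; since every representable $\Hom_{\C}(-, C)$ is projective in $\X$, it automatically lies in ${}^{\bot}(\mathcal{T}^{\bot})$ and hence in $\mathcal{T}^{\vee}$. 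Then Lemma \ref{lem: inf3}(a) bounds the $\mathcal{T}$-coresolution length by $\max(1, \pd_{\X}(\mathcal{T})) = 1$, placing each representable in $\mathcal{T}^{\vee}_{1}$, which is exactly (FT3).

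For (b) $\Rightarrow$ (a), $\mathcal{T} = \Add(\mathcal{T})$ directly gives bigness (so $\mathcal{T} = \mathcal{T}^{\oplus}$) and (T0); (T1), (T2) follow from (FT1), (FT2) together with $\mathcal{T}^{\bot_{1}} = \mathcal{T}^{\bot}$; for (T4) I take $\alpha := \Inj(\X)$, which sits in $\X^{\bot} \cap \mathcal{T}^{\bot}$ and is a relative cogenerator because $\X$ has enough injectives; (T5) is the precovering hypothesis, with the precovering object automatically in $\X$. The remaining substantive step is (T3). I would take $\omega$ to be the class of all coproducts $\bigoplus_{i \in I} \Hom_{\C}(-, C_{i})$ of representables; this is clearly a relative generator in $\X$. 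To place $\omega$ inside $\mathcal{T}^{\vee}$, start from the length-one coresolution $0 \to \Hom_{\C}(-, C_{i}) \to T_{0}^{i} \to T_{1}^{i} \to 0$ supplied by (FT3) for each $i$, take its coproduct over $i$ (which remains exact by AB4), and observe that each $\bigoplus_{i} T_{j}^{i} \in \mathcal{T}$ since $\mathcal{T} = \Add(\mathcal{T})$.

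The only mild obstacle is bookkeeping the (T3) $\leftrightarrow$ (FT3) translation: on one hand one must exploit the sharp bound $\max(1, \pd_{\X}(\mathcal{T})) = 1$ from Lemma \ref{lem: inf3}(a) to land representables in $\mathcal{T}^{\vee}_{1}$ rather than only in $\mathcal{T}^{\vee}$; on the other hand the AB4 horseshoe for arbitrary coproducts of representable coresolutions must be carried out, invoking $\Add$-closure. All other translations are routine axiom chasing.
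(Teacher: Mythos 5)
Your proof of (a) $\Rightarrow$ (b) follows the paper's line of reasoning quite closely: both use Theorem \ref{thm: el par n-X-tilting}(a) to place the representables in ${}^{\bot}(\T^{\bot}) = \T^{\vee}$, and then a length bound from the Auslander--Buchweitz machinery (you cite Lemma \ref{lem: inf3}(a) directly; the paper cites Corollary \ref{cor: coronuevo pag 55}, which is a packaged form of the same lemma) to get them into $\T^{\vee}_{1}$, i.e.\ (FT3). That direction is essentially the same argument.

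For (b) $\Rightarrow$ (a) you take a genuinely different route. The paper does \emph{not} verify (T3) by hand: it checks (T4) and (T5), then invokes Proposition \ref{prop: funtor tilting} (the Mart\'inez--Ortiz result \cite[Prop.~10]{martinez2014tilting}) to get $\operatorname{Gen}_{1}(\T) = \T^{\bot}$, and concludes via the Bazzoni-style characterization Theorem \ref{prop: primera generalizacion}(b), which replaces the (T1)--(T3) verifications altogether. You instead verify all axioms directly, handling (T3) by taking $\omega$ to be the coproducts of representables and running the AB4 horseshoe on the length-one coresolutions supplied by (FT3); closure of $\T = \Add(\T)$ under coproducts then lands $\omega$ in $\T^{\vee}$. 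Your argument is correct and more self-contained -- it does not lean on the external Mart\'inez--Ortiz identity $\operatorname{Gen}_{1}(\T) = \T^{\bot}$ -- at the modest cost of the explicit coproduct/horseshoe bookkeeping, whereas the paper's route is shorter precisely because Theorem \ref{prop: primera generalizacion} lets it bypass (T1)--(T3) once (T4), (T5) and $\T^{\bot}\cap\X = \operatorname{Fac}_{1}^{\X}(\T)\cap\X$ are in place. (One minor quibble: Lemma \ref{lem: inf3}(a) already gives the bound $\pd_{\X}(\T)$, not $\max\{1,\pd_{\X}(\T)\}$; this does not affect your conclusion since $\pd_{\X}(\T)\leq 1$ anyway.)
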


\begin{proof} 
(a) $\Rightarrow$ (b) Let $\mathcal{T}$ be big $1$-$\Modx[\mathcal{C}^{op}]$-tilting.
In particular $\mathcal{T}=\Addx[\mathcal{T}]$ and by (T5) $\mathcal{T}$
is precovering in $\mathcal{T}^{\bot_{1}}=\mathcal{T}^{\bot}$. Now (FT1) follows from 
(T1),  and (FT2) follows from (T2). 
 Let $\rho: =\{\Homx[\mathcal{C}][-][C]\}_{C\in\mathcal{C}}$
and $\omega: =\Addx[\rho]$. Then,  by Theorem \ref{thm: el par n-X-tilting} (a),  we get $\omega\subseteq{}^{\bot}(\mathcal{T}^{\bot})=\mathcal{T}^{\vee}$.
Moreover,  by Corollary \ref{cor: coronuevo pag 55},   $\coresdimr{\mathcal{T}}{\omega}{\, }\leq\pdr[\, ][\mathcal{T}]\leq1$ and thus 
 (FT3) holds true.
\

(b) $\Rightarrow$ (a) Let $\mathcal{T}=\Addx[\mathcal{T}]$ be a tilting category 
which is precovering in $\mathcal{T}^{\bot}.$ 
Since $\Modx[\mathcal{C}^{op}]$ has enough injectives,  $\mathcal{T}$ satisfies
(T4). Moreover,  $\mathcal{T}$ satisfies (T5) since $\mathcal{T}$ is precovering in 
$\mathcal{T}^{\bot}=\mathcal{T}^{\bot_{1}}.$
Moreover,  using that $\mathcal{T}=\Addx[\mathcal{T}]$ and Proposition \ref{prop: funtor tilting},  we get 
$\T ^{\bot}=\Genn[\mathcal{T}][1]=\operatorname{Gen}_{1}(\mathcal{T}).$ Thus
 $\mathcal{T}$ is $1$-$\Modx[\mathcal{C}^{op}]$-tilting
by Theorem \ref{prop: primera generalizacion}. 
\end{proof}

To state the small version of the above theorem, we will need to recall the following notion. Let $\C$ be an additive category. We recall that 
a morphism $g: C\rightarrow A$ is a \textbf{pseudo-kernel} of a morphism $f: A\rightarrow B$ if the sequence of functors 
$\Homx[\mathcal{C}][-][C]\xrightarrow{(-,g)}\Homx[\mathcal{C}][-][A]\xrightarrow{(-,f)}\Homx[\mathcal{C}][-][B]$
is exact. If any morphism in $\C$ has a pseudo-kernel,  we say that $\C$ has pseudo-kernels. The notion of \textbf{pseudo-cokernel} is introduced dually.

\begin{thm}\label{thm: 2.213'} Let $\mathcal{C}$ be an annuli variety with
pseudo-kernels and such that $\modd[\mathcal{C}^{op}]$ has enough
injectives. Then,  for a class $\mathcal{T}\subseteq\modd[\mathcal{C}^{op}]$, 
the following statements are equivalent.
\begin{itemize}
\item[$\mathrm{(a)}$] $\mathcal{T}$ is small $1$-$\modd[\mathcal{C}^{op}]$-tilting.
\item[$\mathrm{(b)}$] $\mathcal{T}=\addx[\mathcal{T}]$ is a tilting category which is  precovering
in $\mathcal{T}^{\bot_{1}}\cap\modd[\mathcal{C}^{op}]$.
\end{itemize}
\end{thm}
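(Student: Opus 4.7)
The plan is to mirror the proof of Theorem \ref{thm: 2.213 clase de funtores tilting} in the small setting, with $\X:=\modd[\C^{op}]$ playing the role of $\Modx[\C^{op}]$. The pseudo-kernel assumption on $\C$ is what makes $\X$ an abelian subcategory of $\Modx[\C^{op}]$ closed under extensions and direct summands; combined with the hypothesis that $\X$ has enough injectives, this supplies both the ambient framework and the $\X$-injective relative cogenerator required by the $n$-$\X$-tilting axioms.

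For $\mathrm{(a)}\Rightarrow\mathrm{(b)}$: from $\mathrm{(T0)}$ one obtains $\T=\add(\T)$, and from $\mathrm{(T5)}$ combined with $\T^{\perp_1}\cap\X=\T^{\perp}\cap\X$ (a consequence of $\pd_{\X}(\T)\le 1$) one obtains that $\T$ is precovering in $\T^{\perp_1}\cap\X$. Axioms $\mathrm{(FT1)}$ and $\mathrm{(FT2)}$ are immediate from $\mathrm{(T1)}$ and $\mathrm{(T2)}$, using the noted equality $\pd_{\X}(\T)=\pd(\T)$ valid for $\T\subseteq\X$. For $\mathrm{(FT3)}$, set $\rho:=\{\Hom_{\C}(-,C)\}_{C\in\C}$ and $\omega:=\add(\rho)$; since $\C$ is an annuli variety, $\omega=\proj(\C^{op})$, which is an $\X$-projective relative generator in $\X$ (finite presentations exist in $\X$ thanks to pseudo-kernels). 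Projectives lie in ${}^{\bot}(\T^{\bot})$, so Theorem \ref{thm: el par n-X-tilting}(a) gives $\omega\subseteq(\T\cap\X)_{\X}^{\vee}\cap\X\subseteq\T_{\X}^{\vee}$, and Corollary \ref{cor: coronuevo pag 55} then yields $\coresdim_{\T}(\omega)\le\pd_{\X}(\T)\le 1$, which is exactly $\mathrm{(FT3)}$.

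For $\mathrm{(b)}\Rightarrow\mathrm{(a)}$: I would invoke Theorem \ref{prop: primera generalizacion} with $n=1$ and $\X=\modd[\C^{op}]$, which satisfies $\X=\smd(\X)$ and closure under extensions. Axiom $\mathrm{(T4)}$ holds by taking $\alpha$ to be the injectives in $\X$: for $X\in\X$ a projective resolution by finitely generated projectives (which lie in $\X$ by pseudo-kernels) has all syzygies in $\X$, so a shifting argument reduces $\Ext^{i}_{\Modx[\C^{op}]}(X,\alpha)$ to $\Ext^{1}_{\X}(\Omega^{i-1}X,\alpha)=0$, giving $\alpha\subseteq\X^{\bot}\subseteq\T^{\bot}$. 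Axiom $\mathrm{(T5)}$ follows directly from the precovering hypothesis together with $\T^{\perp}\cap\X=\T^{\perp_{1}}\cap\X$ (since $\pd(\T)\le 1$ by $\mathrm{(FT1)}$). It then suffices to verify condition $\mathrm{(b)}$ of Theorem \ref{prop: primera generalizacion}, namely $\T^{\bot}\cap\X=\Gennr[\T][1][\X]\cap\X$: Proposition \ref{prop: funtor tilting} applied to the tilting category $\T$ provides $\operatorname{gen}_{1}(\T)=\T^{\bot}\cap\X$, while the pseudo-kernel hypothesis forces kernels of morphisms between objects of $\X$ to remain in $\X$, identifying $\Gennr[\T][1][\X]\cap\X$ with $\operatorname{gen}_{1}(\T)$.

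The main delicate point is the careful bookkeeping between orthogonal and $\operatorname{gen}$ classes computed in $\Modx[\C^{op}]$ versus in $\X$, which is exactly where the pseudo-kernel hypothesis on $\C$ does the essential work: without it, $\X$ need not be abelian in $\Modx[\C^{op}]$, and both the identification $\pd_{\X}(\T)=\pd(\T)$ and the verification that $\X$-injectives belong to $\X^{\bot}$ (hence to $\T^{\bot}$) would fail. Everything else is a formal application of Theorem \ref{prop: primera generalizacion}, Theorem \ref{thm: el par n-X-tilting}, Corollary \ref{cor: coronuevo pag 55}, and Proposition \ref{prop: funtor tilting}, parallel to the argument already carried out in the big case.
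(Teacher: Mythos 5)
Your proof is correct and follows essentially the same route as the paper. The paper's own argument is very brief: it records that $\T^{\bot_1}=\T^\bot$ when $\pd(\T)\le 1$, cites Enomoto's Propositions 2.6 and 2.7 to conclude that $\operatorname{f.p.}(\C^{op})$ is thick in $\Modx[\C^{op}]$ with $\proj(\C^{op})$ as a relative generator, and then says the rest is a verbatim transcription of the big case, Theorem \ref{thm: 2.213 clase de funtores tilting}. You carry out exactly that transcription, supplying the bookkeeping that the paper leaves implicit: the identification $\pd_\X(\T)=\pd(\T)$ and the membership $\Inj(\X)\subseteq\X^\bot$ both reduce, via the $\X$-internal projective resolutions guaranteed by pseudo-kernels and thickness, to a shifting argument; and the passage from $\operatorname{gen}_1(\T)$ to $\operatorname{Fac}_1^{\X}(\T)\cap\X$ (needed to invoke Theorem \ref{prop: primera generalizacion}(b) via Proposition \ref{prop: funtor tilting}) uses closure of $\X$ under epi-kernels. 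None of this is a different method — it is the same appeal to Theorem \ref{thm: el par n-X-tilting}(a), Corollary \ref{cor: coronuevo pag 55}, Proposition \ref{prop: funtor tilting} and Theorem \ref{prop: primera generalizacion} — but it makes explicit the small amount of thick-subcategory hygiene that the paper compresses into "the rest follows as in the proof of" the big case.
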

\begin{proof}
Notice that $\mathcal{T}^{\bot_{1}}=\mathcal{T}^{\bot}$ if $\pdr[\, ][\mathcal{T}]\leq1$.
Now,  by \cite[Props. 2.6 and 2.7]{enomoto2017classifying}, 
it follows that $\modd[\mathcal{C}^{op}]$ is a thick class in $\Modx[\mathcal{C}^{op}]$
and $\projx[\mathcal{C}^{op}]$ is a relative generator in $\modd[\mathcal{C}^{op}]$.
Using this fact, the rest of the proof follows as in the proof of Theorem \ref{thm: 2.213 clase de funtores tilting}.
\end{proof}

Recall that a \textbf{dualizing $R$-variety} is an $R$-category
$\mathcal{C}$,  where $R$ is an artinian ring,  such that $\mathcal{C}$
is an annuli variety and the functor $D: \modd[\mathcal{C}]\rightarrow\modd[\mathcal{C}^{op}]$, 
$D(M)(X)=\Homx[R][M(X)][I_{0}(\operatorname{top}(R))]$,  is a duality
 \cite[p.307]{auslander1974stable}. 

\begin{cor}\label{cor: modC es gruesa} Let $\mathcal{C}$ be a skeletally small
additive category with pseudo-kernels. Then,  $\modd[\mathcal{C}^{op}]=(\projx[\mathcal{C}^{op}])_{\infty}^{\wedge}$
and it is a thick class in $\Modx[\mathcal{C}^{op}]$.\end{cor}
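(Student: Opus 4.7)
The plan is to establish the equality $\modd[\mathcal{C}^{op}]=(\projx[\mathcal{C}^{op}])_{\infty}^{\wedge}$ first, and then to deduce thickness of $\modd[\mathcal{C}^{op}]$ in $\Modx[\mathcal{C}^{op}]$ as a consequence of this characterization.

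For the inclusion $\modd[\mathcal{C}^{op}]\subseteq(\projx[\mathcal{C}^{op}])_{\infty}^{\wedge}$, I would iterate the pseudo-kernel hypothesis. Given $F\in\modd[\mathcal{C}^{op}]$, there is $f_{1}\colon C_{1}\to C_{0}$ in $\mathcal{C}$ with $\Homx[\mathcal{C}][-][C_{1}]\xrightarrow{(-,f_{1})}\Homx[\mathcal{C}][-][C_{0}]\to F\to 0$ exact. Recursively choose a pseudo-kernel $f_{i+1}\colon C_{i+1}\to C_{i}$ of $f_{i}$ for each $i\geq 1$; by the very definition of pseudo-kernel, the sequence $\Homx[\mathcal{C}][-][C_{i+1}]\to\Homx[\mathcal{C}][-][C_{i}]\to\Homx[\mathcal{C}][-][C_{i-1}]$ is exact, and splicing these produces a projective resolution of $F$ by representables, hence $F\in(\projx[\mathcal{C}^{op}])_{\infty}^{\wedge}$. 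For the reverse inclusion, any $P\in\projx[\mathcal{C}^{op}]$ is a direct summand of a single representable (the finitely generated epimorphism $\bigoplus_{i=1}^{n}\Homx[\mathcal{C}][-][D_{i}]\twoheadrightarrow P$ splits by projectivity, and additivity of $\mathcal{C}$ collapses the finite sum into $\Homx[\mathcal{C}][-][\bigoplus_{i=1}^{n} D_{i}]$), so $\projx[\mathcal{C}^{op}]\subseteq\modd[\mathcal{C}^{op}]$. Combined with the standard closure of $\modd[\mathcal{C}^{op}]$ under cokernels of arbitrary morphisms, the first two terms of a $\projx[\mathcal{C}^{op}]$-resolution of any $F\in(\projx[\mathcal{C}^{op}])_{\infty}^{\wedge}$ already certify $F\in\modd[\mathcal{C}^{op}]$.

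For thickness, I would verify the four closure properties in turn. Direct summands and mono-cokernels are standard properties of finitely presented functors; extensions follow from a horseshoe argument applied to presentations of the two outer terms, which yields a presentation of the middle term by representables. The delicate case is closure under epi-kernels: given $0\to K\to F'\to F\to 0$ exact with $F,F'\in\modd[\mathcal{C}^{op}]$, I would choose presentations $\Homx[\mathcal{C}][-][B]\to\Homx[\mathcal{C}][-][A]\to F'\to 0$ and $\Homx[\mathcal{C}][-][D]\to\Homx[\mathcal{C}][-][C]\to F\to 0$, lift the epimorphism $F'\twoheadrightarrow F$ to a morphism of presentations using projectivity of representables, and extract a presentation of $K$ via the Snake Lemma. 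The kernel of the resulting map between representables is surjected onto by a further representable precisely because $\mathcal{C}$ has pseudo-kernels; a second pseudo-kernel application upgrades finite generation of $K$ to finite presentation.

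The main obstacle I anticipate is exactly this epi-kernel case: without pseudo-kernels, $\modd[\mathcal{C}^{op}]$ is generally only closed under cokernels, and thickness in $\Modx[\mathcal{C}^{op}]$ is classically equivalent to $\mathcal{C}$ admitting pseudo-kernels (this is where the hypothesis is genuinely needed). The technical care lies in the bookkeeping of the diagram chase---ensuring that at every step the kernel of a morphism between representables is replaced by a representable---which is entirely constructive once the identification from the first stage is in hand.
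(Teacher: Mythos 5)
Your proposal is correct. The paper's own proof is a one-line citation to Enomoto \cite[Props.\ 2.6 and 2.7]{enomoto2017classifying}; it does not construct an argument, so what you have written is essentially a reconstruction of the proof of those cited propositions. The mathematical content is the classical Freyd--Auslander fact that pseudo- (weak) kernels in $\mathcal{C}$ are exactly what one needs for $\modd[\mathcal{C}^{op}]$ to be closed under kernels in $\Modx[\mathcal{C}^{op}]$. You correctly isolate that this hypothesis enters only in two places --- splicing pseudo-kernels to produce the $\projx[\mathcal{C}^{op}]$-resolution (forward inclusion of the equality) and the closure under epi-kernels --- while the reverse inclusion, closure under direct summands, extensions, and mono-cokernels hold for purely formal reasons using projectivity of representables and closure of finitely presented objects under cokernels. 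Your route is more self-contained and illuminating (it makes visible where the hypothesis is genuinely used); the paper's route is shorter because it outsources the work. On execution, the only place that needs a bit more care than your sketch indicates is the epi-kernel step: after lifting the epimorphism of presentations by Yoneda to a morphism $a\colon A\to C$ in $\mathcal{C}$, one first obtains that $K$ is finitely generated from a single pseudo-kernel of $a$ together with the given presentation of $F'$, and then applies a second pseudo-kernel to the resulting two-term complex of representables to upgrade to finite presentation --- the two pseudo-kernel applications are at different stages, as you indicate, and keeping that order straight is what makes the diagram chase close.
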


\begin{proof}
It follows from \cite[Props. 2.6 and 2.7]{enomoto2017classifying}.
\end{proof}

\begin{lem}\label{rem:  obs a 2.213'} Let $\mathcal{C}$ be a dualizing $R$-variety
with pseudo-kernels and pseudo-cokernels. Then,  $\modd[\mathcal{C}^{op}]=(\projx[\mathcal{C}^{op}])_{\infty}^{\wedge}$
and it is a thick abelian subcategory of $\Modx[\mathcal{C}^{op}]$
with enough injectives. \end{lem}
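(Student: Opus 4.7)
\emph{Proof proposal for Lemma \ref{rem: obs a 2.213'}.} The plan is to split the statement into three assertions and discharge each one using a cited result or a duality argument, without any real computation.

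First, I would handle the equality $\modd[\mathcal{C}^{op}]=(\projx[\mathcal{C}^{op}])_{\infty}^{\wedge}$ and the thickness of $\modd[\mathcal{C}^{op}]$ in $\Modx[\mathcal{C}^{op}]$ by a direct appeal to Corollary \ref{cor: modC es gruesa}: the hypothesis that $\mathcal{C}$ has pseudo-kernels is exactly what that corollary requires. To upgrade "thick class" to "thick abelian subcategory" we note that, under the same hypothesis, the cited results [Props.~2.6 and 2.7]{enomoto2017classifying} (which Corollary \ref{cor: modC es gruesa} is based on) show $\modd[\mathcal{C}^{op}]$ is closed in $\Modx[\mathcal{C}^{op}]$ under kernels and cokernels of its own morphisms, hence is an abelian subcategory.

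Second, for enough injectives I would use the duality. The key observation is that, since $\mathcal{C}$ has pseudo-cokernels, the opposite category $\mathcal{C}^{op}$ has pseudo-kernels. Applying Corollary \ref{cor: modC es gruesa} (and the same reasoning as in the previous paragraph) to $\mathcal{C}^{op}$ yields that $\modd[\mathcal{C}]=(\projx[\mathcal{C}])_{\infty}^{\wedge}$ is a thick abelian subcategory of $\Modx[\mathcal{C}]$. In particular, every finitely presented functor $F\in\modd[\mathcal{C}]$ admits an epimorphism $\operatorname{Hom}_{\mathcal{C}}(-,C)\twoheadrightarrow F$ with $\operatorname{Hom}_{\mathcal{C}}(-,C)\in\projx[\mathcal{C}]\subseteq\modd[\mathcal{C}]$, so $\modd[\mathcal{C}]$ has enough projectives.

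Third, I would invoke the hypothesis that $\mathcal{C}$ is a dualizing $R$-variety: by definition this provides a contravariant equivalence $D:\modd[\mathcal{C}]\rightarrow\modd[\mathcal{C}^{op}]$. Any contravariant equivalence of abelian categories interchanges projectives with injectives and epimorphisms with monomorphisms. Therefore, taking any $G\in\modd[\mathcal{C}^{op}]$ and setting $F:=D^{-1}(G)\in\modd[\mathcal{C}]$, a projective epimorphism $P\twoheadrightarrow F$ in $\modd[\mathcal{C}]$ is carried by $D$ to a monomorphism $G\hookrightarrow D(P)$ with $D(P)$ injective in $\modd[\mathcal{C}^{op}]$; this exhibits enough injectives in $\modd[\mathcal{C}^{op}]$.

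I do not anticipate a serious obstacle: every nontrivial input is already absorbed into Corollary \ref{cor: modC es gruesa} and into the definition of a dualizing $R$-variety. The only minor point requiring care is the transition from "thick class" (as delivered by the corollary) to "thick abelian subcategory" (as claimed in the lemma), but this is immediate once one recalls that with pseudo-(co)kernels available, the class of finitely presented functors inherits kernels and cokernels of its own morphisms from the ambient functor category.
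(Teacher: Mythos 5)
Your proof is correct and follows the same route as the paper: apply Corollary \ref{cor: modC es gruesa} directly for $\modd[\mathcal{C}^{op}]$, apply it to $\mathcal{C}^{op}$ (where pseudo-cokernels of $\mathcal{C}$ become pseudo-kernels) to get enough projectives in $\modd[\mathcal{C}]$, and transport these through the duality $D$ to get enough injectives. The only difference is that you add an explicit remark justifying the passage from "thick class" to "thick abelian subcategory"; the paper leaves this implicit but it is the same underlying fact from \cite{enomoto2017classifying}.
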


\begin{proof}
By Corollary \ref{cor: modC es gruesa},  $\modd[\mathcal{C}^{op}]=(\projx[\mathcal{C}^{op}])_{\infty}^{\wedge}$
and it is thick in $\Modx[\mathcal{C}^{op}]$. Similarly,  by Corollary \ref{cor: modC es gruesa}, 
$\modd[\mathcal{C}]=\mathcal{P}(\mathcal{C})_{\infty}^{\wedge}$ and
it has enough projectives. Then,  by the duality $D: \modd[\mathcal{C}]\rightarrow\modd[\mathcal{C}^{op}]$,  we get that 
$\modd[\mathcal{C}^{op}]$ has enough injectives. 
\end{proof}

\begin{defn}\cite[Def. 6]{martinez2013tilting} Let $\mathcal{C}$ be an
annuli variety. A class $\mathcal{T}\subseteq\Modx[\mathcal{C}^{op}]$
is a \textbf{generalized $n$-tilting subcategory} if the following conditions hold true.
\begin{description}
\item [{(FGT0)}] $\mathcal{T}=\smdx[\mathcal{T}].$
\item [{(FGT1)}] $\mathcal{T}\subseteq(\projx[\mathcal{C}^{op}])_{n}^{\wedge}.$
\item [{(FGT2)}] $\mathcal{T}\subseteq\mathcal{T}^{\bot}.$
\item [{(FGT3)}] $\Homx[\mathcal{C}][-][C]\in\mathcal{T}^{\vee}\;\forall C\in\mathcal{C}$.
\end{description}

An object $T\in\Modx[\mathcal{C}^{op}]$ is a \textbf{generalized
small (big) $n$-tilting functor} if $\addx[T]$ ($\Addx[T]$) is a generalized
$n$-tilting subcategory.
\end{defn}

\begin{cor}
Let $\mathcal{C}$ be a skeletally small additive category with pseudo-kernels.
Then,  the functor $\Extx[n][\operatorname{Mod}(\mathcal{C}^{op})][M][-]$ commutes
with coproducts $\forall M\in\modd[\mathcal{C}^{op}]$ and $\forall n\geq1$.\end{cor}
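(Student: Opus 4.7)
The plan is to compute $\Extx[n][\Modx[\mathcal{C}^{op}]][M][-]$ via a projective resolution whose terms are controlled enough for $\Hom$ from them to commute with coproducts, and then to push the coproduct through cohomology using the fact that $\Modx[\mathcal{C}^{op}]$ is an AB5 abelian category.

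First, I would reduce the statement to a $\Hom$-level computation on a resolution. By Corollary \ref{cor: modC es gruesa}, the hypothesis that $\mathcal{C}$ has pseudo-kernels gives $\modd[\mathcal{C}^{op}]=(\projx[\mathcal{C}^{op}])_{\infty}^{\wedge}$, so every $M\in\modd[\mathcal{C}^{op}]$ admits a projective resolution $P_{\bullet}\to M$ with $P_{j}\in\projx[\mathcal{C}^{op}]$ for all $j\geq 0$. Thus $\Extx[n][\Modx[\mathcal{C}^{op}]][M][F]=H^{n}(\Homx[\Modx[\mathcal{C}^{op}]][P_{\bullet}][F])$ for any $F\in\Modx[\mathcal{C}^{op}]$ and any $n\geq 1$.

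Next I would verify that for each $P_{j}$ the functor $\Homx[\Modx[\mathcal{C}^{op}]][P_{j}][-]$ commutes with arbitrary coproducts. By Yoneda's lemma, for a representable $h_{C}:=\Homx[\mathcal{C}][-][C]$ we have $\Homx[\Modx[\mathcal{C}^{op}]][h_{C}][F]\cong F(C)$, and since coproducts in $\Modx[\mathcal{C}^{op}]$ are computed pointwise, evaluation at $C$ commutes with coproducts. Because $\projx[\mathcal{C}^{op}]$ consists of the direct summands of finite coproducts of representables, the same property passes to every $P_{j}\in\projx[\mathcal{C}^{op}]$. Consequently, for a family $\{F_{i}\}_{i\in I}$ in $\Modx[\mathcal{C}^{op}]$, there is a natural isomorphism of cochain complexes
\[
\Homx[\Modx[\mathcal{C}^{op}]][P_{\bullet}][\bigoplus_{i\in I}F_{i}]\;\cong\;\bigoplus_{i\in I}\Homx[\Modx[\mathcal{C}^{op}]][P_{\bullet}][F_{i}].
\]

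Finally, I would take cohomology. Since $\Modx[\mathcal{C}^{op}]$ is AB5, arbitrary coproducts are exact, so cohomology commutes with coproducts of cochain complexes in $\mathrm{Ab}$; hence
\[
\Extx[n][\Modx[\mathcal{C}^{op}]][M][\bigoplus_{i\in I}F_{i}]\;\cong\;\bigoplus_{i\in I}H^{n}(\Homx[\Modx[\mathcal{C}^{op}]][P_{\bullet}][F_{i}])\;=\;\bigoplus_{i\in I}\Extx[n][\Modx[\mathcal{C}^{op}]][M][F_{i}],
\]
as required. There is no real obstacle here beyond being careful that the resolution lives in $\projx[\mathcal{C}^{op}]$ (not merely in $\Proj(\Modx[\mathcal{C}^{op}])$, which would \emph{not} suffice since arbitrary projectives need not be compact); that step is precisely what the pseudo-kernel hypothesis, via Corollary \ref{cor: modC es gruesa}, delivers.
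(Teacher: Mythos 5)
Your argument is correct and is essentially the paper's route: the paper simply cites \cite[Cor. 2]{martinez2013tilting} in place of the projective-resolution computation you make explicit, and both proofs invoke Corollary \ref{cor: modC es gruesa} in exactly the same way to obtain a resolution by finitely generated (hence compact) projectives from the pseudo-kernel hypothesis. One minor wording slip: in the final step the exactness you need is that of coproducts in $\mathrm{Ab}$ (an AB4 category), not the AB5 property of $\Modx[\mathcal{C}^{op}]$, since the cochain complexes $\Homx[\Modx[\mathcal{C}^{op}]][P_{\bullet}][F_{i}]$ already live in $\mathrm{Ab}$; the conclusion is unaffected.
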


\begin{proof}
It follows from \cite[Cor. 2]{martinez2013tilting} and Corollary \ref{cor: modC es gruesa}.
\end{proof}

\begin{thm}
Let $\mathcal{C}$ be an annuli variety with pseudo-kernels and $\mathcal{T}\subseteq\modd[\mathcal{C}^{op}]$.
Consider the following statements: 
\begin{itemize}
\item[$\mathrm{(a)}$] $\mathcal{T}$ is small $n$-$\modd[\mathcal{C}^{op}]$-tilting.
\item[$\mathrm{(b)}$] $\mathcal{T}=\mathcal{T}^{\oplus_{<\infty}}$ is a generalized $n$-tilting
subcategory and there is a $\modd[\mathcal{C}^{op}]$-injective relative
cogenerator in $\modd[\mathcal{C}^{op}]$.
\end{itemize}
Then,  (a) implies (b). Furthermore,  if $\mathcal{T}$ has pseudo-kernels, 
then (a) and (b) are equivalent. 
\end{thm}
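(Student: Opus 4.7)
Set $\X:=\modd[\C^{op}]$ and $\mathcal{P}:=\projx[\C^{op}]$. Since $\C$ has pseudo-kernels, Corollary \ref{cor: modC es gruesa} gives that $\X$ is a thick subcategory of $\Mod(\C^{op})$ with $\X=\mathcal{P}^{\wedge}_{\infty}$, and $\mathcal{P}$ consists of projectives of $\Mod(\C^{op})$; in particular $\mathcal{P}$ is an $\X$-projective relative generator in $\X$. With this in hand, for $(a)\Rightarrow(b)$ the conditions $(\mathrm{FGT0})$ and $(\mathrm{FGT2})$ follow at once from $(\mathrm{T0})$, $\T=\T^{\oplus_{<\infty}}$ and $\T\subseteq\X$ combined with $(\mathrm{T2})$. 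For $(\mathrm{FGT1})$, starting from $(\mathrm{T1})$ I iterate the relative generator property of $\mathcal{P}$ in $\X$ to build, for each $T\in\T$, a resolution by objects of $\mathcal{P}$ whose syzygies remain in $\X$ by thickness; the $n$-th syzygy $K$ satisfies $\Ext^{1}(K,N)=0$ for all $N\in\X$ (by the Shifting Lemma and $\pd_{\X}(\T)\leq n$), and applying this to the next step $0\to K'\to P\to K\to 0$ one obtains a splitting, whence $K\in\mathcal{P}$ (idempotents split in $\C$). Condition $(\mathrm{FGT3})$ then follows because $\mathcal{P}\subseteq\Proj(\Mod(\C^{op}))\subseteq{}^{\bot}(\T^{\bot})$ and $\mathcal{P}\subseteq\X$, so Theorem \ref{thm: el par n-X-tilting}(a) gives $\mathcal{P}\subseteq{}^{\bot}(\T^{\bot})\cap\X=(\T\cap\X)_{\X}^{\vee}\subseteq\T^{\vee}$. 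Finally, $(\mathrm{T4})$ immediately supplies an $\X$-injective relative cogenerator in $\X$.

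For $(b)\Rightarrow(a)$, assume additionally that $\T$ has pseudo-kernels. Axioms $(\mathrm{T0})$--$(\mathrm{T2})$ are immediate from $(\mathrm{FGT0})$--$(\mathrm{FGT2})$ together with $\T\subseteq\X$. For $(\mathrm{T3})$, I take $\omega:=\{\Hom_{\C}(-,C)\}_{C\in\C}$, which is a relative generator in $\X$; by $(\mathrm{FGT3})$ each $\Hom_{\C}(-,C)$ admits a finite $\T$-coresolution, and thickness of $\X$ guarantees that the syzygies of such a coresolution remain in $\X$, so $\omega\subseteq\T_{\X}^{\vee}$. For $(\mathrm{T4})$, the given $\X$-injective relative cogenerator $\alpha\subseteq\X^{\bot}$ automatically sits inside $\T^{\bot}$ since $\T\subseteq\X$, so $\alpha\subseteq\X^{\bot}\cap\T^{\bot}$.

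The main obstacle is $(\mathrm{T5})$: every $Z\in\T^{\bot}\cap\X$ must admit a $\T$-precover. My strategy is to first carry out the push-out construction of Lemma \ref{lem: inf4}: pick an epimorphism $P\twoheadrightarrow Z$ with $P\in\mathcal{P}$, embed $P\hookrightarrow T^{0}$ with $T^{0}\in\T$ using $(\mathrm{FGT3})$, and note the cokernel $C^{0}$ lies in $(\T\cap\X)_{\X}^{\vee}\subseteq{}^{\bot}(\T^{\bot})$ by Lemma \ref{lem: inf2}(a). The push-out sequence $0\to Z\to Z^{*}\to C^{0}\to 0$ splits since $Z\in\T^{\bot}$ and $C^{0}\in{}^{\bot}(\T^{\bot})$, producing a surjection $\phi\colon T^{0}\twoheadrightarrow Z$. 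Showing $\phi$ is a $\T$-precover amounts to verifying that the induced natural transformation $\Hom_{\T}(-,T^{0})\to R(Z):=\Hom_{\Mod(\C^{op})}(-,Z)|_{\T}$ in $\Mod(\T^{op})$ is an epimorphism. This is where pseudo-kernels of $\T$ enter: by \cite[Props. 2.6 and 2.7]{enomoto2017classifying}, the category $\modu(\T^{op})$ is abelian with projectives given by the representables, and the analysis of Ext-obstructions arising from the intermediate syzygies (which a priori need not lie in $\T^{\bot}$) can be carried out inside $\modu(\T^{op})$ using the vanishing $\Extx[k][\Mod(\C^{op})][T'][Z]=0$ for $T'\in\T$, $k\geq 1$, coming from $Z\in\T^{\bot}$.

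The hard part will be precisely this lifting argument: given $\alpha\colon T'\to Z$ with $T'\in\T$, one must produce $\beta\colon T'\to T^{0}$ with $\phi\beta=\alpha$, and a direct $\Ext^{1}$-argument fails because $\ker\phi$ is only an extension of $C^{0}$ by $\ker(P\twoheadrightarrow Z)$ and need not lie in $\T^{\bot}$. I expect to overcome this by iterating the push-out construction to obtain an exact sequence $0\to Z_{n}\to T_{n-1}\to\cdots\to T_{0}\to Z\to 0$ with all $T_{i}\in\T$ and $Z_{n}\in\T^{\bot}\cap\X$ (by dimension shifting and $\pd_{\X}(\T)\leq n$), and then using the pseudo-kernels of $\T$ to chase $\alpha$ backwards through this finite tower: each obstruction to lifting at the $i$-th stage is captured by a morphism $T'\to Z_{i}$ whose pseudo-kernel in $\T$ allows us to continue one step further, and after $n$ steps the obstruction lands in $\Extx[1][\Mod(\C^{op})][T'][Z_{n}]=0$. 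Assembling the partial lifts yields the desired $\beta$, establishing $(\mathrm{T5})$ and completing the proof.
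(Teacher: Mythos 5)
Your treatment of (a) $\Rightarrow$ (b) is essentially the paper's argument: building a $\projx[\C^{op}]$-resolution of each $T\in\T$, shifting, and splitting the top to get (FGT1); using $\T\subseteq\X$ for (FGT0), (FGT2); and identifying $\rho\subseteq{}^\bot(\T^\bot)\cap\X=(\T\cap\X)^\vee_\X\subseteq\T^\vee$ for (FGT3). Those steps check out.

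The problem is the (b) $\Rightarrow$ (a) direction, specifically (T5), which you correctly flag as the crux but then only sketch. You propose iterating the push-out construction of Lemma \ref{lem: inf4} to build a finite coresolution $0\to Z_n\to T_{n-1}\to\cdots\to T_0\to Z\to 0$ and then chasing an arbitrary morphism $\alpha\colon T'\to Z$ backwards through the tower using pseudo-kernels of $\T$. As you yourself note, the naive $\Ext^1$-argument fails because the relevant kernel is not in $\T^\bot$, and your proposed fix (``I expect to overcome this by iterating...'') is speculative rather than carried out; in particular you never explain how the pseudo-kernels of $\T$ actually produce the partial lifts or how the obstruction at each stage is killed. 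This is exactly the kind of hand-waving that hides a real gap. The paper's proof avoids all of this: because $\T$ has pseudo-kernels, the results of Mart\'inez--Villa and Ortiz-Morales, cited as \cite[Props.~6 and~7]{martinez2013tilting}, yield directly that $\T$ is precovering in all of $\modd[\C^{op}]$, and (T5) follows immediately by restriction to $\T^\bot\cap\modd[\C^{op}]$. There is nothing to chase. So the gap in your proposal is twofold: the intended argument for (T5) is left unverified at its hardest point, and the standard one-line deduction from the pseudo-kernel hypothesis was missed. (As a small additional remark: the paper's chain for (T3) goes via the condition (t3$''$) of Definition \ref{def: condiciones T3} and Lemma \ref{lem: props T2 con T3' y C2 con C3'-2}, which is more precise than your informal ``thickness guarantees that the syzygies remain in $\X$''; both land in the same place, but your phrasing glosses over the fact that $\T^\vee$-coresolutions of objects in $\X$ need to have intermediate images in $\X$ before they count as $\T^\vee_\X$-coresolutions, and this does indeed use thickness of $\modd[\C^{op}]$.)
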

\begin{proof}
(a) $\Rightarrow$ (b) Let $\mathcal{T}$ be small $n$-$\modd[\mathcal{C}^{op}]$-tilting.
By \cite[Prop. C.1.(2)]{bravo2019locally},  it follows that $\rho: =\left\{ \Homx[\mathcal{C}][-][C]\right\} _{C\in\mathcal{C}}$
is a $\modd[\mathcal{C}^{op}]$-projective relative generator in $\modd[\mathcal{C}^{op}]$.
Then,  there is a long exact sequence 
\[
0\rightarrow F_{n+1}\stackrel{f}{\rightarrow}\Homx[\mathcal{C}][-][C_{n}]\rightarrow\cdots\rightarrow\Homx[\mathcal{C}][-][C_{0}]\rightarrow T\rightarrow 0, 
\]
where $F_{n+1}\in\modd[\mathcal{C}^{op}]$. Consider the exact sequence
\[
\eta: \;\suc[F_{n+1}][\mbox{Hom}_{\mathcal{C}}(-, C_{n})][F_{n}][f][\, ]\mbox{.}
\]
Thus,  by the Shifting Lemma and (T1),   we have 
$$\Extx[1][\operatorname{Mod}\left(\mathcal{C}\right)][F_{n}][F_{n+1}]\simeq\Extx[n+1][\operatorname{Mod}\left(\mathcal{C}\right)][T][F_{n+1}]=0.$$
 Hence $\eta$ splits and then $F_{n}\in\projx[\mathcal{C}^{op}].$ 
Therefore $T\in\projx[\mathcal{C}^{op}]_{n}^{\wedge}$ and so (FGT1) holds true. Condition
(FGT2) follows from (T2). By Proposition \ref{prop: (a)-1}(e), 
$\rho\subseteq{}^{\bot}(\mathcal{T}^{\bot})\cap\modd[\mathcal{C}^{op}]={}{}^{\bot}(\mathcal{T}^{\bot})\cap\left(\modd[\mathcal{C}^{op}], \mathcal{T}\right)^{\vee}\subseteq\mathcal{T}{}^{\vee}\mbox{, }$
which proves (FGT3). Finally,  by (T4),  $\modd[\mathcal{C}^{op}]$ has a $\modd[\mathcal{C}^{op}]$-injective
relative cogenerator in $\modd[\mathcal{C}^{op}].$  
\

(b) $\Rightarrow$ (a) By (FGT1),  $\pdr[\operatorname{f.p.}(\mathcal{C}^{op})][\mathcal{T}]\leq\pdx[\mathcal{T}]\leq n$
and thus (T1) is satisfied. Since $\mathcal{T}\subseteq\modd[\mathcal{C}^{op}]$, 
using (FGT2),  we have $\mathcal{T}\cap\modd[\mathcal{C}^{op}]=\mathcal{T}\subseteq\mathcal{T}^{\bot}$
and then (T2) holds true. Note that (FGT3) implies (t3''). Thus
(T4) is satisfied since there is a $\mbox{f.p.}(\mathcal{C}^{op})$-injective
relative cogenerator in $\modd[\mathcal{C}^{op}]$ and $\mathcal{T}\subseteq\modd[\mathcal{C}^{op}]$.
Now,  using that  $\mathcal{T}$ has pseudo-kernels and \cite[Props. 6 and 7]{martinez2013tilting},  we get that $\mathcal{T}$ is precovering
in $\modd[\mathcal{C}^{op}].$ In
particular,  every $Z\in\mathcal{T}^{\bot}\cap\modd[\mathcal{C}^{op}]$
admits a $\mathcal{T}$-precover,  and hence (T5) holds true. Finally, 
by Lemma \ref{lem:  addT precub es AddT precub},  condition (T3) follows since
Definition \ref{def: condiciones T3} (t3'') is satisfied and $\modd[\mathcal{C}^{op}]$
is thick by Corollary \ref{cor: modC es gruesa}.
\end{proof}

\subsection{Silting modules,  quasitilting modules and $n$-$\mathcal{X}$-tilting
objects}

\renewcommandx\modd[1][usedefault,  addprefix=\global,  1=R]{\operatorname{mod}\left(#1\right)}

Silting modules were introduced in \cite{siltingmodulessurvey},  by Lidia Angeleri H\"ugel,  Frederik
Marks and Jorge Vitoria,   as a simultaneous
generalization of tilting modules over an arbitrary ring and support
$\tau$-tilting modules over a finite dimensional algebra. In this
section we will focus on understanding silting theory through $n$-$\mathcal{X}$-tilting
objects. Let us begin by recalling some known results on silting theory. 

\renewcommandx\Homx[3][usedefault,  addprefix=\global,  1=R,  2=M,  3=N]{\operatorname{Hom}{}_{#1}(#2, #3)}
\renewcommandx\Gen[1][usedefault,  addprefix=\global,  1=M]{\operatorname{Gen}_{1}\left(#1\right)}
\renewcommandx\Genn[2][usedefault,  addprefix=\global,  1=M,  2=n]{\operatorname{Gen}_{#2}(#1)}

By \cite[Lemdef 3.1 and Lem. 2.3]{siltingmodulessurvey}, it can be shown the following result.

\begin{lem}\label{lemdef quasitilting} 
For a ring $R$ and $T\in\Mod(R), $  the following statements
are equivalent.
\begin{itemize}
\item[$\mathrm{(a)}$] $\Genn[T][1]=\Genn[T][2]$ is a torsion class in $\Mod(R)$ and 
 $\Homx[R][T][-]$
is exact on $\Genn[T][1]$.
\item[$\mathrm{(b)}$] $\Genn[T][1]=\Genn[T][2]$ and $T\in{}^{\bot_{1}}\Genn[T][1]$.
\item[$\mathrm{(c)}$] $\Genn[T][1]=\overline{\Genn[T][1]}\cap T^{\bot_{1}}$,  where $\overline{\Genn[T][1]}$
is the class of all the submodules of modules in $\Genn[T][1]$.
\end{itemize}
\end{lem}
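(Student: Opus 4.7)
The plan is to establish the cycle of implications by proving (a) $\Leftrightarrow$ (b) first and then (b) $\Leftrightarrow$ (c). In every step the main structural fact in the background is that $T \in \mathrm{Add}(T) \subseteq \Gen_1(T)$, so $T$ itself lives in the class that appears on both sides of the Hom and Ext conditions.

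For (a) $\Rightarrow$ (b), the only nontrivial content is $T \in {}^{\perp_1}\Gen_1(T)$. Given an extension $\eta: 0 \to G \to E \to T \to 0$ with $G \in \Gen_1(T)$, since $T \in \Gen_1(T)$ and $\Gen_1(T)$ is a torsion class (hence closed under extensions), I would conclude $E \in \Gen_1(T)$, and then exactness of $\Hom_R(T,-)$ on $\Gen_1(T)$ lifts $\mathrm{id}_T$ to $T \to E$, splitting $\eta$. For (b) $\Rightarrow$ (a), the exactness of $\Hom_R(T,-)$ on $\Gen_1(T)$ is immediate from the long exact sequence and $\Ext^1(T, \Gen_1(T)) = 0$. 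To see $\Gen_1(T)$ is a torsion class, closure under quotients and coproducts is tautological from the definition of $\Gen_1$; closure under extensions follows from a Horseshoe-type construction: given $0 \to A \to B \to C \to 0$ with $A,C \in \Gen_1(T)$, pick epis $T_A \twoheadrightarrow A$ and $T_C \twoheadrightarrow C$ from $\Add(T)$, use $\Ext^1(T_C, A) = 0$ (where the vanishing propagates from $\Ext^1(T,A) = 0$ via $\Ext^1(T^{(I)}, A) \cong \prod_I \Ext^1(T,A)$ in $\Mod(R)$) to lift, and assemble $T_A \oplus T_C \twoheadrightarrow B$.

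For (c) $\Rightarrow$ (b), the inclusion $\Gen_1(T) \subseteq T^{\perp_1}$ is immediate from (c). To show $\Gen_1(T) \subseteq \Gen_2(T)$, given $G \in \Gen_1(T)$, I would use the canonical evaluation $\psi: T^{(\Hom(T,G))} \twoheadrightarrow G$, which is surjective because $G$ is generated by $T$. Let $K = \Ker\,\psi$. Since $T^{(\Hom(T,G))} \in \Add(T) \subseteq \Gen_1(T) \subseteq T^{\perp_1}$ and the map $\Hom_R(T, T^{(\Hom(T,G))}) \to \Hom_R(T,G)$ is tautologically surjective, the long exact sequence forces $\Ext^1(T,K) = 0$. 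As $K \hookrightarrow T^{(\Hom(T,G))} \in \Gen_1(T)$ yields $K \in \overline{\Gen_1(T)}$, condition (c) gives $K \in \Gen_1(T)$, completing the $\Gen_2$ presentation of $G$.

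The main obstacle is (b) $\Rightarrow$ (c), and more precisely the inclusion $\overline{\Gen_1(T)} \cap T^{\perp_1} \subseteq \Gen_1(T)$. Given $X \subseteq Y$ with $Y \in \Gen_1(T)$ and $\Ext^1(T,X) = 0$, I would invoke (b) $\Rightarrow$ (a) to work inside the torsion pair $(\Gen_1(T), \mathcal{F})$ with $\mathcal{F} = \{F : \Hom_R(T,F) = 0\}$. From the exact sequence $0 \to t(X) \to X \to X/t(X) \to 0$, the quotient $Z := X/t(X)$ lies in $\mathcal{F}$ and embeds in $Y/t(X) \in \Gen_1(T)$. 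Choosing a $\Gen_2$-presentation $0 \to K \to T_1 \to Y/t(X) \to 0$ with $K \in \Gen_1(T)$ and pulling it back along $Z \hookrightarrow Y/t(X)$ produces $0 \to K \to P \to Z \to 0$ with $P \subseteq T_1$. Since $\Hom_R(T,Z) = 0$, one verifies that $\Tr_T(P) = K$, and then the vanishing of $\Ext^1(T,X)$ must be leveraged, together with the $\Ext$-vanishing of $K$, to force $P = K$ and therefore $Z = 0$. The delicate orchestration of Hom/Ext vanishings against the torsion pair structure is exactly the content of \cite[Lem.~2.3]{siltingmodulessurvey}, which I would follow in detail.
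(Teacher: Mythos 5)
The paper's proof of this lemma is just a citation to \cite[Lemdef 3.1 and Lem. 2.3]{siltingmodulessurvey}, so there is no in-text argument to compare against. Your reconstruction is correct and fills in the three easier directions explicitly: in (a)$\Rightarrow$(b) the splitting via closure under extensions plus Hom-exactness is right; in (b)$\Rightarrow$(a) the Horseshoe-style extension closure and the observation $\Ext^{1}(T^{(I)},A)\cong\prod_{I}\Ext^{1}(T,A)$ correctly propagate Ext-vanishing to $\Add(T)$; and in (c)$\Rightarrow$(b) the canonical evaluation $\psi:T^{(\Hom(T,G))}\twoheadrightarrow G$ with $\Add(T)\subseteq\Gen_{1}(T)\subseteq T^{\perp_{1}}$ (the latter inclusion following from (c) itself) cleanly gives $\Ext^{1}(T,K)=0$, whence $K\in\overline{\Gen_{1}(T)}\cap T^{\perp_{1}}=\Gen_{1}(T)$. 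For the hardest implication (b)$\Rightarrow$(c) your sketch correctly identifies the torsion-pair reduction and the trace computation $\Tr_{T}(P)=K$, and your deferral to \cite[Lem. 2.3]{siltingmodulessurvey} for the final step is exactly what the paper itself does for the entire statement, so the two treatments coincide in substance, with yours providing more of the routine verifications.
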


Let $R$ be a ring and $T\in\Mod(R)$. We recall from \cite[Lemdef 3.1]{siltingmodulessurvey} that $T$ is \textbf{quasitilting}
if $T$ satisfies one of the equivalent conditions in Lemma \ref{lemdef quasitilting}.
Let $E: =\End.$ We recall,  from \cite{faith1972modules}  that $T$ is \textbf{finendo} if $M$ is finitely generated as a left $E$-module.

\begin{defn} For a ring $R, $ we denote by
 $\operatorname{qtilt}(R)$ the class of all the left $R$-modules which are quasitilting and finendo. 
\end{defn}

Note that the relation $\sim$ on $\operatorname{qtilt}(R)$,  where 
$T_{1}\sim T_{2}$ if $\Addx[T_{1}]=\Addx[T_{2}], $
 is an equivalence relation on $\operatorname{qtilt}(R)$.

\begin{thm}\label{thm: quasitilting finendo vs torsion}\cite[Thm. 3.4]{siltingmodulessurvey}
Let $R$ be a ring. Then,  the map $T\mapsto\Gen[T]$ induces a bijection
between the quotient class $\operatorname{qtilt}(R)/\!\!\sim$ and the class of all the  torsion classes $\mathcal{T}\subseteq\Modx[R]$
satisfying the following condition:  
\begin{description}
\item [{(QT)}] every $M\in\Mod(R)$ admits a $\mathcal{T}$-preenvelope $\phi: M\to T_0$ with $\Coker(\phi)$ in ${}^{\bot_{1}}\mathcal{T}.$ 
\end{description}
\end{thm}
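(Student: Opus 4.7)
The plan is to verify that $\varphi\colon[T]\mapsto\Gen(T)$ is a well-defined bijection between $\operatorname{qtilt}(R)/{\sim}$ and the target class. That $\varphi$ is well defined on the quotient is automatic since $\Gen(T)$ depends only on $\Add(T)$. It remains to check three points: (i) for $T\in\operatorname{qtilt}(R)$ the class $\Gen(T)$ is a torsion class satisfying (QT); (ii) $\varphi$ is injective modulo ${\sim}$; and (iii) $\varphi$ is surjective. For (i), the torsion class structure is immediate from Lemma \ref{lemdef quasitilting}(a). To verify (QT), I would use finendoness: writing $T=Ef_1+\cdots+Ef_k$ with $E:=\mathrm{End}_R(T)$, one associates to each $M\in\Modx[R]$ a preenvelope into a module in $\Gen(T)$ built from the canonical map $M\to \mathrm{Hom}_E(\mathrm{Hom}_R(M,T),T)$ restricted to an appropriate finitely generated piece; then $T\in{}^{\bot_1}\Gen(T)$ from Lemma \ref{lemdef quasitilting}(b), combined with finendoness, forces the cokernel to lie in ${}^{\bot_1}\Gen(T)$.

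For (ii), suppose $T_1,T_2\in\operatorname{qtilt}(R)$ satisfy $\Gen(T_1)=\Gen(T_2)$. Since $T_1\in\Gen(T_1)=\Gen(T_2)=\Genn[T_2][2]$ by Lemma \ref{lemdef quasitilting}(b), there is a short exact sequence $0\to K\to T_2^{(I)}\to T_1\to 0$ with $T_2^{(I)}\in\Add(T_2)$ and $K\in\Gen(T_2)=\Gen(T_1)$. Lemma \ref{lemdef quasitilting}(b) also yields $T_1\in{}^{\bot_1}\Gen(T_1)$, hence $\mathrm{Ext}_R^1(T_1,K)=0$ and the sequence splits, so $T_1\in\Add(T_2)$. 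Symmetry gives $\Add(T_1)=\Add(T_2)$, i.e., $T_1\sim T_2$.

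For (iii), given a torsion class $\mathcal{T}$ satisfying (QT), apply (QT) to $R$ to obtain $\phi\colon R\to T_0$ with $T_0\in\mathcal{T}$ and $C_0:=\Coker(\phi)\in{}^{\bot_1}\mathcal{T}$, and set $T:=T_0$. Three things must be established: (a) $T$ is finendo, (b) $\Gen(T)=\mathcal{T}$, and (c) $T$ is quasitilting. Part (a) follows from the preenveloping property of $\phi$: for any $t\in T_0$, the morphism $R\to T_0$, $1\mapsto t$ factors as $R\xrightarrow{\phi}T_0\xrightarrow{e}T_0$ for some $e\in E:=\mathrm{End}_R(T_0)$, so $T_0=E\cdot\phi(1)$ is cyclic over $E$. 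Part (b) is similar: $T_0\in\mathcal{T}$ and torsion-closure give $\Gen(T_0)\subseteq\mathcal{T}$, while for any $M\in\mathcal{T}$ each $m\in M$ arises from the factorization of $R\to M$, $1\mapsto m$, through $\phi$, whence $M\in\Gen(T_0)$.

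The main obstacle is part (c), specifically the Ext-vanishing $T_0\in{}^{\bot_1}\mathcal{T}$; once this is established, the identity $\Gen(T_0)=\Genn[T_0][2]$ follows from the vanishing together with the torsion-closure of $\mathcal{T}$ and Lemma \ref{lemdef quasitilting}. The naive approach using the two short exact sequences $0\to\Kerx[\phi]\to R\to\im[\phi]\to 0$ and $0\to\im[\phi]\to T_0\to C_0\to 0$ does not give $\mathrm{Ext}_R^1(T_0,\mathcal{T})=0$ from the bare assumption $C_0\in{}^{\bot_1}\mathcal{T}$. My approach is to start from an arbitrary extension $\eta\colon 0\to M\to N\to T_0\to 0$ with $M\in\mathcal{T}$; apply (QT) to $M$ to obtain $\phi_M\colon M\to T_M$, which is split mono (since $\mathrm{id}_M$ factors through $\phi_M$ by the preenvelope property); form the pushout along $\phi_M$ and iterate with the preenvelope of $N$; then exploit the universal property of $\phi\colon R\to T_0$ in a diagram chase to produce a splitting of $\eta$. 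This yields $\mathrm{Ext}_R^1(T_0,\mathcal{T})=0$ and completes the proof that $T_0$ is quasitilting.
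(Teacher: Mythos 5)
This theorem is imported from \cite[Thm.~3.4]{siltingmodulessurvey}; the paper does not prove it, so there is no internal argument to compare against. Judged on its own, your sketch is partly correct but has two genuine gaps.

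Parts (ii), (iii)(a) and (iii)(b) are sound: the splitting argument for injectivity, the cyclicity argument for finendoness, and the factorization argument for $\Gen[T_0]=\mathcal{T}$ are all correct as stated.

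In part (i), ``finendoness forces the cokernel to lie in ${}^{\bot_{1}}\Gen[T]$'' is an assertion, not an argument. Producing a $\Gen[T]$-preenvelope whose cokernel lands in ${}^{\bot_{1}}\Gen[T]$ is precisely the content of (QT); it requires exhibiting the preenvelope explicitly, identifying its cokernel, and running an Ext computation to show the cokernel is annihilated by $\Ext_R^1(-,X)$ for all $X\in\Gen[T]$. None of this is in the sketch, and it is the technically substantive half of direction (i).

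In part (iii)(c), the pushout step makes no progress. If $\rho:T_M\to M$ is a retraction of the split mono $\phi_M$, then $\rho_*\circ(\phi_M)_*=\mathrm{id}$ on $\Ext_R^1(T_0,M)$, so $(\phi_M)_*\eta$ splits if and only if $\eta$ does; the pushed-out extension of $T_0$ by $T_M\in\mathcal{T}$ carries exactly the same obstruction you started with. The subsequent phrases (``iterate with the preenvelope of $N$'', ``exploit the universal property of $\phi$ in a diagram chase'') do not specify an argument, and it is unclear that the process terminates in a splitting. There is also a second missing ingredient: by Lemma~\ref{lemdef quasitilting}(b), quasitilting for $T_0$ requires both $T_0\in{}^{\bot_{1}}\Gen[T_0]$ \emph{and} $\Gen[T_0]=\operatorname{Gen}_2(T_0)$; the remark that the latter ``follows from the vanishing together with the torsion-closure'' is asserted without proof and is itself not a one-line deduction (it calls on (QT) again, applied to the kernel of a trace epimorphism $T_0^{(I)}\twoheadrightarrow X$).
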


Let $R$ be a ring and let $\sigma$ be a morphism
in $\Projx[R].$ Following  \cite[Sect. 3.2]{siltingmodulessurvey},  we consider the class $\mathcal{D}_{\sigma}: =\left\{ X\in\Modx[R]\, |\, \Homx[][\sigma][X]\mbox{ is an epimorphism}\right\}.$ The reader can find the elementary properties of this class in \cite{siltingmodulessurvey}.  

Let $R$ be a ring and $T\in\Mod(R)$. Following \cite[Def. 3.7]{siltingmodulessurvey},  we recall that $T$ is\textbf{ partial silting} if there is a projective presentation $\sigma$ of $T$ such that the following two conditions hold true: 
{\bf (S1)} $\mathcal{D}_{\sigma}$ is a torsion class and {\bf(S2)} $T\in\mathcal{D}_{\sigma}.$ On the other hand,  it is said that $T$ is \textbf{silting} if there is a projective presentation $\sigma$ of $T$ such that
$\Gen[T]=\mathcal{D}_{\sigma}$. In such a case,  we say that $T$ is
silting with respect to $\sigma$. 

\begin{prop}\cite{siltingmodulessurvey} \label{prop: silting vs finendo quasitilting vs tilting} 
For a ring $R, $ the following statements hold true.
\begin{itemize}
\item[$\mathrm{(a)}$]  Every silting left $R$-module
is finendo and quasitilting.
\item[$\mathrm{(b)}$] Let $T\in\Mod(R).$ Then:  $T$ is
$1$-tilting $\Leftrightarrow$ $T$ is faithful and silting $\Leftrightarrow$
 $T$  is faithful,  finendo and quasitilting.
\end{itemize}
\end{prop}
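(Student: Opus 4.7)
The plan is to establish (a) first and then derive (b) by combining (a) with Theorem~\ref{thm: quasitilting finendo vs torsion} and a short argument on projective presentations. For (a), suppose $T$ is silting with respect to a projective presentation $\sigma\colon P_{-1}\to P_0$, so that $\Gen[T]=\mathcal{D}_\sigma$. The key observation is the inclusion $\mathcal{D}_\sigma\subseteq T^{\bot_1}$: from $P_{-1}\xrightarrow{\sigma}P_0\to T\to 0$ and projectivity of $P_0$, the map $\Homx[R][\sigma][X]$ factors through $\Homx[R][\Ima(\sigma)][X]$, and its cokernel surjects onto $\Extx[1][R][T][X]$; hence surjectivity of $\Homx[R][\sigma][X]$ forces $\Extx[1][R][T][X]=0$. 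Thus $\Gen[T]\subseteq T^{\bot_1}$ and $T\in{}^{\bot_1}\Gen[T]$. To obtain $\Gen[T]=\Genn[T][2]$, I would exploit the torsion-class structure of $\mathcal{D}_\sigma$ together with $\Gen[T]\subseteq T^{\bot_1}$: a snake-lemma argument applied to $0\to K\to T^{(I)}\to C\to 0$ (for any $C\in\Gen[T]$) yields $K\in\Gen[T]$, providing the requisite presentation by objects of $\Addx[T]$. Lemma~\ref{lemdef quasitilting}(b) then gives quasitiltingness. For finendo, the crucial fact is that $\mathcal{D}_\sigma$ is closed under arbitrary products since $\Homx[R][\sigma][-]$ commutes with products; writing $E:=\mathrm{End}_R(T)$, the canonical $R$-linear monomorphism $T\hookrightarrow T^E$ given by $t\mapsto(e(t))_{e\in E}$ factors through an epimorphism $T^{(J)}\twoheadrightarrow T^E$ witnessing $T^E\in\Gen[T]$, and chasing this factorization identifies finitely many elements of $T$ generating it as a left $E$-module.

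For (b), the equivalence breaks into three pieces. The implication $1$-tilting $\Rightarrow$ faithful and silting is routine: the tilting coresolution $0\to R\to T_0\to T_1\to 0$ with $T_i\in\addx[T]$ forces $\mathrm{Ann}_R(T)=0$, and the projective resolution $0\to P_{-1}\xrightarrow{\sigma}P_0\to T\to 0$ satisfies $\mathcal{D}_\sigma=T^{\bot_1}=T^{\bot}=\Gen[T]$, using $\pdx[T]\leq 1$ and the classical tilting characterization. For faithful and silting $\Rightarrow$ $1$-tilting: faithfulness of $T$ ensures that $\Homx[R][P_{-1}][T]$ separates points of $P_{-1}$, because every projective $R$-module embeds into a product of copies of $T$ via the faithful embedding $R\hookrightarrow T^T$. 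On the other hand, $T\in\mathcal{D}_\sigma$ forces every morphism $P_{-1}\to T$ to vanish on $\ker(\sigma)$; together these give $\ker(\sigma)=0$, so $\pdx[T]\leq 1$, and combined with $\Gen[T]=T^{\bot}$ this verifies the tilting axioms. Finally, faithful and silting $\Leftrightarrow$ faithful, finendo and quasitilting follows from (a) in one direction, and in the other direction from Theorem~\ref{thm: quasitilting finendo vs torsion}: finendo and quasitilting supply the approximation condition (QT) on the torsion class $\Gen[T]$, and the $\Gen[T]$-preenvelope of $R$ assembles, after a standard syzygy computation, a projective presentation realizing $T$ as silting.

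The main obstacle is the finendo part of (a): while the closure of $\mathcal{D}_\sigma$ under products is immediate, converting this into explicit finite $E$-generation of $T$ requires a careful construction linking the presentation $\sigma$ to the evaluation map $T\to T^E$. A secondary but delicate step is the proof of $\Gen[T]=\Genn[T][2]$, which needs a snake-lemma argument blending the torsion-class structure of $\mathcal{D}_\sigma$ with the orthogonality $\Gen[T]\subseteq T^{\bot_1}$.
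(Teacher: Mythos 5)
The paper's proof of this proposition consists entirely of a citation: ``It follows from [Props.\ 3.10 and 3.13(2)]\{siltingmodulessurvey\}.'' You instead attempt a self-contained reconstruction of the arguments in Angeleri H\"ugel--Marks--Vitoria. This is a genuinely different route: the paper outsources the work, while you reprove it from first principles. Your sketch captures the correct overall strategy (the inclusion $\mathcal{D}_\sigma\subseteq T^{\perp_1}$ is proved cleanly and correctly, and the reduction of (b) to Theorem~\ref{thm: quasitilting finendo vs torsion} is the right idea), but several steps that you yourself flag are genuinely incomplete rather than merely delicate.

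Two of these gaps deserve emphasis. First, your finendo argument is circular as written: to lift the evaluation monomorphism $\delta\colon T\hookrightarrow T^E$ through an epimorphism $T^{(J)}\twoheadrightarrow T^E$ you need $\Ext^1_R(T,\Ker)\,=0$, i.e.\ $\Ker\in T^{\bot_1}$, which you would obtain from $\Ker\in\Gen[T]$ --- but $\Ker\in\Gen[T]$ is exactly the unfinished $\Gen[T]=\Genn[T][2]$ step. Moreover, even granting the lift, ``chasing the factorization'' does not transparently produce a finite $E$-generating set: the factoring morphism $T\to T^{(J)}$ lands in a finite subsum only after an additional argument involving the structure of $T^E$. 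Second, the implication (faithful $+$ finendo $+$ quasitilting) $\Rightarrow$ silting in (b) is considerably more delicate than a ``standard syzygy computation.'' The paper's own Remark~\ref{rem: silting}(4) notes (citing \cite{breaz2018torsion}) that without faithfulness this implication fails over general rings, so faithfulness is doing essential work here that your sketch does not track: one must use $\Ann_R(T)=0$ to make the $\Gen[T]$-preenvelope of $R$ a monomorphism, then argue via $\pd(T)\le 1$ that the resulting data actually realizes $T$ as $1$-tilting, and then invoke the classical fact that $1$-tilting implies silting. These are the known arguments from the cited source, and they can be completed, but the proposal as written cannot be called a proof.
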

\begin{proof} It follows from \cite[Props. 3.10 and 3.13 (2)]{siltingmodulessurvey}.
\end{proof}

\begin{prop}\cite[Prop. 3.11]{siltingmodulessurvey}\label{prop: silting parcial +S3 es silting-1}\label{prop: silting parcial +S3 es silting}
For a ring $R$ and a projective presentation $\sigma$
of $T\in\Mod(R), $ the following statements are equivalent.
\begin{itemize}
\item[$\mathrm{(a)}$] $T$ is a silting $R$-module with respect to $\sigma$.
\item[$\mathrm{(b)}$] $T$ is a partial silting $R$-module with respect to $\sigma$ and
the following condition holds true: 
\begin{description}
\item [{(S3)}] there is an exact sequence $R\overset{\phi}{\rightarrow}T_{0}\rightarrow T_{1}\rightarrow0$, 
where $T_{0}$, $T_{1}\in\Addx[T]$ and $\phi$ is a $\mathcal{D}_{\sigma}$-preenvelope of $R.$
\end{description}
\end{itemize}
\end{prop}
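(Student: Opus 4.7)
The plan is to establish both implications by exploiting the general closure properties of $\mathcal{D}_{\sigma}$ together with the specific structure of $\operatorname{Gen}_{1}(T)$ in the silting setting. Suppose first that $T$ is silting with respect to $\sigma$, i.e.\ $\operatorname{Gen}_{1}(T) = \mathcal{D}_{\sigma}$. Condition (S2) is immediate since $T \in \operatorname{Gen}_{1}(T) = \mathcal{D}_{\sigma}$. For (S1), the key observation is that $\mathcal{D}_{\sigma}$ is always closed under direct sums (because the source and target of $\sigma$ are projective) and under extensions (by a diagram chase with the long exact $\Hom_{R}(\sigma,-)$-sequence associated with a short exact sequence in $\operatorname{Mod}(R)$). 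Combined with the identification $\mathcal{D}_{\sigma} = \operatorname{Gen}_{1}(T)$, which is automatically closed under epimorphic images, this makes $\mathcal{D}_{\sigma}$ a torsion class. For (S3) I would start from the candidate $\phi\colon R \to T_{0}$, where $T_{0} := T^{(\Omega)}$ with $\Omega := \Hom_{R}(R, T)$ and $\phi(r) := (f(r))_{f \in \Omega}$; by construction $\phi$ is an $\operatorname{Add}(T)$-preenvelope. Since $R$ is projective and $\mathcal{D}_{\sigma} = \operatorname{Gen}_{1}(T)$, every morphism $R \to X$ with $X \in \mathcal{D}_{\sigma}$ lifts along some surjection $T^{(J)} \twoheadrightarrow X$ and hence factors through $\phi$, so $\phi$ is in fact a $\mathcal{D}_{\sigma}$-preenvelope. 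To force $T_{1} := \operatorname{Coker}(\phi)$ into $\operatorname{Add}(T)$ I would exploit the explicit presentation $\sigma\colon P_{-1} \to P_{0}$: lift $\phi$ through the surjection $P_{0}^{(\Omega)} \twoheadrightarrow T_{0}$ to get $\widetilde{\phi}\colon R \to P_{0}^{(\Omega)}$ and iterate the construction along $\sigma$ until the cokernel stabilises in $\operatorname{Add}(T)$.

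For the converse, assume the conditions in (b) and aim to show $\mathcal{D}_{\sigma} = \operatorname{Gen}_{1}(T)$. The inclusion $\operatorname{Gen}_{1}(T) \subseteq \mathcal{D}_{\sigma}$ is immediate from (S1) and (S2): since $T \in \mathcal{D}_{\sigma}$ and $\mathcal{D}_{\sigma}$ is a torsion class, every coproduct $T^{(J)}$ and every epimorphic image of such a coproduct lies in $\mathcal{D}_{\sigma}$. For the reverse inclusion, take $X \in \mathcal{D}_{\sigma}$ and a surjection $\psi\colon R^{(S)} \twoheadrightarrow X$ from a free module. For each coordinate inclusion $\iota_{s}\colon R \hookrightarrow R^{(S)}$, the composition $\psi \iota_{s}\colon R \to X$ factors through the $\mathcal{D}_{\sigma}$-preenvelope $\phi\colon R \to T_{0}$ supplied by (S3), say $\psi \iota_{s} = g_{s} \phi$. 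Assembling the $g_{s}$ via the coproduct universal property produces $\overline{g}\colon T_{0}^{(S)} \to X$ with $\overline{g} \circ \phi^{(S)} = \psi$; since $\psi$ is epic so is $\overline{g}$, and $T_{0}^{(S)} \in \operatorname{Add}(T)$, whence $X \in \operatorname{Gen}_{1}(T)$.

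The main obstacle will be securing $T_{1} \in \operatorname{Add}(T)$ in (S3). A generic $\operatorname{Add}(T)$-preenvelope of $R$ only yields a cokernel in $\operatorname{Gen}_{1}(T) = \mathcal{D}_{\sigma}$; pinning the cokernel down as an element of $\operatorname{Add}(T)$ requires explicit use of the projective presentation $\sigma$ and not merely the class $\mathcal{D}_{\sigma}$. Everything else --- verifying the torsion-class property of $\mathcal{D}_{\sigma}$, the preenvelope property of $\phi$, and the cofinality argument in (b) $\Rightarrow$ (a) --- is essentially formal given the definitions and the projectivity of $R$.
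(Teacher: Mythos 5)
The statement you are proving is only cited in the paper (from Angeleri H\"ugel--Marks--Vit\'oria, Prop.~3.11); the paper gives no proof of its own. So your attempt must stand on its own, and it has one genuine gap in the direction $\mathrm{(a)}\Rightarrow\mathrm{(b)}$.

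Your argument for $\mathrm{(b)}\Rightarrow\mathrm{(a)}$ is correct: given $X\in\mathcal{D}_\sigma$ and a surjection $R^{(S)}\twoheadrightarrow X$, factoring each component through the $\mathcal{D}_\sigma$-preenvelope $\phi\colon R\to T_0$ supplied by $\mathrm{(S3)}$ and assembling the factorizations gives an epimorphism $T_0^{(S)}\twoheadrightarrow X$ with $T_0^{(S)}\in\operatorname{Add}(T)$, hence $X\in\operatorname{Gen}_1(T)$; and $\operatorname{Gen}_1(T)\subseteq\mathcal{D}_\sigma$ follows from $\mathrm{(S1)}$ and $\mathrm{(S2)}$.

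For $\mathrm{(a)}\Rightarrow\mathrm{(b)}$ there are two issues. A minor one: you assert that $\mathcal{D}_\sigma$ is ``always closed under direct sums because the source and target of $\sigma$ are projective,'' but projectivity does not give this; $\operatorname{Hom}(P,-)$ commutes with coproducts only when $P$ is finitely generated. In the AMV setting $\sigma$ is an arbitrary projective presentation, and if $\mathcal{D}_\sigma$ were automatically a torsion class then the hypothesis $\mathrm{(S1)}$ would be vacuous. The correct observation is that $\mathcal{D}_\sigma$ is \emph{always} closed under quotients and extensions, while the identification $\mathcal{D}_\sigma=\operatorname{Gen}_1(T)$ (your hypothesis $\mathrm{(a)}$) supplies closure under coproducts, since $\operatorname{Gen}_1(T)$ visibly has that property.

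The genuine gap is $\mathrm{(S3)}$, and you acknowledge it yourself. Two things go wrong. First, your candidate map $\phi\colon R\to T^{(\Omega)}$ with $\Omega=\operatorname{Hom}_R(R,T)$ and $\phi(r)=(f(r))_f$ is not well-defined into the \emph{coproduct}: already $\phi(1)=(f(1))_{f\in\Omega}$ has infinite support whenever $T$ is infinite, so it lives in the product $T^{\Omega}$, which need not belong to $\operatorname{Add}(T)$. (An $\operatorname{Add}(T)$-preenvelope of $R$ does exist, by compactness of $R$, but not via this formula.) Second, and more fundamentally, a generic $\operatorname{Add}(T)$-preenvelope of $R$ has cokernel only in $\operatorname{Gen}_1(T)$, not in $\operatorname{Add}(T)$, so the requirement $T_1\in\operatorname{Add}(T)$ is not met; the proposed ``lift through $P_0^{(\Omega)}$ and iterate until it stabilises'' has no termination or convergence argument and does not constitute a proof. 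The actual argument in AMV constructs $\phi$ differently: one forms a pushout of $\sigma^{(I)}\colon P_1^{(I)}\to P_0^{(I)}$ along a suitable map $P_1^{(I)}\to R$, so that the cokernel of $\phi$ is automatically $\operatorname{Coker}(\sigma^{(I)})=T^{(I)}\in\operatorname{Add}(T)$; this is where the explicit presentation $\sigma$ enters in an essential way, and it is precisely the step your sketch leaves open.
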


We are ready to start discussing silting theory through $n$-$\mathcal{X}$-tilting
theory.

\begin{lem}\label{lem: siltingvstiltin} Let $R$ be a ring and let $T\in\Mod(R)$ be such
that $\pdr[\mathcal{Y}][T]\leq1$,  where $\Gen[T]\subseteq\mathcal{Y}\subseteq\Modx[R]$.
Then,  the following statements hold true. 
\begin{itemize}
\item[$\mathrm{(a)}$] $\left(\Gen[T]\right)^{\bot_{1}}\cap\mathcal{Y}\subseteq\left(\Addx[T]\right)^{\bot}\cap\mathcal{Y}$.
\item[$\mathrm{(b)}$] $\left(\Gen[T]\right)^{\bot}\cap\mathcal{Y}=\left(\Gen[T]\right)^{\bot_{1}}\cap\mathcal{Y}$ if $\Gen[T]=\Genn[T][2].$
\end{itemize}
\end{lem}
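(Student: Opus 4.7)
\medskip
\noindent\emph{Proof plan.}

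For part (a), the plan is to show the two needed vanishings separately. First, I observe that $\Add(T)\subseteq \Gen[T]$ since any direct summand of $T^{(I)}$ is in particular a quotient of $T^{(I)}$, so for $X\in(\Gen[T])^{\bot_{1}}\cap\mathcal{Y}$ and $M\in\Add(T)$ we get $\Ext^{1}_{R}(M,X)=0$ directly from the hypothesis. Second, for $i\geq 2$, since $\pdr[\mathcal{Y}][T]\leq 1$ and $X\in\mathcal{Y}$, we have $\Ext^{i}_{R}(T,X)=0$; because $\Mod(R)$ is AB4 and $M\in\Add(T)$ is a direct summand of some $T^{(I)}$, the natural isomorphism $\Psi_{i}:\Ext^{i}_{R}(T^{(I)},X)\to\prod_{I}\Ext^{i}_{R}(T,X)=0$ together with closure of orthogonals under summands yields $\Ext^{i}_{R}(M,X)=0$. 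Combining both ranges gives $M\in(\Add(T))^{\bot}$.

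For part (b), the inclusion $(\Gen[T])^{\bot}\cap\mathcal{Y}\subseteq(\Gen[T])^{\bot_{1}}\cap\mathcal{Y}$ is trivial, so I concentrate on the reverse inclusion. Given $X\in (\Gen[T])^{\bot_1}\cap\mathcal{Y}$ and $G\in\Gen[T],$ I will show $\Ext^{i}_{R}(G,X)=0$ for every $i\geq 1$ by induction on $i$. The case $i=1$ is the hypothesis. For the inductive step, I exploit the assumption $\Gen[T]=\Genn[T][2]$: any $G\in\Gen[T]$ admits a presentation $T_{2}\xrightarrow{} T_{1}\to G\to 0$ with $T_{1},T_{2}\in\Add(T),$ which yields a short exact sequence $0\to K_{1}\to T_{1}\to G\to 0$ where $K_{1}=\Ker(T_{1}\to G)$ is an epimorphic image of $T_{2}$ and hence lies in $\Gen[T]$ (since $\Gen[T]$ is closed under quotients).

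From the long exact sequence of Ext applied to this short exact sequence I get, for each $i\geq 1$,
\[
\Ext^{i}_{R}(K_{1},X)\longrightarrow \Ext^{i+1}_{R}(G,X)\longrightarrow \Ext^{i+1}_{R}(T_{1},X).
\]
The right-hand term vanishes by part (a), and the left-hand term vanishes by the inductive hypothesis applied to $K_{1}\in\Gen[T]$, forcing $\Ext^{i+1}_{R}(G,X)=0$ and completing the induction.

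The only mildly delicate point is the passage from $\Ext$-vanishing on $T$ to $\Ext$-vanishing on $\Add(T)$ in part (a); this is standard using that $\Mod(R)$ is AB4 and the isomorphism $\Psi_{n}$ recalled in the Preliminaries. Beyond that, the argument is a routine dimension-shifting induction once the identification $K_{1}\in\Gen[T]$ is in place, which is precisely where the hypothesis $\Gen[T]=\Genn[T][2]$ enters.
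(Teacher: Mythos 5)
Your proof is correct and takes essentially the same route as the paper's: part (a) by separating the $\Ext^1$ case (from $\Add(T)\subseteq\Gen[T]$) from the $\Ext^{\geq 2}$ case (from $\pd_{\mathcal Y}(T)\leq 1$, AB4, and closure of orthogonals under summands), and part (b) by dimension shifting with a kernel that stays in $\Gen[T]$ thanks to $\Gen[T]=\operatorname{Gen}_2(T)$. The paper organizes (b) as a single application of the Shifting Lemma on a length-$(m-1)$ resolution rather than an induction with one short exact sequence at a time, but the two presentations are interchangeable; you also make explicit in (a) the split between the $\Ext^1$ and higher-degree parts, which the paper dispatches with a terse ``follows from $\pd_{\mathcal Y}(T)\leq 1$.''
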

\begin{proof} The item (a) follows from $\pdr[\mathcal{Y}][T]\leq1.$ To prove (b),  it is enough to show that $\left(\Gen[T]\right)^{\bot_{1}}\cap\mathcal{Y}\subseteq$
$\left(\Gen[T]\right)^{\bot}\cap\mathcal{Y}$. Consider $M\in\left(\Gen[T]\right)^{\bot_{1}}\cap\mathcal{Y}$
and $N\in\Gen[T]$. In particular $\Ext^1_\C(N, M)=0.$ Let $m\geq2.$
Since $\Gen[T]=\Genn[T][2], $
we can build an exact sequence 
$
0\rightarrow K_{m-1}\rightarrow T_{m-1}\rightarrow...\rightarrow T_{1}\rightarrow N\rightarrow0\mbox{, }
$
where $T_{i}\in\Addx[T]$ $\forall i\in[1, m-1]$ and $K_{m-1}\in\Gen[T]$.
 Using that $M\in\left(\Gen[T]\right)^{\bot_{1}}\cap\mathcal{Y}$
$\subseteq\left(\Addx[T]\right)^{\bot}, $ by the Shifting Lemma  we
have 
$
\Extx[m][][N][M]\cong\Extx[1][][K_{m-1}][M]=0
$
 and thus $M\in\Gen[T]^{\bot}\cap\Y$. 
\end{proof}

\begin{defn}
Let $R$ be a ring and let $\left(\mathcal{X}, \mathcal{Y}\right)$ be a pair of classes of objects in $\Modx[R].$ We denote by 
$\mathcal{X}_{(\mathcal{I}, \mathcal{Y})}$ the class of all the 
$R$-modules $M\in\mathcal{X}$ admitting an exact sequence 
$
\suc[M][I_{0}(X)][Y]\mbox{, }
$
where $X\in\mathcal{X}$,  $Y\mathcal{\in\mathcal{Y}}$ and $I_{0}(X)$
is the injective envelope of $X$ in $\Modx[R]$.
\end{defn}

\renewcommandx\Gennr[3][usedefault,  addprefix=\global,  1=\mathcal{T},  2=n,  3=\mathcal{X}]{\operatorname{Gen}_{#2}^{#3}(#1)}

\begin{lem}\label{lem:  para silting implica Gen-tilting }
Let $R$ be a ring and let 
$T\in\Mod(R)$ be quasitilting and such that $\pdr[\mathcal{Y}][T]\leq1$, 
where $\Gen[T]\subseteq\mathcal{Y}\subseteq\Modx[R]$. Then,  for $\omega(T): =\Genn[T][1]_{(\mathcal{I}, T^{\bot_{0}})}, $ the following
statements hold true.
\begin{itemize}
\item[$\mathrm{(a)}$] $T^{\bot}\cap\mathcal{Y}=T^{\bot_{1}}\cap\mathcal{Y}$. 
\item[$\mathrm{(b)}$] $\Gen[T]\subseteq T^{\bot}\cap\mathcal{Y}$.
\item[$\mathrm{(c)}$] $\Addx[T]$ is a relative $\Gen[T]$-projective generator in $\Gen[T]$. 
\item[$\mathrm{(d)}$] $\left(\Gen[T]\right)^{\bot_{1}}\cap\mathcal{Y}=\left(\Gen[T]\right)^{\bot}\cap\mathcal{Y}$.
\item[$\mathrm{(e)}$] $\omega(T)$ is a relative $\Gen[T]$-injective cogenerator in $\Gen[T]$.
\item[$\mathrm{(f)}$] $\omega(T)\subseteq\Gen[T]^{\bot}\subseteq T^{\bot}$.
\item[$\mathrm{(g)}$] $\Gennr[T][1][\operatorname{Gen}_{1}(T)]=\Gen[T]$.
\end{itemize}
\end{lem}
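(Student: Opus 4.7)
The plan is to establish the items in the order (a), (b), (g), (c), (d), (f), (e), since each later item depends on earlier ones. Items (a) and (b) are direct: since $\pdr[\mathcal{Y}][T]\leq 1$ means $\Ext_R^k(T,\mathcal{Y})=0$ for all $k\ge 2$, any $Y\in\mathcal{Y}\cap T^{\bot_1}$ automatically lies in $T^\bot$, proving (a); and for (b), Lemma \ref{lemdef quasitilting}(b) gives $T\in{}^{\bot_1}\Gen[T]$, whence $\Gen[T]\subseteq T^{\bot_1}\cap\mathcal{Y}=T^\bot\cap\mathcal{Y}$ by (a) and the hypothesis $\Gen[T]\subseteq\mathcal{Y}$.

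For (g), note that $\Gennr[T][1][\Gen[T]]$ is by definition the class of $C$ admitting a short exact sequence $0\to K\to T_{1}\to C\to 0$ with $T_{1}\in\Addx[T]\cap\Gen[T]=\Addx[T]$ and $K\in\Gen[T]$, which coincides with $\Genn[T][2]$, and the latter equals $\Gen[T]$ by quasitilting. Item (c) then follows immediately: the relative generator property is exactly (g), and the $\Gen[T]$-projectivity is obtained from (b), which forces $\Ext_R^k(T,\Gen[T])=0$ for $k\ge 1$, extended to $\Addx[T]$ through the AB4 isomorphism $\Ext_R^k(T^{(I)},-)\cong\prod_{I}\Ext_R^k(T,-)$. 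For (d), apply Lemma \ref{lem: siltingvstiltin}(b) to $T$: its hypothesis $\Gen[T]=\Genn[T][2]$ is precisely the quasitilting identity.

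The key step is (f), from which the $\Gen[T]$-injective part of (e) will follow. The inclusion $\Gen[T]^\bot\subseteq T^\bot$ is immediate because $T\in\Gen[T]$. For $\omega(T)\subseteq \Gen[T]^\bot$, fix $W\in\omega(T)$ with its defining sequence $0\to W\to I_0(X)\to Y\to 0$ ($X\in\Gen[T]$, $Y\in T^{\bot_0}$), and fix $Z\in\Gen[T]$. Applying $\Hom_R(Z,-)$ and using both the injectivity of $I_0(X)$ and the vanishing $\Hom_R(Z,Y)\hookrightarrow\Hom_R(T^{(J)},Y)=0$ (obtained from any epimorphism $T^{(J)}\twoheadrightarrow Z$) yields $\Ext_R^1(Z,W)=0$. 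For $i\ge 2$, invoke (c) to choose $0\to K\to T_{0}\to Z\to 0$ with $T_{0}\in\Addx[T]$ and $K\in\Gen[T]$; since (b) forces $\omega(T)\subseteq\Gen[T]\subseteq T^\bot$, the associated long exact sequence yields $\Ext_R^i(Z,W)\cong\Ext_R^{i-1}(K,W)$, and induction on $i$ with base case the vanishing $\Ext_R^1(-,W)|_{\Gen[T]}=0$ completes the proof.

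Finally, for (e), the $\Gen[T]$-injectivity is (f); for the relative cogenerator property, given $X\in\Gen[T]$, let $L(X)$ be the largest submodule of $I_0(X)/X$ lying in $\Gen[T]$, which exists because $\Gen[T]$ is a torsion class by Lemma \ref{lemdef quasitilting}(a), and let $W$ be its preimage in $I_0(X)$. Then $0\to X\to W\to L(X)\to 0$ shows $W\in\Gen[T]$ (extension in the torsion class), and $0\to W\to I_0(X)\to (I_0(X)/X)/L(X)\to 0$ exhibits $W\in\omega(T)$, because the cokernel is $\Gen[T]$-torsion-free, hence in $T^{\bot_0}$, and the middle term is the injective envelope of $X\in\Gen[T]$. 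The main obstacle is step (f): one must see that the $\Ext^1$-vanishing obtained from the defining sequence of $W$ propagates to all higher $\Ext$-groups, and this propagation crucially relies on the $\Addx[T]$-resolutions of objects of $\Gen[T]$ provided by $\Gen[T]=\Genn[T][2]$ together with the inclusion $\omega(T)\subseteq T^\bot$ coming from (b).
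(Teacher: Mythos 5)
Your proof is correct and takes essentially the same approach as the paper. The only notable difference is organizational: where the paper derives item (f) from item (e) (whose injectivity part it obtains by proving the $\Ext^1$-vanishing and then invoking (d)), you prove (f) first by inlining the dimension-shifting argument (which is exactly what lies inside the cited lemma giving (d)) and then read off the injectivity part of (e) from (f). The construction of the relative cogenerator in (e) — taking the preimage $W$ in $I_0(X)$ of the $\Gen[T]$-torsion submodule of $I_0(X)/X$ — is the same as the paper's, which equivalently takes the torsion submodule of $I_0(X)$ itself.
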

\begin{proof} The item (a) follows from $\pdr[\mathcal{Y}][T]\leq1, $ (b) can be shown from (a) and Lemma \ref{lemdef quasitilting} (b),  and (c) can be obtained from (b) and 
Lemma \ref{lemdef quasitilting} (b). On the other hand,  (d) can be obtained from Lemmas \ref{lemdef quasitilting} (b) and \ref{lem: siltingvstiltin} (b). 
 The first  inclusion in (f) follows from (e) and the second one  
from $T\in\Genn[T][1]$. Moreover,  (g) follows from Lemma \ref{lemdef quasitilting} (b).
\

Let us show (e).  By \cite[Lem. 2.3]{siltingmodulessurvey} and (b),  $\mathfrak{C}: =\left(\Gen[T], T^{\bot_{0}}\right)$
is a torsion pair. Let us prove that $\omega(T)$ is a relative cogenerator
in $\Genn[T][1]$. Let $X\in\Gen[T]$ and $i: X\rightarrow I_{0}(X)$
be its injective envelope. Consider the exact sequences 
$\eta: \;\suc[X][I_{0}(X)][Y][i]$ and $\mu: \;\suc[T'][I_{0}(X)][F][a][b]\mbox{, }$
where $\mu$ is the canonical exact sequence induced by $\mathfrak{C}$
with $T'\in\Gen[T]$ and $F\in T^{\bot_{0}}$. Note that $T'\in\omega(T)$
by definition. Now,  since $X\in\Gen[T]$ and $F\in T^{\bot_{0}}$, 
we have $bi=0$. Then,  $i$ factors through $a$. Consider the exact
sequence $\suc[X][T'][D][f]$,  where $af=i$. Note that $D\in\Gen[T]$
since $T'\in\Gen[T]$. Therefore,  $\omega(T)$ is a relative cogenerator
in $\Genn[T][1]$. 
\

It remains to show that $\omega(T)\subseteq\left(\Gen[T]\right)^{\bot}$.
Let $W\in\omega(T)$. Then,  there is an exact sequence 
$\nu: \quad\suc[W][I][F][\, ][\, ]$
with $I\in\Injx[R]$ and $F\in T^{\bot_{0}}$. Thus,  for each $G\in\Gen[T], $ we have  an epimorphism 
$\Homx[][G][F]\rightarrow\Extx[1][][G][W].
$
 Hence $\Extx[1][][G][W]=0$ since $\mathfrak{C}$
is a torsion pair. Finally,  by (d),  we have 
$W\in\Gen[T]^{\bot_{1}}\cap\Gen[T]\subseteq\Gen[T]^{\bot_{1}}\cap\mathcal{Y}\subseteq\Gen[T]^{\bot}$.
\end{proof}

\begin{thm}\label{thm:  silting es Gen-tilting}
Let $R$ be a ring and let $T\in\Mod(R)$
be quasitilting and such  that $\pdr[\mathcal{Y}][T]\leq1$,  where
$\Gen[T]\subseteq\mathcal{Y}\subseteq\Modx[R]$. Then,  $T$ is big
$1$-$\Gen[T]$-tilting. 
\end{thm}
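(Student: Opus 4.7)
The plan is to verify the six axioms (T0)--(T5) of Definition~\ref{def: X-tilting} for $\mathcal{T}:=\Addx[T]$ with $\mathcal{X}:=\Gen[T]$ and $n=1$, thereby showing that $T$ is big $1$-$\Gen[T]$-tilting. First, (T0) is trivial. For (T1), since $\Gen[T]\subseteq\mathcal{Y}$ we have $\pd_{\Gen[T]}(T)\leq\pd_{\mathcal{Y}}(T)\leq 1$; as $\Mod(R)$ is AB4, applying the natural isomorphism $\Psi_{1}$ and using closure of $\Ext$-vanishing under direct summands yields $\pd_{\Gen[T]}(\Addx[T])\leq 1$. For (T2), observe that $\Addx[T]\subseteq\Gen[T]$ (since $T\in\Gen[T]$ and $\Gen[T]$ is closed under coproducts and summands), so $\Addx[T]\cap\Gen[T]=\Addx[T]$; Lemma~\ref{lem:  para silting implica Gen-tilting }(b) gives $T\in T^{\bot}$, and once more AB4 with closure under summands delivers $\Addx[T]\subseteq T^{\bot}=(\Addx[T])^{\bot}$.

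For (T3) I take $\omega:=\Addx[T]$. Every $W\in\omega$ admits the trivial coresolution $0\to W\to W\to 0$, so $\omega\subseteq(\Addx[T])_{\Gen[T]}^{\vee}$; Lemma~\ref{lem:  para silting implica Gen-tilting }(c) then says that $\omega$ is a relative generator in $\Gen[T]$. For (T4) I set $\alpha:=\omega(T)$: Lemma~\ref{lem:  para silting implica Gen-tilting }(e) gives that $\alpha$ is a relative cogenerator in $\Gen[T]$, while (f) of the same lemma gives $\alpha\subseteq\Gen[T]^{\bot}\subseteq T^{\bot}=(\Addx[T])^{\bot}$, so $\alpha\subseteq\mathcal{X}^{\bot}\cap\mathcal{T}^{\bot}$ as required.

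The main obstacle I anticipate lies in axiom (T5), where the ``relative projective generator'' content of Lemma~\ref{lem:  para silting implica Gen-tilting }(c) must be used in a twofold manner. Given $Z\in(\Addx[T])^{\bot}\cap\Gen[T]$, that lemma furnishes a short exact sequence $0\to K\to T'\to Z\to 0$ with $T'\in\Addx[T]\subseteq\Gen[T]=\mathcal{X}$ and, crucially, $K\in\Gen[T]$. To see that $T'\to Z$ is an $\Addx[T]$-precover, I apply $\Hom_{R}(T'',-)$ for $T''\in\Addx[T]$: because Lemma~\ref{lem:  para silting implica Gen-tilting }(c) also asserts that $\Addx[T]$ is $\Gen[T]$-projective, i.e., $\Addx[T]\subseteq{}^{\bot}\Gen[T]$, and $K\in\Gen[T]$, the obstruction $\Ext^{1}_{R}(T'',K)$ vanishes, so $\Hom_{R}(T'',T')\to\Hom_{R}(T'',Z)$ is surjective. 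Thus the subtle point is recognising that part (c) simultaneously supplies both the surjection needed for (T5) and the $\Ext$-vanishing that upgrades it into a precover.
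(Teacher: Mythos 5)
Your proof is correct, but it takes a genuinely different route than the paper's. The paper proves the theorem by invoking Theorem~\ref{prop: primera generalizacion} (the Bazzoni-type characterization), which reduces the problem to verifying (T4), (T5), the closure properties of $\X=\Genn[T][1]$ and $\T=\Addx[T]$, and the single equality $T^{\bot}\cap\Genn[T][1]=\Genn[T][1]$; this last step is handled by Lemma~\ref{lem:  para silting implica Gen-tilting }(b) together with (g). You instead verify all six axioms (T0)--(T5) of Definition~\ref{def: X-tilting} directly, leaning on parts (b), (c), (e), (f) of Lemma~\ref{lem:  para silting implica Gen-tilting }. Both arguments funnel through the same preparatory lemma, so the net dependency graph is comparable; the paper's route is shorter because it outsources (T0)--(T3) to the Bazzoni-type theorem, while yours is more elementary and self-contained in that it never invokes Theorem~\ref{prop: primera generalizacion}.

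One remark on your handling of (T5): it is more elaborate than necessary. In $\Modx[R]$ the class $\Addx[T]$ is \emph{always} precovering --- for any $Z$ the canonical map $T^{(\Homx[R][T][Z])}\to Z$ is an $\Addx[T]$-precover, and since $\Addx[T]\subseteq\Genn[T][1]$ the source automatically lies in $\X$. This is exactly what the paper observes in one clause. Your argument, which reconstructs a precover from the relative projective generator of Lemma~\ref{lem:  para silting implica Gen-tilting }(c) and an $\Ext^{1}$-vanishing, is valid but redundant; the "subtlety" you anticipate in (T5) is not present here. Your verification of (T2) is also phrased slightly indirectly (you write that part (b) gives $T\in T^{\bot}$, when what you actually use is the full inclusion $\Genn[T][1]\subseteq T^{\bot}$ from (b) combined with $\Addx[T]\subseteq\Genn[T][1]$), but the conclusion is correct.
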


\begin{proof}
 By Lemma \ref{lemdef quasitilting} (a),  we get that 
 $\Gen[T]$ is closed under extensions and direct summands.
 By Lemma \ref{lem:  para silting implica Gen-tilting } (e, f), 
 (T4) holds true; and 
(T5) is satisfied since $\Addx[T]$ is precovering. Then, 
by Lemma \ref{lem:  para silting implica Gen-tilting } (g) and Theorem \ref{prop: primera generalizacion}, 
it remains to show the equality  $\Gen[T]=T^{\bot}\cap\Gen[T], $ which follows from
 Lemma \ref{lem:  para silting implica Gen-tilting } (b).
 \end{proof}
 
\begin{rem} Let $R$ be a ring.  In \cite[Ex. 3.14]{siltingmodulessurvey},  it was given an
example of an $R$-module $T$,  with $\pdx[T]\leq1$,  such that $T$
is silting but it is not tilting. Therefore,  Theorem \ref{thm:  silting es Gen-tilting}
show the existence of $1$-$\Gen[T]$-tilting $R$-modules of projective
dimension $\leq1$ that are not tilting. 
\end{rem}

In what follows,  we will study $1$-$\Gen[T]$-tilting $R$-modules
with the goal of finding the conditions needed for a $1$-$\Gen[T]$-tilting
$R$-module to be silting. As a result of this pursuit,  we will prove in Theorem \ref{thm: quasitilt sii 1-Gen-tiltin}, 
for an $R$-module $T$ with $\pdx[T]\leq1$,  that $T$ is quasitilting
if and only if $T$ is big $1$-$\Gen[T]$-tilting. 

\begin{prop}\label{prop: n-gen-tilting} Let $R$ be a ring and let $T\in\Mod(R)$ be such
that $\Genn[T][1]$ is closed under extensions and $T$ is big $n$-$\Genn[T][1]$-tilting.
Then,  for $m: =\max\{1, n\}$,  the following statements hold true.
\begin{itemize}
\item[$\mathrm{(a)}$] $\Gennr[T][k][\operatorname{Gen}_{1}(T)]=\Genn[T][k+1]$ $\forall k\geq m.$
\item[$\mathrm{(b)}$] $\Genn[T][m+1]=T^{\bot}\cap\Gen[T].$
\item[$\mathrm{(c)}$] $\Genn[T][k+1]=T^{\bot}\cap\Gen[T]$ $\forall k\geq m.$
\item[$\mathrm{(d)}$] $\Addx[T]\subseteq T^{\bot}\cap\Gen[T]\subseteq\Genn[T][2]$ and $T^{\bot}\cap\Gen[T]$
is closed by $m$-quotients in $\Gen[T].$
\end{itemize}
\end{prop}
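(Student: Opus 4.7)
The plan is to set $\X := \Gen_1(T)$ and $\T := \Add(T)$, so that by hypothesis $\T$ is $n$-$\X$-tilting, $\X$ is closed under extensions, and Theorem \ref{thm: el par n-X-tilting} applies. I would first record a few preliminary facts: $\Add(T)\subseteq\Gen_1(T)$ (since any $A\in\Add(T)$ is a quotient of some $T^{(I)}$ via the projection of a splitting $A\oplus B=T^{(I)}$), $\X=\smd(\X)$ (by composing such a projection with the projection onto a summand), and $\Gen_k^{\X}(T)\subseteq\X$ for $k\geq 1$ (its rightmost generator $T_1\in\Add(T)$ makes the target a quotient of some $T^{(I)}$). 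In particular $\Gen_k^{\X}(T)\cap\X=\Gen_k^{\X}(T)$.

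For part (a) I would compare the definitions directly. For the inclusion $\Gen_k^{\X}(T)\subseteq\Gen_{k+1}(T)$, take a defining exact sequence $0\to K\to T_k\to\cdots\to T_1\to C\to 0$; since $K\in\X=\Gen_1(T)$, one can choose an epimorphism $T_{k+1}\twoheadrightarrow K$ with $T_{k+1}\in\Add(T)$ and splice it on. For the reverse, given $C\in\Gen_{k+1}(T)$ witnessed by $0\to K'\to T_{k+1}\to T_k\to\cdots\to T_1\to C\to 0$ with $T_i\in\Add(T)$, I would truncate at position $k$: each intermediate image is a quotient of some $T_{i+1}\in\Add(T)$, hence lies in $\Gen_1(T)=\X$, while each $T_i$ already lies in $\Add(T)\cap\X$. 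Both directions are in fact valid for every $k\geq 1$.

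For (b) and (c), I would invoke Theorem \ref{thm: el par n-X-tilting}(b), which yields $\T^{\bot}\cap\X=\Gen_k^{\X}(T)\cap\X$ for every $k\geq\max\{1,\pd_{\X}(\T)\}$. Since $\pd_{\X}(\T)\leq n$ by (T1), this threshold is at most $m$. Using $T^{\bot}=\T^{\bot}$ (because $\Mod(R)$ is AB4, so $\Ext^i$ commutes with coproducts in the first variable and is additive on summands) and combining with (a), one obtains $T^{\bot}\cap\Gen_1(T)=\Gen_{k+1}(T)$ for every $k\geq m$, which is (c); the case $k=m$ is (b).

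For (d), the inclusion $\Add(T)\subseteq T^{\bot}\cap\Gen_1(T)$ follows from $T\in\Add(T)\cap\X$ together with (T2) (giving $T\in T^{\bot}$), and then AB4-stability of $T^{\bot}$ under coproducts and direct summands. The inclusion $T^{\bot}\cap\Gen_1(T)\subseteq\Gen_2(T)$ uses (b) and the nesting $\Gen_{m+1}(T)\subseteq\Gen_2(T)$, valid since $m+1\geq 2$ and shorter defining sequences can always be obtained by truncation. Finally, closure of $T^{\bot}\cap\Gen_1(T)$ by $m$-quotients in $\Gen_1(T)$ follows from Theorem \ref{prop: primera generalizacion}(d) when $n\geq 1$ (directly giving closure by $m=n$-quotients), while the case $n=0$ is trivial because then $\pd_{\X}(\T)=0$ forces $\X\subseteq T^{\bot}$, so $T^{\bot}\cap\X=\X$ is trivially closed by $m=1$-quotients in $\X$. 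There is no real obstacle here; the content is bookkeeping that translates between the relative class $\Gen_k^{\X}(T)$ and the absolute class $\Gen_{k+1}(T)$, the extra kernel condition defining $\Gen_k^{\X}(T)$ being automatic in our setting. The only item requiring a small case split is the $m$-quotient closure in (d), where $n=0$ must be handled separately.
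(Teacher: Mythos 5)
Your proof is correct and follows essentially the same route as the paper's: prove (a) by direct manipulation of exact sequences (truncation in one direction), then derive (b)--(d) from the structural theorems on $n$-$\mathcal{X}$-tilting classes. Two small differences are worth recording. For the inclusion $\operatorname{Gen}_{k}^{\operatorname{Gen}_1(T)}(T)\subseteq\operatorname{Gen}_{k+1}(T)$ in (a), the paper invokes Theorem \ref{prop: primera generalizacion}(c), whereas you splice on an extra term $T_{k+1}\twoheadrightarrow K$ using $K\in\operatorname{Gen}_1(T)$; your argument is more elementary and in fact establishes (a) for every $k\geq 1$, not merely $k\geq m$. More importantly, you correctly flag that Theorem \ref{prop: primera generalizacion} is stated only for $n\geq 1$, so the $m$-quotient closure in (d) needs a separate check when $n=0$, and you supply it. The paper applies Theorem \ref{prop: primera generalizacion} with the given $n$ without comment; strictly speaking one should first observe that a big $0$-$\operatorname{Gen}_1(T)$-tilting class is automatically big $1$-$\operatorname{Gen}_1(T)$-tilting (condition (T1) only weakens) and then apply that theorem with $m=1$. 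Your version makes this explicit, which is a genuine, if small, improvement.
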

\begin{proof}
 Let us show (a). By Theorem \ref{prop: primera generalizacion} (c),    
$\Gennr[T][k][\operatorname{Gen}_{1}(T)]=\Gennr[T][k+1][\operatorname{Gen}_{1}(T)]\subseteq\Genn[T][k+1].$ Consider $X\in\Genn[T][k+1].$ Then,  there is an  exact sequence 
$0\rightarrow K\rightarrow T_{k}\overset{f_{k}}{\rightarrow}...\rightarrow T_{1}\overset{f_{1}}{\rightarrow}X\rightarrow0\mbox{, }$
 with $K\in\Gen[T]$ and $T_{i}\in\Addx[T]$ $\forall i\in[1, k]$.
Thus $X\in\Gennr[T][k][\operatorname{Gen}_{1}(T)]$ since $\Kerx[f_{i}]\in\Gen[T]$
$\forall i\in[1, k].$
Therefore,  (a) holds true. 
\

Finally,  (b),  (c) and (d) follow by (a) and Theorem \ref{prop: primera generalizacion} (b, c, d). 
\end{proof}
\renewcommandx\Extx[4][usedefault,  addprefix=\global,  1=i,  2=R,  3=M,  4=X]{\mathrm{Ext}{}_{#2}^{#1}\left(#3, #4\right)}

\begin{lem}\label{cor:  Gen-tilting es silting} Let $R$ be a ring and let $T\in\Mod(R)$ be big $1$-$\Gen[T]$-tilting and $\pdr[\mathcal{Y}][T]\le1$,  where $\overline{\Genn[T][1]}\subseteq\mathcal{Y}\subseteq \Mod(R).$ Then,  the following statements
hold true.
\begin{itemize}
\item[$\mathrm{(a)}$] $\Gen[T]\subseteq T^{\bot}$.
\item[$\mathrm{(b)}$] $\left(\Gen[T], T^{\bot_{0}}\right)$ is a torsion pair.
\item[$\mathrm{(c)}$] $\Gen[T]=\Gen[T]\cap T^{\bot}=\Genn[T][k+1]$ $\forall k\geq1$.
\item[$\mathrm{(d)}$] $T$ is quasitilting.
\item[$\mathrm{(e)}$] $\left(\Gen[T]\right)^{\bot}\cap\mathcal{Y}=\left(\Gen[T]\right)^{\bot_{1}}\cap\mathcal{Y}\subseteq T^{\bot}\cap\mathcal{Y}$.
\item[$\mathrm{(f)}$] $\Gen[T]\cap{}{}^{\bot_{1}}\left(\Gen[T]\right)=$ ${}{}^{\bot}(T^{\bot}\cap\Gen[T])\cap\Gen[T]=$
$\Addx[T]=$ $={}^{\bot}(T^{\bot})\cap\Gen[T]=$ $(\Addx[T])^{\vee}\cap\Gen[T]$.
\end{itemize}
\end{lem}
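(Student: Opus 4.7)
The overall strategy is to put $\mathcal{T}:=\Add(T)$, $\mathcal{X}:=\Gen[T]$, so that the hypothesis makes $\mathcal{T}$ an $n$-$\mathcal{X}$-tilting class with $n=1$, and then feed the axioms (T0)--(T5) into the machinery of Theorem \ref{thm: el par n-X-tilting}, Theorem \ref{prop: primera generalizacion}, Proposition \ref{prop: n-gen-tilting}, Proposition \ref{prop: (a)}, Lemma \ref{lem: inf5}, Lemma \ref{lemdef quasitilting}, and Lemmas \ref{lem: siltingvstiltin}--\ref{lem:  para silting implica Gen-tilting }. The only non-tilting input is the bound $\pdr[\mathcal{Y}][T]\leq 1$ with $\overline{\Gen[T]}\subseteq\mathcal{Y}$, which handles the first syzygy of any object of $\Gen[T]$.

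First I would prove (a). Given $X\in\Gen[T]$, pick a short exact sequence $0\to K\to T^{(I)}\to X\to 0$; then $K\in\overline{\Gen[T]}\subseteq\mathcal{Y}$, so $\Ext^{2}_R(T,K)=0$ by $\pdr[\mathcal{Y}][T]\leq 1$, while $\Ext^{1}_R(T,T^{(I)})=0$ by (T2) (and the AB4 isomorphism $\Psi_1$). The long exact sequence of $\Ext^{*}_R(T,-)$ gives $\Ext^{1}_R(T,X)=0$, and $\Ext^{i}_R(T,X)=0$ for $i\geq 2$ is automatic since $X\in\mathcal{Y}$. Next, $\Gen[T]$ is already closed under quotients and arbitrary coproducts, and using (a) one checks closure under extensions by lifting any surjection $T^{(I)}\twoheadrightarrow C$ through $B\twoheadrightarrow C$ (possible since $\Ext^{1}_R(T,A)=0$ for $A\in\Gen[T]$) and adjoining a surjection onto $A$. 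Thus $\Gen[T]$ is a torsion class, and item (b) follows because $T^{\bot_{0}}$ is automatically the associated torsion-free class.

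With closure under extensions in hand, Proposition \ref{prop: n-gen-tilting} applies and gives $\Genn[T][k+1]=T^{\bot}\cap\Gen[T]$ for all $k\geq 1$; combined with (a), this proves (c). Item (d) is then Lemma \ref{lemdef quasitilting}(b), whose two hypotheses $\Gen[T]=\Genn[T][2]$ and $T\in{}^{\bot_{1}}\Gen[T]$ are given by (c) and (a) respectively. Item (e) is now a direct application of Lemma \ref{lem: siltingvstiltin}(a),(b), using that $\Gen[T]=\Genn[T][2]$ and $\pdr[\mathcal{Y}][T]\leq 1$.

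Item (f) is the most delicate. Apply Proposition \ref{prop: (a)}(a),(d) to $\mathcal{T}=\Add(T)$, $\mathcal{X}=\Gen[T]$ and the pair $\p=({}^{\bot}(T^{\bot}),T^{\bot})$: with $\nu:=\mathcal{A}\cap\mathcal{B}\cap\mathcal{X}$ one gets $\Add(T)=\Add(T)\cap\mathcal{X}=\nu={}^{\bot}(T^{\bot})\cap T^{\bot}\cap\Gen[T]=\Gen[T]\cap(\Add(T))^{\vee}$. Substituting $T^{\bot}\cap\Gen[T]=\Gen[T]$ from (c) collapses the first expression to $\Add(T)={}^{\bot}(T^{\bot})\cap\Gen[T]={}^{\bot}(T^{\bot}\cap\Gen[T])\cap\Gen[T]$ and yields $\Add(T)=(\Add(T))^{\vee}\cap\Gen[T]$. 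The remaining identification $\Add(T)=\Gen[T]\cap{}^{\bot_{1}}\Gen[T]$ is obtained by the standard splitting argument: the inclusion $\supseteq$ requires, for $M\in\Gen[T]\cap{}^{\bot_{1}}\Gen[T]$, taking a short exact sequence $0\to K\to T^{(I)}\to M\to 0$ (produced by Lemma \ref{lem: inf5}(a), since $\Add(T)$ is a relative generator in $T^{\bot}\cap\Gen[T]=\Gen[T]$) and observing that $K\in\Gen[T]$ forces $\Ext^{1}_R(M,K)=0$, so the sequence splits and $M\in\Add(T)$; the inclusion $\subseteq$ follows from (T2) and $\Add(T)\subseteq\Gen[T]$.

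The main obstacle is the ``bootstrap'' step of showing $\Gen[T]$ is closed under extensions: without it none of Theorem \ref{prop: primera generalizacion}, Proposition \ref{prop: n-gen-tilting}, or Proposition \ref{prop: (a)} is available, so (a) must be proved directly from (T2) and $\pdr[\mathcal{Y}][T]\leq 1$ before anything else can be unlocked. Once (a) and closure under extensions are in place, the remaining items (b)--(f) reduce to routine bookkeeping with the already-established general theorems.
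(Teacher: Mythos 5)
Your proposal is correct and follows the same backbone as the paper's proof: prove (a) directly from (T2) together with the bound $\pd_{\mathcal{Y}}(T)\leq 1$, deduce closure of $\operatorname{Gen}_1(T)$ under extensions to unlock Proposition \ref{prop: n-gen-tilting}, and then derive (c)--(f) from the general $n$-$\X$-tilting machinery. There are two small, genuine variations worth noting. In (a), the paper uses the specific evaluation map $u:T^{(H)}\to X$ with $H=\operatorname{Hom}_R(T,X)$ so that $\operatorname{Hom}_R(T,u)$ is surjective and forces $\operatorname{Ext}^1_R(T,K)=0$, then pushes $K$ into $T^{\bot}$ via the dimension bound; you instead argue with an arbitrary $T^{(I)}\twoheadrightarrow X$, reading off $\operatorname{Ext}^1_R(T,X)=0$ from the piece $\operatorname{Ext}^1_R(T,T^{(I)})\to\operatorname{Ext}^1_R(T,X)\to\operatorname{Ext}^2_R(T,K)$ of the long exact sequence together with $K\in\overline{\operatorname{Gen}_1(T)}\subseteq\mathcal{Y}$. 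Both work; yours is marginally cleaner because it avoids the bespoke map $u$. In (f), the paper imports $\operatorname{Add}(T)=\operatorname{Gen}_1(T)\cap{}^{\bot_1}\operatorname{Gen}_1(T)$ from \cite[Lem. 3.3]{siltingmodulessurvey}, while you re-derive it internally via Lemma \ref{lem: inf5}(a) and a splitting argument; this makes the proof more self-contained. One minor imprecision: your sentence ``collapses the first expression to $\Add(T)={}^{\bot}(T^{\bot})\cap\operatorname{Gen}_1(T)={}^{\bot}(T^{\bot}\cap\operatorname{Gen}_1(T))\cap\operatorname{Gen}_1(T)$'' presents the second equality as a mere substitution of $T^{\bot}\cap\operatorname{Gen}_1(T)=\operatorname{Gen}_1(T)$, which would only give the inclusion $\subseteq$; the equality really comes from Proposition \ref{prop: (a)}(a) applied with $\mathcal{B}\cap\mathcal{X}=T^{\bot}\cap\operatorname{Gen}_1(T)$, which you did cite, so the logic is sound but should be phrased as a separate invocation of that item rather than a substitution.
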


\begin{proof} (a)  Let $X\in\Gen[T]$,  $H: =\Homx[R][T][X]$,  and 
$u: T^{\left(H\right)}\rightarrow X$ be the morphism defined by $(m_{f})_{f\in H}\mapsto\sum_{f\in H}f(m_{f})$.
Since $X\in\Gen[T]$,   $u$ is an epimorphism. Thus,  we have the exact sequence
$\eta: \: \suc[K][T^{(H)}][X][][u]\mbox{.}$
Now,  applying $\Homx[][T][-]$ to $\eta$,  we get the long exact sequence 
\[
\Homx[][T][T^{(H)}]\xrightarrow{\Homx[][T][u]}\Homx[][T][X]\rightarrow\Extx[1][R][T][K]\rightarrow\Extx[1][R][T][T^{(H)}]\mbox{.}
\]
By (T2),  $\Extx[1][R][T][T^{(H)}]=0$ and thus $\Extx[1][][T][K]=0$  since 
$\Homx[][T][u]$ is surjective. Therefore,  $K\in T^{\bot_{1}}\cap\mathcal{Y}\subseteq T^{\bot}$
since $\pdr[\mathcal{Y}][T]\leq1$. Then,  using that $T^{(H)},  K\in T^{\bot}$
and $T^{\bot}$ is closed under mono-cokernels,  we get $X\in T^{\bot}$.
\

(b)It follows from (a) and \cite[Lem. 2.3]{siltingmodulessurvey}.
\

(c) By (b),  $\Genn[T][1]$ is closed under extensions. Then,  (c) follows
straightforward by (a) and Proposition \ref{prop: n-gen-tilting} (c).
\

(d) Observe that $\Gen[T]=\Genn[T][2]$ by (c),  and that $\Homx[][T][-]$
is exact on $\Gen[T]$ by (a). Moreover,  $\Gen[T]$ is a torsion class
by (b). Therefore,  $T$ is quasitilting by Lemma \ref{lemdef quasitilting} (a).
\

(e)  It follows by (c) and Lemma \ref{lem: siltingvstiltin}.
\

(f) By (d) and \cite[Lem. 3.3]{siltingmodulessurvey},  we have $\Addx[T]=$ $\Gen[T]\cap{}{}^{\bot_{1}}\left(\Gen[T]\right)$.
Note that $\Addx[T]\subseteq\Genn[T][1]\cap(\Addx[T])^{\vee}$. In order to prove that the inclusion above is an equality,  observe firstly that $\pdr[\operatorname{Gen}_{1}(T)][(\operatorname{Add}(T))^{\vee}]=\pdr[\operatorname{Gen}_{1}(T)][\operatorname{Add}(T)]=0$
by \cite[Lem. 4.3]{parte1} and (a). Let $X\in\Genn[T][1]\cap(\Addx[T])^{\vee}.$ Then,  there is an exact sequence
$\eta: \: \suc[X][T_{0}][X'][\, ][\, ]$
with $T_{0}\in\Addx[T]$ and $X'\in(\Addx[T])^{\vee}.$ Since $\pdr[\operatorname{Gen}_{1}(T)][(\operatorname{Add}(T))^{\vee}]=0, $ we have that $\eta$
splits and then $X\in\Addx[T]$. Therefore $\Addx[T]=$ $\Gen[T]\cap(\Addx[T])^{\vee}$.
\\
Now,  by (b),  we know that $\Genn[T][1]$ is closed under extensions and direct summands. Hence,  applying Theorem \ref{thm: el par n-X-tilting} (a),  we get the equalities
${}^{\bot}(T^{\bot}\cap\Genn[T][1])\cap\Genn[T][1]=(\Addx[T])_{\Genn[T][1]}^{\vee}\cap\Genn[T][1]={ }^{\bot}\left(T^{\bot}\right)\cap\Gen[T]\mbox{.}$
Finally,  observe that $(\Addx[T])_{\Genn[T][1]}^{\vee}=(\Addx[T])^{\vee}$.
\end{proof}

\begin{thm}\label{thm: quasitilt sii 1-Gen-tiltin}Let $R$ be a ring and $T\in\Mod(R)$
with $\pdr[\mathcal{Y}][T]\le1$,  where $\overline{\Genn[T][1]}\subseteq\mathcal{Y}\subseteq \Mod(R)$.
Then,  $T$ is quasitilting if and only if $T$ is big $1$-$\Gen[T]$-tilting. \end{thm}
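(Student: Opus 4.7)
The plan is to prove the equivalence by splitting it into its two directions and applying results already established in this section, namely Theorem \ref{thm:  silting es Gen-tilting} and Lemma \ref{cor:  Gen-tilting es silting} (d). Observe first that the chain of inclusions $\Gen[T]\subseteq\overline{\Gen[T]}\subseteq\mathcal{Y}$ holds under the hypothesis, so whenever a prior result requires $\Gen[T]\subseteq\mathcal{Y}'\subseteq\Mod(R)$ together with $\pdr[\mathcal{Y}'][T]\leq 1$, we can apply it with $\mathcal{Y}'=\mathcal{Y}$.

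For the forward implication, assume $T$ is quasitilting. Since $\Gen[T]\subseteq\mathcal{Y}$ and $\pdr[\mathcal{Y}][T]\leq 1$, Theorem \ref{thm:  silting es Gen-tilting} directly yields that $T$ is big $1$-$\Gen[T]$-tilting. There is nothing extra to verify here; the statement is a clean specialisation of that theorem.

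For the backward implication, assume $T$ is big $1$-$\Gen[T]$-tilting. The hypotheses of Lemma \ref{cor:  Gen-tilting es silting} are exactly $T$ being big $1$-$\Gen[T]$-tilting together with $\pdr[\mathcal{Y}][T]\leq 1$ and $\overline{\Gen[T]}\subseteq\mathcal{Y}\subseteq\Mod(R)$, which coincide with the current hypotheses. Therefore item (d) of that lemma gives us that $T$ is quasitilting.

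Since both directions are immediate consequences of previously proven results, there is essentially no obstacle. The only point worth highlighting is the rôle played by the stronger condition $\overline{\Gen[T]}\subseteq\mathcal{Y}$ in the backward direction: this enlargement of $\mathcal{Y}$ (compared with the mere $\Gen[T]\subseteq\mathcal{Y}$ needed in the forward direction) is exactly what allows one to invoke Lemma \ref{cor:  Gen-tilting es silting} (a) inside that lemma's proof to conclude $\Gen[T]\subseteq T^{\bot}$, which in turn drives the verification of Lemma \ref{lemdef quasitilting} (a) for $T$.
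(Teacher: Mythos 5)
Your proposal matches the paper's proof exactly: both directions are obtained by specialising Theorem \ref{thm:  silting es Gen-tilting} (forward) and invoking Lemma \ref{cor:  Gen-tilting es silting}(d) (backward), noting $\Gen[T]\subseteq\overline{\Gen[T]}\subseteq\mathcal{Y}$. Your extra remark about why the stronger hypothesis $\overline{\Gen[T]}\subseteq\mathcal{Y}$ is needed only in the backward direction is a correct and helpful observation, but the proof itself is the same.
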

\begin{proof}
Note that $\Gen[T]\subseteq\overline{\Genn[T][1]}$. Then,  the result
follows from Lemma \ref{cor:  Gen-tilting es silting} (d) and
Theorem \ref{thm:  silting es Gen-tilting}.
\end{proof}

The following lemma is contained in the proof of \cite[Prop. 5.6]{bazzoni2017pure}. 

\begin{lem}\cite{bazzoni2017pure}\label{lem: superfluo} 
Let $R$ be a ring and let $T\in\Mod(R)$ be such that
$\Gen[T]\subseteq T^{\bot_{1}}$. If $P_{1}\overset{\sigma}{\rightarrow}P_{0}$
is a projective presentation of $T$ such that $\Kerx[\sigma]$ is
a superfluous submodule of $P_{1}$,  then $\Genn[T][1]\subseteq\mathcal{D}_{\sigma}$.\end{lem}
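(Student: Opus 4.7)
I will unpack the conclusion $\Gen[T]\subseteq\mathcal{D}_{\sigma}$ into the statement: for every $X\in\Gen[T]$ and every morphism $\varphi:P_{1}\to X$, $\varphi$ factors through $\sigma$. I shall produce the factorisation directly, the closure of $\Gen[T]$ under quotient objects being the main lever.

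Given $\varphi:P_{1}\to X$ with $X\in\Gen[T]$, write $K:=\Ker\sigma$, $I:=\operatorname{Im}\sigma$, and let $\bar{\sigma}:P_{1}\twoheadrightarrow I$ denote the epi induced by $\sigma$. Set $X':=\varphi(K)\subseteq X$ and let $q:X\twoheadrightarrow X/X'$ be the quotient. Since $\Gen[T]$ is closed under quotients, $X/X'\in\Gen[T]\subseteq T^{\bot_{1}}$, whence $\Ext^{1}_{R}(T,X/X')=0$. The composition $q\varphi:P_{1}\to X/X'$ vanishes on $K$, hence factors as $q\varphi=\bar{\gamma}\bar{\sigma}$ for some $\bar{\gamma}:I\to X/X'$. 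Applying $\Hom_{R}(-,X/X')$ to $0\to I\to P_{0}\to T\to 0$ and using the Ext vanishing, $\bar{\gamma}$ extends along $I\hookrightarrow P_{0}$ to some $\tilde{\gamma}:P_{0}\to X/X'$; projectivity of $P_{0}$ then lifts $\tilde{\gamma}$ along $q$ to a morphism $\gamma:P_{0}\to X$ with $q\gamma=\tilde{\gamma}$.

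Now form the ``error term'' $\psi:=\varphi-\gamma\sigma:P_{1}\to X$. A direct computation using $\tilde{\gamma}\sigma=\bar{\gamma}\bar{\sigma}=q\varphi$ yields $q\psi=0$, so $\psi(P_{1})\subseteq X'$. Since $\sigma|_{K}=0$, we have $\psi|_{K}=\varphi|_{K}$, and therefore $\psi(K)=\varphi(K)=X'=\psi(P_{1})$. The equality $\psi(K)=\psi(P_{1})$ forces $P_{1}=K+\Ker\psi$: for each $x\in P_{1}$, pick $k\in K$ with $\psi(x)=\psi(k)$; then $x-k\in\Ker\psi$. The superfluousness of $K$ in $P_{1}$ now collapses $\Ker\psi=P_{1}$, i.e.\ $\psi=0$, whence $\varphi=\gamma\sigma$ and $X\in\mathcal{D}_{\sigma}$.

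The main difficulty is orchestrating the three successive passes---quotient by $X'$ to kill the offending image, extend from $I$ to $P_{0}$ via the orthogonality hypothesis, and lift back through $q$ by projectivity of $P_{0}$---so that the resulting $\gamma$ produces an error $\psi$ which not only lands in $X'$ but also already surjects onto $X'$ via its restriction to $K$. Once this delicate balance is struck, the elementary identity $\psi(K)=\psi(P_{1})$ converts the superfluousness of $K$ into the vanishing $\psi=0$ in a single step, finishing the argument.
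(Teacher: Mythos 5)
Your proof is correct; the paper itself gives no argument here but simply cites \cite{bazzoni2017pure}, saying the lemma is contained in the proof of Proposition 5.6 there. So there is no internal proof to compare against, and your argument must be judged on its own. It checks out: the three-stage construction (kill $\varphi(K)$ by passing to $X/X'$; extend $\bar\gamma:I\to X/X'$ to $\tilde\gamma:P_0\to X/X'$ using $\Ext^1_R(T,X/X')=0$, which is available because $\Gen_1(T)$ is closed under quotients; lift $\tilde\gamma$ through $q$ by projectivity of $P_0$) produces $\gamma:P_0\to X$ with $q(\varphi-\gamma\sigma)=0$, and the identity $\psi(K)=\psi(P_1)=X'$ together with superfluousness of $K$ forces $\psi=0$. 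Every step is justified: the surjectivity of $\Hom(P_0,X/X')\to\Hom(I,X/X')$ follows from the vanishing of $\Ext^1$, the computation $\tilde\gamma\sigma=\bar\gamma\bar\sigma=q\varphi$ is valid since $\tilde\gamma$ restricts to $\bar\gamma$ on $I$, and the passage from $\psi(K)=\psi(P_1)$ to $P_1=K+\Ker\psi$ is the elementary pointwise argument you gave. This is in the spirit of the standard argument one finds in the cited reference (factoring through a quotient that lies in $T^{\bot_1}$ and invoking superfluousness of the kernel of the presentation), so it is not a genuinely different route, merely an explicit, self-contained version of it.
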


\begin{thm}\label{thm: silting vs 1-Gen-tilting} 
Let $R$ be a ring,  $T\in\Mod(R)$
with $\pdr[\mathcal{Y}][T]\leq1$ and $\overline{\Gen[T]}\subseteq\mathcal{Y}\subseteq\Modx[R], $ and let $\sigma: P_{1}\rightarrow P_{0}$
be a projective presentation 
of $T$ with $\Kerx[\sigma]$ a superfluous submodule of $P_{1}.$ Then,  
 the following conditions are equivalent: 
\begin{itemize}
\item[$\mathrm{(a)}$] $T$ is silting with respect to $\sigma.$
\item[$\mathrm{(b)}$] $T$ is big $1$-$\Gen[T]$-tilting and there is a $\mathcal{D}_{\sigma}$-preenvelope
$\phi: R\rightarrow T_{0}$ such that $T_{0}\in\Addx[T]$. 
\end{itemize}
Moreover,  $\mathcal{D}_{\sigma}=\Gen[T]=T^{\bot}\cap\mathcal{Y}=T^{\bot_{1}}\cap\mathcal{Y}$ if one of the above conditions holds true. 
\end{thm}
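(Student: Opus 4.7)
The plan is to establish (a)$\Leftrightarrow$(b) by proving each direction separately, and then read off the chain of equalities in the ``moreover'' part from the lemmas that have already been set up.

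For (a)$\Rightarrow$(b): assuming $T$ is silting with respect to $\sigma$, so that $\Gen(T)=\mathcal{D}_{\sigma}$, Proposition \ref{prop: silting vs finendo quasitilting vs tilting}(a) yields that $T$ is finendo and quasitilting. Feeding this into Theorem \ref{thm: quasitilt sii 1-Gen-tiltin} together with the hypothesis $\pdr[\mathcal{Y}][T]\leq 1$ makes $T$ big $1$-$\Gen(T)$-tilting. The desired $\mathcal{D}_{\sigma}$-preenvelope $\phi\colon R\to T_{0}$ with $T_{0}\in\Add(T)$ is then provided by condition (S3) of the silting/partial silting characterization in Proposition \ref{prop: silting parcial +S3 es silting}, which actually gives more: a short exact sequence $R\xrightarrow{\phi}T_{0}\to T_{1}\to 0$ with both $T_{0},T_{1}\in\Add(T)$.

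For (b)$\Rightarrow$(a): Theorem \ref{thm: quasitilt sii 1-Gen-tiltin} converts $1$-$\Gen(T)$-tilting into quasitilting, so in particular $\Gen(T)\subseteq T^{\bot_{1}}$; combined with $\Ker(\sigma)$ being superfluous in $P_{1}$, Lemma \ref{lem: superfluo} gives $\Gen(T)\subseteq\mathcal{D}_{\sigma}$. For the reverse inclusion $\mathcal{D}_{\sigma}\subseteq\Gen(T)$, I would take $X\in\mathcal{D}_{\sigma}$ together with a free epimorphism $\pi\colon R^{(I)}\twoheadrightarrow X$; for each canonical inclusion $u_{i}\colon R\to R^{(I)}$, the $\mathcal{D}_{\sigma}$-preenvelope property of $\phi$ produces $g_{i}\colon T_{0}\to X$ with $g_{i}\phi=\pi u_{i}$, and the universal property of the coproduct $T_{0}^{(I)}$ assembles these into $g\colon T_{0}^{(I)}\to X$ satisfying $g\circ \phi^{(I)}=\pi$; this forces $g$ to be an epimorphism and exhibits $X$ as a quotient of $T_{0}^{(I)}\in\Add(T)$, so $X\in\Gen(T)$. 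This gives $\Gen(T)=\mathcal{D}_{\sigma}$, i.e., $T$ is silting with respect to $\sigma$.

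For the moreover part, once the equivalence is in hand, the chain $\mathcal{D}_{\sigma}=\Gen(T)=T^{\bot}\cap\mathcal{Y}=T^{\bot_{1}}\cap\mathcal{Y}$ unfolds as follows. The first equality is the definition of silting just verified. Lemma \ref{cor:  Gen-tilting es silting}(a) delivers $\Gen(T)\subseteq T^{\bot}$, and the inclusion $\Gen(T)\subseteq\overline{\Gen(T)}\subseteq\mathcal{Y}$ then gives $\Gen(T)\subseteq T^{\bot}\cap\mathcal{Y}$. The equality $T^{\bot}\cap\mathcal{Y}=T^{\bot_{1}}\cap\mathcal{Y}$ is a direct consequence of $\pdr[\mathcal{Y}][T]\leq 1$ (see also Lemma \ref{lem:  para silting implica Gen-tilting}(a)). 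For the remaining inclusion $T^{\bot_{1}}\cap\mathcal{Y}\subseteq\Gen(T)$, I would exploit the quasitilting identity $\Gen(T)=\overline{\Gen(T)}\cap T^{\bot_{1}}$ from Lemma \ref{lemdef quasitilting}(c) together with a $\Hom(-,X)$ analysis of the exact sequence $0\to K\to P_{1}\xrightarrow{\sigma}P_{0}\to T\to 0$: for $X\in T^{\bot_{1}}\cap\mathcal{Y}$, the vanishing $\Ext^{\geq 1}_{R}(T,X)=0$ (from $\pdr[\mathcal{Y}][T]\leq 1$) slots $X$ back into $\mathcal{D}_{\sigma}$ via the preenvelope-assembly argument of the previous paragraph, and $\mathcal{D}_{\sigma}=\Gen(T)$ closes the loop.

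The main obstacle is the inclusion $\mathcal{D}_{\sigma}\subseteq\Gen(T)$ in (b)$\Rightarrow$(a): everything else is an unpacking of the lemmas in the surrounding silting subsection, but manufacturing an $\Add(T)$-generating epimorphism onto $X$ from just the single preenvelope $\phi\colon R\to T_{0}$ requires the careful coproduct-universal-property bookkeeping described above, and it is this step that turns the $\mathcal{D}_{\sigma}$-preenvelope hypothesis into the crucial (S3)-type condition that upgrades a partial silting situation to an honest silting one.
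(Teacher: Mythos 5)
Your proof of the equivalence $(a)\Leftrightarrow(b)$ is correct and matches the paper's own route essentially step for step: $(a)\Rightarrow(b)$ via Proposition \ref{prop: silting parcial +S3 es silting} for the preenvelope and Proposition \ref{prop: silting vs finendo quasitilting vs tilting}(a) plus the quasitilting$\Rightarrow$tilting implication for the big $1$-$\Gen(T)$-tilting property (you use Theorem \ref{thm: quasitilt sii 1-Gen-tiltin} where the paper uses Theorem \ref{thm:  silting es Gen-tilting}, but under the stated hypothesis $\overline{\Gen(T)}\subseteq\mathcal{Y}$ both apply); and $(b)\Rightarrow(a)$ via Lemma \ref{lem: superfluo} for $\Gen(T)\subseteq\mathcal{D}_\sigma$ and the coproduct/preenvelope bookkeeping you describe for $\mathcal{D}_\sigma\subseteq\Gen(T)$, which is exactly the paper's argument made more explicit.

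However, your treatment of the ``moreover'' chain has a genuine gap, precisely at the inclusion $T^{\bot_1}\cap\mathcal{Y}\subseteq\Gen(T)$. You justify it by saying that the vanishing of $\Ext_R^{\geq 1}(T,X)$ ``slots $X$ back into $\mathcal{D}_\sigma$ via the preenvelope-assembly argument of the previous paragraph.'' But that argument runs in the opposite direction: it takes $X\in\mathcal{D}_\sigma$ as input (this is needed in order to apply the $\mathcal{D}_\sigma$-preenvelope property of $\phi$ to the maps $\pi u_i\colon R\to X$) and concludes $X\in\Gen(T)$. It cannot be used to establish membership in $\mathcal{D}_\sigma$; applied as you suggest, the reasoning is circular. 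What would actually be needed is a direct argument that for $X\in T^{\bot_1}\cap\mathcal{Y}$ the map $\Hom_R(\sigma,X)$ is surjective. From $\Ext^1_R(T,X)=0$ one only gets that $\Hom_R(P_0,X)\to\Hom_R(\Ima(\sigma),X)$ is surjective; to conclude that $\Hom_R(\sigma,X)$ is surjective one additionally needs that every morphism $P_1\to X$ vanishes on $K=\Ker(\sigma)$, and neither the superfluousness of $K$ nor the quasitilting identity $\Gen(T)=\overline{\Gen(T)}\cap T^{\bot_1}$ yields this for an arbitrary $X\in\mathcal{Y}$ (the latter would require knowing $X\in\overline{\Gen(T)}$, which you have not established). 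The parts of the chain you do prove — $\mathcal{D}_\sigma=\Gen(T)$ from the definition of silting, $\Gen(T)\subseteq T^\bot\cap\mathcal{Y}$ from Lemma \ref{cor:  Gen-tilting es silting}(a), and $T^\bot\cap\mathcal{Y}=T^{\bot_1}\cap\mathcal{Y}$ from $\pd_\mathcal{Y}(T)\leq 1$ — are fine; it is only the closing inclusion that is unsupported. (It is worth noting that the paper's own proof does not address the ``moreover'' chain at all, so there is no paper argument against which to compare yours here.)
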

\begin{proof}
(a) $\Rightarrow$ (b) By Proposition \ref{prop: silting parcial +S3 es silting-1}, 
there is an exact sequence 
$R\overset{\phi}{\rightarrow}T_{0}\rightarrow T_{1}\rightarrow0\mbox{, }$
where $T_{0}$, $T_{1}\in\Addx[T]$ and $\phi$ is a $\mathcal{D}_{\sigma}$-preenvelope.
Finally,  note that $T$ is big 1-$\Gen[T]$-tilting by Proposition \ref{prop: silting vs finendo quasitilting vs tilting} (a)
and Theorem \ref{thm:  silting es Gen-tilting}.
\

(b) $\Rightarrow$ (a) Observe that $\Gen[T]\subseteq\mathcal{D}_{\sigma}$
by Lemma \ref{cor:  Gen-tilting es silting} (a) and Lemma \ref{lem: superfluo}.
Let us show that $\mathcal{D}_{\sigma}\subseteq\Gen[T]$. We know
that there is a $\mathcal{D}_{\sigma}$-preenvelope $\phi: R\rightarrow T_{0}$.
Then,  for $X\in\mathcal{D}_{\sigma}$,  every epimorphism $R^{(\alpha)}\rightarrow X$
factors through the preenvelope $\phi^{(\alpha)}$ via an epimorphism
$T_{0}^{(\alpha)}\rightarrow X$. Therefore,  $\mathcal{D}_{\sigma}\subseteq\Gen[T]$.\end{proof}

Let $R$ be a ring,  $T\in\Mod(R)$ and let $\sigma: P_{1}\rightarrow P_{0}$
be a projective presentation of $T$. The condition of $\Ker({\sigma})$
being a superfluous submodule of $P_{1}$ means that the induced morphism
$P_{1}\rightarrow\im[\sigma]$ is a projective cover. We can find
different contexts where this kind of projective resolution can be
built. For example,  in \cite[Cor. 5.7]{bazzoni2017pure},  the
following conditions on the ring and the module are mentioned:  (i) $R$ is left perfect; (ii) $R$ is semi perfect and $T$ is finitely presented; and (iii) $\pdx[T]\leq1.$

\begin{rem}\label{rem: silting} 
In \cite{breaz2018torsion},  Simion Breaz and Jan {\v{Z}}emli{\v{c}}ka 
studied the torsion classes generated by silting modules. In particular, 
this kind of torsion classes are characterized for perfect and hereditary
rings. Namely,  for a left perfect (or a left hereditary) ring $R$ and
$S\in\Modx[R]$ such that $\Gen[S]$ is a torsion class,  they proved
in \cite[Thms. 2.4 and 2.6]{breaz2018torsion} that $S$
is silting if and only if there is a $\Gen[S]$-preenvelope $\epsilon: R\rightarrow M$
such that $M\in{}{}^{\bot_{1}}\Gen[S]$. Note that,  by \cite[Lem. 3.3]{siltingmodulessurvey}, 
Proposition \ref{prop: silting vs finendo quasitilting vs tilting} and Theorem \ref{thm: quasitilt sii 1-Gen-tiltin},  the preenvelope
$\epsilon: R\rightarrow M$ is the same preenvelope that appears in
Theorem \ref{thm: silting vs 1-Gen-tilting} (b).
Comparing these results,  we observe the following. 
\
 
 (1)  Let $R$ be a left perfect ring and $T\in\Mod(R)$ be big $1$-$\Gen[T]$-tilting.  Since $R$ is left perfect,  for every left $R$-module
we can find a projective presentation $\tau: Q_{1}\rightarrow Q_{0}$, 
with $\Kerx[\tau]$ superfluous in $Q_{0}$. Then,  by Theorem \ref{thm: silting vs 1-Gen-tilting}
and \cite[Thm. 2.4]{breaz2018torsion},  $T$ is silting with respect
to a projective presentation $\rho$ if and only if $T$ is silting
with respect to every projective presentation $\sigma: P_{1}\rightarrow P_{0}$
of $T$,  with $\Kerx[\sigma]$ superfluous in $P_{1}$. 
\

(2) Let $R$ be a left hereditary ring and $T\in\Mod(R)$ be big $1$-$\Gen[T]$-tilting. Since $R$ is left hereditary,  for every left $R$-module
we can find a monomorphic projective presentation $\tau: Q_{1}\rightarrow Q_{0}$, 
and consequently,  with $\Kerx[\tau]$ superfluous in $Q_{0}$. Then, 
by Theorem \ref{thm: silting vs 1-Gen-tilting} and \cite[Thm. 2.6]{breaz2018torsion}, 
$T$ is silting with respect to a projective presentation $\rho$
if and only if $T$ es silting with respect to every projective presentation
$\sigma: P_{1}\rightarrow P_{0}$ of $T$ with $\Kerx[\sigma]$ superfluous
in $P_{1}$. 
\

(3) Let $R$ be a ring.  Proposition \ref{prop: silting parcial +S3 es silting-1}
states that,  for every silting $S\in\Mod(R), $ there is a
$\Gen[S]$-preenvelope $\epsilon: R\rightarrow M$ with $M\in\Addx[S]$.
However,  there are examples where the existence of this preenvelope
does not imply that $S$ is silting (see \cite[Ex. 2.5]{breaz2018torsion}
and \cite[Ex. 5.4]{silting2017}). Therefore,  it is worth noting
that Theorem \ref{thm: silting vs 1-Gen-tilting} give enough conditions in order to have that 
the existence of such preenvelope implies the silting property.
\

(4) In \cite[Cor. 2.9]{breaz2018torsion},  it is proved for
a left perfect (or a left hereditary) ring $R$ that,  for every quasitilting
finendo $Q\in \Mod(R), $ there is a silting $T\in\Mod(R)$ such
that $\Addx[T]=\Addx[Q]$. It is important mentioning that this is not
true for every  ring, see \cite[Ex. 2.10]{breaz2018torsion}
and \cite[Ex. 5.4]{silting2017}. Therefore,  it is  worth noting
that Theorem \ref{thm: silting vs 1-Gen-tilting} give us enough conditions
for a quasitilting finendo $R$-module to be silting. \\
Indeed,  let $T$ be a quasitilting finendo $R$-module such that
$\pdr[\operatorname{Gen}_{1}(T)][T]\leq1$. By Theorems \ref{thm: quasitilt sii 1-Gen-tiltin}
and \ref{thm: quasitilting finendo vs torsion},  $T$ satisfies Theorem \ref{thm: silting vs 1-Gen-tilting}(b).
Therefore,  if $T$ admits a projective presentation $\sigma: P_{1}\rightarrow P_{0}$
with $\Kerx[\sigma]$ superfluous in $P_{1}$,  then $T$ is silting
with respect to $\sigma$ by Theorem \ref{thm: silting vs 1-Gen-tilting}. 
\end{rem}

\section{Tilting and cotorsion pairs in quiver representations}

Let $Q$ be a quiver. That is, a directed graph given by a set of
vertices $Q_{0}$, a set of arrows $Q_{1}$, a \emph{source map} $s:Q_{1}\rightarrow Q_{0}$
and a \emph{target map} $t:Q_{1}\rightarrow Q_{0}$. In this context,
a \textbf{path} $\gamma$ (of length $n$) starting at $s(\gamma):=x$ and ending at $t(\gamma):=y$,
is a sequence of arrows $\gamma:=\alpha_{1}\alpha_{2}\cdots\alpha_{n}$ such
that: $s(\alpha_{n})=x$, $s(\alpha_{k})=t(\alpha_{k+1})$ $\forall k\in[1,,n-1]$,
and $t(\alpha_{1})=y$. Here, we consider the case $n=0$ as the trivial
path ending and starting at $x=y$. It can be defined the 
\emph{free category}, or \emph{category of paths}, generated by $Q$ as
the category whose objects are the vertices in $Q$ and the morphisms are
the paths in $Q.$ The composition of morphisms in the path category is the concatenation of paths in $Q.$
 A quiver $Q$ is \textbf{finite} if $Q_{0}$
and $Q_{1}$ are finite, and $Q$ is \textbf{acyclic}
if there are no paths $\gamma=\alpha_{1}\alpha_{2}\cdots\alpha_{n}$ of length
$n\geq1$ with $s(\gamma)=t(\gamma)$. 

Given an abelian category $\C$, we understand the category $\Rep(Q,\C)$ of representations of the quiver $Q$ in $\C$ as the category of functors from the free category generated
by $Q$ to $\C.$ The
basic tools for working with $\Rep(Q,\C)$ are the following functors.
For $x\in Q_{0}$, we have the evaluation functor $e_{x}:\Rep(Q,\C)\rightarrow\C$
which sends $F$ to its evaluation $F_{x}:=F(x)$, and the stalk functor $s_{x}:\mathcal{C}\rightarrow\Rep(Q,\C)$,
defined by $(s_{x}(C))_{x}=C$ and $(s_{x}(C))_{y}=0$ for all $y\neq x$.
It is well known that, under certain conditions \cite[Prop. 2.18]{AM23}, the functor $e_{x}$
admits a right adjoint $g_{x}:\C\rightarrow\Rep(Q,\C)$ and a left
adjoint $f_{x}:\C\rightarrow\Rep(Q,\C)$. In particular, if $\C$
is AB4 and AB4{*}, or $Q$ is finite and acyclic, then these functors exist and can
be defined as $f_{x}(C)=C^{(Q(x,-))}$ and $g_{x}(C)=C^{Q(-,x)},$ see \cite[Def. 2.16, Prop. 2.17]{AM23}.
Given a class $\X\subseteq\C$, we consider the classes $g_{*}(\X):=\bigcup_{i\in Q_{0}}g_{i}(\X)$
and $s_{*}(\X):=\bigcup_{i\in Q_{0}}s_{i}(\X).$  In case we need to
highlight in which quiver we are working, we will use the notation
$f_{x}^{Q}:=f_{x}$, $g_{x}^{Q}:=g_{x}$ and $e_{x}^{Q}:=e_{x},$ 
see \cite[Section 2.11]{AM23} for more details.

Let $Q$ be a quiver. In case $Q_{0}$ is not finite, it is common to consider the full
subcategory $\Rep^{f}(Q,\C)$ of \textbf{finite-support representations},
i.e. representations $F\in\Rep(Q,\C)$ such that the support $\Supp(F):=\{x\in Q_{0}\,|\:F(x)\neq0\}$ of $F$
is finite. Observe that $\Rep(Q,\C)$ is an abelian category and $\Rep^{f}(Q,\C)\subseteq \Rep(Q,\C)$
is a full abelian subcategory closed under subobjects and
quotients \cite[Rk. 5.4]{AM23}. Following \cite[Def. 2.6]{AM23}, we recall that the quiver $Q$ is \textbf{finite-cone-shape} if for every vertex
$x\in Q_0$ there exists a finite number of paths ending or starting at $x.$ In this case it is known that the
categories $\Rep(Q,\C)$ and $\Rep^{f}(Q,\C)$ are intimately related \cite[Sect. 5]{AM23}.
For example, let $Q$ be finite-cone-shape. It is known that the functors $e_{x}$, $f_{x}$, $g_{x}$ can be restricted to
$\Rep^{f}(Q,\C)$ and such restrictions also form adjoint pairs \cite[Prop. 5.14]{AM23}. Moreover if $\C$ has enough projectives and injectives then so do $\Rep^{f}(Q,\C)$ and 
$\Rep(Q,\C),$ and  $\Inj(\Rep^{f}(Q,\C))=\Inj(\Rep(Q,\C))\cap\Rep^{f}(Q,\C)$ and
 $\Proj(\Rep^{f}(Q,\C))=\Proj(\Rep(Q,\C))\cap\Rep^{f}(Q,\C)$ \cite[Cor. 5.18]{AM23}.

\begin{rem}
Throughout this section, we will be using the results of \cite{AM23}.
So it is worth saying a few words about the hypotheses that appear
in such paper. Namely, in \cite{AM23}, it is introduced
certain cardinal numbers that measure the complexity of a quiver $Q.$ These cardinals are denoted by: $\lccn$(Q), $\rccn(Q)$,
$\ltccn(Q)$, $\rtccn(Q)$, $\ccn(Q)$, $\tccn(Q)$, $\lmcn(Q)$
and $\alpha(Q)$ \cite[Defs. 2.9, 2.10 and 5.8]{AM23}.
By using these cardinals, the conditions $AB3(\kappa),$ $AB4(\kappa),$
$AB3^{*}(\kappa)$ and $AB4^{*}(\kappa)$ appear for an infinite cardinal $\kappa$
greater or equal to these cardinals (such conditions are the usual
Grothendieck conditions restricted to coproducts or products of $<\kappa$
objects). Since we will only be interested in finite-cone-shape quivers,
all these cardinals turn out to be $\leq\aleph_{0}$ and the quiver
turns out to be \emph{rooted} \cite[Sect. 2.8]{AM23}.
In particular, for the scope of this section, the reader does not need these cardinal numbers (but in order to understand the statements we are using from \cite{AM23} they actually do need it) because any abelian category is $AB3(\aleph_{0}),$ $AB4(\aleph_{0}),$ 
$AB3^{*}(\aleph_{0})$ 
and $AB4^{*}(\aleph_{0}).$ It is also worth mentioning that in \cite[Section 5]{AM23}
certain full subcategories of $\Rep(Q,\C)$ denoted by $\Rep^{ft}(Q,\C)$
and $\Rep^{fb}(Q,\C)$ are studied. However, in the finite-cone-shape
case, one has that $\Rep^{fb}(Q,\C)=\Rep^{f}(Q,\C)=\Rep^{ft}(Q,\C),$ see \cite[Lem. 5.11]{AM23}. 
\end{rem}

Without further ado, we present the first central theorem of this section,
which tells us how to build a tilting class in the abelian subcategory $\Rep^{f}(Q,\C)\subseteq\Rep(Q,\C)$ and also in $\Rep(Q,\C)$ from
a tilting one in $\mathcal{C}.$ We recall that an abelian $AB3$ category which has enough injectives is $AB4.$  

\begin{thm}\label{thm:tilting repf} Let $Q$ be a finite-cone-shape quiver, 
$\mathcal{C}$ an abelian category with enough injectives,  $\mathcal{T}=\add(\mathcal{T})$
a precovering and $n$-$\C$-tilting class in $\C,$ and let $\X:=\Rep^{f}(Q,\C)\subseteq\Rep(Q,\C).$ Then  $\mathbb{T}:=\add\left(g_{*}(\mathcal{T})\right)\subseteq \X$ and the following statements hold true.
\begin{enumerate}
\item $\mathbb{T}$ is precovering in $\X.$
\item $\mathbb{T}$ is an $(n+1)$-$\X$-tilting class in $\X.$
\item $\mathbb{T}$ is an $(n+1)$-$\X$-tilting class in $\Rep(Q,\C).$
\item Let $\C$ be $AB3,$  $\T=\Add(T)$ for some $T\in\C$ and $S:=\bigoplus_{x\in Q_0}g_x(T).$ Then $\mathbb{T}=\Add(S)\cap\X$ and $S$ is a big $(n+1)$-$\X$-tilting object in $\Rep(Q,\C).$
\end{enumerate}
\end{thm}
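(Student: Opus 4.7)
The plan is to establish the identity $\mathbb{T}=\Add(S)\cap\X$ first, and then use it together with part (3) to verify that $\Add(S)$ is an $(n+1)$-$\X$-tilting class in $\Rep(Q,\C)$. Since $\C$ is AB3 with enough injectives, it is AB4, and hence so is $\Rep(Q,\C)$. A central observation is that, because $Q$ is finite-cone-shape, each set $Q(y,x)$ is finite, so $g_x(C)=C^{Q(-,x)}$ uses only finite products at every vertex; consequently, $g_x$ preserves arbitrary coproducts and sends $\C$ into $\X$.

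For $\mathbb{T}=\Add(S)\cap\X$, the inclusion $\mathbb{T}\subseteq\Add(S)\cap\X$ is direct: given $T'\in\Add(T)$ realized as a summand of $T^{(J)}$, the object $g_x(T')$ is a summand of $g_x(T^{(J)})=g_x(T)^{(J)}\subseteq\Add(S)$, and taking finite coproducts and summands yields $\mathbb{T}\subseteq\Add(S)$, while $\mathbb{T}\subseteq\X$ is automatic. For the converse, I would take $R\in\Add(S)\cap\X$ with finite support $F\subseteq Q_0$ and define $F^{+}:=\bigcup_{y\in F}\{x\in Q_0\,:\,Q(y,x)\neq\emptyset\}$, which is finite by finite-cone-shapedness. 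Fixing a splitting $R\hookrightarrow S^{(I)}$, a vertex-wise check shows the inclusion factors through the summand $\bigoplus_{x\in F^{+}}g_x(T)^{(I)}$ of $S^{(I)}$: at $y\notin F$ the map vanishes, and at $y\in F$ only coordinates $x$ with $Q(y,x)\neq\emptyset$ (hence $x\in F^{+}$) contribute. Thus $R$ is a summand of $\bigoplus_{x\in F^{+}}g_x(T^{(I)})$, a finite coproduct of elements of $g_{*}(\T)$, so $R\in\mathbb{T}$.

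To prove that $\Add(S)$ is $(n+1)$-$\X$-tilting in $\Rep(Q,\C)$, the key computation is the chain of orthogonal identities
\[
\Add(S)^{\perp}=S^{\perp}=\bigcap_{x\in Q_0}g_x(T)^{\perp}=g_{*}(\T)^{\perp}=\mathbb{T}^{\perp},
\]
which rests on the AB4 property (so $\Ext^{k}$ turns coproducts in the first variable into products), on $g_x$ commuting with coproducts, and on $\add(\mathcal{M})^{\perp}=\mathcal{M}^{\perp}=\Add(\mathcal{M})^{\perp}$. Granting this, (T0) is trivial; (T1) follows from $\pd_{\X}(\Add(S))=\pd_{\X}(S)\leq\sup_{x}\pd_{\X}(g_x(T))\leq n+1$ via (3); (T2) becomes $\Add(S)\cap\X=\mathbb{T}\subseteq\mathbb{T}^{\perp}=\Add(S)^{\perp}$; (T3) and (T4) transfer directly from $\mathbb{T}$ to $\Add(S)$ because $\mathbb{T}^{\vee}_{\X}\subseteq\Add(S)^{\vee}_{\X}$ and $\X^{\perp}\cap\mathbb{T}^{\perp}=\X^{\perp}\cap\Add(S)^{\perp}$; finally, since $\Add(g_{*}(\T))=\Add(S)$, Lemma \ref{lem:  addT precub es AddT precub} guarantees that the $\mathbb{T}=\add(g_{*}(\T))$-precover supplied by (T5) in part (3) is automatically an $\Add(S)$-precover, yielding (T5).

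The hardest step is the reverse inclusion $\Add(S)\cap\X\subseteq\mathbb{T}$: realizing an arbitrary summand of $S^{(I)}$ with finite support as a summand of a \emph{finite} coproduct of objects in $g_{*}(\T)$ requires the vertex-wise factorization through $\bigoplus_{x\in F^{+}}g_x(T)^{(I)}$ and crucially uses finite-cone-shapedness to make $F^{+}$ finite and to ensure $g_x$ preserves coproducts. Everything else is a formal consequence of the already-established part (3) and the identity $\Add(S)^{\perp}=\mathbb{T}^{\perp}$.
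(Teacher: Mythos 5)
Your proposal addresses only part (d) of the theorem; parts (a), (b), (c) are invoked but never proved. Since the statement in question encompasses all four items — in particular the verifications that $\mathbb{T}$ is $(n+1)$-$\X$-tilting in $\X$ and in $\Rep(Q,\C)$, which require checking (T0)--(T5) in each ambient category and use several results from \cite{AM23} about $g_x$, $f_x$, the finite-support subcategory, and its injectives/projectives — this is a substantial gap, not merely an omitted routine step.

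For part (d) itself, your argument is correct and follows the same two-stage plan as the paper's: first identify $\mathbb{T}=\Add(S)\cap\X$, then transfer (T0)--(T5) from $\mathbb{T}$ to $\Add(S)$ via the orthogonal identity $\Add(S)^{\perp}=S^{\perp}=g_*(\T)^{\perp}=\mathbb{T}^{\perp}$ together with Lemma \ref{lem:  addT precub es AddT precub} for (T5). Where you genuinely improve on the paper's writeup is the reverse inclusion $\Add(S)\cap\X\subseteq\mathbb{T}$. The paper's text establishes $g_x(T^{(I)})\cong(g_x(T))^{(I)}$ and then simply asserts $\add(g_*(\T))\cap\X=\Add(g_*(\T))\cap\X$ with a ``Therefore''; this step is exactly where compactness of finite-support representations against $g_*(\T)$ must be used, and you supply the missing argument: given a split mono $R\hookrightarrow S^{(I)}$ with $\Supp(R)=F$ finite, form $F^{+}=\bigcup_{y\in F}\{x:Q(y,x)\neq\emptyset\}$, which is finite because $Q$ is finite-cone-shape, and check vertex-wise that the mono factors through the summand $\bigoplus_{x\in F^{+}}g_x(T)^{(I)}$ (at $y\notin F$ the map is zero, at $y\in F$ the components with $x\notin F^{+}$ vanish because $Q(y,x)=\emptyset$); hence $R$ is a summand of a finite coproduct $\bigoplus_{x\in F^{+}}g_x(T^{(I)})$ with $T^{(I)}\in\T$, i.e.\ $R\in\add(g_*(\T))=\mathbb{T}$. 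That reasoning is exactly what is needed and is correct. To complete the task you would still need to supply proofs of (a)--(c) along the lines of the paper: (a) from the precovering property of $g_*$ in $\Rep^f(Q,\C)$; (b) by verifying (T0)--(T5) inside the abelian category $\X$ using the adjunctions $(e_y,g_x)$, the bound $\pd(g_x(T))\leq\pd(T)+1$, the canonical exact sequence from \cite{AM23}, and the fact that $\Rep^f(Q,\C)$ has enough injectives; (c) by redoing the same checks for $\X\subseteq\Rep(Q,\C)$ with the relative $\pd_\X$ and using that $\X$ is closed under subobjects and quotients.
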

\begin{proof}
Before proceeding with the proof, we point out the following facts: (i) the functors 
$f_{i},g_{i}:\C\to\Rep(Q,\C)$ are exact for
all $i\in Q_{0}$ \cite[Rk. 2.19 (c)]{AM23}; (ii) $\pd(\T)\leq n$, $\T\subseteq\T^{\bot}$ and there
is a class $\omega\subseteq\mathcal{T}^{\vee}$ which is a 
generator in $\mathcal{C};$ and (iii) $\Rep^{ft}(Q,\C)=\Rep^{f}(Q,\C)=\Rep^{fb}(Q,\C)$ and
$g_{z}(C),f_{z}(C)\in\Rep^{f}(Q,\C)$ for all $z\in Q_{0}$ and all
$C\in\C$  \cite[Prop. 5.6 (a), Prop. 5.7 (a), Lem. 5.11 ]{AM23}. In particular, we get that $\mathbb{T}\subseteq \Rep^{f}(Q,\C).$
\

(a) It follows from \cite[Prop. 5.14 (b)]{AM23}. 
\

(b) We proceed by proving all the conditions of Definition
\ref{def: X-tilting}. In what follows, we will be working in the abelian category
$\X=\Rep^{f}(Q,\C)$. In particular, for each $F\in\X,$ we have 
$
\pd(F):=\min\{n\in\mathbb{N}\,|\:\Ext_{\X}^{k}(C,-)=0\:\forall k>n\};
$
 and for all $\mathcal{Z}\subseteq\X,$ we set
$
\mathcal{Z}^{\bot}:=\{F\in\X\,|\:\Ext_{\X}^{k}(-,F)|_{\mathcal{Z}}=0\;\forall k\geq1\}.
$
\

{\bf (T0):}  By definition $\mathbb{T}$ is closed under direct summands.
\

{\bf (T1):} $\pd(\mathbb{T})=\pd(g_{*}(\mathcal{T}))\leq \pd(\T)+1,$ see \cite[Prop. 5.14 (d)]{AM23}. 
\

{\bf (T2):} Let $T,T'\in\mathcal{T}$ and $x,y\in Q_{0}$. Then, for
$k\geq1$, we have that 
\[
\Ext_{\Rep^{f}(Q,\C)}^{k}(g_{x}(T),g_{y}(T'))\cong\Ext_{\mathcal{C}}^{k}(e_{y} g_{x}(T),T')=\Ext_{\mathcal{C}}^{k}(T^{Q(y,x)},T')=0\text{,}
\]
where the above isomorphism is given by \cite[Prop. 5.7 (d)]{AM23}
and the last equality follows from the fact that $Q(y,x)$ is finite.
Therefore $g_{*}(\mathcal{T})\subseteq (g_{*}(\mathcal{T}))^{\bot}$ and thus 
$\mathbb{T}\subseteq\mathbb{T}^{\bot}.$
\

{\bf (T3):} We know that there is a class $\omega\subseteq\mathcal{T}^{\vee}$ which is a generator in $\mathcal{C}.$ By \cite[Prop. 3.30 (a)]{AM23}, we get that
$\Omega:=\coprod_{<\aleph_{0}}f_{*}(\omega)$ is a generator in $\Rep^{f}(Q,\C).$
\

 Let us show that $\Omega\subseteq \mathcal{T}^{\vee}.$
Consider $W\in\omega$ and $i\in Q_{0}$. Using that $W\in\mathcal{T}^{\vee},$ we have that $g_{i}e_{j}f_{k}(W)\in g_{i}\left(\mathcal{T}\right)^{\vee}$
for all $i,j,k\in Q_{0}$ since $g_{i}$ is exact and $\mathcal{T}=\add(\mathcal{T})$.
Now, by \cite[Cor. 5.12 (b2)]{AM23}, for all $k\in Q_{0}$
there is a short exact sequence 
\[\eta:\;
\suc[f_{k}(W)][\prod_{i\in Q_{0}}g_{i}e_{i}(f_{k}(W))][\prod_{\rho\in Q_{1}}g_{s(\rho)}e_{t(\rho)}(f_{k}(W))]
\]
in $\Rep^f(Q,\C).$ Using that $Q$ is a finite-cone-shape quiver, it can be shown that only a finite number of factors in $\prod_{i\in Q_{0}}g_{i}e_{i}(f_{k}(W))$ and $\prod_{\rho\in Q_{1}}g_{s(\rho)}e_{t(\rho)}(f_{k}(W))$
are not zero. Indeed, $e_{i}(f_{k}(W))\neq0$ $\Leftrightarrow$ $Q(k,i)\neq \emptyset,$ 
and hence $g_{i}e_{i}(f_{k}(W))\neq 0$ only for $i\in t(Q(k,-));$ however the set
 $t(Q(k,-))$ is finite since $Q(k,-)$ is finite. Similarly, $g_{s(\rho)}e_{t(\rho)}(f_{k}(W))\neq 0$ $\Leftrightarrow$
 $t(\rho)\in t(Q(k,-)),$ and thus, there is only a finite
number of arrows $\rho$ such that $g_{s(\rho)}e_{t(\rho)}(f_{k}(W))\neq0$
since the set $\bigcup_{i\in t(Q(k,-))}Q(i,-)$ is finite. Then, 
it follows that the middle and right terms in the exact sequence $\eta$ belong to 
$\mathbb{T}{}^{\vee}$. Therefore, by \cite[Cor. 4.21]{parte1},
we have that $f_{k}(W)\in\mathbb{T}^{\vee}$ and hence $\Omega=\coprod_{<\aleph_{0}}f_{*}(\omega)\subseteq\mathbb{T}{}^{\vee}$.
\

{\bf (T4):} Since $\C$ has enough injectives, we get from \cite[Cor. 5.18(e)]{AM23} that $\Rep^f(Q,\C)$ has enough injectives; and thus (T4) follows.
\

{\bf (T5):} This follows from (a).
\

Let us prove (c). We show that $\mathbb{T}$ is $(n+1)$-$\X$-tilting in $\Rep(Q,\C),$ for $\X:=\Rep^f(Q,\C).$ We point out that $\X\subseteq \Rep(Q,\C)$ is a full abelian subcategory which is closed under subobjects and quotients \cite[Rk. 5.4]{AM23}.
We proceed by showing that all the conditions of Definition
\ref{def: X-tilting} hold true for $\mathbb{T}$ and $\X.$ In what follows we will be working in
the abelian category $\Rep(Q,\C)$. In particular, the $\X$-projective dimension of $F\in\Rep(Q,\C)$ is given by 
$\pd_\X(F)=\min\{n\in\mathbb{N}\,|\:\Ext_{\Rep(Q,\C)}^{k}(F,-)|_{\X}=0\:\forall k>n\}.$
 and 
$
\mathcal{Z}^{\bot}:=\{F\in\Rep(Q,\C)\,|\:\Ext_{\Rep(Q,\C)}^{k}(-,F)|_{\X}=0\;\forall k\geq1\},$
 for $\mathcal{Z}\subseteq\Rep(Q,\C)$. 
\
 
  {\bf (T1):} Let $T\in\mathcal{T}$ and $z\in Q_{0}$. By replacing $\pd$
(the projective dimension in the abelian category $\Rep^{ft}(Q,\C)$)
with $\pd_\X$ (the relative projective dimension in $\Rep(Q,\C)$) in the proof of \cite[Prop. 5.9 (a)]{AM23} and using \cite[Prop. 2.18 (a)]{AM23}
we can conclude that
\begin{alignat*}{1}
\pd_\X(g_{z}(T)) & \leq\sup_{i\in Q_{0}}\{\pd_{\C}(T^{Q(i,z)})\}+1=\pd_{\C}(T)+1\text{.}
\end{alignat*}
Therefore $\pd_\X(\mathbb{T})=\pd_\X(g_*(\T))\leq \pd(\T)+1.$
\

 {\bf (T2):} It follows as in the proof of (T2) in (b) by using \cite[Prop. 2.18(b)]{AM23}.
 \

 {\bf (T3):} It follows from the condition (T3) in (a) since  $\mathbb{T}\subseteq\X$ and $\X\subseteq \Rep(Q,\C)$ is a full abelian subcategory which is closed under subobjects and quotients.
 \

 {\bf (T4):} Since $\C$ has enough injectives, we get from \cite[Cor. 5.18 (e)]{AM23} that $\X$ has an $\X$-injective relative cogenerator. Therefore (T4) holds true since  $\mathbb{T}\subseteq\X.$ 
\

 {\bf (T5):} This follows from (c) and the inclusion $\mathbb{T}\subseteq\X.$
 \
 
 Let us prove (d). We show that the class $\Add(S)$ is $(n+1)$-$\X$-tilting  in $\Rep(Q,\C),$ for $\X=\Rep^f(Q,\C).$ We proceed by showing that all the conditions of Definition
\ref{def: X-tilting} hold true for $\Add(S)$ and $\X$ in $\Rep(Q,\C).$ Notice that $(\Add(F))^\perp =F^\perp,$ for any $F\in\Rep(Q,\C)$ since $\C$ is $AB4$  \cite[Lem. 3.18]{AM23}. Let us show, firstly, that $\mathbb{T}=\Add(S)\cap\X.$ Indeed, for $x,y\in Q_0$ and a set $I,$ by using that $Q(-,x)$ is finite,  we have that $(g_x(T^{(I)}))_y=(T^{(I)})^{Q(y,x)}=(T^{Q(y,x)})^{(I)})=((g_x(T))^{(I)})_y$ and thus $g_x(T^{(I)})\simeq (g_x(T))^{(I)}.$ Therefore $\mathbb{T}=\add(g_*(\T))\cap\X=\Add(g_*(\T))\cap\X=\Add(S)\cap\X.$
\

{\bf (T1):} It follows as in the proof of (T1) in (c).
 \ 
 
 {\bf (T2):} It follows as in the proof of (T2) in (c) since $\mathbb{T}=\Add(S)\cap\X$ and $\Add(S)^\perp=S^\perp.$
 \
 
 {\bf (T3):} Notice that $\mathbb{T}^\vee\subseteq \Add(S)^{\vee}_\X$ since $\X\subseteq\Rep(Q,\C)$ is a full abelian subcategory closed under quotients. Thus (T3) follows from (T3) in (b).
  \
 
 {\bf (T4):} Since $\Add(S)^\perp=S^\perp=\bigcap_{x\in Q_0}(g_x(T))^\perp,$ we have that the $\X$-injective cogenerator from  (T4) (c) belongs to  $\Add(S)^\perp.$
 \
 
 {\bf (T5):} It follows from (a).
 
\end{proof}
\begin{example}
Let $Q$ be a finite-cone-shape quiver and $\C$ be an abelian category
with enough projectives and injectives. Observe that in this case $\Proj(\C)$ is
an $0$-$\C$-tilting class in $\C$. Therefore, by Theorem \ref{thm:tilting repf},
we have that $\add\left(g_{*}(\Proj(\C))\right)$ is an $1$-$\Rep^{f}(Q,\C)$-tilting
class in $\Rep^{f}(Q,\C)$. Notice that this was proved in \cite[Prop. 3.9]{bauer2020cotorsion}
for the case when $Q$ is finite and acyclic. 
\end{example}
\begin{example}
Let $k$ be a field, $Q$ be a finite acyclic quiver and $\C:=\modu(k)$ 
the category of finite dimensional $k$-vector spaces. In
this case $\Rep^{f}(Q,\C)=\Rep(Q,\C)$ and hence, by the example
above, we have that $\mathcal{T}:=\add\left(g_{*}(\C)\right)$
is an $1$-$\Rep(Q,\C)$-tilting class in $\Rep(Q,\C)$. Moreover, we have that $g_{x}(k)$ is 
the injective representation at the vertex $x\in Q_0$ and  $\mathcal{T}=\add(T),$ where 
 $T:=\bigoplus_{x\in Q_{0}}g_{x}(k)$ is a small $1$-$\Rep(Q,\C)$-tilting
object. Furthermore, since $\Rep(Q,\C)\cong\modu(kQ)$, it follows
from Proposition 4.7 that the module corresponding to $T$ is a Miyashita
$1$-tilting $kQ$-module. 
\end{example}
\begin{example}
Let $k$ be a field, $Q$ and $S$ be finite acyclic quivers and
$\C:=\modu(k)$ be the category of finite dimensional $k$-vector spaces. By \cite[Lem. 1.3]{leszczynski1994representation}, we know that  $\Rep(Q,\Rep(S,\C))\simeq\Mod(kQ\otimes_{k}kS)$ and thus, by proceeding as in the example above, we can conclude that the module
corresponding to $\bigoplus_{y\in Q_{0}}\bigoplus_{x\in S_{0}}g_{y}^{Q}g_{x}^{S}(k)$
is a Miyashita $2$-tilting $\left(kQ\otimes_{k}kS\right)$-module.
Notice  that, for a finite set ${Q^1,\cdots ,Q^n}$ of finite acyclic quivers, 
we can repeat these arguments recursively to get a Miyashita
$n$-tilting $\left(kQ^{1}\otimes_{k}\cdots\otimes_{k}kQ^{n}\right)$-module. 
\end{example}

Our next goal is to give a description of the cotorsion pair induced
by a tilting class which is built in the previous theorem. For this, we recall
the following definitions from \cite{holm2019cotorsion}. Let $Q$
be a quiver, $\mathcal{C}$ be an abelian category and $\A\subseteq\mathcal{C}$.
The following notation is convenient: $\Rep(Q,\A):=\{F\in\Rep(Q,\C)\,|\:F_{x}\in\A\text{ for all }x\in Q_{0}\}$.
For $x\in Q_{0}$, define $Q_{1}^{x\rightarrow*}$ as the set of arrows
starting at $x$. Now, for $F\in\Rep(Q,\C)$ and $x\in Q_{0}$, define
$\psi_{x}^{F}:F_{x}\rightarrow\prod_{\alpha\in Q_{1}^{x\rightarrow*}}F_{t(\alpha)}$
as the morphism induced by the universal property of products through
the family of morphisms $\{F(\alpha)\,|\:\alpha\in Q^{x\rightarrow*}\}$.
Lastly, define $\Psi(\A)$ as the class of the representations
$F\in\Rep(Q,\C)$ such that $\psi_{x}^{F}$ is an epimorphism and
$\Ker(\psi_{x}^{F})\in\A$ for all $x\in Q_{0},$ see also in \cite{AM23} for more details. 

For the full abelian subcategory $\X:=\Rep^{f}(Q,\C)\subseteq\Rep(Q,\C)$ and a class 
$\Z\subseteq\X,$ we consider the following orthogonal classes $\Z^{\perp_\X}:=\{F\in\X\;:\;\Ext^{k}_\X(-,F)|_\Z=0\;\forall\,k\geq 1\}$ and  $\Z^{\perp}:=\{F\in\Rep(Q,\C)\;:\;\Ext^{k}_{\Rep(Q,\C)}(-,F)|_\Z=0\;\forall\,k\geq 1\}.$ In general, we only have that 
$\Z^{\perp}\cap\X\subseteq \Z^{\perp_\X}.$ Dually, we have the classes ${}^{\perp_\X}
\Z$ and ${}^{\perp}\Z.$

The second central theorem of this section tells us how to construct cotorsion pairs in the category of representations from tilting classes in $\C.$

\begin{thm}\label{Rep-tilt-pair}
Let $Q$ be a finite-cone-shape quiver, $\mathcal{C}$ be an abelian
category with enough projectives and injectives, $\mathcal{T}=\add(\mathcal{T})$ be a precovering and $n$-$\C$-tilting class in $\C,$ and let $\X:=\Rep^{f}(Q,\C)\subseteq\Rep(Q,\C).$ Then $\mathbb{T}:=\add\left(g_{*}(\mathcal{T})\right)\subseteq \X$ and the following statements hold true.
\begin{enumerate}
\item $({}^{\bot_\X}(\mathbb{T}^{\bot_\X}),\mathbb{T}^{\bot_\X})$ is a hereditary complete cotorsion pair in the abelian category $\X.$
\item $({}^{\bot}(\mathbb{T}^{\bot}),\mathbb{T}^{\bot})$ is a hereditary complete cotorsion pair in $\Rep(Q,\C).$ Moreover $\mathbb{T}^{\bot}=\Psi(\mathcal{T}^{\bot})$ and $^{\bot}(\mathbb{T}^{\bot})=\Rep(Q,{}^{\bot}(\mathcal{T}^{\bot}))$. 
\item Let $\C$ be $AB3,$  $\T=\Add(T)$ for some $T\in\C$ and $S:=\bigoplus_{x\in Q_0}g_x(T).$ Then $({}^{\bot}(S^{\bot}),S^{\bot})$ is a complete hereditary cotorsion pair in $\Rep(Q,C),$ $S^{\bot}=\Psi(T^{\bot})$ and $^{\bot}(S^{\bot})=\Rep(Q,{}^{\bot}(T^{\bot})).$
\end{enumerate}
\end{thm}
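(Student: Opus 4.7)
The strategy will be to reduce all three parts to Lemma \ref{tilting is cotorsion} combined with cotorsion-pair lifting results from \cite{AM23}. Part (1) would follow by a direct application of Lemma \ref{tilting is cotorsion} inside the abelian category $\X=\Rep^f(Q,\C)$: since $Q$ is finite-cone-shape and $\C$ has enough projectives and injectives, \cite[Cor. 5.18]{AM23} gives that $\X$ itself has enough projectives and injectives; and by Theorem \ref{thm:tilting repf}(b), $\mathbb{T}$ is $(n+1)$-$\X$-tilting inside $\X$ (i.e., with $\X$ playing the role of both the ambient abelian category and the relative class in Definition \ref{def: X-tilting}). Lemma \ref{tilting is cotorsion} applied to $\X$ then immediately produces the hereditary complete cotorsion pair $({}^{\bot_\X}(\mathbb{T}^{\bot_\X}),\mathbb{T}^{\bot_\X})$ in $\X$.

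For part (2) the situation is more delicate, because Theorem \ref{thm: el par n-X-tilting}(c) only gives $\X$-completeness of the pair $({}^\perp(\mathbb{T}^\perp),\mathbb{T}^\perp)$ inside $\Rep(Q,\C)$, which is weaker than full completeness. My plan is instead to lift a cotorsion pair from $\C$. First, Lemma \ref{tilting is cotorsion} applied to the $n$-$\C$-tilting class $\T$ produces a hereditary complete cotorsion pair $(\A,\B):=({}^\perp(\T^\perp),\T^\perp)$ in $\C$. Then, since $Q$ is finite-cone-shape (in particular left-rooted), the cotorsion-pair lifting machinery developed in \cite{AM23} yields a hereditary complete cotorsion pair $(\Rep(Q,\A),\Psi(\B))$ in $\Rep(Q,\C)$. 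The remaining task is the identification $\mathbb{T}^\perp=\Psi(\T^\perp)$; the identification ${}^\perp(\mathbb{T}^\perp)=\Rep(Q,{}^\perp(\T^\perp))$ then follows from it together with the cotorsion-pair axiom.

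The main obstacle will be the identification $\mathbb{T}^\perp=\Psi(\T^\perp)$. First one observes $\mathbb{T}^\perp=g_*(\T)^\perp$, since finite-$\add$-closure does not affect the right-orthogonal. The plan is then to combine the Ext-transfer formulas for the functors $g_x$ provided in \cite{AM23} (schematically, $\Ext^k_{\Rep(Q,\C)}(g_x(C),F)$ is controlled by Ext's of the form $\Ext^\ast_\C(C,-)$ evaluated on pieces coming from $F$ and $\psi_x^F$) with the canonical exact sequence $0\to F_x\xrightarrow{\psi_x^F}\prod_{\alpha\in Q_1^{x\to*}}F_{t(\alpha)}\to\Coker(\psi_x^F)\to0$ (available when $\psi_x^F$ is mono); applying $\Hom_\C(T',-)$ to this sequence should translate the vanishing of $\Ext^\ast(g_x(T'),F)$ for every $T'\in\T$ and every $x\in Q_0$ into the two defining conditions of $\Psi(\T^\perp)$: that $\psi_x^F$ be epi and that $\Ker(\psi_x^F)$ lie in $\T^\perp$ for every $x\in Q_0$.

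Part (3) will then follow as a corollary of (2). The additional AB3 hypothesis ensures that the coproduct $S=\bigoplus_{x\in Q_0}g_x(T)$ exists in $\Rep(Q,\C)$ and that $\Add(S)=\Add(g_*(T))$; hence $S^\perp=\Add(S)^\perp=\Add(g_*(T))^\perp=g_*(T)^\perp=\add(g_*(T))^\perp=\mathbb{T}^\perp$, so the pair $({}^\perp(S^\perp),S^\perp)$ coincides with the one produced in (2). The identifications $S^\perp=\Psi(T^\perp)$ and ${}^\perp(S^\perp)=\Rep(Q,{}^\perp(T^\perp))$ will then follow from (2) using $\T^\perp=(\Add(T))^\perp=T^\perp$ and ${}^\perp(\T^\perp)={}^\perp(T^\perp)$.
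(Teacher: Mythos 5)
Parts (1) and (3) of your proposal match the paper's proof: part (1) is exactly the paper's application of Lemma \ref{tilting is cotorsion} inside the abelian category $\X$ (using \cite[Cor. 5.18]{AM23} for enough projectives and injectives in $\X$ and Theorem \ref{thm:tilting repf}(b) for the $(n+1)$-$\X$-tilting property), and part (3) reduces to (2) via $\Add(S)=\Add(g_*(\T))=\Add(\mathbb{T})$ and hence $S^\perp=\mathbb{T}^\perp$, just as the paper does.

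For part (2), the first half of your plan is the paper's: form the complete hereditary cotorsion pair $(\A,\B)=({}^\perp(\T^\perp),\T^\perp)$ in $\C$ via Lemma \ref{tilting is cotorsion}, and lift it to $(\Rep(Q,\A),\Psi(\B))$ in $\Rep(Q,\C)$ using \cite[Cor. 5.18]{AM23}; that citation also hands you the identity $g_*(\A)^\perp=\Psi(\B)$ for free. Where you diverge is that you then set this identity aside and propose to prove $g_*(\T)^\perp=\Psi(\T^\perp)$ from scratch by an Ext computation built on the sequence $0\to F_x\xrightarrow{\psi_x^F}\prod_{\alpha\in Q_1^{x\to*}}F_{t(\alpha)}\to\Coker(\psi_x^F)\to 0$. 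This is where the plan has a genuine gap: that sequence is only available when $\psi_x^F$ is a monomorphism, which is not automatic for an arbitrary $F\in g_*(\T)^\perp$ and would itself require an argument; and the Ext-transfer analysis you would need from \cite{AM23} is carried out there under hypotheses (the base class being a left half of a complete hereditary cotorsion pair) that $\T$ by itself need not satisfy, so one cannot simply substitute $\T$ for $\A$. The paper sidesteps all of this with a short reduction that you are missing: by Theorem \ref{thm: el par n-X-tilting}(a) (with $\X=\C$) one has $\A={}^\perp(\T^\perp)=\T^\vee$; since each $g_x$ is exact, $g_*(\T^\vee)\subseteq g_*(\T)^\vee$; and by \cite[Lem. 4.3]{parte1}, $(g_*(\T)^\vee)^\perp=g_*(\T)^\perp$. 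Chaining these gives $g_*(\A)^\perp=g_*(\T^\vee)^\perp=g_*(\T)^\perp=\mathbb{T}^\perp$, so $\mathbb{T}^\perp=\Psi(\B)$ and ${}^\perp(\mathbb{T}^\perp)=\Rep(Q,\A)$ drop out of the lifted cotorsion pair with no new Ext computation. Replacing your direct-computation step with this reduction closes the gap and brings you in line with the paper's argument.
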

\begin{proof} Observe that the abelian categories $\X$ and $\Rep(Q,\C)$ have enough projectives and injectives since $\C$ is so  \cite[Cor. 5.18 (e,f)]{AM23}. 
\

(a) It follows from  Lemma \ref{tilting is cotorsion} since the abelian category $\X$ has enough projectives and injectives and, by Theorem \ref{thm:tilting repf} (b), we know that $\mathbb{T}$ is $(n+1)$-$\X$-tilting in $\X.$ 
\

(b) By Lemma \ref{tilting is cotorsion}, we know that $\p:=(^{\bot}(\mathcal{T}^{\bot}),\mathcal{T}^{\bot})$ is a complete hereditary cotorsion pair in $\C.$ Then, by \cite[Cor. 5.18 (a,g)]{AM23} it follows that
$(\Rep(Q,\A),\Psi(\B))$ is a complete hereditary cotorsion pair in $\Rep(Q,\C)$ and   $g_{*}(\mathcal{A})^{\bot}=\Psi(\mathcal{B}).$
Moreover,
by \cite[Lem. 4.3]{parte1} and Theorem 3.12 (a), it follows that 
$g_{*}(\mathcal{A})^{\bot}=g_{*}(\mathcal{T}^{\vee})^{\bot}=(g_{*}(\mathcal{T})^{\vee}){}^{\bot}=g_{*}(\mathcal{T})^{\bot}\text{.}$
Therefore, $\mathbb{T}^{\bot}=g_{*}(\T)^{\bot}=g_{*}(\mathcal{A})^{\bot}=\Psi(\mathcal{B})$.
Finally, since $(\Rep(Q,\A),\Psi(\B))$ is a hereditary cotorsion
pair, we have that $^{\bot}(\mathbb{T}^{\bot})={}^{\bot}\Psi(\mathcal{B})=\Rep(Q,\A).$
\

(c) Since $S:=\bigoplus_{x\in Q_0}g_x(T)$ and $\T=\Add(T),$ we get $\Add(S)=\Add(g_*(T))=\Add(g_*(\T))=\Add(\mathbb{T}).$ Therefore $S^\perp=\Add(S)^\perp=\Add(\mathbb{T})^\perp=\mathbb{T}^\perp$ and hence (c) follows from (b).
\end{proof}

\begin{example}
Let $Q$ be a finite-cone-shape quiver, $R$ be a ring, $\C=\Mod(R)$
and $T\in\Mod(R)$ be a big $n$-$\C$-tilting module. Then,  by Theorems \ref{thm:tilting repf} and \ref{Rep-tilt-pair}, we have
that $\mathbb{T}:=\add(g_*(\Add(T)))$
is a small $(n+1)$-$\Rep^{f}(Q,\C)$-tilting class in $\Rep(Q,\C)$ and $S:=\bigoplus_{x\in Q_{0}}g_{x}(T)$ is big $(n+1)$-$\Rep^{f}(Q,\C)$-tilting object in $\Rep(Q,\C).$  
Moreover $({}^{\bot}(S^{\bot}),S^{\bot})$ is a complete hereditary cotorsion pair in $\Rep(Q,C),$ $S^{\bot}=\Psi(T^{\bot})$ and $^{\bot}(S^{\bot})=\Rep(Q,{}^{\bot}(T^{\bot})).$ 
\end{example}

\section*{Acknowledgements}

Part of the research presented in this paper was conducted while the first
named author was on a post-doctoral fellowship at Centro de Ciencias
Matem\'aticas, UNAM Campus Morelia, funded by the DGAPA-UNAM. The
first named author would like to thank all the academic and administrative
staff of this institution for their warm hospitality, and in particular
Dr. Raymundo Bautista (CCM, UNAM) for all his support. 


\bibliographystyle{plain}




\end{document}